
\documentclass[12pt]{book}


\usepackage{a4wide}
\usepackage{fancyhdr}
\usepackage{graphicx}
\usepackage[english]{babel}
\usepackage[bookmarks]{hyperref}
\usepackage{parskip}
\usepackage[nottoc]{tocbibind}


\usepackage{amsmath}
\usepackage{amssymb}
\usepackage{amscd}
\usepackage{amsfonts}
\usepackage{mathrsfs}
\usepackage{amsthm}


\usepackage{braket}


\begingroup
    \makeatletter
    \@for\theoremstyle:=definition,remark,plain\do{%
        \expandafter\g@addto@macro\csname th@\theoremstyle\endcsname{%
            \addtolength\thm@preskip\parskip
            }%
        }
\endgroup


\newcommand{\E}{\mathcal{E}}
\newcommand{\Ee}{\mathcal{E}_{\rm e}}
\newcommand{\DEl}{D(E)_{\rm loc}}
\newcommand{\DEln}{D(E)_{{\rm loc},\, \mathcal{N}}}

\newcommand{\Ep}{E_{\varphi}}
\newcommand{\Eph}{\widehat{E}_{\varphi}}

\newcommand{\Eg}{ E^{(M)}}
\newcommand{\Egm}{E^{(M)}}
\newcommand{\Ekm}{E^{(k)}}
\newcommand{\Ek}{\widehat{E}^{(k)}}

\newcommand{\Ew}{ E^{(w)}}
\newcommand{\sE}{ {\rm supp}\,E}
\newcommand{\Er}{E^{\,{\rm ref}}}

\newcommand{\Bf}{\mathcal{B}_{\rm fin}}
\newcommand{\Lpf}{L^p_{\rm fin}(m)}
\newcommand{\Ltf}{L^2_{\rm fin}(m)}
\newcommand{\Lpl}{L^p_{\rm loc}(m)}
\newcommand{\Ltl}{L^2_{\rm loc}(m)}
\newcommand{\Ltlm}{L^2_{\rm loc}({\rm vol}_g)}

\newcommand{\IR}{\mathbb{R}}

\newcommand{\IN}{\mathbb{N}}

\newcommand{\cB}{\mathcal{B}}
\newcommand{\cN}{\mathcal{N}}

\newcommand{\ceh}{{\rm cap}_{E,h}}

\newcommand{\Ge}{G^{\,r}}

\newcommand{\qedc}{\hfill $\bigtriangleup$}

\newcommand{\D}{\, {\rm d}}



\newcommand{\as}[1]{\langle #1\rangle}

\newcommand{\ow}[1]{\widetilde{#1}}


\theoremstyle{plain}
\newtheorem{lemma}{Lemma}[chapter]
\newtheorem{proposition}[lemma]{Proposition}
\newtheorem{corollary}[lemma]{Corollary}
\newtheorem{theorem}[lemma]{Theorem}

\newtheorem{conjecture}[lemma]{Conjecture}

\theoremstyle{definition}
\newtheorem{definition}[lemma]{Definition}
\newtheorem{remark}[lemma]{Remark}
\newtheorem{example}[lemma]{Example}

 \newtheorem{question}[]{Question}
  \newtheorem{looseend}[]{Loose end}


\newcommand{\Hmm}[1]{*\leavevmode{\marginpar{\tiny%
$\hbox to 0mm{\hspace*{-0.5mm}$\leftarrow$\hss}%
\vcenter{\vrule depth 0.1mm height 0.1mm width \the\marginparwidth}%
\hbox to 0mm{\hss$\rightarrow$\hspace*{-0.5mm}}$\\\relax\raggedright #1}}}

\newenvironment{rcases}{%
  \left|\renewcommand*\lbrace.%
  \begin{cases}}%
{\end{cases}\right.}


\pagestyle{fancyplain}
\renewcommand{\chaptermark}[1]%
         {\markboth{\thechapter.\ #1}{}}
\renewcommand{\sectionmark}[1]%
         {\markright{\thesection\ #1}}
\lhead[\fancyplain{}{\bfseries\thepage}]%
    {\fancyplain{}{\bfseries\rightmark}}
\rhead[\fancyplain{}{\bfseries\leftmark}]%
    {\fancyplain{}{\bfseries\thepage}}
\cfoot{}


\newcommand{\LMUTitle}[9]{
  \thispagestyle{empty}
  \vspace*{\stretch{1}}
  {\parindent0cm
   \rule{\linewidth}{.7ex}}
  \begin{flushright}

    \vspace*{\stretch{1}} 
    \sffamily\bfseries\Huge
    #1\\
    \vspace*{\stretch{1}}
    \sffamily\bfseries\large
    #2
    \vspace*{\stretch{1}}
  \end{flushright}
  \rule{\linewidth}{.7ex}
  \vspace*{\stretch{5}}
  \begin{center}
    \includegraphics[width=2in]{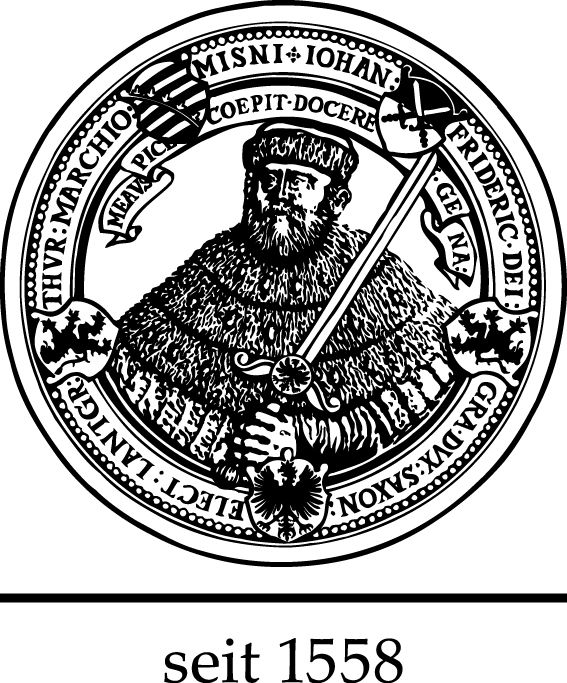}
  \end{center}
  \vspace*{\stretch{1}}
  \begin{center}\sffamily\LARGE{Friedrich-Schiller-Universit\"at Jena 2016}\end{center}
  \newpage
  \thispagestyle{empty}

  \cleardoublepage
  \thispagestyle{empty}

  \vspace*{\stretch{2}}
  {\parindent0cm
  \rule{\linewidth}{.7ex}}
  \begin{flushright}
    \vspace*{\stretch{1}}
    \sffamily\bfseries\Huge
    #1\\
    \vspace*{\stretch{2}}
  \end{flushright}
  \rule{\linewidth}{.7ex}

  \vspace*{\stretch{3}}
  \begin{center}
   \begin{center}
    \includegraphics[width=2in]{Logo_FSU_Bild_jpg}
  \end{center}
  
    \Large {\bf Dissertation} \\
    zur Erlangung des akademischen Grades\\
    {\bf doctor rerum naturalium}\\
    \Large vorgelegt dem Rat der #4\\
    \Large der Friedrich-Schiller-Universit\"at Jena\\
    \vspace*{\stretch{1}}
    \Large   von\\
    \Large {\bf Dipl.-Math. Marcel Schmidt}\\
    \Large geboren am 20.11.1987 in #3\\
    \vspace*{\stretch{2}}
  \end{center}

  \newpage
  \thispagestyle{empty}

  \vspace*{\stretch{1}}

  \begin{flushleft}
    \large 1. Gutachter und Betreuer:  #7 \\[1mm]
    \large 2. Gutachter: #8 \\[1mm]
    \large 3. Gutachter: #9\\[1mm]
    \large Tag der Abgabe: 20.12.2016 \\[1mm]
    \large Tag der \"offentlichen Verteidigung:  \\[1mm]
  \end{flushleft}

  \cleardoublepage
}


\begin{document}

  \frontmatter

  \LMUTitle
      {Energy forms}               
      {Marcel Schmidt}                       
      {Lichtenstein}                             
      {Fakult\"at f\"ur Mathematik und Informatik}                         
      {Jena 2016}                          
      {Abgabedatum}                            
      {Prof. Daniel Lenz (Jena)}                          
      {Prof. Alexander Grigor'yan (Bielefeld)}                         
      {Prof. Peter Stollmann (Chemnitz)}

  \tableofcontents
  



\chapter{Zusammenfassung in deutscher Sprache}

Die vorliegende Arbeit besch\"aftigt sich mit Energieformen (energy forms). Eine Energie\-form ist eine unterhalbstetige quadratische Form auf dem Raum der reellwertigen, $m$-fast \"uberall definierten Funktionen
$$E:L^0(m) \to [0,\infty],$$
welche einer messbaren Funktion $f$ ihre Energie $E(f)$ zuordnet und eine Kontraktionseigenschaft besitzt. Die Kontraktionseigenschaft besagt, dass f\"ur jede normale Kontraktion $C:\IR \to \IR$ die Energie einer Funktion $f$ die Ungleichung 
$$E(C \circ f) \leq E(f)$$
erf\"ullt. Dies ist eine abstrakte Formulierung des Postulats, dass das Abschneiden von Fluktuationen einer Funktion (welche eine physikalische Gr\"o\ss e beschreibt) ihre Energie verringert. 

Energieformen stellen eine Verallgemeinerung von Dirichletformen, erweiterten Dirichletformen und Widerstandsformen (resistance forms) dar. Diese Arbeit untersucht ihre strukturellen Eigenschaften und globalen Eigenschaften. Zu letzteren z\"ahlen Rekurrenz und Transienz, die Eindeutigkeit von Silversteinerweiterungen und stochastische Voll\-st\"andig\-keit. Wir konstruieren maximale Silversteinerweiterungen und entwickeln den Begriff der schwachen L\"osung, einer zu einer Energieform assoziierten Laplacegleichung, um die globalen Eigenschaften zu  charakterisieren. Anders als bisher verwenden wir dabei aus\-schlie\ss lich die algebraische Struktur der Formen und keine Darstellungstheorie oder extrinsische Informationen. 


\chapter{Acknowledgments}

First of all I express my gratitude to my advisor Daniel Lenz. His continuous guidance over the past years has led me to interesting mathematical problems; his generous support always provided me with the means and the opportunities to travel and to communicate my research;  his trust gave me freedom in my scientific work, allowed me  to give lectures to students and to supervise theses of Bachelor and Master students. I thank him for these experiences and challenges, and the skills that I learned from them.

Next, I want to thank my co-authors Batu G\"uneysu, Sebastian Haeseler, Matthias Keller, Daniel Lenz, Jun Masamune,  Florentin M\"unch, Felix Pogorzelski,  Andras Telcs,  Melchior Wirth  and Radek Wojciechowski. Working with them has broadened my mathematical knowledge and strongly influenced my mathematical thinking.

I am indebted to Siegfried Beckus, Daniel Lenz and Mirjam Schmidt for proofreading  this thesis. Thank you for helping me improve the language and for saving the reader from too long and complicated sentences. 

Since the second year of my PhD studies I am part of the Research Training Group 'Quantum and Gravitational Fields' at the Friedrich-Schiller-Universit\"at Jena. I am grateful for the financial  support it provided on multiple occasions. 

During the past years I enjoyed the hospitality of various institutions.  In particular, I want to thank Bobo Hua (Fudan University Shanghai), Matthias Keller (Universit\"at Potsdam), Jun Masamune (Tohoku University Sendai at that time), Radek  Wojciechowski (CUNY New York), Rene Schilling (TU Dresden) and Toshihiro Uemura (Kansai University Osaka) for the invitations and the opportunities to present my research outside of Jena.  I gratefully acknowledge the financial travel support for two visits to Japan by the  European Science Foundation (ESF).

Special thanks go to my colleagues in Jena. Without Daniel L., Daniel S., Felix, Markus, Matthias, Melchior, Ren\'e, Sebastian, Siegfried, Therese and Xueping writing this thesis and working in Jena would have been less fun. In particular, I thank Siegfried for organizing many non-mathematical activities in our group, which led to the excellent atmosphere that we enjoy today.

\newpage

I am very grateful to my dear parents and grandparents who always supported me strongly. Their belief in my abilities gave me the courage to study mathematics and to write this thesis.   

Finally, I want to express my gratitude and my love to my wife Mirjam and my daughter Henrike. With their constant encouragement and their waiving of time spent together they helped me a lot in finishing this thesis. DaDa!


\chapter{Introduction}

In this thesis we study energy forms. These are quadratic forms on the space of real-valued measurable $m$-a.e. determined functions
$$E:L^0(m) \to [0,\infty],$$
which assign to a measurable function $f$ its energy $E(f)$. Their two defining characteristics are a contraction property and some form of continuity. The contraction property  demands that for each normal contraction $C:\IR \to \IR$ the energy of a function $f$ satisfies
$$E(C \circ f) \leq E(f).$$
This is an abstract formulation of the postulate that cutting off fluctuations of a function (which is thought to describe some physical quantity) decreases its energy. The continuity assumption that we impose on energy forms is lower semicontinuity with respect to local convergence in measure. 

 A typical  energy form is Dirichlet's energy integral on bounded subsets $ \Omega \subseteq \IR^n$, which assigns to $f:\Omega \to \IR$ the quantity
 $$\int_{\Omega} |\nabla f|^2 \D x.$$
 It was introduced by Dirichlet in the middle of the 19th century to study boundary value problems for the Laplace operator. This example shows that energy forms have been around for quite a while, but a first systematic approach that emphasizes the importance of the contraction property was given around 100 years later by Beurling and Deny in the seminal papers \cite{BD,BD1}, where they introduce Dirichlet spaces. These works created a lot of momentum in the fields of potential theory, probability theory and operator theory and inspired a vast amount of research. Here, we do not try to give a comprehensive historical overview over this development (we would most certainly fail at this task); we limit ourselves to mentioning some cornerstones in order to formulate the goals of this thesis and to discuss how its contents fit into the picture. 
 
 About a decade after the pioneering work of Beurling and Deny, Fukushima introduced Dirichlet forms as $L^2$-versions of Dirichlet spaces to classify different Brownian motions on open subsets of Euclidean space, see \cite{Fuk69}. In the following years, the efforts of Fukushima \cite{Fuk71,Fuk76,Fuk80} and Silverstein \cite{Sil,Sil2}  revealed the deep connection between regular Dirichlet forms and Hunt processes on locally compact metric spaces. Nowadays, a summary of their results can be found in the books \cite{FOT,CF}. The original Dirichlet spaces of Beurling and Deny appear in this theory as special cases of transient extended Dirichlet spaces. In \cite{AH-K0,AH-K1,AH-K2} Albeverio  and H{\o}egh-Krohn presented examples of Dirichlet forms and associated Markov processes on infinite dimensional state spaces, a case that is not covered by the theory of Fukushima and Silverstein. During the 80's several people tried to extend the probabilistic side of Dirichlet form theory to Dirichlet forms on separable metric spaces.  In full generality, this was realized in the book  of Ma and R\"ockner \cite{MR}, where they introduce quasi-regular Dirichlet forms and show that these are exactly the forms that are associated with (sufficiently good) Markov processes.  
 
  Another popular class of quadratic forms in the spirit of Beurling and Deny are resistance forms,  which emerged from the study of Brownian motion on fractals.  They were defined by Kigami as measure free versions of Dirichlet forms, see \cite{Kig1,Kig2}. Resistance forms satisfy the mentioned contraction property, but they do not live on $L^2$-spaces and, in general, they are not extended Dirichlet forms. Their importance stems from the fact that on fractals there are multiple, mutually singular Borel measures that could serve as reference measure. With the help of resistance forms, the different choices of measures can be treated simultaneously.

 The {\bf first goal} of this thesis is to show that our notion of energy forms provides a general framework that encompasses Dirichlet forms, extended Dirichlet forms and resistance forms.  The three  classes  have in common that they satisfy  the  contraction property, while at a first glance  their continuity properties are rather different. Dirichlet forms are closed forms on $L^2(m)$, i.e., they are lower semicontinuous with respect to $L^2(m)$-convergence; extended Dirichlet forms are introduced in an ad-hoc manner, their definition has similarities with the closure of a form; resistance forms are degenerate complete inner products  with the additional property that their domain embeds continuously into a quotient of the underlying function space. In the first chapter of this thesis, we introduce closed quadratic forms on general topological vector spaces and provide several characterizations for closedness and closability.  It turns out that the  continuity properties of Dirichlet forms, extended Dirichlet forms and resistance forms are different aspects of closedness. In this language, the extended Dirichlet form is the closure of the Dirichlet form on the topological vector space $L^0(m)$ equipped with the topology of local convergence in measure and a resistance form is a closed form on the topological vector space of all functions on a set equipped with the topology of pointwise convergence. The latter space can also be interpreted as $L^0(\mu)$, where $\mu$ is the counting measure. In this case, the topology of pointwise convergence and the topology of local convergence in measure coincide.
 
 As sketched above, the development of Dirichlet form theory was mainly driven by the close connection to probability theory.  This relation is certainly important and powerful; however, too much structure hides intrinsic structure and can lead to complicated proofs. We take the viewpoint that all theorems on energy forms that make claims about the form itself should be proven by form methods only. The {\bf second goal} of this thesis is to demonstrate that a restriction to form methods can open the way to shorter, more elegant proofs and to some new structural insights; it is the leitmotif in our work.   
 
 The local behavior of a Markov process does not determine the associated Dirichlet form uniquely. Silverstein realized that a Dirichlet form corresponds to a process with prescribed local behavior iff it is an extension of the Dirichlet form of the minimal process with the given local behavior and the domain of the Dirichlet form of the minimal process is an ideal in the larger domain, see \cite{Sil}. Today, such extensions are called Silverstein extensions. Since their definition only involves simple algebraic properties of the form domains, it is possible to introduce Silverstein extensions for all energy forms. It is the {\bf third goal} of this thesis to 'properly understand' their role in energy form theory. To this end, we provide several characterizations of Silverstein extensions, discuss where they naturally appear and where the limits of Silverstein extensions lay, prove uniqueness theorems and algebraically construct the maximal Silverstein extension when possible.  These results generalize known results for (quasi-)regular Dirichlet forms in various directions and shed some new light on them.
  
 Inspired by the works of Aronson, Davies, De Giorgi, Grigor'yan, Karp, Li, Nash, Moser, Yau and others on manifolds, Sturm showed in a series of papers \cite{Stu1,Stu2,Stu4,Stu3}   that local regular Dirichlet forms provide a convenient framework to study the interplay between the geometry of the underlying space and properties of solutions to the Laplace equation and to the heat equation. His results on recurrence, Liouville theorems, stochastic completeness, heat kernel estimates and Harnack inequalities have since then been extended to many other situations. The fundamental concept that is required to even formulate these theorems is the notion of a weak solution to an equation involving 'the Laplacian' of the form. There are two approaches for defining such weak solutions in the literature. One is powerful but extrinsic, it utilizes notions of weak solutions available in the concrete model, e.g. distributional Laplacians. The second approach makes use of representation theory for Dirichlet forms viz the Beurling-Deny formula. This is technically involved and limited to quasi-regular Dirichlet forms. The {\bf fourth goal} of this thesis is to develop a theory of weak solutions for energy forms that is more algebraic in nature. While a geometric analysis in the spirit of Sturm's work is beyond its scope, we try to convince the reader that the concept of weak solutions that we introduce in the fourth chapter can be useful to study the mentioned problems in great generality.  To this end, we show  that our notion of weak solutions extends classical notions and we prove certain meta theorems of Dirichlet form theory that are known in some concrete situations, e.g. for Dirichlet forms associated with manifolds and graphs, but whose assertions could not even be formulated for general Dirichlet forms. More precisely, we characterize excessive functions as weakly superharmonic functions, we prove that recurrence is equivalent to certain weakly superharmonic functions being constant and we show that stochastic completeness is equivalent to uniqueness of bounded weak solutions to an eigenvalue problem; on manifolds and graphs these are classical results. 
 
 All four of our goals can be summarized as follows: we try to provide a coherent framework for investigating all closed forms with the contraction property such that the developed theory is algebraic in nature and independent of additional extrinsic information.  Apart from the strive for the greatest possible generality, there are several reasons why we work in this universal setup.
 
 Motivated by physics and other applications, the contraction property is inherent in the theory of energy forms.  In contrast,  lower semicontinuity (closedness) is a technical assumption that is used to ensure that the forms do not show the pathological behavior of general functions on infinite dimensional vector spaces. There is no divine reason why one should work with lower semicontinuity with respect to local convergence in measure. Our choice is based on two observations: All of the  examples are lower semicontinuous with respect to local convergence in measure, the weakest topology on Lebesgue spaces; this allows us to apply lower semicontinuity for the largest possible class of nets. The topology of local convergence in measure is  rather independent of the concrete measure $m$; it is determined by the order structure of $L^0(m)$. In spirit, a similar approach is taken by Hinz in \cite{Hin}, where he uses the topology of uniform convergence to obtain independence of the underlying measure. For a discussion on why this  property is important we refer the reader to his text. 
  
 On our quest to reaching the first goal, in the first chapter of this thesis we develop the theory of closed quadratic forms on topological vector spaces to be able to deal with forms on $L^0(m)$. The concepts that are used there do not only apply to quadratic forms but also to other nonnegative convex functionals. For example, energy functionals of $p$-Laplacians on various $L^q$-spaces and, more generally, nonlinear Dirichlet forms in the sense of \cite{CG} can be treated with our theory. In the later chapters, the assumption that we deal with quadratic forms becomes more and more essential, but still some of the proofs, which we develop in view of our second goal, apply to the nonlinear situation. More specifically, we believe that our discussion of recurrence, transience, of excessive functions and parts of the theory of Silverstein extensions could be useful in this context.

 The reasons for our attempts on the third and the fourth goal in the presented generality are twofold. As mentioned above, the notion of weak solutions plays a major role in the geometric analysis of Dirichlet forms and it is fairly well-developed for strongly local regular Dirichlet forms. However, recent years have also seen a strong interest in geometric analysis of nonlocal Dirichlet forms, see e.g. the survey \cite{Kel} and references therein. In the nonlocal world a coherent theory of weak solutions is available for discrete Dirichlet forms, which derive from graphs, but for general jump forms it is missing. In Chapter~\ref{chapter:weak solutions} we fill this gap once and for all, for all Dirichlet forms.  
 
 Dirichlet form theory can be seen as the study of the interplay of quadratic forms with the algebraic and the order structure of the underlying function space. Originally, it concerns forms on commutative measure spaces, but Gross realized in \cite{Gro1,Gro2} that there is an extension to the noncommutative situation. He used it in order to construct and investigate Hamiltonians for interacting boson and fermion systems in quantum field theory. After this pioneering work the theory of noncommutative Dirichlet forms was  developed and many applications in various areas of mathematics have surfaced, see the recent surveys \cite{Rie} and \cite{Cip}. Since our approach to studying Silverstein extensions and weak solutions is purely algebraic, it extends to forms on noncommutative spaces (in a more or less straightforward manner). Therefore, it  can be seen as a first step towards geometric analysis in the spirit of Sturm's work in the noncommutative world.

Many of the theorems in this thesis are  far-reaching generalizations of results that are known for particular examples or for (quasi-)regular Dirichlet forms. Besides larger generality, the novelty lies in their proofs, which are more structural and have a more functional analytic flavor than the original ones. Below, we elaborate on the structure of the thesis  and mention our main theorems. How they relate to known theorems is discussed in detail in the main text.

The thesis is organized as follows. In the first section of Chapter~\ref{chapter:quadratic forms}, we start with a brief introduction on topological vector spaces and then discuss various Lebesgue spaces of measurable functions and their properties.  The subsequent Section~\ref{section:quadratic forms} is devoted to closed quadratic forms on topological vector spaces. Closedness is a property that makes quadratic forms accessible to functional analytic investigations. We characterize  it in terms of completeness of the form domain with respect to the form topology, lower semicontinuity of the form, and the form domain being a Hilbert space, see Proposition~\ref{prop:lower semincontinuity implies closedness}, Theorem~\ref{theorem:characterization closedness} and Theorem~\ref{theorem:characterization closedness hilbert space}. This is followed by a short  discussion of Dirichlet form theory in Section~\ref{section:quadratic forms on lebesgue spaces}. More specifically, we recall the relation of Dirichlet forms and their associated Markovian objects, discuss their contraction properties, see Theorem~\ref{theorem:cutoff properties Dirichlet form}, and give a new and short proof for the existence of the extended Dirichlet form in terms of closable forms on $L^0(m)$, see Theorem~\ref{theorem: existence extended Dirichlet space}. We finish Chapter~\ref{chapter:quadratic forms} with an approximation for closed quadratic forms on $\Ltf$ by continuous ones, see Lemma~\ref{lemma: approximation of closed forms}.

Chapter~\ref{chaper:energy forms} introduces  energy forms, the main objects of our studies, and is devoted to their basic properties. In Section~\ref{section:the definition and main examples} we give the precise definition of energy forms, see Definition~\ref{definition:energy form}, and discuss important examples. In particular, we show that Dirichlet forms, extended Dirichlet forms and resistance forms all provide energy forms, see Subsection~\ref{subsection:resistance forms}. The subsequent Section~\ref{section:contraction properties} treats contraction properties of energy forms. Its main result is Theorem~\ref{theorem:cutoff properties energy forms}, an extension of Theorem~\ref{theorem:cutoff properties Dirichlet form} to all energy forms. After that we discuss structure properties of energy forms in Section~\ref{section:structure properties}. We prove that the form domain of an energy form is a lattice and that bounded functions in the domain  form an algebra, see Theorem~\ref{theorem:algebraic and order properties}. It turns out that the kernel of an energy form is quite important for its properties; its triviality implies that the form domain equipped with the form norm is a Hilbert space, see Theorem~\ref{theorem:continuous embedding of energy forms}. We introduce the concepts of recurrence and transience, see Definition~\ref{definition:recurrence}, and of invariant sets and irreducibility, see Definition~\ref{definition:invariant sets},  to study  the kernel and compute it when it is nontrivial, see Theorem~\ref{thm:kernel of an energy form} and Corollary~\ref{corollary:kernel recurrent forms}. Section~\ref{section:superharmonic functions} deals with superharmonic and excessive functions. We characterize them in terms of additional contraction properties that are not induced by normal contractions, see Theorem~\ref{theorem:characterization of superharmonic functions} and Theorem~\ref{theorem:characterization of excessive functions}, and show that the existence of nonconstant excessive functions is related to recurrence and transience, see Theorem~\ref{theorem:recurrence in terms of constant excessive functions}. In the last two sections of Chapter~\ref{chaper:energy forms}, we introduce capacities and local spaces of energy forms by adapting the corresponding notions from Dirichlet form theory to our setting.

Chapter~\ref{chapter:silverstein extensions} is devoted to Silverstein extensions of energy forms. These are defined via an ideal property of the form domains. In Section~\ref{section:ideals} we characterize this ideal property in terms of the algebraic structure, the order structure and in terms of the local form domain, see Theorem~\ref{theorem:Ideal properties of energy forms}, and discuss where Silverstein extensions naturally appear. Section~\ref{section:uniqueness of silverstein extensions} deals with Silverstein uniqueness. We prove that recurrence implies Silverstein uniqueness, see Theorem~\ref{theorem:recurrent forms are Silverstein unique}, and give another abstract criterion for Silverstein uniqueness in terms of capacities, see Theorem~\ref{theorem:capacity characterization of uniqueness of Silverstein extensions}. The latter is then used to relate Silverstein uniqueness with vanishing of the topological capacity of  boundaries, see Theorem~\ref{theorem:silverstein uniqueness boundary inside space} and Theorem~\ref{theorem:silverstein uniqueness boundary outside space}. The last section of  Chapter~\ref{chapter:silverstein extensions} deals with the existence of the maximal Silverstein extension. We give two examples of energy forms that do not have a maximal Silverstein extension, see Example~\ref{example:counterexample maximal Silverstein extension energy form} and Example~\ref{example:counterexample maximal Silverstein extension regular dirichlet form}. We decompose energy forms into a main part, see Definition~\ref{definition:main part}, and a monotone killing part, see Definition~\ref{definition:killing part}, and provide an algebraic construction of the maximal Silverstein extension when the killing vanishes, see Theorem~\ref{theorem:maximality er}. At the end of the section, we discuss how this can be employed to obtain maximal Markovian extensions of hypoelliptic operators, see Theorem~\ref{theorem:maximality er operator} and Corollary~\ref{corollary:maximal markovian extension}.

In Chapter~\ref{chapter:weak solutions} we study properties of weak (super-)solutions to the Laplace equation. In the first section we introduce the weak form extension, see Definition~\ref{definition:weak form}, that is an off-diagonal extension of the given form, see Theorem~\ref{theorem:ew extends e}. We use the weak form to define weak solutions to the Laplace equation, see Definition~\ref{definition:weak solution}, and show for several examples that this notion coincides with the 'classical' one.  The second section deals with excessive functions, which are characterized as weakly superharmonic functions, see Theorem~\ref{theorem:characterization excessive functions as superharmonic functions}. The following section is devoted to the existence of minimal weak solutions to the Laplace equation. More precisely, we define the extended potential operator, see Definition~\ref{definition:extended potential operator}, and show that it provides minimal weak solutions when they exist, see Theorem~\ref{theorem:maximal principle potential operator}. We conclude Chapter~\ref{chapter:weak solutions} with several applications. We prove that uniqueness of bounded weak solutions to the Laplace equation is related to a conservation property for the extended potential operator, see Theorem~\ref{theorem:uniqueness of bounded weak solutions}, show that $L^p$-resolvents of Dirichlet forms provide weak $L^p$-solutions to an Eigenvalue problem, see Theorem~\ref{theorem:lp resolvents}, and characterize recurrence in terms of certain  weakly superharmonic functions being constant, see Theorem~\ref{theorem:finish}.

  \mainmatter\setcounter{page}{1}
  \chapter{Quadratic forms on topological vector spaces} \label{chapter:quadratic forms}

\section{Topological vector spaces}

In this section we briefly review parts of the theory of topological vector spaces and fix some notation. The notions and theorems are standard and can be found in (almost) any textbook on this subject. For details we refer the reader to the classics \cite{Bou} and \cite{Schae}. As main examples for later applications we discuss Lebesgue spaces of measurable functions. For a background on them we refer the reader to \cite{Fre1,Fre2,Fre3}.

\subsection{Generalities}

Let $X$ be a nonempty set. A {\em net} in $X$ is a mapping $\Lambda: I \to X$, where $I$ is a {\em directed set}, i.e., it carries a preorder such that each pair of elements of $I$ has an upper bound. Given a net $\Lambda:I \to X$, for $i\in I$ we set $x_i := \Lambda(i)$ and write $(x_i)_{i \in I}$ or simply $(x_i)$ instead of $\Lambda$. We abuse notation and always denote directed preorders by $\prec$.  A {\em subnet of the net $(x_i)_{i\in I}$} is a pair $((y_j)_{j \in J},\Delta)$, where $(y_j)_{j \in J}$ is a net and $\Delta:J \to I$ is a monotone mapping such that $y_j = x_{\Delta(j)}$ for all $j \in J$ and $\Delta(J)$ is {\em cofinal} in $I$, i.e., for each $i \in I$ there exists $j \in J$ with $i \prec\Delta(j)$. For a subnet $((y_j)_{j \in J},\Delta)$ of $(x_i)_{i \in I}$, we set $x_{i_j}:= x_{\Delta(j)} = y_j$ and  write $(x_{i_j})_{j \in J}$ or $(x_{i_j})$ instead of $((y_j)_{j \in J},\Delta)$. A net with index set $\IN$, the natural numbers, is called {\em sequence}. Whenever $X$ carries a topology $\tau$, an element $x \in X$ is a {\em limit of the net} $(x_i)_{i\in I}$ if for each $O \in \tau$ there exists $i_O \in I$ such that $x_i \in O$ for all $i\in I$ with $i_O \prec i$. In this case, we write
$$x = \lim_{i\in I} x_i = \lim_i x_i. $$
If $(x_i)$ is a net in $\IR$, the real numbers, we let
$$\liminf_{i} x_i := \lim_{i} \inf \{x_j \mid i \prec j\} \text{ and } \limsup_{i} x_i := \lim_{i} \sup \{x_j \mid i \prec j\}, $$
which exist in the two point compactification $\IR \cup\{-\infty, \infty\}$. The space of real-valued continuous functions on $(X,\tau)$ is denoted by $C(X)$ and $C_c(X)$ is the subspace of continuous functions of compact support.  

Let $V$ be a vector space over the real numbers. In what follows we assume that $V$ is equipped with a {\em vector space topology} $\tau$, i.e., a topology under which the operations 
$$V \times V \to V, \quad (u,v) \mapsto  u + v \quad \text{ and } \quad  \IR \times V \to V, \quad (\lambda,u) \mapsto \lambda u$$
are continuous. Here, the product spaces carry the corresponding product topology. If, additionally, $\tau$ is Hausdorff, the pair  $(V,\tau)$ is called {\em topological vector space}.  A subset $B\subseteq V$ is called {\em balanced} if for all $u \in B$ and all $\lambda \in \IR$ with $|\lambda| \leq 1$ we have $\lambda u \in B$. The required continuity properties imply that any vector space topology has a basis that is translation invariant and consists of balanced sets.

Every topological vector space $(V,\tau)$ carries a canonical translation invariant uniform structure, namely the system  of subsets of $V \times V$ given by 
$$\mathcal{U}(\tau) := \left\{ \{(u,v) \mid u - v \in U\} \mid  U \text{ is a neighborhood of }0\right\}.$$
When referring to uniform concepts such as Cauchyness or uniform continuity, we always assume that the underlying uniform structure is the above. In particular, we call $(V,\tau)$ {\em complete} if it is complete with respect to $\mathcal{U}(\tau)$, i.e., if every $\mathcal{U}(\tau)$-Cauchy net converges. When the topology $\tau$ is induced by a translation invariant metric, uniform concepts with respect to the metric and with respect to the uniform structure $\mathcal{U}(\tau)$ coincide. In this case, complete spaces are known under the following name.

\begin{definition}[F-space]
A topological vector space $(V,\tau)$ is called {\em $F$-space} if it is complete and $\tau$ is metrizable.
\end{definition}

Due to Baire's category theorem, linear operators between $F$-spaces obey the fundamental theorems of functional analysis. Indeed, we have the following, see e.g. \cite[Theorem 3.8]{Hus}.

\begin{theorem}[Open mapping and closed graph theorem] \label{theorem:open mapping and closed graph theorem}
Let $(V_i,\tau_i),$ $i=1,2$, be two $F$-spaces. Then the following holds.
\begin{itemize}
 \item[(a)] Every linear and continuous operator from $V_1$ onto $V_2$ is open.
 \item[(b)] Every linear operator from $V_1$ into $V_2$ whose graph is closed in $V_1 \times V_2$ is continuous.  
\end{itemize}
\end{theorem}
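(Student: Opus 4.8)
The plan is to deduce part (b) from part (a) via the standard graph trick, so the real work lies in the open mapping statement (a). Throughout I would fix translation-invariant metrics $d_1,d_2$ inducing $\tau_1,\tau_2$ (these exist since the $\tau_i$ are metrizable vector space topologies, and completeness with respect to $\mathcal{U}(\tau_i)$ coincides with metric completeness), and write $B_r^i=\{x\in V_i\mid d_i(x,0)<r\}$ for the balls around $0$. Translation invariance gives $d_i(u,v)=d_i(u-v,0)$, hence $B_r^i-B_s^i\subseteq B_{r+s}^i$, and each $B_r^i$ is absorbing by continuity of scalar multiplication. Since the $B_r^2$ form a neighborhood base at $0$ and $T$ is linear, to show that the continuous surjection $T\colon V_1\to V_2$ is open it suffices to prove that $T(B_r^1)$ is a neighborhood of $0$ for every $r>0$.

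\emph{Step 1 (Baire).} As $B_r^1$ is absorbing, $V_1=\bigcup_{n\in\IN}nB_r^1$, so surjectivity and linearity give $V_2=\bigcup_{n\in\IN}n\,\overline{T(B_r^1)}$. Because $V_2$ is a complete metric space, Baire's category theorem forces some $n\,\overline{T(B_r^1)}$, and hence $\overline{T(B_r^1)}$, to have nonempty interior. Writing $A=T(B_r^1)$ and using $A-A\subseteq T(B_{2r}^1)$ together with $\overline{A}-\overline{A}\subseteq\overline{A-A}$, one finds that $\overline{T(B_{2r}^1)}$ contains the difference $O-O$ of a nonempty open set $O\subseteq\overline{A}$, which is a neighborhood of $0$. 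Thus $\overline{T(B_r^1)}$ is a neighborhood of $0$ for every $r>0$.

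\emph{Step 2 (removing the closure).} This is the step where completeness of $V_1$ is indispensable, and it is the main obstacle. Fix $r>0$, choose radii $r_k>0$ with $\sum_k r_k<r$, and for each $k$ pick $s_k>0$ with $B_{s_k}^2\subseteq\overline{T(B_{r_k}^1)}$ by Step 1. Given $y\in B_{s_0}^2$ I would build inductively $x_k\in B_{r_k}^1$ with $y-T(x_0+\dots+x_k)\in B_{s_{k+1}}^2$: at each stage the residual lies in $\overline{T(B_{r_k}^1)}$, hence can be approximated to within $B_{s_{k+1}}^2$ by some $T(x_k)$ with $x_k\in B_{r_k}^1$. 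The partial sums $s_k=x_0+\dots+x_k$ are Cauchy, since $d_1(s_{k+\ell},s_k)\le\sum_{j>k}r_j\to0$, so by completeness they converge to some $x$ with $d_1(x,0)\le\sum_k r_k<r$. Continuity of $T$ and $y-T(s_k)\to0$ then give $T(x)=y$, so $B_{s_0}^2\subseteq T(B_r^1)$; openness of $T$ at $0$, and then everywhere by translation and linearity, follows.

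Finally, for part (b) I would apply (a) to the graph $G=\{(x,Tx)\mid x\in V_1\}\subseteq V_1\times V_2$. The product is again an $F$-space (it is metrizable via $d_1+d_2$ and complete), and $G$ is a linear subspace that is closed by hypothesis, hence complete, hence itself an $F$-space. The restriction of the first projection $p\colon G\to V_1$, $(x,Tx)\mapsto x$, is linear, continuous and bijective, so by (a) it is open and therefore a homeomorphism; consequently $x\mapsto(x,Tx)=p^{-1}(x)$ is continuous, and composing with the continuous second projection shows that $T$ is continuous.
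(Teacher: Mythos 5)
Your proof is correct, but note that the paper itself does not prove this theorem at all: it is quoted as a standard fact with a pointer to the literature (\cite[Theorem 3.8]{Hus}), so there is no in-paper argument to compare against. What you give is the canonical Baire-category proof that such references contain: Step 1 (absorbing balls plus Baire give that $\overline{T(B_r^1)}$ is a $0$-neighborhood), Step 2 (the iterative approximation scheme using completeness of $V_1$ to pass from $\overline{T(B_r^1)}$ to $T(B_r^1)$), and the graph trick reducing (b) to (a). Two small points deserve a line each in a polished write-up. First, in Step 2 the convergence $y - T(x_0+\dots+x_k) \to 0$ does not follow unless the radii $s_k$ tend to $0$; since shrinking $s_k$ preserves the inclusion $B_{s_k}^2 \subseteq \overline{T(B_{r_k}^1)}$, you should additionally demand, say, $s_k \leq 2^{-k}$ when choosing them (also, you overload the symbol $s_k$ for both these radii and the partial sums $x_0+\dots+x_k$, which should be disentangled). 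Second, your opening appeal to the existence of a \emph{translation-invariant} metric inducing a metrizable vector space topology is exactly the Birkhoff--Kakutani theorem and is where the paper's remark that uniform concepts for $\mathcal{U}(\tau)$ and for such a metric coincide gets used; it is standard but worth citing explicitly, since completeness of $V_1$ enters Step 2 precisely through metric completeness for that invariant metric. With these cosmetic repairs the argument is complete and is, in substance, the proof the paper delegates to the reference.
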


Quotients of topological vector spaces can be equipped with a vector space topology. The following lemma is contained in \cite[Section 2.3]{Schae}.

\begin{lemma}[Quotients of topological vector spaces]\label{lemma:quotient topology}
 Let $V$ be a vector space and $\tau$ be a vector space topology. Let $S \subseteq V$ be a subspace and let $\pi:V \to V/S$ be  the canonical projection to the quotient. Then
 $$\tau/S := \{U \subseteq V/S \mid \pi^{-1}(U) \in \tau\}$$
 is a vector space topology on $V/S$. It is Hausdorff if and only if $S$ is closed. Moreover, the map $\pi$ is open and for any basis $\mathcal{O}$ of $\tau$ the collection $\{\pi(O)\mid O \in \mathcal{O}\}$ is a basis for $\tau / S$.  
\end{lemma}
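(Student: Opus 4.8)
The plan is to verify the three assertions of Lemma~\ref{lemma:quotient topology} in the natural order: first that $\tau/S$ is a vector space topology, then the Hausdorff characterization, and finally the openness of $\pi$ together with the basis claim. Throughout I will exploit that $\tau/S$ is defined precisely so that $\pi$ is continuous, and in fact it is the \emph{finest} topology on $V/S$ making $\pi$ continuous.

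First I would check that $\tau/S$ is a topology at all: this follows immediately because preimages under $\pi$ commute with arbitrary unions and finite intersections, so the defining condition $\pi^{-1}(U)\in\tau$ is preserved under those operations, and $\pi^{-1}(\emptyset)=\emptyset$, $\pi^{-1}(V/S)=V$ both lie in $\tau$. To see that it is a \emph{vector space} topology I must establish continuity of addition and scalar multiplication on $V/S$. The clean way is to use the universal property of the quotient topology: a map out of $(V/S,\tau/S)$ is continuous if and only if its composition with $\pi$ is continuous, and similarly $\pi\times\pi:(V\times V,\tau\times\tau)\to(V/S\times V/S)$ is a quotient map onto the product (here one uses that $\pi$ is open, which I address below, to guarantee the product of quotient maps is again a quotient map). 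Then the continuity of addition on $V/S$ is deduced from the commuting square relating it, via $\pi\times\pi$ and $\pi$, to addition on $V$, which is continuous by hypothesis; scalar multiplication is handled identically.

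Next, the openness of $\pi$ and the basis statement. For openness, take $O\in\tau$; then $\pi^{-1}(\pi(O))=O+S=\bigcup_{s\in S}(O+s)$, which is a union of translates of $O$ and hence open in $\tau$ since $\tau$ is translation invariant. By the definition of $\tau/S$ this means $\pi(O)\in\tau/S$, so $\pi$ is open. The basis claim then follows routinely: given any $U\in\tau/S$ and any point $\pi(v)\in U$, the preimage $\pi^{-1}(U)$ is $\tau$-open and contains $v$, so it contains some basic $O\in\mathcal O$ with $v\in O$; applying $\pi$ gives $\pi(v)\in\pi(O)\subseteq U$, and each $\pi(O)$ is open by the openness of $\pi$, so $\{\pi(O)\mid O\in\mathcal O\}$ is a basis for $\tau/S$.

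Finally, the Hausdorff characterization. For a vector space topology, being Hausdorff is equivalent to the singleton $\{0\}$ being closed, so I would show that $\{0+S\}$ is closed in $V/S$ exactly when $S$ is closed in $V$. Using the continuity of $\pi$, the point $0+S$ is closed iff its complement is open iff $\pi^{-1}(\{0+S\})=S$ has open complement in $V$, i.e.\ iff $S$ is closed in $\tau$. This is the only place where the subspace property of $S$ interacts with the closedness, and it is essentially immediate once openness and the vector-space-topology structure are in hand. The main obstacle, modest as it is, lies in justifying that $\pi\times\pi$ is a quotient map so that the continuity of the algebraic operations transfers cleanly; this is where the openness of $\pi$ does real work, so I would prove openness before attacking the continuity of addition and scalar multiplication rather than treating the assertions in the order stated.
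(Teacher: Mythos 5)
Your proof is correct, and there is nothing in the paper to compare it against: the paper does not prove this lemma but simply cites \cite[Section~2.3]{Schae}, so your argument is in effect the standard textbook proof. A few small points. First, for scalar multiplication the relevant map is ${\rm id}_{\IR}\times\pi:\IR\times V\to \IR\times V/S$ rather than $\pi\times\pi$; it is again an open continuous surjection, hence a quotient map, so your ``handled identically'' is fine once this is made explicit. Second, you correctly flagged the one genuinely delicate step: a product of quotient maps need not be a quotient map in general, and it is the openness of $\pi$ (an open continuous surjection is a quotient map, and products of such maps are again open continuous surjections) that rescues the argument -- which justifies your reordering of the assertions. That detour can in fact be avoided entirely by a direct neighborhood argument: given a neighborhood $U$ of $\pi(u+v)$, the set $\pi^{-1}(U)$ is a neighborhood of $u+v$, so continuity of addition in $V$ gives open $W_1\ni u$, $W_2\ni v$ with $W_1+W_2\subseteq \pi^{-1}(U)$, whence $\pi(W_1)+\pi(W_2)\subseteq U$ with $\pi(W_i)$ open by openness of $\pi$. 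Third, the fact that a vector space topology is Hausdorff if and only if $\{0\}$ is closed deserves its one-line justification, e.g.\ the diagonal of $V/S\times V/S$ is the preimage of $\{0\}$ under the continuous map $(u,v)\mapsto u-v$; with that noted, your reduction of the Hausdorff property to ``$S$ closed'' via $\pi^{-1}(\{0+S\})=S$ is exactly right and uses only the definition of $\tau/S$.
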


Metrizability and completeness pass to the quotient, see e.g. \cite[Theorem 6.3]{Schae}.

\begin{lemma}[Completeness of quotients]\label{lemma:completeness of quotients}
 Let $(V,\tau)$ be an $F$-space and let $S \subseteq V$ be a closed subspace. Then $(V/S,\tau/S)$ is an $F$-space. 
\end{lemma}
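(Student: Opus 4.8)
The plan is to produce an explicit translation-invariant metric on the quotient that induces $\tau/S$, and then to establish completeness by lifting Cauchy sequences back to $V$. Since $(V,\tau)$ is an $F$-space, $\tau$ is induced by a translation-invariant metric $d$; write $\|x\| := d(x,0)$ for the associated $F$-norm, which is symmetric, subadditive, and vanishes only at $0$. First I would define the candidate quotient norm by
$$\|\pi(x)\|_S := \inf_{s \in S} \|x - s\|.$$
This depends only on the coset $\pi(x)$ because $S$ is a subspace, and it is symmetric and subadditive (the infimum of a sum dominates the sum of the infima). The one delicate point is positivity: $\|\pi(x)\|_S = 0$ means $x$ lies in the $\tau$-closure of $S$, so here I would invoke the hypothesis that $S$ is closed to conclude $x \in S$, i.e. $\pi(x) = 0$. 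Hence $\tilde d(\pi(x),\pi(y)) := \|\pi(x)-\pi(y)\|_S$ is a genuine translation-invariant metric on $V/S$.

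Next I would verify that $\tilde d$ induces exactly $\tau/S$. The key observation is that the $\tilde d$-ball of radius $\varepsilon$ about $0$ equals $\pi(\{x : \|x\| < \varepsilon\})$: if $\|\pi(x)\|_S < \varepsilon$ then some representative $x-s$ satisfies $\|x-s\| < \varepsilon$, giving $\pi(x) \in \pi(\{\|\cdot\| < \varepsilon\})$, and the reverse inclusion is immediate from $\|\pi(x)\|_S \le \|x\|$. Since the $\|\cdot\|$-balls form a neighborhood basis of $0$ for $\tau$ and $\pi$ is open by Lemma~\ref{lemma:quotient topology}, their images form a neighborhood basis of $0$ for $\tau/S$; as both topologies are translation invariant, they coincide. (This also re-proves metrizability directly: $\tau/S$ is first countable, being the open image of a first countable topology, and Hausdorff by Lemma~\ref{lemma:quotient topology} since $S$ is closed.)

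For completeness, since $(V/S,\tau/S)$ is metrizable it suffices to show that every Cauchy sequence converges. Given a Cauchy sequence I would pass to a subsequence $(\pi(x_k))$ with $\tilde d(\pi(x_k),\pi(x_{k+1})) < 2^{-k}$ and then lift it: set $y_1 := x_1$ and inductively choose a representative $y_{k+1}$ of $\pi(x_{k+1})$ with $\|y_{k+1}-y_k\| < 2^{-k}$. This is possible precisely because $\|\pi(x_{k+1})-\pi(x_k)\|_S$ is an infimum over $S$, so $y_{k+1}$ may be corrected by an element of $S$ without changing its coset. Subadditivity of $\|\cdot\|$ then makes $(y_k)$ Cauchy in $V$, hence convergent to some $y$ by completeness of $V$; continuity of $\pi$ gives $\pi(x_k)=\pi(y_k)\to\pi(y)$. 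Since a Cauchy sequence with a convergent subsequence converges, the original sequence converges as well.

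The \emph{main obstacle} is the lifting step: one must select the representatives $y_k$ coherently so that the lifted sequence is Cauchy in $V$, and this rests on both the description of the quotient norm as an infimum over $S$ and the fact that $S$ is a subspace, so that coset-preserving corrections are available at each stage. The closedness of $S$ is needed only for the positivity of the quotient norm, and the reduction from general Cauchy nets to Cauchy sequences uses the standard equivalence of completeness and sequential completeness for metrizable uniform spaces.
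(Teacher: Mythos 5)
Your proof is correct, and it is worth noting that the paper does not actually prove this lemma at all: it is quoted from the literature (Schaefer, where the argument runs through quotient uniformities and Cauchy filters), so your self-contained argument is genuinely different in route. What you give is the classical $F$-norm proof: the quotient functional $\|\pi(x)\|_S = \inf_{s\in S}\|x-s\|$, the identification of its balls with $\pi$-images of balls (where Lemma~\ref{lemma:quotient topology} supplies openness of $\pi$ and the basis statement), and completeness via a rapidly Cauchy subsequence lifted coherently through coset-preserving corrections by elements of $S$. All the individual steps check out: the equality of the $\tilde d$-ball with $\pi(\{\|\cdot\|<\varepsilon\})$ is an honest equality in both directions, closedness of $S$ enters exactly where you say (positivity of the quotient norm, equivalently the Hausdorff property, consistent with Lemma~\ref{lemma:quotient topology}), and the reduction of uniform completeness to sequential metric completeness is legitimate because, as the paper itself remarks, for translation-invariant metrics the metric uniformity and $\mathcal{U}(\tau)$ coincide. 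The only ingredient you use tacitly is that a metrizable topological vector space admits a compatible translation-invariant metric (Birkhoff--Kakutani); since the paper's treatment of $F$-spaces already works with translation-invariant metrics, this is an acceptable standard fact, but it deserves a word of acknowledgment. The trade-off between the two approaches: the uniform-space proof generalizes beyond the metrizable setting, while your argument buys an explicit quotient $F$-norm and the concrete lifting mechanism, which is often useful downstream (e.g.\ it makes the open mapping between quotient and Hilbert norms in Theorem~\ref{theorem:characterization closedness hilbert space} more transparent).
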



A topological vector space $(V,\tau)$ is called {\em locally convex} if there exists a family of seminorms $(p_i)_{i\in I}$ such that the collection of sets
$$\left\{\{u\in V \mid p_{i}(u-v) < \varepsilon\}\mid i \in I,\, v \in V,\, \varepsilon >0\right\}$$
forms a subbasis for the topology $\tau$. In this case, we say that {\em $(p_i)_{i\in I}$ generates $\tau$}. For locally convex spaces, the Hahn-Banach theorem   implies that their continuous dual 
$$V' := \{\varphi:V \to \IR \mid \varphi \text{ is linear and continuous}\}$$
separates the points of $V$, see e.g. \cite[Corollary~1 of Theorem~4.2]{Schae}.  As a consequence, the {\em weak topology} $\sigma(V,V')$ that is generated by the family of seminorms
$$p_\varphi:V \to [0,\infty), \,  u \mapsto p_\varphi (u) := |\varphi(u)|, \,   \varphi \in V',$$
is Hausdorff, i.e., the pair $(V,\sigma(V,V'))$ is a topological vector space. The fundamental theorem relating $\tau$ and $\sigma(V,V')$ on convex sets is the following, see e.g. \cite[Corollary 2 of Theorem 9.2]{Schae}.

\begin{theorem}[Weak closure of convex sets] \label{theorem:weak closure of convex sets}
 Let $(V,\tau)$ be a locally convex topological vector space and let $K \subseteq V$ be convex. The closure of $K$ with respect to $\tau$ and the closure of $K$ with respect to $\sigma(V,V')$ coincide.
\end{theorem}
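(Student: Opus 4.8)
The plan is to prove the two inclusions $\ov{K}^{\tau} \subseteq \ov{K}^{\sigma}$ and $\ov{K}^{\sigma} \subseteq \ov{K}^{\tau}$ separately, writing $\ov{K}^{\tau}$ and $\ov{K}^{\sigma}$ for the closures with respect to $\tau$ and $\sigma(V,V')$, respectively. The starting point is the elementary observation that $\sigma(V,V')$ is coarser than $\tau$: by definition every $\varphi \in V'$ is $\tau$-continuous, so each generating seminorm $p_\varphi$ is $\tau$-continuous and hence $\sigma(V,V') \subseteq \tau$. Consequently every $\sigma(V,V')$-closed set is $\tau$-closed, and since the $\tau$-closure is the intersection of the larger family of closed supersets of $K$, it follows that $\ov{K}^{\tau} \subseteq \ov{K}^{\sigma}$. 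I would emphasize that this inclusion uses no convexity and holds for an arbitrary subset $K$.

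For the reverse inclusion $\ov{K}^{\sigma} \subseteq \ov{K}^{\tau}$, which is where convexity enters, I would argue by contraposition: it suffices to exhibit, for any $x \notin \ov{K}^{\tau}$, a weak neighborhood of $x$ disjoint from $K$. First I would record that $\ov{K}^{\tau}$ is itself convex---in a topological vector space the closure of a convex set is convex, a fact that follows from the joint continuity of the map $(t,u,v) \mapsto t u + (1-t) v$---and that it is $\tau$-closed by construction. Thus $\{x\}$ and $\ov{K}^{\tau}$ are two disjoint convex sets, the former compact and the latter closed, inside the locally convex space $(V,\tau)$.

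The decisive tool is then the geometric Hahn--Banach separation theorem for locally convex spaces, which yields a functional $\varphi \in V'$ and a scalar $\alpha \in \IR$ with $\varphi(x) > \alpha \geq \varphi(y)$ for all $y \in \ov{K}^{\tau}$. The half-space $U := \{v \in V \mid \varphi(v) > \alpha\}$ is $\sigma(V,V')$-open because $\varphi$ is weakly continuous, it contains $x$, and it is disjoint from $\ov{K}^{\tau}$ and \emph{a fortiori} from $K$. Hence $x \notin \ov{K}^{\sigma}$, which completes the argument. I expect the only genuine obstacle to be the invocation of strict separation: this is precisely the point where local convexity is indispensable, since it guarantees---through Hahn--Banach---that $V'$ is rich enough to separate a point from a disjoint closed convex set, a property that can fail for general topological vector spaces.
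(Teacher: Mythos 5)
Your proof is correct: the inclusion $\ov{K}^{\tau} \subseteq \ov{K}^{\sigma}$ follows, as you say, purely from $\sigma(V,V')$ being coarser than $\tau$, and the reverse inclusion is exactly the standard Hahn--Banach strict-separation argument, where local convexity enters precisely as you identify (the only trivial caveat being $K = \emptyset$, where separation is never invoked and both closures are empty). The paper does not prove this theorem itself but cites it from the literature (Schaefer), and your argument is the classical proof underlying that citation.
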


\subsection{Lebesgue spaces as topological vector spaces} \label{section:lebesgue spaces}

In later sections $V$ is some Lebesgue space of real-valued measurable functions. We give a short overview over their topological and order properties and provide some examples. 

Let $(X,\mathcal{B},m)$ be a measure space. If not further specified we do not make any assumptions on the measure $m$. The collection of measurable sets of finite measure is
$$\Bf:= \{U \in \cB \mid m(U) < \infty\}.$$
We equip the reals with the Borel $\sigma$-algebra and denote the space of real-valued measurable function by $\mathcal{M}(X,\cB)$. The space of $m$-a.e. defined functions is
$$L^0(X,\cB, m) := \mathcal{M}(X,\cB) / \{f \in \mathcal{M}(X,\cB) \mid f = 0\, m\text{-a.e.}\}.$$
We often omit  the $\sigma$-algebra or the underlying space and simply write $L^0(X,m)$ or $L^0(m)$ instead of $L^0(X,\cB, m)$. Most of the times we do not distinguish between functions in $\mathcal{M}(X,\cB)$ and elements of $L^0(m)$. In this sense, we treat addition, multiplication, pointwise inequalities etc.  Likewise, we do not distinguish between sets and up to $m$-measure zero defined sets. We warn the reader that this viewpoint leads to subtleties in some proofs that we only address implicitly. The space $L^0(m)$ is equipped with the natural order relation on functions, i.e., for $f,g \in L^0(m)$ we say $f \leq g$ if and only if $f(x) \leq g(x)$ for $m$-a.e. $x\in X$.  The functions  
$$f \wedge g := \min \{f,g\} \text{ and } f \vee g := \max\{f,g\}$$
are the biggest lower bound of $f,g$ and the smallest upper bound of $f,g$, respectively. A function $f \in L^0(m)$ is nonnegative if $f \geq 0$ and strictly positive if $f(x) > 0$ for $m$-a.e. $x \in X$. By $f_+ := f \wedge 0$ and $f_- := (-f) \wedge 0$ we denote the positive and the negative part of $f$, respectively. For $f,g \in L^0(m)$, we use the notation 
$$\{f \geq g\} := \{x \in X \mid f(x) \geq g(x) \},$$
which is a set that is defined up to a set of $m$-measure zero. Likewise, we define the $m$-a.e. determined sets $\{f>g\}, \{f \leq g\}$ and $\{f < g\}$.
%
%

The {\em indicator function} of a set $U \subseteq X$ is given by
$$1_U:X \to \IR,\, x \mapsto 1_U(x) := \begin{cases}
                            0 &\text{if } x \in X \setminus U\\
                            1 &\text{if } x \in U
                           \end{cases}.
$$ 

For two measurable sets $A,B \subseteq X$, we say that $A\prec B$ if $m(A \setminus B) =0$, or equivalently, if $1_A \leq 1_B$ holds in $L^0(m)$. This introduces an order relation on the collection of all $m$-a.e. defined measurable sets  (and a preorder on all measurable sets). Whenever a collection of measurable sets is the index set of a net it will always be equipped with this order relation.

We topologize the space $L^0(m)$ as follows. For every $U \in \Bf$, we let the pseudometric $d_U$  be given by
$$d_U:L^0(m) \times L^0(m) \to [0,\infty), \, (f,g) \mapsto  d_U(f,g) := \int_U |f-g|\wedge 1\, {\rm d}m. $$
The so constructed family $\{d_U \mid U \in \Bf\}$  induces a topology $\tau(m)$ on $L^0(m)$ (the smallest topology that contains all balls with respect to the $d_U$, $U\in \Bf$), which we call the {\em topology of local convergence in measure}. A net $(f_i)$ converges towards $f$ with respect to $\tau(m)$ if and only if $\lim_i d_U(f_i,f) = 0$ for each $U \in \Bf$. In this case, we write $f_i \overset{m}{\to}f$ for short. An alternative subbasis of zero neighborhoods for $\tau(m)$ is the family of sets
$$\{g \in L^0(m) \mid m(U\cap\{|g|\geq \delta\}) < \varepsilon\}, \text{   } \varepsilon,\delta > 0 \text{ and }U\in \Bf.$$
The following lemma shows that $\tau(m)$ is compatible with the vector and lattice operations on $L^0(m)$, see \cite[Proposition~245D]{Fre2}.

\begin{lemma}
 The topology $\tau(m)$ is a vector space topology on $L^0(m)$. Moreover, multiplication and taking maxima and minima are continuous  operations from $L^0(m) \times L^0(m)$ to $L^0(m)$.  
\end{lemma}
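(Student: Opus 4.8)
The plan is to verify every continuity statement by testing it on nets and reducing to elementary estimates for the pseudometrics $d_U$, $U \in \Bf$. Throughout I would use that a net $(f_i)$ converges to $f$ in $\tau(m)$ if and only if $d_U(f_i,f) \to 0$ for each $U \in \Bf$, together with two pointwise inequalities for the truncation $t \mapsto t \wedge 1$: subadditivity $(s+t)\wedge 1 \le (s \wedge 1) + (t\wedge 1)$ and the scaling bound $(Cs)\wedge 1 \le C(s \wedge 1)$ for $s,t \ge 0$ and $C \ge 1$. Both are checked by distinguishing whether the arguments lie below or above the truncation level $1$. Note that we only claim a \emph{vector space topology}, so no Hausdorff property has to be established.

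Addition and the lattice operations are immediate, so I would dispatch them first. Since $t \mapsto t \wedge 1$ is nondecreasing and subadditive, the triangle inequality gives $d_U(f+g, f'+g') \le d_U(f,f') + d_U(g,g')$, which is joint continuity of $+$. For $\vee$ and $\wedge$ I would use the pointwise estimate $|(f_i \vee g_i) - (f \vee g)| \le |f_i - f| + |g_i - g|$ (and its analogue for $\wedge$); monotonicity and subadditivity of the truncation then yield $d_U(f_i \vee g_i, f \vee g) \le d_U(f_i,f) + d_U(g_i,g) \to 0$.

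For scalar multiplication I would split $\lambda_i f_i - \lambda f = \lambda_i(f_i - f) + (\lambda_i - \lambda) f$ along a net $(\lambda_i, f_i) \to (\lambda, f)$. Using $|\lambda_i| \le C$ eventually together with the scaling bound, the first term contributes at most $C\, d_U(f_i,f) \to 0$. The second term is the only subtlety: the integrand $|(\lambda_i - \lambda) f| \wedge 1$ depends on $i$ only through the scalar $\mu_i := \lambda_i - \lambda \to 0$, so it suffices to show that the real function $\mu \mapsto \int_U |\mu f| \wedge 1 \D m$ is continuous at $0$. Since continuity of a function on $\IR$ is sequential, this reduces to sequences $\mu_n \to 0$, where dominated convergence applies (the integrand is bounded by $1 \in L^1(U)$ because $m(U) < \infty$, and tends to $0$ a.e.). This device is what lets me avoid invoking dominated convergence along a net.

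The main obstacle is continuity of multiplication, as $\cdot$ is neither Lipschitz nor controlled directly by the $d_U$. Here I would switch to the equivalent description of $\tau(m)$ by the zero neighborhoods $\{g \mid m(U \cap \{|g| \ge \delta\}) < \varepsilon\}$, i.e. work with local convergence in measure. Writing
\[
f_i g_i - fg = (f_i - f)(g_i - g) + (f_i - f)\, g + f\,(g_i - g),
\]
I would bound each piece on a fixed $U \in \Bf$ and for fixed $\delta > 0$. For the cross term, $|(f_i - f)(g_i - g)| \ge \delta$ forces $|f_i - f| \ge \sqrt{\delta}$ or $|g_i - g| \ge \sqrt{\delta}$, so its measure is at most the sum of two vanishing quantities. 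For a term like $(f_i - f)\,g$ with a fixed factor, I would truncate: since $g$ is a.e. finite and $m(U) < \infty$, continuity from above gives $m(U \cap \{|g| > K\}) \to 0$ as $K \to \infty$, so after fixing $K$ large the remaining contribution lies inside $\{|f_i - f| \ge \delta/K\}$, whose measure tends to $0$; the symmetric term $f\,(g_i - g)$ is identical. Assembling the three estimates shows $m(U \cap \{|f_i g_i - fg| \ge \delta\}) \to 0$ for all $U \in \Bf$ and $\delta > 0$, which is exactly continuity of multiplication.
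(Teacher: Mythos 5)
Your proof is correct. Note that the paper does not prove this lemma at all --- it cites it from Fremlin (Proposition~245D) --- so there is no in-paper argument to compare against; your write-up is a complete, self-contained verification along the standard lines of the cited reference. All the key estimates check out: subadditivity and the scaling bound for $t \mapsto t \wedge 1$, the $1$-Lipschitz bound $|(f_i \vee g_i) - (f \vee g)| \leq |f_i - f| + |g_i - g|$, the splitting $\lambda_i f_i - \lambda f = \lambda_i(f_i - f) + (\lambda_i - \lambda)f$ with the sequential-continuity device for the second term (a clean way to sidestep dominated convergence along nets, which the paper only establishes later in Lemma~\ref{lemma:Lebesgue's theorem}), and the three-term decomposition of $f_i g_i - fg$ with the truncation-at-level-$K$ trick, which correctly uses $m(U) < \infty$ for continuity from above. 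The only implicit ingredient is the equivalence between $d_U$-convergence and local convergence in measure when you switch descriptions for the multiplication argument; this is a two-line Chebyshev estimate in both directions ($d_U(f_i,f) \geq \delta\, m(U \cap \{|f_i - f| \geq \delta\})$ for $\delta \leq 1$, and $d_U(f_i,f) \leq \delta\, m(U) + m(U \cap \{|f_i - f| \geq \delta\})$), and the paper asserts the alternative subbasis anyway, so this is not a gap --- but it would be worth one sentence in a final write-up.
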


 Further properties of the topology $\tau(m)$ depend on $m$. It may or may not be complete and, if too many sets have infinite measure, it may happen that $\tau(m)$ is not Hausdorff. In order to clarify these phenomena we recall the following concepts of measure theory, cf. \cite[Chapter 21]{Fre2}.

\begin{definition}[Taxonomy of measures] \label{definition:taxonomy of measures} Let $(X,\mathcal{B},m)$ be a measure space. 
 \begin{itemize}
  \item[(a)] $m$ is  {\em finite} if $m(X) < \infty$.  It is {\em $\sigma$-finite} if there exists a sequence of measurable sets of finite measure $(X_n)_{n \in \IN}$ such that $X = \bigcup_n X_n$.
  \item[(b)] $m$ is  {\em strictly localizable} if there exists a partition $(X_i)_{i\in I}$ of $X$ into measurable sets of finite measure such that $\mathcal{B} = \{A \subseteq X \mid A \cap X_i \in \mathcal{B} \text{ for all } i \in I\}$ and
$$m(A) = \sum_{i \in I} m(A \cap X_i) \text{ for every } A \in \mathcal{B}.$$
\item[(c)] $m$ is {\em semi-finite} if for each $A \in \mathcal{B}$ with $m(A) = \infty$ there exists some $A' \subseteq A$ with $A' \in \mathcal{B}$ and $0 < m(A')<\infty$.
\item[(d)] $m$ is {\em localizable} if it is semi-finite and for each $\mathcal{A} \subseteq \mathcal{B}$ there exists a set $S \in \mathcal{B}$ such that 
\begin{itemize}
 \item $m(A\setminus S) = 0$ for each $A \in \mathcal{A},$ 
 \item if $G \in \mathcal{B}$ and $m(A\setminus G) = 0$ holds for all $A \in \mathcal{A}$, then $m(S\setminus G) = 0$.
\end{itemize}
In this case, $S$ is called an {\em essential supremum} of $\mathcal{A}$.
 \end{itemize}
\end{definition}

\begin{remark}
 The definition of localizable measures is a bit clumsy. Indeed, it is a concept related to the measure algebra of $m$. Localizability is equivalent to the Dedekind completeness of the measure algebra with respect to the order relation on sets that we introduced previously.  
\end{remark}

Some relations among the mentioned concepts are trivial; finite measures and $\sigma$-finite measures are strictly localizable. The following is less obvious, see \cite[Theorem~211L]{Fre2}.

\begin{theorem}\label{theorem:strictly localizable implies localizable}
 Every strictly localizable measure is localizable. In particular, finite and $\sigma$-finite measures are localizable. 
\end{theorem}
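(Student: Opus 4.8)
The plan is to verify directly that a strictly localizable measure $m$ meets the two requirements in the definition of localizability (Definition~\ref{definition:taxonomy of measures}): semi-finiteness, and the existence of an essential supremum for every subfamily of $\mathcal{B}$. The ``in particular'' clause then needs no separate argument, since finite and $\sigma$-finite measures were just observed to be strictly localizable. Throughout I fix a partition $(X_i)_{i \in I}$ into sets of finite measure witnessing strict localizability, so that $\mathcal{B} = \{A : A \cap X_i \in \mathcal{B} \text{ for all } i\}$ and $m(A) = \sum_{i \in I} m(A \cap X_i)$.

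Semi-finiteness is immediate: if $m(A) = \infty$, then $\sum_{i} m(A \cap X_i) = \infty$ forces some summand $m(A \cap X_i)$ to be positive, and $A \cap X_i \subseteq A$ has measure at most $m(X_i) < \infty$ and strictly positive, as required. The heart of the matter is the construction, for an arbitrary $\mathcal{A} \subseteq \mathcal{B}$, of an essential supremum $S$. The key idea is to build the supremum piece by piece on the finite-measure blocks and then glue. On each $X_i$ the standard exhaustion argument is available because $m$ is finite there: set $\gamma_i := \sup\{ m(\bigcup_{A \in \mathcal{C}}(A \cap X_i)) : \mathcal{C} \subseteq \mathcal{A} \text{ countable}\}$, which is finite since $\gamma_i \le m(X_i) < \infty$; pick countable $\mathcal{C}_n \subseteq \mathcal{A}$ with $m(\bigcup_{A \in \mathcal{C}_n}(A \cap X_i)) \to \gamma_i$; and put $S_i := \bigcup_{A \in \mathcal{C}}(A \cap X_i)$ with $\mathcal{C} := \bigcup_n \mathcal{C}_n$ still countable, so that $S_i \in \mathcal{B}$, $S_i \subseteq X_i$ and $m(S_i) = \gamma_i$. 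Maximality of $\gamma_i$ forces $m((A \cap X_i) \setminus S_i) = 0$ for every $A \in \mathcal{A}$ (otherwise adjoining $A$ to $\mathcal{C}$ would produce a countable family whose union exceeds $\gamma_i$), while the representation of $S_i$ as a countable union over $\mathcal{C}$ gives minimality: any measurable $G$ with $m((A \cap X_i) \setminus G) = 0$ for all $A$ satisfies $m(S_i \setminus G) = 0$, being a countable union of $m$-null sets.

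It then remains to set $S := \bigcup_{i \in I} S_i$ and check it is an essential supremum of $\mathcal{A}$. Measurability of $S$ is where strict localizability enters crucially a second time: since the $S_i$ sit in disjoint blocks, $S \cap X_i = S_i \in \mathcal{B}$, whence $S \in \mathcal{B}$ by the defining characterization of $\mathcal{B}$. For the first essential-supremum property I would decompose along the partition, using $(A \setminus S) \cap X_i = (A \cap X_i) \setminus S_i$ to get $m(A \setminus S) = \sum_i m((A \cap X_i) \setminus S_i) = 0$. For the second, note that $m(A \setminus G) = 0$ implies $m((A \cap X_i) \setminus G) = 0$ for all $i$; the local minimality then gives $m(S_i \setminus G) = 0$, and summing via $(S \setminus G) \cap X_i = S_i \setminus G$ yields $m(S \setminus G) = 0$.

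I expect the only genuinely delicate points to be the bookkeeping for measurability of $S$ and the careful handling of the additivity $m(A) = \sum_i m(A \cap X_i)$, made subtle by the fact that the index set $I$ may be uncountable. This is exactly why a naive global exhaustion by countable subfamilies does not work and one must localize to the finite-measure blocks first; once the local essential suprema are in place, the gluing and the verification of both properties are routine.
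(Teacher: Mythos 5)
Your proof is correct and complete: semi-finiteness falls out of the decomposition identity, and the per-block exhaustion (finite $\gamma_i$, a countable subfamily attaining it, maximality and minimality on each $X_i$, then gluing via $S \cap X_i = S_i$ to get measurability and both essential-supremum properties) is exactly the standard argument. Note that the paper itself gives no proof of this theorem but defers to \cite[Theorem~211L]{Fre2}, and your argument is essentially the one found there, so there is nothing to add.
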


With the these concepts at hand, we can state the theorem that characterizes  the topological properties of $\tau (m)$, see \cite[Theorem~245E]{Fre2}.

\begin{theorem}[Topological properties of $L^0(m)$] \label{theorem:properties of L0} Let $(X,\mathcal{B},m)$ be a measure space.
\begin{itemize}
 \item[(a)] $m$ is semi-finite if and only if $(L^0(m),\tau(m))$ is Hausdorff.
 \item[(b)] $m$ is $\sigma$-finite if and only if $(L^0(m),\tau(m))$ is metrizable.
 \item[(c)] $m$ is localizable if and only if $(L^0(m),\tau(m))$ is Hausdorff and complete. 
 \end{itemize}
\end{theorem}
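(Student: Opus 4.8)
The unifying observation is that $\tau(m)$ is by definition generated by the pseudometrics $d_U$, $U \in \Bf$, so $(L^0(m),\tau(m))$ is Hausdorff precisely when this family separates the points of $L^0(m)$. I would prove each of the three equivalences by deriving the topological property from the measure-theoretic hypothesis and establishing the converse by contraposition. For (a), if $m$ is semi-finite and $f \neq 0$, then some level set $A_n := \{|f| \geq 1/n\}$ has positive measure; semi-finiteness yields $A' \prec A_n$ with $0 < m(A') < \infty$, and then $d_{A'}(f,0) \geq (1/n \wedge 1)\, m(A') > 0$, so the pseudometrics separate points and the space is Hausdorff. Conversely, if $m$ fails to be semi-finite, I pick $A$ with $m(A) = \infty$ all of whose finite-measure subsets are null; then $1_A \neq 0$ in $L^0(m)$, yet $d_U(1_A,0) = m(U \cap A) = 0$ for every $U \in \Bf$, so $0$ and $1_A$ cannot be separated.

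For (b), if $m$ is $\sigma$-finite I would fix an increasing exhaustion $X = \bigcup_n X_n$ by sets of finite measure and show that the countable subfamily $\{d_{X_n}\}$ already generates $\tau(m)$: given $U \in \Bf$ one has $m(U \setminus X_n) \to 0$, whence $d_U(f,g) \leq d_{X_n}(f,g) + m(U \setminus X_n)$ makes $d_{X_n}$-small differences $d_U$-small. A Hausdorff topology (guaranteed here since $\sigma$-finite implies semi-finite, via (a)) generated by countably many pseudometrics is metrizable, e.g. by $d = \sum_n 2^{-n}(d_{X_n} \wedge 1)$. For the converse I argue by contraposition: if $m$ is not semi-finite it is not Hausdorff by (a), hence not metrizable; if $m$ is semi-finite but not $\sigma$-finite, I assume for contradiction that $\tau(m)$ is metrizable, hence first countable. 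A countable neighborhood base at $0$ involves only countably many sets $U_n \in \Bf$; their union $Y$ cannot exhaust $X$ up to a null set, so semi-finiteness provides $A \prec X \setminus Y$ with $0 < m(A) < \infty$. Then $1_A \neq 0$ lies in every basic neighborhood coming from the base (since $A \cap U_n = \emptyset$), contradicting Hausdorffness.

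Part (c) is the substantial one. For localizable $\Rightarrow$ Hausdorff and complete, Hausdorffness again comes from (a). For completeness, let $(f_i)$ be a $\tau(m)$-Cauchy net. For each $U \in \Bf$ the restricted net $(f_i|_U)$ is Cauchy for convergence in measure on the finite, hence $\sigma$-finite, space $(U, m|_U)$, which is complete; thus $f_i|_U \to g_U$ for some $g_U$, and these limits are consistent in the sense that $g_U = g_V$ on $U \cap V$. The task is to glue the $g_U$ into a single global $g \in L^0(m)$, and this is exactly where localizability enters. I would reconstruct $g$ from its superlevel sets: for each $t \in \mathbb{Q}$ let $S_t$ be an essential supremum of $\{\{g_U > t\} : U \in \Bf\}$ (which exists by localizability), check using consistency and the minimality clause in the definition of essential supremum that the $S_t$ are decreasing in $t$ and satisfy $S_t \cap U = \{g_U > t\}$ up to null sets, and then set $g(x) := \sup\{t \in \mathbb{Q} : x \in S_t\}$. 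One verifies $g \in L^0(m)$ with $g|_U = g_U$, so that $d_U(f_i, g) \to 0$ for every $U \in \Bf$ and hence $f_i \to g$.

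For the converse, Hausdorffness gives semi-finiteness by (a), and it remains to produce essential suprema. Given $\mathcal{A} \subseteq \mathcal{B}$, I would consider the net of indicators $1_E$ indexed by the upward-directed family of finite-measure sets $E$ contained in some finite union of members of $\mathcal{A}$. For fixed $U \in \Bf$ the quantities $m(U \cap E)$ increase and are bounded by $m(U)$, so the differences $d_U(1_E, 1_{E'}) = m(U \cap (E' \setminus E))$ become small along the net, making it Cauchy; by completeness it converges to some $h$. Continuity of multiplication applied to $1_E = 1_E^2$ forces $h = h^2$, so $h = 1_S$ for a measurable $S$, and order-preservation of limits together with semi-finiteness shows $S$ is an essential supremum of $\mathcal{A}$. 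The main obstacle throughout is the gluing step in the forward direction of (c): turning the consistent local limits $g_U$ into a single measurable function requires precisely the Dedekind completeness of the measure algebra encoded by localizability, and the careful bookkeeping with essential suprema and null sets is the technical heart of the theorem.
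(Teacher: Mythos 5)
Your proof is correct, but there is a structural point to note first: the paper does not prove this theorem at all — it is imported from Fremlin's treatise via the citation \cite[Theorem~245E]{Fre2}. So there is no in-paper argument to compare against, and what you have produced is a self-contained, function-space rendering of what Fremlin does at the level of the measure algebra. Parts (a) and (b) are the standard arguments and are complete as written (including the two points people often miss: that balls centered at $0$ suffice when extracting the countable family $U_n \in \Bf$ from a countable neighborhood base, and that $\sigma$-finiteness must be fed through (a) to upgrade the pseudometric $\sum_n 2^{-n}(d_{X_n}\wedge 1)$ to a metric). Part (c) is where Fremlin would say ``localizability $=$ Dedekind completeness of the measure algebra'' and deduce completeness of $L^0$ from order completeness; you instead glue the local limits $g_U$ through essential suprema of the rational superlevel sets $\{g_U > t\}$, using the minimality clause of the essential supremum to get $S_t \cap U = \{g_U > t\}$ up to null sets, and conversely you manufacture an essential supremum of an arbitrary $\mathcal{A} \subseteq \mathcal{B}$ as the $\tau(m)$-limit of the increasing Cauchy net of indicators $1_E$, with the idempotency trick $h = \lim 1_E = \lim 1_E^2 = h^2$ (using continuity of multiplication and Hausdorffness) to see the limit is itself an indicator $1_S$. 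This is the same mechanism as the literature's proof, transported from the algebra of sets to functions, and both directions are sound.

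Three details you gloss, none of which is a genuine gap. First, in the gluing step you should also verify that $g(x) = \sup\{t \in \mathbb{Q} : x \in S_t\}$ is a.e.\ real-valued: the sets $\{g = +\infty\}$ and $\{g = -\infty\}$ meet each $U \in \Bf$ in a null set because $g = g_U$ a.e.\ on $U$, and semi-finiteness (part of localizability) then forces them to be null, after which one redefines $g$ there. Second, the phrase ``order-preservation of limits together with semi-finiteness'' compresses the verification of both clauses of Definition~\ref{definition:taxonomy of measures}(d): for the upper-bound clause, if $m(A \setminus S) > 0$ then semi-finiteness gives $B \subseteq A \setminus S$ with $0 < m(B) < \infty$, and since eventually $E \supseteq B$ one gets $d_B(1_E, 1_S) \geq m(B) > 0$, contradicting convergence; for minimality, every $E$ in your index family satisfies $m(E \setminus G) = 0$, so a finite-measure piece $B \subseteq S \setminus G$ of positive measure yields $d_B(1_E,1_S) \geq m(B) - m(B\cap E) = m(B) > 0$ for all $E$, the same contradiction. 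Third, completeness of convergence in measure on each finite-measure piece must be invoked for nets, not just sequences; this is harmless because $d_U$ is a genuine metric on $L^0(U, m|_{\mathcal{B}\cap U})$, and in a complete metric space Cauchy nets converge. One caution for consistency with the paper: you cannot shortcut the converse of (c) with Proposition~\ref{proposition:convergence of monotone nets}, since that proposition assumes localizability — precisely what is being proved — but your direct argument avoids it.
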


\begin{remark}
The previous two theorems show that $L^0(m)$ is an $F$-space if and only if $m$ is $\sigma$-finite. In this case, convergence of sequences with respect to $\tau(m)$ can be characterized in terms of $m$-a.e. convergent subsequences. Namely, if $m$ is $\sigma$-finite a sequence $(f_n)$ converges towards $f$ with respect to $\tau(m)$ if and only if any of its subsequences has a subsequence that converges to $f$ $m$-a.e., see \cite[Proposition~245K]{Fre2}.
\end{remark}

Whether or not $L^0(m)$ is locally convex depends on the atoms of $m$. We do not give a full characterization but rather illustrate this fact with two examples.

\begin{example}
 \begin{itemize}
  \item[(a)]  Let $X = [0,1]$ and let $m =\lambda,$ the Lebesgue measure on Borel subsets of $[0,1]$. It can be shown that all continuous functionals on $(L^0(\lambda),\tau(\lambda))$ vanish. Therefore, it can not be locally convex. 
  \item[(b)] Let $X \neq \emptyset$ be arbitrary and let $m = \mu$, the counting measure on all subsets of $X$. In this case, $L^0(\mu)$ is the space of all real-valued functions on $X$ and the topology $\tau(\mu)$ is the topology of pointwise convergence.  It is obviously generated by a family of seminorms, and so $(L^0(\mu),\tau (\mu))$ is a locally convex topological vector space.
 \end{itemize}
\end{example}


If $m$ is localizable, the space $L^0(m)$ is Dedekind complete, i.e., any subset of $L^0(m)$ that is bounded from above has a supremum in $L^0(m)$, see \cite[Theorem~241G]{Fre2}. A consequence is the following characterization of convergence for monotone nets in $L^0(m)$. Recall that a net $(f_i)$ in $L^0(m)$ is called monotone increasing if $i \prec j$ implies $f_i \leq f_j$. 

\begin{proposition}\label{proposition:convergence of monotone nets}
 Let $m$ be localizable. A monotone increasing net $(f_i)$ in $L^0(m)$ is convergent with respect to $\tau(m)$ if and only if there exists $g \in L^0(m)$ such that for all $i$ the inequality $f_i \leq g$ holds. In this case, we have
 $$\lim_i f_i = \sup_i f_i.$$
\end{proposition}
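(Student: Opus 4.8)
The plan is to prove the two implications separately and to read off the identity $\lim_i f_i = \sup_i f_i$ along the way. Throughout I use the three facts made available for localizable $m$: the space $(L^0(m),\tau(m))$ is Hausdorff (so limits are unique), $L^0(m)$ is Dedekind complete (so order-bounded families have suprema), and $\vee,\wedge$ are continuous on $L^0(m)\times L^0(m)$.

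For the direction \emph{convergent $\Rightarrow$ bounded}, assume $f_i \overset{m}{\to} f$. Fixing an index $i_0$, the subnet $(f_i)_{i_0 \prec i}$ still converges to $f$, and since $h \mapsto h \vee f_{i_0}$ is continuous I obtain $f_i \vee f_{i_0} \to f \vee f_{i_0}$ along $i_0 \prec i$. But monotonicity gives $f_i \vee f_{i_0} = f_i$ for such $i$, so this net also converges to $f$; by Hausdorffness $f \vee f_{i_0} = f$, i.e. $f_{i_0} \le f$. Hence $f$ is an upper bound of the net, which proves boundedness. Moreover, if $g$ is \emph{any} upper bound then $f_i \wedge g = f_i \to f$, while continuity of $\wedge$ gives $f_i \wedge g \to f \wedge g$, so $f \wedge g = f$, i.e. $f \le g$. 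Thus $f = \sup_i f_i$, establishing $\lim_i f_i = \sup_i f_i$.

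For the converse, suppose $f_i \le g$ for all $i$. By Dedekind completeness $f := \sup_i f_i$ exists in $L^0(m)$, and it remains to show $f_i \overset{m}{\to} f$. Writing $u_i := f - f_i$, this reduces to proving that for a \emph{decreasing} net $u_i \ge 0$ with lattice infimum $\inf_i u_i = 0$ one has $\int_U u_i \wedge 1 \,\D m \to 0$ for every $U \in \Bf$. Truncating, I may assume $0 \le u_i \le 1$ on the finite-measure set $U$; then the decreasing net of reals $\int_U u_i \,\D m$ tends to some $c \ge 0$, and the goal is $c = 0$. The main obstacle is exactly here: this is order-continuity of the integral along an index set that need not be countably cofinal, so the sequential monotone convergence theorem does not apply directly, and the lattice infimum $\inf_i u_i$ need not coincide with a pointwise infimum.

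My plan for this step is to reduce to a sequence and then transfer back using Dedekind completeness. Using directedness I extract an increasing sequence $i_1 \prec i_2 \prec \cdots$ with $\int_U u_{i_n}\,\D m \to c$; the countable infimum $w := \inf_n u_{i_n}$ is the pointwise a.e.\ limit of $(u_{i_n})$, so dominated convergence (by $1$ on $U$) yields $\int_U w \,\D m = c$. I then show $w$ vanishes on $U$: for a fixed index $j$, choosing $k \succ i_n, j$ gives $u_k \le u_j \wedge u_{i_n}$ together with $\int_U u_k\,\D m \ge c$, which squeezes $\int_U (u_j \wedge u_{i_n})\,\D m \to c$; letting $n \to \infty$ and applying dominated convergence again gives $\int_U (u_j \wedge w)\,\D m = \int_U w\,\D m$, whence $w \le u_j$ a.e.\ on $U$. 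Consequently $w\,1_U$ is a nonnegative lower bound of $\{u_j 1_U\}$, whose infimum is $0$ because $\inf_i u_i = 0$; therefore $w\,1_U = 0$ and $c = \int_U w\,\D m = 0$, as required. This closes the converse, and the first part identifies the limit as $\sup_i f_i$.
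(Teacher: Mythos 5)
Your proof is correct, and while it shares the paper's overall skeleton, the hard direction (bounded $\Rightarrow$ convergent) runs on a genuinely different mechanism. Both arguments get $f = \sup_i f_i$ from Dedekind completeness and then face the same obstacle you identify explicitly: the index set need not be countably cofinal, so one extracts an increasing sequence of indices along which a numerical quantity attains its infimum over the whole net, and then uses directedness plus monotonicity to show the resulting countable limit object is dominated by (resp.\ contained in) every member of the net. In the paper the quantity is $m(U \cap \{|f - f_i| \geq \delta\})$, the limit object is the intersection $\ow{U}$ of the superlevel sets, and the argument is a double contradiction: from $m(\ow{U}\setminus U_i) = 0$ for all $i$ one concludes that $f - \delta 1_{\ow{U}}$ is an upper bound strictly below $f$. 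In your version the quantity is the truncated integral $\int_U u_i \wedge 1 \D m$, the limit object is the pointwise infimum $w = \inf_n u_{i_n}$, and the argument is direct: the squeeze $c \leq \int_U (u_j \wedge u_{i_n}) \D m \leq \int_U u_{i_n} \D m \to c$ together with dominated convergence yields $w \leq u_j$ a.e.\ on $U$, whence $w 1_U \leq \inf_j u_j = 0$ and $c = 0$ (the last step is cleanest via $w 1_U \leq u_j 1_U \leq u_j$, using $u_j \geq 0$, which you effectively do). What your route buys is a tidy intermediate statement of independent interest --- order-continuity of the truncated integral on finite-measure sets along decreasing nets --- and it avoids the $\varepsilon$--$\delta$ level-set bookkeeping and the nested contradictions; what the paper's route buys is that it needs only continuity from above of finite measures rather than dominated convergence. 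Your forward direction is also slightly stronger than the paper's one-line remark: via Hausdorffness and continuity of $\vee$ and $\wedge$ you show the limit is the \emph{least} upper bound, so the identity $\lim_i f_i = \sup_i f_i$ is established within that direction rather than by combining the two implications.
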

\begin{proof}
 Let $(f_i)$ be a monotone increasing net and let $g \in L^0(m)$ with $f_i \leq g$ for all $i$. Due to the Dedekind completeness of $L^0(m)$, the supremum $f := \sup_i  f_i$ exists. We show $f_i \overset{m}{\to} f$. Assume  this were not the case. The monotonicity of $(f_i)$ implies that there exist  $\varepsilon,\delta> 0$ and $U \in \Bf$ such that for all $i$ we have
 $$m(U\cap \{|f-f_i|\geq \delta\}) \geq \varepsilon.$$
 We let $U_i:= U\cap \{|f-f_i|\geq \delta\}$ and choose an increasing sequence of indices $i_n, n \in \IN,$ with
 $$ \lim_{n\to \infty} m(U_{i_n}) = \inf_i m(U_i) \geq \varepsilon. $$
 We set $\ow{U}:= \cap_{n\geq 1} U_{i_n}$. The $U_{i_n}$ are decreasing and $U$ has finite measure. We obtain
 $$m(\ow{U}) =  \lim_{n\to \infty} m(U_{i_n}) \geq \varepsilon$$
 and use this observation to show that $f - \delta 1_{\ow{U}}$ is an upper bound for the $f_i$, which contradicts the definition of $f$. According to the definition of $U_i$, it suffices to prove $m(\ow{U} \setminus U_i) = 0$ for all $i$. Assume the contrary. Then there exists an $i_0$ with
 $$0 < m(\ow{U} \setminus U_{i_0}) = m(\ow{U}) - m(U_{i_0} \cap \ow{U}) = m(\ow{U}) - \lim_{n\to \infty}m(U_{i_0} \cap U_{i_n}).$$
 Consequently, there exists an $n \in \IN$ such that $m(\ow{U}) > m(U_{i_0} \cap U_{i_n})$. We choose an index $j$ with $i_0,i_n \prec j$.  By the monotonicity of $(f_i)$, we obtain $m(\ow{U}) > m(U_{i_0} \cap U_{i_n}) \geq m(U_j)$. This is a contradiction to the definition of $\ow{U}$ and the convergence of $(f_i)$ to $f$ is proven.  
 
 Now, assume that the net $(f_i)$ is convergent. The continuity of the lattice operations with respect to local convergence in measure shows that $g:= \lim_i f_i$ is an upper bound for the $f_i$. This finishes the proof.
 \end{proof}

For $1\leq p <\infty$, we let  $L^p(m)$ be the space of $p$-integrable functions in $L^0(m)$ with corresponding norm
$$\|f\|_p := \left(\int_X |f|^p\, {\rm d}m\right)^{\frac{1}{p}}.$$
 The space of essentially bounded $L^0(m)$-functions is denoted by $L^\infty(m)$ and equipped with the norm 
 $$\|f\|_\infty = {\rm ess\, sup} |f|.$$
When $p=2$, the norm $\|\cdot\|_2$ is induced by the  inner product 
$$\as{f,g}:= \int_X fg\, {\rm d}m.$$
The space of functions that are $p$-integrable on sets of finite measure is denoted by
$$\Lpf := \{f \in L^0(m) \mid f 1 _U \in L^p(m) \text{ for each } U \in \Bf\}.$$
We equip it with the locally convex topology generated by the family of seminorms
$$\|\cdot\|_{p,U}:\Lpf \to [0,\infty),\, f \mapsto \|f\|_{p,U} := \|f1_U\|_p, \, U \in \Bf.$$
When $X$ is a Hausdorff topological space and $m$ is a Radon measure on the Borel $\sigma$-algebra of $X$, i.e., $m$ is locally finite and inner regular, then 
$$\Lpl := \{f \in L^0(m) \mid f 1_K \in L^p(m) \text{ for all compact } K \subseteq X \}$$
is the local $L^p$-space.  It is equipped with the locally convex topology generated by the family of seminorms
$$\|\cdot\|_{p,K},\, K \subseteq X \text{ compact}.$$

H\"older's inequality implies that $L^p(m)$  and  $\Lpf$ are continuously embedded in $L^0(m)$. For $1\leq p < \infty$, a converse of this statement holds on order bounded sets. Indeed, this is a consequence of the following version Lebesgue's theorem for nets. Since it is usually formulated for sequences only, we include a short proof. 
\begin{lemma}[Lebesgue's dominated convergence theorem]\label{lemma:Lebesgue's theorem}
 Let $(f_i)$ be a convergent net in $L^0(m)$ with $f_i \overset{m}{\to}f$. Assume that there exists a function $g \in L^1(m)$ such that for all $i$
 $$|f_i| \leq g.$$
 Then  $f \in L^1(m)$ and
 $$\lim_i \int_X |f-f_i|\, {\rm d}m = 0.$$
\end{lemma}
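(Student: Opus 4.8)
The plan is to split the argument into two parts: first I would show that $f\in L^1(m)$ with $|f|\le g$ almost everywhere, and then I would establish the $L^1$-convergence by an $\varepsilon$-estimate that is run directly over the net. The second step is where the net character matters: the textbook proof passes to an $m$-a.e. convergent subsequence and invokes the classical dominated convergence theorem, but subsequence extraction is unavailable for arbitrary nets, so the estimate has to be carried out in place.

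For the first part, I would fix $U\in\Bf$ and $\delta>0$ and note that on $\{|f|\ge g+\delta\}$ the domination $|f_i|\le g$ forces $|f-f_i|\ge|f|-|f_i|\ge\delta$, whence
$$\{|f|\ge g+\delta\}\cap U\subseteq\{|f-f_i|\ge\delta\}\cap U \quad\text{for every } i.$$
Since $f_i\overset{m}{\to}f$ means exactly $\lim_i m(U\cap\{|f-f_i|\ge\delta\})=0$, while the left-hand side is independent of $i$, the set $\{|f|\ge g+\delta\}\cap U$ is null. As $g\in L^1(m)$, the set $\{g>0\}$ is $\sigma$-finite, so taking the union over $\delta=1/k$ and over the exhaustion of $\{g>0\}$ by the sets $\{g>1/n\}\in\Bf$ yields $|f|\le g$ a.e. on $\{g>0\}$. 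On $\{g=0\}$ all $f_i$ vanish, and testing against finite-measure subsets $U\subseteq\{g=0\}$ gives $\int_U|f|\wedge1\,{\rm d}m=\lim_i d_U(f_i,f)=0$, so $f=0$ a.e. there as well (here one uses semi-finiteness of $m$). Thus $|f|\le g$ a.e. on $X$ and $\int_X|f|\,{\rm d}m\le\int_X g\,{\rm d}m<\infty$, so $f\in L^1(m)$.

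For the second part, fix $\varepsilon>0$. Using $g\in L^1(m)$ I would first pick $U=\{g>1/n\}\in\Bf$ with $\int_{X\setminus U}g\,{\rm d}m<\varepsilon$; domination then controls the tail, $\int_{X\setminus U}|f-f_i|\,{\rm d}m\le\int_{X\setminus U}(|f|+|f_i|)\,{\rm d}m\le2\int_{X\setminus U}g\,{\rm d}m<2\varepsilon$, uniformly in $i$. On the finite-measure set $U$ I would split, for $\eta>0$,
$$\int_U|f-f_i|\,{\rm d}m\le\eta\,m(U)+\int_{A_i}(|f|+|f_i|)\,{\rm d}m\le\eta\,m(U)+\int_{A_i}2g\,{\rm d}m,$$
where $A_i:=U\cap\{|f-f_i|>\eta\}$. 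Now $m(A_i)\to0$ by convergence in measure, and the absolute continuity of the integral of the single function $2g\in L^1(m)$ gives $\lim_i\int_{A_i}2g\,{\rm d}m=0$; letting $\eta\to0$ afterwards shows $\lim_i\int_U|f-f_i|\,{\rm d}m=0$. Combining the two estimates yields $\limsup_i\int_X|f-f_i|\,{\rm d}m\le2\varepsilon$, and since $\varepsilon>0$ was arbitrary the claim follows.

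I expect the genuine obstacle to be exactly this net issue, and the point that makes the scheme work is that both facts I lean on — that convergence in measure is the statement $\lim_i m(U\cap\{|f-f_i|\ge\delta\})=0$, and that $\int_{A_i}2g\,{\rm d}m\to0$ whenever $m(A_i)\to0$ (absolute continuity of one fixed $L^1$-integral) — are assertions about limits of nets of real numbers and require no countability whatsoever. A secondary, purely technical subtlety is the behavior on $\{g=0\}$, which is harmless once one observes that everything of interest is carried by the $\sigma$-finite set $\{g>0\}$ and uses semi-finiteness of $m$.
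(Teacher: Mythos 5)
Your proof is correct and follows essentially the same route as the paper's: both split $X$ into a finite-measure core carried by $g$ (the paper's $A_k=\{1/k\le g\le k\}$, your $\{g>1/n\}$), bound the tail integral by $2\int_{X\setminus \cdot} g\,{\rm d}m$ uniformly in $i$, and finish on the core using local convergence in measure of the net — the net issue you identify is exactly what the paper's argument is built around. The only divergences are in execution: the paper obtains $|f|\le g$ from the continuity of the lattice operations on $L^0(m)$ and, since $g\le k$ on $A_k$, replaces $|f-f_i|$ by the bounded truncation $|f-f_i|\wedge 2k$ so that $d_{A_k}(f_i,f)\to 0$ applies directly, whereas you derive the domination by hand (making explicit the semi-finiteness on $\{g=0\}$ that the paper leaves implicit) and use absolute continuity of the integral of $2g$ in place of the truncation.
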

\begin{proof}  For $k\in \IN$ we let $A_k:= \{x \in X \mid 1/k \leq g(x) \leq k\}$. Then $m(A_k) < \infty$ and the sequence $(g 1_{A_k})$ converges in $L^1(m)$ to $g$, as $k \to \infty$. Since the lattice operations are continuous on $L^0(m)$, we have $|f| \leq g$. This implies 
$$\int_{X\setminus A_k} |f-f_i|\, {\rm d}m  \leq 2\int_{X\setminus A_k} g\, {\rm d}m \to 0, \text{ as } k\to \infty, $$
where the convergence is uniform in $i$.  Thus, for each $\varepsilon > 0$ we may choose some $k$ independent of $i$ such that
$$\int_X |f-f_i|\, {\rm d}m \leq \int_{A_k} |f-f_i|\, {\rm d}m + \varepsilon = \int_{A_k} |f-f_i|\wedge 2k\, {\rm d}m + \varepsilon.$$
Since $A_k$ has finite measure and $(f_i)$ converges locally in measure to $f$, we obtain the statement. 
\end{proof}
\begin{remark}
In contrast to the previous lemma, it is well known that Lebesgue's theorem does not hold for almost everywhere convergent nets. Indeed, this is one of the main technical reasons why we work with convergence in measure instead of almost everywhere convergence. 
\end{remark}
\begin{lemma}
 Let $g \in L^p(m)$ (or $g \in \Lpf$). The $L^p(m)$-topology (or the $\Lpf$-topology) coincides with the subspace topology of  $\tau(m)$ on
$$\{f \in L^0(m) \mid |f| \leq g\}.$$
\end{lemma}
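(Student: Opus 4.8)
The plan is to show that on the order interval $B_g := \{f \in L^0(m) \mid |f| \le g\}$ the identity map is a homeomorphism between the two subspace topologies in question. Since neither topology need be metrizable (by Theorem~\ref{theorem:properties of L0} this happens only in the $\sigma$-finite case), I phrase everything in terms of nets, using the facts that a map between topological spaces is continuous precisely when it preserves convergence of nets, and that two topologies on a fixed set agree as soon as they admit the same convergent nets with the same limits.

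One inclusion is immediate. By Hölder's inequality $L^p(m)$ (respectively $\Lpf$) embeds continuously into $(L^0(m),\tau(m))$, so the $L^p(m)$-topology (respectively the $\Lpf$-topology) is finer than $\tau(m)$; restricting to $B_g$, the $\tau(m)$-subspace topology is coarser than the $L^p(m)$-subspace topology. It therefore remains to prove the reverse, namely that every net in $B_g$ that converges in $\tau(m)$ to a limit $f \in B_g$ also converges in the $L^p(m)$-norm (respectively in each seminorm $\|\cdot\|_{p,U}$).

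So let $(f_i)$ be a net in $B_g$ with $f_i \overset{m}{\to} f$ and $f \in B_g$. I first note that convergence in measure is preserved by the $p$-th power of the modulus: since $\{|f_i - f|^p \ge \delta\} = \{|f_i - f| \ge \delta^{1/p}\}$ for every $\delta > 0$, the subbasis description of $\tau(m)$ via the sets $\{h \in L^0(m)\mid m(U \cap \{|h| \ge \delta\}) < \varepsilon\}$ shows $|f_i - f|^p \overset{m}{\to} 0$. Next, from $|f_i|, |f| \le g$ I obtain the $i$-independent domination $|f_i - f|^p \le (2g)^p = 2^p g^p$, and $2^p g^p \in L^1(m)$ because $g \in L^p(m)$. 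Applying the net version of Lebesgue's dominated convergence theorem (the preceding lemma) to the net $(|f_i - f|^p)$ with limit $0$ and dominating function $2^p g^p$ yields
$$\|f_i - f\|_p^p = \int_X |f_i - f|^p \D m \longrightarrow 0,$$
which is the desired convergence. For $g \in \Lpf$ the same argument is run seminorm by seminorm: fixing $U \in \Bf$ one has $g 1_U \in L^p(m)$, the domination $|f_i - f|^p 1_U \le 2^p (g 1_U)^p \in L^1(m)$, and $|f_i - f|^p 1_U \overset{m}{\to} 0$, so the lemma gives $\|f_i - f\|_{p,U} \to 0$ for every $U \in \Bf$.

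Combining the two inclusions, the $\tau(m)$- and the $L^p(m)$- (respectively $\Lpf$-) convergent nets in $B_g$ coincide together with their limits, hence the two subspace topologies agree. I expect the only genuinely delicate point to be the insistence on nets in place of sequences: because $L^0(m)$ fails to be metrizable outside the $\sigma$-finite setting one cannot reduce to sequential arguments, and the entire proof hinges on having dominated convergence available for nets — which is precisely why that lemma was established beforehand.
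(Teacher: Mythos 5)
Your proof is correct and follows essentially the same route as the paper: the nontrivial direction is handled by observing $|f_i - f|^p \overset{m}{\to} 0$, dominating by $2^p g^p \in L^1(m)$, and invoking the net version of Lebesgue's dominated convergence theorem, exactly as the paper does. The only differences are cosmetic — you spell out the easy inclusion via H\"older (which the paper states just before the lemma) and run the $\Lpf$ case seminorm by seminorm (which the paper dismisses with ``works similarly'').
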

\begin{proof}
 We only show the statement for $g \in L^p(m),$ the other case works similarly. Let $(f_i)$ be a net in $\{f \in L^0(m) \mid |f| \leq g\}$ with $f_i \overset{m}{\to} f$. We obtain $|f_i - f|^p \overset{m}{\to} 0$. Since $|f_i - f|^p \leq 2^p g^p$ and $g^p \in L^1(m)$, the claim follows from Lebesgue's theorem, Lemma~\ref{lemma:Lebesgue's theorem}.
\end{proof}
The $L^p(m)$ spaces are always Hausdorff and complete. In contrast we have seen that this need not be true for $L^0(m)$. Similar phenomena occur for $\Lpf$. Indeed, it can be shown that Theorem~\ref{theorem:properties of L0} remains true when $L^0(m)$ is replaced by $\Lpf$. We only need the following special case.

\begin{proposition}\label{proposition:completeness local Lp} Let $m$ be localizable. For $1\leq p < \infty$, the space $\Lpf$ is a complete locally convex topological vector space.
\end{proposition}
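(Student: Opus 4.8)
The plan is to check the three claims separately; local convexity and the Hausdorff property are quick, and completeness is the only substantial point. Local convexity holds by construction, since the topology on $\Lpf$ is, by definition, the one generated by the seminorms $\|\cdot\|_{p,U}$, $U \in \Bf$. For the Hausdorff property I would use that a localizable measure is in particular semi-finite. If $\|f\|_{p,U} = 0$ for every $U \in \Bf$, then $f$ vanishes on each set of finite measure; were $\{|f| > 0\}$ of positive measure, semi-finiteness would produce a subset of finite positive measure on which $f$ does not vanish, a contradiction. Hence $f = 0$ in $L^0(m)$, the seminorms separate points, and $\Lpf$ is a topological vector space in the sense of the paper.

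The heart of the matter is completeness, and the strategy is to first produce a candidate limit inside $L^0(m)$ and then upgrade the convergence. The key observation is that the $\Lpf$-topology is finer than the topology $\tau(m)$ restricted to $\Lpf$: for each $U \in \Bf$, H\"older's inequality yields $d_U(f,0) = \int_U |f| \wedge 1 \D m \le \int_U |f|\D m \le m(U)^{1 - 1/p}\,\|f\|_{p,U}$, so every pseudometric $d_U$ is dominated by the seminorm $\|\cdot\|_{p,U}$. Consequently, a Cauchy net $(f_i)$ in $\Lpf$ is Cauchy in $(L^0(m),\tau(m))$, and since $m$ is localizable, Theorem~\ref{theorem:properties of L0}(c) guarantees that $L^0(m)$ is complete. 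Thus $f_i \overset{m}{\to} f$ for some $f \in L^0(m)$.

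It then remains to show that $f \in \Lpf$ and that $\|f_i - f\|_{p,U} \to 0$ for each $U \in \Bf$. Fixing $U$, the Cauchy condition in $\Lpf$ says precisely that $(f_i 1_U)$ is a Cauchy net in the Banach space $L^p(m)$; since a metric space is complete for nets as soon as it is complete for sequences, $(f_i 1_U)$ converges in $L^p$-norm to some $g_U \in L^p(m)$. On the other hand, multiplication by $1_U$ is $\tau(m)$-continuous, so $f_i 1_U \overset{m}{\to} f 1_U$, and $L^p$-convergence on the finite-measure set $U$ forces $f_i 1_U \overset{m}{\to} g_U$ as well. Because $m$ is semi-finite, $\tau(m)$ is Hausdorff and limits are unique, so $g_U = f 1_U$. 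Hence $f 1_U \in L^p(m)$ for every $U \in \Bf$, i.e. $f \in \Lpf$, and $\|f_i - f\|_{p,U} = \|f_i 1_U - g_U\|_p \to 0$. As $U$ was arbitrary, $f_i \to f$ in the $\Lpf$-topology, proving completeness.

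The main obstacle, and the place where care is required, is that we are working with nets rather than sequences: I would need the fact that a Banach space and $(L^0(m),\tau(m))$ (for localizable $m$) are complete for \emph{nets}, not merely for sequences. The second delicate point is the identification $g_U = f 1_U$, which fuses two a priori unrelated limits — one obtained in $L^p$-norm, the other in measure — and which relies essentially on the Hausdorffness of $\tau(m)$, hence on semi-finiteness, to ensure uniqueness of the limit in $L^0(m)$.
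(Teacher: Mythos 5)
Your proof is correct, and while it shares the paper's opening skeleton --- a Cauchy net in $\Lpf$ is Cauchy in $\tau(m)$ by the continuous embedding, so localizability and Theorem~\ref{theorem:properties of L0} furnish a candidate limit $f\in L^0(m)$ --- it diverges genuinely in how the convergence is upgraded to the $\Lpf$-topology. The paper proves a Fatou-type lower semicontinuity estimate along the net: it truncates, applies the monotone convergence theorem together with the net version of Lebesgue's theorem (Lemma~\ref{lemma:Lebesgue's theorem}), and then interchanges the two limits via the lemma on monotone nets (Lemma~\ref{lemma:monotone nets}) to get $\int_U |f-f_i|^p \D m \leq \liminf_j \int_U |f_j-f_i|^p \D m$, from which the Cauchy property finishes the argument. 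You instead exploit that for each fixed $U \in \Bf$ the net $(f_i 1_U)$ is Cauchy in the Banach space $L^p(m)$, invoke net-completeness of complete metric spaces to obtain an $L^p$-limit $g_U$, and identify $g_U = f 1_U$ by uniqueness of limits in the Hausdorff space $(L^0(m),\tau(m))$ --- a ``two limits must agree'' argument that entirely avoids the truncation and limit-interchange machinery. Each route has its merits: the paper's stays within its own toolkit of net lemmas and yields the quantitative Fatou inequality as a byproduct, whereas yours is more modular, outsourcing the analytic work to the completeness of $L^p$ at the cost of the (standard, and correctly flagged by you) fact that metric completeness for sequences implies completeness for nets. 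Your direct verification of Hausdorffness via semi-finiteness and separation of the seminorms is also sound and merely unfolds what the paper compresses into ``$\Lpf$ continuously embeds into the Hausdorff space $L^0(m)$''; note that both arguments use localizability only through the completeness of $L^0(m)$, with semi-finiteness alone carrying the Hausdorff claim.
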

\begin{proof}
   The space $\Lpf$ continuously embeds into the Hausdorff space $L^0(m)$ and so it is Hausdorff itself. It remains to prove completeness. Let $(f_i)$ be a Cauchy net in $\Lpf$. Since $\Lpf$ is continuously embedded in $L^0(m)$, the net $(f_i)$ is also Cauchy with respect to $\tau(m)$. The localizability of $m$ and Theorem~\ref{theorem:properties of L0} show that it has a $\tau(m)$-limit $f$. Let $U\in \Bf$ be given. We use the monotone convergence theorem and Lemma~\ref{lemma:Lebesgue's theorem} to obtain 
 $$\int_U |f - f_i|^p\, {\rm d} m = \lim_{n \to \infty} \int_U |f - f_i|^p \wedge n \, {\rm d} m = \lim_{n \to \infty} \lim_j\int_U |f_j - f_i|^p \wedge n\, {\rm d} m. $$
 The above expression is monotone increasing in $n$. Therefore, the previous computation and Lemma~\ref{lemma:monotone nets} yield
 $$\int_U |f - f_i|^p\, {\rm d} m \leq  \liminf_j \lim_{n \to \infty}\int_U |f_j - f_i|^p \wedge n\, {\rm d} m = \liminf_j \int_U |f_j - f_i|^p \, {\rm d} m.$$ 
 Since $(f_i)$ was assumed to be Cauchy in $\Lpf$, this finishes the proof.
\end{proof}

Next, we discuss the dual spaces of $\Lpf$ and their properties.  To keep notation simple we restrict ourselves to the case when $p=2$, but the arguments work for other $1< p  < \infty$ as well.

%

\begin{proposition}\label{proposition:characterization duals of Lpf}
 The map
 $$\{\varphi \in L^2(m) \mid \text{there exists } U \in \Bf \text{ s.t. } \varphi 1_{X\setminus U} = 0\} \to \Ltf'$$
 $$\varphi \mapsto \ell_\varphi,$$
 where
  $$\ell_\varphi:\Ltf \to \IR,\, f \mapsto \ell_\varphi(f) := \int_X f \varphi\, {\rm d}m$$
  is a vector space isomorphism.
\end{proposition}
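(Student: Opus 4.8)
\emph{Proof proposal.} The plan is to verify in turn that the map $\varphi \mapsto \ell_\varphi$ is well-defined, linear and injective, and then to establish surjectivity, which is the substantial part. For well-definedness, fix $\varphi \in L^2(m)$ vanishing outside some $U \in \Bf$. Linearity of $\ell_\varphi$ is immediate, and continuity follows from the Cauchy-Schwarz inequality: for $f \in \Ltf$ we have $|\ell_\varphi(f)| = |\int_U f\varphi\,{\rm d}m| \leq \|\varphi\|_2\, \|f\|_{2,U}$, so $\ell_\varphi$ is bounded by a multiple of the single generating seminorm $\|\cdot\|_{2,U}$ and hence lies in $\Ltf'$. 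Linearity of the assignment $\varphi \mapsto \ell_\varphi$ is clear. Injectivity is equally easy: if $\ell_\varphi = 0$, then since $\varphi \in L^2(m) \subseteq \Ltf$ we may test against $f = \varphi$ to get $\|\varphi\|_2^2 = \ell_\varphi(\varphi) = 0$, whence $\varphi = 0$.

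The heart of the argument is surjectivity. Given $\ell \in \Ltf'$, I would first invoke the standard characterization of continuous linear functionals on a locally convex space: continuity of $\ell$ with respect to the topology generated by $\{\|\cdot\|_{2,U} \mid U \in \Bf\}$ forces the existence of finitely many sets $U_1, \ldots, U_n \in \Bf$ and a constant $C$ with $|\ell(f)| \leq C \max_{1 \leq k \leq n} \|f\|_{2,U_k}$ for all $f$. Setting $U := U_1 \cup \cdots \cup U_n \in \Bf$ and noting $\|f\|_{2,U_k} \leq \|f\|_{2,U}$, this collapses to the single bound $|\ell(f)| \leq C \|f\|_{2,U}$.

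This domination by one seminorm lets me factor $\ell$ through $L^2(m)$. Since $U$ has finite measure, the subspace $H := \{h \in L^2(m) \mid h = h 1_U\}$ of functions supported in $U$ is a closed subspace of the Hilbert space $L^2(m)$ (it is the kernel of the bounded projection $h \mapsto h 1_{X\setminus U}$), and it coincides with $\{f 1_U \mid f \in \Ltf\}$. The estimate $|\ell(f)| \leq C \|f1_U\|_2$ shows that $\ell(f)$ depends only on $f1_U$, so there is a well-defined bounded linear functional $\tilde\ell$ on $H$ with $\tilde\ell(f1_U) = \ell(f)$. By the Riesz representation theorem there exists $\varphi \in H$ — that is, $\varphi \in L^2(m)$ vanishing outside $U \in \Bf$ — with $\tilde\ell(h) = \as{h,\varphi}$ for all $h \in H$. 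Then for every $f \in \Ltf$ we have $\ell(f) = \tilde\ell(f1_U) = \int_X f1_U\varphi\,{\rm d}m = \int_X f\varphi\,{\rm d}m = \ell_\varphi(f)$, using $\varphi = \varphi 1_U$, so $\ell = \ell_\varphi$ and $\varphi$ lies in the asserted domain.

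The main obstacle is the reduction step in the surjectivity argument, namely extracting from abstract continuity the quantitative domination by finitely many, and then via the union by a single, seminorm $\|\cdot\|_{2,U}$. Once this is in hand everything reduces cleanly to the Riesz representation theorem on the Hilbert space of $L^2$-functions supported on a set of finite measure, and the verification that the resulting $\varphi$ represents $\ell$ is routine.
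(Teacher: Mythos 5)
Your proposal is correct and follows essentially the same route as the paper: both reduce continuity of $\ell$ to a bound $|\ell(f)| \leq C\|f\|_{2,U}$ by a single seminorm (the paper compresses the finitely-many-seminorms-and-union step you spell out), then factor through the Hilbert space of $L^2$-functions supported on $U$ and apply the Riesz representation theorem. Your additional verifications of well-definedness, linearity and injectivity are exactly the parts the paper dismisses as clear.
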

\begin{proof}
 It suffices to show surjectivity, the rest is clear. Let $\varphi \in \Ltf'$ be given. Its continuity yields the existence of a set $U \in \Bf$ and a constant $C> 0$ such that for all  $f \in \Ltf$ we have
$$|\varphi(f)| \leq C \|f\|_{2,U}.$$
Therefore, $\varphi(f)$ only depends on the restriction of $f$ to the set $U$ and so it can be interpreted as a continuous functional on the Hilbert space $L^2(U,m)$. Consequently, it is  induced by some $\ow{\varphi} \in L^2(U,m)$ via the identity

$$ \varphi(f) = \int_U \ow{\varphi} f|_U \, {\rm d}m.$$
This finishes the proof. 
\end{proof}

\begin{proposition} \label{prop:weak compactness in Ltf}
 Let $m$ be localizable. For any $g \in \Ltf$ the set
 $$\{f \in \Ltf \mid |f|\leq g\}$$
 is compact with respect to the weak topology $\sigma(\Ltf,\Ltf')$.
\end{proposition}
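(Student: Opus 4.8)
Write $K := \{f \in \Ltf \mid |f| \leq g\}$ and assume $g \geq 0$ (otherwise $\{g<0\}$ forces $K=\emptyset$, which is trivially compact). The set $K$ is convex, being a symmetric order interval. My plan is to realize $(K,\sigma(\Ltf,\Ltf'))$ as a closed subset of a compact product and invoke Tychonoff. By Proposition~\ref{proposition:characterization duals of Lpf}, every element of $\Ltf'$ has the form $\ell_\varphi$ with $\varphi \in L^2(m)$ supported in some $U_\varphi \in \Bf$; write $D$ for the set of such $\varphi$. For $f \in K$ one has $|\ell_\varphi(f)| = |\int_{U_\varphi} f\varphi \D m| \leq \|\varphi\|_2\,\|g 1_{U_\varphi}\|_2 =: c_\varphi$, so the evaluation map $J:K \to \prod_{\varphi \in D}[-c_\varphi,c_\varphi]$, $J(f) := (\ell_\varphi(f))_{\varphi \in D}$, is well defined. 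Since $D$ separates the points of $\Ltf$ (given $f \neq 0$ pick $U$ with $f1_U \neq 0$ and test against $\varphi = f1_U$), the map $J$ is injective, and by definition the weak topology on $K$ is the initial topology of the family $(\ell_\varphi)$; hence $J$ is a homeomorphism of $(K,\sigma(\Ltf,\Ltf'))$ onto $J(K)$. As the product is compact by Tychonoff, it suffices to prove that $J(K)$ is closed.

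To this end I would take a point $(\xi_\varphi)_{\varphi \in D}$ in the closure of $J(K)$ and produce $f \in K$ with $J(f) = (\xi_\varphi)$. The linear relations $\ell_{a\varphi_1 + b\varphi_2} = a\ell_{\varphi_1} + b\ell_{\varphi_2}$ are closed conditions on the product, so $\varphi \mapsto \xi_\varphi$ is linear on $D$. Fixing $U \in \Bf$ and restricting to the copy of $L^2(U,m)$ sitting inside $D$, this linear functional is bounded by $\|g1_U\|_2\,\|\cdot\|_2$, so Riesz representation yields a unique $f_U \in L^2(U,m)$ with $\xi_\varphi = \int_U f_U\,\varphi \D m$ for all $\varphi$ supported in $U$. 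Testing against $\varphi = \operatorname{sgn}(f_U)1_A$ for $A \subseteq U$ and using that the point lies in $\overline{J(K)}$ gives $\int_A |f_U|\D m = \xi_\varphi \leq \int_A g\D m$, whence $|f_U| \leq g$ a.e.\ on $U$; uniqueness of the representative forces the consistency $f_{U'}1_U = f_U$ whenever $U \subseteq U'$.

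It remains to glue the consistent, pointwise bounded family $(f_U)_{U \in \Bf}$ into a single function. I expect this to be the main obstacle, and it is exactly where localizability enters. Consider the net $((f_U)_+)_{U \in \Bf}$ indexed by $\Bf$ ordered by inclusion: by consistency $(f_W)_+ 1_U = (f_U)_+$ for $U \subseteq W$, so the net is monotone increasing and bounded above by $g$. Proposition~\ref{proposition:convergence of monotone nets} (valid since $m$ is localizable, hence $L^0(m)$ Dedekind complete) gives $\tau(m)$-convergence to $f^+ := \sup_U (f_U)_+$, and likewise one obtains $f^-$ from the negative parts; a short lattice computation shows $f^+ \wedge f^- = 0$, so $f := f^+ - f^-$ satisfies $|f| \leq g$. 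Multiplying the monotone limits by $1_U$ (the tails are eventually constant $(f_U)_\pm$) yields $f1_U = f_U$, so $f \in \Ltf$, hence $f \in K$, and $\ell_\varphi(f) = \int_{U_\varphi} f_{U_\varphi}\varphi \D m = \xi_\varphi$ for every $\varphi \in D$. Thus $J(f) = (\xi_\varphi)$, proving $J(K)$ closed and therefore compact, which completes the argument.
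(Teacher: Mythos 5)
Your proposal is correct, and it shares the paper's skeleton --- Tychonoff compactness of a product plus a closedness argument, with localizability entering through Dedekind completeness at the gluing stage --- but the two proofs package this differently in ways worth noting. The paper embeds $\Ltf$ into $\prod_{U \in \Bf} L^2(U,m)$ with each factor carrying its \emph{weak} topology, so compactness is imported from weak compactness of order intervals in the Hilbert spaces $L^2(U,m)$, and the coordinates of a limit point are already functions $h_U$ satisfying compatibility relations. You instead run the classical Banach--Alaoglu construction literally: you embed $K$ into a product of compact real intervals indexed by the dual functionals $\ell_\varphi$ of Proposition~\ref{proposition:characterization duals of Lpf}, which needs nothing beyond compactness of intervals in $\IR$, but costs you an extra step --- Riesz representation on each $L^2(U,m)$ to manufacture the local pieces $f_U$, together with the order estimate $|f_U| \leq g$ via testing against $\operatorname{sgn}(f_U)1_A$. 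At the gluing stage, where the paper takes $h_1 = \sup_U (h_U)_+$ and $h_2 = \sup_U (h_U)_-$ and verifies $h_1 1_U = (h_U)_+$ by an explicit $\varepsilon$-argument with an exceptional set, you invoke Proposition~\ref{proposition:convergence of monotone nets} for the monotone nets $((f_U)_\pm)_{U \in \Bf}$ and obtain $f^\pm 1_U = (f_U)_\pm$ from eventual constancy of the tails and continuity of multiplication on $L^0(m)$; this is a cleaner packaging of the same localizability input, since the $\varepsilon$-argument is in effect hidden inside the proof of that proposition. Two points to tighten in a final write-up: the bound needed for Riesz representation on $L^2(U,m)$ should be stated as the closed condition $|\xi_\varphi| \leq \|g1_U\|_2 \|\varphi\|_2$ for all $\varphi$ supported in $U$, which passes from $J(K)$ to its closure --- your radii $c_\varphi = \|\varphi\|_2\|g1_{U_\varphi}\|_2$ depend on the chosen $U_\varphi$ and need not be uniform over the copy of $L^2(U,m)$ inside $D$; and the ``short lattice computation'' for $f^+ \wedge f^- = 0$ deserves one line, e.g.\ for arbitrary $U,W$ pass to $V = U \cup W$, note $(f_U)_+ \wedge (f_W)_- \leq (f_V)_+ \wedge (f_V)_- = 0$, and conclude by continuity of the lattice operations along the common net.
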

\begin{proof}
We mimic the standard proof of the Banach-Alaoglu-Bourbaki theorem. Since we do not work in the dual space, we have to prove an additional closedness statement, for which we use the localizability of $m$. 

For $U\in \Bf$ we equip $L^2(U,m)$ with the weak topology and consider the space
$$ \mathcal{P} := \prod_{U \in \Bf} L^2(U,m),$$
which we equip with the corresponding product topology. The semi-finiteness of $m$ implies the injectivity of the map 
$$\iota : \Ltf \to  \mathcal{P} , \quad f \mapsto (f|_U)_{U \in \Bf}.$$
 The characterization of $\Ltf'$ in Proposition~\ref{proposition:characterization duals of Lpf} yields that  $\iota$ is a homeomorphism onto its image when $\Ltf$ is equipped with $\sigma(\Ltf,\Ltf')$.

Order bounded sets are weakly compact in $L^2(U,m)$. Accordingly,  Tychonoff's theorem yields that $\{(f_U) \in  \mathcal{P}  \mid |f_U|\leq g|_U\})$ is compact in $\mathcal{P}$. Since
$$\iota \left(\{f \in \Ltf \mid |f|\leq g\}\right) \subseteq \{(f_U) \in  \mathcal{P}  \mid |f_U|\leq g|_U\}$$
and $\iota$ is a homeomorphism onto its image, it suffices to show that $\iota  \left(\{f \in \Ltf \mid |f|\leq g\}\right)$ is closed in $ \mathcal{P}$. 

To this end, let a net $(f_i)$ in $\{f \in \Ltf \mid |f|\leq g\}$ with $\iota f_i \to (h_U)_{U \in \Bf} \in  \mathcal{P}$ be given. We interpret $h_U \in L^2(U,m)$ as an element of $L^0(m)$ by letting $h_U := 0$ on $X \setminus U$. As limits of globally defined functions, the $(h_U)$ satisfy the compatibility properties

 $$h_U 1_{U\cap V} = h_{U\cap V} \text { and } h_U = h_{U\cap V} + h_{U \cap X\setminus V}, \text{ for all } U,V \in \Bf.$$

 If the $(h_U)$ were functions and not equivalence classes of functions, these compatibility properties would be sufficient to obtain a function on the whole space that locally agrees with them. Since they are not defined everywhere and we care about measurability, we need to argue as follows. 
 
The localizability of $m$ yields that the space $L^0(m)$ is Dedekind complete, see \cite[Theorem~241G]{Fre2}. Since for each $U\in \Bf$ the inequality $|h_U| \leq g$ holds, we obtain  that the suprema 
$$ h_1 := \sup\{(h_U)_+ \mid U \in \Bf\}  \text{ and } h_2:=  \sup\{(h_U)_- \mid U \in \Bf\}$$
exist in $L^0(m)$. We finish the proof by showing $h_1 1_U = (h_U)_+$ and $h_2 1_U = (h_U)_-$, which implies $(h_U) =  \iota (h_1 - h_2)$.

The definition of $h_1$ yields $h_1 1_U \geq (h_U)_+$  for all  $U \in \Bf$. Suppose now $h_1 1_U \neq (h_U)_+$, i.e., there exists some $\varepsilon > 0$ such that 
$$F:= \{ h_1 1_U  - (h_U)_+ \geq \varepsilon\} $$
satisfies $m(F) > 0$. 
For an arbitrary $V \in \Bf$, the compatibility conditions of $(h_U)$ and the definition of $F$ imply
$$(h_1 - \varepsilon 1_F)1_V   =   h_1 1_{V \cap X \setminus F} +  (h_1 - \varepsilon )1_{V \cap F} \geq  (h_{V \cap X \setminus F})_+ +  (h_U)_+|_{V \cap F} = (h_V)_+.$$
Therefore, $h_1 - \varepsilon 1_F$ is an upper bound for the $(h_V)_+$. This  contradicts to the definition of $h_1$ and we obtain $h_1 1_U = (h_U)_+$. A similar reasoning yields the statement for $h_2$.   This finishes the proof.
\end{proof}

\begin{remark}
The previous theorem can also be inferred from the Banach-Alaoglu-Bourbaki theorem after noting that the bidual of $\Ltf$ in an appropriate sense is $\Ltf$ itself.  As the proof of the latter fact is not shorter and needs some further functional analytic concepts, we rather mimicked the standard proof the Banach-Alaoglu-Bourbaki theorem. The real work was to show that the set $\iota \{f \in \Ltf \mid |f|\leq g\}$ is closed in the product space $\mathcal{P}$.
\end{remark}

We finish this section by discussing linear functionals on $L^0(m)$ and its subspaces. A {\em linear functional on $L^0(m)$} is a linear real-valued function $\ell:D(\ell) \to \IR$ whose {\em domain} $D(\ell) \subseteq L^0(m)$ is a subspace.  If, additionally, $D(\ell)$ is a sublattice  of $L^0(m)$ (i.e. $f,g \in D(\ell)$ implies $f \wedge g, f\vee g \in D(\ell)$) and for all nonnegative $\psi \in D(\ell)$ the inequality $\ell(\psi) \geq 0$ holds, it is called {\em positive functional}.  A linear functional $\ell$  is  {\em regular} if there exist positive linear functionals $\ell_1,\ell_2:D(\ell)\to \IR$ such that $\ell = \ell_1 - \ell_2$. In particular, the domain of a regular functional is a sublattice of $L^0(m)$. 

For a subspace $V \subseteq L^0(m)$, we let $L(V)$ be the vector space of linear functionals with domain $V$. If, additionally, $V$ carries a vector space topology $\tau$, then $L_c(V) = L^\tau_c(V) \subseteq L(V)$ denotes  the subspace of $\tau$-continuous linear functionals. When $V$ is a sublattice of $L^0(m)$, we let $L_+(V)$ denote the space of positive functionals with domain $V$ and $L_r(V)$ denote the space of regular functionals with domain $V$. In this case, $L_+(V)$ is a positive cone in $L(V)$ and we order this space with respect to this cone, i.e., we say that $\ell_1,\ell_2 \in L(V)$ satisfy $\ell_1 \geq \ell_2$ if $\ell_1 - \ell_2 \in L_+(V)$. With respect to this order we understand the following lemma, which is a special case of \cite[Theorem~1.3.2]{M-N}. 
 
\begin{lemma}
  Let $V\subseteq L^0(m)$ be a sublattice and let $\ell \in L_r(V)$. Then 
  $$\ell_+:= \sup\{\ell,0\} \text{ and } \ell_- := \sup\{-\ell,0\}$$
  exist, are positive and satisfy $\ell = \ell_+ - \ell_-$. 
\end{lemma}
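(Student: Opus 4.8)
The plan is to realize $\ell_+$ via the Riesz--Kantorovich formula and then verify that the functional it defines is the least upper bound of $\{\ell,0\}$ in the ordered space $(L(V),\ge)$. For nonnegative $\psi \in V$ I would first set
$$\ell_+(\psi) := \sup\{\ell(\varphi) \mid \varphi \in V,\ 0 \le \varphi \le \psi\}.$$
Writing $\ell = \ell_1 - \ell_2$ with $\ell_1,\ell_2 \in L_+(V)$ shows the supremum is finite, since $0 \le \varphi \le \psi$ gives $\ell(\varphi) \le \ell_1(\varphi) \le \ell_1(\psi) < \infty$. Taking $\varphi = 0$ yields $\ell_+(\psi) \ge 0$, and taking $\varphi = \psi$ yields $\ell_+(\psi) \ge \ell(\psi)$; these two facts will show that the extended functional dominates both $0$ and $\ell$.

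The heart of the argument is to prove that $\ell_+$ is positively homogeneous and additive on the cone $V_+ := \{f \in V \mid f \ge 0\}$. Homogeneity is immediate from the definition. For additivity, the inequality $\ell_+(\psi_1) + \ell_+(\psi_2) \le \ell_+(\psi_1 + \psi_2)$ follows by adding admissible competitors. For the reverse inequality I would take $\varphi \in V$ with $0 \le \varphi \le \psi_1 + \psi_2$ and decompose $\varphi = \varphi_1 + \varphi_2$ with $\varphi_1 := \varphi \wedge \psi_1$ and $\varphi_2 := \varphi - \varphi \wedge \psi_1 = (\varphi - \psi_1) \vee 0$, checking $0 \le \varphi_1 \le \psi_1$ and $0 \le \varphi_2 \le \psi_2$. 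This Riesz decomposition step is exactly where the \emph{sublattice} hypothesis on $V$ enters, as it guarantees $\varphi_1, \varphi_2 \in V$; from $\ell(\varphi) = \ell(\varphi_1) + \ell(\varphi_2) \le \ell_+(\psi_1) + \ell_+(\psi_2)$ and passing to the supremum over $\varphi$ I obtain the missing estimate. I expect this decomposition, together with the bookkeeping that its pieces remain in $V$, to be the main obstacle.

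Once $\ell_+$ is additive and positively homogeneous on $V_+$, it extends uniquely to a linear functional on $V$: every $f \in V$ can be written as $f = g - h$ with $g,h \in V_+$ (e.g.\ $g = f \vee 0$, $h = (-f) \vee 0$), so I would put $\ell_+(f) := \ell_+(g) - \ell_+(h)$. Well-definedness follows from additivity, since two decompositions $f = g - h = g' - h'$ satisfy $g + h' = g' + h$ in $V_+$. By construction $\ell_+ \in L_+(V)$ and $\ell_+ - \ell \in L_+(V)$, so $\ell_+$ is an upper bound of $\{\ell,0\}$. To see it is the least one, let $g \in L(V)$ dominate both $\ell$ and $0$; note any such upper bound is automatically positive. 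For $0 \le \varphi \le \psi$ one has $g(\psi) = g(\varphi) + g(\psi - \varphi) \ge g(\varphi) \ge \ell(\varphi)$, using $g \ge 0$ and $g \ge \ell$, and taking the supremum over $\varphi$ gives $g(\psi) \ge \ell_+(\psi)$ for all $\psi \in V_+$, i.e.\ $g \ge \ell_+$. Hence $\ell_+ = \sup\{\ell,0\}$.

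Finally, since $-\ell = \ell_2 - \ell_1$ is regular as well, applying the same construction to $-\ell$ produces $\ell_- = \sup\{-\ell,0\}$, which is positive. For the decomposition I would use the pointwise identity, valid for $\psi \in V_+$,
$$\ell_+(\psi) - \ell(\psi) = \sup\{-\ell(\psi - \varphi) \mid 0 \le \varphi \le \psi\} = \sup\{-\ell(\chi) \mid 0 \le \chi \le \psi\} = \ell_-(\psi),$$
obtained by substituting $\chi = \psi - \varphi$. This shows $\ell_- = \ell_+ - \ell$ on $V_+$, and by linearity on all of $V$, giving $\ell = \ell_+ - \ell_-$ as claimed.
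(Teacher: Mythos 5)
Your proof is correct: the paper itself gives no argument for this lemma, citing it as a special case of the Riesz--Kantorovich theorem (\cite[Theorem~1.3.2]{M-N}), and your construction of $\ell_+(\psi)=\sup\{\ell(\varphi)\mid 0\le\varphi\le\psi\}$ with the Riesz decomposition $\varphi=\varphi\wedge\psi_1+(\varphi-\psi_1)\vee 0$ is exactly the standard proof underlying that citation. All the steps check out, including the use of the sublattice hypothesis to keep the decomposition pieces in $V$ and the substitution $\chi=\psi-\varphi$ yielding $\ell_+-\ell=\ell_-$ on the positive cone.
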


 For a regular functional $\ell$ we let $|\ell|:= \ell_+ + \ell_-$, where $\ell_+$ and $\ell_-$ are taken in the space $L_r(D(\ell))$.

\section{Quadratic forms}  \label{section:quadratic forms}

In the following section we provide the framework for later chapters, namely we introduce positive quadratic forms on topological vector spaces and study their fundamental properties. First, we treat  basic properties of quadratic forms on general vector spaces. More precisely, we give a criterion on how to check whether a given functional is a quadratic form and discuss two versions of the Banach-Saks theorem.  The second subsection is devoted to studying closedness of quadratic forms, a property that makes them accessible to functional analytic investigations. We discuss how closedness can be characterized in terms of lower semicontinuity and in terms of completeness of the form domain. This also leads to characterizations of the existence of closed extensions. In the last subsection we prove several technical lemmas, which provide convergence statements for nets in the form domain.   In view of later applications, we only treat the case when the underlying vector space is real. However, all the results of this section are  true for complex vector spaces as well.

\subsection{Basics} \label{section:basics on quadratic forms}

This subsection is devoted to the very basics properties of quadratic forms on real vector spaces. In what follows we always use the conventions   $x + \infty = \infty$ for all $x \in [0,\infty]$, $0 \cdot \infty = 0$ and $x \cdot \infty = \infty$ for all $x \in  (0,\infty]$.

\begin{definition}[Quadratic form] 
 Let $V$ be a vector space over $\IR$. A (positive) {\em quadratic form} is a functional $q:V \to [0,\infty]$ that satisfies the following conditions.
 \begin{itemize}
  \item $2q(u) + 2q(v) = q(u-v) + q(u+v)$ for all $u,v \in V$. \hfill(parallelogram identity)
  \item $q(\lambda u) = |\lambda|^2 q (u)$ for all $u \in V$ and all $\lambda \in \IR$. \hfill (homogeneity)
 \end{itemize}
The set  $$D(q) := \{u\in V \mid q(u) < \infty\}$$ is called the {\em domain of $q$} and the set $$\ker q := \{u\in V\mid q(u) = 0\}$$ is the {\em kernel of $q$}.
\end{definition}

For checking whether some functional is a quadratic form or not, it suffices to verify somewhat weaker conditions than the parallelogram identity and homogeneity. This is discussed in the following lemma. 

\begin{lemma} \label{lemma:inequality characterization of quadratic forms}
 Let $V$ be a vector space and let $q: V \to [0,\infty]$. The following assertions are equivalent.
 \begin{itemize}
  \item[(i)] $q$ is a quadratic form on $V$. 
  \item[(ii)] For all $u,v \in D(q)$ and all $\lambda \in \IR$ the inequalities
  $$q(\lambda u) \leq |\lambda|^2 q(u) \text{ and } q(u + v) + q(u-v) \leq 2q(u) + 2q(v)$$
  hold. 
  \item[(iii)] For all $u,v \in D(q)$ and all $\lambda \in \IR$ the inequalities
  $$q(\lambda u) \leq |\lambda|^2 q(u) \text{ and } 2q(u ) + 2q( v) \leq q(u+v) + q(u-v)$$
  hold. 
 \end{itemize}
\end{lemma}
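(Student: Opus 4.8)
The plan is to establish the full equivalence by proving the two trivial forward implications and then the two converses, where the real content is to upgrade each one-sided \emph{inequality} in (ii) and (iii) to the corresponding defining \emph{equality}. The implications (i) $\Rightarrow$ (ii) and (i) $\Rightarrow$ (iii) are immediate: if $q$ is a quadratic form then the parallelogram identity holds with equality, so both displayed inequalities (in either direction) follow, and homogeneity gives $q(\lambda u) = |\lambda|^2 q(u)$, in particular the claimed bound $q(\lambda u) \le |\lambda|^2 q(u)$.

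For the converse directions I would first promote the homogeneity inequality to an identity, since this part is common to (ii) and (iii) and is routine. Taking $\lambda = 0$ in $q(\lambda u) \le |\lambda|^2 q(u)$ and invoking the convention $0 \cdot \infty = 0$ gives $q(0) = 0$ (here one uses $D(q) \neq \emptyset$, the degenerate case $q \equiv \infty$ being excluded). For $\lambda \neq 0$ and $u \in D(q)$ the bound yields $\lambda u \in D(q)$, and combining the estimate for the scalar $\lambda$ with the estimate for $\lambda^{-1}$ applied to $\lambda u$, namely $q(u) = q(\lambda^{-1}(\lambda u)) \le |\lambda^{-1}|^2 q(\lambda u) \le q(u)$, forces $q(\lambda u) = |\lambda|^2 q(u)$; a short additional argument extends homogeneity to all of $V$ and shows $D(q)$ is closed under scalar multiplication.

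The engine of both converses is a substitution trick. Given $u, v$, set $a := u+v$ and $b := u-v$, so that $a + b = 2u$ and $a - b = 2v$. Feeding $a, b$ into the available parallelogram inequality and using the now-established homogeneity $q(2u) = 4q(u)$, $q(2v) = 4q(v)$ converts that inequality into its reverse for $u, v$; the two together yield the parallelogram identity. Concretely, in case (ii) the bound $q(u+v) + q(u-v) \le 2q(u) + 2q(v)$ applied to the pair $a, b$ produces $2q(u) + 2q(v) \le q(u+v) + q(u-v)$, and symmetrically in case (iii).

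The step I expect to be the main obstacle is ensuring this substitution is legitimate, i.e.\ controlling the domain, and here (ii) and (iii) genuinely differ. In case (ii) the inequality is self-correcting: for $u, v \in D(q)$ the right-hand side $2q(u) + 2q(v)$ is finite, so $q(u+v)$ and $q(u-v)$ are finite as well, whence $u \pm v \in D(q)$; thus $D(q)$ is a subspace and $a, b$ are admissible inputs. In case (iii) this is exactly what cannot be extracted from the hypotheses alone, since the reverse bound imposes no finite ceiling on $q(u+v)$; the clean resolution is to read the parallelogram inequality in (iii) as required for all $u, v \in V$ under the $[0,\infty]$ conventions — a reading under which restricting to $D(q)$ costs nothing in (ii) but is a genuine strengthening in (iii) — so that $a = u+v$, $b = u-v$ are always permissible and the identity follows on all of $V$. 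I would close by checking the remaining cases where $u$ or $v$ lies outside $D(q)$: using that $D(q)$ is a subspace, at least one of $u+v, u-v$ then has infinite energy, so both sides of the parallelogram identity equal $\infty$, and homogeneity has already been verified on all of $V$.
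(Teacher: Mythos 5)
Your treatment of (i) $\Rightarrow$ (ii), (iii) and of (ii) $\Rightarrow$ (i) is essentially identical to the paper's: the sandwich $q(u) = q(\lambda^{-1}\lambda u) \le |\lambda|^{-2} q(\lambda u) \le q(u)$ for homogeneity, the observation that the inequality in (ii) forces $u \pm v \in D(q)$ so that $D(q)$ is a subspace, and the substitution of the pair $(u+v, u-v)$ into the parallelogram inequality. Where you genuinely depart from the paper is (iii) $\Rightarrow$ (i), and your instinct there is correct and valuable: the paper dismisses this implication with the single sentence that it ``can be proven along the same lines'', but the first step of those lines --- that $D(q)$ is a vector space --- is exactly what (iii), read literally on $D(q)$, does not deliver, since the reverse inequality puts no finite ceiling on $q(u \pm v)$. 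Your worry is not pedantic; the literal implication is false. On $V = \IR^2$ let $q(u) = \|u\|^2$ when $u$ lies on one of the two coordinate axes and $q(u) = \infty$ otherwise: both inequalities of (iii) hold for all $u, v \in D(q)$ (with equality when $u,v$ share an axis, and trivially otherwise because then $q(u+v) = q(u-v) = \infty$), yet $q$ is not a quadratic form, since for $u = (1,0)$, $v = (0,1)$ one has $2q(u) + 2q(v) = 4$ while $q(u+v) + q(u-v) = \infty$. Your repair --- requiring the inequality in (iii) for all $u, v \in V$ under the $[0,\infty]$ conventions, which as you note costs nothing in (ii) but is a genuine strengthening in (iii) --- makes the substitution $(u+v, u-v)$ always admissible; the derived reverse inequality then also shows $D(q)$ is additively closed, so your closing case analysis goes through. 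Your side remark on the degenerate case $q \equiv \infty$ (where (ii) and (iii) hold vacuously but $q(0) = \infty \ne 0$) flags a second small edge that the paper's proof silently skips, since its argument for $q(0) = 0$ also needs some $u \in D(q)$. In short: your proposal is correct, matches the paper where the paper is sound, and identifies and fixes a real defect in the paper's statement and proof of direction (iii).
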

\begin{proof}
 (ii) $\Rightarrow$ (i): Assertion (ii) implies that $D(q)$ is a vector space. Hence, it suffices to prove homogeneity and the parallelogram identity for elements in $D(q)$. 
 
 For $0\neq \lambda \in \IR$ and $u \in D(q)$ assertion (ii) yields
 $$q(u) = q(\lambda^{-1}\lambda u) \leq |\lambda|^{-2} q(\lambda u) \leq q(u)$$
and
$$0 \,  q(u) = 0 \leq q(0\, u ) \leq 0\, q(u).$$
 Therefore, $q$ is homogeneous on $D(q)$. Since $D(q)$ is a vector space, assertion (ii) and the homogeneity of $q$ imply for $u,v \in D(q)$ the inequality
 $$4q(u) + 4q(v) = q(2u) + q(2v) = q((u + v) + (u-v)) + q((u+v) - (u-v)) \leq 2 q(u+v) + 2 q(u-v).$$
 Hence, the parallelogram identity holds for elements in $D(q)$. 
 
 The implication (iii) $\Rightarrow$ (i) can be proven along the same lines.  This finishes the proof. 
\end{proof}

It is an immediate consequence of the definition that the domain of a quadratic form and its kernel are vector spaces. By a theorem of Jordan and von Neumann \cite[Theorem~I]{JvN} any quadratic form induces a bilinear form on its domain via polarization, i.e., the mapping
$$q:D(q) \times D(q) \to \IR, \quad (u,v) \mapsto q(u,v) := \frac{1}{4}\left(q(u+v) - q(u-v)\right)$$
is bilinear.  We abuse notation and use the same letter for a quadratic form and the induced bilinear form on its domain. In this sense, we have $q(u) = q(u,u)$ for all $u \in D(q)$. The bilinearity of $q$ implies that the mapping 
$$\|\cdot\|_q:D(q) \to [0,\infty), \quad u \mapsto \sqrt{q(u)}$$
is a seminorm that we call the {\em form seminorm} associated with $q$.

The fact that form seminorms come from degenerate inner products implies that bounded subsets with respect to such norms are almost weakly compact, i.e., their completion is weakly compact. Together with Theorem~\ref{theorem:weak closure of convex sets} this is the reason behind the following well-known lemma, see e.g. \cite[Theorem~A.4.1]{CF}. 
\begin{lemma}[Banach-Saks theorem for sequences]\label{lemma:banach saks sequences}
 Let $q$ be a quadratic form on $V$ and let $(u_n)$ be a $\|\cdot\|_q$-bounded sequence in $D(q)$. It possesses a subsequence $(u_{n_k})$ whose sequence of C\'esaro means 
 $$\left(\frac{1}{N} \sum_{k = 1}^N u_{n_k}\right)_{N \geq 1}$$
 is $\|\cdot\|_q$-Cauchy. 
\end{lemma}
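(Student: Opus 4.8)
The plan is to reduce the statement to the classical Banach--Saks theorem in a Hilbert space, which I would then prove by the standard inner-product estimate on Cesàro means.

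\emph{Reduction to a Hilbert space.} Since $q$ restricts to a symmetric bilinear form on $D(q)$ with $q(u) = \|u\|_q^2 \geq 0$, the form seminorm $\|\cdot\|_q$ descends to a genuine norm on the quotient $D(q)/\ker q$, and this norm comes from the inner product induced by polarization. Let $H$ be the completion of $D(q)/\ker q$ and let $\iota : D(q) \to H$ be the composite of the quotient map with the canonical embedding into $H$; by construction $\langle \iota u, \iota v\rangle_H = q(u,v)$, so $\|\iota u\|_H = \|u\|_q$, and $\iota$ is linear, sending finite linear combinations to the corresponding combinations in $H$. If $M$ bounds $\|u_n\|_q$, then $(\iota u_n)$ is a bounded sequence in $H$.

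\emph{Weak subsequence and the means.} Since closed balls in a Hilbert space are weakly sequentially compact, I would pass to a subsequence with $\iota u_{n_k} \rightharpoonup \xi$ for some $\xi \in H$. Setting $x_k := \iota u_{n_k} - \xi$ reduces the problem to a bounded sequence converging weakly to $0$, because the Cesàro means of $(\iota u_{n_k})$ differ from those of $(x_k)$ only by the fixed vector $\xi$. Now thin out once more: having chosen $k_1 < \dots < k_{j-1}$, use $\langle x_{k_i}, x_m\rangle \to 0$ as $m \to \infty$ (weak convergence to $0$) to select $k_j$ with $|\langle x_{k_i}, x_{k_j}\rangle| < 1/j$ for all $i < j$. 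Expanding the squared norm gives
$$\left\|\frac{1}{N}\sum_{j=1}^N x_{k_j}\right\|_H^2 = \frac{1}{N^2}\left(\sum_{j=1}^N \|x_{k_j}\|_H^2 + 2\sum_{i<j}\langle x_{k_i}, x_{k_j}\rangle\right).$$
The diagonal sum is at most $N(M + \|\xi\|_H)^2$, contributing $O(1/N)$, and the off-diagonal sum is bounded in absolute value by $\sum_{j \leq N}(j-1)/j \leq N$, again contributing $O(1/N)$. Hence the means of $(x_{k_j})$ tend to $0$ in $H$, so the means of $(\iota u_{n_{k_j}})$ converge to $\xi$ and are in particular $\|\cdot\|_H$-Cauchy. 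As $\iota$ is seminorm-preserving, the Cesàro means $\frac{1}{N}\sum_{j=1}^N u_{n_{k_j}}$, which lie in $D(q)$ since $D(q)$ is a vector space, are $\|\cdot\|_q$-Cauchy; relabelling this sub-subsequence yields the claim.

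\emph{Main obstacle.} The only genuinely nontrivial input is the existence of a weakly convergent subsequence in $H$; the remainder is bookkeeping with the inner product. Alternatively, Theorem~\ref{theorem:weak closure of convex sets} supplies this weak compactness through reflexivity of the completion, which is the mechanism the surrounding text alludes to. Some care is needed because $\|\cdot\|_q$ is only a seminorm on $D(q)$: the asserted Cauchyness is Cauchyness with respect to this seminorm, equivalently $\|\cdot\|_H$-Cauchyness of the images under $\iota$, and no convergence within $D(q)$ is claimed.
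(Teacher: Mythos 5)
Your proof is correct, and it is precisely the classical Hilbert-space Banach--Saks argument (completion of $(D(q)/\ker q, q)$, extraction of a weakly convergent subsequence, thinning so the off-diagonal inner products are $O(1/j)$, and expansion of the norm of the Ces\`aro means) that the paper itself does not write out but delegates to the cited reference \cite[Theorem~A.4.1]{CF}. One harmless caveat: your closing remark attributes the weak sequential compactness to Theorem~\ref{theorem:weak closure of convex sets}, which is Mazur's theorem on closures of convex sets and not a compactness statement---but since your main argument obtains the weakly convergent subsequence directly from reflexivity of the Hilbert space completion, nothing in the proof depends on that aside.
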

 
We say that a form $\tilde{q}$ is an {\em extension of $q$} if its domain $D(\tilde{q})$ satisfies $D(q) \subseteq D(\tilde{q})$ and the equation $q(u) = \tilde{q}(u)$ holds for all $u \in D(q)$. For any subspace $D\subseteq V$, we let $q|_D$, the {\em restriction of $q$ to $D$}, be given by
$$q|_D(u):= \begin{cases}
             q(u) &\text{ if } u\in D\\
             \infty &\text{ else}
            \end{cases}.
$$
A form $\tilde{q}$ is then called a {\em restriction} of $q$ if $\tilde{q} = q|_{D(\tilde{q})}.$ 

There is a natural partial order on the set of quadratic forms on a given vector space $V$. Namely, we say that  $q \leq q'$ if  for all $u \in V$ the inequality $q'(u) \leq q(u)$ is satisfied. Note that this order is opposite from what one would expect when comparing form values; we rather compare the size of form domains than their values. Indeed, the following characterization holds.
$$q\leq q' \text{ if and only if } D(q) \subseteq D(q') \text{ and } q'(u) \leq q(u) \text{ for each } u \in D(q). $$

\subsection{Closed forms} \label{sect:closed forms}

In this section we study properties of quadratic forms that make them accessible to functional analytic investigations. Given a quadratic form on a topological vector space, we first introduce a natural vector space topology on its domain and then show that its completeness is related to lower semicontinuity of the form. 

\begin{definition}[Form topology]
 Let $(V,\tau)$ be a topological vector space and let $q$ be a quadratic form on $V$. The {\em form topology} $\tau_q$ is is the smallest topology on $D(q)$ containing both the subspace topology of $\tau$ and the topology generated by the seminorm $\|\cdot\|_q$.
\end{definition}

Obviously, the form topology is a vector space topology and a net converges with respect to $\tau_q$ if and only if it converges with respect to both $\tau$ and $\|\cdot\|_q$. 

\begin{definition}(Closedness and closability) 
 Let $(V,\tau)$ be a topological vector space.  A a quadratic form $q$ on $V$ is called {\em closed} (with respect to $\tau$) if the topological vector space $(D(q),\tau_q)$ is complete. It is called {\em closable} if it possesses a closed extension. 
\end{definition}

We start by proving that closedness is preserved under restrictions and that closed forms on a topological vector space form a cone.

\begin{lemma}[Form restriction] \label{lemma:form restriction}
 Let $(V,\tau)$ be a topological vector space and let $q$ be a closed form on $V$. Let $(\ow{V},\ow{\tau})$ be a complete topological vector space and $\iota:\ow{V} \to V$ a continuous linear map. Then  $q\circ \iota$ is a closed quadratic form on $\ow{V}$.  
\end{lemma}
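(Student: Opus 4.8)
The plan is to verify the two defining requirements separately: that $q \circ \iota$ is a quadratic form on $\ow{V}$, and that the associated form topology on its domain is complete. The first is routine. Since $\iota$ is linear, for $u,v \in \ow{V}$ and $\lambda \in \IR$ we have $\iota(\lambda u) = \lambda \iota(u)$ and $\iota(u\pm v) = \iota(u) \pm \iota(v)$, so homogeneity and the parallelogram identity for $q\circ\iota$ are inherited directly from those of $q$ (alternatively, this follows at once from Lemma~\ref{lemma:inequality characterization of quadratic forms}). I would then record two identities that drive the rest of the argument: the domain is $D(q\circ\iota) = \iota^{-1}(D(q))$, and on this domain the form seminorm satisfies $\|u\|_{q\circ\iota} = \|\iota u\|_q$. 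In particular $\iota$ maps $D(q\circ\iota)$ into $D(q)$ and is an isometry for the respective form seminorms.

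For closedness I would take an arbitrary Cauchy net $(u_i)$ in $(D(q\circ\iota), \ow{\tau}_{q\circ\iota})$ and show it converges there. Since the form topology is the join of the subspace topology of $\ow{\tau}$ and the topology of $\|\cdot\|_{q\circ\iota}$, being Cauchy in $\ow{\tau}_{q\circ\iota}$ is equivalent to being Cauchy with respect to both $\ow{\tau}$ and $\|\cdot\|_{q\circ\iota}$. Completeness of $(\ow{V},\ow{\tau})$ then yields a $\ow{\tau}$-limit $u \in \ow{V}$ of $(u_i)$. Now I would push the net forward through $\iota$: the net $(\iota u_i)$ lies in $D(q)$, is $\|\cdot\|_q$-Cauchy by the isometry above, and is $\tau$-convergent to $\iota u$ by continuity of $\iota$; hence $(\iota u_i)$ is Cauchy with respect to $\tau_q$. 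As $q$ is closed, $(D(q),\tau_q)$ is complete, so $(\iota u_i)$ has a $\tau_q$-limit $w \in D(q)$, which is in particular a $\tau$-limit.

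The one point that needs care — and the only genuine obstacle — is to show that the candidate limit $u$ actually lies in $D(q\circ\iota)$, so that the form seminorm estimate is even meaningful for it. Here I would invoke that $(V,\tau)$ is Hausdorff: the net $(\iota u_i)$ has the two $\tau$-limits $\iota u$ and $w$, forcing $\iota u = w \in D(q)$, which says exactly that $u \in \iota^{-1}(D(q)) = D(q\circ\iota)$. With $u$ in the domain, the isometry gives $\|u_i - u\|_{q\circ\iota} = \|\iota u_i - w\|_q \to 0$, and together with $u_i \to u$ in $\ow{\tau}$ this means $u_i \to u$ in $\ow{\tau}_{q\circ\iota}$. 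Thus every Cauchy net in the form topology converges, $(D(q\circ\iota), \ow{\tau}_{q\circ\iota})$ is complete, and $q\circ\iota$ is a closed quadratic form on $\ow{V}$.
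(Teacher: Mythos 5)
Your proof is correct and follows essentially the same route as the paper: use completeness of $(\ow{V},\ow{\tau})$ to get a $\ow{\tau}$-limit $u$, push the net forward through $\iota$ to a $\tau_q$-Cauchy net, use closedness of $q$ to get a $\tau_q$-limit in $D(q)$, and identify it with $\iota u$ by continuity of $\iota$ (uniqueness of limits in the Hausdorff space $V$), which places $u$ in the domain. The only difference is presentational — you make explicit the identities $D(q\circ\iota)=\iota^{-1}(D(q))$ and $\|u\|_{q\circ\iota}=\|\iota u\|_q$ and the role of the Hausdorff axiom, which the paper leaves implicit.
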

\begin{proof}
We set $\ow{q} := q\circ \iota$. Obviously, $\ow{q}$ is a quadratic form. We show that $(D(\ow{q}),\ow{\tau}_{\ow{q}})$ is complete. To this end, let $(u_i)$ be a  $\ow{\tau}_{\ow{q}}$-Cauchy net in $D(\ow{q})$. The completeness of $(\ow{V},\ow{\tau})$ implies that it has a $\ow{\tau}$-limit $u$.  By the continuity and the linearity of $\iota$, and the definition of $\ow{q}$, the net $\iota(u_i)$ is $\tau_q$-Cauchy. Since $q$ is closed it has a $\tau_q$-limit $v \in D(q)$. From the continuity of $\iota$ we infer $\iota (u) = v$. This shows $u_i \to u$ with respect to $\ow{\tau}_{\ow{q}}$.
\end{proof}
\begin{remark}
 The completeness assumption on $(\ow{V},\ow{\tau})$  in the previous lemma is necessary.  For example, consider $V = \IR$ with the Euclidian topology and $\ow{V} = \mathbb{Q}$ with the induced subspace topology. The quadratic form $q:\IR \to [0,\infty)$, $x \mapsto x^2$ is closed as its form topology is the Euclidean topology. However, its restriction to $\mathbb{Q}$ is not closed since $\mathbb{Q}$ with the Euclidean topology is not complete. 
\end{remark}
\begin{lemma}[Form sum]
 Let $(V,\tau)$ be a topological vector space and let $q$ and $\ow{q}$ be closed quadratic forms on $V$. 
 $$q+\ow{q}:V \to[0,\infty], \quad u \mapsto q(u) + \ow{q}(u)$$
 is a closed quadratic form. Its domain satisfies $D(q + \ow{q}) = D(q) \cap D(\ow{q})$. 
\end{lemma}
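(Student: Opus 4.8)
The plan is to verify the two defining conditions of a quadratic form for $q + \ow{q}$ and then establish closedness by comparing the form topology $\tau_{q+\ow{q}}$ with the topologies $\tau_q$ and $\tau_{\ow q}$. First I would note that the domain statement $D(q + \ow{q}) = D(q) \cap D(\ow{q})$ is immediate from the conventions on $[0,\infty]$-valued arithmetic: $q(u) + \ow{q}(u) < \infty$ precisely when both summands are finite. To see that $q + \ow{q}$ is a quadratic form, I would invoke Lemma~\ref{lemma:inequality characterization of quadratic forms} rather than check the parallelogram identity directly on the extended reals. Both $q$ and $\ow{q}$ satisfy $q(\lambda u) \leq |\lambda|^2 q(u)$ and the parallelogram inequality on their domains; adding these inequalities term by term shows that $q + \ow{q}$ satisfies condition (ii) of that lemma on $D(q)\cap D(\ow q)$, hence $q + \ow{q}$ is a quadratic form.

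For closedness, the key observation is the relationship between the seminorms. Since $\|u\|_{q+\ow q}^2 = q(u) + \ow q(u) = \|u\|_q^2 + \|u\|_{\ow q}^2$, a net is $\|\cdot\|_{q+\ow q}$-Cauchy if and only if it is simultaneously $\|\cdot\|_q$-Cauchy and $\|\cdot\|_{\ow q}$-Cauchy. Consequently the form topology $\tau_{q+\ow q}$ on $D(q+\ow q)$ is the smallest topology containing the subspace topology of $\tau$ together with both $\|\cdot\|_q$ and $\|\cdot\|_{\ow q}$; equivalently, a net converges in $\tau_{q+\ow q}$ iff it converges in $\tau$, in $\|\cdot\|_q$, and in $\|\cdot\|_{\ow q}$ simultaneously.

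Now I would take a $\tau_{q+\ow q}$-Cauchy net $(u_i)$ in $D(q+\ow q)$. By the remark above it is Cauchy with respect to $\tau$, with respect to $\|\cdot\|_q$, and with respect to $\|\cdot\|_{\ow q}$. In particular it is $\tau_q$-Cauchy, and since $q$ is closed, $(D(q),\tau_q)$ is complete, so $(u_i)$ has a $\tau_q$-limit $u \in D(q)$; in particular $u_i \to u$ with respect to $\tau$ and with respect to $\|\cdot\|_q$. Symmetrically, $(u_i)$ is $\tau_{\ow q}$-Cauchy, so it has a $\tau_{\ow q}$-limit $u' \in D(\ow q)$, with $u_i \to u'$ in $\tau$. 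Since $\tau$ is Hausdorff and both $u$ and $u'$ are $\tau$-limits of the same net, we get $u = u' \in D(q)\cap D(\ow q) = D(q+\ow q)$. Then $u_i \to u$ in $\tau$, in $\|\cdot\|_q$, and in $\|\cdot\|_{\ow q}$, which is exactly convergence in $\tau_{q+\ow q}$. Hence $(D(q+\ow q), \tau_{q+\ow q})$ is complete and $q+\ow q$ is closed.

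I expect the only subtle point to be the identification $u = u'$ of the two limits, which relies on the Hausdorff property of $\tau$ built into the definition of a topological vector space and on the fact that the form topology refines $\tau$; everything else is a routine transcription of the Cauchy/convergence characterization of $\tau_q$ recorded right after the definition of the form topology. No genuine obstacle arises, but one must be careful to phrase the argument for nets rather than sequences, since completeness here is completeness with respect to the uniform structure $\mathcal{U}(\tau)$.
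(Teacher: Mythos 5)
Your proof is correct and is precisely the routine argument the paper leaves implicit (its proof reads only ``This is straightforward.''): the identity $\|\cdot\|_{q+\ow{q}}^2 = \|\cdot\|_q^2 + \|\cdot\|_{\ow{q}}^2$, completeness of $(D(q),\tau_q)$ and $(D(\ow{q}),\tau_{\ow{q}})$, and the Hausdorff property of $\tau$ to identify the two limits. The only refinement worth noting is that you correctly phrase everything for nets and invoke Lemma~\ref{lemma:inequality characterization of quadratic forms} to sidestep arithmetic in $[0,\infty]$, exactly as the paper's conventions require.
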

\begin{proof}
 This is straightforward.
\end{proof}
\begin{remark}
In order to prove that the form sum of $q$ and $\ow{q}$ is closed, it actually suffices to assume that $q$ is closed on $(V,\tau)$ and that $\ow{q}$ is closed on $(D(q),\tau_q)$.
\end{remark}

We now turn to proving characterizations of closedness and closability of forms in terms of their continuity properties. The first continuity property that we discuss is lower semicontinuity.
\begin{definition}[Lower semicontinuity]
Let $(V,\tau)$ be a topological vector space. A function $f:V \to \IR \cup \{\infty\} \cup \{ -\infty\}$  is called {\em lower semicontinuous}  (with respect to $\tau$) if for all nets $(u_i)$ in $V$ and all $u\in V$ the  convergence $u_i \to u$ with respect to $\tau$  implies 
 $$f(u) \leq \liminf_{i} f(u_i).$$
\end{definition}
Lower semicontinuity of  a function $f$ can be characterized in terms of closedness of its {\em epigraph}
  $${\rm epi} f := \{(u,\lambda) \in V \times \IR \mid f(u) \leq \lambda\}.$$
\begin{lemma}\label{lemma:lower semicontinuity via epigraph}
 Let $(V,\tau)$ be a topological vector space. A function $f:V \to \IR \cup \{\infty\} \cup \{ -\infty\}$ is lower semicontinuous if and only if ${\rm epi} f$ is closed in $V \times \IR$. In particular, if $(V,\tau)$ is metrizable, then $f$ is lower semicontinuous if and only if for all sequences $(u_n)$ in $V$ and all $u \in V$ the convergence $u_n \to u$ with respect to $\tau$ implies
 $$f(u) \leq \liminf_{n\to \infty} f(u_n).$$
\end{lemma}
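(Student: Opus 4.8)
The plan is to prove the net-characterization of lower semicontinuity via closedness of the epigraph first, and then to deduce the metrizable addendum by observing that in a metrizable setting closedness may be tested with sequences. Throughout I will use the standard fact that a subset of a topological space is closed if and only if it contains the limit of every convergent net taking values in it.

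For the implication from lower semicontinuity to closedness of $\mathrm{epi}\, f$, I would start with a net $(u_i,\lambda_i)$ in $\mathrm{epi}\, f$ converging to $(u,\lambda)$ in $V\times\IR$. Convergence in the product topology gives $u_i\to u$ in $V$ and $\lambda_i\to\lambda$ in $\IR$, while membership in the epigraph gives $f(u_i)\le\lambda_i$ for every $i$. Lower semicontinuity then yields $f(u)\le\liminf_i f(u_i)\le\liminf_i\lambda_i=\lambda$, so $(u,\lambda)\in\mathrm{epi}\, f$, and the epigraph is closed.

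For the converse I would argue by contradiction. Assume $\mathrm{epi}\, f$ is closed, let $u_i\to u$, and set $L:=\liminf_i f(u_i)$. If $f(u)>L$, I can choose a real number $\mu$ with $L<\mu<f(u)$; such a $\mu$ exists in every case, including the extended-real ones, precisely because $f(u)>L$ holds strictly. Writing $\liminf_i f(u_i)=\sup_i\inf\{f(u_j)\mid i\prec j\}$, where the inner infima are monotone in $i$, the inequality $L<\mu$ forces $\inf\{f(u_j)\mid i\prec j\}<\mu$ for every $i$; hence for each $i$ there is a $j$ with $i\prec j$ and $f(u_j)<\mu$, so the index set $\{i\mid f(u_i)<\mu\}$ is cofinal and determines a subnet $(u_{i_k})$ with $f(u_{i_k})<\mu$ throughout. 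Then $(u_{i_k},\mu)\in\mathrm{epi}\, f$, and since $u_{i_k}\to u$ the net $(u_{i_k},\mu)$ converges to $(u,\mu)$ in $V\times\IR$; closedness of the epigraph gives $f(u)\le\mu$, contradicting $\mu<f(u)$. Thus $f(u)\le L$, which is lower semicontinuity.

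Finally, for the metrizable statement I would use that $V\times\IR$ is metrizable whenever $V$ is, so $\mathrm{epi}\, f$ is closed if and only if it is sequentially closed. It then suffices to rerun the two implications above verbatim with sequences in place of nets, the only point requiring care being the extraction step in the converse, where from $\liminf_{n\to\infty}f(u_n)<\mu$ one extracts an honest subsequence with values below $\mu$. I expect this cofinal extraction in the converse, together with the careful choice of the separating real $\mu$ in the presence of the values $\pm\infty$, to be the main technical point; the remaining manipulations are routine.
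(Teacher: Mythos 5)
Your proof is correct and follows exactly the route the paper intends: the paper dismisses this lemma with ``follows immediately from the definitions and the fact that in metrizable spaces closed sets can be characterized by sequences,'' and your argument simply fills in those standard details (net characterization of closedness, the cofinal-subnet extraction --- which is valid, since a cofinal subset of a directed set is itself directed and the inclusion gives a subnet in the paper's sense --- and the separating $\mu \in \IR$, whose existence you correctly verify in all extended-real cases). No gaps; this is a fully worked-out version of the paper's one-line proof.
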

\begin{proof}
 The statements follow immediately from the definitions and the fact that in metrizable spaces closed sets can be characterized by sequences. 
\end{proof}

\begin{proposition}[Lower semicontinuous forms are closed]\label{prop:lower semincontinuity implies closedness}
 Let $(V,\tau)$ be a complete topological vector space. A lower semicontinuous quadratic form on $V$  is closed.
\end{proposition}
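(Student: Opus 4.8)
The plan is to verify directly that $(D(q),\tau_q)$ is complete, using the remark following the definition of the form topology: a net converges with respect to $\tau_q$ if and only if it converges with respect to both $\tau$ and $\|\cdot\|_q$, and correspondingly a $\tau_q$-Cauchy net is precisely one that is simultaneously $\tau$-Cauchy and $\|\cdot\|_q$-Cauchy. So I would begin with an arbitrary $\tau_q$-Cauchy net $(u_i)_{i\in I}$ in $D(q)$. Its $\tau$-Cauchyness together with the completeness of $(V,\tau)$ produces a $\tau$-limit $u\in V$. It then remains to check that $u\in D(q)$ and that $u_i\to u$ in $\|\cdot\|_q$; these two facts combined give $u_i\to u$ with respect to $\tau_q$, which is exactly what closedness requires.

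Both remaining claims should follow from a single application of lower semicontinuity. Fix $\varepsilon>0$ and, using $\|\cdot\|_q$-Cauchyness, choose $i_\varepsilon$ so that $q(u_i-u_j)\leq \varepsilon^2$ whenever $i_\varepsilon \prec i$ and $i_\varepsilon\prec j$. For a fixed index $i$ with $i_\varepsilon\prec i$, I would then look at the net $(u_i-u_j)_j$; by continuity of the vector space operations and $u_j\to u$ it converges in $\tau$ to $u_i-u$. Lower semicontinuity of $q$ applied to this net yields
$$q(u_i-u) \leq \liminf_j q(u_i-u_j) \leq \varepsilon^2,$$
where the last inequality uses that $q(u_i-u_j)\leq \varepsilon^2$ for all sufficiently large $j$, the relevant tail being cofinal by transitivity of $\prec$.

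This single estimate closes the argument. The bound $q(u_i-u)\leq \varepsilon^2<\infty$ shows $u_i-u\in D(q)$, so $u = u_i-(u_i-u)\in D(q)$ because $D(q)$ is a vector space; and $\|u_i-u\|_q\leq\varepsilon$ for all $i$ with $i_\varepsilon\prec i$ is exactly $\|\cdot\|_q$-convergence of $(u_i)$ to $u$. Hence $u_i\to u$ in $\tau_q$, and $(D(q),\tau_q)$ is complete.

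I expect the only genuinely delicate point to be the bookkeeping in the $\liminf$ step for nets: one must apply the lower semicontinuity hypothesis to the net $(u_i-u_j)_j$ indexed by $j$ with $i$ held fixed (not to the original net), and confirm that the set of indices on which the form values stay below $\varepsilon^2$ is cofinal, so that the $\liminf$ over $j$ is indeed bounded by $\varepsilon^2$. Everything else is a formal consequence of the definition of the form topology and the linear structure of $D(q)$.
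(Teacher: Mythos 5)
Your proof is correct and follows essentially the same route as the paper's: obtain the $\tau$-limit $u$ from completeness of $(V,\tau)$, then apply lower semicontinuity to the net $(u_i-u_j)_j$ with $i$ fixed to get $q(u_i-u)\leq\liminf_j q(u_i-u_j)$, which via $\|\cdot\|_q$-Cauchyness simultaneously yields $u\in D(q)$ and convergence in the form seminorm, hence in $\tau_q$. The only difference is that you make explicit the $\varepsilon$-bookkeeping and the cofinality argument in the $\liminf$ step, which the paper leaves implicit.
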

\begin{proof}
 Let $q$ be a lower semicontinuous quadratic form on $V$ and let $(u_i)$ be a Cauchy net in $(D(q),\tau_q)$. We need to show that it has a limit with respect to $\tau_q$. 
 
The completeness of $(V,\tau)$ implies that $(u_i)$ has a $\tau$-limit $u \in V$. From the lower semicontinuity of $q$ we infer
$$q(u - u_i) \leq \liminf_{j} q(u_j - u_i). $$
Since $(u_i)$ is Cauchy with respect to $q$, this inequality implies $u \in D(q)$ and $u_i \to u$ with respect to $q$. We obtain $u_i \to u$ with respect to  $\tau_q$. 
\end{proof}

\begin{remark}
  The completeness assumption on $(V,\tau)$ in the previous proposition is necessary. For example, consider the the rational numbers $\mathbb{Q}$ equipped with the standard Euclidian topology and the quadratic form $q:\mathbb{Q} \to [0,\infty)$, $x \mapsto x^2$.  Obviously, $q$ is continuous but not closed. The same is true for any non-complete pre-Hilbert space and the quadratic form coming from its inner product. 
\end{remark}

For the converse of the previous proposition, we do not need completeness of the underlying space. 

\begin{theorem}[Closed forms are lower semicontinuous] \label{theorem:characterization closedness}
 Let $(V,\tau)$ be a topological vector space. A closed quadratic form on $V$ is lower semicontinuous.
\end{theorem}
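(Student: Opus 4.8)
The plan is to verify the lower-semicontinuity inequality directly: given a net $u_i \to u$ in $\tau$, I want $q(u) \le \liminf_i q(u_i)$. If the right-hand side is $+\infty$ there is nothing to prove, so I set $L := \liminf_i q(u_i) < \infty$ and pass to a cofinal subnet along which $q(u_i) \to L$; this subnet still converges to $u$ in $\tau$ and is now $\|\cdot\|_q$-bounded and eventually contained in $D(q)$. The guiding idea is to manufacture from this bounded net a single net $(w_n)$ that converges to $u$ in $\tau$ and is \emph{simultaneously} Cauchy for the form seminorm $\|\cdot\|_q$, and then to feed it into the completeness of $(D(q),\tau_q)$ that closedness provides.

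To build $(w_n)$ I would invoke the Banach--Saks theorem (Lemma~\ref{lemma:banach saks sequences}): from the $\|\cdot\|_q$-bounded family I extract means $w_n$ that are convex combinations of the $u_i$ and are $\|\cdot\|_q$-Cauchy. Since $\|u_i\|_q \to \sqrt{L}$ along the chosen subnet, the triangle inequality for $\|\cdot\|_q$ together with Cesàro averaging forces $\limsup_n \|w_n\|_q \le \sqrt L$. Because each $w_n$ is a convex combination of terms of a net that $\tau$-converges to $u$, I also need $w_n \to u$ in $\tau$. Granting this, $(w_n)$ is Cauchy for both $\tau$ and $\|\cdot\|_q$, hence $\tau_q$-Cauchy.

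Closedness then finishes the argument. Since $q$ is closed, $(D(q),\tau_q)$ is complete, so $(w_n)$ has a $\tau_q$-limit $v \in D(q)$. In particular $w_n \to v$ in $\tau$; as $\tau$ is Hausdorff and also $w_n \to u$ in $\tau$, we conclude $v = u$, so $u \in D(q)$. Finally, $w_n \to u$ in $\|\cdot\|_q$ yields $\|u\|_q = \lim_n \|w_n\|_q \le \sqrt L$, that is $q(u) \le L = \liminf_i q(u_i)$, which is exactly lower semicontinuity.

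The main obstacle is the reconciliation of the two topologies on $D(q)$ at the averaging step: I must ensure that the Banach--Saks means, produced to control $\|\cdot\|_q$, still converge to $u$ in $\tau$. This is delicate precisely because $\tau$ (think of $\tau(m)$ on $L^0(m)$) need be neither metrizable nor locally convex, so both the passage from the net to a sequence of means and the claim that convex combinations of a $\tau$-null net are again $\tau$-null require care. This is where a net-adapted form of the Banach--Saks theorem, together with the continuity of the vector-space operations, has to carry the weight rather than the purely sequential statement; an alternative route would be to realize the bounded net weakly in the Hilbert-space completion of $(D(q)/\ker q,\|\cdot\|_q)$ and use a Mazur-type argument, but identifying that weak limit with $u\in V$ runs into the same two-topology difficulty.
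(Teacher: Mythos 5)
Your overall strategy is the same as the paper's proof of Theorem~\ref{theorem:characterization closedness} --- Banach--Saks means made $\|\cdot\|_q$-Cauchy, fed into the completeness of $(D(q),\tau_q)$, limit identified with $u$ via Hausdorffness, and $q(u)\le L$ via the seminorm triangle inequality --- but there is a genuine gap at exactly the point you flag and then ``grant'': you never secure that the averaged terms $\tau$-converge to $u$. In the generality claimed this is the whole content of the theorem. Without local convexity there is no principle that convex combinations of a $\tau$-null net are $\tau$-null: in spaces such as $L^p(m)$ with $0<p<1$, or $L^0(m)$ with Lebesgue measure, the convex hull of a zero-neighborhood can be huge (the continuous dual may be trivial), and the sequential Banach--Saks lemma selects its subsequence with no regard to $\tau$ whatsoever. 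Moreover, since $\tau$ need not be metrizable, a single \emph{sequence} $(w_n)$ cannot in general be made to $\tau$-converge to $u$; some net construction is unavoidable. Your fallback via the Hilbert completion of $(D(q)/\ker q, q)$ and a Mazur-type argument fails for the same reason, as you note: Mazur is a locally convex tool.

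The paper closes this gap with a rapid-selection device using balanced neighborhoods, and this is the idea missing from your proposal. Fix $i\in I$ and a \emph{balanced} zero-neighborhood $U$; choosing balanced neighborhoods $W_k$ with $W_1+\cdots+W_k\subseteq U$ and indices $\varphi(i,U,k)\succ i$ so deep that $u_{\varphi(i,U,k)}-u\in W_k$, one arranges that \emph{every finite sum} $\sum_{k\in F}\bigl(u_{\varphi(i,U,k)}-u\bigr)$ lies in $U$ --- and this finite-sum property survives passing to the further Banach--Saks subsequence from Lemma~\ref{lemma:banach saks sequences}, which is why it is demanded for all finite $F$ and not just initial segments. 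Then the Ces\`aro mean satisfies $\frac{1}{N}\sum_{k=1}^{N}u_{\varphi(i,U,k)}-u\in\frac{1}{N}U\subseteq U$ by balancedness alone; no convexity of $U$ is used, so $\tau$-convergence of the means to $u$ is built in at selection time rather than hoped for after averaging. Assembling the means $v_{(i,U,N)}$ into a net indexed by $I\times\mathcal{O}\times\IN$ (with $\mathcal{O}$ the balanced zero-neighborhoods) gives a $\tau_q$-Cauchy net converging to $u$ in $\tau$ with $\limsup q(v_j)^{1/2}\le\lim_i q(u_i)^{1/2}$, and your closing argument (completeness, Hausdorffness, $q(u)\le L$) then goes through verbatim. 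So the skeleton of your proof is right, but as written the decisive step would fail precisely in the non-locally-convex setting --- e.g.\ $(L^0(m),\tau(m))$ --- that the theorem is designed to cover.
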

\begin{proof}
 Let $q$ be a closed quadratic form on $V$. Let $(u_i)_{i \in I}$ be a net in $V$ that $\tau$-converges to $u$. Without loss of generality we assume that $(q(u_i))$ is bounded and
 $$\lim_i q(u_i) = \liminf_i q(u_i) < \infty.$$
 The idea of the proof is as follows. We use the boundedness of $(u_i)$ with respect to $q$ to construct a $q$-weakly convergent subnet. We then use certain finite convex combinations of this subnet to make it $q$-strongly convergent and identify its $q$-limit as $u$. The fact that $q^{1/2}$ is convex then yields the claim. Since $(V,\tau)$ need not be locally convex, taking convex combinations of $(u_i)$ need not yield a $\tau$-convergent net. This is why some care is necessary when choosing the convex combinations. 
 
 Let $\mathcal{O}$ denote the set of all balanced zero neighborhoods of $(V,\tau)$. It is a neighborhood base of zero for the topology $\tau$.
 
 {\em Claim 1: } For each $i \in I$ and $U \in \mathcal{O}$, there exists a function $\varphi(i,U,\cdot):\IN \to I$ with the following properties. 
 \begin{itemize}
  \item[(a)] $i \prec \varphi(i,U,k)$ for  all $k\in \IN$.
  \item[(b)] If $F \subseteq \IN$ finite, then 
  $$\sum_{k \in F} (u_{\varphi(i,U,k)}-u) \in U.$$
  \item[(c)] The sequence of C\'esaro means
  $$ \left( \frac{1}{N}\sum_{k = 1}^N u_{\varphi(i,U,k)} \right)_{N \geq 1}$$
  is $\|\cdot\|_q$~-~Cauchy. 
 \end{itemize}
 
 {\em Proof of Claim~1.} Since $(V,\tau)$ is a topological vector space and $u_i \to u$ with respect to $\tau$, we can inductively define a function $\ow{\varphi}(i,U,\cdot):\IN \to I$ that satisfies properties (a) and (b). By our assumptions the sequence $(u_{\ow{\varphi}(i,U,k)})_k$ is $\|\cdot\|_q$~-bounded. Hence, we can apply the Banach-Saks theorem, Lemma~\ref{lemma:banach saks sequences}, to obtain a subsequence  whose C\'esaro means are $\|\cdot\|_q$~-Cauchy. This subsequence still satisfies (a) and (b) and Claim~1 follows. \qedc
 
 Using the axiom of choice, for each $i \in I$ and $U \in \mathcal{O}$ we choose a function $\varphi(i,U,\cdot)$ as in Claim~1 and define
  $$v_{(i,U,N)} := \frac{1}{N}\sum_{k = 1}^N u_{\varphi(i,U,k)}.$$
  We introduce a preorder on the set $J := I \times \mathcal{O} \times \IN$ by letting $(i,U,M) \prec (j,W,N)$ if and only if $i \prec j$,  $W \subseteq U$ and $M \leq N$. Clearly, $(J,\prec)$ is upwards directed. 
  
  {\em Claim~2:} The net $(v_j)_{j \in J}$ converges to $u$ with respect to $\tau$. 
  
  {\em Proof of Claim~2.} Let $U\in \mathcal{O}$ be given. For $W \subseteq U$ we use property (b) of $\varphi$ and that $W$ is balanced to obtain
  $$v_{(i,W,N)} - u = \frac{1}{N} \sum_{k = 1}^N (u_{\varphi(i,W,k)}-u) \in \frac{1}{N}W \subseteq W \subseteq U.  $$
  Since $\mathcal{O}$ is a neighborhood base around zero, this proves Claim~2. \qedc
  
  {\em Claim~3:} The inequality
  $$\lim_{j\in J } q(v_j) \leq \lim_{i\in I} q(u_i). $$
  holds. 
  
  {\em Proof of Claim~3.} 
  According to property (c) of $\varphi$, the limit $\lim_j q(v_j)$ exists. Now, let $\varepsilon > 0$ and choose $i_0 \in I$ such that for each $i' \succ i_0$ we have 
  $$q (u_{i'})^{1/2} \leq \lim_i q(u_i)^{1/2} + \varepsilon.$$
 Property (a) of $\varphi$ states that $i' \prec \varphi(i',U,k)$. Therefore,  $i' \succ i_0$ implies
  $$q (v_{(i',U,N)})^{1/2} \leq \frac{1}{N}\sum_{k = 1}^N q(u_{\varphi(i',U,k)})^{1/2} \leq \lim_i q(u_i)^{1/2} + \varepsilon, $$
  which  shows Claim~3. \qedc
  
 Combining Claim~1 and Claim~2 we obtain that $(v_j)$ is $\tau_q$-Cauchy. Thus, the closedness of $q$ yields that $(v_j)$ has a $\tau_q$-limit $v \in D(q)$. The uniqueness of $\tau$-limits and Claim~2 then implies $u = v$. We use this observation and Claim~3 to obtain
  $$q(u) = \lim_{j\in J} q(v_j) \leq \lim_i q(u_i).$$
  This finishes the proof of the theorem.
\end{proof}

For locally convex spaces, the previous two observations can be somewhat strengthened by equipping $V$ with the weak topology. This is discussed next.

\begin{theorem}
\label{theorem:characterization closedness lc}
 Let $(V,\tau)$ be a complete locally convex topological vector space and let $q$ be a quadratic form on $V$. The following assertions are equivalent.
 \begin{itemize}
  \item[(i)] $q$ is closed. 
  \item[(ii)] $q$ is lower semicontinuous with respect to $\tau$. 
  \item[(iii)] $q$ is lower semicontinuous with respect to $\sigma(V,V')$. 
 \end{itemize}
\end{theorem}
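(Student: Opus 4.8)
The plan is to split the triple equivalence into the two pieces (i)$\Leftrightarrow$(ii) and (ii)$\Leftrightarrow$(iii), harvesting the results already in hand. For (i)$\Leftrightarrow$(ii) there is essentially nothing new to do: since $(V,\tau)$ is complete, Proposition~\ref{prop:lower semincontinuity implies closedness} supplies (ii)$\Rightarrow$(i), while Theorem~\ref{theorem:characterization closedness} supplies (i)$\Rightarrow$(ii). Neither of these uses local convexity, so that hypothesis is reserved entirely for the remaining equivalence.

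For the easy half of the second equivalence, (iii)$\Rightarrow$(ii), I would simply note that $\sigma(V,V')$ is coarser than $\tau$ (every $\sigma(V,V')$-open set is $\tau$-open, as the generating functionals are $\tau$-continuous). Hence every $\tau$-convergent net is also $\sigma(V,V')$-convergent, and lower semicontinuity tested against the larger class of weakly convergent nets immediately yields lower semicontinuity against $\tau$-convergent nets.

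The substantive direction is (ii)$\Rightarrow$(iii), and here I would exploit convexity together with the weak-closure theorem. First I would observe that $q$ is a convex function: the form seminorm $\|\cdot\|_q=\sqrt{q}$ is a nonnegative seminorm, hence convex, and $t\mapsto t^2$ is convex and nondecreasing on $[0,\infty)$, so $q=\|\cdot\|_q^2$ is convex; consequently its epigraph $\mathrm{epi}\,q\subseteq V\times\IR$ is a convex set. By Lemma~\ref{lemma:lower semicontinuity via epigraph}, assumption (ii) says exactly that $\mathrm{epi}\,q$ is closed in $V\times\IR$ with respect to the product topology obtained from $\tau$ and the usual topology on $\IR$. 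Viewing $V\times\IR$ as a locally convex topological vector space, I would then apply Theorem~\ref{theorem:weak closure of convex sets} to the convex set $\mathrm{epi}\,q$: its closure in the product topology agrees with its closure in the weak topology $\sigma(V\times\IR,(V\times\IR)')$, and since $\mathrm{epi}\,q$ is already closed it is therefore weakly closed.

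The step I expect to need the most care is the identification of this weak topology, so that the conclusion can be read back as a statement about $\sigma(V,V')$ alone. Here I would use that $(V\times\IR)'=V'\times\IR$ and that $\sigma(V\times\IR,(V\times\IR)')$ is the product of $\sigma(V,V')$ with the weak topology of $\IR$; because $\IR$ is finite-dimensional, its weak topology coincides with its standard topology. Thus weak closedness of $\mathrm{epi}\,q$ means precisely that $\mathrm{epi}\,q$ is closed in $(V,\sigma(V,V'))\times\IR$, and a second application of Lemma~\ref{lemma:lower semicontinuity via epigraph}—now in the topological vector space $(V,\sigma(V,V'))$—translates this into lower semicontinuity of $q$ with respect to $\sigma(V,V')$, which is (iii). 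The only subtlety is to make sure the epigraph lemma is invoked in $(V,\sigma(V,V'))$ rather than $(V,\tau)$; since that lemma holds for an arbitrary topological vector space, this presents no difficulty.
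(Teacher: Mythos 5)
Your proposal is correct and follows essentially the same route as the paper's proof: (i)$\Leftrightarrow$(ii) is delegated to Proposition~\ref{prop:lower semincontinuity implies closedness} and Theorem~\ref{theorem:characterization closedness}, while (ii)$\Leftrightarrow$(iii) is obtained from Lemma~\ref{lemma:lower semicontinuity via epigraph}, convexity of the epigraph, Theorem~\ref{theorem:weak closure of convex sets}, and the identification $\sigma(V\times\IR,(V\times\IR)') = \sigma(V,V')\times(\text{usual topology on }\IR)$. The only cosmetic difference is that you work with ${\rm epi}\,q$ (justifying its convexity via $q=\|\cdot\|_q^2$), whereas the paper works with ${\rm epi}\,\|\cdot\|_q$, whose convexity is immediate; both are equally valid since lower semicontinuity of $q$ and of $\|\cdot\|_q$ are equivalent.
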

\begin{proof}
(ii) $\Leftrightarrow$ (iii): By Lemma~\ref{lemma:lower semicontinuity via epigraph} it suffices to show the following.

{\em Claim:} ${\rm epi}\|\cdot\|_q$ is closed in $V\times \IR$ when $V$ is equipped with $\tau$ if and only if  ${\rm epi}\|\cdot\|_q$ is closed in $V\times \IR$ when $V$ is equipped with $\sigma(V,V').$ 

{\em Proof of the claim:} This is a consequence of the following observations.
\begin{itemize}
 \item The convexity of $\|\cdot\|_q$ implies that its epigraph ${\rm epi}\|\cdot\|_q$ is a convex subset of $V \times \IR$.
 \item The space $V \times \IR$ equipped with the product topology of $\tau$ and the standard Euclidean topology on $\IR$ is a locally convex vector space. 
 \item The topology $\sigma(V\times \IR,(V\times \IR)')$ on $V \times \IR$ is the product topology of $\sigma(V,V')$ and the standard metric topology on $\IR$. 
 \item For convex sets in locally convex vector spaces, the weak closure and the closure with respect to the original topology coincide, see Theorem~\ref{theorem:weak closure of convex sets}.\qedc
\end{itemize} 

(i) $\Leftrightarrow$ (ii): This was proven in Proposition~\ref{prop:lower semincontinuity implies closedness} and Theorem~\ref{theorem:characterization closedness}.
\end{proof}

\begin{remark}
\begin{itemize}
\item The observation of Theorem~\ref{theorem:characterization closedness} that closed forms on complete topological vector spaces are lower semicontinuous seems to be new in this generality. In the literature we could only find proofs for the case when the underlying space is a Hilbert space, cf. Proposition~\ref{proposition: closedness in L2}. Some proofs for the Hilbert space case can be adapted to the situation when the underlying space is locally convex. In this sense, the novelty of our result lies in removing this assumption. In later chapters we apply the theory to the topological vector space $(L^0(m),\tau(m))$, which need not be locally convex, cf. Section~\ref{section:lebesgue spaces}.

 \item The equivalence of (ii) and (iii) in the previous theorem is well known, see e.g. \cite[Proposition~2.10]{BP}. Indeed, it holds true  for all convex functions. 
 
 \item We have shown that closedness and lower semicontinuity of the form norm coincide. The only tool we used in the course of the proof was the Banach-Saks theorem. Hence, we could treat any convex function,  for which some form of the Banach-Saks theorem holds, with the same methods. For example, if $(V,\tau)$ is a topological vector space and $\|\cdot\|$ is a norm on some subspace $W \subseteq V$ such that the completion of $(W,\|\cdot\|)$ is reflexive, then our theorem holds for the function
 $$f:V \to [0,\infty],\, u \mapsto \begin{cases}
                                    \|u\| &\text{ if } u \in W\\
                                    \infty &\text{ else}
                                   \end{cases}.
 $$
\end{itemize}

\end{remark}

Naturally, the space $(D(q)/\ker q,q)$ is an inner product space and one may wonder whether closedness is related to its completeness. This is indeed true under one additional assumption.  

\begin{theorem} \label{theorem:characterization closedness hilbert space}
 Let $(V,\tau)$ be a complete topological vector space and let $q$ be a quadratic form on $V$. Assume that $(D(q),\tau_q)$ is metrizable. Then each two of the following assertions imply the third.
 \begin{itemize}
  \item[(i)] $q$ is closed.
  \item[(ii)] The kernel $\ker q$ is $\tau$-closed and the canonical embedding 
  $$(D(q)/\ker q, q) \to (V/\ker q, \tau/\ker q), \quad u + \ker q \mapsto u + \ker q$$ 
  is continuous.
  \item[(iii)] $(D(q)/\ker q,q)$ is a Hilbert space. 
 \end{itemize}
\end{theorem}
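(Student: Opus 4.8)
The plan is to reduce everything to a single computation of the quotient of the form topology. Write $N := \ker q$; since $q$ vanishes on $N\subseteq D(q)$ it descends to a genuine inner product on $D(q)/N$, whose norm topology I denote by $\mathcal{T}_{\rm n}$. Via the canonical embedding of (ii), $D(q)/N$ is a linear subspace of $V/N$ and thus also carries the subspace topology $\mathcal{T}_{\rm s}$ of $(V/N,\tau/N)$. In this language the embedding in (ii) is continuous exactly when $\mathcal{T}_{\rm s}\subseteq\mathcal{T}_{\rm n}$, equivalently when $\mathcal{T}_{\rm n}\vee\mathcal{T}_{\rm s}=\mathcal{T}_{\rm n}$. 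I would first record that $N$ is $\tau_q$-closed in $D(q)$, being the zero set of the $\tau_q$-continuous seminorm $\|\cdot\|_q$.

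The crux, and what I expect to be the main obstacle, is to prove the identity
\[
\tau_q/N \;=\; \mathcal{T}_{\rm n}\vee\mathcal{T}_{\rm s}.
\]
The inclusion ``$\supseteq$'' is soft: $u+N\mapsto\|u+N\|_q$ factors through the quotient projection $\pi$ and is $\tau_q/N$-continuous, giving $\mathcal{T}_{\rm n}\subseteq\tau_q/N$, while the inclusion $D(q)/N\hookrightarrow V/N$ factors as $(D(q),\tau_q)\to(V,\tau)\to(V/N,\tau/N)$, giving $\mathcal{T}_{\rm s}\subseteq\tau_q/N$. For ``$\subseteq$'' I would take a basic $\tau_q$-neighbourhood $A\cap B$ of a point, with $A$ being $\tau$-open and $B$ a $\|\cdot\|_q$-ball, and show $\pi(A\cap B)$ is $(\mathcal{T}_{\rm n}\vee\mathcal{T}_{\rm s})$-open. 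The point is that $B$ is $N$-saturated (the seminorm is constant on cosets), so $\pi(A\cap B)=\pi(A)\cap\pi(B)$, where $\pi(B)$ is a $\mathcal{T}_{\rm n}$-ball and $\pi(A)=\pi_V(A)\cap(D(q)/N)$ is $\mathcal{T}_{\rm s}$-open by openness of $\pi_V\colon V\to V/N$ from Lemma~\ref{lemma:quotient topology}. This saturation bookkeeping is where I expect the subtleties to hide.

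Granting the displayed identity, the three implications are short (recall $N$ is $\tau_q$-closed). For (i) and (ii) $\Rightarrow$ (iii): by (i) together with the metrizability hypothesis $(D(q),\tau_q)$ is an $F$-space, so Lemma~\ref{lemma:completeness of quotients} makes $(D(q)/N,\tau_q/N)$ an $F$-space; assertion (ii) collapses $\tau_q/N$ to $\mathcal{T}_{\rm n}$, so $(D(q)/N,q)$ is a complete inner product space, i.e.\ a Hilbert space. For (i) and (iii) $\Rightarrow$ (ii): closedness of $N$ in $\tau$ follows from the lower semicontinuity of $q$ (Theorem~\ref{theorem:characterization closedness}), as then $N=\{q\le 0\}$ is $\tau$-closed; moreover $(D(q)/N,\mathcal{T}_{\rm n})$ is an $F$-space by (iii) and $(D(q)/N,\tau_q/N)$ is an $F$-space as above, so the continuous linear bijection $\mathrm{id}\colon(D(q)/N,\tau_q/N)\to(D(q)/N,\mathcal{T}_{\rm n})$ is a homeomorphism by the open mapping theorem (Theorem~\ref{theorem:open mapping and closed graph theorem}); hence $\mathcal{T}_{\rm n}\vee\mathcal{T}_{\rm s}=\mathcal{T}_{\rm n}$, which is the continuity of the embedding.

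It remains to do (ii) and (iii) $\Rightarrow$ (i), and here I would argue directly with Cauchy nets, deliberately avoiding the possibly incomplete quotient $V/N$. Given a $\tau_q$-Cauchy net $(u_i)$, it is $\tau$-Cauchy, hence $\tau$-convergent to some $u\in V$ by completeness of $V$, and $(u_i+N)$ is $q$-Cauchy, hence convergent to some $v+N$ in $(D(q)/N,q)$ by (iii). Continuity of the embedding in (ii) yields $u_i+N\to v+N$ in $\tau/N$; since also $u_i+N\to u+N$ there and $\tau/N$ is Hausdorff ($N$ being $\tau$-closed, Lemma~\ref{lemma:quotient topology}), I get $u-v\in N\subseteq D(q)$, so $u\in D(q)$ and $\|u_i-u\|_q=\|u_i-v\|_q\to 0$; thus $u_i\to u$ in $\tau_q$ and $q$ is closed. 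Notably, metrizability is used only in the two implications that invoke Lemma~\ref{lemma:completeness of quotients}, not here.
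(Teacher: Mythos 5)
Your proof is correct and follows essentially the same route as the paper's: the displayed identity $\tau_q/\ker q = \mathcal{T}_{\rm n}\vee\mathcal{T}_{\rm s}$, which you prove via saturation of the seminorm balls under the projection and openness of the quotient map, is precisely the auxiliary lemma the paper establishes before its proof, and your three implications then use the same ingredients in the same way (the quotient-of-$F$-spaces lemma, closedness $\Rightarrow$ lower semicontinuity for the $\tau$-closedness of $\ker q$, the open mapping theorem, and a direct Cauchy argument using Hausdorffness of $\tau/\ker q$). The only cosmetic differences are that you phrase (i) \& (ii) $\Rightarrow$ (iii) as a topology identity rather than chasing Cauchy sequences, and that you run (ii) \& (iii) $\Rightarrow$ (i) with nets, correctly observing that metrizability is not needed in that implication.
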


Before proving the theorem we need the following characterization of the quotient topology $\tau_q /\ker q$. 

\begin{lemma}
 The topology $\tau_q /\ker q$ on $D(q)/ \ker q$ is the smallest topology containing both $\tau / \ker q$ and the topology generated by the form norm $\|\cdot\|_q$. 
\end{lemma}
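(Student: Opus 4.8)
The plan is to express both topologies through the canonical projection $\pi\colon D(q)\to D(q)/\ker q$, which is the restriction of the projection $\pi_V\colon V\to V/\ker q$, and to exploit that $\|\cdot\|_q$ is constant on cosets of $\ker q$. Denote by $\sigma$ the smallest topology on $D(q)/\ker q$ containing the subspace (trace) topology of $\tau/\ker q$ together with the topology of the quotient norm $\|\cdot\|_q$; the goal is $\tau_q/\ker q=\sigma$. Two observations drive everything. First, every form-seminorm ball $B(u,r)=\{v\in D(q)\mid \|v-u\|_q<r\}$ is saturated, i.e. $\pi^{-1}(\pi(B(u,r)))=B(u,r)$, and $\pi(B(u,r))$ is precisely the quotient-norm ball around $\pi(u)$; this is because $|\,\|v\|_q-\|w\|_q\,|\le\|v-w\|_q=0$ whenever $v-w\in\ker q$. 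Second, since $\ker q\subseteq D(q)$, for every $\tau$-open $O\subseteq V$ one has $\pi(O\cap D(q))=\pi_V(O)\cap(D(q)/\ker q)$ (the nontrivial inclusion uses that $\pi_V(y)=\pi(z)$ with $z\in D(q)$ forces $y-z\in\ker q\subseteq D(q)$, hence $y\in D(q)$), and this set lies in the trace of $\tau/\ker q$ because $\pi_V$ is open by Lemma~\ref{lemma:quotient topology}.

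For the inclusion $\sigma\subseteq\tau_q/\ker q$ I check the two generating topologies. A trace-open set has the form $\bar O'\cap(D(q)/\ker q)$ with $\bar O'\in\tau/\ker q$, and its $\pi$-preimage equals $\pi_V^{-1}(\bar O')\cap D(q)$, which is open for $\tau|_{D(q)}\subseteq\tau_q$; a quotient-norm-open set pulls back under $\pi$ to a $\|\cdot\|_q$-open set by the saturation observation, hence lies in $\tau_q$. By the definition of the quotient topology both generators belong to $\tau_q/\ker q$, so $\sigma\subseteq\tau_q/\ker q$. For the reverse inclusion I use that, by Lemma~\ref{lemma:quotient topology}, $\pi$ carries a basis of $\tau_q$ to a basis of $\tau_q/\ker q$. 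A basic $\tau_q$-open set is $W=(O\cap D(q))\cap B(u,r)$ with $O\in\tau$; since $B(u,r)$ is saturated, $\pi(W)=\pi(O\cap D(q))\cap\pi(B(u,r))$, and by the two observations the first factor is trace-open and the second is quotient-norm-open, so $\pi(W)\in\sigma$. As these images form a basis of $\tau_q/\ker q$, this gives $\tau_q/\ker q\subseteq\sigma$, and hence equality.

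The step I expect to be the crux is the identity $\pi(W)=\pi(O\cap D(q))\cap\pi(B(u,r))$ in the reverse inclusion: in general $\pi$ does not distribute over intersections, and forming a quotient does not commute with joins of topologies. What rescues the argument is exactly the saturation of the form-seminorm balls, together with the elementary fact that $\pi(A\cap B)=\pi(A)\cap\pi(B)$ whenever $B$ is saturated. The remaining subtlety is purely set-theoretic and hinges on the inclusion $\ker q\subseteq D(q)$, which is what makes $\pi(O\cap D(q))$ coincide with a genuine trace of a $\tau/\ker q$-open set rather than a mere $\pi_V$-image; once these two points are secured, the rest is routine bookkeeping with bases and preimages.
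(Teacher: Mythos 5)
Your proof is correct and follows essentially the same route as the paper: both arguments rest on the saturation of the form-seminorm balls (i.e.\ $u \in B_\varepsilon(v)$ iff $\pi(u) \in \ow{B}_\varepsilon(\pi(v))$), which gives the key identity $\pi(U \cap B_\varepsilon(v)) = \pi(U) \cap \ow{B}_\varepsilon(\pi(v))$, combined with the basis-transfer statement of Lemma~\ref{lemma:quotient topology} for the open projection $\pi$. You merely spell out two points the paper leaves implicit, namely the verification via preimages that the generating sets lie in $\tau_q/\ker q$, and the identification $\pi(O \cap D(q)) = \pi_V(O) \cap (D(q)/\ker q)$, which uses $\ker q \subseteq D(q)$ and justifies reading $\tau/\ker q$ as its trace on $D(q)/\ker q$.
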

\begin{proof} Let $\ow{\tau}$ denote the smallest topology on $D(q) / \ker q$ containing both $\tau / \ker q$ and the topology generated by the form norm $\|\cdot\|_q$. Furthermore, let 
$$\pi: D(q) \to D(q)/ \ker q, \quad u \mapsto u + \ker q$$
be the canonical projection. For $\varepsilon > 0$ and $v \in D(q)$, we set 
$$B_\varepsilon(v) := \{u \in D(q) \mid \|u-v\|_q < \varepsilon\} \text{ and } \ow{B}_\varepsilon(\pi(v)) := \{w \in D(q) / \ker q\mid \|w - \pi(v)\|_q < \varepsilon\}.$$   
 Lemma~\ref{lemma:quotient topology} implies that the collection
$$\{\pi(U)  \cap \ow{B}_\varepsilon(\pi(v)) \mid U \in \tau,\, \varepsilon > 0,\, v \in D(q)\}$$
is a basis for the topology $\ow{\tau}$ and that 
$$\{\pi(U \cap B_\varepsilon(v)) \mid U \in \tau,\,  \varepsilon > 0,\,  v \in D(q)\}$$
is a basis for the topology $\tau_q / \ker q$. Since $u \in B_\varepsilon(v)$ if and only if $\pi(u) \in \ow{B}_\varepsilon(\pi(v))$, we obtain
$$\pi(U \cap B_\varepsilon(v)) = \pi(U)  \cap \ow{B}_\varepsilon(\pi(v)).$$
This finishes the proof of the lemma. 
\end{proof}

\begin{proof}[Proof of Theorem~\ref{theorem:characterization closedness hilbert space}]
 (i) \& (ii) $\Rightarrow$ (iii): Let $(u_n)$ be a Cauchy sequence in $(D(q)/\ker q,q)$. By assertion (ii) it is also $\tau/\ker q$~-~Cauchy. Therefore, the characterization of $\tau_q/\ker q$ of the previous lemma shows that $(u_n)$ is Cauchy  with respect to $\tau_q/\ker q$. 
 
 Since $\ker q$ is $\tau_q$-closed and $(D(q),\tau_q)$ is complete, Lemma~\ref{lemma:completeness of quotients} implies the completeness of $(D(q)/\ker q,\tau_q/\ker q)$.   Hence, $(u_n)$ converges to some limit $u \in D(q)/\ker q$ with respect to $\tau_q/\ker q$.  By the characterization of $\tau_q/\ker q$ of the previous lemma, we obtain $u_n \to u$ with respect to the form norm. 
 
 (i) \& (iii) $\Rightarrow$ (ii):  The lower semicontinuity of $q$ implies that $\ker q$ is $\tau$-closed. It remains to show the continuity of the embedding. Since $\ker q$ is a $\tau_q$-closed subspace of $D(q)$, (i) and Lemma~\ref{lemma:completeness of quotients} imply that $(D(q)/\ker q,\tau_q/\ker q)$ is an $F$-space. Furthermore, (iii) yields the completeness of $(D(q)/\ker q,q)$. With this at hand, (ii) is a consequence of the open mapping theorem, Theorem~\ref{theorem:open mapping and closed graph theorem}, applied to 
 $$(D(q)/\ker q,\tau_q/\ker q) \to (D(q)/\ker q,q), \quad u + \ker q \mapsto u + \ker q$$
 and the characterization of $\tau_q / \ker q$ of the previous lemma. 
 
 (ii) \& (iii) $\Rightarrow$ (i): Let $(u_n)$ be $\tau_q$~-~Cauchy. Assertion (iii) implies that $(u_n)$ has a $\|\cdot\|_q$~-~limit $u\in D(q).$ By (ii) we obtain $u_n + \ker q \to u + \ker q$ with respect to $\tau/ \ker q$. Furthermore, the completeness of $(V,\tau)$ implies that $(u_n)$ has a $\tau$-limit $v$. The definition of $\tau/\ker q$ yields $u_n + \ker q \to v + \ker q$ with respect to $\tau/\ker q$. Since $\ker q$ is $\tau$-closed, the topology $\tau / \ker q$ is Hausdorff and we obtain $u + \ker q = v + \ker q$. This finishes the proof.  
 \end{proof}

\begin{remark}
\begin{itemize}
 \item The previous theorem seems to be new. However, we note that the implication (i) \& (ii) $\Rightarrow$ (iii) is implicitly used in the literature to prove that extended Dirichlet spaces are Hilbert spaces, see Theorem~\ref{theorem:continuous embedding of energy forms} and Remark~\ref{remark:continuous embedding of energy forms}.
 
 \item We  used the metrizability condition in the previous theorem to guarantee that the open mapping theorem holds. One can weaken this assumption to asking that the open mapping theorem is valid for mappings from $(D(q)/\ker q,\tau_q/\ker q)$ to a Hilbert space. 
 \item The form topology is metrizable if $(V, \tau)$ is metrizable but this condition is by no means necessary. If $\ker q = \{0\}$ and (ii) holds, then $\tau_q$ is automatically metrizable as in this situation the form norm $\|\cdot\|_q$ induces the form topology. Further examples where $\tau_q$ is automatically metrizable are provided by resistance forms, see Section~\ref{section:the definition and main examples} or, more generally, by irreducible energy forms, see Corollary~\ref{corollary:irreducible forms are metrizable}.
\end{itemize}
\end{remark}

Based on the previous observations we can also give characterizations for closability of quadratic forms.

\begin{proposition}
  Let $(V,\tau)$ be a complete topological vector space and let $q$ be a quadratic form on $V$.  The following assertions are equivalent.
 \begin{itemize}\label{prop:characterization closability}
  \item[(i)] $q$ is closable. 
  \item[(ii)] Every $q$-Cauchy net $(u_i)$ in $D(q)$ that $\tau$-converges to $0$ satisfies 
  $$\lim_{i} q(u_i) = 0.$$
  \item[(iii)] $q$ is lower semicontinuous on $(D(q),\tau)$, i.e., for all nets $(u_i)$ in $D(q)$ and all $u \in D(q)$  the convergence $u_i \to u$ with respect to $\tau$ implies 
  $$q(u) \leq \liminf_{i} q(u_i).$$
 \end{itemize}
If one of the above conditions is satisfied, the form  $\bar{q}$ given by
$$\bar{q}:V \to [0,\infty],\, u \mapsto \bar{q}(u) := \begin{cases}
                \lim\limits_{i} q(u_i) &\text{if }(u_i) \text{ in } D(q) \text{ is } q\text{-Cauchy and satisfies } u_i \to u\\
                \infty &\text{else}
               \end{cases}
$$
is a closed extension of $q$. Furthermore, $\bar{q}$ is the smallest closed extension of $q$, i.e., any closed extension of $q$ is an extension of $\bar{q}$.  
\end{proposition}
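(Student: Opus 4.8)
The plan is to prove the cycle (i) $\Rightarrow$ (iii) $\Rightarrow$ (ii) $\Rightarrow$ (i) and to handle the construction of $\bar{q}$ together with its minimality at the end. The organizing principle is that on a complete space closedness and lower semicontinuity coincide (Proposition~\ref{prop:lower semincontinuity implies closedness} and Theorem~\ref{theorem:characterization closedness}); the two ``easy'' implications are then just this equivalence pulled back to the subspace $D(q)$, and the real work is to manufacture a closed extension out of the abstract Cauchy condition (ii).

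For (i) $\Rightarrow$ (iii) I would take any closed extension $\tilde{q}$ of $q$; by Theorem~\ref{theorem:characterization closedness} it is lower semicontinuous on $V$, and since $\tilde{q}$ agrees with $q$ on $D(q)$, restricting the defining inequality to nets in $D(q)$ with limit in $D(q)$ yields (iii). For (iii) $\Rightarrow$ (ii), let $(u_i)$ be $q$-Cauchy with $u_i \to 0$ in $\tau$; fix $\varepsilon>0$ and $i_0$ with $\|u_i-u_j\|_q<\varepsilon$ for $i,j\succ i_0$. For fixed $i\succ i_0$ the net $(u_i-u_j)_j$ lies in $D(q)$ and $\tau$-converges to $u_i$, so (iii) gives $q(u_i)\le \liminf_j q(u_i-u_j)\le \varepsilon^2$; letting $\varepsilon\to 0$ forces $\lim_i q(u_i)=0$.

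For (ii) $\Rightarrow$ (i) I would verify that the candidate $\bar{q}$ is a closed extension. The place where (ii) enters is \emph{well-definedness}: if $(u_i)$ and $(v_k)$ are $q$-Cauchy nets in $D(q)$ both $\tau$-converging to $u$, then the product net $(u_i-v_k)$ is $q$-Cauchy and $\tau$-converges to $0$, so (ii) gives $\lim\|u_i-v_k\|_q=0$, whence $\lim_i q(u_i)=\lim_k q(v_k)$ by the reverse triangle inequality for the seminorm $\|\cdot\|_q$. Constant nets then show $\bar{q}|_{D(q)}=q$, so $\bar{q}$ extends $q$. That $\bar{q}$ is a quadratic form follows by passing to the limit, along product nets, in the homogeneity identity and the parallelogram identity for $q$ and invoking Lemma~\ref{lemma:inequality characterization of quadratic forms}. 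I would also record the \emph{density fact}: for $u\in D(\bar{q})$ an approximating $q$-Cauchy net $(u_i)$ satisfies $\bar{q}(u-u_i)=\lim_j q(u_i-u_j)\to 0$, i.e. $u_i\to u$ in $\|\cdot\|_{\bar{q}}$ as well as in $\tau$.

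The main obstacle is \emph{closedness} of $\bar{q}$, that is, completeness of $(D(\bar{q}),\tau_{\bar{q}})$. Given a $\tau_{\bar{q}}$-Cauchy net $(w_\alpha)$ --- equivalently $\tau$-Cauchy and $\|\cdot\|_{\bar{q}}$-Cauchy --- completeness of $(V,\tau)$ provides a $\tau$-limit $w$. Using the density fact I would, over the index set $A\times\mathcal{O}\times\IN$ with $\mathcal{O}$ a balanced neighbourhood base of $0$, select $x_{(\alpha,U,n)}\in D(q)$ with $x_{(\alpha,U,n)}-w_\alpha\in U$ and $\|x_{(\alpha,U,n)}-w_\alpha\|_{\bar{q}}\le 1/n$. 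The triangle inequality together with the $\|\cdot\|_{\bar{q}}$-Cauchyness of $(w_\alpha)$ makes this combined net $q$-Cauchy, while the balancedness of the neighbourhoods makes it $\tau$-converge to $w$; hence $w\in D(\bar{q})$ by the very definition of $\bar{q}$. The remaining point, that $\|w_\alpha-w\|_{\bar{q}}\to 0$, I would obtain by computing $\bar{q}(w_\alpha-w)$ along the product of an approximating net for $w_\alpha$ with $(x_{(\alpha,U,n)})$ and bounding it by $\big(\limsup_\beta\|w_\alpha-w_\beta\|_{\bar{q}}\big)^2$, which vanishes as $\alpha$ grows. This double-limit bookkeeping is the crux; note that, in contrast to the proof of Theorem~\ref{theorem:characterization closedness}, no Banach--Saks argument is needed, because the $\|\cdot\|_{\bar{q}}$-Cauchy hypothesis already yields the $q$-Cauchyness of the constructed net. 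Finally, for minimality I would take any closed extension $\tilde{q}$ of $q$; its lower semicontinuity (Theorem~\ref{theorem:characterization closedness}) gives, for $u\in D(\bar{q})$ with approximating $q$-Cauchy net $(u_i)$, that $\tilde{q}(u-u_i)\le\liminf_j q(u_i-u_j)\to 0$, so $\tilde{q}(u)=\lim_i q(u_i)=\bar{q}(u)$; thus $D(\bar{q})\subseteq D(\tilde{q})$ and $\tilde{q}$ extends $\bar{q}$, showing that $\bar{q}$ is the smallest closed extension.
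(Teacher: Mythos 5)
Your proposal is correct and follows essentially the same route as the paper: the same cycle (i) $\Rightarrow$ (iii) $\Rightarrow$ (ii) $\Rightarrow$ (i) via Theorem~\ref{theorem:characterization closedness}, the same application of (iii) to the difference nets $(u_i-u_j)_j$, and the same triple-index net $(u_{(i,\varepsilon,U)})$ construction for the closedness of $\bar{q}$. The only difference is that you spell out steps the paper declares obvious or proves ``along the way'' --- well-definedness of $\bar{q}$ via product nets, the quadratic-form verification, and the minimality argument --- all of which you handle correctly.
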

\begin{proof}
 (i) $\Rightarrow$ (iii): Let $\tilde{q}$ be a closed extension of $q$ and let  $(u_i)$ be a net in $D(q)$ that $\tau$-converges to $u\in D(q)$. Since $\tilde{q}$ is closed, Theorem~\ref{theorem:characterization closedness} yields
$$q(u) = \tilde{q}(u) \leq \liminf_{i} \tilde{q}(u_i) = \liminf_{i} q(u_i).$$ 
This shows (iii).

(iii) $\Rightarrow$ (ii): Let  $(u_i)$ be a $q$-Cauchy net that $\tau$-converges to $0$. From (iii) we infer
$$q(u_i) \leq \liminf_{j} q(u_i - u_j).$$
Since $(u_i)$ was assumed to be $q$-Cauchy, this implies (ii).

 (ii) $\Rightarrow$ (i): It suffices to show that $\bar{q}$ is a closed quadratic form. By assertion (ii) the functional $\bar{q}$ is well defined and coincides with $q$ on $D(q)$. It follows immediately from its definition that $\bar{q}$ is a quadratic form.  It remains to show its closedness. 
 
 To this end, let $(u_i)$ be a $\tau_{\bar{q}}$-Cauchy net in $D(\bar{q})$. Since the space $(V,\tau)$ is complete, it $\tau$-converges to some limit $u \in V$. We have to prove $u \in D(\bar{q})$ and that $(u_i)$ $\bar{q}$-converges to $u$. The definition of $\bar{q}$ implies that for each $\varepsilon > 0$, each zero neighborhood $U\in \tau$ and each $i$, there exists an element $u_{(i,\varepsilon,U)} \in D(q)$ such that  
$$\|u_i - u_{(i,\varepsilon,U)}\|_{\bar{q}} < \varepsilon \quad \text{ and } \quad u_i - u_{(i,\varepsilon,U)} \in U. $$
We introduce a preorder on the triplets $(i,\varepsilon,U)$ by letting $(i,\varepsilon,U) \prec  (i',\varepsilon',U')$ if and only if $i \prec i'$, $\varepsilon \geq \varepsilon'$ and $U \supseteq U'$. The so constructed net $(u_{(i,\varepsilon,U)})$ is $q$-Cauchy and $\tau$-converges to $u$.  This implies $u \in D(\bar{q})$. Using the definition of $\bar{q}$, we obtain
$$\|u - u_j\|_{\bar{q}}  \leq \|u - u_{(j,\delta,V)}\|_{\bar{q}} + \|u_j - u_{(j,\delta,V)}\|_{\bar{q}} =\lim_{(i,\varepsilon,U)}  \|u_{(i,\varepsilon,U)} - u_{(j,\delta,V)}\|_{q} + \|u_j - u_{(j,\delta,V)}\|_{\bar{q}}.$$
Hence, the properties of $(u_{i,\varepsilon,U})$ yield $u_j \to u$ with respect to $\bar{q}$. This concludes the proof of the implication (ii) $\Rightarrow$ (i).

The minimality statement about $\bar{q}$ is obvious and the rest was proven along the way. 
\end{proof}

 \begin{definition}[Closure of a quadratic form] Let $(V,\tau)$ be a complete topological vector space and let $q$ be a closable quadratic form on $V$. The form $\bar{q}$ that was introduced in the previous proposition is called the {\em closure} of $q$.  
 \end{definition}

\begin{remark}
\begin{itemize}
\item  The previous proposition is well-known for $L^2$-spaces. However, in this generality, a characterization of closability seems to be new.

 \item The previous proposition shows that if $q$ is closable, the domain of its closure $\bar{q}$ is given by
$$D(\bar{q}) = \{u \in V \mid  \text{ there exists a } q\text{-Cauchy net } (u_i) \text{ in } D(q) \text{ with } u_i \to u\}.$$
%
\end{itemize}
\end{remark}

\subsection{Some technical lemmas on quadratic forms}

In this subsection we discuss several technical lemmas for quadratic forms on topological vector spaces. They provide various convergence results, which will be useful later on.

\begin{lemma} \label{lemma:characterization convergence in form topology lsc forms}
 Let $(V,\tau)$ be a topological vector space and let $q$ be a lower semicontinuous quadratic form on $V$. Let $(u_i)$ be a net in $D(q)$ and let $u \in D(q)$. The following assertions are equivalent.
 \begin{itemize}
  \item[(i)] $u_i \to u$ with respect to $\tau_q$. 
  \item[(ii)] $u_i \to u$ with respect to $\tau$ and 
  $$\limsup_i q(u_i) \leq q(u).$$
 \end{itemize}
\end{lemma}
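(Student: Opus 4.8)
The plan is to exploit the remark following the definition of the form topology: a net converges in $\tau_q$ if and only if it converges both with respect to $\tau$ and with respect to the form seminorm $\|\cdot\|_q$. Since $\tau$-convergence appears explicitly in both (i) and (ii), the entire content of the equivalence is the interplay between convergence $q(u_i-u)\to 0$ in the form seminorm and the one-sided numerical condition $\limsup_i q(u_i)\le q(u)$. Throughout I would use that $D(q)$ is a vector space, so that $u_i-u$ and $u_i+u$ both lie in $D(q)$ and all the form values occurring below are finite.

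For the implication (i) $\Rightarrow$ (ii) I would first note that $u_i\to u$ with respect to $\tau$, since $\tau_q$ is finer than the subspace topology of $\tau$. The reverse triangle inequality for the seminorm $\|\cdot\|_q$ gives
$$\left| q(u_i)^{1/2}-q(u)^{1/2}\right|\le q(u_i-u)^{1/2}\longrightarrow 0,$$
so that $q(u_i)\to q(u)$; in particular $\limsup_i q(u_i)=q(u)$, which is (ii).

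The substantial direction is (ii) $\Rightarrow$ (i). Here I would start from the parallelogram identity
$$q(u_i-u)=2q(u_i)+2q(u)-q(u_i+u),$$
so that the only thing left to control is a lower bound on $q(u_i+u)$. Since addition is $\tau$-continuous we have $u_i+u\to 2u$ with respect to $\tau$, and lower semicontinuity of $q$ together with homogeneity yields $4q(u)=q(2u)\le\liminf_i q(u_i+u)$. Taking $\limsup_i$ in the parallelogram identity and using subadditivity of $\limsup$ then gives
$$\limsup_i q(u_i-u)\le 2\limsup_i q(u_i)-\liminf_i q(u_i+u)+2q(u)\le 2q(u)-4q(u)+2q(u)=0,$$
where the last step uses the hypothesis $\limsup_i q(u_i)\le q(u)$ together with the lower bound just obtained. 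As $q(u_i-u)\ge 0$, this forces $q(u_i-u)\to 0$, i.e. $\|\cdot\|_q$-convergence, which combined with the assumed $\tau$-convergence is exactly $u_i\to u$ with respect to $\tau_q$.

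The one point requiring care — and the place where the argument could break — is the arithmetic of $\liminf$ and $\limsup$, since the subadditivity $\limsup_i(a_i+b_i)\le\limsup_i a_i+\limsup_i b_i$ is valid only away from indeterminate forms $\infty-\infty$. I would therefore verify beforehand that every quantity is finite: this follows from $\limsup_i q(u_i)\le q(u)<\infty$ and the seminorm triangle inequality $q(u_i+u)^{1/2}\le q(u_i)^{1/2}+q(u)^{1/2}$, which bounds $\limsup_i q(u_i+u)\le 4q(u)<\infty$ (incidentally forcing $q(u_i+u)\to 4q(u)$). Note that neither completeness nor local convexity of $(V,\tau)$ is used; the argument rests solely on the parallelogram identity and lower semicontinuity, and it is the abstract analogue of the Radon–Riesz property of Hilbert spaces.
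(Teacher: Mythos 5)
Your proof is correct and follows essentially the same route as the paper: the parallelogram identity $q(u_i-u)=2q(u_i)+2q(u)-q(u_i+u)$, subadditivity of $\limsup$, and lower semicontinuity applied via $4q(u)=q(2u)\le\liminf_i q(u_i+u)$. Your explicit finiteness check ruling out indeterminate $\infty-\infty$ forms is a careful touch the paper leaves implicit, but it does not change the argument.
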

\begin{proof}
  The implication (i) $\Rightarrow$ (ii) is clear as convergence with respect to $\|\cdot\|_q$ implies $q(u_i) \to q(u)$. 
  
  (ii) $\Rightarrow$ (i): We use that $q$ is a quadratic form and obtain
 $$\limsup_i q(u-u_i) = \limsup_i \left(2q(u) + 2q(u_i) - q(u + u_i)\right). $$
 The subadditivity of $\limsup$ and the assumption on $q(u_i)$ yields
 $$\limsup_i q(u-u_i) \leq 4q(u) - \liminf_i q(u + u_i).$$
 From the lower semicontinuity of $q$ we infer
 $$4q(u) = q(2u) \leq \liminf_i q(u + u_i).$$
 This finishes the proof.
\end{proof}

\begin{lemma} \label{lemma:existence of a weakly convergent subnet}
 Let $(V,\tau)$ be a topological vector space and let $q$ be a lower semicontinuous form on $V$. Let $(u_i)$ be a $\|\cdot\|_q$-bounded net in $D(q)$ that $\tau$-converges to $u$. Then  it $q$-weakly converges to $u$. 
\end{lemma}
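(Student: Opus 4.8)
The plan is to pass to the Hilbert space attached to the form and to prove weak convergence there. Write $H$ for the completion of the inner product space $(D(q)/\ker q, q)$ and let $\pi \colon D(q) \to H$ be the canonical map, so that $q(a,b) = \langle \pi(a),\pi(b)\rangle_H$ for $a,b \in D(q)$ and $\pi(D(q))$ is dense in $H$. First I would record that $u$ itself lies in $D(q)$: lower semicontinuity gives $q(u) \le \liminf_i q(u_i) \le (\sup_i \|u_i\|_q)^2 < \infty$, so the assertion $q(u_i,w) \to q(u,w)$ for every $w \in D(q)$, i.e.\ that $(u_i)$ converges $q$-weakly to $u$, even makes sense. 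Since the net $(\pi(u_i))$ is bounded in $H$ and $\pi(D(q))$ is dense, this assertion is equivalent to $\pi(u_i)$ converging weakly to $\pi(u)$ in $H$. As closed balls of the Hilbert space $H$ are weakly compact, the bounded net $(\pi(u_i))$ lies in a weakly compact set, hence converges weakly to $\pi(u)$ provided $\pi(u)$ is its unique weak cluster point. Thus it suffices to show that any weak limit $\xi$ of a subnet $(\pi(u_{i_j}))$ equals $\pi(u)$.

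The heart of the matter is to produce good convex combinations of the $u_{i_j}$. The subnet $(u_{i_j})$ is still $\|\cdot\|_q$-bounded and still $\tau$-converges to $u$, so I would feed it into the construction from the proof of Theorem~\ref{theorem:characterization closedness}: using the Banach--Saks theorem (Lemma~\ref{lemma:banach saks sequences}) together with the selection $\varphi$ that keeps all finite partial sums of $u_{\varphi(\cdots)} - u$ inside a prescribed balanced zero neighbourhood, one obtains a net $(v_\alpha)$ of finite convex combinations (C\'esaro means) of the $u_{i_j}$ with two properties: $v_\alpha \to u$ with respect to $\tau$, and $(v_\alpha)$ is $\|\cdot\|_q$-Cauchy. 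Only $\|\cdot\|_q$-boundedness and $\tau$-convergence to $u$ enter here, so that no assumption on the convergence of the numbers $q(u_i)$ is needed.

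Next I would upgrade the form-Cauchy property to genuine form convergence to $u$, using only lower semicontinuity. Given $\varepsilon > 0$, choose $\alpha_0$ with $q(v_\alpha - v_\beta) < \varepsilon^2$ for all $\alpha,\beta \succ \alpha_0$, and fix $\beta \succ \alpha_0$. As $\alpha$ runs, $v_\alpha - v_\beta$ $\tau$-converges to $u - v_\beta$ (subtraction is $\tau$-continuous), and $w \mapsto q(w - v_\beta)$ is lower semicontinuous; therefore $q(u - v_\beta) \le \liminf_\alpha q(v_\alpha - v_\beta) \le \varepsilon^2$. As $\beta \succ \alpha_0$ was arbitrary, this says exactly that $\|v_\beta - u\|_q \le \varepsilon$ for all $\beta \succ \alpha_0$, i.e.\ $\pi(v_\alpha) \to \pi(u)$ in the norm of $H$. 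On the other hand, each $v_\alpha$ is a convex combination of elements $u_{i_j}$ with arbitrarily large indices and $\pi(u_{i_j})$ converges weakly to $\xi$; hence, testing against a fixed $\eta \in H$ and using that a convex combination of numbers close to $\langle \xi,\eta\rangle$ is again close to $\langle \xi,\eta\rangle$, one gets that $\pi(v_\alpha)$ converges weakly to $\xi$ as well. Comparing the two limits and using that the weak topology of $H$ is Hausdorff yields $\xi = \pi(u)$, which is what remained to be shown.

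I expect the main obstacle to be precisely the one already met in Theorem~\ref{theorem:characterization closedness}: since $(V,\tau)$ need not be locally convex, one cannot form convex combinations of the $u_{i_j}$ while naively controlling their $\tau$-behaviour, so the Banach--Saks selection confining all finite partial sums to a single balanced neighbourhood is essential. Beyond that construction, the only genuinely new ingredient is the short lower-semicontinuity argument that turns the form-Cauchy net into a net converging to $u$ in form norm; the remainder is bookkeeping with subnets and the weak compactness of balls in $H$.
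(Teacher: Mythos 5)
Your proposal is correct, but it takes a genuinely different --- and much heavier --- route than the paper. The paper's proof is a short direct computation: after reducing to $u=0$ (lower semicontinuity plus $\|\cdot\|_q$-boundedness give $u\in D(q)$, so one may replace $u_i$ by $u_i-u$), it fixes $v\in D(q)$ and $\alpha>0$, notes $v-\alpha u_i\to v$ in $\tau$, and applies lower semicontinuity to obtain
$q(v)\leq \liminf_i q(v-\alpha u_i)\leq q(v)-2\alpha \limsup_i q(u_i,v)+\alpha^2M^2$
with $M$ a bound on $\|u_i\|_q$; dividing by $\alpha$ and letting $\alpha\to 0+$ gives $\limsup_i q(u_i,v)\leq 0$, and replacing $v$ by $-v$ finishes. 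No completeness, no Hilbert completion, no weak compactness, no subnets, no Banach--Saks --- the remark following the lemma in fact emphasizes that the point of the paper's proof is to use lower semicontinuity directly instead of a Banach--Saks argument, extending \cite[Lemma~I.2.12]{MR} to arbitrary topological vector spaces. Your argument is essentially that classical Hilbert-space strategy (pass to the completion of $(D(q)/\ker q,q)$, use weak compactness of balls, identify the unique weak cluster point), adapted to the non-locally-convex setting by borrowing the C\'esaro-mean selection from the proof of Theorem~\ref{theorem:characterization closedness}. The steps you add are sound: the reduction of $q$-weak convergence to weak convergence in $H$ via boundedness and density of $\pi(D(q))$; the cluster-point reduction in the weakly compact ball; the lower-semicontinuity upgrade from $\|\cdot\|_q$-Cauchy to $\|\cdot\|_q$-convergence to $u$ (the same trick as in Proposition~\ref{prop:lower semincontinuity implies closedness}, needing neither closedness of $q$ nor completeness of $V$); and the averaging estimate giving $\pi(v_\alpha)\rightharpoonup\xi$, where you correctly use the uniform tail estimate $|\langle \pi(u_{i_j})-\xi,\eta\rangle|<\varepsilon$ for all $j\succ j_0$ rather than any convergence of the selected subsequence. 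The one place where you rely on the cited construction rather than prove something is the assertion that the full net $(v_\alpha)$, indexed over triples $(i,U,N)$, is $\|\cdot\|_q$-Cauchy: Banach--Saks directly controls only each fixed-$(i,U)$ C\'esaro sequence, and the cross-index Cauchyness is exactly the tersely justified step in the proof of Theorem~\ref{theorem:characterization closedness}, so your proof inherits whatever care that step requires. Net effect: your route works and gains nothing in generality (both arguments need only lower semicontinuity), while the paper's scaling argument is far shorter and sidesteps the convex-combination machinery entirely.
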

\begin{proof}
 The lower semicontinuity of $q$ and the $\|\cdot\|_q$-boundedness of $(u_i)$ imply $u \in D(q)$. Hence, we can assume $u = 0$. 
 
  The boundedness of $(u_i)$ implies that for each $v \in D(q)$ we have 
 $$- \infty < \liminf_i q(u_i,v) \leq \limsup_i q(u_i,v) < \infty. $$
 Let $M \geq 0$ such that $\|u_i\|_q \leq M$ for each $i$. Since $\tau$ is a vector space topology, for  $\alpha > 0$ and $v \in D(q)$, we obtain $v-\alpha u_i \to v$ with respect to $\tau$.  The lower semicontinuity of $q$   yields
 \begin{align*}
  q(v) &\leq \liminf_i q(v - \alpha u_i)\\
  &= \liminf_i \left(q(v) - 2\alpha q(u_i,v) + \alpha^2 q(u_i) \right) \\
  &\leq \liminf_i \left(q(v) - 2\alpha q(u_i,v) + \alpha^2 M^2 \right) \\
  &=q(v) -2\alpha \limsup_i q(u_i,v) + \alpha^2 M^2.
 \end{align*}
Hence, for all $\alpha > 0$ we obtain $ 2\limsup_i q(u_i,v) \leq \alpha M^2$, which implies $\limsup_i q(u_i,v) \leq 0$.  Since $v$ was arbitrary, we also have $\limsup_i q(u_i,-v) \leq 0$ and conclude 
$$0 \leq  \liminf_i q(u_i,v) \leq \limsup_i q(u_i,v) \leq 0.$$
This finishes the proof. 
\end{proof}


An immediate consequence of the previous lemma is the following. 

\begin{corollary}
 Let $(V,\tau)$ be a topological vector space and let $q$ be a lower semicontinuous quadratic form on $V$. Let $(u_i)$ be a $\|\cdot\|_q$-bounded net that $\tau$-converges to $u$ and $q$-weakly converges to $v$. Then $u-v \in \ker q$. 
\end{corollary}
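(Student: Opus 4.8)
The plan is to reduce the statement to the observation that a weak limit with respect to the degenerate inner product $q(\cdot,\cdot)$ is determined only modulo $\ker q$, using that the previous lemma already exhibits one $q$-weak limit of the net.

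First I would record that both $u$ and $v$ lie in $D(q)$. Indeed, the lower semicontinuity of $q$ together with the $\|\cdot\|_q$-boundedness of $(u_i)$ forces $u \in D(q)$ — this is precisely the opening observation in the proof of Lemma~\ref{lemma:existence of a weakly convergent subnet} — while $v \in D(q)$ is built into the very notion of $q$-weak convergence towards $v$.

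Next I would invoke Lemma~\ref{lemma:existence of a weakly convergent subnet} directly: since $(u_i)$ is $\|\cdot\|_q$-bounded and $\tau$-converges to $u$, it $q$-weakly converges to $u$. Hence the net has two $q$-weak limits, namely $u$ (by the lemma) and $v$ (by hypothesis). Unravelling the definition of $q$-weak convergence, this means that for every $w \in D(q)$ both
$$q(u_i,w) \to q(u,w) \quad \text{and} \quad q(u_i,w) \to q(v,w)$$
hold in $\IR$. Since limits of real-valued nets are unique, I obtain $q(u,w) = q(v,w)$, and therefore by bilinearity $q(u-v,w) = 0$, for every $w \in D(q)$.

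Finally I would specialize to $w = u-v$, which is admissible because $u-v \in D(q)$, to get $q(u-v) = q(u-v,u-v) = 0$, i.e.\ $u-v \in \ker q$. The only conceptual point worth stressing is that, because $q(\cdot,\cdot)$ is a \emph{degenerate} inner product, a $q$-weak limit is unique only up to an element of $\ker q$; the corollary is exactly the quantitative expression of this degeneracy, so no genuine obstacle arises beyond carefully keeping track of it.
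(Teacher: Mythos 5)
Your proof is correct and is exactly the argument the paper intends when it calls the corollary ``an immediate consequence'' of Lemma~\ref{lemma:existence of a weakly convergent subnet}: the lemma supplies $u$ as a $q$-weak limit, uniqueness of limits in $\IR$ gives $q(u-v,w)=0$ for all $w \in D(q)$, and testing against $w = u-v$ yields $q(u-v)=0$. Your preliminary check that $u, v \in D(q)$ is a sensible addition and matches the opening observation in the lemma's own proof, so nothing further is needed.
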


\begin{remark}
 \begin{itemize}
  \item Lemma~\ref{lemma:characterization convergence in form topology lsc forms} is certainly well-known in some special cases. It can be found in the literature when the underlying space is a Hilbert space. A related result when the underlying space is the space of all functions on a discrete set equipped with the topology of pointwise convergence is \cite[Theorem~1.9]{Schmi}.  Lemma~\ref{lemma:existence of a weakly convergent subnet} is an extension of \cite[Lemma~I.2.12]{MR} to arbitrary topological vector spaces.
  
  \item Both lemmas do not require the topological vector space to be complete nor the form to be closed.  The given proofs are new and make direct use of lower semicontinuity instead of using some form of the Banach-Saks theorem.
 \end{itemize}
\end{remark}

\begin{lemma}\label{lemma:form weak convergence implies lower limit}
 Let $V$ be a vector space and let $q$ be a quadratic form on $V$. Let $(u_i)$ be a $\|\cdot\|_q$-bounded net that $q$-weakly converges to $u \in D(q)$. Then 
 $$q(u) \leq \liminf_i q(u_i).$$
\end{lemma}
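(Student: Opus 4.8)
The plan is to exploit the inner-product structure on $D(q)$ furnished by the polarized bilinear form, together with the Cauchy--Schwarz inequality, exactly as in the Hilbert space case; note that no topology on $V$ enters, since $q$-weak convergence and $\|\cdot\|_q$-boundedness are purely algebraic notions attached to the form.

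First I would test the definition of $q$-weak convergence against the fixed vector $u \in D(q)$ itself. Since $(u_i)$ $q$-weakly converges to $u$ and $u \in D(q)$, choosing the test vector $v = u$ yields
$$\lim_i q(u_i, u) = q(u,u) = q(u).$$
In particular this limit exists and equals $q(u)$.

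Next I would invoke the Cauchy--Schwarz inequality for the positive semidefinite bilinear form $q$ on $D(q)$, which gives, for every index $i$,
$$q(u_i, u) \leq q(u_i)^{1/2}\, q(u)^{1/2} = \|u_i\|_q\, \|u\|_q.$$
Passing to $\liminf$ on both sides and using the previous step (together with the fact that the constant factor $q(u)^{1/2} \geq 0$ can be pulled out of the $\liminf$), I obtain
$$q(u) = \liminf_i q(u_i, u) \leq q(u)^{1/2}\, \liminf_i q(u_i)^{1/2}.$$

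Finally I would conclude by distinguishing two cases. If $q(u) = 0$ the claim is immediate, since $\liminf_i q(u_i) \geq 0$. If $q(u) > 0$, I divide the last display by $q(u)^{1/2}$ to get $q(u)^{1/2} \leq \liminf_i q(u_i)^{1/2}$, and then square, using that $t \mapsto t^2$ is continuous and increasing on $[0,\infty)$ so that $\bigl(\liminf_i q(u_i)^{1/2}\bigr)^2 = \liminf_i q(u_i)$; this gives $q(u) \leq \liminf_i q(u_i)$, as desired. I do not expect a genuine obstacle: the only two points warranting a line of care are that Cauchy--Schwarz holds verbatim for a merely \emph{positive semidefinite} form (the possible degeneracy of $q$ causes no trouble) and that the passage of $\liminf$ through the square root is harmless.
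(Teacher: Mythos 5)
Your proof is correct, but it takes a genuinely different route from the paper. You run the classical Cauchy--Schwarz argument directly on the semidefinite form: testing weak convergence against $u$ itself gives $\lim_i q(u_i,u) = q(u)$, and $q(u_i,u) \leq q(u_i)^{1/2}q(u)^{1/2}$ then yields the claim after the (harmless) passage of $\liminf$ through the square root; the degeneracy of $q$ indeed causes no trouble, since Cauchy--Schwarz holds for any positive semidefinite bilinear form. The paper instead passes to the Hilbert space completion $(H,\bar{q})$ of $(D(q)/\ker q,\, q)$, equips $H$ with the topology $\mathcal{W}$ of $\bar{q}$-weak convergence, observes that $\bar{q}$ is a closed form on $(H,\mathcal{W})$, and invokes Theorem~\ref{theorem:characterization closedness} (closed forms are lower semicontinuous); the $\|\cdot\|_q$-boundedness is used there to upgrade $q$-weak convergence against the dense subspace $D(q)/\ker q$ to $\bar{q}$-weak convergence in all of $H$. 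The paper's remark makes clear this detour is deliberate: the point is to exhibit the lemma as an application of its closedness-equals-lower-semicontinuity theorem, the leitmotif of the chapter. Your argument buys brevity and self-containedness --- no completion, no topological vector space machinery --- and, notably, it never uses the $\|\cdot\|_q$-boundedness hypothesis at all (only that the $u_i$ lie in $D(q)$, which boundedness guarantees), so it actually proves a slightly stronger statement; the paper's argument, by contrast, genuinely needs boundedness and buys conceptual unity instead.
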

\begin{proof}
Let $(H,\bar{q})$ be the Hilbert space completion of $(D(q)/\ker q, q)$. We equip $H$ with the vector space topology of $\bar{q}$-weak convergence, which we denote by $\mathcal{W}$. Since $(H,\bar{q})$ continuously embeds into $(H,\mathcal{W})$, the form topology $\mathcal{W}_{\bar{q}}$ coincides with the Hilbert space topology of $(H,\bar{q})$. Therefore, $\bar{q}$ is a closed form on the topological vector space $(H,\mathcal{W})$. By Theorem~\ref{theorem:characterization closedness} it is lower semicontinuous. The $\|\cdot\|_q$-boundedness of $(u_i)$ and the $q$-weak convergence $u_i \to u$  imply the $\bar{q}$-weak convergence $u_i \to u$.  Hence, the statement follows from the lower semicontinuity of $\bar{q}$.
\end{proof}

 \begin{lemma}\label{lemma:weak convergence implies weak convergence}
  Let $V$ be a vector space and let $q,q'$ be two quadratic forms on $V$. Assume that for all $u \in D(q)$ the inequality $q'(u) \leq q(u)$ holds. If $(u_i)$ is a $\|\cdot\|_q$-bounded net in $D(q)$ that $q$-weakly converges to $u \in D(q)$,  then $(u_i)$ converges $q'$-weakly to $u$. 
 \end{lemma}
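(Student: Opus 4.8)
The plan is to transport the weak convergence across the natural contraction between the Hilbert space completions attached to $q$ and to $q'$. The hypothesis $q'(u)\le q(u)$ for $u\in D(q)$ immediately gives $D(q)\subseteq D(q')$ and $\ker q\subseteq\ker q'$ (if $q(u)=0$ then $q'(u)\le q(u)=0$), so the assignment $u+\ker q\mapsto u+\ker q'$ defines a well-defined linear contraction $\iota\colon(D(q)/\ker q,\|\cdot\|_q)\to(D(q')/\ker q',\|\cdot\|_{q'})$, the contractivity being $q'\le q$ once more. Passing to the Hilbert space completions $(H,\bar q)$ of $(D(q)/\ker q,q)$ and $(H',\bar{q'})$ of $(D(q')/\ker q',q')$ --- as in the proof of Lemma~\ref{lemma:form weak convergence implies lower limit} --- the map $\iota$ extends to a bounded linear operator $\hat\iota\colon H\to H'$ with $\hat\iota(u_i+\ker q)=u_i+\ker q'$ and $\hat\iota(u+\ker q)=u+\ker q'$.

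First I would upgrade the assumed $q$-weak convergence (i.e.\ $q(u_i,v)\to q(u,v)$ for every $v\in D(q)$) to genuine weak convergence of $(u_i)$ to $u$ in $H$. This is exactly where the $\|\cdot\|_q$-boundedness enters: writing $M$ for the bound, one approximates an arbitrary $\xi\in H$ by elements of the dense subspace $D(q)/\ker q$ and uses the uniform estimate $\|u_i-u\|_q\le M+\|u\|_q$ to interchange the two limits, precisely the density argument already carried out in Lemma~\ref{lemma:form weak convergence implies lower limit}. Observe also that $(u_i)$ is automatically $\|\cdot\|_{q'}$-bounded, since $q'(u_i)\le q(u_i)\le M^2$.

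The crucial step is that a bounded linear operator between Hilbert spaces is weak-to-weak continuous. Concretely, for a fixed $w\in D(q')$ the functional $\xi\mapsto\bar{q'}(\hat\iota\xi,\,w+\ker q')$ is bounded on $H$, so Riesz representation furnishes $\eta_w\in H$ (namely $\eta_w=\hat\iota^{*}(w+\ker q')$) with $\bar{q'}(\hat\iota\xi,\,w+\ker q')=\bar q(\xi,\eta_w)$ for all $\xi\in H$. Testing the weak convergence in $H$ against $\eta_w$ then gives
$$q'(u_i,w)=\bar q(u_i+\ker q,\eta_w)\longrightarrow\bar q(u+\ker q,\eta_w)=q'(u,w).$$
As $w\in D(q')$ was arbitrary, this is exactly the asserted $q'$-weak convergence of $(u_i)$ to $u$.

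The only genuine obstacle is conceptual rather than computational: the test vectors defining $q'$-weak convergence range over all of $D(q')$, which may be strictly larger than $D(q)$, whereas the hypothesis only controls pairings against $D(q)$. The resolution is that $\|\cdot\|_q$-boundedness promotes $q$-weak convergence to full weak convergence in the completion $H$ (tested against \emph{every} element of $H$), while the adjoint $\hat\iota^{*}$ carries each enlarged test vector $w\in D(q')$ back into $H$; thus a pairing against an element of the possibly larger domain is reduced to a pairing in $H$, where convergence is already available. This is why passing to the completions and exploiting the boundedness assumption is indispensable.
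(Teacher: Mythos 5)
Your proof is correct, and its second half takes a genuinely different route from the paper. Both arguments share the first step: pass to the Hilbert space completion $(H,\bar{q})$ of $(D(q)/\ker q, q)$ and use the $\|\cdot\|_q$-boundedness together with density of $D(q)/\ker q$ to upgrade $q$-weak convergence to genuine weak convergence in $H$. From there the paper does not introduce a second completion at all: it extends $q'$ to a $\bar{q}$-continuous quadratic form $\bar{q}'$ on $H$, invokes Theorem~\ref{theorem:characterization closedness lc} (Hilbert spaces are locally convex) to get lower semicontinuity of $\bar{q}'$ with respect to the weak topology of $H$, and then applies Lemma~\ref{lemma:existence of a weakly convergent subnet} to the $\bar{q}'$-bounded, weakly convergent net $(u_i)$ to conclude $\bar{q}'$-weak convergence. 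You instead build the canonical contraction $\hat{\iota}\colon H \to H'$ into the completion of $(D(q')/\ker q', q')$ and use the weak-to-weak continuity of bounded operators via the adjoint $\hat{\iota}^{*}$ --- no lower semicontinuity of $\bar{q}'$ enters anywhere. Each approach has a distinct payoff. The paper's version is deliberately structured to exhibit the lemma as an application of its closedness machinery (this is stated explicitly in the remark following the lemma), and it avoids the second completion. Your version is more elementary, and it is more careful on precisely the point where the paper is terse: the conclusion of the paper's argument is $\bar{q}'(u_i,\xi)\to\bar{q}'(u,\xi)$ for $\xi \in H$, whereas the asserted $q'$-weak convergence tests against all $w \in D(q')$, which may properly contain $D(q)$; bridging that gap requires exactly the observation you make, that $\hat{\iota}^{*}$ (or, equivalently, a density argument in the closure of the range of $\hat{\iota}$) pulls each enlarged test vector back into $H$. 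So your proposal not only proves the lemma but also makes explicit a step the paper leaves implicit.
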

 \begin{proof}
  Let $(H,\bar{q})$ be the Hilbert space completion of $(D(q)/\ker q, q)$. The inequality between $q'$ and $q$ allows us to uniquely extend $q'$ to a quadratic form $\bar{q}'$ on $H$, which is continuous with respect to $\bar{q}$.  In particular, this extension $\bar{q}'$ is a lower semicontinuous quadratic form on the Hilbert space $(H,\bar{q})$. Since Hilbert spaces are locally convex,  we can apply Theorem~\ref{theorem:characterization closedness lc} to obtain that $\bar{q}'$ is lower semicontinuous with respect to $\bar{q}$-weak convergence. 
  
  The $\|\cdot\|_q$-boundedness of $(u_i)$ and the $q$-weak convergence $u_i \to u$ imply the $\bar{q}$ weak convergence $u_i \to u$. Furthermore, the inequality between $q$ and $q'$ yields that $(u_i)$ is $\|\cdot\|_{\bar{q}'}$-bounded. Now, the statement follows from Lemma~\ref{lemma:existence of a weakly convergent subnet} and the fact that $\bar{q}'$ is lower semicontinuous with respect to $\bar{q}$-weak convergence. 
 \end{proof}

 \begin{remark}
  \begin{itemize}
   \item The previous two lemmas are well-known. Lemma~\ref{lemma:form weak convergence implies lower limit} is a standard result in Hilbert space theory while an $L^2$-version of Lemma~\ref{lemma:weak convergence implies weak convergence} is shown in \cite[Lemma~3.2.2]{FOT}. We included the (short) proofs to emphasize that both can be seen as applications of Theorem~\ref{theorem:characterization closedness} and of Theorem~\ref{theorem:characterization closedness lc}, respectively. 
   \item If  $(D(q)/\ker q,q)$ is a Hilbert space, the boundedness assumption on $(u_i)$ can be dropped in the previous two lemmas. In this case, the Banach-Steinhaus theorem implies that boundedness automatically follows from weak convergence.  
  \end{itemize}
 \end{remark}

\section{Quadratic forms on Lebesgue spaces} \label{section:quadratic forms on lebesgue spaces}

For the rest of this thesis we only deal with forms  on Lebesgue spaces of measurable functions. These spaces are not only vector spaces, but have an order structure and allow algebraic manipulations. Therefore, their form theory is richer than in the abstract setting. In this section  we give a glimpse at this observation by discussing the very basics of Dirichlet form theory. To this end, we basically follow \cite{FOT} and \cite{MR} with some slight modifications. Furthermore, we introduce extended Dirichlet spaces in the light of the theory that we developed in the previous section and provide a new and short proof for their existence.  At the end of this section, we discuss how closed forms on $\Ltf$ can be approximated by continuous ones.

\subsection{Closed forms on $L^2(m)$ and associated objects}

Let $q$ be a quadratic form on the Hilbert space $L^2(m)$.  For $\alpha > 0$ and $f \in L^2(m)$ we set 
$$q_\alpha(f) := q(f) + \alpha \|f\|_2^2.$$
The associated form norm $\|\cdot\|_{q_\alpha}$ induces the form topology of $q$ on $D(q)$. Since $L^2(m)$ is a subspace of $L^0(m)$, we can extend $q$ to a quadratic form on $L^0(m)$ by letting 
$$q(f): = \infty \text{ for } f \in L^0(m)\setminus L^2(m).$$
With this convention the domain of $q$ when considered as a form on $L^2(m)$ and when considered as a form on $L^0(m)$ coincide. In this case, the characterizations of closedness of the previous section read as follows.

\begin{proposition} \label{proposition: closedness in L2}
Let $q$ be a quadratic form on $L^2(m)$. Then the following assertions are equivalent.
 \begin{itemize}
  \item[(i)]  $q$ is closed on $L^2(m)$. 
  \item[(ii)] $q$ is lower semicontinuous with respect to strong convergence on $L^2(m)$.
  \item[(iii)] $q$ is lower semicontinuous with respect to weak convergence on $L^2(m)$.
  \item[(iv)] For one/any $\alpha > 0$, the space $(D(q),q_\alpha)$ is a Hilbert space.  
  \item[(v)] For one/any $\alpha >0$, the form $q_\alpha$ is closed on $L^0(m)$. 
  \end{itemize}
  If, additionally, $m$ is localizable, then all of these assertions are equivalent to the following.
  \begin{itemize}
   \item[(vi)]  For one/any $\alpha > 0$, the form $q_\alpha$ is lower semicontinuous on $L^0(m)$. 
  \end{itemize}

\end{proposition}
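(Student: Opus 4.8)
The plan is to reduce each of the five equivalences to one of the abstract characterizations of closedness from Section~\ref{sect:closed forms}, exploiting that $L^2(m)$ is a complete locally convex (in fact Hilbert) space and that, as noted above, the form topology $\tau_q$ on $D(q)$ is precisely the topology induced by the Hilbert norm $\|\cdot\|_{q_\alpha}$. I would first dispose of (i) $\Leftrightarrow$ (ii) $\Leftrightarrow$ (iii): since $L^2(m)$ is a complete locally convex topological vector space, these three are exactly the conditions of Theorem~\ref{theorem:characterization closedness lc}, once one observes that strong convergence in $L^2(m)$ is convergence with respect to the norm topology $\tau$, while weak convergence is convergence with respect to $\sigma(L^2(m),L^2(m)')$, the dual of $L^2(m)$ being $L^2(m)$ itself.

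For (i) $\Leftrightarrow$ (iv) I would argue directly. Because $\|\cdot\|_{q_\alpha}$ induces $\tau_q$ and the inner product $q_\alpha$ is non-degenerate (one has $q_\alpha(f)\geq \alpha\|f\|_2^2$, so $q_\alpha(f)=0$ forces $f=0$), closedness of $q$, i.e.\ completeness of $(D(q),\tau_q)$, is the same as completeness of the inner product space $(D(q),q_\alpha)$, i.e.\ its being a Hilbert space. The estimate $\|f\|_2\leq \alpha^{-1/2}\|f\|_{q_\alpha}$ shows that the norms $\|\cdot\|_{q_\alpha}$ for different $\alpha>0$ are pairwise equivalent on $D(q)$, so completeness holds for one $\alpha$ if and only if it holds for all; this yields the \emph{one/any} formulation.

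For (i) $\Leftrightarrow$ (v) I would compare form topologies. The form topology of $q_\alpha$ as a form on $(L^0(m),\tau(m))$ is the smallest topology on $D(q)=D(q_\alpha)$ containing both the subspace topology of $\tau(m)$ and the $\|\cdot\|_{q_\alpha}$-topology. The key observation is that on $D(q)$ the norm $\|\cdot\|_{q_\alpha}$ already dominates $\|\cdot\|_2$, which in turn dominates the subspace topology of $\tau(m)$, since $L^2(m)$ embeds continuously into $L^0(m)$; hence the $\|\cdot\|_{q_\alpha}$-topology is the finest of the three and the form topology of $q_\alpha$ on $L^0(m)$ again coincides with $\tau_q$. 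Consequently $q_\alpha$ is closed on $L^0(m)$ if and only if $(D(q),\tau_q)$ is complete, i.e.\ if and only if $q$ is closed, and the \emph{one/any} statement follows as before.

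Finally, under localizability, Theorem~\ref{theorem:properties of L0}(c) guarantees that $(L^0(m),\tau(m))$ is a complete topological vector space. Then Proposition~\ref{prop:lower semincontinuity implies closedness} shows that lower semicontinuity of $q_\alpha$ on $L^0(m)$ implies its closedness, while Theorem~\ref{theorem:characterization closedness} gives the converse on any topological vector space; together these yield (v) $\Leftrightarrow$ (vi). I expect the main obstacle to be the identification of form topologies in step (i) $\Leftrightarrow$ (v): one must verify carefully that enlarging the ambient space from $L^2(m)$ to $L^0(m)$ leaves the form topology on $D(q)$ unchanged, and this rests precisely on the fact that the form norm dominates the coarser topology of local convergence in measure.
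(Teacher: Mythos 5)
Your proposal is correct and follows essentially the same route as the paper's proof: (i)--(iii) via Theorem~\ref{theorem:characterization closedness lc} on the complete locally convex space $L^2(m)$, (iv) and (v) by identifying the form topology with the $\|\cdot\|_{q_\alpha}$-norm topology (the paper phrases this as the continuous embedding of $(D(q_\alpha),q_\alpha)$ into $L^0(m)$), and (v)$\Leftrightarrow$(vi) from Theorem~\ref{theorem:properties of L0} together with Proposition~\ref{prop:lower semincontinuity implies closedness} and Theorem~\ref{theorem:characterization closedness}. You merely spell out details the paper leaves implicit, such as the equivalence of the norms $\|\cdot\|_{q_\alpha}$ for different $\alpha$ behind the one/any formulation.
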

\begin{proof}
The equivalence of (i), (ii), (iii) and (iv) is an immediate consequence of the discussion in Section~\ref{sect:closed forms}. Since  $D(q) = D(q_\alpha)$, the equivalence of assertions (iv) and (v) follows from the definition of closedness and the observation that $(D(q_\alpha),q_\alpha)$ is continuously embedded in $L^0(m)$. If $m$ is localizable, Theorem~\ref{theorem:properties of L0} shows that the space $L^0(m)$ is Hausdorff and complete. Hence, the equivalence of (v) and (vi) is guaranteed by Proposition~\ref{prop:lower semincontinuity implies closedness} and Theorem~\ref{theorem:characterization closedness}.
\end{proof}

\begin{remark}
 The equivalence of the assertions (i) - (iv) in the previous lemma is well known and holds even if $L^2(m)$ is replaced by an arbitrary Hilbert space. However, the last conditions (v) and (vi) stand out as they pay tribute to the fact that $L^2(m)$ is a vector space of functions that continuously embeds into $L^0(m)$. In particular, they show that closed quadratic forms on $L^2(m)$ can be interpreted  as special examples of certain closed quadratic forms on $L^0(m)$.   
\end{remark}

%
%

Every closed quadratic form on $L^2(m)$ comes with a whole zoo of associated objects. We do not assume that the domains of the forms are dense in $L^2(m)$. Therefore, we comment on some necessary modifications.

Let $q$ be a closed quadratic form on $L^2(m)$. We denote by $\overline{D(q)}$ the closure of $D(q)$ in $L^2(m)$. The domain of the {\em associated operator $L$} is given by
$$D(L) := \{f\in D(q) \mid \text{ there exists } h \in \overline{D(q)} \text{ s.t. } q(f,g) = \as{h,g} \text{ for all } g \in D(q)\},$$
on which it acts by $f \mapsto Lf := h.$ Note that $L$ can be viewed as an operator on $\overline{D(q)}$ and as an operator on $L^2(m)$, where it may not be densely defined. The important observation is that $L$ is densely defined, positive and self-adjoint on $\overline{D(q)}$. Its square root $L^{1/2}$ exists in $\overline{D(q)}$ and satisfies
$$q(f) = \begin{cases}
           \as{L^{1/2}f,L^{1/2}f} &\text{if } f \in D(L^{1/2})\\
           \infty &\text{else}
          \end{cases}.
 $$ 
In particular, the domain of $q$ is given by $D(q) = D(L^{1/2})$. 
 On $\overline{D(q)}$ the form $q$ has an {\em associated semigroup} $(T_t)_{t>0}$ and an {\em associated resolvent} $(G_\alpha)_{\alpha > 0}$, which are defined by
$$T_t := e^{-tL} \text{ and } G_\alpha := (L+\alpha)^{-1}.$$
Both objects are strongly continuous and self-adjoint. We extend them to $L^2(m)$ by setting $T_t f  :=  G_\alpha f :=  0$ for $f \in \overline{D(q)}^\perp$. These extended versions still satisfy the semigroup and resolvent equations but need not be strongly continuous. For $f \in L^2(m)$  the resolvent $G_\alpha f$ is the unique element in $D(q)$ that satisfies
$$q_\alpha(G_\alpha f,g) = \as{f,g} \text{ for all } g \in D(q).$$


\subsection{Dirichlet forms}

In this subsection we briefly recall the notion of (regular) Dirichlet forms and discuss properties of the associated objects. 

\begin{definition}[Normal contraction]\label{definition:normal contraction}
 A {\em normal contraction} is a function $C:\IR^n \to \IR$ that satisfies
 $$C(0) = 0 \text{ and } |C(x) - C(y)| \leq \sum_{i = 1}^n |x_i-y_i| \text{ for every } x,y \in \IR^n.$$ 
 We call a normal contraction $C:\IR \to \IR$ an {\em $\varepsilon$-cutoff} if $C$ is monotone increasing, maps $\IR$ to the bounded interval $[-\varepsilon,1+\varepsilon]$ and satisfies $C(x) = x$ for every $x \in [0,1]$. 
\end{definition}

\begin{definition}[Markovian form / Dirichlet form] \label{definition:Markovian form}
 A quadratic form $\E$ on $L^2(m)$ is called {\em Markovian} if for every $\varepsilon > 0$ there exists an $\varepsilon$-cutoff $C_\varepsilon$ such that for all $u \in L^2(m)$ the inequality 
 $$\E(C_\varepsilon \circ u) \leq \E(u)$$
 holds. A closed Markovian form is called {\em Dirichlet form}. If, additionally, $X$ is a separable locally compact metric space, $m$ is a Radon measure of full support and the space
 $$C_c(X) \cap D(\E)$$
 is dense in the Hilbert space $(D(\E),\E_1)$ and in $(C_c(X),\|\cdot\|_\infty)$, then $\E$ is called {\em regular}.  
\end{definition}
\begin{remark}
\begin{itemize}
\item Some authors reserve the term normal contraction exclusively for contractions in one dimension. As the distinction between one and more dimensions is somewhat artificial we ignore it and use the term as indicated above. The concept of what we have called $\varepsilon$-cutoff is widely used in Dirichlet form theory without a special name. 

 \item  Usually Dirichlet forms are required to have a dense domain in $L^2(m)$. Dirichlet forms without dense domain are sometimes called Dirichlet forms in the wide sense, see \cite[Section~1.3]{FOT}. Since all the theorems from Dirichlet form theory that we use in this thesis remain true in the non densely-defined setting, we do not make this distinction.  
\end{itemize}
\end{remark}
 Dirichlet forms can be characterized by somewhat stronger contraction properties than Markovian forms, see \cite[Theorem I.4.12]{MR}.
\begin{theorem}\label{theorem:cutoff properties Dirichlet form}
 Let $\E$ be a closed quadratic form on $L^2(m)$. Then the following assertions are equivalent.
 \begin{itemize}
  \item[(i)] $\E$ is a Dirichlet form. 
  \item[(ii)] For every normal contraction $C:\IR \to \IR$ and every $f \in L^2(m)$, the inequality
  $$\E(C\circ f) \leq \E(f)$$
  holds.
  \item[(iii)] For every normal contraction $C:\IR^n \to \IR$ and all $f_1,\ldots,f_n \in L^2(m)$, the inequality

  $$\E(C(f_1,\ldots,f_n))^{1/2} \leq \sum_{k = 1}^n \E(f_k)^{1/2}$$
  holds.
    \item[(iv)] For all $f,f_1,\ldots,f_n \in L^2(m)$, the  inequalities
$$|f(x)| \leq \sum_{k=1}^n|f_k(x)| \text{ and } |f(x)-f(y)| \leq \sum_{k = 1}^n  |f_k(x)-f_k(y)| \text{ for }  m\text{-a.e. } x,y \in X $$
 imply
$$\E(f)^{1/2} \leq \sum_{k = 1}^n \E(f_k)^{1/2}.$$
 \end{itemize}
\end{theorem}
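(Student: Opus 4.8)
The plan is to close the cycle by proving the three routine implications (iv) $\Rightarrow$ (iii) $\Rightarrow$ (ii) $\Rightarrow$ (i) and then the one substantial implication (i) $\Rightarrow$ (iv). For (iv) $\Rightarrow$ (iii), given a normal contraction $C:\IR^n\to\IR$ and $f_1,\dots,f_n\in L^2(m)$ I would set $f:=C(f_1,\dots,f_n)$; the defining properties $C(0)=0$ and the $\ell^1$-Lipschitz bound translate verbatim into the pointwise hypotheses $|f|\le\sum_k|f_k|$ and $|f(x)-f(y)|\le\sum_k|f_k(x)-f_k(y)|$ of (iv) (in particular $f\in L^2(m)$), so (iv) yields the claim. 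The implication (iii) $\Rightarrow$ (ii) is the case $n=1$ after squaring, and (ii) $\Rightarrow$ (i) is immediate once one observes that every $\varepsilon$-cutoff is a normal contraction and that such cutoffs exist, e.g.\ $x\mapsto(x\vee(-\varepsilon))\wedge(1+\varepsilon)$.

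The heart of the matter is (i) $\Rightarrow$ (iv). I would first extract the \emph{unit contraction} property from the mere existence of cutoffs. For $f\in D(\E)$ and an $\varepsilon$-cutoff $C_\varepsilon$ we have $\E(C_\varepsilon\circ f)\le\E(f)$. Since $C_\varepsilon$ is a normal contraction with $C_\varepsilon(0)=0$ one gets $|C_\varepsilon\circ f|\le|f|$, and since $C_\varepsilon$ is the identity on $[0,1]$ and deviates from $0$ resp.\ $1$ by at most $\varepsilon$ outside that interval, the pointwise estimate $|C_\varepsilon\circ f-(f\vee0)\wedge1|\le\min(\varepsilon,|f|)$ holds. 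Lebesgue's theorem, with dominating function $|f|^2\in L^1(m)$, gives $C_\varepsilon\circ f\to(f\vee0)\wedge1$ in $L^2(m)$ as $\varepsilon\to0$, and the lower semicontinuity of the closed form $\E$, Proposition~\ref{proposition: closedness in L2}, yields $\E((f\vee0)\wedge1)\le\E(f)$.

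To upgrade this to the full property (iv) I would pass to the bounded approximating forms
\[\E^{(\beta)}(u):=\beta\as{u-\beta G_\beta u,u},\qquad\beta>0,\]
which are everywhere defined, continuous, and increase pointwise to $\E$ on $L^2(m)$ by spectral calculus (with $\E(u)=\infty$ for $u\notin D(\E)$). The unit contraction property is equivalent to the resolvent $\beta G_\beta$ being sub-Markovian, which I would read off from the variational identity $\E_\beta(G_\beta f,\cdot)=\as{f,\cdot}$ together with the fact that applying the unit contraction to $G_\beta f$ does not increase $\E_\beta$. A sub-Markovian $\E^{(\beta)}$ admits a Beurling--Deny type representation
\[\E^{(\beta)}(u)=\frac{1}{2}\iint(u(x)-u(y))^2\,J_\beta(\mathrm{d}x,\mathrm{d}y)+\int u(x)^2\,k_\beta(\mathrm{d}x)\]
with a nonnegative symmetric jump measure $J_\beta$ and a nonnegative killing measure $k_\beta$. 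In this representation $\E^{(\beta)}(u)^{1/2}$ is literally the norm of the pair $(u(x)-u(y),u(x))$ in the $L^2$-space of $J_\beta$ and $k_\beta$, so the pointwise hypotheses of (iv) together with the triangle inequality in that $L^2$-space give $\E^{(\beta)}(f)^{1/2}\le\sum_k\E^{(\beta)}(f_k)^{1/2}\le\sum_k\E(f_k)^{1/2}$. Taking the supremum over $\beta$ and using $\E(f)=\sup_\beta\E^{(\beta)}(f)$ finishes the proof.

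The main obstacle is precisely this last step, and it is where extrinsic input threatens to enter: deriving sub-Markovianity of $\beta G_\beta$ from the unit contraction and, above all, producing the kernel representation of the bounded form $\E^{(\beta)}$ are the representation-theoretic ingredients that the algebraic philosophy of the thesis would prefer to avoid. A partly algebraic alternative handles the lattice contractions for free: applying (ii) to $C(x)=|x|$ gives $\E(|g|)\le\E(g)$, hence $\E(g^+,g^-)\le0$ and therefore the submodularity $\E(u\vee v)+\E(u\wedge v)\le\E(u)+\E(v)$, which already yields (iii) for contractions assembled from $\max$, $\min$ and sums. The genuinely hard point, and the reason the approximating-form machinery seems unavoidable here, is obtaining the sharp bound $\sum_k\E(f_k)^{1/2}$ for an \emph{arbitrary} $n$-dimensional normal contraction rather than only for such lattice polynomials.
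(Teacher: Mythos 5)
Your reduction to the approximating forms is sound up to one step, and that step is fatal at the stated level of generality: the claim that a bounded sub-Markovian form $\E^{(\beta)}(u)=\beta\as{u-\beta G_\beta u,u}$ on a \emph{general} measure space admits a Beurling--Deny representation with a jump measure $J_\beta$ on $X\times X$ and a killing measure $k_\beta$. A bounded symmetric sub-Markovian operator on $L^2(m)$ need not be an integral operator, and the natural candidate $(A,B)\mapsto -\E^{(\beta)}(1_A,1_B)$ (on disjoint sets of finite measure) is a priori only a positive bimeasure; its extension to a $\sigma$-additive measure on the product $\sigma$-algebra requires topological or regularity hypotheses (standard Borel structure, Radon measures, quasi-regularity of the form) that the theorem does not assume. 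This is precisely the representation-theoretic input that the statement is formulated to avoid, and you correctly sensed the danger in your closing paragraph -- but your argument does rely on it, so as written (i) $\Rightarrow$ (iv) is not proven. Everything before that point is fine: the easy cycle (iv) $\Rightarrow$ (iii) $\Rightarrow$ (ii) $\Rightarrow$ (i), the derivation of the unit contraction property from cutoffs via dominated convergence and lower semicontinuity, the monotone approximation $\E^{(\beta)}\nearrow\E$, and the sub-Markovianity of $\beta G_\beta$.

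The repair is to use only the \emph{finite-dimensional} Beurling--Deny decomposition, which needs no measure on $X\times X$. For a simple function $f=\sum_{i=1}^n\alpha_i 1_{A_i}$ with disjoint $A_i$ one has $\E^{(\beta)}(f)=\sum_{i,j}b_{ij}(\alpha_i-\alpha_j)^2+\sum_i c_i\alpha_i^2$ with $b_{ij}=-\E^{(\beta)}(1_{A_i},1_{A_j})\geq 0$ and $c_i=\E^{(\beta)}(1_{A_i},1_{\cup_j A_j})\geq 0$; the sign conditions follow from sub-Markovianity exactly as in Lemma~\ref{lemma:cutoff for functions with disjoint support} and Lemma~\ref{lemma:cutoff for functions beeing one on support} (valid for continuous Markovian forms, cf.\ Remark~\ref{remark:cutoff remark}), and Minkowski's inequality in the resulting weighted $\ell^2$-space gives assertion (iii) for simple functions; density of simple functions and continuity of $\E^{(\beta)}$ then yield (iii), and the supremum over $\beta$ gives (iii) for $\E$. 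Note, however, that this route produces (iii) rather than (iv), so you then need the genuinely nontrivial implication (iii) $\Rightarrow$ (iv): given the pointwise hypotheses of (iv), the map $C(f_1(x),\ldots,f_n(x)):=f(x)$ is a well-defined normal contraction on a subset of $\IR^n$ and must be extended to all of $\IR^n$, as in Lemma~\ref{lemma:extension of normal contractions} -- the step your direct (i) $\Rightarrow$ (iv) design was meant to bypass but which becomes unavoidable once the kernel representation is dropped. For context: the paper does not prove this $L^2$ statement itself (it cites Ma--R\"ockner, Theorem~I.4.12) but proves the $L^0$ generalization, Theorem~\ref{theorem:cutoff properties energy forms}, by exactly this simple-function strategy, with the Moreau--Yosida-type approximating forms on $\Ltf$ playing the role of your $\E^{(\beta)}$.
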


\begin{remark} 
Philosophically, a form $\E$ being Markovian means that for each $\varepsilon$ one can find sufficiently smooth $\varepsilon$-cutoffs that decrease the value of the form. If, additionally, the form is closed, the previous theorem shows that smoothness of the cutoff can be replaced by Lipschitz continuity.
\end{remark}
  The Markov property of Dirichlet forms can be characterized in terms of properties of the associated semigroup and the associated resolvent. This is discussed next.  A linear operator $T:L^2(m) \to L^2(m)$ is {\em Markovian} if for each $f \in L^2(m)$ with $0 \leq f \leq 1$ the inequality 
$$0 \leq Tu \leq 1$$
holds. For later purposes we note that self-adjoint Markovian operators can be extended to continuous operators on $L^1(m)$. More precisely, if $T$ is Markovian, then  $f \in L^1(m) \cap L^2(m)$ implies $Tf \in L^1(m)\cap L^2(m)$ and
$$\|Tf\|_1 \leq \|f\|_1,$$
see e.g. \cite[Section~1.5]{FOT}. Since $L^1(m) \cap L^2(m)$ is dense in $L^1(m)$, this  implies that $T$ uniquely extends to a contraction $T^1$ on $L^1(m)$. Dirichlet forms can be characterized by the Markov property for the associated semigroup and the associated resolvent, see e.g. \cite[Theorem~1.4.1]{FOT}.
 
\begin{lemma}
Let $\E$ be a closed quadratic form on $L^2(m)$. The following assertions are equivalent.  
\begin{itemize}
 \item[(i)] $\E$ is a Dirichlet form.
 \item[(ii)] For each $\alpha > 0$, the operator $\alpha G_\alpha$ is Markovian.
 \item[(iii)] For each $t > 0$, the  operator $T_t$ is Markovian.
\end{itemize}
\end{lemma}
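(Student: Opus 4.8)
The plan is to establish (i) $\Rightarrow$ (ii) $\Rightarrow$ (iii) together with (iii) $\Rightarrow$ (ii) $\Rightarrow$ (i), so that the symmetric pair (ii) $\Leftrightarrow$ (iii) closes simultaneously with (i) $\Leftrightarrow$ (ii). The recurring tool is the variational description of the resolvent recalled above: for $f \in L^2(m)$ the element $u := G_\alpha f$ is the unique minimizer over $D(\E)$ of the strictly convex functional $\Phi(g) := \E(g) + \alpha\|g\|_2^2 - 2\langle f,g\rangle$, because its Euler--Lagrange equation $\E(u,g) + \alpha\langle u,g\rangle = \langle f,g\rangle$ is exactly the defining identity $\E_\alpha(G_\alpha f, g) = \langle f,g\rangle$ of $G_\alpha f$.

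For (i) $\Rightarrow$ (ii) I would fix $f$ with $0 \le f \le 1$, apply the normal contraction $C(x) := 0 \vee (x \wedge \alpha^{-1})$ to $u = G_\alpha f$, and set $v := C \circ u$. Theorem~\ref{theorem:cutoff properties Dirichlet form} gives $\E(v) \le \E(u)$ (in particular $v \in D(\E)$), while a pointwise inspection of
$$\Phi(v) - \Phi(u) \le \int_X (v-u)\big(\alpha(v+u) - 2f\big)\,\D m$$
shows that the integrand vanishes on $\{0 \le u \le \alpha^{-1}\}$ and is strictly negative on $\{u < 0\}$ and on $\{u > \alpha^{-1}\}$, using $0 \le f \le 1$. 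Since $u$ minimizes $\Phi$, we must have $\Phi(v) \ge \Phi(u)$, forcing both exceptional sets to be null, i.e. $0 \le \alpha G_\alpha f \le 1$.

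The passage between resolvent and semigroup I would handle through the functional calculus already available via the associated operator $L$. For (ii) $\Rightarrow$ (iii) I would use the exponential (Yosida) formula $T_t f = \lim_{n\to\infty}(\tfrac{n}{t}G_{n/t})^n f$ in $L^2(m)$: each factor $\tfrac nt G_{n/t}$ is Markovian by (ii), Markovianity is stable under composition, and it survives the $L^2$-limit since the order relations $0 \le \cdot \le 1$ pass to almost everywhere convergent subsequences. For (iii) $\Rightarrow$ (ii) I would instead integrate the Laplace representation $\alpha G_\alpha f = \alpha\int_0^\infty e^{-\alpha t} T_t f\,\D t$ (a Bochner integral in $L^2(m)$); the pointwise bounds $0 \le T_t f \le 1$ together with $\alpha\int_0^\infty e^{-\alpha t}\,\D t = 1$ give $0 \le \alpha G_\alpha f \le 1$ at once.

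For the remaining implication (ii) $\Rightarrow$ (i) I would recover $\E$ from the resolvent through the approximating forms $\E^{(\beta)}(f) := \beta\langle f - \beta G_\beta f, f\rangle$, which by spectral calculus increase to $\E(f)$ as $\beta \to \infty$ (with limit $+\infty$ off $D(\E)$). The crux — and the step I expect to be the main obstacle — is the algebraic lemma that the bounded symmetric sub-Markovian operator $S = \beta G_\beta$ induces a Markovian bilinear form $\langle (I-S)\cdot,\cdot\rangle$, i.e. $\E^{(\beta)}(C\circ f) \le \E^{(\beta)}(f)$ for every normal contraction $C$. Formally this is transparent from a Beurling--Deny type splitting of $\E^{(\beta)}$ into a jump part comparable to $\tfrac12\iint (f(x)-f(y))^2\,\D(\cdots)$ and a killing part comparable to $\int f(x)^2\,(1 - S1)(x)\,\D(\cdots)$, both of which manifestly decrease under $C$; the genuine work lies in making this rigorous without presupposing a kernel for $S$, which one can arrange via the functional calculus or a direct approximation argument. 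Granting the lemma, the monotone convergence $\E^{(\beta)} \uparrow \E$ yields $\E(C\circ f) \le \E(f)$ for all $f \in D(\E)$ (and trivially otherwise), so the closed form $\E$ is Markovian and hence a Dirichlet form.
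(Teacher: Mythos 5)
The paper itself offers no proof of this lemma --- it simply cites \cite[Theorem~1.4.1]{FOT} --- and your argument is essentially a reconstruction of that standard proof. Your (i) $\Rightarrow$ (ii) is correct as written: $u = G_\alpha f$ is indeed the unique minimizer of $\Phi$ over $D(\E)$, since the identity $\E_\alpha(G_\alpha f, g) = \as{f,g}$ for all $g \in D(\E)$ is exactly how the paper characterizes the resolvent (also in the non-densely-defined setting, where $T_t$ and $G_\alpha$ are extended by zero on $\overline{D(\E)}^\perp$); your contraction $C(x) = 0 \vee (x \wedge \alpha^{-1})$ is normal, so Theorem~\ref{theorem:cutoff properties Dirichlet form}~(ii) gives $v = C \circ u \in D(\E)$ with $\E(v) \le \E(u)$, and your sign analysis of the integrand $(v-u)(\alpha(v+u)-2f)$ on $\{u<0\}$ and $\{u > \alpha^{-1}\}$ is right. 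The Yosida and Laplace-transform passages between (ii) and (iii) are also fine, including on $\overline{D(\E)}^\perp$ where both sides vanish, since Markovianity survives composition, Bochner averaging against the probability density $\alpha e^{-\alpha t}$, and $L^2$-limits via a.e.\ convergent subsequences.

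The one genuine gap is the step you yourself flag in (ii) $\Rightarrow$ (i): the lemma that $\E^{(\beta)}(f) = \beta\as{f - \beta G_\beta f, f}$ is Markovian is granted, not proved, and one of your two proposed repairs would fail. The functional calculus of $L$ cannot yield it: Markovianity is an order-theoretic property invisible to spectral data (two unitarily equivalent self-adjoint operators share the same functional calculus while only one need generate a Markovian resolvent). Your ``direct approximation argument'' is the correct route, and the paper supplies it verbatim in Claim~3 of the proof of Theorem~\ref{theorem:cutoff properties energy forms}: for a simple function $f = \sum_{i=1}^n \alpha_i 1_{A_i}$ with pairwise disjoint $A_i$ of finite measure one has $\E^{(\beta)}(f) = \sum_{i,j} b_{ij}(\alpha_i - \alpha_j)^2 + \sum_i c_i \alpha_i^2$, where for $i \neq j$ the coefficient $b_{ij} = -\E^{(\beta)}(1_{A_i},1_{A_j}) = \beta^2 \as{G_\beta 1_{A_i}, 1_{A_j}} \ge 0$ because $G_\beta$ is positivity preserving, and $c_i = \E^{(\beta)}(1_{A_i}, 1_{\cup_j A_j}) = \beta\bigl( m(A_i) - \as{1_{A_i}, \beta G_\beta 1_{\cup_j A_j}}\bigr) \ge 0$ by the symmetry of $G_\beta$ together with $\beta G_\beta 1_{\cup_j A_j} \le 1$ from (ii). These signs give $\E^{(\beta)}(C \circ f) \le \E^{(\beta)}(f)$ for simple $f$ and every normal contraction $C$, and since $\E^{(\beta)}$ is $L^2$-continuous and $|C \circ f_n - C \circ f| \le |f_n - f|$, the inequality extends to all of $L^2(m)$; the monotone convergence $\E^{(\beta)} \uparrow \E$ (which you correctly verify also off $\overline{D(\E)}$) then finishes exactly as you say. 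With this substitution your proof is complete and coincides in architecture with the proof of \cite[Theorem~1.4.1]{FOT} that the paper invokes.
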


\subsection{Extended Dirichlet spaces}

In this subsection  we show that Dirichlet forms have a closed  extension to $L^0(m)$ when the underlying measure is localizable. This $L^0$-closure is nothing more but the extended Dirichlet space, a well known and important object in Dirichlet form theory. In later chapters it will serve as one of the main examples for energy forms.  

Recall that for a form $\E$ on $L^2(m)$ we use the convention $\E(f) = \infty$ if $f \in L^0(m) \setminus L^2(m)$. 

\begin{theorem}[Closability of Dirichlet forms on $L^0(m)$]\label{theorem: existence extended Dirichlet space}
Let $m$ be localizable. Every Dirichlet form on $L^2(m)$ is closable on $L^0(m)$.
\end{theorem}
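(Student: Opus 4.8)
The plan is to use the closability criterion from Proposition~\ref{prop:characterization closability}, specifically condition (ii): I must show that every $\E$-Cauchy net $(u_i)$ in $D(\E)$ that $\tau(m)$-converges to $0$ satisfies $\lim_i \E(u_i) = 0$. Here the underlying complete topological vector space is $(L^0(m),\tau(m))$, which is Hausdorff and complete precisely because $m$ is localizable (Theorem~\ref{theorem:properties of L0}(c)). Since $\E$ is a Dirichlet form on $L^2(m)$, it is closed there, hence lower semicontinuous with respect to $L^2$-convergence, and by construction $\E(f)=\infty$ off $L^2(m)$.

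First I would take an $\E$-Cauchy net $(u_i)$ in $D(\E)\subseteq L^2(m)$ with $u_i \overset{m}{\to} 0$, and set $c := \lim_i \E(u_i)$, which exists and is finite by the Cauchy property. Since $(u_i)$ is $\|\cdot\|_\E$-bounded and $\E$ is lower semicontinuous on $L^0(m)$ (using localizability, via Proposition~\ref{proposition: closedness in L2}(vi) applied to $\E_\alpha$, or directly via Theorem~\ref{theorem:characterization closedness}), I would like to extract a $q$-weakly convergent subnet and identify its weak limit with $0$. The key tool is Lemma~\ref{lemma:existence of a weakly convergent subnet}: the $\|\cdot\|_\E$-bounded net $(u_i)$ that $\tau(m)$-converges to $0$ in fact $\E$-weakly converges to $0$. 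The main obstacle is that $\tau(m)$-convergence to $0$ does not immediately give $\E$-convergence to $0$; the contraction (Markov) property is what forces the limit $c$ down to $0$, so this must be exploited.

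The crucial step where the Dirichlet (Markovian) structure enters is the following. Because $(u_i)$ is $\E$-Cauchy and $\E$-weakly convergent to $0$, a standard Banach--Saks argument (Lemma~\ref{lemma:banach saks sequences}) lets me pass to a subsequence $(u_{n_k})$ whose Cés\`aro means $v_N := \frac{1}{N}\sum_{k=1}^N u_{n_k}$ converge to $0$ both in $\|\cdot\|_\E$-norm-Cauchy sense and in $\tau(m)$, while $\lim_N \E(v_N)^{1/2}$ controls $c^{1/2}$; but the Cés\`aro means $\E$-converge to the $\E$-weak limit $0$, forcing $\lim_N \E(v_N) = 0$. To connect $\E(v_N)$ back to $c = \lim_i \E(u_i)$ I would use that $\E$-weak convergence of $(u_{n_k})$ to $0$ gives $\E(v_N) \to 0$ (Cés\`aro means of a weakly null, norm-bounded sequence are norm-null in the Hilbert completion), and simultaneously $\liminf \E(u_{n_k}) \geq \E(\text{weak limit}) = 0$. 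The honest difficulty is squeezing $c \leq 0$: here one truncates using the Markov property, replacing $u_i$ by $((-1)\vee u_i)\wedge 1$ or by normal contractions to reduce to bounded functions where $L^2$ and $\tau(m)$ control align, then invokes the contraction inequality $\E(C\circ u_i)\leq \E(u_i)$ from Theorem~\ref{theorem:cutoff properties Dirichlet form} to compare with the genuinely $L^2$-convergent truncated net.

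Concretely, I expect the cleanest route is: reduce to a sequence by metrizability-on-bounded-sets arguments is unavailable (since $m$ is only localizable, not $\sigma$-finite), so I must genuinely work with nets. I would therefore combine Lemma~\ref{lemma:existence of a weakly convergent subnet} (giving $\E$-weak convergence to $0$) with Lemma~\ref{lemma:form weak convergence implies lower limit} and the Cés\`aro/Banach--Saks construction to produce a net in $D(\E)$ that is $\E$-strongly null and $\tau(m)$-null, then use lower semicontinuity of $\E$ together with the Markov property to conclude $c = \lim_i \E(u_i) = 0$. The single hardest point will be handling the non-$\sigma$-finite case at the level of nets rather than sequences, ensuring every convergence statement (weak, norm, in measure) is valid for nets; the localizability of $m$ is exactly what guarantees completeness of $(L^0(m),\tau(m))$ and hence that these net-theoretic manipulations close up.
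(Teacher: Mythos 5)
There is a genuine gap at the central step of your plan: the passage from $\tau(m)$-convergence to $\E$-weak convergence via Lemma~\ref{lemma:existence of a weakly convergent subnet} is circular. That lemma presupposes that the form is lower semicontinuous with respect to the ambient topology, here $\tau(m)$; but lower semicontinuity of $\E$ on $(D(\E),\tau(m))$ is, by Proposition~\ref{prop:characterization closability}~(iii), \emph{exactly equivalent} to the closability you are trying to prove, and full lower semicontinuity of $\E$ on $L^0(m)$ is in general false --- whenever $D(\Ee)\neq D(\E)$, a net $(f_n)$ in $D(\E)$ with $f_n \overset{m}{\to} f \in D(\Ee)\setminus L^2(m)$ violates it, since $\E(f)=\infty$ by convention. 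Your fallback of using the genuine lower semicontinuity of $\E_\alpha$ on $L^0(m)$ (Proposition~\ref{proposition: closedness in L2}~(vi)) does not repair this: to apply Lemma~\ref{lemma:existence of a weakly convergent subnet} with $q=\E_\alpha$ you would need the net to be $\|\cdot\|_{\E_\alpha}$-bounded, i.e.\ bounded in $L^2(m)$, and an $\E$-Cauchy net tending to $0$ in measure need not be $L^2$-bounded --- this unbounded regime is precisely where the extended Dirichlet space is strictly larger than $D(\E)$. So the $\E$-weak convergence to $0$ is never actually established, and everything downstream hangs on it. (Incidentally, once weak convergence to $0$ \emph{is} known, the Banach--Saks/C\'esaro machinery is superfluous: an $\E$-Cauchy net converges strongly in the Hilbert completion of $(D(\E)/\ker\E,\E)$, and a weakly null strong limit is $0$, whence $\E(u_i)\to 0$ directly.) A secondary defect of your criterion~(ii) route is that truncation by normal contractions destroys the $\E$-Cauchy property: contractions decrease energy but are not continuous in form norm, so the ``truncated net'' you propose to compare with need not be $\E$-Cauchy, and the comparison cannot be closed.

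What is missing --- and what the paper's proof supplies --- is a mechanism for identifying weak limits from convergence in measure, and this is where the Markov property enters in an essential way beyond truncation: through the $L^1$-contractivity of the resolvent. The paper works with criterion~(iii) of Proposition~\ref{prop:characterization closability} (lower semicontinuity on the domain) and first treats uniformly bounded nets $|f_i|\leq M$ with $f_i \overset{m}{\to} f \in D(\E)$: testing against $G_\alpha\varphi$ with $\varphi \in L^1(m)\cap L^2(m)$, and using that the Markovian resolvent contracts $L^1$ so that $G_\alpha\varphi \in L^1(m)\cap L^2(m)$, one gets $\E(f_i,G_\alpha\varphi) = \langle f_i,\varphi\rangle - \alpha\langle f_i,G_\alpha\varphi\rangle \to \E(f,G_\alpha\varphi)$ by Lebesgue's theorem for nets (Lemma~\ref{lemma:Lebesgue's theorem}), the dominating function being $M(|\varphi|+\alpha|G_\alpha\varphi|)\in L^1(m)$; since such $G_\alpha\varphi$ are $\E$-dense in $D(\E)$ and $(\E(f_i))$ is bounded, this yields $\E(f_i,g)\to \E(f,g)$ for all $g\in D(\E)$, and Lemma~\ref{lemma:form weak convergence implies lower limit} gives the bounded case. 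The general case then follows by truncation used in the \emph{right direction}, where no Cauchyness is needed: $\E(f)\leq \liminf_{M}\E(C_M\circ f)$ by $L^2$-lower semicontinuity, and $\E(C_M\circ f)\leq \liminf_i \E(C_M\circ f_i)\leq \liminf_i \E(f_i)$ by the bounded case and the contraction property of Theorem~\ref{theorem:cutoff properties Dirichlet form}~(ii). Without this resolvent/$L^1$ step --- or an equivalent supply of functionals that are simultaneously continuous for $\tau(m)$ on order-bounded sets and for the form topology --- the bridge from convergence in measure to form-weak convergence cannot be built, and that is the concrete hole in your proposal.
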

\begin{proof}
 Let $\E$ be a Dirichlet form. Since $m$ is localizable, the space $L^0(m)$ is complete, see Theorem~\ref{theorem:properties of L0}. By Proposition~\ref{prop:characterization closability} it suffices to show the lower semicontinuity of $\E$ on its domain $D(\E)$. 
 
 To this end, let $(f_i)$  in $D(\E)$ and let $f \in D(\E)$ with $f_i \overset{m}{\to} f$ be given. Without loss of generality we may assume
 $$\lim_{i}\E(f_i) = \liminf_{i} \E(f_i) < \infty.$$
 {\em Claim:} The inequality $\E(f) \leq \liminf \E(f_i)$ holds under the additional assumption that there exists some constant $M >0$ such that for all $i$ the inequality $|f_i| \leq M$ holds. 
 
{\em Proof of the Claim.} Let  $\varphi \in L^1(m) \cap L^2(m)$ and let $\alpha >0$. Since $\E$ is a Dirichlet form, the associated resolvent $G_\alpha$ is a contraction on $L^1(m)$. In particular, it satisfies $G_\alpha \varphi \in L^1(m) \cap L^2(m)$. Using this observation, the form characterization of the resolvent and Lebesgue's dominated convergence theorem, Lemma~\ref{lemma:Lebesgue's theorem}, we infer
 $$\E(f_i,G_\alpha \varphi) = \as{f_i,\varphi} - \alpha \as{f_i,G_\alpha \varphi} \overset{i}{\longrightarrow} \as{f,\varphi} - \alpha \as{f,G_\alpha \varphi} = \E(f,G_\alpha \varphi).$$
 Since elements of the form $G_\alpha \varphi$ with $\varphi \in L^1(m) \cap L^2(m)$ are $\E$-dense in $D(\E)$ and the net $(\E(f_i))$ is bounded, we obtain 
 $$\lim_{i } \E(f_i,g) = \E(f,g) \text{ for each }g \in D(\E). $$
This implies the claim as in inner product spaces the corresponding norm is lower semicontinuous with respect to weak convergence, see Lemma~\ref{lemma:form weak convergence implies lower limit}. \qedc

For $M  > 0$, we let the normal contraction $C_M:\IR \to \IR$ be given by
$$C_M(x) = (x \wedge M) \vee ({-M}).$$
Obviously, the convergence $C_M \circ f\to f$ as $M \to \infty$ holds in $L^2(m)$. From the lower semicontinuity of $\E$ on $L^2(m)$ we derive
$$\E(f) \leq \liminf_{M\to \infty}\E(C_M \circ f). $$
Furthermore, for each $M>0$ we have $C_M \circ f_i \overset{m}{\to} C_M \circ f$. An application of the previously proven claim and the contraction properties of Theorem~\ref{theorem:cutoff properties Dirichlet form} (ii) yield
$$\E(C_M \circ f) \leq \liminf_i \E(C_M \circ f_i) \leq \liminf_i \E(f_i). $$
This finishes the proof.
\end{proof}

\begin{definition}[Extended Dirichlet space]
 Let $\E$ be a Dirichlet form on $L^2(m)$. Its closure on $L^0(m)$ is called its {\em extended form} and is denoted by $\Ee$. The domain $D(\Ee)$ is the {\em extended Dirichlet space} of $\E$. 
\end{definition}

\begin{remark}
 \begin{itemize}
  \item In the literature the extended Dirichlet space is introduced in a slightly different manner.  It is defined to be the space
 $$\{f \in L^0(m) \mid  \text{ there exists an } \E\text{-Cauchy sequence } (f_n) \text{ with } f_n \to f \, m\text{-a.e.}\}$$
 to which the form extends. This bears a close resemblance to the characterization of the closure of a form given in Proposition~\ref{prop:characterization closability} but with convergence in measure replaced by almost everywhere convergence.  When $m$ is $\sigma$-finite, convergence in measure can be characterized by almost everywhere convergence. In this case, a sequence $(f_n)$ satisfies $f_n \overset{m}{\to} f$ if and only if any subsequence of $(f_n)$ has a subsequence that converges to $f$ $m$-a.e., see Subsection~\ref{section:lebesgue spaces}. With this characterization at hand, it is not hard to see that in the $\sigma$-finite situation our definition coincides with the classical one. However, we believe that our definition is somewhat more natural from a functional analytic viewpoint.
 
 \item The extended Dirichlet space and the extension of the form to it were introduced in \cite{Sil} under some topological assumptions on the underlying space $X$ and the measure $m$. That it can also be defined for $\sigma$-finite $m$ without further assumptions was first realized  in \cite{Schmu1,Schmu2}. The main advantages of our approach are its generality, it works for localizable $m$, and its brevity. Combining our result and Theorem~\ref{theorem:characterization closedness}, we obtain another main result of \cite{Schmu1,Schmu2}, namely  the lower semicontinuity of extended Dirichlet forms on $L^0(m)$.
 \end{itemize}
\end{remark}

We finish this subsection by discussing the relation of the domain of $\E$ and $\Ee$.

\begin{proposition} \label{propostiont:domain extended dirichlet form}
 Let $m$ be localizable and let $\E$ be a Dirichlet form on $L^2(m)$. Then $$D(\E) = D(\Ee)\cap L^2(m).$$ 
\end{proposition}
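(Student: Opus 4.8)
\emph{Setup and the easy inclusion.} The inclusion $D(\E)\subseteq D(\Ee)\cap L^2(m)$ is immediate: since $\Ee$ extends $\E$ we have $D(\E)\subseteq D(\Ee)$, and the convention $\E=\infty$ on $L^0(m)\setminus L^2(m)$ forces $D(\E)\subseteq L^2(m)$. The content is the reverse inclusion, and the plan is to detect membership of $f\in D(\Ee)\cap L^2(m)$ in the $L^2$-domain through the resolvent $(G_\alpha)$ of $\E$. The obstruction to a naive argument is that, by Proposition~\ref{prop:characterization closability}, a point $f\in D(\Ee)$ comes only with an $\E$-Cauchy net $(f_i)$ in $D(\E)$ satisfying $f_i\overset{m}{\to}f$ and $\Ee(f)=\lim_i\E(f_i)$; such a net is $\|\cdot\|_{\E}$-bounded but in general neither $L^2$-bounded nor $L^2$-convergent, so one cannot pass to $L^2$-limits directly.

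\emph{The key identity for bounded $f$.} The device will be
\[ \Ee(f,G_\alpha\varphi)+\alpha\langle f,G_\alpha\varphi\rangle=\langle f,\varphi\rangle \qquad (\varphi\in L^2(m),\ \alpha>0), \]
which I first establish for bounded $f$. Assume $|f|\le M_0$ and put $g_i:=(f_i\wedge M_0)\vee(-M_0)$. By Theorem~\ref{theorem:cutoff properties Dirichlet form} these lie in $D(\E)$ with $\E(g_i)\le\E(f_i)$, so $(g_i)$ is $\|\cdot\|_{\Ee}$-bounded, and $g_i\overset{m}{\to}f$. As $\Ee$ is closed on the complete space $L^0(m)$ it is lower semicontinuous (Theorem~\ref{theorem:characterization closedness}), so Lemma~\ref{lemma:existence of a weakly convergent subnet} gives $\Ee(g_i,h)\to\Ee(f,h)$ for every $h\in D(\Ee)$. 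Taking $h=G_\alpha\varphi$ with $\varphi\in L^1(m)\cap L^2(m)$ and using the defining relation $\E_\alpha(G_\alpha\varphi,g_i)=\langle\varphi,g_i\rangle$ of the resolvent, the left-hand side equals $\E(g_i,G_\alpha\varphi)=\langle g_i,\varphi\rangle-\alpha\langle g_i,G_\alpha\varphi\rangle$; since $G_\alpha$ is $L^1$-contractive, both $\varphi$ and $G_\alpha\varphi$ lie in $L^1(m)$, and $|g_i|\le M_0$ allows passage to the limit by Lemma~\ref{lemma:Lebesgue's theorem}, yielding the identity for $\varphi\in L^1(m)\cap L^2(m)$. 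Both sides are $L^2$-continuous in $\varphi$ (using $|\Ee(f,G_\alpha\varphi)|\le\Ee(f)^{1/2}\E(G_\alpha\varphi)^{1/2}$ and $\E(G_\alpha\varphi)\le\alpha^{-1}\|\varphi\|_2^2$), so it extends to all $\varphi\in L^2(m)$.

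\emph{Extracting $f\in D(\E)$.} Testing the identity against $\psi\in\overline{D(\E)}^{\perp}$, where $G_\alpha\psi=0$, gives $\langle f,\psi\rangle=0$; hence $f\in\overline{D(\E)}$ and, by the standard strong convergence of resolvents, $\beta G_\beta f\to f$ in $L^2(m)$ as $\beta\to\infty$. Putting $\varphi=f$, $\alpha=\beta$ and multiplying by $\beta$ gives $\Ee(f,\beta G_\beta f)=\E^{(\beta)}(f)$, where $\E^{(\beta)}(f):=\beta\langle f-\beta G_\beta f,f\rangle$; a direct computation with the resolvent identity shows $\E(\beta G_\beta f)=\E^{(\beta)}(f)-\beta\|f-\beta G_\beta f\|_2^2\le\E^{(\beta)}(f)$, so Cauchy--Schwarz for the form $\Ee$ gives
\[ \E^{(\beta)}(f)=\Ee(f,\beta G_\beta f)\le\Ee(f)^{1/2}\E(\beta G_\beta f)^{1/2}\le\Ee(f)^{1/2}\E^{(\beta)}(f)^{1/2}, \]
whence $\E(\beta G_\beta f)\le\E^{(\beta)}(f)\le\Ee(f)$ for all $\beta$. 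Thus $(\beta G_\beta f)$ has uniformly bounded energy and converges to $f$ in $L^2(m)$, and the weak lower semicontinuity of $\E$ on $L^2(m)$ (Proposition~\ref{proposition: closedness in L2}) yields $\E(f)\le\liminf_\beta\E(\beta G_\beta f)\le\Ee(f)<\infty$, i.e.\ $f\in D(\E)$.

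\emph{Removing boundedness and the main obstacle.} For general $f\in D(\Ee)\cap L^2(m)$ I truncate: $f^{(M)}:=(f\wedge M)\vee(-M)$ satisfies $f^{(M)}\in D(\Ee)$ with $\Ee(f^{(M)})\le\Ee(f)$, since the truncated net $((f_i\wedge M)\vee(-M))$ has energy bounded by $\E(f_i)$ (Theorem~\ref{theorem:cutoff properties Dirichlet form}) and converges in measure to $f^{(M)}$, so lower semicontinuity of $\Ee$ (Theorem~\ref{theorem:characterization closedness}) applies. The bounded case then gives $f^{(M)}\in D(\E)$ with $\E(f^{(M)})\le\Ee(f)$; as $f^{(M)}\to f$ in $L^2(m)$ with uniformly bounded energy, weak lower semicontinuity once more yields $f\in D(\E)$. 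The essential difficulty throughout is exactly the one noted at the outset---converting the mere measure-convergence of the defining net into $L^2$-information---and it is overcome by the truncation plus dominated-convergence step producing the key identity, together with the resolvent bound $\E(\beta G_\beta f)\le\Ee(f)$.
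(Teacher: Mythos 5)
Your proof is correct, but it takes a genuinely different route from the paper's. The paper's argument is a five-line, purely form-theoretic trick: given an $\E$-Cauchy net $(f_i)$ with $f_i \overset{m}{\to} f$, it truncates at heights supplied by the net itself, setting $\ow{f}_{i,j} := (f_i \wedge |f_j|)\vee(-|f_j|)$; for fixed $j$ the dominating function $|f_j| \in L^2(m)$ turns convergence in measure into $L^2$-convergence via dominated convergence (and a second application gives $(f\wedge|f_j|)\vee(-|f_j|) \to f$ in $L^2(m)$), after which $L^2$-lower semicontinuity together with the bound $\E(\ow{f}_{i,j})^{1/2} \leq \E(f_i)^{1/2} + 2\E(f_j)^{1/2}$ from Theorem~\ref{theorem:cutoff properties Dirichlet form}~(iii) finishes the proof. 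You instead overcome the same obstacle --- converting measure convergence into $L^2$-information --- through the resolvent: the key identity $\Ee(f,G_\alpha\varphi) + \alpha\langle f,G_\alpha\varphi\rangle = \langle f,\varphi\rangle$ for bounded $f$ (whose derivation via $L^1$-contractivity of $\alpha G_\alpha$ and Lemma~\ref{lemma:Lebesgue's theorem} essentially replays the paper's proof of Theorem~\ref{theorem: existence extended Dirichlet space}), then the orthogonality test to place $f$ in $\overline{D(\E)}$, strong resolvent convergence, and the approximating-form bound $\E(\beta G_\beta f) \leq \E^{(\beta)}(f) \leq \Ee(f)$. All steps check out, including your care with the non-densely-defined setting (the convention $G_\alpha = 0$ on $\overline{D(\E)}^{\perp}$, and deriving $f \in \overline{D(\E)}$ before invoking $\beta G_\beta f \to f$). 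What each approach buys: yours stays within standard Dirichlet-form/spectral machinery and additionally yields the uniform energy bound along $(\beta G_\beta f)$, but it is several times longer and leans on the operator-theoretic apparatus that the paper's leitmotif deliberately avoids; the paper's double-truncation argument is shorter, needs only the contraction property and closedness, and is the kind of proof that would survive in settings (e.g.\ nonlinear Markovian functionals) where no resolvent is available.
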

\begin{proof}
The inclusion $D(\E) \subseteq D(\Ee)\cap L^2(m)$ is obvious; we only prove the opposite inclusion. The characterization of closures of quadratic forms in  Proposition~\ref{prop:characterization closability} yields that  for  $f \in D(\Ee)\cap L^2(m)$  there exists an $\E$-Cauchy net $(f_i)$ in $D(\E)$ with $f_i \overset{m}{\to} f$. Lebesgue's dominated convergence theorem shows that for each fixed $j$ the functions
 $$\ow{f}_{i,j} := (f_i \wedge |f_j|) \vee (-|f_j|)$$
 converge in $L^2(m)$ towards $g_j := (f \wedge |f_j|) \vee (-|f_j|)$ . Applying Lebesgue's theorem again, we obtain $g_j \to f$ in $L^2(m)$.  The $L^2$-lower semicontinuity of $\E$ yields
 $$\E(f)^{1/2}\leq  \liminf_{j} \liminf_{i} \E(\ow{f}_{i,j})^{1/2} \leq \liminf_{j} \liminf_{i} (\E(f_i)^{1/2} + 2 \E(f_j)^{1/2}) < \infty.$$
Here, we used that by  Theorem~\ref{theorem:cutoff properties Dirichlet form}~(iii) the inequalities
$$\E(f \wedge g)^{1/2} \leq \E(f)^{1/2} + \E(g)^{1/2} \text{ and } \E(f \vee g)^{1/2} \leq \E(f)^{1/2} + \E(g)^{1/2} $$
hold for all $f,g \in D(\E)$ (cf. also the proof of Theorem~\ref{theorem:algebraic and order properties}). This finishes the proof.
\end{proof}
\begin{remark}
 The previous proposition is well known when $m$ is $\sigma$-finite, see e.g. \cite[Theorem~1.1.5.]{CF}. Again, we would like to point the brevity of our proof. 
\end{remark}

\subsection{Closed forms on $\Ltf$} \label{section:form on L0}

This subsection is devoted to proving that each closed quadratic form on $\Ltf$ can be approximated by continuous quadratic forms on $\Ltf$. To this end, we first introduce the approximating forms and then show that they converge pointwise to the given form.

\begin{definition}[Approximating forms]
 Let $q$ be a quadratic form on $\Ltf$. For $\alpha > 0$ and  $U\in \Bf$,  the {\em approximating form} $q^{(\alpha,U)}:\Ltf \to [0,\infty]$ is defined by
$$f \mapsto  q^{(\alpha,U)}(f) := \inf\left\{q(g) +  \alpha \int_U (g-f)^2\, {\rm d}m \, \middle| \, g \in \Ltf \right\}.$$

\end{definition}
\begin{remark}
   If $m$ is finite, the space $\Ltf$ coincides with $L^2(m)$. In this case, it can be shown that $q^{(\alpha,X)}$ is given by  
  $$q^{(\alpha,X)}(f) = \alpha \as{(I - \alpha G_\alpha)f,f},$$
  where $G_\alpha$ is the $L^2$-resolvent of $q$. These are forms that are usually used to approximate closed forms on $L^2(m)$, cf. \cite[Section~1.3]{FOT}. 
\end{remark}
The following lemma shows that the approximating forms are indeed continuous quadratic forms on $\Ltf$.
\begin{lemma}\label{lemma: properties of approximating forms}
Let $q$ be a quadratic form on $\Ltf$. For each $\alpha > 0$ and each $U \in \Bf$, the functional $q^{(\alpha,U)}$ is a continuous quadratic form with domain  $D(q^{(\alpha,U)}) = \Ltf$. 
\end{lemma}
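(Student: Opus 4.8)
The plan is to read $q^{(\alpha,U)}$ as the \emph{infimal convolution} of $q$ with the auxiliary quadratic form $p(f) := \alpha \int_U f^2 \D m = \alpha\|f\|_{2,U}^2$ on $\Ltf$, and to exploit that this operation preserves the quadratic-form structure while producing a form seminorm controlled by a generating seminorm of $\Ltf$. Writing $q^{(\alpha,U)}(f) = \inf_g\{q(g) + p(g-f)\}$ makes the three assertions (finiteness, quadraticity, continuity) accessible by elementary testing of the infimum.

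First I would settle the domain. Inserting the admissible choice $g = 0$ and using $q(0) = 0$ yields $q^{(\alpha,U)}(f) \le \alpha\|f\|_{2,U}^2$, which is finite for every $f \in \Ltf$ since $f1_U \in L^2(m)$. Hence $q^{(\alpha,U)}(f) < \infty$ for all $f$, so $D(q^{(\alpha,U)}) = \Ltf$ and the value is never $\infty$. Next I would verify that $q^{(\alpha,U)}$ is a quadratic form by checking condition (ii) of Lemma~\ref{lemma:inequality characterization of quadratic forms}. Homogeneity is immediate: for $\lambda \ne 0$ the substitution $g = \lambda g'$ in the infimum, together with the homogeneity of $q$ and of $p$, gives $q^{(\alpha,U)}(\lambda f) = \lambda^2 q^{(\alpha,U)}(f)$, while the case $\lambda = 0$ follows from $q^{(\alpha,U)}(0)=0$. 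For the parallelogram inequality, given $f,h$ and $\varepsilon > 0$, I would choose near-minimizers $g_1, g_2 \in \Ltf$ with $q(g_1)+p(g_1-f) \le q^{(\alpha,U)}(f)+\varepsilon$ and $q(g_2)+p(g_2-h) \le q^{(\alpha,U)}(h)+\varepsilon$, then test the infima defining $q^{(\alpha,U)}(f+h)$ and $q^{(\alpha,U)}(f-h)$ against the candidates $g_1+g_2$ and $g_1-g_2$. Adding the two resulting inequalities and applying the parallelogram identity separately to $q$ (on $g_1,g_2$) and to $p$ (on $g_1-f,\,g_2-h$) collapses the right-hand side to $2[q(g_1)+p(g_1-f)] + 2[q(g_2)+p(g_2-h)] \le 2q^{(\alpha,U)}(f) + 2q^{(\alpha,U)}(h) + 4\varepsilon$, and letting $\varepsilon \to 0$ gives the required inequality.

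Finally, for continuity I would reuse the bound from the first step, which says precisely that the form seminorm satisfies $\|f\|_{q^{(\alpha,U)}} = q^{(\alpha,U)}(f)^{1/2} \le \sqrt{\alpha}\,\|f\|_{2,U}$. Since $\|\cdot\|_{2,U}$ is one of the seminorms generating the locally convex topology of $\Ltf$, the form seminorm $\|\cdot\|_{q^{(\alpha,U)}}$ is dominated by a continuous seminorm; hence $\{\,\|\cdot\|_{q^{(\alpha,U)}} < 1\,\}$ contains the basic neighbourhood $\{\,\|\cdot\|_{2,U} < \alpha^{-1/2}\,\}$ of $0$, so the form seminorm is continuous at $0$ and therefore, being a seminorm, continuous everywhere. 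Consequently $q^{(\alpha,U)} = (\|\cdot\|_{q^{(\alpha,U)}})^2$ is continuous on $\Ltf$.

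The main obstacle is the parallelogram inequality: one must be disciplined about the $\varepsilon$-approximate minimizers and about feeding the parallelogram identities the right arguments. The key algebraic observation that makes it work is $(g_1 \pm g_2) - (f \pm h) = (g_1 - f) \pm (g_2 - h)$, so that the $p$-contributions reorganize exactly into a parallelogram expression in $g_1-f$ and $g_2-h$; everything else is routine.
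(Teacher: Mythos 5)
Your proposal is correct and follows essentially the same route as the paper: test the infimum to get finiteness and the bound $q^{(\alpha,U)}(f)\leq \alpha\|f\|_{2,U}^2$, verify the inequalities of Lemma~\ref{lemma:inequality characterization of quadratic forms}~(ii) using the substitutions $g\mapsto\lambda g$ and $(g_1,g_2)\mapsto(g_1\pm g_2)$ together with the identity $(g_1\pm g_2)-(f\pm h)=(g_1-f)\pm(g_2-h)$, and deduce continuity from domination of the form seminorm by the generating seminorm $\|\cdot\|_{2,U}$. The only cosmetic differences are your use of $\varepsilon$-near-minimizers where the paper takes the infimum over arbitrary $g,g'$ directly, and your slightly sharper continuity constant $\sqrt{\alpha}$ in place of the paper's $\sqrt{2\alpha}$.
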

\begin{proof}
 Obviously, $q^{(\alpha,U)}$ is finite on the whole space $\Ltf$. We employ Lemma~\ref{lemma:inequality characterization of quadratic forms} to prove that $q^{(\alpha,U)}$ is a quadratic form, i.e., we show that for $\lambda \in \IR$ and $f,f' \in \Ltf$, the inequalities
 $$q^{(\alpha,U)}(\lambda f) \leq |\lambda|^2 q^{(\alpha,U)}(f)$$
 and 
  $$q^{(\alpha,U)} (f+f') + q^{(\alpha,U)} (f-f') \leq 2q^{(\alpha,U)} (f) + 2q^{(\alpha,U)} (f')$$
  hold.
 
 For $\lambda \in \IR$ and  $g \in \Ltf$, we have
  $$q^{(\alpha,U)}(\lambda f) \leq q(\lambda g) +  \alpha \int_U (\lambda g - \lambda f)^2\, {\rm d}m. $$
 Taking the infimum and using that $q$ is a quadratic form yields 
 $$q^{(\alpha,U)}(\lambda f) \leq |\lambda|^2 q^{(\alpha,U)}(f).$$
 Let $g,g'\in \Ltf$. The definition of $q^{(\alpha,U)}$ and that $q$ is a quadratic form imply
 \begin{align*}
 q^{(\alpha,U)} (f+f') + q^{(\alpha,U)} (f-f') &\leq q(g+g') + \alpha \int_U (g + g '- (f  +  f') )^2 {\rm d}m \\& \quad\quad\quad\quad\quad + q(g-g') + \alpha\int_U (g - g '- (f  -  f') )^2 {\rm d}m 
 \\&= 2 q(g) + 2\alpha \int_U (g - f)^2 {\rm d}m +  2 q(g') + 2\alpha \int_U (g '- f' )^2 {\rm d}m
 \end{align*}
After taking the infimum over all such $g,g'$  we arrive at
 $$q^{(\alpha,U)} (f+f') + q^{(\alpha,U)} (f-f') \leq 2q^{(\alpha,U)} (f) + 2q^{(\alpha,U)} (f').$$
 As already mentioned, these inequalities show that $q^{(\alpha,U)} $ is a quadratic form. It remains to prove its continuity. For $f,f',g \in \Ltf$ we have
 $$q(g) +  \alpha \int_U (g - (f - f') )^2\, {\rm d}m \leq  q(g) +   2 \alpha \int_U g^2\, {\rm d}m  + 2\alpha \int_U (f - f')^2\, {\rm d}m.$$
 Taking the infimum over $g$ yields
 $$q^{(\alpha,U)} (f-f') \leq 2\alpha \int_U (f - f')^2\, {\rm d}m.$$
Since $q^{(\alpha,U)} $  is a quadratic form, this inequality implies its continuity and finishes the proof.
\end{proof}

The following lemma is the main result of this section.

\begin{lemma}[Approximation of closed forms] \label{lemma: approximation of closed forms}
Let $m$ be localizable and let $q$ be a closed quadratic form on $\Ltf$. For each $f\in \Ltf$  we have
$$q(f) = \sup \{q^{(\alpha,U)} (f)\mid \alpha > 0,\,  U \in \Bf\}.$$
\end{lemma}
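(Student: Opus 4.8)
The plan is to establish the two inequalities $\sup_{\alpha,U} q^{(\alpha,U)}(f)\le q(f)$ and $q(f)\le \sup_{\alpha,U} q^{(\alpha,U)}(f)$ separately. The first is immediate from the definition of the approximating form: if $f\in D(q)$, I would insert $g=f$ into the infimum and obtain $q^{(\alpha,U)}(f)\le q(f)+\alpha\int_U(f-f)^2\,{\rm d}m=q(f)$ for every $\alpha>0$ and $U\in\Bf$; and if $f\notin D(q)$ there is nothing to prove, since then $q(f)=\infty$. Hence the supremum is always bounded above by $q(f)$, and it remains to prove the reverse inequality.

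For the reverse direction I would set $L:=\sup\{q^{(\alpha,U)}(f)\mid \alpha>0,\,U\in\Bf\}$ and assume $L<\infty$, the case $L=\infty$ being trivial. The idea is to feed near-minimizers of the approximating problems into the lower semicontinuity of $q$, thereby avoiding any question about existence of genuine minimizers. I would index a net over triples $(\alpha,U,n)\in(0,\infty)\times\Bf\times\IN$, directed by $\alpha\le\alpha'$, $U\prec U'$ and $n\le n'$. By the definition of the infimum, for each triple I may choose some $g_{(\alpha,U,n)}\in\Ltf$ with
$$q(g_{(\alpha,U,n)})+\alpha\int_U (g_{(\alpha,U,n)}-f)^2\,{\rm d}m\le q^{(\alpha,U)}(f)+\tfrac{1}{n}\le L+\tfrac{1}{n}.$$
Since both terms on the left are nonnegative, this yields at once the two crucial estimates $q(g_{(\alpha,U,n)})\le L+\tfrac1n$ and $\|g_{(\alpha,U,n)}-f\|_{2,U}^2\le (L+1)/\alpha$.

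The crux of the argument is then to verify that $g_{(\alpha,U,n)}\to f$ in the topology of $\Ltf$, which is generated by the seminorms $\|\cdot\|_{2,V}$, $V\in\Bf$. Fixing $V\in\Bf$ and $\delta>0$, and choosing $\alpha_0$ so large that $(L+1)/\alpha_0<\delta^2$, I would note that for every index beyond $(\alpha_0,V,1)$ one has $V\prec U$ and $\alpha\ge\alpha_0$, whence $\|g_{(\alpha,U,n)}-f\|_{2,V}\le\|g_{(\alpha,U,n)}-f\|_{2,U}\le\sqrt{(L+1)/\alpha}<\delta$. Thus the net converges to $f$ in $\Ltf$: letting $\alpha$ grow forces closeness to $f$ on each fixed $U$, while letting $U$ exhaust $\Bf$ upgrades this to convergence in every seminorm. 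Because $q$ is closed it is lower semicontinuous by Theorem~\ref{theorem:characterization closedness}, so
$$q(f)\le\liminf_{(\alpha,U,n)} q(g_{(\alpha,U,n)})\le L,$$
where the last inequality holds because cofinally (as $n\to\infty$) we have $q(g_{(\alpha,U,n)})\le L+\tfrac1n$. Combining this with the first inequality completes the proof.

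The main obstacle I anticipate is organizing the interplay of the two limiting procedures inside a single net — $\alpha\to\infty$ controlling the $L^2(U)$-distance to $f$ while $U$ exhausts $\Bf$ to promote this to $\tau$-convergence — and recognizing that genuine minimizers are not needed: working with $\tfrac1n$-near-minimizers sidesteps all existence and completeness questions and reduces everything to the lower semicontinuity guaranteed by closedness. (Localizability enters only implicitly, to ensure that $\Ltf$ is Hausdorff and hence a topological vector space to which Theorem~\ref{theorem:characterization closedness} applies.)
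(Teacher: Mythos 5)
Your proof is correct, and its engine is the same as the paper's: choose near-minimizers of the approximating problems, organize them into a net converging to $f$ in $\Ltf$, and conclude via the lower semicontinuity of the closed form $q$ guaranteed by Theorem~\ref{theorem:characterization closedness}. The one genuine difference is how the infinite case is dispatched. The paper fixes $\varepsilon>0$, bounds the near-minimizers by $q^{(\alpha,U)}(f)+\varepsilon$, and---since its uniform $L^2$-estimate $\int_U(f-g_{\alpha,V})^2\,{\rm d}m\le \alpha^{-1}(q(f)+\varepsilon)$ is only available when $q(f)<\infty$---must treat $q(f)=\infty$ in a separate Case~2, extracting a subnet along which $\alpha_i\to\infty$ forces $q^{(\alpha_i,U_i)}(f)\to\infty$. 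You instead bound everything by $L=\sup_{\alpha,U}q^{(\alpha,U)}(f)$, which makes the estimate $\|g_{(\alpha,U,n)}-f\|_{2,U}^2\le(L+1)/\alpha$ available whenever $L<\infty$, while the case $L=\infty$ is immediate from the trivial inequality $q(f)\ge L$; this eliminates the paper's subnet argument entirely, at no cost, since near-minimizers always exist ($q^{(\alpha,U)}(f)\le \alpha\|f\|_{2,U}^2<\infty$ for every $f\in\Ltf$). Two minor remarks: your justification of $\liminf_{(\alpha,U,n)} q(g_{(\alpha,U,n)})\le L$ via ``cofinally'' undersells what you have---the bound $q(g_{(\alpha,U,n)})\le L+1/n$ holds at \emph{every} index, so in fact $\limsup\le L$; and your closing parenthetical is accurate, in that localizability enters only through semi-finiteness of $m$, which makes $\Ltf$ Hausdorff and hence a topological vector space to which the lower-semicontinuity theorem applies.
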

\begin{proof}
The definition of the approximating forms yields
$$q(f) \geq \sup \{q^{(\alpha,U)} (f)\mid \alpha > 0,\,  U \in\Bf \}.$$
We show the opposite inequality. Let $f \in \Ltf$ be given and let $\varepsilon >0$ arbitrary. For each $\alpha >0$ and $U \in \Bf$, we choose  $g_{\alpha,U}\in \Ltf$ such that 
$$q^{(\alpha,U)} (f) + \varepsilon \geq q(g_{\alpha,U}) + \alpha \int_U (f - g_{\alpha,U})^2 {\rm d}m.$$
We turn $(g_{\alpha, U})$ into a net by letting $(\alpha,U) \prec (\alpha',U')$ if and only if $\alpha \leq \alpha'$ and $U \subseteq U'$. 

{\em Case 1:} $q(f) < \infty$. For $V \in \Bf$ with  $V \supseteq U$, we have
$$\int_U (f - g_{\alpha,V})^2 {\rm d}m \leq \int_V (f - g_{\alpha,V})^2 {\rm d}m \leq \alpha^{-1} (q_{\alpha,V}(f) + \varepsilon) \leq \alpha^{-1} (q(f) + \varepsilon).$$
This shows that the net $(g_{\alpha,U})$ converges to $f$ in $\Ltf$. Since $q$ is closed on $\Ltf$, Theorem~\ref{theorem:characterization closedness} implies that $q$ is lower semicontinuous on $\Ltf$ and we infer
$$q(f) \leq \liminf_{(\alpha,U)} q(g_{\alpha,U}).$$
This inequality and the choice of the $g_{\alpha,U}$ yield
$$q(f) \leq \sup \{q^{(\alpha,U)} (f)\mid \alpha > 0,\,  U \in \Bf\} + \varepsilon.$$
{\em Case 2:} $q(f) = \infty$. If the net $(g_{\alpha,U})$ converges to $f$, we can use the lower semicontinuity of $q$ and argue as in Case 1. If $(g_{\alpha,U})$ does not converge to $f$, then there exist $U \in \Bf$, some $\delta >0$ and a subnet $(g_{\alpha_i,U_i})$ such that for all $i$ we have
$$\int_U (f - g_{\alpha_i,U_i})^2 {\rm d}m  \geq \delta \text{ and } U \subseteq U_i.$$
The definition of the  $(g_{\alpha,U})$ and the choice of the subnet imply
$$q_{\alpha_i,U_i}(f) + \varepsilon \geq \alpha_i \int_{U_i} (f - g_{\alpha_i,U_i})^2 {\rm d}m \geq \delta \alpha_i.$$
Since the mapping $i \mapsto \alpha_i$ is cofinal, the net $(\alpha_i)$ satisfies $\lim_i \alpha_i = \infty$. This finishes the proof. 
\end{proof}

\begin{remark}
  In many topological spaces all lower semicontinuous functions are pointwise limits of continuous functions, see e.g. \cite{Nag}. The remarkable fact about the previous lemma is that the continuous functions that approximate a quadratic form can be chosen to be quadratic forms as well. 
\end{remark}
  
  In later section we will apply the previous lemma to restrictions of closed forms on $L^0(m)$ to $\Ltf$. This is why we note the following.
  
\begin{lemma}\label{lemma:restriction to Ltf}
 Let $m$ be localizable and let $q$ be a closed quadratic form on $L^0(m)$. The restriction of $q$ to $\Ltf$ is a closed form on $\Ltf$. 
\end{lemma}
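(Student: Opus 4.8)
The plan is to recognize the statement as an immediate instance of the Form restriction lemma, Lemma~\ref{lemma:form restriction}. That lemma asserts: if $q$ is closed on a topological vector space $(V,\tau)$ and $\iota:(\ow{V},\ow{\tau})\to (V,\tau)$ is a continuous linear map out of a \emph{complete} topological vector space, then $q\circ\iota$ is closed on $\ow{V}$. I would apply it with $V=L^0(m)$ carrying $\tau(m)$, with $\ow{V}=\Ltf$ carrying its canonical locally convex topology generated by the seminorms $\|\cdot\|_{2,U}$, $U\in\Bf$, and with $\iota:\Ltf\to L^0(m)$ the natural inclusion.

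First I would note that $q$ is closed on $(L^0(m),\tau(m))$, which is exactly the hypothesis of the present lemma. Then I would verify the two conditions of Lemma~\ref{lemma:form restriction} that concern $\iota$ and $\ow{V}$. The inclusion $\iota$ is obviously linear, and it is continuous because Hölder's inequality gives a continuous embedding of $\Ltf$ into $L^0(m)$, as recorded right after the definition of $\Lpf$. The completeness of $(\Ltf,\ow{\tau})$ is precisely Proposition~\ref{proposition:completeness local Lp}, where the assumption that $m$ be localizable enters.

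With the hypotheses in place, Lemma~\ref{lemma:form restriction} yields that $q\circ\iota$ is a closed quadratic form on $\Ltf$. Finally I would identify $q\circ\iota$ with the restriction of $q$ to $\Ltf$: since $\iota$ is the inclusion, $(q\circ\iota)(f)=q(f)$ for every $f\in\Ltf$, so $q\circ\iota$ has domain $\{f\in\Ltf\mid q(f)<\infty\}$ and agrees with $q$ there; that is, it is exactly the restriction $q|_{\Ltf}$ regarded as a form on the topological vector space $\Ltf$. This is the assertion of the lemma.

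There is essentially no genuine obstacle here; the work is purely in the bookkeeping of hypotheses, namely ensuring simultaneously that $(\Ltf,\ow{\tau})$ is complete (this is where localizability of $m$ is used, via Proposition~\ref{proposition:completeness local Lp}) and that the embedding into $L^0(m)$ is continuous, so that Lemma~\ref{lemma:form restriction} applies verbatim. It is worth recalling the remark following that lemma, which shows by example that completeness of the source space is indispensable; this is precisely the reason the localizability assumption on $m$ cannot be dropped.
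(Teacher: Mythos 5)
Your proposal is correct and follows exactly the paper's own proof: cite Proposition~\ref{proposition:completeness local Lp} for completeness of $\Ltf$ under localizability, note the continuity of the natural inclusion into $L^0(m)$, and apply Lemma~\ref{lemma:form restriction}. The identification of $q\circ\iota$ with $q|_{\Ltf}$ and the remark on why completeness (hence localizability) is indispensable are accurate additions consistent with the paper.
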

\begin{proof}
Since $m$ is localizable Proposition~\ref{proposition:completeness local Lp} shows that $\Ltf$ is complete. Furthermore, the natural inclusion of $\Ltf$ into $L^0(m)$ is continuous. Thus, we can apply  Lemma~\ref{lemma:form restriction} and obtain that the restriction of $q$ to $\Ltf$ is a closed form on $\Ltf$. 
\end{proof}

  \chapter{Energy forms} \label{chaper:energy forms}

In this chapter we introduce energy forms, the main objects of our studies.  Our goals are to show that energy forms are a common generalization of Dirichlet forms, extended Dirichlet forms and resistance forms and to extend important concepts and theorems from Dirichlet forms to energy forms. In this sense, the contents of this chapter could be seen as a mere copy of the contents of the first few chapters of any textbook on Dirichlet form theory that in some way overcomes the technical  difficulty that energy forms do not live on $L^2$-spaces but on $L^0$-spaces. However, the lack of structure of closed forms on $L^0$ (there is no associated semigroup, no associated resolvent and no associated self-adjoint operator), which forces the consequent use of lower semicontinuity, leads to new and short proofs for known results and to some new structural insights. 

The first section of this chapter provides the precise definition of energy forms and some important examples, among which are Dirichlet forms, extended Dirichlet forms and resistance forms, see Subsection~\ref{subsection:resistance forms}.  The second section is devoted to contraction properties; its main result is Theorem~\ref{theorem:cutoff properties energy forms}, an extension of Theorem~\ref{theorem:cutoff properties Dirichlet form} to energy forms. In the third section we demonstrate how the contraction properties of an energy form influence the algebraic properties of its form domain, the concrete shape of its kernel and its continuity properties. More specifically, we prove that the form domain is a lattice and that bounded functions in the form domain are an algebra, see Theorem~\ref{theorem:algebraic and order properties}, that the triviality of the kernel implies that the form domain equipped with the form norm is a Hilbert space, see Theorem~\ref{theorem:continuous embedding of energy forms}, and we explicitly compute the form kernel, see Theorem~\ref{thm:kernel of an energy form} and Corollary~\ref{corollary:kernel recurrent forms}. The fourth section introduces superharmonic and excessive functions and characterizes them in terms of some additional contraction properties, which are not induced by normal contractions, see Theorem~\ref{theorem:characterization of superharmonic functions} and Theorem~\ref{theorem:characterization of excessive functions}. It is followed by a section on capacities and a section on local spaces, where we basically adapt the existing notions to our situation.

In the course of this chapter, it turns out that the kernel of an energy form determines many of its properties. More precisely, the question whether it contains the constant functions, i.e., whether the energy form is recurrent, is of importance.  This is why we give several characterizations of recurrence in the mentioned sections, see Corollary~\ref{corollary:kernel recurrent forms}, Theorem~\ref{theorem:recurrence in terms of constant excessive functions} and Theorem~\ref{theorem:capacities for recurrent forms}.

How our results for energy forms compare with the existing results for Dirichlet forms and resistance forms is discussed individually after each theorem.

\section{The definition and main examples} \label{section:the definition and main examples}

In this section we introduce energy forms and give some important examples. Moreover, we discuss how energy forms can be seen as a common generalization of Dirichlet forms, extended Dirichlet forms and resistance forms. For the following definition recall the definition of normal contractions and $\varepsilon$-cutoffs, see Definition~\ref{definition:normal contraction}.

\begin{definition}[Energy form] \label{definition:energy form}
Let $m$ be localizable. A quadratic form $E$ on $L^0(m)$ is called {\em Markovian} if for each $\varepsilon >0$ there exists an $\varepsilon$-cutoff $C_\varepsilon$ such that for all $f \in L^0(m)$ the inequality 
$$E(C_\varepsilon \circ f ) \leq E(f)$$
holds. A closed Markovian form on $L^0(m)$ is called {\em energy form}. 
\end{definition}

Since it is part of the definition of an energy form, from now on we make the {\bf standing assumption} that the measure $m$ is localizable.  

It may happen that $D(E) = \{0\}$. This situation is not so interesting and some of the desired theorems do not hold. We say that $E$ is {\em nontrivial} if its domain satisfies $D(E) \neq \{0\}.$

The following theorem is important for providing concrete examples of energy forms. It shows that the Markov property of a form passes to its closure.  

\begin{theorem}\label{theorem:closure is markovian}
 The closure of a closable Markovian form on $L^0(m)$ is an energy form. 
\end{theorem}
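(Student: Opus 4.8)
The plan is to show that the closure $\bar q$ of a closable Markovian form $q$ is again Markovian; closedness comes for free, since $\bar q$ is closed by Proposition~\ref{prop:characterization closability}. The crucial observation is that a single $\varepsilon$-cutoff witnessing the Markov property of $q$ will also witness it for $\bar q$, so no new cutoffs need to be produced.

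First I would fix $\varepsilon > 0$ and invoke the Markov property of $q$ to choose an $\varepsilon$-cutoff $C_\varepsilon$ with $q(C_\varepsilon \circ f) \leq q(f)$ for all $f \in L^0(m)$. I claim the same $C_\varepsilon$ satisfies $\bar q(C_\varepsilon \circ f) \leq \bar q(f)$ for all $f$. This is trivial when $f \notin D(\bar q)$, so assume $f \in D(\bar q)$. By Proposition~\ref{prop:characterization closability} there is a $q$-Cauchy net $(f_i)$ in $D(q)$ with $f_i \overset{m}{\to} f$ and $\bar q(f) = \lim_i q(f_i)$, the limit existing because $(\|f_i\|_q)$ is Cauchy in $\IR$.

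Next I would transport the Markov inequality along this net. Since $C_\varepsilon$ is a normal contraction, it is $1$-Lipschitz, so $|C_\varepsilon \circ f_i - C_\varepsilon \circ f| \leq |f_i - f|$ holds $m$-a.e.; consequently $d_U(C_\varepsilon \circ f_i, C_\varepsilon \circ f) \leq d_U(f_i, f)$ for every $U \in \Bf$, whence $C_\varepsilon \circ f_i \overset{m}{\to} C_\varepsilon \circ f$. The Markov property of $q$ gives $C_\varepsilon \circ f_i \in D(q)$ with $q(C_\varepsilon \circ f_i) \leq q(f_i)$, and since $\bar q$ extends $q$ we have $\bar q(C_\varepsilon \circ f_i) = q(C_\varepsilon \circ f_i) \leq q(f_i)$. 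Because $\bar q$ is closed it is lower semicontinuous by Theorem~\ref{theorem:characterization closedness}, so
$$\bar q(C_\varepsilon \circ f) \leq \liminf_i \bar q(C_\varepsilon \circ f_i) \leq \liminf_i q(f_i) = \lim_i q(f_i) = \bar q(f).$$
This establishes the Markov property for $\bar q$ with the same cutoff, and together with closedness it shows that $\bar q$ is an energy form.

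I do not expect a genuine obstacle here: the argument is a clean combination of three facts already available, namely that the closure is closed, that closed forms are lower semicontinuous, and that normal contractions are $1$-Lipschitz (hence preserve local convergence in measure). The one step I would be most careful about is the direction of the semicontinuity estimate — lower semicontinuity bounds $\bar q(C_\varepsilon \circ f)$ \emph{from above} by $\liminf_i \bar q(C_\varepsilon \circ f_i)$, which is exactly what is needed so that it can be chained with $\bar q(C_\varepsilon \circ f_i) \leq q(f_i)$ and the defining identity $\bar q(f) = \lim_i q(f_i)$. Notably, no Banach--Saks or net-refinement machinery is required, only the Lipschitz property of the cutoff and the lower semicontinuity of the closure.
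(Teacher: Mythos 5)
Your proof is correct and follows essentially the same route as the paper's: fix one $\varepsilon$-cutoff from the Markov property of $q$, use the characterization of the closure from Proposition~\ref{prop:characterization closability} to pick a $q$-Cauchy net $(f_i)$ with $f_i \overset{m}{\to} f$ and $\bar q(f) = \lim_i q(f_i)$, note that the contraction preserves convergence in measure, and chain lower semicontinuity of $\bar q$ with $\bar q(C_\varepsilon \circ f_i) = q(C_\varepsilon \circ f_i) \leq q(f_i)$. The only cosmetic difference is that you spell out the $1$-Lipschitz justification for $C_\varepsilon \circ f_i \overset{m}{\to} C_\varepsilon \circ f$, which the paper leaves implicit.
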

\begin{proof}
Let $E$ be a closable Markovian form on $L^0(m)$. We need to show that its closure $\overline{E}$ is Markovian. Let $\varepsilon > 0$ and let $C_\varepsilon$ be an $\varepsilon$-cutoff such that $E(C_\varepsilon \circ f) \leq E(f)$ for all $f \in L^0(m)$.  Proposition~\ref{prop:characterization closability} shows that  $\overline{E}:L^0(m) \to [0,\infty],$ the closure of $E$, is given by 
$$
 f \mapsto \overline{E}(f) = \begin{cases}
                \lim\limits_{i} E(f_i) &\text{if }(f_i)\text{ in } D(E) \text{ is } E\text{-Cauchy and satisfies } f_i \overset{m}{\to} f\\
                \infty &\text{else}
               \end{cases}.
$$ 
 In particular, for $f \in D(\overline{E})$ there exists an $E$-Cauchy net $(f_i)$ with $f_i \overset{m}{\to} f$. Since $C_\varepsilon$ is a normal contraction, we have $C_\varepsilon \circ f_i \overset{m}{\to} C_\varepsilon \circ f$. The lower semicontinuity of $\overline{E}$ and the Markov property of $E$ imply
$$\overline{E}(C_\varepsilon \circ f) \leq  \liminf_i \overline{E}(C_\varepsilon \circ f_i) = \liminf_i E(C_\varepsilon \circ f_i) \leq  \liminf_i E(f_i) = \overline{E}(f). $$
This finishes the proof.
\end{proof}

\begin{remark}
Energy forms can be seen as generalizations of Dirichlet forms, see Subsection~\ref{subsection:resistance forms}. In this sense, the previous theorem is an extension of \cite[Theorem~3.1.1]{FOT}. Our proof relies on the characterization of the closure of $E$ and its lower semicontinuity. A similar approach  was used in the proof of \cite[Theorem~3.2]{ABR}.
\end{remark}

In some cases, the underlying space $X$ is equipped with a topology that derives from its geometry.  A way of saying that an energy form is compatible with the geometry of $X$ is the following. 

\begin{definition}[Regular energy form]
 Let $X$ be a locally compact separable metric space and let $m$ be a Radon measure on $X$ with full support.  An energy form $E$ on $L^0(m)$ is called {\em regular} if the space
 $$D(E) \cap C_c(X)$$
 is dense in $D(E)$ with respect to the form topology and in $C_c(X)$ with respect to uniform convergence. 
\end{definition}

Before developing the theory further, we give several examples. The reader is strongly encouraged to pick his favorite one and keep it in mind.

\subsection{Energy forms associated with Riemannian manifolds} \label{subsection:manifolds}  Let $(M,g)$ be a smooth Riemannian manifold with associated volume measure ${\rm vol}_g$. We define the quadratic form $E_{(M,g)}:L^0({\rm vol}_g) \to [0,\infty]$  by 
 $$f \mapsto E_{(M,g)}(f) := \begin{cases} \int_M |\nabla f|^2 \D {\rm vol}_g & \text{if } f \in L^2_{\rm loc}({\rm vol}_g) \text{ and } |\nabla f| \in L^2({\rm vol}_g) \\ \infty & \text{else}\end{cases}.$$
 Here,  $\nabla$ is the distributional  gradient and $|\cdot|$ denotes the length of a tangent vector. 
 
 \begin{proposition}\label{proposition: example manifolds}
  $E_{(M,g)}$ is an energy form on $L^0({\rm vol}_g).$
 \end{proposition}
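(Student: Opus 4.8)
The plan is to verify the three defining properties of an energy form in turn: that $E_{(M,g)}$ is a quadratic form on $L^0(\mathrm{vol}_g)$, that it is Markovian, and that it is closed. First I note that since $M$ is a (second countable) smooth manifold and $\mathrm{vol}_g$ is a Radon measure, $\mathrm{vol}_g$ is $\sigma$-finite, hence localizable by Theorem~\ref{theorem:strictly localizable implies localizable}, so the standing assumption is met; moreover by Theorem~\ref{theorem:properties of L0} the space $(L^0(\mathrm{vol}_g),\tau(\mathrm{vol}_g))$ is complete \emph{and} metrizable. Completeness means, via Proposition~\ref{prop:lower semincontinuity implies closedness} and Theorem~\ref{theorem:characterization closedness}, that closedness of $E_{(M,g)}$ is \emph{equivalent} to its lower semicontinuity with respect to local convergence in measure; metrizability means (Lemma~\ref{lemma:lower semicontinuity via epigraph}) it suffices to test lower semicontinuity on \emph{sequences}. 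The bulk of the work is this lower semicontinuity.

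The quadratic form and Markov properties are routine. Writing $E_{(M,g)}(f)=\int_M\langle\nabla f,\nabla f\rangle\Dm$ on its domain, linearity of the distributional gradient shows $D(E_{(M,g)})=\{f\in\Ltlm \mid |\nabla f|\in L^2(\mathrm{vol}_g)\}$ is a vector space, and the parallelogram identity and homogeneity follow from those of the $L^2$-inner product of gradients (alternatively one checks the two inequalities of Lemma~\ref{lemma:inequality characterization of quadratic forms}). For the Markov property it suffices to show $E_{(M,g)}(C\circ f)\le E_{(M,g)}(f)$ for every normal contraction $C:\IR\to\IR$, since the $\varepsilon$-cutoff $x\mapsto(x\vee(-\varepsilon))\wedge(1+\varepsilon)$ is such a contraction. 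If $f\in D(E_{(M,g)})$, then $f\in\Ltlm\subseteq L^1_{\mathrm{loc}}$ with $\nabla f\in L^2$, so, working in charts, the chain rule for Lipschitz functions gives $\nabla(C\circ f)=C'(f)\,\nabla f$ almost everywhere; since $|C'|\le 1$ and $|C\circ f|\le|f|$ we obtain $C\circ f\in\Ltlm$ and $|\nabla(C\circ f)|\le|\nabla f|$ pointwise, whence $E_{(M,g)}(C\circ f)\le E_{(M,g)}(f)$ (the inequality being trivial when $f\notin D(E_{(M,g)})$).

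For lower semicontinuity let $f_n\overset{m}{\to}f$; I may assume $L:=\liminf_n E_{(M,g)}(f_n)=\lim_n E_{(M,g)}(f_n)<\infty$ and, passing to a subsequence (using $\sigma$-finiteness), that $f_n\to f$ $\mathrm{vol}_g$-a.e. Fix $k\in\IN$ and truncate with $C_k(x)=(x\wedge k)\vee(-k)$: the functions $C_k\circ f_n$ are bounded by $k$ and satisfy $E_{(M,g)}(C_k\circ f_n)\le E_{(M,g)}(f_n)$ by the Markov property, so their gradients are bounded in the separable Hilbert space $H:=L^2(\mathrm{vol}_g;TM)$. By weak sequential compactness I extract (diagonally in $k$) a subsequence along which $\nabla(C_k\circ f_n)\rightharpoonup W_k$ in $H$. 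As $C_k\circ f_n\to C_k\circ f$ a.e. and is uniformly bounded, it converges weakly in $L^2(\Omega)$ to $C_k\circ f$ on every relatively compact $\Omega$; testing against compactly supported smooth vector fields $\Phi$ and integrating by parts gives $\int\langle W_k,\Phi\rangle\Dm=\lim_n\int\langle\nabla(C_k\circ f_n),\Phi\rangle\Dm=-\lim_n\int (C_k\circ f_n)\,\mathrm{div}_g\Phi\Dm=-\int (C_k\circ f)\,\mathrm{div}_g\Phi\Dm$, which identifies $W_k=\nabla(C_k\circ f)$. Thus $C_k\circ f\in D(E_{(M,g)})$ with $E_{(M,g)}(C_k\circ f)=\|W_k\|_H^2\le\liminf_n E_{(M,g)}(C_k\circ f_n)\le L$ by weak lower semicontinuity of the norm (Lemma~\ref{lemma:form weak convergence implies lower limit}).

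Finally I let $k\to\infty$. One checks $\nabla(C_k\circ f)=1_{\{|f|<k\}}\,\nabla(C_{k'}\circ f)$ for $k\le k'$, so the $\nabla(C_k\circ f)$ form a Cauchy sequence in $H$ of norm $\le\sqrt L$, converging to some $V\in H$ with $\|V\|_H^2\le L$. It remains to see $f\in\Ltlm$ with $\nabla f=V$, which is \textbf{the main obstacle}: a global $L^2$-bound on gradients together with convergence in measure does not by itself control $f$ locally in $L^2$. I would resolve this with a local Poincaré inequality on relatively compact balls $B$, which bounds $\|C_k\circ f-(C_k\circ f)_B\|_{L^2(B)}$ uniformly in $k$; the averages $(C_k\circ f)_B$ cannot diverge (otherwise $C_k\circ f\to\pm\infty$ on a set of positive measure, contradicting $C_k\circ f\to f$ with $f$ finite a.e.), so $f\in L^2(B)$ for all such $B$, i.e. $f\in\Ltlm$. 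With $f\in L^1_{\mathrm{loc}}$ in hand, dominated convergence ($|C_k\circ f|\le|f|$, Lemma~\ref{lemma:Lebesgue's theorem}) upgrades the identity above to $\int\langle V,\Phi\rangle\Dm=-\int f\,\mathrm{div}_g\Phi\Dm$, so $V=\nabla f$ distributionally; hence $f\in D(E_{(M,g)})$ and $E_{(M,g)}(f)=\|V\|_H^2\le L$. This is the desired lower semicontinuity and completes the verification that $E_{(M,g)}$ is an energy form.
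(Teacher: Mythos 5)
Your proof is correct, but it takes a genuinely different route from the paper's. The paper proves closedness \emph{directly} in the sense of its definition: it takes a sequence $(f_n)$ that is Cauchy in the form topology, so the gradients $\nabla f_n$ are Cauchy and hence converge \emph{strongly} in the complete space $\vec{L}^2({\rm vol}_g)$ with no compactness argument needed; it then uses the local Poincar\'e inequality (Lemma~\ref{lemma:local poincare}) at the \emph{start} to place the $\tau(m)$-limit $f$ in $\Ltlm$, and identifies $\nabla f$ by testing against compactly supported smooth vector fields after smooth truncation. You instead prove sequential lower semicontinuity for an arbitrary energy-bounded sequence converging locally in measure, and invoke the equivalence of closedness and lower semicontinuity (Proposition~\ref{prop:lower semincontinuity implies closedness}, using completeness of $L^0({\rm vol}_g)$). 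Since you only have an energy bound rather than Cauchyness, you must pay for the missing strong convergence with weak sequential compactness in $\vec{L}^2$, a truncation scheme, and a diagonal extraction, and you correctly isolate the real obstruction --- that a global gradient bound plus convergence in measure does not control $f$ locally in $L^2$ --- resolving it with the same local Poincar\'e inequality, but at the \emph{end} of the argument via the boundedness of the averages $(C_k\circ f)_B$. What each approach buys: the paper's is shorter and avoids compactness entirely; yours yields the lower semicontinuity statement directly (which the paper only recovers afterwards through Theorem~\ref{theorem:characterization closedness}) and is the more robust template when one cannot reduce to Cauchy sequences. One small caveat: you twice invoke the chain rule for \emph{Lipschitz} contractions ($\nabla(C\circ f)=C'(f)\nabla f$ a.e., and $\nabla(C_k\circ f)=1_{\{|f|<k\}}\nabla(C_{k'}\circ f)$), whereas the paper's toolkit (Lemma~\ref{lemma:contraction manifold}) only covers smooth $\psi$ with bounded derivative. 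The Lipschitz version is standard (and the level-set subtlety is harmless since $\nabla f=0$ a.e.\ on $\{f=c\}$), but to stay within the paper's lemmas you could, as the paper does with its $\psi_N$, replace $C_k$ by smooth contractions with $|\psi'|\leq 1$ approximating the truncation; for the Markov property itself the smooth $\varepsilon$-cutoffs alone already suffice, so proving the inequality for all normal contractions there is more than the definition requires.
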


 \begin{proof}
  The Markov property of $E_{(M,g)}$ is a consequence of Lemma~\ref{lemma:contraction manifold} and the fact that there exist smooth $\varepsilon$-cutoffs. It remains to prove closedness. Since ${\rm vol}_g$ is $\sigma$-finite, the space $L^0({\rm vol}_g)$ is metrizable and we can work with sequences. 
  
  Let $(f_n)$ be a sequence in $D(E_{(M,g)})$ that is  Cauchy with respect to the form topology. The completeness of $L^0(m)$ implies that it has a $\tau(m)$-limit $f$. We show $f \in D(E_{(M,g)})$ and the convergence $E_{(M,g)}(f-f_n) \to 0$. 
  
  We first prove $f \in L^2_{\rm loc}({\rm vol}_g)$. According to Lemma~\ref{lemma:local poincare}, for each $x \in M$, there exists a relatively compact open neighborhood $U_x$ of $x$ and a constant $C_x > 0$ such that for all  $h \in D(E_{(M,g)})$ the inequality
  $$\int_{U_x} |h - \bar{h}|^2 {\rm d vol}_g \leq C_x \int_M |\nabla h|^2 {\rm d vol}_g  $$
   holds, where $\bar{h} = {\rm vol}_g(U_x)^{-1} \int_{U_x} h \D {\rm  vol}_g$. Therefore, the sequence $(f_n - \bar{f_n})$ is Cauchy in $L^2(U_x,{\rm vol}_g)$. Since the latter space is complete, there exists $h \in L^2(U_x,{\rm vol}_g)$ such that $f_n - \bar{f_n} \to h$ in $L^2(U_x,{\rm vol}_g)$. Combining this with $f_n \to f$ in  $L^0({\rm vol}_g)$ yields $\bar{f_n} \to f - h$ in $L^0(U_x,{\rm vol}_g)$. This shows that $f - h$ is constant on $U_x$ and so $f\in L^2(U_x,{\rm vol}_g)$. Since any compact set can be covered by finitely many of the $U_x$, we obtain $f \in L^2_{\rm loc} ({\rm vol}_g).$
  
  It remains to prove $E_{(M,g)}(f-f_n) \to 0$, as $n \to \infty$. The space of square integrable vector fields $\vec{L}^2({\rm vol}_g)$ is complete. Therefore, the sequence $(\nabla f_n)$ has a limit $\mathcal{X} \in \vec{L}^2({\rm vol}_g)$. We need to show $\nabla f = \mathcal{X}$. To this end, for each $N \in \IN$, we choose a bounded function $\psi_N \in C^\infty(\IR)$ with $\|\psi'_N \|_\infty \leq 1$ and $\psi(x) = x$ for $x \in [-N,N]$. The chain rule, Lemma~\ref{lemma:contraction manifold}, and the continuity of $\psi'_N$ imply the $\vec{L}^2({\rm vol}_g)$-convergence
  $$\nabla (\psi_N \circ f_n) = (\psi_N' \circ f_n) \nabla f_n \to (\psi_N' \circ f) \mathcal{X}, \text{ as } n \to \infty.$$
  For any compactly supported smooth vector field $\mathcal{Y}$ we obtain
  \begin{align*}
  \int_M f \, {\rm div}\,\mathcal{Y} \, {\rm d vol}_g &= \lim_{N \to \infty} \int_M (\psi_N \circ f)\, {\rm div}\, \mathcal{Y} \, {\rm d vol}_g \\
  &= \lim_{N \to \infty} \lim_{n\to\infty} \int_M (\psi_N \circ f_n)\,  {\rm div}\, \mathcal{Y} \, {\rm d vol}_g\\
  &= \lim_{N \to \infty} \lim_{n\to\infty} \int_M g(\nabla (\psi_N \circ f_n), \mathcal{Y} )\, {\rm d vol}_g\\
  &=\lim_{N \to \infty} \int_M g((\psi'_N \circ f) \mathcal{X} , \mathcal{Y})\, {\rm d vol}_g\\
  &=  \int_M g(\mathcal{X}, \mathcal{Y})\, {\rm d vol}_g.\\
  \end{align*}
   This shows $\nabla f = \mathcal{X}$ and finishes the proof. 
 \end{proof}
\begin{remark}
The idea that a local version of the Poincar\'e inequality implies the closedness of $E_{(M,g)}$ on $L^0({\rm vol}_g)$ is taken from \cite{HKLMS}. It can be shown that the kernel of $E_{(M,g)}$ consists of functions that are constant on connected components of $M$. Therefore, the local Poincar\'e inequality implies the continuity of the embedding 
$$D(E_{(M,g)})/\ker E_{(M,g)}  \to L^2_{\rm loc}({\rm vol}_g)/\ker E_{(M,g)}, \, f + \ker E_{(M,g)} \mapsto  f + \ker E_{(M,g)}. $$
This embedding and the completeness of $\vec{L}^2({\rm vol}_g)$ yielded the closedness of $E_{(M,g)}$. It is almost the same argument as in the proof of Theorem~\ref{theorem:characterization closedness hilbert space}.
\end{remark}
There is another example for an energy form associated with $(M,g)$. We let $E^0_{(M,g)}$ be the closure of the restriction of $E_{(M,g)}$ to the subspace $C_c^\infty(M)$, the {\em smooth function of compact support on $M$}. 
\begin{proposition}
 $E^0_{(M,g)}$ is a regular energy form on $L^0({\rm vol}_g).$
\end{proposition}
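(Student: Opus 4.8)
The plan is to recognize $E^0_{(M,g)}$ as the closure of a closable Markovian form, invoke Theorem~\ref{theorem:closure is markovian} to conclude it is an energy form, and then read off regularity almost directly from the construction. Throughout write $q := E_{(M,g)}|_{C_c^\infty(M)}$ for the restriction of the already-established energy form $E_{(M,g)}$ to $C_c^\infty(M)$.

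First I would check closability of $q$. Since $E_{(M,g)}$ is an energy form it is closed (Proposition~\ref{proposition: example manifolds}), hence lower semicontinuous on $L^0({\rm vol}_g)$ by Theorem~\ref{theorem:characterization closedness}. As $q$ agrees with $E_{(M,g)}$ on its domain $C_c^\infty(M) \subseteq D(E_{(M,g)})$, this lower semicontinuity restricts verbatim: for a net $(u_i)$ in $C_c^\infty(M)$ with $u_i \overset{m}{\to} u \in C_c^\infty(M)$ one has $q(u) = E_{(M,g)}(u) \le \liminf_i E_{(M,g)}(u_i) = \liminf_i q(u_i)$, so $q$ is lower semicontinuous on $(D(q),\tau({\rm vol}_g))$. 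Because ${\rm vol}_g$ is $\sigma$-finite (hence localizable) the space $L^0({\rm vol}_g)$ is complete, so Proposition~\ref{prop:characterization closability} gives that $q$ is closable with closure $\bar q = E^0_{(M,g)}$. This is the general principle that a restriction of a closed form is closable.

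Next I would verify that $q$ is Markovian. The key point is that $E_{(M,g)}$ is Markovian through \emph{smooth} $\varepsilon$-cutoffs, which is exactly how its Markov property was obtained via the chain rule in Lemma~\ref{lemma:contraction manifold}. Fix $\varepsilon > 0$ and such a smooth $C_\varepsilon$. For $f \in C_c^\infty(M)$ the composition $C_\varepsilon \circ f$ is smooth, and since $C_\varepsilon(0) = 0$ it vanishes off $\operatorname{supp} f$; thus $C_\varepsilon \circ f \in C_c^\infty(M) = D(q)$ and $q(C_\varepsilon \circ f) = E_{(M,g)}(C_\varepsilon \circ f) \le E_{(M,g)}(f) = q(f)$. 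For $f \notin C_c^\infty(M)$ the inequality is trivial as $q(f) = \infty$. Hence $q$ is Markovian, and Theorem~\ref{theorem:closure is markovian} shows $E^0_{(M,g)} = \bar q$ is an energy form.

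Finally, regularity. For form-topology density I would spell out the general fact that $D(q) = C_c^\infty(M)$ is dense in $(D(\bar q),\tau_{\bar q})$: given $u \in D(\bar q)$ with an approximating $q$-Cauchy net $(u_i)$, $u_i \overset{m}{\to} u$, lower semicontinuity of $\bar q$ yields $\bar q(u - u_j) \le \liminf_i q(u_i - u_j) \to 0$, so $u_j \to u$ in the form topology; since $C_c^\infty(M) \subseteq D(E^0_{(M,g)}) \cap C_c(M)$, the latter is a fortiori form-dense. For uniform density it again suffices that $C_c^\infty(M)$ be $\|\cdot\|_\infty$-dense in $C_c(M)$, which is standard (local mollification against a partition of unity subordinate to a chart cover of the support). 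I expect the only genuine subtlety to be the support control in this mollification step, where one must keep the smooth approximants compactly supported inside a fixed neighborhood of $\operatorname{supp} f$ while arranging uniform convergence; every other ingredient is a direct application of the abstract closure machinery already developed.
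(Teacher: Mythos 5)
Your proof is correct and follows essentially the same route as the paper: Markovianity of the restriction to $C_c^\infty(M)$ via smooth $\varepsilon$-cutoffs and the chain rule (Lemma~\ref{lemma:contraction manifold}), Markovianity of the closure via Theorem~\ref{theorem:closure is markovian}, and regularity from the uniform density of $C_c^\infty(M)$ in $C_c(M)$ (the paper cites Stone--Weierstra\ss\ where you mollify, and it takes closedness as automatic from the definition as a closure where you justify closability explicitly via lower semicontinuity of $E_{(M,g)}$ and Proposition~\ref{prop:characterization closability}). Your added closability step is a sound piece of bookkeeping the paper leaves implicit, but it does not change the argument's structure.
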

\begin{proof}
 The closedness of $E^0_{(M,g)}$ follows from its definition and the regularity is a consequence of the Stone-Weierstra\ss\, theorem. By  the chain rule, Lemma~\ref{lemma:contraction manifold}, and the existence of smooth $\varepsilon$-cutoffs, the restriction of $E_{(M,g)}$ to $C_c^\infty(M)$ is Markovian. According to Theorem~\ref{theorem:closure is markovian}, its closure $E^0_{(M,g)}$ is Markovian as well.
\end{proof}

\begin{remark}
 The $L^2$-restrictions  $\E_{(M,g)}:= E_{(M,g)}|_{L^2({\rm vol}_g)}$ and $\E^0_{(M,g)}: = E^0_{(M,g)}|_{L^2({\rm vol}_g)}$ are Dirichlet forms on $L^2({\rm vol}_g)$ and well studied, see e.g. \cite{Gri}. The operator associated with $\E_{(M,g)}$ is the $L^2$-Laplacian with Neumann boundary conditions (at infinity) and the operator associated with $\E^0_{(M,g)}$ is the $L^2$-Laplacian with Dirichlet boundary conditions (at infinity). It is known that $E^0_{(M,g)}$ is the extended Dirichlet form of $\E^0_{(M,g)}$, see e.g. \cite[Example~1.5.3]{FOT} for the case of open subsets of $\IR^n$. A similar statement for $\E_{(M,g)}$ and $E_{(M,g)}$ need not be true. It can happen that $\E^0_{(M,g)} = \E_{(M,g)}$ while $E^0_{(M,g)} \neq E_{(M,g)}$.
\end{remark}

\subsection{Jump-type forms} \label{subsection:jump forms} Let $(X,\mathcal{B},m)$ be a $\sigma$-finite measure space. Let $J$ be a measure on $\mathcal{B}\otimes \mathcal{B}$ with the property that $m(A) =  0$ implies $J(A\times X) = J(X \times A) = 0$ and let $V:X \to [0,\infty]$ be measurable. We define the functional $E_{J,V}: L^0(m) \to [0,\infty]$ by 
$$ f \mapsto E_{J,V}(f) := \int_{X\times X} (f(x) - f(y))^2 \D J(x,y) + \int_X f(x)^2 \, V(x) \D m(x).$$
\begin{proposition}
  $E_{J,V}$ is an energy form on $L^0(m)$.
\end{proposition}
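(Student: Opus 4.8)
The plan is to verify the three defining properties of an energy form from Definition~\ref{definition:energy form} in turn: that $E_{J,V}$ is a well-defined quadratic form on $L^0(m)$, that it is Markovian, and that it is closed; throughout, the $\sigma$-finiteness of $m$ (hence, by Theorem~\ref{theorem:strictly localizable implies localizable}, its localizability) is what keeps the ambient space well-behaved. First I would fix measurable representatives and check that the value $E_{J,V}(f)$ is independent of the choice. The potential term is an integral against $m$ and so is insensitive to changes on $m$-null sets. For the jump term the hypothesis on $J$ is decisive: if $f=g$ off an $m$-null set $N$, then $f(x)-f(y)=g(x)-g(y)$ for all $(x,y)$ outside $(N\times X)\cup(X\times N)$, and the assumption $m(N)=0 \Rightarrow J(N\times X)=J(X\times N)=0$ shows this exceptional set is $J$-null, so the two jump integrals agree and $E_{J,V}$ descends to $L^0(m)$. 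That it is a quadratic form then follows from the pointwise identities $(\lambda a)^2=\lambda^2 a^2$ and $(a+b)^2+(a-b)^2=2a^2+2b^2$ applied with $a=f(x)-f(y)$, $b=g(x)-g(y)$ (and analogously for the potential integrand), integrated against $J$ and against $m$; since all integrands are nonnegative, additivity and positive homogeneity of the integral yield homogeneity and the parallelogram identity in $[0,\infty]$ with no finiteness restriction (alternatively one may invoke Lemma~\ref{lemma:inequality characterization of quadratic forms}).

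For the Markov property I would observe that the required inequality in fact holds for \emph{every} normal contraction, in particular for any $\varepsilon$-cutoff $C_\varepsilon$ (such cutoffs exist, e.g.\ piecewise linear ones). Indeed, a normal contraction $C$ is $1$-Lipschitz, so $(C(f(x))-C(f(y)))^2\leq (f(x)-f(y))^2$ pointwise, while $C(0)=0$ together with the Lipschitz bound gives $|C(s)|\leq|s|$, whence $C(f)^2\leq f^2$. Integrating the first inequality against $J$ and the second against $m$ gives $E_{J,V}(C\circ f)\leq E_{J,V}(f)$, which is exactly the Markov property.

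Because $m$ is $\sigma$-finite, Theorem~\ref{theorem:properties of L0} shows that $(L^0(m),\tau(m))$ is complete, Hausdorff and metrizable, so by Proposition~\ref{prop:lower semincontinuity implies closedness} it suffices to prove that $E_{J,V}$ is lower semicontinuous, and by Lemma~\ref{lemma:lower semicontinuity via epigraph} it suffices to test this on sequences. So I would take $f_n \overset{m}{\to} f$, pass to a subsequence realizing $\liminf_n E_{J,V}(f_n)$, and then, using that convergence in measure for $\sigma$-finite $m$ is characterized by $m$-a.e.\ convergent subsequences (the remark following Theorem~\ref{theorem:properties of L0}), pass to a further subsequence converging $m$-a.e., say off an $m$-null set $N$. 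Since $f_n(x)\to f(x)$ for $x\notin N$, we get $f_n(x)-f_n(y)\to f(x)-f(y)$ off the $J$-null set $(N\times X)\cup(X\times N)$, so the jump integrands converge $J$-a.e.\ and Fatou's lemma bounds the jump term of $f$ by the $\liminf$ of those of $f_n$. For the potential term I would record the pointwise bound $f(x)^2V(x)\leq\liminf_n f_n(x)^2V(x)$ holding $m$-a.e.\ (the only delicate case, $f(x)=0$ with $V(x)=\infty$, being handled by the convention $0\cdot\infty=0$) and apply Fatou again. Adding the two estimates and using superadditivity of $\liminf$ gives $E_{J,V}(f)\leq\liminf_n E_{J,V}(f_n)$.

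The genuinely non-formal step is the lower semicontinuity of the jump term: convergence in $m$-measure only directly controls $f_n$ on $X$, whereas the jump integral lives on $X\times X$ against $J$, which need not be absolutely continuous with respect to the product measure. Bridging this gap is precisely the role of the compatibility hypothesis $m(A)=0\Rightarrow J(A\times X)=J(X\times A)=0$ — both here, to upgrade $m$-a.e.\ convergence to $J$-a.e.\ convergence of $(x,y)\mapsto f_n(x)-f_n(y)$, and in the well-definedness step — so I would be careful to isolate and invoke it cleanly at both places.
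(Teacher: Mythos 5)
Your proposal is correct and follows essentially the same route as the paper: well-definedness and the Markov property via pointwise estimates (the paper dismisses these as immediate), then closedness via lower semicontinuity for sequences, using $\sigma$-finiteness to pass to an $m$-a.e.\ convergent subsequence realizing the $\liminf$, upgrading to $J$-a.e.\ convergence of the jump integrand via the compatibility condition on $J$, and concluding with Fatou's lemma. The only difference is that you spell out the routine steps the paper leaves implicit, including the correct observation that the condition $m(A)=0\Rightarrow J(A\times X)=J(X\times A)=0$ is used twice, for well-definedness and for the Fatou step.
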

\begin{proof}
 The condition on $J$ ensures that the form is well-defined. From the properties of the integral with respect to $J$ and $V {\rm d} m$, it easily follows that $E_{J,V}$ is a  Markovian quadratic form. It remains to prove its closedness. The $\sigma$-finiteness of $m$ yields that the space $L^0(m)$ is metrizable. Hence, it suffices to show lower semicontinuity for sequences. 
 
 To this end, let $(f_n)$ in $L^0(m)$ and $f \in L^0(m)$ with $f_n \overset{m}{\to} f$ be given. By the characterization of convergence in measure in the $\sigma$-finite situation, there exists a subsequence $(f_{n_k})$ with  $f_{n_k} \to f$ $m$-a.e. and
 $$\liminf_{n\to \infty} E_{J,V}(f_n) = \liminf_{k\to \infty} E_{J,V}(f_{n_k}).$$
 The assumption on $J$ implies $(f_n(x) - f_n(y))^2 \to (f(x) - f(y))^2$ for $J$-a.e. pair $(x,y)$. An application of Fatou's lemma yields
 $$E_{J,V}(f) \leq \liminf_{k \to \infty}E_{J,V}(f_{n_k}) = \liminf_{n \to \infty} E_{J,V}(f_n).$$
 This calculation finishes the proof.
\end{proof}

We would like to warn the reader that the condition on the measure $J$ does not imply its absolute continuity with respect to $m \otimes m$ nor does it imply any symmetry. 

One important special case of jump forms arises when $X$ is countable and equipped with the  $\sigma$-algebra of all subsets of $X$. In this case, the measure $m$ can be identified with a function $m:X \to [0,\infty]$ via the identity
$$m(A) = \sum_{x \in A} m(x) \text{ for all } A \subseteq X.$$
 Furthermore, there exists a function $b: X\times X \to [0,\infty]$ such that for all $B \subseteq X \times X$ the measure $J$ satisfies
$$J(B) = \sum_{(x,y) \in B} b(x,y),$$
and the energy form $E_{J,V}$ takes the form
$$E_{J,V}(f) = \sum_{x,y \in X} b(x,y)(f(x)-f(y))^2 + \sum_{x \in X} f(x)^2 V(x) m(x).$$
The function $b$ can be interpreted as an edge weight of a graph whose vertices $x,y \in X$ are connected if and only if $b(x,y) >0$.  Forms of this type (with various additional conditions on $b$ and $V$) have been studied quite extensively. We refer the reader to \cite{GHKLW,Schmi,Soa,Woe} and references therein. Indeed, the seminal paper \cite{BD}, which introduces the notion of Dirichlet spaces, treats exactly this case under the additional assumptions that $X$ is a finite set and that the functions $b$ and $V$ take finite values only.  

\subsection{Resistance forms, Dirichlet forms and extended Dirichlet forms} \label{subsection:resistance forms}

The previous two examples have been rather explicit. We finish this section by discussing two more abstract situations. The next one is the main reason why we do not restrict ourselves to the $\sigma$-finite situation but allow localizable measures.

{\bf Resistance forms.}  Let $X \neq \emptyset$ arbitrary and let $\mu$  be the counting measure on all subsets of $X$. In this case, $L^0(\mu)$ is the space of all real valued functions on $X$ and its topology $\tau(\mu)$ is the topology of pointwise convergence. The measure $\mu$ is strictly localizable and hence localizable by Theorem~\ref{theorem:strictly localizable implies localizable}. 
 
 Following \cite{Kig1}, we call a functional $\E:L^0(\mu)\to [0,\infty]$ a {\em resistance form }if it satisfies the subsequent five conditions.

(RF1)  $\E$ is a quadratic form on $L^0(\mu)$ with $\ker \E = \IR 1$.

(RF2) $(D(\E)/\IR 1, \E)$ is a Hilbert space.

(RF3) For every finite $V \subseteq X$ and each $g:V \to \IR$, there exists an $f \in D(E)$ with $f|_V = g$. 

(RF4) For all $x,y \in X$, we have
$$R_\E (x,y):= \sup \left\{ \frac{|f(x) - f(y)|^2}{\E(f)}\, \middle|\, 0 < \E(f) < \infty  \right\} < \infty. $$
(RF5)  For all $f \in L^0(\mu)$, we have 
$$ \E( (f \vee 0) \wedge 1) \leq \E(f).$$
\begin{proposition}
 Every resistance form is an energy form. 
\end{proposition}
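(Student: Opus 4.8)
The plan is to verify the two defining properties of an energy form from Definition~\ref{definition:energy form}, namely that a resistance form $\E$ is Markovian and that it is closed on $L^0(\mu)$. Since $\mu$ is the counting measure it is strictly localizable, hence localizable by Theorem~\ref{theorem:strictly localizable implies localizable}, so the standing assumption is met and, by Theorem~\ref{theorem:properties of L0}, the space $(L^0(\mu),\tau(\mu))$ is Hausdorff and complete. The Markov property is then immediate from (RF5): the unit contraction $C(x) = (x\vee 0)\wedge 1$ is a normal contraction that is monotone increasing, fixes $[0,1]$ pointwise and maps $\IR$ into $[0,1] \subseteq [-\varepsilon,1+\varepsilon]$, so it is an $\varepsilon$-cutoff for every $\varepsilon > 0$ in the sense of Definition~\ref{definition:normal contraction}. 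As $C\circ f = (f\vee 0)\wedge 1$, condition (RF5) gives $\E(C\circ f) \leq \E(f)$ for all $f$, which is exactly the Markov property.

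For closedness I would invoke Theorem~\ref{theorem:characterization closedness hilbert space} with $V = L^0(\mu)$, $\tau = \tau(\mu)$ and $q = \E$; since $(V,\tau)$ is complete it suffices to check the hypotheses together with two of the three listed conditions. Condition (iii) is exactly (RF2). By (RF1) we have $\ker \E = \IR 1$, a one-dimensional subspace of the Hausdorff space $(L^0(\mu),\tau(\mu))$, hence $\tau(\mu)$-closed. The real content is to produce condition (ii), the continuity of the canonical embedding $(D(\E)/\IR 1,\E) \to (L^0(\mu)/\IR 1, \tau(\mu)/\IR 1)$, and to secure the metrizability hypothesis on $(D(\E),\tau_\E)$. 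Both will follow from (RF4): rewriting its definition gives
$$|f(x) - f(y)| \leq R_\E(x,y)^{1/2}\, \E(f)^{1/2} = R_\E(x,y)^{1/2}\, \|f\|_\E \quad \text{for all } f \in D(\E),\ x,y \in X.$$

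Fixing a base point $o \in X$ and setting $D_o := \{f \in D(\E) \mid f(o) = 0\}$, the decomposition $f = (f - f(o)\,1) + f(o)\,1$ yields $D(\E) = D_o \oplus \IR 1$, and since $1 \in \ker\E$ one has $\E(f) = \E(f - f(o)\,1)$; thus $\|\cdot\|_\E$ restricts to a genuine norm on $D_o$ and $(D(\E)/\IR 1, \E)$ is isometrically isomorphic to $(D_o,\|\cdot\|_\E)$. The displayed estimate shows that for $f \in D_o$ each evaluation $f \mapsto f(x) = f(x)-f(o)$ is $\|\cdot\|_\E$-continuous, so the inclusion $D_o \hookrightarrow L^0(\mu)$ is continuous from $\|\cdot\|_\E$ to the topology of pointwise convergence $\tau(\mu)$. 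Composing with the continuous projection $\pi\colon L^0(\mu) \to L^0(\mu)/\IR 1$ gives precisely the canonical embedding, which is therefore continuous; this is (ii).

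It remains to argue metrizability, which I expect to be the main technical point. The form topology $\tau_\E$ is the supremum of $\tau(\mu)|_{D(\E)}$ and the $\|\cdot\|_\E$-topology. Using the resistance estimate once more, every evaluation $f \mapsto f(x) = (f(x)-f(o)) + f(o)$ is continuous for the topology generated by $\|\cdot\|_\E$ together with the single functional $f \mapsto f(o)$; since $\tau(\mu)|_{D(\E)}$ is generated by the evaluations, $\tau_\E$ coincides with the topology of the norm $f \mapsto (\E(f) + |f(o)|^2)^{1/2}$, which is indeed a norm because $\E(f)=0$ and $f(o)=0$ force $f \in \IR 1$ and then $f=0$. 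Being norm-induced, $\tau_\E$ is metrizable. With (ii), (iii) and metrizability in hand, Theorem~\ref{theorem:characterization closedness hilbert space} delivers (i), the closedness of $\E$, so $\E$ is a closed Markovian form on $L^0(\mu)$ and hence an energy form. I note that (RF3) is not needed for this statement, as it concerns richness of the domain rather than closedness or the Markov property.
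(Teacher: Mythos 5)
Your proof is correct and follows essentially the same route as the paper's: Markovianity directly from (RF5), and closedness via Theorem~\ref{theorem:characterization closedness hilbert space}, with metrizability obtained through the same norm $f \mapsto \sqrt{\E(f)+|f(o)|^2}$ and continuity of the canonical embedding deduced from the (RF4) resistance estimate. Your closing observation that (RF3) is dispensable is a small but accurate refinement: the paper invokes (RF3) in its Claim~2, yet only the finiteness $R_\E(x,o)<\infty$ guaranteed by (RF4) is actually used there.
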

\begin{proof}
 Let $\E$ be a resistance form. It is Markovian by (RF5).  To show its closedness we use Theorem~\ref{theorem:characterization closedness hilbert space}. In order to apply it, we need to prove that $D(\E)$ equipped with the form topology is metrizable and that the canonical inclusion of $(D(\E)/ \IR 1,\E)$ into $(L^0(\mu)/ \IR 1, \tau(\mu)/\IR 1)$ is continuous. Once we have shown these two facts, (RF2) and Theorem~\ref{theorem:characterization closedness hilbert space} yield the statement. 
 
 We fix $o \in X$ and define the norm $\|\cdot\|_o$ by
 $$\|\cdot\|_o:D(\E) \to [0,\infty),\, f \mapsto \|f\|_o := \sqrt{\E(f) + |f(o)|^2}.$$
 {\em Claim 1: } The norm $\|\cdot\|_o$ induces the form topology. In particular, the form topology is metrizable.
 
 {\em Proof of Claim 1.} Obviously, convergence in the form topology implies convergence with respect to $\|\cdot\|_o$ and convergence with respect to $\|\cdot\|_o$ implies convergence with respect to $\E$. Furthermore, the definition of $R_\E$ yields that for $x\in X$ we have
 $$|f(x)|^2 \leq 2 |f(x) - f(o)|^2 + 2 |f(o)|^2 \leq 2 R(x,o) \E(f) + 2 |f(o)|^2 \leq 2 \max \{R(x,o),1\} \|f\|^2_o. $$
 Since $R_\E(x,o) < \infty$, the above inequality shows that convergence with respect to $\|\cdot\|_o$ implies pointwise convergence. This finishes the proof of Claim 1. \qedc
 
 {\em Claim 2: } The canonical inclusion of $(D(\E)/ \IR 1,\E)$ into $(L^0(\mu)/ \IR 1, \tau(\mu)/\IR 1)$ is continuous.
 
 {\em Proof of Claim 2.}  Let $(f_i)$ be a net in $D(\E)$ and  let $f \in D(\E)$ with $\E(f-f_i) \to 0$. We fix $o\in X$. The statement follows once we show that $\ow{f}_i := f_i - f_i(o)$ converges pointwise to $\ow{f} := f - f(o)$. Since $\ker \E  = \IR 1$ and $\ow{f}_i(o) = \ow{f}(o) = 0$, we obtain
 $$|\ow{f}(x) - \ow{f}_i(x)|^2 = |\ow{f}(x) - \ow{f}_i(x) - (\ow{f}(o) - \ow{f}_i(o))|^2 \leq R_\E(x,o) \E(\ow{f} - \ow{f}_i) = R_\E(x,o) \E(f - f_i).$$
 Condition (RF3) yields  $R_\E(x,o) > 0$ and so we infer the pointwise convergence $\ow{f_i} \to \ow{f}$. This proves Claim~2. 
\end{proof}

\begin{remark}
 \begin{itemize}
  \item For proving that resistance forms are energy forms  not all of the properties (RF1) - (RF5) are needed.  The condition $\ker \E = \IR 1$ can be removed. In this case, (RF2) has to be replaced by demanding that $(D(\E)/\ker \E , \E)$ is a Hilbert space. Indeed, this latter assumption is not too complicated to check as (RF4) automatically implies $\ker \E \subseteq \IR 1$ and so either $\ker \E = \IR 1$ or $\ker \E = \{0\}$.
  
  An example for these 'generalized' resistance forms is the form $E^0_{(M,g)}$ when $(M,g)$ is a one-dimensional connected Riemannian manifold. We leave the details to the reader but mention that both $\ker E^0_{(M,g)} = \{0\}$ and $\ker E^0_{(M,g)}  =  \IR 1$ can happen for suitable choices of the metric $g$.

  \item The previous proposition together with the discussion of Section~\ref{sect:closed forms} shows that resistance forms are lower semicontinuous with respect to pointwise convergence. We believe that this is an important feature, which has not been widely used in the literature. 
  \end{itemize}
\end{remark}

{\bf Dirichlet forms and extended Dirichlet spaces.} Let $m$ be localizable and let $\E$ be a Dirichlet form on $L^2(m)$. Recall our convention $\E(f) = \infty$ for $f \in L^0(m) \setminus L^2(m)$ and that for $\alpha > 0$ and $f \in L^0(m)$ we set
$$\E_\alpha(f) = \E(f) + \alpha \int_X |f|^2 \D m.$$
\begin{proposition}
 Let $m$ be localizable and let $\E$ be a Dirichlet form on $L^2(m)$. 
 \begin{itemize}
  \item[(a)] For each $\alpha > 0$, the form $\E_\alpha$ is an energy form on $L^0(m)$.
  \item[(b)] The extended Dirichlet form $\Ee$ is an energy form on $L^0(m)$.
 \end{itemize}
\end{proposition}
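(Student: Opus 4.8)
The plan is to read off both parts from results already established, reducing everything to the Markov property of $\E$ together with the closability and closedness statements of Section~\ref{section:quadratic forms on lebesgue spaces}. No genuinely new computation should be required.

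For part (a) I would treat closedness and the Markov property separately. Closedness is immediate: a Dirichlet form is by definition closed on $L^2(m)$, so the equivalence of conditions (i) and (v) in Proposition~\ref{proposition: closedness in L2} shows at once that $\E_\alpha$ is closed on $L^0(m)$. For the Markov property, recall that $\E$ being Markovian (Definition~\ref{definition:Markovian form}) supplies, for each $\varepsilon>0$, an $\varepsilon$-cutoff $C_\varepsilon$ with $\E(C_\varepsilon\circ u)\le \E(u)$ for all $u\in L^2(m)$. I would then use the same $C_\varepsilon$ for $\E_\alpha$ and only need to control the zeroth order term. The key elementary observation is that an $\varepsilon$-cutoff is a normal contraction, so $C_\varepsilon(0)=0$ and $|C_\varepsilon(x)|=|C_\varepsilon(x)-C_\varepsilon(0)|\le|x|$; hence $|C_\varepsilon\circ f|\le|f|$ pointwise $m$-a.e.\ and therefore $\|C_\varepsilon\circ f\|_2\le\|f\|_2$. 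Adding this to the energy inequality gives $\E_\alpha(C_\varepsilon\circ f)\le\E_\alpha(f)$ for $f\in L^2(m)$, while for $f\in L^0(m)\setminus L^2(m)$ the inequality is trivial because $\E_\alpha(f)=\infty$. Thus $\E_\alpha$ is a closed Markovian form, i.e.\ an energy form.

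For part (b) I would argue even more directly. Viewing $\E$ as a form on $L^0(m)$ via the convention $\E(f)=\infty$ for $f\notin L^2(m)$, exactly the same cutoff computation (now using only the energy inequality on $L^2(m)$ and triviality off $L^2(m)$) shows that $\E$ is a Markovian form on $L^0(m)$. Since $m$ is localizable, Theorem~\ref{theorem: existence extended Dirichlet space} guarantees that $\E$ is closable on $L^0(m)$ and that its closure is precisely $\Ee$. Theorem~\ref{theorem:closure is markovian}, which asserts that the closure of a closable Markovian form on $L^0(m)$ is an energy form, then immediately yields that $\Ee$ is an energy form.

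The only delicate point — and the natural candidate for the main obstacle — is the treatment of the zeroth order term $\alpha\int_X|f|^2\D m$ under the cutoff in part (a): one must know that the very cutoffs witnessing the Markov property of $\E$ also do not increase the $L^2$-norm. This is exactly where the normalization $C_\varepsilon(0)=0$ built into the notion of a normal contraction (Definition~\ref{definition:normal contraction}) is used. Everything else is a bookkeeping reduction to the cited propositions and theorems.
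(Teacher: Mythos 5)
Your proposal is correct and follows essentially the same route as the paper: closedness of $\E_\alpha$ via Proposition~\ref{proposition: closedness in L2}, its Markov property from the fact that an $\varepsilon$-cutoff (being a normal contraction with $C_\varepsilon(0)=0$) simultaneously decreases $\E$ and the $L^2$-norm, and part (b) by combining Theorem~\ref{theorem: existence extended Dirichlet space} with Theorem~\ref{theorem:closure is markovian}. The only difference is that you spell out the cutoff bookkeeping that the paper compresses into the remark that both $\E$ and $\|\cdot\|_2^2$ are Markovian.
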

\begin{proof}
 (a): Since a Dirichlet form is closed on $L^2(m)$, we can use Proposition~\ref{proposition: closedness in L2} to obtain that $\E_\alpha$ is closed on $L^0(m)$. The Markov property of $\E_\alpha$ follows from the fact that both $\E$ and the norm $\|\cdot\|^2_2$ are Markovian. 
 
 (b): $\Ee$ is the closure of $\E$ in $L^0(m)$ and, therefore, closed by definition. As the closure of the Markovian form $\E$ it is Markovian by Theorem~\ref{theorem:closure is markovian}.
\end{proof}

\begin{remark}
 In general, a Dirichlet form is not an energy form and so we can not directly apply the theory we develop below to a Dirichlet form itself. Nevertheless, this does not mean any loss of information. If one is interested in the $L^2$-theory of Dirichlet forms, then everything is encoded in the forms $\E_\alpha$, $\alpha> 0$. For other properties of Dirichlet forms that are not captured by the $L^2$-theory one can instead use the extended Dirichlet form.  
\end{remark}

\section{Contraction properties}\label{section:contraction properties}

This section is devoted to contraction properties of energy forms. We prove several consequences of the Markov property, which then accumulate in a version of   Theorem~\ref{theorem:cutoff properties Dirichlet form} for energy forms. In the $L^2$-setting this theorem is usually proven in two steps. It is first established for continuous forms and then extended to closed forms by an approximation procedure. We pursue a similar strategy and use the approximating forms of Section~\ref{section:form on L0}. 

As a preparation we start with elementary contraction properties, which follow immediately from the Markov property and lower semicontinuity.

\begin{proposition}[Elementary contraction properties] \label{proposition:elementary cut-off properties}
  Let $E$ be an energy form. For all $f \in L^0(m)$ and all $n \geq 0$, the following inequalities hold.
 \begin{itemize}
  \item[(a)] $E(f_+ \wedge 1) \leq E(f)$.
  \item[(b)] $E(f_+) \leq E(f)$.
  \item[(c)] $E(|f|) \leq E(f)$. 
  \item[(d)] $E(f  \wedge n) \leq E(f)$.  
 \end{itemize}
\end{proposition}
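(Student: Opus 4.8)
The plan is to derive all four inequalities from the Markov property together with the lower semicontinuity of $E$, which is available because $E$ is closed (Theorem~\ref{theorem:characterization closedness}), and from homogeneity and bilinearity of the form. Throughout I may assume $f\in D(E)$, since otherwise the right-hand sides are infinite and there is nothing to prove. For (a) I would start from the defining $\varepsilon$-cutoffs: if $C_\varepsilon$ is any $\varepsilon$-cutoff, a case distinction on $\{x\le 0\}$, $[0,1]$ and $\{x\ge 1\}$ shows $|C_\varepsilon(x)-(x\vee 0)\wedge 1|\le\varepsilon$ for every $x\in\IR$, so that $C_\varepsilon\circ f\to (f\vee 0)\wedge 1 = f_+\wedge 1$ uniformly, hence in $\tau(m)$, as $\varepsilon\to 0$. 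Choosing the $C_\varepsilon$ from the Markov property and invoking lower semicontinuity yields $E(f_+\wedge 1)\le\liminf_{\varepsilon\to 0}E(C_\varepsilon\circ f)\le E(f)$.

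For (b) I would feed (a) with rescalings of $f$: applying (a) to $f/\lambda$ and using homogeneity gives $E(f_+\wedge\lambda)\le E(f)$ for every $\lambda>0$, because $(f/\lambda)_+\wedge 1=\lambda^{-1}(f_+\wedge\lambda)$. Since $f_+\wedge\lambda$ increases to $f_+$ and converges to it in $\tau(m)$ (Proposition~\ref{proposition:convergence of monotone nets}, or Lemma~\ref{lemma:Lebesgue's theorem} on sets of finite measure), lower semicontinuity gives $E(f_+)\le\liminf_{\lambda\to\infty}E(f_+\wedge\lambda)\le E(f)$.

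The heart of the argument, and the step I expect to be the main obstacle, is a cross-term estimate that I would isolate as a lemma: \emph{if $u,v\in D(E)$ satisfy $u,v\ge 0$ and $uv=0$, then $E(u,v)\le 0$.} To see this, fix $t>0$ and set $g:=u-tv$; disjointness of supports gives $g_+=u$, so (b) yields $E(u)=E(g_+)\le E(g)=E(u)-2t\,E(u,v)+t^2E(v)$, whence $2E(u,v)\le t\,E(v)$. Letting $t\to 0^+$ gives $E(u,v)\le 0$. This is the one genuinely nontrivial point; everything else is bookkeeping.

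Granting the lemma, (c) and (d) become short bilinear computations. For (c), writing $|f|=f_++f_-$ and $f=f_+-f_-$ with $f_\pm\in D(E)$ (by (b)) and $f_+f_-=0$, polarization gives $E(|f|)=E(f)+4E(f_+,f_-)\le E(f)$. For (d) I would use $f\wedge n=(f_+\wedge n)-f_-$, so that $E(f)-E(f\wedge n)=\bigl(E(f_+)-E(f_+\wedge n)\bigr)-2E\bigl((f_+-f_+\wedge n),\,f_-\bigr)$; the first bracket is nonnegative by (a) rescaled and applied to the nonnegative function $f_+$, and the cross term is $\le 0$ by the lemma, since $f_+-f_+\wedge n=(f-n)_+$ and $f_-$ have disjoint supports for $n\ge 0$. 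This gives $E(f\wedge n)\le E(f)$ and completes the proof.
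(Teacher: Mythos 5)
Your proof is correct and follows essentially the same route as the paper's: (a) via the defining $\varepsilon$-cutoffs and lower semicontinuity, (b) via rescaling and homogeneity, and (c), (d) via the bilinear expansion together with the perturbation trick $E(u)=E((u-tv)_+)\le E(u-tv)$ for nonnegative $u,v$ with $uv=0$, letting $t\to 0+$ to get $E(u,v)\le 0$. The only difference is presentational: you isolate this cross-term estimate as a standalone lemma and apply it twice, whereas the paper proves it inline for $(f_+,f_-)$ and for $(f_+-f_+\wedge n,f_-)$ and only afterwards records the general statement as Lemma~\ref{lemma:cutoff for functions with disjoint support}.
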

\begin{proof}
(a): For  $n>0$ we choose an $n^{-1}$-cutoff $C_{n^{-1}}$ with $E(C_{n^{-1}}\circ f)\leq E(f)$. By definition we have $C_{n^{-1}} \circ f \overset{m}{\to} f_+ \wedge 1,$   as $n \to \infty$. From the lower semicontinuity of $E$ we infer
$$E(f_+\wedge 1) \leq \liminf_{n\to \infty} E(C_{n^{-1}}\circ f) \leq E(f).$$
(b): We have  $n  ((n^{-1}f)_+ \wedge 1) \overset{m}{\to} f_+,$ as $n\to \infty$. The lower semicontinuity of $E$  and (a) imply
$$E(f_+) \leq  \liminf_{n\to \infty} E(n  ((n^{-1}f)_+ \wedge 1)) = \liminf_{n\to \infty} n^2 E((n^{-1}f)_+ \wedge 1) \leq E(f).$$
(c): Without loss of generality, we may assume $f \in D(E)$. By (b) we already know that $f_+,f_- \in D(E)$. We compute
$$E(|f|) = E(f_+) + 2 E(f_+,f_-) + E(f_-) \text{ and } E(f) = E(f_+) - 2 E(f_+,f_-) + E(f_-). $$
Hence, it suffices to show $E(f_+,f_-) \leq 0$. For $\varepsilon > 0$, assertion (b) implies
$$E(f_+) = E( (f_+ - \varepsilon f_-)_+) \leq E( f_+ - \varepsilon f_-) = E(f_+) - 2\varepsilon E(f_+,f_-) + \varepsilon ^2E(f_-). $$
Letting $\varepsilon \to 0+$ yields $E(f_+,f_-) \leq 0$.

(d): We have $f_+ \wedge n = n  ((n^{-1}f)_+ \wedge 1)$. By (a) we obtain
$$E(f \wedge n) = E(f_+ \wedge n) - 2 E(f_+ \wedge n,f_-) + E(f_-) \leq E(f_+) - 2 E(f_+ \wedge n,f_-) + E(f_-).$$
Therefore, it suffices to prove $E(f_+ - f_+ \wedge n,f_-) \leq 0$. The latter inequality can be shown with the same arguments which were used to prove $E(f_+,f_-)\leq 0$. This finishes the proof.
\end{proof}
We deduced the inequality $E(|f|) \leq E(f)$ from $E(f_+,f_-) \leq 0$. The following lemma extends this observation. 
\begin{lemma}\label{lemma:cutoff for functions with disjoint support}
 Let $E$ be an energy form and let $f,g \in D(E)$ be nonnegative. If $f \wedge g = 0$, then
 $$E(f,g) \leq 0.$$
 If, additionally, there are nonnegative $\ow{f},\ow{g}\in D(E)$ with $f \geq \ow{f}$ and $g \geq \ow{g}$, then
 $$E(f,g) \leq E(\ow{f},\ow{g}).$$
\end{lemma}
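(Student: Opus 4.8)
The plan is to establish the first inequality by reproducing, almost verbatim, the perturbation argument already used to show $E(f_+,f_-) \leq 0$ in Proposition~\ref{proposition:elementary cut-off properties}, and then to deduce the second inequality as a purely formal consequence of the first via a bilinear decomposition.

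For the first part, the decisive observation is that the disjointness $f \wedge g = 0$ together with $f,g \geq 0$ forces $(f - \varepsilon g)_+ = f$ for every $\varepsilon > 0$: at points where $f > 0$ we have $g = 0$, so $f - \varepsilon g = f$, while at points where $f = 0$ we have $f - \varepsilon g = -\varepsilon g \leq 0$. Applying Proposition~\ref{proposition:elementary cut-off properties}(b) to $f - \varepsilon g$ and expanding by bilinearity then yields
$$E(f) = E((f - \varepsilon g)_+) \leq E(f - \varepsilon g) = E(f) - 2\varepsilon E(f,g) + \varepsilon^2 E(g).$$
Since $g \in D(E)$ gives $E(g) < \infty$, this rearranges to $E(f,g) \leq \tfrac{\varepsilon}{2} E(g)$, and letting $\varepsilon \to 0+$ produces $E(f,g) \leq 0$.

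For the second part, I would set $h := f - \ow{f}$ and $k := g - \ow{g}$. Both lie in $D(E)$, since $D(E)$ is a vector space, and both are nonnegative by hypothesis. Expanding the bilinear form gives
$$E(f,g) = E(\ow{f} + h, \ow{g} + k) = E(\ow{f},\ow{g}) + E(\ow{f}, k) + E(h, \ow{g}) + E(h, k),$$
so it suffices to show that the three cross terms are nonpositive. The key point is that $\ow{f}$ and $h$ are both dominated by $f$, while $\ow{g}$ and $k$ are both dominated by $g$; combined with $f \wedge g = 0$ this means that any pair consisting of one function from $\{\ow{f},h\}$ and one from $\{\ow{g},k\}$ has vanishing pointwise minimum. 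Applying the first part of the lemma to each of the pairs $(\ow{f}, k)$, $(h, \ow{g})$ and $(h, k)$ gives $E(\ow{f}, k) \leq 0$, $E(h, \ow{g}) \leq 0$ and $E(h, k) \leq 0$, which is precisely what is needed to conclude $E(f,g) \leq E(\ow{f},\ow{g})$.

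I do not expect a serious obstacle here; the only step requiring a moment of care is verifying that the relevant pointwise minima vanish, which rests on the elementary fact that if $u \geq 0$ and $u \leq v$, then $\{u > 0\} \subseteq \{v > 0\}$ up to $m$-measure zero, so that disjointness of $\{f>0\}$ and $\{g>0\}$ propagates to all four building blocks.
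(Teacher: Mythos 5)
Your proposal is correct and takes essentially the same approach as the paper: the first inequality is obtained by the identical perturbation argument (using $(f-\varepsilon g)_+ = f$ and Proposition~\ref{proposition:elementary cut-off properties}, then letting $\varepsilon \to 0+$), and the second by decomposing $E(f,g)$ into cross terms of nonnegative functions with disjoint supports and applying the first part to each. The only cosmetic difference is that the paper uses the more economical two-term telescoping $E(f,g) - E(\ow{f},\ow{g}) = E(f-\ow{f},g) + E(\ow{f}, g-\ow{g})$, whereas you fully expand into three cross terms, costing one extra (but equally valid) application of the first part.
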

\begin{proof}
 Let $f,g \in D(E)$ with $f \wedge g = 0$ be given. For any $\varepsilon >0$,  Proposition~\ref{proposition:elementary cut-off properties} shows
 $$E(f) = E((f - \varepsilon g)_+) \leq E(f - \varepsilon g) = E(f) - 2 \varepsilon E(f,g) + \varepsilon ^2 E(g).$$
 Consequently, we obtain $E(f,g) \leq 0$. For the 'furthermore' statement, we compute
 $$E(f,g) -  E(\ow{f},\ow{g}) = E(f-\ow{f},g) + E(\ow{f}, g -\ow{g}).$$
 Since $(f-\ow{f}) \wedge g = (g -\ow{g}) \wedge \ow{f} = 0$,  what we have shown so far yields that the right-hand side of the above equation is negative. This finishes the proof.
\end{proof}

Another technical lemma that uses the same proof technique  is the following.

\begin{lemma}\label{lemma:cutoff for functions beeing one on support}
 Let $E$ be an energy form and let $ f,g \in D(E)$ be nonnegative. If the inequality $1_{\{g > 0\}} \leq f \leq 1$ holds, then 
 $$E(f,g) \geq 0.$$
\end{lemma}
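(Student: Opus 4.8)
The plan is to reuse the $\varepsilon$-perturbation technique that powered the previous two lemmas, but now perturbing \emph{upward} by $\varepsilon g$ rather than subtracting. The crucial observation is that the hypothesis $1_{\{g>0\}} \leq f \leq 1$ forces $f$ to equal $1$ exactly on $\{g>0\}$ and to stay below $1$ everywhere else, so that cutting $f+\varepsilon g$ at height $1$ returns $f$ unchanged. Concretely, I claim that for every $\varepsilon>0$,
$$(f+\varepsilon g)\wedge 1 = f.$$
On $\{g=0\}$ this holds because $f+\varepsilon g = f \leq 1$, and on $\{g>0\}$ it holds because there $f=1$ and $f+\varepsilon g = 1+\varepsilon g > 1$, which is clipped back to $1=f$. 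This identity is the whole content of the lemma; once it is in hand the rest is the familiar expansion.

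Next I would feed this into an elementary contraction property. Since $f,g\geq 0$ and $\varepsilon>0$, the function $f+\varepsilon g$ is nonnegative, so $(f+\varepsilon g)_+ = f+\varepsilon g$ and Proposition~\ref{proposition:elementary cut-off properties}(a) yields
$$E(f) = E\big((f+\varepsilon g)\wedge 1\big) = E\big((f+\varepsilon g)_+\wedge 1\big) \leq E(f+\varepsilon g).$$
Here I have used that $D(E)$ is a vector space, so $f+\varepsilon g\in D(E)$ and all quantities are finite.

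Then I would expand the right-hand side by bilinearity of the induced form,
$$E(f+\varepsilon g) = E(f) + 2\varepsilon\, E(f,g) + \varepsilon^2 E(g),$$
and subtract $E(f)$ from both sides of the inequality to obtain $0 \leq 2\varepsilon\, E(f,g) + \varepsilon^2 E(g)$. Dividing by $2\varepsilon>0$ gives
$$-E(f,g) \leq \tfrac{\varepsilon}{2}\, E(g),$$
and since $g\in D(E)$ means $E(g)<\infty$, letting $\varepsilon\to 0^+$ forces $-E(f,g)\leq 0$, i.e. $E(f,g)\geq 0$, as desired.

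I do not foresee a genuine obstacle: the argument is the mirror image of Lemma~\ref{lemma:cutoff for functions with disjoint support}, with the sign of the conclusion reversed because we perturb upward and cut from above instead of taking positive parts. The only point requiring care is verifying the pointwise identity $(f+\varepsilon g)\wedge 1 = f$ separately on $\{g>0\}$ and $\{g=0\}$; this is where the precise form of the hypothesis $1_{\{g>0\}}\leq f\leq 1$ is used, and it is the single step that distinguishes this lemma from its predecessors.
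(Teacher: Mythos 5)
Your proof is correct and is essentially the paper's own argument: both rest on the identity $(f+\varepsilon g)\wedge 1 = f$, the elementary contraction property $E(h_+\wedge 1)\leq E(h)$ from Proposition~\ref{proposition:elementary cut-off properties}, bilinear expansion of $E(f+\varepsilon g)$, and letting $\varepsilon \to 0+$. Your only addition is spelling out the pointwise verification of the cutoff identity on $\{g>0\}$ and $\{g=0\}$, which the paper leaves implicit.
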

\begin{proof}
  Let nonnegative $f,g \in D(E)$ with $1_{\{g > 0\}} \leq f \leq 1 $ be given. For any $\varepsilon >0$,  Proposition~\ref{proposition:elementary cut-off properties} yields
  $$E(f) = E((f + \varepsilon g)\wedge 1) \leq E(f + \varepsilon g) = E(f) + 2 \varepsilon E(f,g) + \varepsilon ^2 E(g).$$
 Letting $\varepsilon \to 0+$ implies $E(f,g) \geq 0.$ 
\end{proof}

\begin{remark}\label{remark:cutoff remark}
\begin{itemize}
 \item For later purposes, it is important to note  that the statements of Proposition~\ref{proposition:elementary cut-off properties} and, therefore, Lemma~\ref{lemma:cutoff for functions with disjoint support} and Lemma~\ref{lemma:cutoff for functions beeing one on support} remain true for closed Markovian forms on $\Ltf$. This is due to the fact that in the proof of Proposition~\ref{proposition:elementary cut-off properties} (a) and (b) we used lower semicontinuity with respect to convergence in measure, which can be replaced by lower semicontinuity with respect to $\Ltf$ convergence. 
 \item The presented proofs for the previous two lemmas are well-known, see e.g. \cite{All}. 
\end{itemize}
\end{remark}

Before proving the main result of this section, we show that the approximation procedure of Section~\ref{section:form on L0} leads to an approximation by Markovian forms if the form we start with is Markovian. Recall that for $\alpha > 0$ and $U \in \Bf$, the corresponding approximating form  is given by 
$$E^{(\alpha,U)}:\Ltf \to [0,\infty),\, f \mapsto  E^{(\alpha,U)}(f) = \inf\left\{E(g) +  \alpha \int_U (g-f)^2\, {\rm d}m\, \middle| \, g \in \Ltf \right\}.$$

\begin{lemma}\label{lemma:Markovian approximation}
Let $E$ be a Markovian quadratic form on $\Ltf$. For all $\alpha > 0$ and all $U \in \Bf$, the form  $E^{(\alpha,U)}$ is Markovian.
\end{lemma}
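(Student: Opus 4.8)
The plan is to show that the very same $\varepsilon$-cutoff that witnesses the Markov property of $E$ also witnesses it for $E^{(\alpha,U)}$. Fix $\varepsilon > 0$ and, using that $E$ is Markovian on $\Ltf$, choose an $\varepsilon$-cutoff $C_\varepsilon$ with $E(C_\varepsilon \circ g) \leq E(g)$ for all $g \in \Ltf$. I claim this same $C_\varepsilon$ satisfies $E^{(\alpha,U)}(C_\varepsilon \circ f) \leq E^{(\alpha,U)}(f)$ for every $f \in \Ltf$, which is precisely the cutoff inequality for the approximating form.

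The key idea is to turn a competitor for $f$ into a competitor for $C_\varepsilon \circ f$ by applying $C_\varepsilon$. Concretely, fix $f \in \Ltf$ and let $g \in \Ltf$ be arbitrary. First I would check that $C_\varepsilon \circ g \in \Ltf$: since $C_\varepsilon$ is an $\varepsilon$-cutoff it maps $\IR$ into the bounded interval $[-\varepsilon, 1+\varepsilon]$, so $C_\varepsilon \circ g$ is a bounded measurable function and hence lies in $\Ltf$, because every $U \in \Bf$ has finite measure. Thus $C_\varepsilon \circ g$ is an admissible competitor for $C_\varepsilon \circ f$ in the infimum defining $E^{(\alpha,U)}(C_\varepsilon \circ f)$, which gives
$$E^{(\alpha,U)}(C_\varepsilon \circ f) \leq E(C_\varepsilon \circ g) + \alpha \int_U (C_\varepsilon \circ g - C_\varepsilon \circ f)^2\, {\rm d}m.$$

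Now both terms on the right decrease. The energy term is controlled by the Markov property of $E$, namely $E(C_\varepsilon \circ g) \leq E(g)$. For the penalty term I would use that $C_\varepsilon$ is a normal contraction, hence $1$-Lipschitz, so $|C_\varepsilon \circ g - C_\varepsilon \circ f| \leq |g - f|$ holds pointwise $m$-a.e., whence $\int_U (C_\varepsilon \circ g - C_\varepsilon \circ f)^2\, {\rm d}m \leq \int_U (g-f)^2\, {\rm d}m$. Combining these two estimates yields
$$E^{(\alpha,U)}(C_\varepsilon \circ f) \leq E(g) + \alpha \int_U (g-f)^2\, {\rm d}m.$$

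Since $g \in \Ltf$ was arbitrary, taking the infimum over $g$ on the right-hand side gives $E^{(\alpha,U)}(C_\varepsilon \circ f) \leq E^{(\alpha,U)}(f)$, and as $\varepsilon > 0$ was arbitrary this shows $E^{(\alpha,U)}$ is Markovian. There is no serious obstacle here; the entire argument rests on the single observation that composing an admissible competitor with $C_\varepsilon$ simultaneously decreases the energy part (by Markovianity of $E$) and the $L^2$-penalty part (by the contraction property of $C_\varepsilon$). The only points requiring a word of care are the admissibility $C_\varepsilon \circ g \in \Ltf$ and the pointwise Lipschitz bound for the penalty integral, both of which are immediate from the definition of an $\varepsilon$-cutoff.
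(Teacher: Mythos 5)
Your proof is correct and follows essentially the same route as the paper: the paper's one-line argument likewise uses $C_\varepsilon \circ g$ as a competitor in the infimum, bounds the energy term by Markovianity of $E$ and the penalty term by the contraction property of $C_\varepsilon$, and takes the infimum over $g$. Your extra checks (that $C_\varepsilon \circ g \in \Ltf$, which also follows at once from $|C_\varepsilon\circ g|\leq |g|$ since $C_\varepsilon$ is a normal contraction) are sound and merely make explicit what the paper leaves implicit.
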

\begin{proof}
 Let $f \in \Ltf$ and let $\varepsilon > 0$. We choose an  $\varepsilon$-cutoff $C_\varepsilon$ such that $E(C_\varepsilon \circ f) \leq E(f)$. For arbitrary $g \in \Ltf$, we obtain
 $$E^{(\alpha,U)}(C_\varepsilon \circ f) \leq  E(C_\varepsilon \circ g) +  \alpha \int_U (C_\varepsilon \circ f - C_\varepsilon \circ g)^2\, {\rm d}m \leq E( g) +  \alpha \int_U ( g -  f)^2\, {\rm d}m.$$
 Taking the infimum over all such $g$ finishes the proof. 
\end{proof}

\begin{proposition}[Approximation by Markovian forms]\label{proposition:Markovian approximation}
 Let $E$ be a closed Markovian form on $\Ltf$. There exists a net of continuous Markovian forms $(E_i)$ on $\Ltf$ with $D(E_i) = \Ltf$ such that for all $f \in \Ltf$ we have
 $$E(f) = \lim_i E_i(f) = \sup_i E_i(f).$$
\end{proposition}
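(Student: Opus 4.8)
The plan is to take the very approximating forms $E^{(\alpha,U)}$ from Section~\ref{section:form on L0} as the members of the net, and merely to organize them into a monotone increasing net whose supremum is $E$. The three facts I need about each $E^{(\alpha,U)}$ are already in place: by Lemma~\ref{lemma: properties of approximating forms} it is a continuous quadratic form with $D(E^{(\alpha,U)}) = \Ltf$; by Lemma~\ref{lemma:Markovian approximation} it is Markovian; and by Lemma~\ref{lemma: approximation of closed forms} we have $E(f) = \sup\{E^{(\alpha,U)}(f) \mid \alpha > 0,\, U \in \Bf\}$ for every $f \in \Ltf$. So the only remaining task is to convert this supremum into a limit.

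To this end I index the forms by $J := \{(\alpha,U) \mid \alpha > 0,\, U \in \Bf\}$ and order it by declaring $(\alpha,U) \prec (\alpha',U')$ if and only if $\alpha \leq \alpha'$ and $U \subseteq U'$. This is a directed set: given two indices, the pair $(\max\{\alpha,\alpha'\},\, U \cup U')$ dominates both, and $U \cup U' \in \Bf$ since a finite union of sets of finite measure has finite measure. Setting $E_{(\alpha,U)} := E^{(\alpha,U)}$ then yields a net of continuous Markovian forms with domain $\Ltf$, as required.

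The key computation is the monotonicity of this net. Fixing $f$ and writing $E^{(\alpha,U)}(f) = \inf_{g} [E(g) + \alpha \int_U (g-f)^2 \D m]$, I observe that the penalty term $\alpha \int_U (g-f)^2 \D m$ is nonnegative and increases both when $\alpha$ grows and when $U$ enlarges, the integrand being nonnegative. Since raising a nonnegative penalty can only raise the infimum over $g$, it follows that $(\alpha,U) \prec (\alpha',U')$ implies $E^{(\alpha,U)}(f) \leq E^{(\alpha',U')}(f)$. Hence $(E_{(\alpha,U)}(f))$ is a monotone increasing net in $[0,\infty]$, so its limit equals its supremum; combining this with Lemma~\ref{lemma: approximation of closed forms} gives $\lim_{(\alpha,U)} E_{(\alpha,U)}(f) = \sup_{(\alpha,U)} E_{(\alpha,U)}(f) = E(f)$ for every $f \in \Ltf$.

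I do not anticipate a serious obstacle here, since the real content was already carried by the three cited lemmas. The only point demanding a little care is the monotonicity argument for the infimum: one must check that passing to the infimum over $g$ preserves the pointwise inequality between the two penalized functionals, which is immediate because enlarging a nonnegative penalty cannot decrease the infimum.
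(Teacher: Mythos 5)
Your proof is correct and is essentially the paper's own argument: the paper proves this proposition by simply citing Lemma~\ref{lemma: approximation of closed forms} and Lemma~\ref{lemma:Markovian approximation}, leaving the conversion of the supremum into a limit implicit. You have merely made explicit the directedness of the index set and the monotonicity of $(\alpha,U) \mapsto E^{(\alpha,U)}(f)$, both of which are verified correctly.
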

\begin{proof}
This follows from Lemma~\ref{lemma: approximation of closed forms} and Lemma~\ref{lemma:Markovian approximation}.
\end{proof}

\begin{theorem} \label{theorem:cutoff properties energy forms}
Let $E$ be a closed quadratic form on $L^0(m)$. The following assertions are equivalent.
  \begin{itemize}
  \item[(i)] $E$ is an energy form. 
  \item[(ii)] For every normal contraction $C:\IR \to \IR$ and every $f \in L^0(m)$, the inequality
  $$E(C\circ f) \leq E(f)$$
  holds.
  \item[(iii)] For every normal contraction $C:\IR^n \to \IR$ and all $f_1,\ldots,f_n \in L^0(m)$, the inequality

  $$E(C(f_1,\ldots,f_n))^{1/2} \leq \sum_{k = 1}^n E(f_k)^{1/2}$$
  holds.
  \item[(iv)] For all $f,f_1,\ldots,f_n \in L^0(m)$, the  inequalities
$$|f(x)| \leq \sum_{k=1}^n|f_k(x)| \text{ and } |f(x)-f(y)| \leq \sum_{k = 1}^n  |f_k(x)-f_k(y)| \text{ for }  m\text{-a.e. } x,y \in X $$
 imply
$$E(f)^{1/2} \leq \sum_{k = 1}^n E(f_k)^{1/2}.$$
 \end{itemize}
\end{theorem}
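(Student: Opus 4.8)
The plan is to run the cycle (iii)$\Rightarrow$(ii)$\Rightarrow$(i), to treat the equivalence (iii)$\Leftrightarrow$(iv) separately, and to concentrate the real work on (i)$\Rightarrow$(iii). The cheap steps first. For (iii)$\Rightarrow$(ii) I take $n=1$ and square the resulting inequality. For (ii)$\Rightarrow$(i) I note that $E$ is closed by hypothesis and that every $\varepsilon$-cutoff is a normal contraction, so (ii) immediately yields the Markov property. For (iii)$\Leftrightarrow$(iv): the implication (iv)$\Rightarrow$(iii) follows by putting $f:=C(f_1,\dots,f_n)$ and using $C(0)=0$ together with the Lipschitz bound $|C(x)-C(y)|\le\sum_k|x_k-y_k|$ to verify the two pointwise hypotheses of (iv); conversely, (iii)$\Rightarrow$(iv) is the standard McShane extension argument, since the hypotheses of (iv) say exactly that $(f_1(x),\dots,f_n(x))\mapsto f(x)$, together with $0\mapsto 0$, is $1$-Lipschitz for the $\ell^1$-metric on the ($m$-a.e.\ defined) set $\{(f_1(x),\dots,f_n(x))\mid x\in X\}\cup\{0\}$, hence extends to a normal contraction $C$ on $\IR^n$ with $f=C(f_1,\dots,f_n)$ $m$-a.e., and (iii) applies.

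For (i)$\Rightarrow$(iii) I follow the two-step scheme announced in the text. \emph{Step 1 (continuous forms).} Let $q$ be a continuous Markovian form on $\Ltf$ with $D(q)=\Ltf$. For $U\in\Bf$ the restriction of $q$ to the subspace of functions supported in $U$ is a bounded, hence closed, Markovian form on $L^2(U,m)$ — it stays Markovian because $\varepsilon$-cutoffs fix $0$ and therefore preserve the support condition — so it is a Dirichlet form on the finite measure space $(U,m|_U)$, and Theorem~\ref{theorem:cutoff properties Dirichlet form} gives (iii) for it. Given $g_1,\dots,g_n\in\Ltf$ and a normal contraction $C$, I apply this to the functions $g_k1_U$, using $C(g_11_U,\dots,g_n1_U)=C(g_1,\dots,g_n)1_U$, and then let $U\uparrow X$ along $\Bf$: since $g_k1_U\to g_k$ and $C(g_1,\dots,g_n)1_U\to C(g_1,\dots,g_n)$ in $\Ltf$ and $q$ is continuous, the inequality $q(C(g_1,\dots,g_n)1_U)^{1/2}\le\sum_k q(g_k1_U)^{1/2}$ passes to the limit, establishing (iii) for $q$.

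\emph{Step 2 (closed forms).} By Lemma~\ref{lemma:restriction to Ltf} the restriction of $E$ to $\Ltf$ is closed, and it is Markovian since $\varepsilon$-cutoffs send $\Ltf$ into the bounded functions of $\Ltf$. Proposition~\ref{proposition:Markovian approximation} writes this restriction as a pointwise supremum $E=\sup_i E_i$ of continuous Markovian forms $E_i$ on $\Ltf$. For $g_k\in\Ltf$, Step 1 gives $E_i(C(g_1,\dots,g_n))^{1/2}\le\sum_k E_i(g_k)^{1/2}\le\sum_k E(g_k)^{1/2}$ with a bound independent of $i$, so taking the supremum over $i$ on the left yields (iii) for all arguments in $\Ltf$. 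I then remove the restriction to $\Ltf$ by truncation: for arbitrary $f_1,\dots,f_n\in L^0(m)$ of finite energy the truncations $f_k^{(N)}:=(f_k\wedge N)\vee(-N)$ are bounded, hence lie in $\Ltf$, and so does $C(f_1^{(N)},\dots,f_n^{(N)})$. Applying the $\Ltf$-version just proved and letting $N\to\infty$, I combine the lower semicontinuity of the closed form $E$ (Theorem~\ref{theorem:characterization closedness}) applied to $C(f_1^{(N)},\dots,f_n^{(N)})\to C(f_1,\dots,f_n)$ in measure on the left with the truncation estimate $E(f_k^{(N)})\le E(f_k)$, which follows from Proposition~\ref{proposition:elementary cut-off properties}(d) applied to $\pm f_k$, on the right. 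This gives (iii) for $E$ on all of $L^0(m)$.

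The conceptual heart is Step 1 together with the reduction in Step 2. The point I expect to require the most care is that $D(E)$ need not be contained in $\Ltf$, so (iii) on $\Ltf$ does not transfer directly; this is exactly what the truncation argument repairs, and it is legitimate precisely because Proposition~\ref{proposition:elementary cut-off properties} supplies the truncation inequality $E(f\wedge N)\le E(f)$ on $L^0(m)$ independently, so no circularity arises. The other mild subtlety is that Step 1 is not self-contained but borrows the already-established Dirichlet-form theorem on each finite-measure slice; checking that these restrictions genuinely are (possibly non-densely defined) Dirichlet forms, and that continuity lets the slices be reassembled as $U\uparrow X$, is the main thing to get right.
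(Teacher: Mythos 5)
Your proposal is correct, and its skeleton -- the trivial implications, the McShane extension for (iii)$\Rightarrow$(iv) (this is exactly Lemma~\ref{lemma:extension of normal contractions}), the truncation argument reducing $L^0(m)$ to $\Ltf$ via Proposition~\ref{proposition:elementary cut-off properties}, and the reduction to continuous Markovian forms via Lemma~\ref{lemma:restriction to Ltf} and Proposition~\ref{proposition:Markovian approximation} -- coincides with the paper's proof (your Step~2 is the paper's Claims~1 and~2). Where you genuinely diverge is the core step, (iii) for continuous Markovian forms on $\Ltf$: the paper proves this from scratch in its Claim~3, deriving the sign inequalities $E(1_A,1_B)\le 0$ for disjoint $A,B$ and $E(1_A,1_B)\ge 0$ for $A\subseteq B$ (Lemmas~\ref{lemma:cutoff for functions with disjoint support} and \ref{lemma:cutoff for functions beeing one on support}, valid on $\Ltf$ by Remark~\ref{remark:cutoff remark}), writing $E$ on simple functions as $\sum b_{ij}(\alpha_i-\alpha_j)^2+\sum c_i\alpha_i^2$ with $b_{ij},c_i\ge 0$ in Beurling--Deny fashion, and concluding by density of simple functions. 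You instead observe that a continuous Markovian form on $\Ltf$ restricts, on each slice of functions supported in $U\in\Bf$, to a fully defined closed Markovian form on $L^2(U,m)$ -- the support condition is preserved because $\varepsilon$-cutoffs fix $0$ -- so Theorem~\ref{theorem:cutoff properties Dirichlet form} applies as a black box, and you reassemble by $\Ltf$-continuity along $U\uparrow X$. Both arguments are valid; your checks are the right ones (support preservation, continuity of the form along the net $g1_U\to g$ in $\Ltf$, and the non-circularity of the truncation bound $E(f_k^{(N)})\le E(f_k)$, which indeed comes from Proposition~\ref{proposition:elementary cut-off properties} independently of the theorem). What your route buys is brevity, by outsourcing the positivity mechanism to the known $L^2$-theory of \cite{MR}; what the paper's route buys is self-containedness in its declared ``form methods only'' spirit -- its Claim~3 makes the theorem independent of the $L^2$ Dirichlet-form result rather than a corollary of it, which matters for a thesis whose stated point is that the energy-form version of assertion (iv) does \emph{not} easily follow from the Dirichlet-form statement, whereas (iii) does.
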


\begin{proof}
 The implications (iv) $\Rightarrow$ (iii) $\Rightarrow$ (ii) $\Rightarrow$ (i) are obviously satisfied. 
 
 (iii) $\Rightarrow$ (iv): Let $f,f_1,\ldots,f_n \in L^0(m)$  with
$$|f(x)| \leq \sum_{k=1}^n|f_k(x)| \text{ and } |f(x)-f(y)| \leq \sum_{k = 1}^n  |f_k(x)-f_k(y)| \text{ for } m\text{-a.e. } x,y \in X $$
 be given. We fix some measurable versions $\tilde{f}, \tilde{f}_1,\ldots, \tilde{f}_n$ for which the above inequalities hold for all points and define  
 $$C: (\tilde{f}_1,\ldots, \tilde{f}_n)(X) \to \IR, \quad (\tilde{f}_1(x),\ldots, \tilde{f}_n(x)) \mapsto \tilde{f}(x). $$
 By the choice of the $\ow{f}_i$ the map $C$ is well defined and a normal contraction on its domain.  According to Lemma~\ref{lemma:extension of normal contractions}, it can be extended to a normal contraction $\ow{C}$ on the whole space $\IR^n.$  Assertion (iii) implies 
 $$E^{1/2}(f) = E(\widetilde{C}(f_1,\ldots,f_n))^{1/2} \leq \sum_{i=1}^n E(f_i)^{1/2}. $$
 This shows (iv).

 (i) $\Rightarrow$ (iii): We make several reductions and then show the statement for continuous Markovian forms on $\Ltf$.
 
 {\em Claim 1:} If (iii) holds for functions in $\Ltf$, then (iii) holds for functions in $L^0(m)$. 
 
 {\em Proof of Claim 1.} Let $f_1,\ldots,f_n \in L^0(m)$. For $k\in \IN$ we set $f_{i,k} := (f_i \vee (-k)) \wedge k$. Obviously,  $C(f_{1,k},\ldots,f_{n,k}) \overset{m}{\to}  C(f_{1},\ldots,f_{n})$, as $k \to \infty$. Furthermore, the $f_{i,k}$ and $C(f_{1,k},\ldots,f_{n,k})$ are bounded and,  therefore,  belong to $\Ltf$. The lower semicontinuity of $E$ and  (iii) for $\Ltf$-functions yield
$$E(C(f_{1},\ldots,f_{n}))^{1/2} \leq \liminf_{k\to \infty} E(C(f_{1,k},\ldots,f_{n,k}))^{1/2} \leq \liminf_{k\to \infty} \sum_{i = 1}^n E(f_{i,k})^{1/2}.$$
According to Proposition~\ref{proposition:elementary cut-off properties},  we have $E(f_{i,k}) \leq  E(f_i)$. This finishes the proof of Claim~1.\qedc

{\em Claim 2:} If (iii) holds for all continuous Markovian forms on $\Ltf$ whose domain equals $\Ltf$, then it holds for the restriction of $E$ to $\Ltf$. 

{\em Proof of Claim 2.} According to Lemma~\ref{lemma:restriction to Ltf}, the restriction of $E$ to $\Ltf$ is a closed form on $\Ltf$. Furthermore, this restriction is Markovian. Proposition~\ref{proposition:Markovian approximation} shows that it can be approximated by continuous Markovian forms on $\Ltf$ whose domain is $\Ltf$.  This proves Claim~2.  \qedc

{\em Claim 3:} Assertion (iii) holds for continuous Markovian forms on $\Ltf$ with domain $\Ltf$. 

{\em Proof of Claim 3.} Let $E$ be a continuous Markovian form on $\Ltf$ with $D(E) = \Ltf$. As seen in Remark~\ref{remark:cutoff remark}, the statements of Lemma~\ref{lemma:cutoff for functions with disjoint support} and Lemma~\ref{lemma:cutoff for functions beeing one on support} remain true for the form $E$. For two measurable sets $A,B \subseteq X$, they imply the following inequalities.

\begin{itemize}
 \item[(a)] If $A \cap B = \emptyset$, then $E(1_A,1_B)\leq 0$.
 \item[(b)] If $A \subseteq B$, then $E(1_A,1_B) \geq 0$.
\end{itemize}
With these at hand, we can prove (iii) for simple functions. Let
$$f = \sum_{i=1}^n \alpha_i 1_{A_i}$$
with $A_i \cap A_j = \emptyset$ if $i\neq j$ be given. An elementary computation shows 
$$E(f) = \sum_{i,j = 1}^n b_{ij} (\alpha_i - \alpha_j)^2 + \sum_{i = 1}^n c_{i} \alpha_i ^2,$$
where 
$$b_{ij} = -E(1_{A_i},1_{A_j}) \text{ and } c_{i} = \sum_{j = 1}^n E(1_{A_i},1_{A_j}) = E(1_{A_i},1_{\cup_j A_j}).$$
The inequalities (a) and (b) yield $b_{ij} \geq 0$ and $c_i \geq 0$. For a normal contraction $C:\IR^n \to \IR$ and $f = C(f_1,\ldots,f_n)$ with simple functions $f_1,\ldots,f_n,$ the positivity of $b_{ij}$ and $c_i$ implies
$$E(f)^{1/2} \leq \sum_{i = 1}^n E(f_i)^{1/2}. $$
Therefore, (iii) holds for simple functions. Since simple functions are dense in $\Ltf$ and $E$ is continuous, this finishes the proof of Claim~3. \qedc

Taking into account all the claims finishes the proof of the implication (i) $\Rightarrow$ (iii). 
\end{proof}

\begin{remark}
\begin{itemize}
  \item The previous theorem is well known for Dirichlet forms, see \cite[Theorem~4.12 and Corollary~4.13 of Chaper~1]{MR}. That it remains valid for extended Dirichlet spaces is not explicitly stated in the literature. We could only find the equivalence (i) $\Leftrightarrow$ (ii), see \cite[Corollary~1.5.1]{FOT} and \cite[Theorem~1.1.5]{CF}. This is somewhat surprising since (iii) for extended Dirichlet forms can easily be deduced from the statement for Dirichlet forms and the definition of the extended Dirichlet space. In contrast, the known methods that show the equivalence of (iii) and (iv) for Dirichlet forms can not be easily adapted to extended Dirichlet spaces. Our approach to proving it by extending a normal contraction on a subset of $\IR^n$ to the whole space seems to be new.

  \item For resistance forms, Proposition~\ref{proposition:elementary cut-off properties} is known under the additional assumption that the underlying space equipped with the resistance metric is separable, see \cite[Proposition~3.15]{Kig3}. 
  
  \item The general idea behind the proof of the theorem is well known. In the end, everything boils down to the inequalities (a) and (b) in the proof of Claim~3 and to an approximation procedure. That the inequalities (a) and (b) play an important role for proving contraction properties was already observed in the seminal paper \cite{BD}.  The innovation here is to approximate in $\Ltf$. 
  
  \item The reason why we change the space from $L^0(m)$ to $\Ltf$ for  the approximation is quite simple. In many cases, there are no nontrivial continuous quadratic forms on $L^0(m)$. If $q$ is were  continuous quadratic form on $L^0(m)$, then for each $v \in L^0(m)$ the linear functional $L^0(m) \to \IR$, $u \mapsto q(v,u)$ is continuous. As discussed above, when $m$ is the Lebesgue measure, the space $L^0(m)$ does not admit any continuous linear functionals. 
\end{itemize}

\end{remark}

\section{Structure properties of energy forms} \label{section:structure properties}

In this  section  we study the structure of the domain and of the kernel of energy forms. We prove that domains of energy forms are lattices and that essentially bounded functions in the domain form an algebra. We investigate  continuity properties of energy forms in the spirit of Section~\ref{sect:closed forms}. Surprisingly, it turns out that the continuity of the embedding of the form domain into $L^0(m)$ is already guaranteed by the triviality of the form kernel. This observation  implies that the domain of an energy form with trivial kernel equipped with the induced inner product is a Hilbert space. Therefore, understanding the kernel of an energy form is quite important for understanding its properties. We give an explicit description of the kernel when it is nontrivial. At the end of this section, we discuss the support of an energy form.

We start with the order and the algebraic structure of the domain of energy forms. In the following theorem we use the convention $ 0 \cdot \infty = 0$.

\begin{theorem}[Algebraic and order structure] \label{theorem:algebraic and order properties} 
 Let $E$ be an energy form. For all functions $f,g \in L^0(m)$, the inequalities 
  $$E(f \wedge g)^{1/2} \leq E(f)^{1/2} + E(g)^{1/2}, \quad E(f \vee g)^{1/2} \leq E(f)^{1/2} + E(g)^{1/2} $$
 and 
  $$E(fg)^{1/2} \leq \|f\|_\infty E(g)^{1/2} + \|g\|_\infty E(f)^{1/2} $$
 hold. In particular, $D(E)$ is a lattice and $D(E) \cap L^\infty(m)$ is an algebra. 
\end{theorem}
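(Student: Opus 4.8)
The plan is to deduce all three inequalities from the contraction property established in Theorem~\ref{theorem:cutoff properties energy forms}, combined with the homogeneity of $E$ and its lower semicontinuity from Theorem~\ref{theorem:characterization closedness}.

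For the lattice inequalities I would observe that the maps $C_\wedge,C_\vee:\IR^2\to\IR$ given by $C_\wedge(x,y)=\min\{x,y\}$ and $C_\vee(x,y)=\max\{x,y\}$ are normal contractions: both vanish at the origin and satisfy $|C_\wedge(x,y)-C_\wedge(x',y')|\le|x-x'|+|y-y'|$, and likewise for $C_\vee$. Applying Theorem~\ref{theorem:cutoff properties energy forms}(iii) with $n=2$, $f_1=f$, $f_2=g$ then gives
$$E(f\wedge g)^{1/2}\le E(f)^{1/2}+E(g)^{1/2}\quad\text{and}\quad E(f\vee g)^{1/2}\le E(f)^{1/2}+E(g)^{1/2}.$$
In particular, if $f,g\in D(E)$ the right-hand sides are finite, so $f\wedge g,f\vee g\in D(E)$ and $D(E)$ is a lattice.

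For the product inequality I would first treat bounded functions. Assume $\|f\|_\infty\le 1$ and $\|g\|_\infty\le 1$ and fix measurable representatives with $|f|,|g|\le 1$ everywhere. The identity $f(x)g(x)-f(y)g(y)=f(x)(g(x)-g(y))+g(y)(f(x)-f(y))$ yields the pointwise bounds $|f(x)g(x)|\le|f(x)|+|g(x)|$ and $|f(x)g(x)-f(y)g(y)|\le|f(x)-f(y)|+|g(x)-g(y)|$ for a.e.\ $x,y$, so Theorem~\ref{theorem:cutoff properties energy forms}(iv) applied to $h=fg$, $f_1=f$, $f_2=g$ gives $E(fg)^{1/2}\le E(f)^{1/2}+E(g)^{1/2}$. (Equivalently, one extends the restriction of $(x,y)\mapsto xy$ to $[-1,1]^2$ to a normal contraction on $\IR^2$ via Lemma~\ref{lemma:extension of normal contractions} and invokes part (iii).) For bounded $f,g$ with $a:=\|f\|_\infty>0$ and $b:=\|g\|_\infty>0$, I then apply this to $f/a$ and $g/b$ and clear denominators using $E(\lambda u)=|\lambda|^2E(u)$, obtaining $E(fg)^{1/2}\le\|g\|_\infty E(f)^{1/2}+\|f\|_\infty E(g)^{1/2}$; the cases $a=0$ or $b=0$ force $fg=0$ and are trivial.

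Finally, for arbitrary $f,g\in L^0(m)$ I would truncate, setting $f_k:=(f\vee(-k))\wedge k$ and $g_k:=(g\vee(-k))\wedge k$. Each $C_k(x)=(x\vee(-k))\wedge k$ is a normal contraction, so $E(f_k)\le E(f)$ and $E(g_k)\le E(g)$ by Theorem~\ref{theorem:cutoff properties energy forms}(ii), while $\|f_k\|_\infty\le\|f\|_\infty$ and $\|g_k\|_\infty\le\|g\|_\infty$. Since multiplication is continuous for local convergence in measure, $f_kg_k\overset{m}{\to}fg$, and the bounded case gives $E(f_kg_k)^{1/2}\le\|g_k\|_\infty E(f_k)^{1/2}+\|f_k\|_\infty E(g_k)^{1/2}\le\|g\|_\infty E(f)^{1/2}+\|f\|_\infty E(g)^{1/2}$, where the conventions $0\cdot\infty=0$ and $x+\infty=\infty$ cover the degenerate subcases. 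Lower semicontinuity of $E$ then yields $E(fg)^{1/2}\le\liminf_k E(f_kg_k)^{1/2}\le\|g\|_\infty E(f)^{1/2}+\|f\|_\infty E(g)^{1/2}$. In particular, for $f,g\in D(E)\cap L^\infty(m)$ the bound is finite, so $fg\in D(E)\cap L^\infty(m)$ and $D(E)\cap L^\infty(m)$ is an algebra. The only genuine subtlety I expect is the bookkeeping around the conventions $0\cdot\infty$ when a sup-norm is infinite; the substantive point is simply the elementary verification that the product is a normal contraction on the unit cube, after which everything follows from the already-established contraction theorem.
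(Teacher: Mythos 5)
Your proposal is correct and takes essentially the same route as the paper: both deduce all three inequalities from Theorem~\ref{theorem:cutoff properties energy forms} together with lower semicontinuity of $E$, differing only cosmetically in that the paper obtains the lattice bounds from the identity $f\wedge g=\tfrac{1}{2}\left(f+g-|f-g|\right)$ rather than from $\min$/$\max$ viewed as two-variable normal contractions, and handles unboundedness by truncating only $f$ in the single nontrivial degenerate case $\|f\|_\infty=\infty$, $E(g)=0$, instead of your uniform double truncation. Your verification that the restricted product is a normal contraction, the rescaling step, and the bookkeeping with the conventions $0\cdot\infty=0$ and $x+\infty=\infty$ are all sound.
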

\begin{proof}
 Let $f,g \in D(E)$ be given. We use the identity $f \wedge g = \frac{f + g - |f-g|}{2}$ and the contraction properties of $E$ to obtain
 $$E(f \wedge g)^{1/2} \leq \frac{1}{2} \left(E(f)^{1/2} +E(g)^{1/2} +E(|f - g|)^{1/2}\right) \leq E(f)^{1/2} +E(g)^{1/2}.$$
 The statement for the maximum $f\vee g$ can be inferred similarly. 
 
 When $f,g \in D(E) \cap L^\infty(m)$, the estimate on $E(fg)^{1/2}$ follows from Theorem~\ref{theorem:cutoff properties energy forms} since we have
 $$|f(x)g(x)| \leq \|f\|_\infty |g(x)| + \|g\|_\infty |f(x)|$$ and $$|f(x)g(x) - f(y)g(y)|  \leq \|f\|_\infty |g(x) - g(y)| +  \|g\|_\infty |f(x) - f(y)|.$$
 It remains to treat the case when $\|f\|_\infty = \infty$ and $E(g) = 0$. For  $n \in  \IN$ we define $f_n := (f \wedge n) \vee (-n)$.  The lower semicontinuity of $E$ and what we have already shown yield
 $$E(fg)^{1/2} \leq \liminf_{n\to \infty} E(f_n g)^{1/2} \leq \liminf_{n\to \infty} \left(n E(g)^{1/2} + \|g\|_\infty E(f_n)^{1/2}\right) \leq \|g\|_\infty E(f)^{1/2}. $$
 The 'in particular' part follows immediately.
\end{proof}

\begin{remark}
The previous theorem is well known for Dirichlet forms and  extended Dirichlet forms, see \cite[Theorem~1.4.2 and Corollary~1.5.1]{FOT}, and  its proof is standard. For resistance forms the fact that bounded functions in the domain form an algebra seems to be known only in the case when the underlying space is separable with respect to the resistance metric, see \cite{Kig3}.
\end{remark}

The following proposition provides two important dense subspaces of the form domain. 

\begin{proposition}\label{proposition:approximation by bounded functions}
 Let $E$ be an energy form. For each $f \in D(E)$, the following holds true in the form topology of $E$.
 \begin{itemize}
  \item[(a)]  $(f \wedge n) \vee (-n) \to f, \text{ as } n \to \infty.$
  \item[(b)] $(f - \alpha)_+ - (f + \alpha)_- = f - (f\wedge \alpha)\vee (-\alpha) \to f,  \text{ as } \alpha \to 0+.$
 \end{itemize}
In particular, the subspaces $D(E) \cap L^\infty(m)$ and 
$$\{f \in D(E) \mid \text{ there exists } \psi \in D(E)\text{ with } 1_{\{|f| > 0\}} \leq \psi \}$$
are dense in $D(E)$ with respect to the form topology. 
\begin{proof}
The mappings $\IR \to \IR$,  $x \mapsto (x \wedge n) \vee (-n)$ and $x \mapsto (x - \alpha)_+ - (x + \alpha)_-$ are normal contractions.   From Theorem~\ref{theorem:cutoff properties energy forms} we infer
$$\limsup_{n \to \infty} E((f \wedge n) \vee (-n)) \leq E(f) \text{ and } \limsup_{\alpha \to 0+}E((f - \alpha)_+ - (f + \alpha)_-)  \leq E(f).$$
   Since both (a) and (b) hold in the topology of convergence in measure, we can apply Lemma~\ref{lemma:characterization convergence in form topology lsc forms} to obtain the statement. The denseness of $D(E)\cap L^\infty(m)$ is an immediate consequence of (a). For the second density statement we note that $|f| \in D(E)$ by the contraction properties and
   $$\alpha^{-1} |f| \geq 1_{\{|(f - \alpha)_+ - (f + \alpha)| > 0\}}. $$
   This finishes the proof. 
\end{proof}
 \end{proposition}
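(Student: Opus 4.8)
The plan is to realize $f$ as a $\tau_E$-limit of explicit approximants in both (a) and (b) by composing $f$ with suitable one-parameter families of normal contractions, and then to deduce the two density assertions by observing that these approximants already lie in the respective subspaces. The workhorse throughout will be Lemma~\ref{lemma:characterization convergence in form topology lsc forms}: since an energy form is closed and hence lower semicontinuous by Theorem~\ref{theorem:characterization closedness}, a net $(u_i)$ in $D(E)$ converges to $u\in D(E)$ in the form topology precisely when $u_i\overset{m}{\to}u$ and $\limsup_i E(u_i)\le E(u)$. Thus for each approximant it suffices to check two things: convergence in measure to $f$, and an asymptotic energy bound by $E(f)$.

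For (a) I would take $C_n(x):=(x\wedge n)\vee(-n)$, which is a normal contraction, so that $C_n\circ f=(f\wedge n)\vee(-n)\in D(E)$ and $E(C_n\circ f)\le E(f)$ by Theorem~\ref{theorem:cutoff properties energy forms}; in particular $\limsup_n E(C_n\circ f)\le E(f)$. For (b) I would take the soft-threshold $D_\alpha(x):=(x-\alpha)_+-(x+\alpha)_-$, which vanishes on $[-\alpha,\alpha]$ and equals $x\mp\alpha$ beyond it; it too is a normal contraction, giving $D_\alpha\circ f=f-(f\wedge\alpha)\vee(-\alpha)\in D(E)$ and $\limsup_{\alpha\to0+}E(D_\alpha\circ f)\le E(f)$. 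The energy bounds are therefore immediate, and the remaining point is convergence in measure. Here I would argue on each $U\in\Bf$ via the pseudometric $d_U$, using the decisive structural feature that $|C_n\circ f-f|$ decreases to $0$ as $n\to\infty$ and $|D_\alpha\circ f-f|$ decreases to $0$ as $\alpha\to0+$, pointwise $m$-a.e. Since $|C_n\circ f-f|\wedge 1\le 1\in L^1(U)$, and likewise for $D_\alpha$, the monotone convergence theorem on the finite-measure set $U$ yields $d_U(C_n\circ f,f)\to0$ and $d_U(D_\alpha\circ f,f)\to0$, that is, the required convergence in measure. With both hypotheses of Lemma~\ref{lemma:characterization convergence in form topology lsc forms} verified, (a) and (b) follow.

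Finally the density statements. For $D(E)\cap L^\infty(m)$ I would note that each $(f\wedge n)\vee(-n)$ is bounded, so (a) exhibits $f$ in its $\tau_E$-closure. For the second subspace I would use $\{|D_\alpha\circ f|>0\}=\{|f|>\alpha\}$ together with $|f|\in D(E)$ (contraction property), so that $\psi:=\alpha^{-1}|f|\in D(E)$ satisfies $1_{\{|D_\alpha\circ f|>0\}}\le\psi$; hence each $D_\alpha\circ f$ lies in the second subspace, and (b) delivers density.

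The one genuine subtlety, and the step I would be most careful about, is the convergence in measure in (b): because this is a net indexed by $\alpha\to0+$ rather than a sequence, one cannot invoke dominated convergence for nets, which fails, as noted after Lemma~\ref{lemma:Lebesgue's theorem}. It is precisely the monotonicity of $|D_\alpha\circ f-f|$ that rescues the argument and lets monotone convergence take over; everything else is routine bookkeeping with normal contractions.
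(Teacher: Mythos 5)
Your proof is correct and follows essentially the same route as the paper: compose $f$ with the normal contractions $C_n$ and $D_\alpha$, use Theorem~\ref{theorem:cutoff properties energy forms} for the energy bound $\limsup E(\,\cdot\,)\le E(f)$, and conclude via Lemma~\ref{lemma:characterization convergence in form topology lsc forms}, with the density claims read off exactly as in the paper. The only difference is that you spell out the convergence in measure (via monotonicity of $|C_n\circ f-f|$ and $|D_\alpha\circ f-f|$ on sets of finite measure), a step the paper treats as immediate; your care about dominated convergence failing for nets is well placed but ultimately the same argument.
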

 \begin{remark}
 For Dirichlet forms these approximations are well known, see \cite[Theorem~1.4.2]{FOT}. Our proof is a bit more direct since it makes use of the lower semicontinuity of $E$ and Lemma~\ref{lemma:characterization convergence in form topology lsc forms}.
 \end{remark}

 Further properties of an energy form depend on its kernel. It will turn out that we can completely describe it with the help of the following concepts.
 
 \begin{definition}[Recurrence and transience] \label{definition:recurrence}
  An energy form $E$ is called {\em transient} if $\ker E = \{0\}$. It is {\em recurrent} if $1 \in \ker E$.
 \end{definition}

 \begin{remark}
 This definition is motivated by the classical characterization of recurrent and transient Dirichlet forms. It is well known that a Dirichlet form $\E$ is recurrent if and only if $1 \in \ker \Ee$, see \cite[Theorem~1.6.3]{FOT}, and that $\E$ is transient if and only if $\ker \Ee = \{0\}$, see \cite[Theorem~1.6.2]{FOT}. In other words, recurrence/ transience of a Dirichlet form in the classical sense coincides with recurrence/transience of the extended Dirichlet form in the above sense. 
 \end{remark}

 \begin{theorem} \label{theorem:continuous embedding of energy forms}
 Let $E$ be an energy form. The following assertions are equivalent. 
 \begin{itemize}
 \item[(i)] $E$ is transient.
  \item[(ii)] The embedding  
  $$(D(E),\|\cdot\|_E) \to (L^0(m),\tau(m)),\quad f \mapsto f$$
is continuous.
\item[(iii)] $(D(E),E)$ is a Hilbert space. 
 \end{itemize}
\end{theorem}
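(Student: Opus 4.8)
The plan is to establish the cycle of implications (i) $\Rightarrow$ (ii) $\Rightarrow$ (iii) $\Rightarrow$ (i), with the first implication carrying all the weight and the other two being essentially formal.

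I would dispatch the two easy steps first. For (iii) $\Rightarrow$ (i): if $(D(E),E)$ is a Hilbert space then $E$ is a genuine inner product, so $E(f)=0$ forces $f=0$, i.e. $\ker E=\{0\}$. For (ii) $\Rightarrow$ (iii): I first note that (ii) already forces transience. Indeed, if $f\in\ker E$ then $\|nf\|_E=0$ for all $n$, so continuity of the embedding gives $nf\to 0$ in $\tau(m)$; but since $m$ is semi-finite (hence Hausdorff, see Theorem~\ref{theorem:properties of L0}) this is impossible for $f\neq 0$, as one finds $U\in\Bf$ with $m(U\cap\{f\neq 0\})>0$ and computes $d_U(nf,0)\to m(U\cap\{f\neq 0\})>0$ by monotone convergence. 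Thus $\|\cdot\|_E$ is a norm, and continuity of the embedding means exactly that the subspace topology of $\tau(m)$ on $D(E)$ is contained in the $\|\cdot\|_E$-topology; hence the form topology $\tau_E$, being the join of the two, coincides with the $\|\cdot\|_E$-topology. As $E$ is closed, $(D(E),\tau_E)$ is complete, so $(D(E),\|\cdot\|_E)$ is a complete normed space whose norm comes from an inner product, i.e. a Hilbert space.

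The heart of the proof, and where I expect the real difficulty, is (i) $\Rightarrow$ (ii), which I would prove by contraposition: assuming the embedding is not continuous, I construct a nonzero element of $\ker E$. Failure of continuity at $0$ yields a sequence $(f_n)$ in $D(E)$ with $E(f_n)\to 0$ that stays outside a fixed basic $\tau(m)$-neighborhood $\{g\mid m(U\cap\{|g|\geq\delta\})<\varepsilon\}$ of $0$, for some $U\in\Bf$ and $\delta,\varepsilon>0$. Using the contraction properties (Proposition~\ref{proposition:elementary cut-off properties} and Theorem~\ref{theorem:cutoff properties energy forms}) I replace $f_n$ by $g_n:=(|f_n|/\delta)\wedge 1$, so that $0\leq g_n\leq 1$, $E(g_n)\leq\delta^{-2}E(f_n)\to 0$, and $g_n=1$ on $U\cap\{|f_n|\geq\delta\}$, whence $\int_U g_n\,{\rm d}m\geq\varepsilon$. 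This is precisely where the Markov property is indispensable: for a general closed form, transience would not produce such energy-controlled truncations, which explains why the conclusion is special to energy forms.

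Now the $g_n$ lie in $\{f\in\Ltf\mid |f|\leq 1\}$ (note $1\in\Ltf$), which by Proposition~\ref{prop:weak compactness in Ltf} is compact in $\sigma(\Ltf,\Ltf')$; so a subnet of $(g_n)$ converges weakly to some $\phi\in\Ltf$, and testing against $1_U\in\Ltf'$ (Proposition~\ref{proposition:characterization duals of Lpf}) gives $\int_U\phi\,{\rm d}m\geq\varepsilon$, so $\phi\neq 0$. It remains to show $E(\phi)=0$. Since $E$ is lower semicontinuous with respect to $\tau(m)$ (Theorem~\ref{theorem:characterization closedness}), I must pass from weak $\Ltf$-convergence to $\tau(m)$-convergence; this is achieved by Mazur's lemma in the form of Theorem~\ref{theorem:weak closure of convex sets}, which lets me select convex combinations $\tilde g$ drawn from the tails $\{g_n\mid n\geq N\}$ that converge to $\phi$ in the strong (hence $\tau(m)$) topology of $\Ltf$. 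As $\|\cdot\|_E$ is a seminorm, any such combination satisfies $\|\tilde g\|_E\leq\sup_{n\geq N}\|g_n\|_E\to 0$, and directing the construction by increasing $N$ produces a net converging to $\phi$ in $\tau(m)$ with energies tending to $0$; lower semicontinuity then yields $E(\phi)\leq\liminf E(\tilde g)=0$, so $0\neq\phi\in\ker E$, contradicting transience. The main obstacle throughout is reconciling the three topologies in play — $\tau(m)$, the weak topology of $\Ltf$, and the form norm — on a space that is neither metrizable nor $\sigma$-finite; it is the localizability of $m$, entering through the weak compactness of order intervals, that lets the purely local mass estimate on $U$ be upgraded to a genuine global kernel element.
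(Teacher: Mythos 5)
Your proposal is correct and takes essentially the same route as the paper: the decisive implication (i)$\Rightarrow$(ii) rests, exactly as in the paper's proof, on truncating to the order interval $\{f \in \Ltf \mid |f| \leq 1\}$ via the contraction properties, extracting a weak limit through Proposition~\ref{prop:weak compactness in Ltf} and Proposition~\ref{proposition:characterization duals of Lpf}, and annihilating its energy by lower semicontinuity — your explicit Mazur-lemma step (Theorem~\ref{theorem:weak closure of convex sets}) merely re-derives by hand the weak lower semicontinuity that the paper cites directly from Theorem~\ref{theorem:characterization closedness lc}, and your contrapositive framing mirrors the paper's direct argument. The remaining implications also match, up to two harmless presentational choices: you verify directly that the form topology collapses to the norm topology and conclude completeness by hand where the paper invokes Theorem~\ref{theorem:characterization closedness hilbert space}, and you make explicit (welcomely) that (ii) already forces $\ker E = \{0\}$, a point the paper leaves implicit.
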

\begin{proof}
 (i) $\Rightarrow$ (ii): Let $(f_i)$ be a net in $D(E)$ with $\|f_i\|_E \to 0$.  We use Proposition~\ref{prop:weak compactness in Ltf} to obtain a subnet $(f_{i_j})$ such that $(|f_{i_j}|\wedge 1)$ converges to some $\psi \in\Ltf$ with respect to the weak topology $\sigma(\Ltf,\Ltf')$. The characterization of $\Ltf'$, Proposition~\ref{proposition:characterization duals of Lpf}, shows that for each $U \in \Bf$  we have
 $$ \lim_{j} \int_U |f_{i_j}|\wedge 1\, {\rm d}m = \int_U \psi\, {\rm d}m.$$
 Therefore, it suffices to show $\psi = 0$. Since the restriction of $E$ to $\Ltf$ is closed,  Theorem~\ref{theorem:characterization closedness lc} yields that it is lower semicontinuous with respect to $\sigma(\Ltf,\Ltf')$. This observation and the contraction properties of $E$ imply
 $$E(\psi) \leq \liminf_j E(|f_{i_j}|\wedge 1) \leq \liminf_j E(f_{i_j}) = 0.$$
 By the assumption $\ker E = \{0\}$, we obtain $\psi = 0$.  
 
 (ii) $\Rightarrow$ (iii): Assertion (ii) implies that the form topology is induced by the form norm $\|\cdot\|_E$. In particular, it is metrizable and we can apply Theorem~\ref{theorem:characterization closedness hilbert space} from which the claim follows.
 
 (iii) $\Rightarrow$ (i): This is clear as in Hilbert spaces inner products are nondegenerate. 
 \end{proof}

\begin{remark} \label{remark:continuous embedding of energy forms}
\begin{itemize}
 \item The previous theorem is quite surprising. It is not clear why the vanishing of the kernel of a quadratic form should imply the continuity of the embedding of the form domain into the underlying space. Here, the 'secret' reasons are the weak compactness of order bounded sets in $\Ltf$ and the contraction properties of the given energy form. Whenever $m$ is finite, the compactness statement reduces to weak compactness of bounded sets in $L^2(m)$.  In this case,  we believe that the given proof is the shortest possible for the theorem.
 
 \item The equivalence of assertions (i) and (iii) is known for extended Dirichlet spaces when $m$ is $\sigma$-finite, see \cite[Theorem~2.1.9]{CF}. However, the proof given there is very different from ours and somewhat less transparent. There, the main technical ingredient, which requires a rather long series of claims, is the following. For a Dirichlet form $\E$, the condition $\ker \Ee = \{0\}$ implies the existence of a strictly positive function $g \in L^1(m)$ such that 
 $$\int_{X} |f|g \, {\rm d} m \leq \Ee(f)^{1/2} \text{ for all } f\in D(\Ee).$$
  This inequality is somewhat stronger than (ii) and it does not directly follow from our arguments.
\end{itemize}
\end{remark}
When the kernel of an energy form is nontrivial we need the following concept of invariant sets to describe it. 
\begin{definition}[Invariant sets and irreducibility] \label{definition:invariant sets}
Let $E$ be an energy form. A measurable set $A \subseteq X$ is called $E$-{\em invariant} if for every $f \in D(E)$ the equality
$$E(f) = E(1_A f) + E(1_{X\setminus A} f)$$
holds. The collection of all $E$-invariant sets is denoted by $\mathcal{I}(E)$. The energy form $E$ is called {\em irreducible} or {\em ergodic} if every  $A \in \mathcal{I}(E)$ satisfies $m(A)  = 0$ or $m(X\setminus A) =  0$. 
\end{definition}

\begin{remark}
\begin{itemize}
 \item If the underlying space $X$ has at least two measurable sets which do not coincide $m$-a.e., then  irreducible energy forms  are nontrivial.
 \item The notion of irreducibility is also inspired by the corresponding one for Dirichlet forms, cf. \cite[Section~1.6]{FOT}. 
\end{itemize}
\end{remark}

The following lemma provides a useful criterion for checking whether or not a given set is invariant.

\begin{lemma}\label{lemma:characterization invariant sets}
Let $E$ be an energy form and let $A \subseteq X$ be measurable. The following assertions are equivalent.
\begin{itemize}
 \item[(i)] $A$ is $E$-invariant.
 \item[(ii)] For all $f \in D(E)$, the inequality
 $$E(1_A f) \leq E(f)$$
 holds.
\end{itemize}
\end{lemma}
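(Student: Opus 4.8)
The plan is to prove the two implications separately, noting that the argument will use only that $E$ is a nonnegative quadratic form on $L^0(m)$; neither the Markov property nor closedness enters.

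The implication (i) $\Rightarrow$ (ii) is immediate. If $A$ is $E$-invariant, then for every $f \in D(E)$ the defining identity $E(f) = E(1_A f) + E(1_{X\setminus A} f)$ together with the nonnegativity of $E$ gives $E(1_A f) \leq E(f)$.

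For (ii) $\Rightarrow$ (i), the first observation is that the assumed inequality forces $1_A f \in D(E)$ for every $f \in D(E)$, and hence $1_{X\setminus A} f = f - 1_A f \in D(E)$ because $D(E)$ is a vector space. The key idea is then a variational trick that isolates the bilinear cross term. For $f \in D(E)$ and $t \in \IR$ I would consider the function $\phi_t := 1_A f + t\, 1_{X\setminus A} f$, which lies in $D(E)$ and satisfies $1_A \phi_t = 1_A f$ because $1_A \cdot 1_{X\setminus A} = 0$. Applying the hypothesis (ii) to $\phi_t$ and expanding via the polarization identity yields, for every $t \in \IR$,
$$E(1_A f) = E(1_A \phi_t) \leq E(\phi_t) = E(1_A f) + 2t\, E(1_A f, 1_{X\setminus A} f) + t^2\, E(1_{X\setminus A} f),$$
so that $0 \leq 2t\, E(1_A f, 1_{X\setminus A} f) + t^2 E(1_{X\setminus A} f)$ holds for all real $t$. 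A nonnegative quadratic polynomial in $t$ that vanishes at $t=0$ must have vanishing linear coefficient; hence $E(1_A f, 1_{X\setminus A} f) = 0$. Concretely: if $E(1_{X\setminus A} f) = 0$ the right-hand side equals $2t\, E(1_A f, 1_{X\setminus A} f)$, whose sign changes with $t$ unless the cross term vanishes; and if $E(1_{X\setminus A} f) > 0$ the minimum over $t$ equals $-E(1_A f, 1_{X\setminus A} f)^2 / E(1_{X\setminus A} f)$, which is nonnegative only when the cross term is zero. With the cross term gone, the expansion of $E(f) = E(1_A f + 1_{X\setminus A} f)$ collapses to $E(f) = E(1_A f) + E(1_{X\setminus A} f)$, which is exactly the $E$-invariance of $A$.

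The only real subtlety is the choice of the one-parameter family $\phi_t$: freezing $1_A \phi_t \equiv 1_A f$ while letting $t$ vary is what converts the one-sided inequality in (ii) into the two-sided information that the cross term vanishes. Verifying the domain membership $1_{X\setminus A}f \in D(E)$ and the elementary sign analysis of the quadratic in $t$ are then routine.
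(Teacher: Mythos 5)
Your proof is correct and follows essentially the same route as the paper: there, the hypothesis is applied to $(1_A f \pm \varepsilon\, 1_{X\setminus A} f)$ for $\varepsilon > 0$ and $\varepsilon \to 0+$, which is exactly your one-parameter family $\phi_t$ with the sign analysis of the quadratic in $t$ phrased as a two-sided limit. The domain verification $1_{X\setminus A} f = f - 1_A f \in D(E)$ appears identically in the paper, so there is nothing to add.
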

\begin{proof}
 It suffices to prove the implication (ii) $\Rightarrow$ (i). To this end, let a measurable $A\subseteq X$ be given such that for all $f \in D(E)$ the inequality  
 $$E(1_A f) \leq E(f) $$
 holds. This inequality implies that for $f \in D(E)$ we have $1_{A} f  \in D(E)$ and, since $D(E)$ is a vector space, $1_{X\setminus A} f = f - 1_A f\in D(E)$. For   $\varepsilon >0$, we obtain
 $$E(1_{A} f) = E(( 1_{A} f \pm \varepsilon 1_{X \setminus A} f)1_{A})  \leq  E(1_A f) \pm 2 \varepsilon E(1_A f,  1_{X\setminus A} f) + \varepsilon^2 E( 1_{X\setminus A} f) .$$
Letting $\varepsilon \to 0+$ shows $E(1_{A} f, 1_{X\setminus A} f) = 0$. Therefore, $A$ is $E$-invariant.  
\end{proof}

\begin{lemma}
 Let $E$ be an energy form. The collection of $E$-invariant sets $\mathcal{I}(E)$ is a $\sigma$-algebra.
\end{lemma}
\begin{proof}
 Obviously,  $X \in \mathcal{I}(E)$ and $A \in \mathcal{I}(E)$ implies $X \setminus A \in  \mathcal{I}(E)$. Next, we prove that $\mathcal{I}(E)$ is stable under finite intersections. For $A,B \in \mathcal{I}(E)$ and $f \in D(E)$, we obtain
 $$E(1_{A \cap B}f)  = E(1_A (1_Bf)) \leq E(1_Bf) \leq E(f). $$
 Hence, Lemma~\ref{lemma:characterization invariant sets} yields $A \cap B \in \mathcal{I}(E)$. 
 
 What we have shown also implies that $\mathcal{I}(E)$ is stable under finite unions. It remains to prove its stability under countable unions. To this end, let $A_1, A_2 ,\ldots \in \mathcal{I}(E)$ be given. For each $f \in D(E)$, the equation
 $$1_{\bigcup_{i=1}^\infty A_i} f = \lim_{n \to \infty} 1_{\bigcup_{i=1}^n A_i} f  $$
 holds in the $L^0(m)$-topology. The lower semicontinuity of $E$ implies
 $$E\left(1_{\bigcup_{i=1}^\infty A_i} f \right) \leq \liminf_{n \to \infty} E\left( 1_{\bigcup_{i=1}^n A_i} f\right) \leq E(f). $$
 Another application of Lemma~\ref{lemma:characterization invariant sets} shows $\bigcup_{i=1}^\infty A_i \in \mathcal{I}(E)$. This finishes the proof. 
\end{proof}

\begin{remark}
 The lower semicontinuity of $E$ even implies a stronger statement about $\mathcal{I}(E)$. It shows that after identifying sets that differ on a set of measure $0$ the collection $\mathcal{I}(E)/\hspace*{-1mm}\sim$ is a Dedekind complete subalgebra of the measure algebra of $m$. 
\end{remark}

The following theorem is the main result about the kernel of an energy form when it is nontrivial. 

\begin{theorem} \label{thm:kernel of an energy form}
 Let $E$ be an energy form. Then $\ker E \subseteq L^0(X,\mathcal{I}(E),m)$. In particular, if $E$ is irreducible, then  $\ker E \subseteq \IR 1$. 
\end{theorem}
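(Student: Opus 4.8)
The plan is to show that every level set of a function $f \in \ker E$ is $E$-invariant, which forces $f$ to be $\mathcal{I}(E)$-measurable. Since $\ker E$ is a vector space that is stable under normal contractions (as $E(C\circ u)\le E(u)$ by Theorem~\ref{theorem:cutoff properties energy forms}), I would first observe that $f_+,f_-\in\ker E$ by Proposition~\ref{proposition:elementary cut-off properties}; because $f=f_+-f_-$, it suffices to treat a nonnegative $f\in\ker E$. For such an $f$ and a threshold $t\ge 0$ I would set $A:=\{f>t\}$ and produce the indicator $1_A$ inside $\ker E$ as follows. The map $D(y)=(y-nt)_+\wedge 1$ is a normal contraction, where the condition $D(0)=0$ crucially uses $t\ge 0$, and $nf\in\ker E$, so
$$E\big(D(nf)\big)\le E(nf)=n^2 E(f)=0.$$
Since $D(nf)=(n(f-t))_+\wedge 1 \to 1_A$ pointwise, hence in $\tau(m)$ (dominated convergence on each set of finite measure), the lower semicontinuity of $E$ yields $E(1_A)\le\liminf_n E(D(nf))=0$, that is $1_A\in\ker E$.

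The next step turns $1_A\in\ker E$ into invariance of $A$. Here I would invoke the algebra inequality of Theorem~\ref{theorem:algebraic and order properties} together with the convention $0\cdot\infty=0$: for every $g\in L^0(m)$,
$$E(1_A g)^{1/2}\le \|1_A\|_\infty E(g)^{1/2} + \|g\|_\infty E(1_A)^{1/2}\le E(g)^{1/2},$$
since $\|1_A\|_\infty\le 1$ and $E(1_A)=0$ (the second summand vanishes even for unbounded $g$ by the convention). Thus $E(1_A g)\le E(g)$ for all $g\in D(E)$, and Lemma~\ref{lemma:characterization invariant sets} gives $A\in\mathcal{I}(E)$. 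For $t<0$ the set $\{f>t\}$ equals $X$ up to a null set and is trivially invariant. As the sets $\{f>t\}$, $t\in\IR$, generate $\sigma(f)$ and $\mathcal{I}(E)$ is a $\sigma$-algebra (by the lemma preceding the theorem), $f$ admits a version measurable with respect to $\mathcal{I}(E)$, i.e. $f\in L^0(X,\mathcal{I}(E),m)$. This establishes $\ker E\subseteq L^0(X,\mathcal{I}(E),m)$.

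For the \emph{in particular} assertion I would take $f\in\ker E$ with $E$ irreducible; by the above $f$ is $\mathcal{I}(E)$-measurable, so each level set $\{f>t\}$ lies in $\mathcal{I}(E)$ and hence has measure $0$ or full measure. Putting $c:=\inf\{t\mid m(\{f>t\})=0\}$, I would check that $c$ is finite (using that $f$ is real-valued $m$-a.e., so $\{f>t\}$ is null for large $t$ and co-null for small $t$, and that a countable intersection of co-null sets is co-null); then for $t_1<c<t_2$ invariance forces $t_1<f\le t_2$ $m$-a.e., and letting $t_1\uparrow c$, $t_2\downarrow c$ yields $f=c$ $m$-a.e., so $f\in\IR 1$.

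I expect the only genuinely delicate point to be the first step, namely arranging a bona fide normal contraction (with value $0$ at $0$) whose composition with $f$ approximates $1_A$; this is exactly why the reduction to nonnegative $f$ and to thresholds $t\ge 0$ is made. Everything afterwards is a direct application of the algebra inequality, the invariance criterion of Lemma~\ref{lemma:characterization invariant sets}, and the $\sigma$-algebra property of $\mathcal{I}(E)$.
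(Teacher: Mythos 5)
Your proposal is correct and follows essentially the same route as the paper: the paper likewise reduces to nonnegative $h\in\ker E$, shows $E(1_{\{h>\alpha\}})=0$ via the very same approximants $(n(h-h\wedge\alpha))\wedge 1=(n(h-\alpha))_+\wedge 1$ together with lower semicontinuity, and then combines the algebra inequality of Theorem~\ref{theorem:algebraic and order properties} with Lemma~\ref{lemma:characterization invariant sets} to get invariance of the level sets. Your only addition is spelling out the elementary a.e.-constancy argument for the \emph{in particular} part, which the paper leaves as the remark that triviality of invariant sets forces $L^0(X,\mathcal{I}(E),m)=\IR 1$.
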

\begin{proof}
 Let $h \in \ker E \setminus\{0\}$ be given. By the contraction properties of $E$ we can assume $h \geq 0$. For $\alpha \geq 0$, we set $A_\alpha := \{ h > \alpha \}$. It suffices to show that $A_\alpha$ is invariant. We have  $ (n (h - h\wedge \alpha) )\wedge 1 \overset{m}{\to} 1_{A_\alpha}$,  as $n \to \infty$. From the lower semicontinuity of $E$ and its contraction properties, we infer
 $$E(1_{A_\alpha}) \leq \liminf_{n\to \infty} E((n (h - h\wedge \alpha) )\wedge 1) =  0.$$
 This observation and Theorem~\ref{theorem:algebraic and order properties} yield  
 $$E(1_{A_\alpha} f) \leq E(f) \text{ for all }f \in  D(E).$$
 Hence, Lemma~\ref{lemma:characterization invariant sets} shows the invariance of $A_\alpha$. For the 'in particular' statement note that the triviality of invariant sets implies $L^0(X,\mathcal{I}(E),m) = \IR  1$.
\end{proof}

\begin{remark}
\begin{itemize}
 \item In the proof of the previous theorem we have used that  the irreducibility of $E$ implies that $L^0(X,\mathcal{I} (E),m)$ is isomorphic to $\IR $. We warn the reader that this is not a topological but a purely algebraic statement. Indeed, if $m(X) = \infty$ and any $A \in  \mathcal{I} (E)$ satisfies $m(X \setminus A) = 0$ or $m(A) = 0$, then $ L^0(X,\mathcal{I} (E),m)$ carries the trivial topology.

 \item The statement for irreducible Dirichlet forms and their extended spaces is certainly well known, see e.g. \cite[Theorem~2.1.11]{CF}. For forms which are not irreducible, we could not find a reference.
 \end{itemize}
\end{remark}

We now prove several corollaries to the previous theorem. We first show that irreducible energy forms satisfy the main assumption of Theorem~\ref{theorem:characterization closedness hilbert space}, i.e., their form topology is metrizable. After that we discuss the kernel of  recurrent energy forms and  a dichotomy of recurrence and transience for irreducible forms.
 
 \begin{corollary}\label{corollary:irreducible forms are metrizable}
  Let $E$ be an irreducible energy form and assume $m(X) > 0$. Let $(f_i)$ be a net in $D(E)$. The following assertions are equivalent.
  \begin{itemize}
   \item[(i)] $f_i \to f$ with respect to the form topology $\tau(m)_E$.
   \item[(ii)] $\| f_i - f\|_E \to 0$ and there exists some $U \in \Bf$ with $m(U)>0$ such that 
   $$\int _U |f_i-f| \wedge 1\, {\rm d}m \to 0.$$
  \end{itemize}
In particular, the form topology $\tau(m)_E$ is metrizable. 
 \end{corollary}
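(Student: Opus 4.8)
The plan is to read off the equivalence directly from the definition of the form topology and then to upgrade convergence in measure on the single set $U$ to convergence on every set of finite measure, using the weak compactness of order intervals together with irreducibility.

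First recall that, by the very definition of the form topology, $f_i \to f$ with respect to $\tau(m)_E$ means precisely that $\|f_i - f\|_E \to 0$ \emph{and} $f_i \overset{m}{\to} f$, i.e. $\int_V |f_i - f| \wedge 1\, \D m \to 0$ for \emph{every} $V \in \Bf$. Since $m$ is localizable it is semi-finite, and as $m(X) > 0$ there exists $U \in \Bf$ with $m(U) > 0$; hence (i) trivially implies (ii). The content is the converse. For (ii) $\Rightarrow$ (i) I would set $g_i := f_i - f \in D(E)$ and suppose, for contradiction, that $g_i \overset{m}{\to} 0$ fails. Then there are $V \in \Bf$, $\varepsilon > 0$ and a subnet $(g_{i_k})$ with $\int_V |g_{i_k}| \wedge 1\, \D m \geq \varepsilon$ for all $k$, while still $\|g_{i_k}\|_E \to 0$ and $\int_U |g_{i_k}| \wedge 1\, \D m \to 0$. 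The functions $|g_{i_k}| \wedge 1$ lie in the order interval $\{h \in \Ltf \mid 0 \leq h \leq 1\}$, which is weakly compact by Proposition~\ref{prop:weak compactness in Ltf}; hence a further subnet satisfies $|g_{i_{k_l}}| \wedge 1 \to \psi$ in $\sigma(\Ltf, \Ltf')$, and $0 \leq \psi \leq 1$ since this interval is convex and closed, hence weakly closed by Theorem~\ref{theorem:weak closure of convex sets}.

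By Lemma~\ref{lemma:restriction to Ltf} the restriction of $E$ to $\Ltf$ is closed, so Theorem~\ref{theorem:characterization closedness lc} makes it lower semicontinuous with respect to $\sigma(\Ltf, \Ltf')$; combined with the contraction property $E(|g| \wedge 1) \leq E(g)$ from Theorem~\ref{theorem:cutoff properties energy forms} this gives
$$E(\psi) \leq \liminf_l E(|g_{i_{k_l}}| \wedge 1) \leq \liminf_l E(g_{i_{k_l}}) = 0,$$
so $\psi \in \ker E$. This is where irreducibility enters: by Theorem~\ref{thm:kernel of an energy form} we have $\ker E \subseteq \IR 1$, so $\psi = c\,1$ for some constant $c \in [0,1]$. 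Testing the weak convergence against $1_U \in \Ltf'$ (Proposition~\ref{proposition:characterization duals of Lpf}) yields $c\, m(U) = \int_U \psi\, \D m = \lim_l \int_U |g_{i_{k_l}}| \wedge 1\, \D m = 0$, and $m(U) > 0$ forces $c = 0$, i.e. $\psi = 0$. Testing instead against $1_V$ gives $\int_V |g_{i_{k_l}}| \wedge 1\, \D m \to 0$, contradicting the lower bound $\varepsilon$. Hence $g_i \overset{m}{\to} 0$, which together with $\|g_i\|_E \to 0$ is exactly (i). I expect the main obstacle to be precisely this identification step: without irreducibility the weak limit $\psi$ could be a nonconstant invariant function which is \emph{not} annihilated by convergence in measure on the single set $U$, so the argument would break down.

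Finally, for the metrizability statement I would fix one $U_0 \in \Bf$ with $0 < m(U_0) < \infty$ and define, on $D(E)$,
$$\rho(f,g) := \|f - g\|_E + \int_{U_0} |f - g| \wedge 1\, \D m.$$
This $\rho$ is a genuine metric: if $\rho(f,g) = 0$ then $f - g \in \ker E \subseteq \IR 1$ is constant and vanishes $m$-a.e.\ on the positive-measure set $U_0$, whence $f = g$. A net is $\rho$-convergent exactly when it satisfies the condition in (ii) with $U = U_0$, which by the equivalence just established is the same as $\tau(m)_E$-convergence. Since a topology is determined by its convergent nets, $\tau(m)_E$ coincides with the metric topology of $\rho$ and is therefore metrizable.
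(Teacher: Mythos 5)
Your proof is correct, and it reaches the conclusion by a genuinely different route than the paper. For (ii) $\Rightarrow$ (i) the paper uses a two-line reduction: it introduces the perturbed form $E_U(f) := E(f) + \int_U f^2\,{\rm d}m$, which is again an energy form, notes that irreducibility together with Theorem~\ref{thm:kernel of an energy form} forces $\ker E_U = \{0\}$, and then applies Theorem~\ref{theorem:continuous embedding of energy forms} to $E_U$ as a black box: since $E_U(|f_i - f|\wedge 1) \to 0$ (the $E$-part by the contraction properties, the integral part by hypothesis), transience of $E_U$ yields $|f_i - f|\wedge 1 \to 0$ in $L^0(m)$. What you do instead is unfold the machinery \emph{inside} that embedding theorem --- weak compactness of order intervals in $\Ltf$ (Proposition~\ref{prop:weak compactness in Ltf}) together with lower semicontinuity of the restricted form with respect to $\sigma(\Ltf,\Ltf')$ (Lemma~\ref{lemma:restriction to Ltf} and Theorem~\ref{theorem:characterization closedness lc}) --- and you invoke irreducibility at exactly the point you flag: the weak cluster point $\psi$ lies in $\ker E \subseteq \IR 1$, and testing against $1_U$ kills the constant. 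So your argument is in effect a re-proof of the relevant case of Theorem~\ref{theorem:continuous embedding of energy forms} adapted to the irreducible setting, while the paper reuses that theorem via the auxiliary transient form; the paper's version is shorter and your subnet/contradiction framing is slightly heavier than necessary (the $E_U$-trick removes it), but yours makes transparent where irreducibility is indispensable, which matches the failure heuristic you give for nonconstant invariant kernel elements. Finally, your explicit metric $\rho(f,g) = \|f-g\|_E + \int_{U_0} |f-g|\wedge 1\,{\rm d}m$, with injectivity coming from $\ker E \subseteq \IR 1$ and $m(U_0) > 0$, is a welcome elaboration: the paper states metrizability only as an ``in particular'' of the equivalence, without exhibiting the metric.
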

 \begin{proof}
 (i) $\Rightarrow$ (ii): This follows from the definition of the form topology and the fact that the localizability of $m$ and $m(X) > 0$ imply the existence of a set $U \in \Bf$ with $m(U)>0$.
 
(ii) $\Rightarrow$ (i): For $U \in \Bf$ with $m(U) >0$, we consider the quadratic form 
$$E_U:L^0(m) \to [0,\infty],\, f \mapsto E_U(f):= E(f) + \int_{U} f^2 {\rm d}m.$$
As a sum of two energy forms, it is an energy form itself.  Furthermore, the irreducibility of $E$ and Theorem~\ref{thm:kernel of an energy form} imply $\ker E_U = \{0\}$. 

Let $(f_i)$ be a net in $D(E)$ with $\|f_i\|_E \to 0$ and suppose
$$\int _U |f_i| \wedge 1\, {\rm d}m \to 0$$
for some $U \in \Bf$ with $m(U) > 0$. By the contraction properties of $E$, we  also have $\||f_i|\wedge 1\|_E \to 0$. Therefore, we obtain $E_U( |f_i|\wedge 1 ) \to 0$. Now, Theorem~\ref{theorem:continuous embedding of energy forms} applied to $E_U$ yields $|f_i|\wedge 1 \to 0$  in $L^0(m)$ and finishes the proof. 
 \end{proof}
\begin{remark}
   For proving the metrizability of the form topology $\tau(m)_E$ the condition that $E$ is irreducible can be weakened. In fact, it would be sufficient to find countably many invariant sets $A_1,A_2,\ldots$ with $X = \cup_n A_n$ such that for each $n$ the form $f \mapsto E(1_{A_n}f)$ is irreducible. We refrain from giving details. 
\end{remark}

 In view of this corollary, Theorem~\ref{theorem:characterization closedness hilbert space} and Theorem~\ref{theorem:continuous embedding of energy forms} and by studying the examples of Section~\ref{section:the definition and main examples}, one is tempted to make the following conjecture.

 \begin{conjecture} \label{conjecture:hilbert space}
  If $E$ is an irreducible energy form, then  $(D(E)/\ker E,E)$ is a Hilbert space.
 \end{conjecture}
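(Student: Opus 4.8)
The plan is to separate the two possibilities allowed by irreducibility. By Theorem~\ref{thm:kernel of an energy form}, irreducibility forces $\ker E \subseteq \IR 1$, so by Definition~\ref{definition:recurrence} exactly one of two cases occurs: either $\ker E = \{0\}$ (transient) or $\ker E = \IR 1$ (recurrent). In the transient case there is nothing to prove, since Theorem~\ref{theorem:continuous embedding of energy forms} already gives that $(D(E),E) = (D(E)/\ker E, E)$ is a Hilbert space. Thus the entire content of the conjecture sits in the recurrent case, and from now on I assume $\ker E = \IR 1$; as $E$ is irreducible and nontrivial we also have $m(X) > 0$.

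In the recurrent case the strategy is to invoke Theorem~\ref{theorem:characterization closedness hilbert space} with $(V,\tau) = (L^0(m),\tau(m))$ and $q = E$. Three of its hypotheses come for free: $L^0(m)$ is complete because $m$ is localizable (Theorem~\ref{theorem:properties of L0}); $(D(E),\tau(m)_E)$ is metrizable by Corollary~\ref{corollary:irreducible forms are metrizable}; and $E$ is closed, being an energy form. Furthermore, the lower semicontinuity of $E$ (Theorem~\ref{theorem:characterization closedness}) shows that $\ker E = \IR 1$ is $\tau(m)$-closed. Hence assertion (i) and the closedness half of assertion (ii) hold automatically, and Theorem~\ref{theorem:characterization closedness hilbert space} reduces the desired conclusion (iii) to the one remaining point of (ii): the continuity of the canonical embedding $(D(E)/\IR 1, E) \to (L^0(m)/\IR 1, \tau(m)/\IR 1)$.

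To attack this continuity I would mimic the weak-compactness argument of Theorem~\ref{theorem:continuous embedding of energy forms}. Unwinding the quotient topology via Lemma~\ref{lemma:quotient topology}, continuity amounts to: for every net $(f_i)$ in $D(E)$ with $\|f_i\|_E \to 0$ one can choose constants $\lambda_i \in \IR$ with $f_i - \lambda_i 1 \overset{m}{\to} 0$. The key favourable feature of recurrence is that $1 \in \ker E$, so $E(f_i - \lambda_i 1) = E(f_i) \to 0$ for any choice of $\lambda_i$. I would fix $U_0 \in \Bf$ with $m(U_0) > 0$, pick a centering $\lambda_i$ normalising $f_i$ on $U_0$, and apply Proposition~\ref{prop:weak compactness in Ltf} to the order-bounded net of truncations $h_i := ((f_i - \lambda_i 1) \wedge 1) \vee (-1)$, which is dominated by $1 \in \Ltf$; passing to a subnet gives $h_{i_j} \to \psi$ for $\sigma(\Ltf,\Ltf')$. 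Since truncation is a normal contraction, the weak lower semicontinuity of $E|_{\Ltf}$ (Theorem~\ref{theorem:characterization closedness lc}) and the contraction properties (Theorem~\ref{theorem:cutoff properties energy forms}) force $E(\psi) \leq \liminf_j E(h_{i_j}) \leq \liminf_j E(f_{i_j}) = 0$, whence $\psi \in \ker E = \IR 1$; the centering is designed so that $\psi = 0$, and then Proposition~\ref{proposition:characterization duals of Lpf} turns weak convergence into $\int_U |f_{i_j} - \lambda_{i_j} 1| \wedge 1 \, {\rm d}m \to 0$ for every $U \in \Bf$.

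The hard part is precisely this centering. Weak convergence of the truncations controls $f_i - \lambda_i 1$ only where it is bounded, yet a single constant $\lambda_i$ must simultaneously center $f_i$ on all sets of finite measure, and the whole argument runs over nets rather than sequences, so extracting one subnet and one family of constants working uniformly over all $U \in \Bf$ is delicate. What would make it go through cleanly is a local Poincar\'e-type inequality $\inf_{\lambda \in \IR} \int_U (f - \lambda)^2 \, {\rm d}m \leq C_U E(f)$; such an inequality is exactly what drives the manifold example (Lemma~\ref{lemma:local poincare}) and, by directly integrating the energy, the graph and transient examples, but it is not available for a general energy form. I expect this gap is why the statement is recorded as a conjecture. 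An unconditional proof would likely have to either derive such a local Poincar\'e inequality from irreducibility and recurrence alone, or supply a new compactness argument adapted to the quotient $L^0(m)/\IR 1$. A complementary route worth trying is to prove completeness of $(D(E)/\IR 1, E)$ directly, by realising its abstract Hilbert-space completion inside $L^0(m)/\IR 1$; this succeeds verbatim in the concrete examples and, through Theorem~\ref{theorem:characterization closedness hilbert space}, would deliver the embedding continuity as a byproduct.
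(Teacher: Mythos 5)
You have not proved the statement, and nothing in the paper does either: it is recorded as Conjecture~\ref{conjecture:hilbert space} precisely because no proof (and no counterexample) is known, even for irreducible extended Dirichlet forms. That said, your reduction is exactly the one the paper envisages. The case split via Theorem~\ref{thm:kernel of an energy form} is correct: in the transient case the claim is Theorem~\ref{theorem:continuous embedding of energy forms}, and in the recurrent case $\ker E = \IR 1$, the metrizability from Corollary~\ref{corollary:irreducible forms are metrizable} and the $\tau(m)$-closedness of $\IR 1$ (lower semicontinuity of $E$) let Theorem~\ref{theorem:characterization closedness hilbert space} reduce everything to continuity of the embedding $(D(E)/\IR 1, E) \to (L^0(m)/\IR 1, \tau(m)/\IR 1)$. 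Up to that point your argument is sound.

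The gap you name --- the choice of centering constants $\lambda_i$ --- is the genuine obstruction, and it is worth seeing concretely why the transient argument does not survive quotienting. In Theorem~\ref{theorem:continuous embedding of energy forms} the truncations $|f_i| \wedge 1$ form an order-bounded net in $\Ltf$ whose weak limit is automatically identified as $0$ because $\ker E = \{0\}$. In the recurrent case the weak limit of the truncated, centered net is only known to lie in $\IR 1$, and truncation does not commute with subtracting a constant: if $f_i$ oscillates at a scale much larger than $1$ across different sets of finite measure, no single $\lambda_i$ centers $f_i$ on all $U \in \Bf$ simultaneously, and passing to a subnet does not help because the centering a given $U$ demands may vary with $U$. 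What would repair this is exactly the Poincar\'e-type inequality you identify, which fixes a canonical centering $L(f)$; the paper's remark following the conjecture points to \cite{Osh}, where such an inequality is established for irreducible Harris recurrent Dirichlet forms and the conjecture is then deduced through Theorem~\ref{theorem:characterization closedness hilbert space}, precisely along your route (the same inequality reappears as an open question in the paper's final chapter). So your diagnosis coincides with the state of the art: the proposal is a faithful road map that honestly labels its missing step, not a proof. One small technical repair should you pursue it: run the compactness argument with the nonnegative truncations $|f_i - \lambda_i 1| \wedge 1$ rather than the signed ones, since it is the vanishing of their $\sigma(\Ltf,\Ltf')$-limit that converts weak convergence into $\int_U |f_{i} - \lambda_{i}| \wedge 1 \D m \to 0$ via Proposition~\ref{proposition:characterization duals of Lpf}.
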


  Unfortunately, we were not able to prove it nor to find a counterexample. To the best of our knowledge, its validity is even unknown for irreducible extended Dirichlet forms. A partial result is contained in \cite{Osh}. There, it is shown that for an irreducible Harris recurrent Dirichlet form $\E$ there exists a nonnegative function $g \in L^1(m)$ such that 
  $$\int_X |f - L(f)|g\D m \leq \Ee(f) \text{ for all } f \in D(\Ee)^{1/2},$$
  where $L(f) = \|g\|_1^{-1} \int_X f g \D m.$ From this Poincar\'{e} type inequality, the conjecture can  be easily deduced with the help of Theorem~\ref{theorem:characterization closedness hilbert space}.

The next corollary shows that the kernel of a recurrent energy form can be computed explicitly.

\begin{corollary}\label{corollary:kernel recurrent forms}
An energy form $E$ is recurrent if and only if $\ker E =  L^0(X,\mathcal{I} (E),m)$.
\end{corollary}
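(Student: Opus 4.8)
The plan is to prove the two inclusions separately, leaning on Theorem~\ref{thm:kernel of an energy form} for the automatic containment $\ker E \subseteq L^0(X,\mathcal{I}(E),m)$ and building the reverse containment out of indicators of invariant sets. For the easy direction I would first note that $X \in \mathcal{I}(E)$, so the constant function $1 = 1_X$ always lies in $L^0(X,\mathcal{I}(E),m)$; hence if $\ker E = L^0(X,\mathcal{I}(E),m)$, then $1 \in \ker E$, which is precisely the definition of recurrence.

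For the converse, assume $E$ is recurrent, so that $1 \in \ker E \subseteq D(E)$. By Theorem~\ref{thm:kernel of an energy form} only the inclusion $L^0(X,\mathcal{I}(E),m) \subseteq \ker E$ remains. The crucial step is to show that $E(1_A) = 0$ for every $A \in \mathcal{I}(E)$: evaluating the invariance identity $E(f) = E(1_A f) + E(1_{X\setminus A}f)$ at the admissible argument $f = 1$ gives $0 = E(1) = E(1_A) + E(1_{X\setminus A})$, and nonnegativity forces both summands to vanish. Since $\|\cdot\|_E = E(\cdot)^{1/2}$ is a seminorm, every simple $\mathcal{I}(E)$-measurable function $s = \sum_{i=1}^n \alpha_i 1_{A_i}$ with $A_i \in \mathcal{I}(E)$ then satisfies $E(s)^{1/2} \leq \sum_{i=1}^n |\alpha_i|\, E(1_{A_i})^{1/2} = 0$, so all such $s$ lie in $\ker E$.

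To pass from simple functions to an arbitrary $f \in L^0(X,\mathcal{I}(E),m)$ I would exploit that $\ker E$ is $\tau(m)$-closed, a consequence of the lower semicontinuity of $E$ (Theorem~\ref{theorem:characterization closedness}). First truncate, setting $f_n := (f \wedge n)\vee(-n)$, a bounded $\mathcal{I}(E)$-measurable function; approximating $f_n$ uniformly by simple $\mathcal{I}(E)$-measurable functions and noting that uniform convergence implies $\tau(m)$-convergence (each $d_U$ integrates $|\cdot|\wedge 1$ only over a set of finite measure), lower semicontinuity yields $E(f_n) = 0$. Finally $f_n \to f$ in $\tau(m)$, since $|f_n - f| \wedge 1 \to 0$ pointwise and is dominated by $1$, so dominated convergence gives $d_U(f_n,f) \to 0$ for each $U \in \Bf$; a last application of lower semicontinuity then gives $E(f) \leq \liminf_n E(f_n) = 0$, i.e.\ $f \in \ker E$.

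I expect the only genuinely substantive point to be the identity $E(1_A) = 0$ for invariant $A$, since this is exactly where the recurrence hypothesis $E(1)=0$ is consumed and where the invariance of $A$ is used in its full strength. Everything afterwards is a routine combination of the seminorm triangle inequality with a truncation-and-simple-function approximation, all resting on the $\tau(m)$-closedness of $\ker E$.
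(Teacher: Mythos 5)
Your proof is correct and follows essentially the same route as the paper: the easy direction via $1 = 1_X \in L^0(X,\mathcal{I}(E),m)$, the key identity $E(1_A) = 0$ obtained by evaluating the invariance relation at $f = 1$ (using $1 \in D(E)$ from recurrence), and then the $\tau(m)$-closedness of $\ker E$ from lower semicontinuity combined with the density of simple $\mathcal{I}(E)$-measurable functions, with Theorem~\ref{thm:kernel of an energy form} supplying the opposite inclusion. Your truncation-and-uniform-approximation argument simply spells out the density step that the paper states in one line.
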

\begin{proof}
Constant functions are always measurable. Therefore, we obtain $1 \in \ker E$ whenever $\ker E =  L^0(X,\mathcal{I} (E),m)$. It remains to prove the opposite implication. By the previous theorem, it suffices to show that recurrence yields $ L^0(X,\mathcal{I} (E),m) \subseteq \ker E$.
 
   To this end, assume that $E$ is recurrent and let $A \in \mathcal{I}(E)$ be given. By the invariance of $A$ we obtain
 $$E(1_A) = E(1_A 1) \leq E(1) = 0.$$
 Hence, every simple function in $L^0(X,\mathcal{I} (E),m)$ belongs to $\ker E$. The lower semicontinuity of $E$ implies that $\ker E$ is a closed subspace of $L^0(m)$. Since simple functions are dense in $L^0(X,\mathcal{I} (E),m)$ with respect to convergence in measure, we obtain the statement. 
\end{proof}

The last corollary to  Theorem~\ref{thm:kernel of an energy form} which we present is the following.

\begin{corollary}[Dichotomy of recurrence and transience] \label{corolllary:dichotomy recurrence and transience}
 An irreducible energy form is recurrent if and only if it is not transient. 
\end{corollary}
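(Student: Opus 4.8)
The plan is to read the dichotomy directly off Theorem~\ref{thm:kernel of an energy form}, which is the real engine here: for an irreducible energy form $E$ it yields the inclusion $\ker E \subseteq \IR 1$. Everything else is a short deduction, so I expect no genuine obstacle beyond handling the degenerate case $m(X) = 0$. To dispose of it, I would either impose $m(X) > 0$ (as in Corollary~\ref{corollary:irreducible forms are metrizable}) or simply remark that if $m(X) = 0$ then $L^0(m) = \{0\}$ and the two notions collapse trivially. Assuming $m(X) > 0$, the constant function $1$ is a nonzero element of $L^0(m)$, so $\IR 1$ is a genuinely one-dimensional subspace of $L^0(m)$; this nondegeneracy is the only point where one must be slightly careful.

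First I would prove that recurrence precludes transience. If $E$ is recurrent, then $1 \in \ker E$ by definition, and since $1 \neq 0$ in $L^0(m)$ this gives $\ker E \neq \{0\}$; hence $E$ is not transient. This direction uses nothing beyond the definitions and the nondegeneracy of $1$.

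For the converse, I would suppose that $E$ is not transient, i.e.\ $\ker E \neq \{0\}$. Invoking Theorem~\ref{thm:kernel of an energy form} and the irreducibility of $E$, I obtain $\ker E \subseteq \IR 1$. Thus $\ker E$ is a nonzero linear subspace of the one-dimensional space $\IR 1$, which forces $\ker E = \IR 1$; in particular $1 \in \ker E$, so $E$ is recurrent. Combining the two implications yields the claimed equivalence. As anticipated, the substance lives entirely in Theorem~\ref{thm:kernel of an energy form}, and the corollary is essentially the observation that a one-dimensional kernel space admits only the two extremes $\{0\}$ and $\IR 1$.
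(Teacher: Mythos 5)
Your proof is correct and follows essentially the same route as the paper: both directions are read off Theorem~\ref{thm:kernel of an energy form}, with transience failing exactly when the kernel is a nonzero subspace of $\IR 1$ and hence equals $\IR 1$. Your explicit handling of the one-dimensionality step and of the degenerate case $m(X)=0$ (where $1=0$ in $L^0(m)$) is slightly more careful than the paper, which treats the forward direction as holding ``by definition,'' but it is the same argument in substance.
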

\begin{proof}
 Let $E$ be an irreducible energy form. If $E$ is recurrent, it cannot be transient by definition. Assume $E$ is not transient, i.e., $\ker E \neq \{0\}$. In this case, Theorem~\ref{thm:kernel of an energy form} shows $\ker E = \IR 1$ and so $E$ is recurrent.  This finishes the proof. 
\end{proof}

\begin{remark}
 The dichotomy of recurrence and transience is well-known for Dirichlet forms. Surprisingly, we were not able to find a version of Corollary~\ref{corollary:kernel recurrent forms} for extended Dirichlet forms in the literature.
\end{remark}

With a bit more effort, we can strengthen the observation of the previous corollary and obtain a decomposition of $E$ into its recurrent and transient part. For a measurable subset $A \subseteq X$, we let $\iota_A:L^0(A,\cB \cap A,m|_{\cB \cap A}) \to L^0(X,\cB,m)$ be the canonical extension operator that extends a function by $0$ outside of $A$. Here, $\cB \cap A$ denotes the $\sigma$-algebra of measurable subsets of $A$ and $m|_{\cB \cap A}$ is the restriction of $m$ to this $\sigma$-algebra.  

\begin{theorem}[Decomposition into recurrent and transient part]\label{theorem:decomposition into recurrent and transient part}
Let $E$ be an energy form on $L^0(X,\cB,m)$. There exists a set $S \in \mathcal{I}(E)$ such that 
$$E_R:L^0(S,\cB \cap S,m|_{\cB \cap S}) \to [0,\infty], \quad f \mapsto E(\iota_S f)$$
is a recurrent energy form and 
$$E_T:L^0(X\setminus S,\cB \cap (X\setminus S),m|_{\cB \cap (X\setminus S)}) \to [0,\infty], \quad f \mapsto E(\iota_{X\setminus S} f)  $$
is a transient energy form.   In particular, for each $f \in L^0(m)$ we have
$$E(f) = E_R(f|_S) + E_T(f|_{X \setminus S}). $$
\end{theorem}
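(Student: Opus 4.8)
The plan is to take $S$ to be a measurable representative of the largest invariant set on which the constant function lies in the kernel, and then to read off recurrence of $E_R$, transience of $E_T$, and the decomposition directly from the invariance of $S$. First I would introduce the family
$$\mathcal{A} := \{A \in \mathcal{I}(E) \mid E(1_A) = 0\}.$$
This family is nonempty (it contains $\emptyset$) and directed under inclusion: for $A,B \in \mathcal{A}$ the set $A \cup B$ lies in $\mathcal{I}(E)$, and since $1_{A \cup B} = 1_A \vee 1_B$ the lattice estimate of Theorem~\ref{theorem:algebraic and order properties} gives $E(1_{A \cup B})^{1/2} \leq E(1_A)^{1/2} + E(1_B)^{1/2} = 0$, whence $A \cup B \in \mathcal{A}$. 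Thus $(1_A)_{A \in \mathcal{A}}$ is a monotone increasing net in $L^0(m)$ bounded above by $1$, so by Proposition~\ref{proposition:convergence of monotone nets} it converges with respect to $\tau(m)$ to $g := \sup_{A \in \mathcal{A}} 1_A$. A short order computation ($1_A = 1_A^2 \leq g^2$ gives $g \leq g^2$, while $g \leq 1$ gives $g^2 \leq g$) shows $g = g^2$, so $g = 1_S$ for some measurable $S$.

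Next I would verify that $S \in \mathcal{A}$ and that it is maximal. Applying lower semicontinuity of $E$ to $1_A \overset{m}{\to} 1_S$ yields $E(1_S) \leq \liminf_A E(1_A) = 0$, so $1_S \in \ker E$; Theorem~\ref{thm:kernel of an energy form} then gives $1_S \in L^0(X,\mathcal{I}(E),m)$, i.e. $S \in \mathcal{I}(E)$, and therefore $S \in \mathcal{A}$. By construction $1_B \leq 1_S$ for every $B \in \mathcal{A}$, so $m(B \setminus S) = 0$; this maximality is the property I will exploit below.

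With $S$ fixed I would check the three assertions. That $E_R = E \circ \iota_S$ and $E_T = E \circ \iota_{X \setminus S}$ are energy forms follows from Lemma~\ref{lemma:form restriction}: the extension maps $\iota_S$ and $\iota_{X\setminus S}$ are continuous into $L^0(m)$, the restricted (still localizable) measures make the domains complete, and the identity $C \circ \iota_S f = \iota_S(C \circ f)$ for a normal contraction $C$ (which vanishes at $0$) transfers the Markov property. The decomposition $E(f) = E_R(f|_S) + E_T(f|_{X \setminus S})$ is exactly the invariance of $S$, since $1_S f = \iota_S(f|_S)$ and $1_{X\setminus S} f = \iota_{X\setminus S}(f|_{X\setminus S})$. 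Recurrence of $E_R$ is immediate, as $E_R(1) = E(1_S) = 0$. For transience of $E_T$, given $h \in \ker E_T$ I may assume $h \geq 0$ by the contraction properties, so $\iota_{X \setminus S} h$ is a nonnegative element of $\ker E$ supported in $X \setminus S$; exactly as in the proof of Theorem~\ref{thm:kernel of an energy form}, each level set $B_\alpha := \{\iota_{X \setminus S} h > \alpha\}$ with $\alpha > 0$ is $E$-invariant with $E(1_{B_\alpha}) = 0$, hence $B_\alpha \in \mathcal{A}$; but $B_\alpha \subseteq X \setminus S$, so maximality forces $m(B_\alpha) = 0$, and letting $\alpha \to 0+$ gives $h = 0$, i.e. $\ker E_T = \{0\}$.

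The main obstacle I anticipate is the construction of $S$ in the merely localizable (rather than $\sigma$-finite) setting, where the usual sequential exhaustion is unavailable: this is precisely where I rely on Dedekind completeness of $L^0(m)$ and on the monotone-net convergence of Proposition~\ref{proposition:convergence of monotone nets} in order to push lower semicontinuity through an a priori uncountable supremum. A secondary technical point is confirming that the restrictions of $m$ to $S$ and to $X \setminus S$ remain localizable, so that Lemma~\ref{lemma:form restriction} applies and $E_R$, $E_T$ genuinely qualify as energy forms.
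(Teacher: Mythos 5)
Your proof is correct and follows essentially the same route as the paper: both construct $S$ as (a representative of) the essential supremum of the sets whose indicators lie in $\ker E$, use Dedekind completeness and Proposition~\ref{proposition:convergence of monotone nets} together with lower semicontinuity to get $1_S \in \ker E$, and prove transience of $E_T$ by running the level-set argument from Theorem~\ref{thm:kernel of an energy form} against the maximality of $S$. Your only deviations are cosmetic elaborations of steps the paper leaves implicit -- the explicit directedness of the index family, deducing $S \in \mathcal{I}(E)$ via Theorem~\ref{thm:kernel of an energy form} instead of the multiplication estimate of Theorem~\ref{theorem:algebraic and order properties} with Lemma~\ref{lemma:characterization invariant sets}, and the verification that restricted measures stay localizable so Lemma~\ref{lemma:form restriction} applies.
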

\begin{proof}
 We let  $\mathcal{C}:= \{C \in \cB \mid 1_C \in \ker E\}$. Recall that we order sets by inclusion up to $m$ measure zero. Hence, the net $(1_C)_{C\in \mathcal{C}}$ is monotone increasing and bounded by the constant function $1$. According to Proposition~\ref{proposition:convergence of monotone nets}, it converges in $L^0(m)$ to the function 
 $$f:= \sup_{C\in \mathcal{C}} 1_C.$$
 Clearly, $f$ only takes the values $0$ or $1$ and we let $S:= \{f = 1\}$ such that $f = 1_S$. The lower semicontinuity of $E$ implies
 $$E(1_S) \leq \liminf_{C\in \mathcal{C}} E(1_C) = 0.$$
Since $1_S \in \ker E$, Theorem~\ref{theorem:algebraic and order properties} combined with Lemma~\ref{lemma:characterization invariant sets} yield that $S$ is invariant. 

We define $E_R$ and $E_T$ as in the statement of the theorem. The invariance of $S$ implies that $E_R$ and $E_T$ are energy forms. From the equality
 $$E_R(1) = E(1_S) = 0$$
 we infer that the form $E_R$ is recurrent. It remains to show that $E_T$ is transient. To this end, we let $h \in \ker E_T$ be given. Without loss of generality we assume $h \geq 0$. For $\alpha > 0$, we set $A_\alpha := \{h > \alpha\} \subseteq X\setminus S$. With the same argument as in the proof of Theorem~\ref{thm:kernel of an energy form}, we obtain $E(1_{A_\alpha}) = E_T(1_{A_\alpha})  = 0$, which implies $ A_\alpha \in \mathcal{C}.$ Since $S$ is an essential supremum of $\mathcal{C}$ and $A_\alpha \subseteq X \setminus S$, we must have $m(A_\alpha) = 0$. Therefore, we obtain $h=0$. This finishes the proof. 
\end{proof}

\begin{remark}
  The previous theorem shows that we can develop the theory for recurrent and transient forms separately without loosing information. This is of course well-known for Dirichlet forms, see \cite[Lemma~1.6.4]{FOT}. Here, the novelty lies in the fact that we derived this decomposition by form methods only.
\end{remark}

In Dirichlet form theory it is common to assume that the domain  of the form is dense in the underlying $L^2$-space.  So-far, a lack of a similar condition did not cause any problems but it will in the next section. This is why we introduce the following concept.

\begin{definition}[Support of an energy form]
 Let $E$ be an energy form and let
 $$\mathcal{S} := \{ \{|f| \geq \alpha\} \mid f \in D(E), \alpha > 0\}.$$
 An essential supremum of $\mathcal{S}$ is called the {\em support of $E$} and is denoted by ${\rm supp}\,E$. The form $E$ is said to have full support if $\sE= X$. 
\end{definition}

\begin{remark}
\begin{itemize}
 \item The existence of $\sE$ is guaranteed by the localizability of $m$.
 \item The support of an energy form is only defined up to a set  of measure zero. When treating it as a concrete set we always need to choose a representative. In this sense, the condition $\sE = X$ is to be understood as $m(X \setminus \sE) = 0$. As already mentioned in Subsection~\ref{section:lebesgue spaces}, this causes some subtleties that we do not address explicitly. 
\end{itemize}
\end{remark}

The following proposition shows that each energy form can be viewed as an energy form of full support on a smaller underlying space.  As above, we tacitly extend functions by zero outside of their domains.  
 
\begin{proposition} \label{proposition:support of an energy form}
 Let $E$ be an energy form on $L^0(X,\cB,m)$. Its support is $E$-invariant and the quadratic form 
 $$\ow{E}:L^0({\rm supp} E, \cB \cap {\rm supp} E, m|_{\cB \cap {\rm supp} E}) \to [0,\infty],\quad f \mapsto  E(\iota_{{\rm supp} E} f)$$
 is an energy form of full support. 
\end{proposition}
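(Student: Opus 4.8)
The plan is to base everything on a single structural observation: every function in $D(E)$ vanishes almost everywhere outside $\sE$. Since $\sE$ is an essential supremum of $\mathcal{S} = \{\{|f|\ge \alpha\}\mid f\in D(E),\, \alpha>0\}$, for each $f\in D(E)$ and each $\alpha>0$ the set $\{|f|\ge\alpha\}$ satisfies $m(\{|f|\ge\alpha\}\setminus\sE)=0$. Writing $\{|f|>0\}=\bigcup_{n\ge 1}\{|f|\ge 1/n\}$ and using countable subadditivity, I would conclude $m(\{|f|>0\}\setminus\sE)=0$, that is $f=1_{\sE}f$. From this the $E$-invariance of $\sE$ is immediate: since $1_{X\setminus\sE}f=0$ for every $f\in D(E)$, we get $E(1_{\sE}f)+E(1_{X\setminus\sE}f)=E(f)+E(0)=E(f)$, the defining identity (alternatively one notes $E(1_\sE f)=E(f)\le E(f)$ and invokes Lemma~\ref{lemma:characterization invariant sets}).

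Next I would check that $\ow{E}$ is an energy form. First, the restriction $m|_{\cB\cap\sE}$ is again localizable: by the Remark following Definition~\ref{definition:taxonomy of measures}, localizability is Dedekind completeness of the measure algebra, and the measure algebra of $m|_{\cB\cap\sE}$ is the principal ideal generated by $\sE$, which inherits Dedekind completeness. Hence $L^0(\sE,\cB\cap\sE,m|_{\cB\cap\sE})$ is complete by Theorem~\ref{theorem:properties of L0}, and the extension-by-zero map $\iota_{\sE}$ is a continuous linear map into the complete space $(L^0(m),\tau(m))$. Since $E$ is closed, Lemma~\ref{lemma:form restriction} applies to $\ow{E}=E\circ\iota_{\sE}$ and yields closedness of $\ow{E}$. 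For the Markov property, given $\varepsilon>0$ I pick an $\varepsilon$-cutoff $C_\varepsilon$ with $E(C_\varepsilon\circ g)\le E(g)$ for all $g$; because $C_\varepsilon(0)=0$, extension by zero commutes with the cutoff, $\iota_{\sE}(C_\varepsilon\circ f)=C_\varepsilon\circ\iota_{\sE}(f)$, so $\ow{E}(C_\varepsilon\circ f)=E(C_\varepsilon\circ\iota_{\sE}f)\le E(\iota_{\sE}f)=\ow{E}(f)$.

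Finally, to obtain full support I would identify $D(\ow{E})$ with $D(E)$. By the first paragraph, $g\mapsto g|_{\sE}$ is a bijection from $D(E)$ onto $D(\ow{E})$ with inverse $\iota_{\sE}$, and for $f=g|_{\sE}$ one has $\{|f|\ge\alpha\}=\{|g|\ge\alpha\}$ as subsets of $\sE$. Hence the family $\widetilde{\mathcal{S}}=\{\{|f|\ge\alpha\}\mid f\in D(\ow{E}),\,\alpha>0\}$ defining ${\rm supp}\,\ow{E}$ coincides with $\mathcal{S}$. Now $\sE$ is trivially an upper bound of $\widetilde{\mathcal{S}}$ inside $\sE$, and any measurable $G\subseteq\sE$ that is an upper bound of $\widetilde{\mathcal{S}}$ is, viewed in $X$, an upper bound of $\mathcal{S}$; since $\sE$ is an essential supremum of $\mathcal{S}$ in $X$, this forces $m(\sE\setminus G)=0$. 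Thus $\sE$ is an essential supremum of $\widetilde{\mathcal{S}}$, i.e. ${\rm supp}\,\ow{E}=\sE$ up to a null set, as claimed.

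I expect the only genuinely delicate point to be the bookkeeping with essential suprema when passing between $X$ and the subspace $\sE$ — specifically, verifying that localizability descends to the restriction (so that the relevant essential suprema exist) and that an upper bound computed inside $\sE$ remains an upper bound in $X$. Everything else reduces to the single fact that $D(E)\subseteq 1_{\sE}L^0(m)$, which collapses the invariance, closedness, and Markov claims to near-triviality.
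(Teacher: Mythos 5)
Your proof is correct and rests on exactly the observation the paper's proof cites, namely that $f = f\,1_{\sE}$ for every $f \in D(E)$, from which invariance is immediate; the remainder of your argument (localizability descending to the restriction, closedness via Lemma~\ref{lemma:form restriction}, the Markov property commuting with extension by zero, and the essential-supremum bookkeeping) simply makes explicit the verifications the paper dismisses as ``the rest is clear.'' So you have taken essentially the same route, just with the omitted details carefully filled in.
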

\begin{proof}
 The $E$-invariance of ${\rm supp} E$ follows from the fact that for each $f \in D(E)$ the equality $f = f 1_{{\rm supp} E}$ holds. The rest is clear.
\end{proof}

\section{Superharmonic and excessive functions} \label{section:superharmonic functions}

This section is devoted to superharmonic and excessive functions. We study their properties and provide characterizations in terms of contraction properties for the form that are not induced by normal contractions.  These additional contractions properties are a main technical ingredient in later sections.

\begin{definition}[Superharmonic and excessive functions] \label{definition:superharmonic function}
 Let $E$ be an energy form. A  function $h \in D(E)$ is called {\em $E$-superharmonic} if for each nonnegative  $\psi \in D(E)$ the inequality
 $$E(h,\psi) \geq 0$$
 holds. The collection of all superharmonic functions is denoted by $S(E)$. A function is called {\em $E$-excessive} if it belongs to the $\tau(m)$-closure of $S(E)$ in $L^0(m)$. 
\end{definition}

\begin{remark}
When $E$ is an extended Dirichlet form we will see in the discussion after Theorem~\ref{theorem:characterization of excessive functions} that the above notion of excessive function and the one for (extended) Dirichlet forms coincide. 
\end{remark}

We first show that superharmonic functions can be characterized as minimizers of the energy. As a preparation, we need the following lemma. It provides further contraction properties, which are not induced by normal contractions. 

\begin{lemma} \label{lemma:superharmonic functions as cutoff}
 Let $E$ be an energy form and let $h$ be $E$-excessive. For each $f \in D(E)$, the following inequalities hold.
 \begin{itemize}
  \item[(a)] $E(f \wedge h) \leq E(f).$
  \item[(b)] $E((f - h)_+) \leq E(f).$
 \end{itemize}
 If, additionally, $h$ is $E$-superharmonic, then
  \begin{itemize}
 \item[(c)] $E(f \wedge h) \leq E(f \wedge h,f)$.
  \end{itemize}
\end{lemma}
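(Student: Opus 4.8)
The plan is to prove all three inequalities first under the stronger assumption that $h \in S(E)$ is superharmonic, and only afterwards to recover the excessive case of (a) and (b) by approximation, exploiting lower semicontinuity of $E$. Throughout I would use that $D(E)$ is a vector lattice (Theorem~\ref{theorem:algebraic and order properties}), so that for $f, h \in D(E)$ the functions
$$f \wedge h, \quad p := (f-h)_+ = f - f\wedge h, \quad q := (h-f)_+ = h - f\wedge h$$
all lie in $D(E)$. The two algebraic facts driving the argument are the identity $f\wedge h = h - q$ and the observation that $p,q \geq 0$ have disjoint supports, i.e. $p \wedge q = 0$.

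I would establish (c) first, as it is the engine for (a) and (b). By bilinearity, $E(f\wedge h, f) - E(f\wedge h) = E(f\wedge h, p)$, and substituting $f\wedge h = h - q$ gives $E(f\wedge h, p) = E(h,p) - E(q,p)$. Now $E(h,p) \geq 0$ since $h$ is superharmonic and $p \geq 0$, while $E(q,p) \leq 0$ by Lemma~\ref{lemma:cutoff for functions with disjoint support} because $p$ and $q$ are nonnegative with $p \wedge q = 0$. Hence $E(f\wedge h, p) \geq 0$, which is precisely assertion (c).

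From (c), assertion (a) for superharmonic $h$ follows by the Cauchy--Schwarz inequality for the bilinear form $E$ on $D(E)$: one has $E(f\wedge h) \leq E(f\wedge h, f) \leq E(f\wedge h)^{1/2}E(f)^{1/2}$, so $E(f\wedge h) \leq E(f)$ (the case $E(f\wedge h)=0$ being trivial). For (b), I would expand $E(p) = E(f - f\wedge h) = E(f) - 2E(f, f\wedge h) + E(f\wedge h)$ and feed in the inequality $E(f, f\wedge h) = E(f\wedge h, f) \geq E(f\wedge h)$ supplied by (c); this yields $E(p) \leq E(f) - E(f\wedge h) \leq E(f)$, which is (b).

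Finally, for excessive $h$ there is a net $(h_i)$ in $S(E)$ with $h_i \overset{m}{\to} h$. Since the vector operations and the lattice operations are continuous on $L^0(m)$, I obtain $f\wedge h_i \overset{m}{\to} f\wedge h$ and $(f-h_i)_+ \overset{m}{\to} (f-h)_+$. Applying the already-proved superharmonic cases of (a) and (b) to each $h_i$ gives $E(f\wedge h_i) \leq E(f)$ and $E((f-h_i)_+) \leq E(f)$, and lower semicontinuity of $E$ (Theorem~\ref{theorem:characterization closedness}) then delivers $E(f\wedge h) \leq \liminf_i E(f\wedge h_i) \leq E(f)$ and likewise for (b). The main subtlety to flag is that an excessive $h$ need not belong to $D(E)$, so in the excessive case (a) and (b) are genuine assertions whose content includes $f\wedge h, (f-h)_+ \in D(E)$; this is exactly what the limiting argument certifies. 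The only real work is the algebraic bookkeeping identifying $p$ and $q$ as disjointly supported pieces and recognizing (c) as the source of (a) and (b); everything else is routine.
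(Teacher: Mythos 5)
Your proof is correct and takes essentially the same route as the paper: establish (c) first for superharmonic $h$, deduce (a) by Cauchy--Schwarz, deduce (b) from (c), and then recover the excessive cases of (a) and (b) by lower semicontinuity of $E$ along a net of superharmonic functions converging in measure. Your derivation of (c) from $E(h,p)\geq 0$ and $E(q,p)\leq 0$ (Lemma~\ref{lemma:cutoff for functions with disjoint support}) is equivalent to the paper's contraction step $E(|f-h|)\leq E(f-h)$, since with $p=(f-h)_+$ and $q=(h-f)_+$ one has $|f-h|=p+q$ and $f-h=p-q$, so that inequality is precisely $E(p,q)\leq 0$; likewise your expansion for (b) is a harmless variant (in fact yielding the slightly stronger bound $E((f-h)_+)\leq E(f)-E(f\wedge h)$) of the paper's Cauchy--Schwarz argument.
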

\begin{proof} 
We start with statement (c). The identity $f \wedge h = 1/2 (f + h - |f - h|)$, the contraction properties of $E$ and the superharmonicity of  $h$ imply
\begin{align*}
E(f \wedge h,f)-  E(f \wedge h)  &= \frac{1}{4} \left( E(f) - E(h) - E(|f-h|) + 2 E(h,|f-h|)\right)\\
&\geq  \frac{1}{4} \left( E(f) - E(h) - E(f-h) + 2 E(h,|f-h|)\right)\\
&= \frac{1}{2} E(h, f - h + |f-h|)\\
&= E(h,(f-h)_+) \geq 0.
\end{align*}

By the lower semicontinuity of $E$, it suffices to show the statements (a) and (b) in the case when $h$ is superharmonic. Applying (c), we obtain 
$$E(f \wedge h) \leq E(f \wedge h,f) \leq E(f \wedge h)^{1/2}E(f)^{1/2},$$
 which proves (a). For inequality (b), we use $(f -h)_+ = f - f \wedge h$ and (c) to conclude
$$E((f-h)_+,f) - E((f-h)_+) = E(f - f \wedge h, f \wedge h ) = E(f,f\wedge h) - E(f \wedge h) \geq 0.$$
This computation yields 
$$E((f-h)_+) \leq E((f-h)_+,f) \leq E((f-h)_+)^{1/2}E(f)^{1/2}$$
and finishes the proof.
\end{proof}

\begin{remark}
\begin{itemize}
 \item Let $\E$ be a Dirichlet form with associated semigroup $(T_t)$. \cite[Proposition~1.2]{MR} implies that for any $\alpha > 0$,  an $\E_\alpha$-superharmonic function  $h$ is $\alpha$-excessive in the classical sense, i.e., it satisfies $e^{-t \alpha} T_t h \leq h$ for all $t>0$. In this situation, assertion (a) of the previous lemma is contained in \cite[Theorem~2.6]{MOR}. For extended Dirichlet forms and excessive functions in the classical sense  (which are not to be confused with $\alpha$-excessive functions and which will be shown to be $\Ee$-excessive in our sense) inequality (a) of the previous lemma is given by \cite[Proposition~2.10]{Kaj}. There, the proof uses methods from \cite{Shi}.  In contrast to the known proofs, we emphasize that ours is short and (almost) purely algebraic.  
 
 \item It will turn out that this rather innocent looking lemma is one of the main lemmas of this thesis. 
\end{itemize}
\end{remark}

The following characterization of superharmonic functions is (almost) a direct consequence of the previous lemma.
 
\begin{theorem}[Characterization of superharmonic functions] \label{theorem:characterization of superharmonic functions}
 Let $E$ be an energy form and let $h \in D(E)$. The following assertions are equivalent.
 \begin{itemize}
  \item[(i)] $h$ is superharmonic.
  \item[(ii)] There exists $g \in L^0(m)$ with $g \leq h$ such that 
  $$E(h) = \inf\{ E(f) \mid f \in L^0(m) \text{ with }f \geq g\}.$$
  \item[(iii)] $E(h) = \inf \{E(f) \mid f \in L^0(m) \text{ with }f \geq h\}.$
  \item[(iv)] For all $f \in L^0(m)$, the inequality $E(f \wedge h) \leq E(f)$ holds.
 \end{itemize}
\end{theorem}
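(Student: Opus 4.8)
The plan is to establish the cyclic chain of implications (i) $\Rightarrow$ (iv) $\Rightarrow$ (iii) $\Rightarrow$ (ii) $\Rightarrow$ (i); three of the four arrows are essentially immediate, and only the last requires an argument, which is the standard variational characterization of a minimizer.

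For (i) $\Rightarrow$ (iv) I would invoke the preceding lemma directly. A superharmonic $h$ lies in $S(E)$ and hence is $E$-excessive, so Lemma~\ref{lemma:superharmonic functions as cutoff}~(a) gives $E(f \wedge h) \leq E(f)$ for all $f \in D(E)$; for $f \in L^0(m) \setminus D(E)$ the right-hand side is $\infty$ and the inequality is trivial. Thus (iv) holds for every $f \in L^0(m)$. The implication (iv) $\Rightarrow$ (iii) is then a one-line observation: any $f \geq h$ satisfies $f \wedge h = h$, so (iv) yields $E(h) \leq E(f)$, which together with the trivial bound obtained from $f = h$ gives the claimed infimum. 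Finally (iii) $\Rightarrow$ (ii) needs no work at all, since one may simply take $g = h$, for which $g \leq h$ holds with equality.

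The heart of the proof is (ii) $\Rightarrow$ (i). Here I would use that (ii) says precisely that $h$ minimizes $E$ over the convex feasible set $\{f \in L^0(m) \mid f \geq g\}$, and that $h$ itself belongs to this set because $g \leq h$. Fix a nonnegative $\psi \in D(E)$. Since $\psi \geq 0$, for every $t \geq 0$ the function $h + t\psi \geq h \geq g$ is again admissible and lies in $D(E)$ (as $D(E)$ is a vector space), so minimality together with the bilinearity of $E$ gives
\[
E(h) \leq E(h + t\psi) = E(h) + 2t\, E(h,\psi) + t^2 E(\psi).
\]
Hence $0 \leq 2\, E(h,\psi) + t\, E(\psi)$ for all $t > 0$, and letting $t \to 0+$ yields $E(h,\psi) \geq 0$. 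As $\psi$ was an arbitrary nonnegative element of $D(E)$, the function $h$ is superharmonic.

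I do not expect a serious obstacle: the substantial technical work, namely the non-standard contraction property $E(f \wedge h) \leq E(f)$ for superharmonic $h$, has already been carried out in Lemma~\ref{lemma:superharmonic functions as cutoff}, so here it suffices to assemble the pieces. The only points that require a moment's care are the bookkeeping for $f \notin D(E)$ in (iv) (handled by the convention $E(f) = \infty$) and the verification that the perturbed function $h + t\psi$ remains in the feasible set, which is exactly where the hypothesis $g \leq h$ and the one-sided nature of the perturbation (taking $t \geq 0$ and $\psi \geq 0$) are used.
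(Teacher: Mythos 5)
Your proposal is correct and follows essentially the same route as the paper: the cyclic chain (i) $\Rightarrow$ (iv) $\Rightarrow$ (iii) $\Rightarrow$ (ii) $\Rightarrow$ (i), with (i) $\Rightarrow$ (iv) delegated to Lemma~\ref{lemma:superharmonic functions as cutoff} and the variational perturbation $h + t\psi$ with $t \to 0+$ for the final implication. Your direct observation for (iv) $\Rightarrow$ (iii) (that $f \geq h$ gives $f \wedge h = h$) is just a streamlined version of the paper's minimizing-sequence phrasing of the same idea.
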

\begin{proof}
 (i) $\Rightarrow$ (iv): This follows from Lemma~\ref{lemma:superharmonic functions as cutoff}.
 
 (iv) $\Rightarrow$ (iii):  Let $(f_n)$ be a sequence in $D(E)$ with $f_n \geq h$ and
 $$E(f_n) \to \inf\{E(f) \mid f \in L^0(m) \text{ with } f \geq h \}, \text{ as } n \to \infty.$$
 By (iv) we can assume $f_n \leq h$ and obtain $f_n = h$ for all $n$. This shows (iii). 
 
 (iii) $\Rightarrow$ (ii): This is obvious.
 
 (ii) $\Rightarrow$ (i): Let $\varepsilon > 0$ and let $\psi\in D(E)$ nonnegative. By $g \leq h$ we have $h + \varepsilon \psi \geq g$. Hence, (ii) implies
 $$E(h) \leq E(h + \varepsilon \psi) =  E(h) + 2\varepsilon E(h,\psi) + \varepsilon^2 E(\psi). $$
 Since $\varepsilon > 0$ was arbitrary, this computation shows $E(h,\psi) \geq 0$, i.e., $h$ is superharmonic.
\end{proof}

\begin{remark}
 \begin{itemize}
  \item The fact that certain minimizers of the energy are superharmonic is well known in potential theory. The idea of the proof of (ii) $\Rightarrow$ (i) is standard, see e.g. the proof of \cite[Lemma~2.1.1]{FOT}.

  \item For excessive functions, a somewhat weaker characterization in terms of contractions can be obtained under the additional assumption that the form has full support, see Theorem~\ref{theorem:characterization of excessive functions} below.
 \end{itemize}
\end{remark}

\begin{corollary}
 Let $E$ be an energy form and let $h$ be excessive. Then $h \in D(E)$ if and only if $h$ is superharmonic. 
\end{corollary}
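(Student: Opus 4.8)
The plan is to treat the two implications separately, with essentially all of the content living in the forward direction. For the reverse implication, if $h$ is superharmonic then $h \in D(E)$ holds by the very definition of superharmonicity in Definition~\ref{definition:superharmonic function} (a superharmonic function is by decree an element of $D(E)$), so there is nothing to prove there.

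For the direction that $h \in D(E)$ together with excessiveness forces superharmonicity, I would exploit the minimum-contraction characterization of superharmonic functions, namely assertion (iv) of Theorem~\ref{theorem:characterization of superharmonic functions}, because that formulation passes cleanly to limits under lower semicontinuity. Since $h$ is excessive, I would choose a net $(h_i)$ in $S(E)$ with $h_i \overset{m}{\to} h$. Each $h_i$ is superharmonic, so the implication (i) $\Rightarrow$ (iv) of that theorem gives $E(f \wedge h_i) \leq E(f)$ for every $f \in L^0(m)$ and every index $i$.

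Next I would fix an arbitrary $f \in L^0(m)$. By continuity of the lattice operations on $L^0(m)$ one has $f \wedge h_i \overset{m}{\to} f \wedge h$, and the lower semicontinuity of $E$ (valid since $E$ is closed, by Theorem~\ref{theorem:characterization closedness}) yields
$$E(f \wedge h) \leq \liminf_i E(f \wedge h_i) \leq E(f).$$
As $f$ was arbitrary and $h \in D(E)$ by hypothesis, the reverse implication (iv) $\Rightarrow$ (i) of Theorem~\ref{theorem:characterization of superharmonic functions} shows that $h$ is superharmonic, completing the argument.

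I do not anticipate a genuine obstacle, but the one conceptual point worth flagging is the choice of characterization: the right inequality to push through the measure-convergent net is the contraction inequality $E(f \wedge h) \leq E(f)$ rather than the defining bilinear condition $E(h,\psi) \geq 0$. Trying to transfer the latter directly would require form-norm control of the net $(h_i)$, which mere convergence in measure does not supply, so routing everything through assertion (iv) and lower semicontinuity is the crucial move.
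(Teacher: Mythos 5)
Your proof is correct and follows essentially the same route as the paper: the paper invokes Lemma~\ref{lemma:superharmonic functions as cutoff}~(a) (whose proof already reduces the excessive case to the superharmonic one via lower semicontinuity, exactly your net argument) to get $E(f\wedge h)\leq E(f)$, and then applies the implication (iv) $\Rightarrow$ (i) of Theorem~\ref{theorem:characterization of superharmonic functions}, just as you do. The only difference is that you inline the approximation-by-superharmonic-functions step rather than citing the lemma, which changes nothing of substance.
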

\begin{proof}
 Let $h\in D(E)$ be excessive. By Lemma~\ref{lemma:superharmonic functions as cutoff} it satisfies $E(f \wedge h) \leq E(f)$ for each $f \in D(E)$. Therefore, the previous theorem yields the statement. 
\end{proof}

\begin{corollary}\label{corollary:concatenation superharmonic functions}
 Let $E$ be an energy form. Let $h  \in D(E)$ be superharmonic and let $h_1,h_2 \in L^0(m)$ be excessive. Then $h \wedge h_1$ is superharmonic and $h_1 \wedge h_2$ is excessive. Furthermore, for each constant $C \geq 0$, the function $h \wedge C$ is superharmonic and $h_1 \wedge C$ is excessive. 
\end{corollary}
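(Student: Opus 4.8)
The plan is to reduce the superharmonicity statements to criterion (iv) of Theorem~\ref{theorem:characterization of superharmonic functions} combined with the cutoff inequalities of Lemma~\ref{lemma:superharmonic functions as cutoff}, and then to upgrade the superharmonic statements to excessive ones by approximating with nets and invoking continuity of the lattice operations on $L^0(m)$.

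First I would treat $h \wedge h_1$. Since $h_1$ is excessive and $h \in D(E)$, Lemma~\ref{lemma:superharmonic functions as cutoff}~(a) applied with $f = h$ gives $E(h \wedge h_1) \leq E(h) < \infty$, so $h \wedge h_1 \in D(E)$ and the characterization theorem is applicable to it. To verify criterion (iv), namely $E(f \wedge (h \wedge h_1)) \leq E(f)$ for all $f \in L^0(m)$, only the case $f \in D(E)$ is nontrivial. There $f \wedge h \in D(E)$ because the domain is a lattice (Theorem~\ref{theorem:algebraic and order properties}), so Lemma~\ref{lemma:superharmonic functions as cutoff}~(a) for the excessive function $h_1$ yields $E((f \wedge h) \wedge h_1) \leq E(f \wedge h)$, while criterion (iv) for the superharmonic $h$ gives $E(f \wedge h) \leq E(f)$. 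Chaining these proves (iv), whence $h \wedge h_1$ is superharmonic by Theorem~\ref{theorem:characterization of superharmonic functions}.

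Specializing this to a \emph{superharmonic} $h_1$, which is trivially excessive since $S(E)$ is contained in its own $\tau(m)$-closure, shows that $S(E)$ is stable under $\wedge$. For $h_1 \wedge h_2$ with both functions excessive, I would choose nets $(h_1^i) \subseteq S(E)$ and $(h_2^j) \subseteq S(E)$ with $h_1^i \overset{m}{\to} h_1$ and $h_2^j \overset{m}{\to} h_2$. The product net $(h_1^i \wedge h_2^j)_{(i,j)}$ lies in $S(E)$ by the stability just established, and since $(a,b) \mapsto a \wedge b$ is continuous on $L^0(m) \times L^0(m)$ and product nets of convergent nets converge to the pair of limits, $h_1^i \wedge h_2^j \overset{m}{\to} h_1 \wedge h_2$. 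Hence $h_1 \wedge h_2$ lies in the $\tau(m)$-closure of $S(E)$, i.e. it is excessive.

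For a constant $C \geq 0$, superharmonicity of $h \wedge C$ follows just as in the first paragraph, with the role of Lemma~\ref{lemma:superharmonic functions as cutoff}~(a) now played by the elementary cutoff inequality $E(g \wedge C) \leq E(g)$ of Proposition~\ref{proposition:elementary cut-off properties}~(d): this gives $h \wedge C \in D(E)$, and for $f \in D(E)$ it gives $E(f \wedge h \wedge C) \leq E(f \wedge h) \leq E(f)$, so criterion (iv) holds. Finally $h_1 \wedge C$ excessive is obtained exactly as $h_1 \wedge h_2$ above: approximate $h_1$ by a net in $S(E)$, observe that each truncation $h_1^i \wedge C$ is superharmonic by what was just shown, and pass to the limit by continuity of $\wedge$. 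The only genuine obstacle is this limiting argument: because $m$ is merely assumed localizable, $\tau(m)$ need not be metrizable, so I must argue with nets rather than sequences and rely on the continuity of the lattice operations and the convergence of product nets, instead of appealing to sequential closure of $S(E)$.
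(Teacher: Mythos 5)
Your proof is correct and takes essentially the same route as the paper's: the chain $E(f \wedge h \wedge h_1) \leq E(f \wedge h) \leq E(f)$ obtained from Lemma~\ref{lemma:superharmonic functions as cutoff}, membership $h \wedge h_1 \in D(E)$, and the criterion of Theorem~\ref{theorem:characterization of superharmonic functions}, followed by a net approximation of excessive functions by superharmonic ones. Your explicit product-net argument and your use of the elementary cutoff $E(g \wedge C) \leq E(g)$ of Proposition~\ref{proposition:elementary cut-off properties} for the constant case (correctly avoiding the claim that constants are excessive, which fails without full support) simply spell out what the paper compresses into ``immediate consequence'' and ``along the same lines.''
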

\begin{proof}
 Let $h$ superharmonic and $h_1,h_2$ excessive.  Lemma~\ref{lemma:superharmonic functions as cutoff} implies $E(h \wedge h_1) \leq E(h)$, i.e., $h \wedge h_1 \in D(E)$. Another application of Lemma~\ref{lemma:superharmonic functions as cutoff} yields
 $$E(f \wedge h \wedge h_1) \leq E(f \wedge h) \leq E(f)  %
 \text{ for all }f \in D(E).$$
 With this at hand the superharmonicity of $h \wedge h_1$ follows from Theorem~\ref{theorem:characterization of superharmonic functions}. That $h_1\wedge h_2$ is excessive is an immediate consequence and the 'furthermore' statement can be inferred along the same lines.
\end{proof}

After having characterized superharmonic functions as minimizers of the energy we can now prove their existence. Indeed, we show the existence of a whole family of superharmonic functions.

\begin{proposition}\label{proposition:existence superharmonic functions}
 Let $E$ be an energy form. For every $f \in D(E)$ and all $\alpha> 0$, there exists a superharmonic function $h_{f,\,\alpha}$ such that
 $$ 1_{\{|f| \geq \alpha\}} \leq h_{f,\,\alpha} \leq  1 \text{ and } E(h_{f,\,\alpha}) = \inf \{ E(g) \mid g \in L^0(m) \text{ with } g \geq 1_{\{|f| \geq \alpha\}}\}. $$
 Moreover, these functions can be chosen such that $h_{g,\, \beta} \geq h_{f,\, \alpha}$ whenever $|g| \geq |f|$ and $\beta \leq \alpha$. In particular, if $E$ is nontrivial, then there exists a nontrivial superharmonic function. 
\end{proposition}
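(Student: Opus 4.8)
The plan is to realize $h_{f,\alpha}$ as an energy minimizer over the admissible set $\{g \in L^0(m) \mid g \geq 1_{\{|f| \geq \alpha\}}\}$ and then to pin down a \emph{canonical} minimizer so that the family becomes monotone. Write $A := \{|f| \geq \alpha\}$ and $m_0 := \inf\{E(g) \mid g \geq 1_A\}$. Since $|f| \in D(E)$ and $(\alpha^{-1}|f|)\wedge 1 \geq 1_A$ with $E((\alpha^{-1}|f|)\wedge 1) \leq \alpha^{-2}E(f) < \infty$ by the contraction properties (Proposition~\ref{proposition:elementary cut-off properties}), the value $m_0$ is finite. Replacing any admissible $g$ by $(g)_+ \wedge 1$ keeps it admissible and does not increase the energy, so $m_0 = \inf\{E(g) \mid 1_A \leq g \leq 1\}$ and every minimizing net may be taken inside the order interval $[1_A,1] \subseteq \Ltf$.

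First I would establish existence of a minimizer. The restriction of $E$ to $\Ltf$ is a closed form on the complete locally convex space $\Ltf$ (Lemma~\ref{lemma:restriction to Ltf}, Proposition~\ref{proposition:completeness local Lp}), hence by Theorem~\ref{theorem:characterization closedness lc} it is lower semicontinuous for the weak topology $\sigma(\Ltf,\Ltf')$. The order interval $\{g \in \Ltf \mid |g| \leq 1\}$ is weakly compact by Proposition~\ref{prop:weak compactness in Ltf}, and the convex set $\{g \in \Ltf \mid 1_A \leq g \leq 1\}$ is closed (being the trace of a $\tau(m)$-closed set under the continuous embedding $\Ltf \hookrightarrow L^0(m)$), hence weakly closed by Theorem~\ref{theorem:weak closure of convex sets}. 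Thus a minimizing net in $[1_A,1]$ has a weak cluster point $h$ with $1_A \leq h \leq 1$ and, by weak lower semicontinuity, $E(h) \leq m_0$; admissibility forces $E(h) = m_0$. By Theorem~\ref{theorem:characterization of superharmonic functions} (applied with $g = 1_A \leq h$), any such minimizer is superharmonic.

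The hard part will be the monotonicity clause, which I would handle in two steps. The first ingredient is the submodularity inequality $E(u\wedge v) + E(u\vee v) \leq E(u)+E(v)$ for $u,v \in D(E)$: writing $w = u\wedge v$, $a = u-w$, $b = v-w$, a direct expansion gives $E(u\wedge v)+E(u\vee v)-E(u)-E(v) = 2E(a,b)$, and since $a,b \geq 0$ with $a\wedge b = 0$, Lemma~\ref{lemma:cutoff for functions with disjoint support} yields $E(a,b)\leq 0$ (here $a,b \in D(E)$ since $D(E)$ is a lattice, Theorem~\ref{theorem:algebraic and order properties}). The second ingredient is a canonical selection: the set $\mathcal{H}_A$ of minimizers lying in $[1_A,1]$ is a sublattice, because for $h_1,h_2 \in \mathcal{H}_A$ both $h_1\wedge h_2$ and $h_1 \vee h_2$ are admissible and submodularity forces them to remain minimizers. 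Viewing $\mathcal{H}_A$ as a downward directed net and using Dedekind completeness of $L^0(m)$, the infimum $h_A := \inf \mathcal{H}_A$ is the $\tau(m)$-limit of this monotone net (Proposition~\ref{proposition:convergence of monotone nets}); lower semicontinuity gives $E(h_A) \leq m_0$ while $h_A \geq 1_A$ gives $E(h_A) \geq m_0$, so $h_A$ is the smallest minimizer in $[1_A,1]$. I then define $h_{f,\alpha} := h_A$.

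Finally I would verify monotonicity and nontriviality. If $|g| \geq |f|$ and $\beta \leq \alpha$ then $A = \{|f|\geq\alpha\} \subseteq A' := \{|g|\geq\beta\}$, so $1_A \leq 1_{A'}$. Now $h_A \wedge h_{A'} \geq 1_A$ is admissible for the $A$-problem and $h_A \vee h_{A'} \geq 1_{A'}$ for the $A'$-problem; adding these lower bounds and comparing with submodularity forces $E(h_A \wedge h_{A'}) = E(h_A)$, i.e.\ $h_A \wedge h_{A'} \in \mathcal{H}_A$. Since $h_A$ is the smallest element of $\mathcal{H}_A$, we obtain $h_A \leq h_A \wedge h_{A'} \leq h_{A'}$, which is precisely $h_{g,\beta} \geq h_{f,\alpha}$. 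For the last assertion, if $E$ is nontrivial I pick $0 \neq f \in D(E)$ and $\alpha > 0$ with $m(\{|f|\geq\alpha\}) > 0$; then $h_{f,\alpha} \geq 1_{\{|f|\geq\alpha\}}$ is a nonzero superharmonic function. The genuinely delicate point is that minimizers need not be unique (they agree only modulo $\ker E$), which is exactly why the canonical smallest-minimizer selection, rather than an arbitrary minimizer, is needed to make the whole family monotone.
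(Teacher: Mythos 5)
Your proof is correct, but it takes a genuinely different route from the paper. The paper first invokes the decomposition into recurrent and transient parts (Theorem~\ref{theorem:decomposition into recurrent and transient part}): in the recurrent case $h=1$ works trivially, and in the transient case $(D(E),E)$ is a Hilbert space by Theorem~\ref{theorem:continuous embedding of energy forms}, so the projection theorem applied to the closed convex set $K_{f,\alpha}$ yields a \emph{unique} minimizer $h_{f,\alpha}=P(0)$; monotonicity then falls out immediately, since $h_{g,\beta}\wedge h_{f,\alpha}\in K_{f,\alpha}$ has energy at most $E(h_{f,\alpha})$ by Theorem~\ref{theorem:characterization of superharmonic functions}~(iv), and uniqueness forces $h_{g,\beta}\wedge h_{f,\alpha}=h_{f,\alpha}$. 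You instead avoid the decomposition entirely: you obtain a minimizer by the direct method in $\Ltf$, combining weak compactness of order intervals (Proposition~\ref{prop:weak compactness in Ltf}) with weak lower semicontinuity of the restricted form (Lemma~\ref{lemma:restriction to Ltf} and Theorem~\ref{theorem:characterization closedness lc}) --- exactly the machinery the paper deploys in the proof of Theorem~\ref{theorem:continuous embedding of energy forms} --- and then, since minimizers are only unique modulo $\ker E$, you restore monotonicity through a canonical selection: the submodularity inequality $E(u\wedge v)+E(u\vee v)\leq E(u)+E(v)$, which you correctly derive from Lemma~\ref{lemma:cutoff for functions with disjoint support}, makes the minimizer set a sublattice, and Dedekind completeness together with Proposition~\ref{proposition:convergence of monotone nets} produces a smallest minimizer $h_A$, for which the submodularity comparison gives $h_A\leq h_A\wedge h_{A'}\leq h_{A'}$. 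Both arguments are sound; the paper's is shorter because transience buys uniqueness outright, while yours buys uniformity (no case split, no decomposition theorem) and a distinguished minimal element of the minimizer set even when the kernel is nontrivial --- in the transient case your $h_A$ of course coincides with the paper's projection. Your closing observation that an arbitrary selection of minimizers could break monotonicity, and that this is the genuinely delicate point, is exactly right; the paper sidesteps it by uniqueness rather than by selection.
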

\begin{proof} 
By Theorem~\ref{theorem:decomposition into recurrent and transient part}, we can assume that $E$ is either recurrent or transient. If $E$ is recurrent, then there is nothing to show since in this case $1 \in \ker E$ is superharmonic and satisfies all the required properties. 

 Now, assume $\ker E  = \{0\}$. For $f \in D(E)$ and $\alpha> 0$, we consider the convex set 
 $$ K_{f,\alpha}:= \{g \in D(E) \mid g \geq 1_{\{|f| \geq \alpha\}} \}.$$
 By the contraction properties of $E$, we have $\alpha^{-1}|f| \in K_{f,\alpha}$ and by Theorem~\ref{theorem:continuous embedding of energy forms}, $K_{f,\alpha}$ is closed in the Hilbert space $(D(E),E)$. Approximation theory in Hilbert spaces yields the existence of a projection $P:D(E)\to K_{f,\alpha}$ such that $P(h)$ is the unique element in $K_{f,\alpha}$ that satisfies
 $$E(h - P(h)) = \inf_{g \in K_{f,\alpha}} E(h - g).$$
 According to Theorem~\ref{theorem:characterization of superharmonic functions}, the function $h_{f,\, \alpha} := P(0) \in K_{f,\alpha}$ is superharmonic. Its is boundedness by $1$ follows from the contraction properties of $E$. 

 For the 'moreover' statement, we let $g \in D(E)$ with $|g| \geq |f|$ and $\beta \leq \alpha$ be given.  We have $h_{g,\, \beta} \geq 1_{\{|f| \geq \alpha\}}$, which implies $h_{g,\, \beta} \wedge h_{f,\, \alpha} \in K_{f, \alpha}$.  Since $h_{g,\beta}$ is superharmonic, Theorem~\ref{theorem:characterization of superharmonic functions} yields $E(h_{g,\, \beta} \wedge h_{f,\, \alpha})  \leq E(h_{f,\, \alpha})$. As $h_{f,\,\alpha}$ is the unique minimizer of $E$ on $K_{f,\alpha}$, we obtain $h_{g,\, \beta} \wedge h_{f,\, \alpha}  = h_{f,\, \alpha}$, i.e.,   $h_{g,\, \beta} \geq h_{f,\, \alpha}.$  This finishes the proof.
\end{proof}

The previous proposition set the stage for characterizing excessive functions in terms of contractions properties.

\begin{theorem}[Characterization excessive functions]\label{theorem:characterization of excessive functions}
 Let $E$ be an energy form of full support. For $h \in L^0(m)$ the following assertions are equivalent. 
 \begin{itemize}
 \item[(i)] $h$ is $E$-excessive. 
  \item[(ii)] For all $f \in D(E)$ the inequality $E(f \wedge h) \leq E(f)$ holds.
 \end{itemize}
\end{theorem}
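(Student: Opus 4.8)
The implication (i) $\Rightarrow$ (ii) requires no work: if $h$ is $E$-excessive, then assertion (a) of Lemma~\ref{lemma:superharmonic functions as cutoff} is exactly the inequality $E(f\wedge h)\le E(f)$ for all $f\in D(E)$. So the entire content lies in (ii) $\Rightarrow$ (i), and my plan is first to reduce it to the recurrent and the transient case via the decomposition $E=E_R\oplus E_T$ over $X=S\sqcup(X\setminus S)$ provided by Theorem~\ref{theorem:decomposition into recurrent and transient part}. Since $L^0(m)$ is the product of the $L^0$-spaces over $S$ and over $X\setminus S$, testing (ii) against functions supported in $S$ respectively $X\setminus S$ shows that (ii) holds for $E_R$ and for $E_T$ separately; likewise superharmonicity, and hence the $\tau(m)$-closure defining excessiveness, factor through the two invariant pieces, and each piece inherits full support. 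In the \emph{recurrent} case the argument is then immediate: here the constant function $1$ lies in $\ker E$, so $c\cdot 1\in D(E)$ with $E(c\cdot 1)=0$ for every $c>0$; applying (ii) with $f=c\cdot 1$ gives $E((c\cdot 1)\wedge h)\le E(c\cdot 1)=0$, whence $(c\cdot 1)\wedge h\in\ker E=L^0(X,\mathcal{I}(E),m)$ by Corollary~\ref{corollary:kernel recurrent forms}. Letting $c\to\infty$ and using that $\ker E$ is $\tau(m)$-closed (lower semicontinuity of $E$) yields $h\in\ker E\subseteq S(E)$, so $h$ is even superharmonic.

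The heart of the matter is the \emph{transient} case, where $(D(E),E)$ is a Hilbert space by Theorem~\ref{theorem:continuous embedding of energy forms}. Here I would approximate $h$ from below by superharmonic functions. Let $G=\{g\in D(E)\mid g\ge 0\}$, directed by $\le$; this is a directed set because $D(E)$ is a lattice (Theorem~\ref{theorem:algebraic and order properties}). The net $(g\wedge h)_{g\in G}$ is increasing and bounded above by $h$, so by Proposition~\ref{proposition:convergence of monotone nets} it converges in $\tau(m)$ to $\sup_{g\in G}(g\wedge h)$, and I claim this supremum equals $h$: indeed $g\wedge h=h$ wherever $h\le 0$, while for each $f\in D(E)$ the elements $c|f|\in G$ push the supremum up to $h$ on $\{f\ne 0\}$ as $c\to\infty$, and the sets $\{f\ne 0\}$ have essential supremum ${\rm supp}\, E=X$. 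For each $g\in G$ put $\gamma:=g\wedge h$, which lies in $D(E)$ by (ii), and let $u_g$ be the $E$-orthogonal projection of $0$ onto the closed convex set $K:=\{u\in D(E)\mid u\ge\gamma\}$. By Theorem~\ref{theorem:characterization of superharmonic functions} the minimizer $u_g$ is superharmonic and satisfies $u_g\ge\gamma$. Applying (ii) to $u_g$ gives $E(u_g\wedge h)\le E(u_g)$, and since $u_g\wedge h\ge\gamma$ still belongs to $K$, minimality together with uniqueness of the projection forces $u_g\wedge h=u_g$, i.e.\ $u_g\le h$. Thus $\gamma\le u_g\le h$; as $\gamma\to h$ in $\tau(m)$, the estimate $0\le h-u_g\le h-\gamma$ squeezes $u_g\to h$, exhibiting $h$ as a $\tau(m)$-limit of superharmonic functions, i.e.\ as excessive.

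I expect the genuine difficulties to sit in the transient step. First, the passage from ``$\sup_g(g\wedge h)\ge h$ on each $\{f\ne 0\}$'' to ``$\ge h$ on all of $X$'' must be run through the defining minimality of the essential supremum ${\rm supp}\, E$: if $B:=\{\sup_g(g\wedge h)<h\}$ met every $\{f\ne 0\}$ in a null set, then $X\setminus B$ would be an upper bound for the family $\{\{f\ne 0\}\}$ smaller than its essential supremum $X$, forcing $m(B)=0$. Second, one must verify that $K$ is closed in the Hilbert space $(D(E),E)$ so that the projection exists; for this I would use that in the transient case the form norm induces the subspace topology of $\tau(m)$ on $D(E)$ (Theorem~\ref{theorem:continuous embedding of energy forms}), so $E$-convergence implies convergence in measure and the order constraint $u\ge\gamma$ is preserved in the limit. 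The decomposition bookkeeping is routine but relies on the observation that superharmonicity can be tested independently on the two invariant summands $S$ and $X\setminus S$.
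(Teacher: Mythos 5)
Your proof is correct, and it reaches the conclusion by a genuinely different assembly of the same basic machinery. The paper first proves Proposition~\ref{proposition:existence superharmonic functions}, producing superharmonic functions $h_{f,\,\alpha}$ with $1_{\{|f|\geq\alpha\}}\leq h_{f,\,\alpha}\leq 1$ (the recurrent/transient dichotomy is absorbed there: in the recurrent case one simply takes $h_{f,\,\alpha}=1$, in the transient case one projects $0$ onto $K_{f,\alpha}=\{g\geq 1_{\{|f|\geq\alpha\}}\}$), and then uses as approximants the functions $h\wedge(n\,h_{f,\,\alpha})$, whose superharmonicity follows from combining hypothesis (ii) with the contraction criterion (iv) of Theorem~\ref{theorem:characterization of superharmonic functions}; these are $\leq h$ by construction, the index set is the directed family of triplets $(n,f,\alpha)$, and full support enters as $\sup_{f,\alpha}h_{f,\,\alpha}=1$. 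You instead work with obstacles depending on $h$ itself: you project $0$ onto $K=\{u\in D(E)\mid u\geq g\wedge h\}$ and extract the crucial bound $u_g\leq h$ from (ii) via \emph{uniqueness} of the Hilbert projection (noting $u_g\wedge h\in K$ with $E(u_g\wedge h)\leq E(u_g)=\inf_K E$) --- a trick the paper uses only in the `moreover' (monotonicity) part of Proposition~\ref{proposition:existence superharmonic functions}, and which here substitutes for the paper's capping $h\wedge(\cdot)$ and its order bookkeeping. What your route buys: the recurrent case is dispatched in one line ($c\wedge h\in\ker E$ by (ii), $\ker E$ is $\tau(m)$-closed by lower semicontinuity, and kernel elements are trivially superharmonic), and the transient approximation needs neither the two-parameter family $h_{f,\,\alpha}$ nor condition (iv) of Theorem~\ref{theorem:characterization of superharmonic functions}; the cost is that the reduction via Theorem~\ref{theorem:decomposition into recurrent and transient part} must be carried out explicitly at the level of the theorem, which you did correctly --- including the slightly delicate points that testing (ii) against functions supported in one invariant piece produces an extra nonnegative term $E_T\bigl((0\wedge h)|_{X\setminus S}\bigr)$ that may simply be dropped, that each piece inherits full support, and that superharmonic approximants on the two pieces recombine (by invariance of $S$ and polarization) into $E$-superharmonic functions over a product-directed index set. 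Your auxiliary verifications (closedness of $K$ in $(D(E),E)$ via Theorem~\ref{theorem:continuous embedding of energy forms}, nonemptiness of $K$ via (ii), and $\sup_{g}(g\wedge h)=h$ via the minimality property of the essential supremum defining ${\rm supp}\,E$) are exactly the right ones, so there is no gap.
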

\begin{proof}
 (i) $\Rightarrow$ (ii): This follows from Lemma~\ref{lemma:superharmonic functions as cutoff}.
 
 (ii) $\Rightarrow$ (i): For $f \in D(E)$ and $\alpha > 0$, we choose a superharmonic function $h_{f,\, \alpha}$ as in Proposition~\ref{proposition:existence superharmonic functions}. Combining assertion (ii) with Theorem~\ref{theorem:characterization of superharmonic functions} shows that for each $n$ the function 
 $$h_{(n,f,\,\alpha)} := h \wedge (n h_{f,\, \alpha})\in D(E)$$
 is superharmonic. We introduce a preorder on the set of triplets
 $$I:= \{ (n,f,\alpha) \mid n \in \IN, \alpha >0, f\in D(E)\}$$
 by letting $(n,f,\alpha) \prec (n',f',\alpha')$ if  and only if $n \leq n', \alpha \geq \alpha'$ and $|f| \leq |f'|$. Clearly, $(I,\prec)$ is upwards directed. By Proposition~\ref{proposition:existence superharmonic functions} the net $(h_i)_{i \in I}$ is monotone increasing. Since it is also bounded by $h$, Proposition~\ref{proposition:convergence of monotone nets} shows that it converges in $L^0(m)$ to its supremum. The assumption that $E$ has full support and the properties of the $h_{f,\,\alpha}$ yield
 $$\sup\{h_{f,\, \alpha} \mid  f \in D(E),\alpha >0\} = 1.$$
 As a consequence, we obtain 
 $$h = \sup_{i \in I} h_i  = \lim_{i \in I} h_i.$$
 This finishes the proof. 
 \end{proof}

 \begin{remark}
 \begin{itemize}
  \item The theorem is not true for general energy forms. Consider the set $X = \{a,b\}$ equipped with the counting measure $\mu$  and the energy form
  $$E:L^0(\mu) \to [0,\infty], \, f \mapsto E(f):= \begin{cases}
            f(a)^2 & \text{ if } f(b) = 0\\
            \infty & \text{ else}
           \end{cases}.
$$
A function $h$ is $E$-superharmonic if and only if $h(a) \geq 0$ and $h(b) = 0$. Therefore, the constant function $1$  satisfies $E(f \wedge 1) \leq E(f)$ but cannot be approximated by superharmonic functions. 

Proposition~\ref{proposition:support of an energy form} shows that we can always consider the energy form on its support as an energy form of full support. In this sense, the condition on the support is not  very restrictive. In the example, we have ${\rm supp}\,E = \{a\}$. Therefore, $1$ is $E$-excessive when $E$ is considered on the space $L^0(\{a\},\mu)$.
 
 \item  Classically, for a Dirichlet form $\E$ with associated semigroup $(T_t)$, an $h \in L^0(m)$ is called excessive if $T_t|h| < \infty$ $m$-a.e. and $T_t h \leq h$ for all $t>0$. It is shown in \cite[Proposition~2.10]{Kaj} that this is equivalent to $\Ee (f \wedge h) \leq \Ee(f)$ for all $f \in D(\Ee)$. The Dirichlet forms in \cite{Kaj} are densely defined on $L^2(m)$ and therefore have full support. Hence, our theorem and the previous discussion show  that  $h$ is $\Ee$-excessive in our sense if and only if it is  excessive in the classical sense. In the light of the this remark, it seems to be a new observation that excessive functions are precisely the limits of  superharmonic functions in the topology of local convergence in measure.  
 \end{itemize}
 \end{remark}

After having proven that there exist superharmonic (and excessive) functions we show that their sign and their support are not arbitrary. 

\begin{proposition}\label{lemma:superharmonic functions are positive}
Let $E$ be an energy form and let $h$ be $E$-excessive. Then 
$$E(h_-) = 0.$$
In particular, if $E$ is transient, then every $E$-excessive function is nonnegative. 
\end{proposition}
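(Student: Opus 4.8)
The plan is to read the desired identity directly off the cutoff inequality of Lemma~\ref{lemma:superharmonic functions as cutoff}, which already holds for every $E$-excessive $h$ and every $f \in D(E)$, so that no separate treatment of the superharmonic and the excessive case is needed at this point. The one observation to make is that, with the convention $h_- = (-h)\vee 0$, the negative part of $h$ is precisely the cutoff of the zero function at $h$, namely $0\wedge h = h\wedge 0 = -h_-$.

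First I would apply part (a) of Lemma~\ref{lemma:superharmonic functions as cutoff} with the test function $f = 0$. Since $0\in D(E)$ and $E(0)=0$ by homogeneity, this yields $E(0\wedge h)\le E(0)=0$, and hence $E(0\wedge h)=0$ because $E$ takes values in $[0,\infty]$. Using the identity $0\wedge h = -h_-$ together with the homogeneity $E(-u)=E(u)$, I would then conclude
$$E(h_-) = E\big(-(0\wedge h)\big) = E(0\wedge h) = 0,$$
which is exactly the first assertion.

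For the ``in particular'' part I would simply combine $E(h_-)=0$ with transience. By definition transience means $\ker E = \{0\}$, so $E(h_-)=0$ forces $h_- \in \ker E = \{0\}$, i.e. $h_- = 0$. Since $h_- = (-h)\vee 0 = 0$ is equivalent to $h\ge 0$, this shows that every $E$-excessive function is nonnegative when $E$ is transient.

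There is no genuine obstacle here: the entire content is already packaged in Lemma~\ref{lemma:superharmonic functions as cutoff}, and the only thing one must notice is that taking the negative part of $h$ coincides with cutting the zero function down by $h$. I would emphasize in the write-up that the cited lemma is stated for excessive (not merely superharmonic) functions, so the reduction to the superharmonic case via lower semicontinuity has already been absorbed into that lemma and need not be repeated.
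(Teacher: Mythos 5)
Your proof is correct, and it takes a genuinely different (and shorter) route than the paper. The paper first treats the superharmonic case by hand: it invokes Lemma~\ref{lemma:cutoff for functions with disjoint support} to get $E(h_+,h_-)\leq 0$, combines this with the defining inequality $E(h,h_-)\geq 0$ (valid since $h_-\geq 0$ lies in $D(E)$ by the contraction properties) to squeeze out $E(h_-)=0$, and only then passes to general excessive $h$ by choosing a net of superharmonic functions $h_i\overset{m}{\to}h$ and using lower semicontinuity together with continuity of the lattice operations, which gives $E(h_-)\leq\liminf_i E((h_i)_-)=0$. You instead observe that Lemma~\ref{lemma:superharmonic functions as cutoff}~(a) is already stated for excessive $h$, so the single substitution $f=0$ yields $E(0\wedge h)\leq E(0)=0$, and the identity $0\wedge h=-h_-$ together with homogeneity finishes the first claim; the approximation step is thus absorbed into the cited lemma rather than repeated. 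There is no circularity: the proof of Lemma~\ref{lemma:superharmonic functions as cutoff} does not use the proposition, and $0\in D(E)$ with $E(0)=0$ is automatic from homogeneity (with the convention $0\cdot\infty=0$). The trade-off is that the paper's argument is self-contained modulo the more elementary disjoint-support lemma and makes the mechanism ($E(h_+,h_-)\leq 0$ versus $E(h,h_-)\geq 0$) visible, whereas yours buys brevity by exploiting that the heavier lemma was formulated at the excessive level. One small point of care in the write-up: the paper's stated convention $f_-:=(-f)\wedge 0$ is evidently a typo for $(-f)\vee 0$ (its own proof uses $h=h_+-h_-$ and $h_+\wedge h_-=0$), and your reading $h_-=(-h)\vee 0$, hence $h\wedge 0=-h_-$, is the intended one. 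The transience step is exactly as in the paper.
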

\begin{proof}
Let $h$ be $E$-superharmonic. According to Lemma~\ref{lemma:cutoff for functions with disjoint support}, the inequality $E(h_-,h_+) \leq 0$ holds. We obtain
$$ 0 \leq E(h,h_-) = E(h_+,h_-)  -  E(h_-) \leq 0, $$
which implies $E(h_-) = 0$. 

For an $E$-excessive $h$  we consider a net of $E$-superharmonic functions $(h_i)$ with $h_i \overset{m}{\to } h$.  The lower semicontinuity of $E$ yields
$$E(h_-) \leq \liminf_i E( (h_i)_-) = 0.$$
This finishes the proof. 
\end{proof}

The previous lemma already suggests that more can be said about excessive functions when the form is recurrent or transient. Before discussing this relation, we need the following lemma.

\begin{lemma}\label{lemma:support of superharmonic function is invariant}
 Let $E$ be an energy form and let $h \geq 0$ be $E$-excessive. The set $\{h>0\}$ is $E$-invariant.
\end{lemma}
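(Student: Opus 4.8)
The plan is to invoke the characterization of invariant sets in Lemma~\ref{lemma:characterization invariant sets}, which reduces the claim to showing that $E(1_{\{h>0\}} f) \leq E(f)$ for every $f \in D(E)$. To manufacture the indicator $1_{\{h>0\}}$ out of the excessive function $h$, I will let $n$ tend to infinity in $nh$. This requires knowing that $nh$ is again excessive, which follows from the fact that the excessive functions form a cone: the superharmonic functions $S(E)$ are stable under multiplication by nonnegative scalars, since $E(ch,\psi) = c\,E(h,\psi) \geq 0$ for $c \geq 0$ and nonnegative $\psi$, and this cone property passes to the $\tau(m)$-closure. So $nh$ is $E$-excessive for each $n \in \IN$, and this small observation should be recorded first.

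Fix $f \in D(E)$ and set $g_n := (f \wedge nh) \vee (-nh)$, the truncation of $f$ to the symmetric band between $-nh$ and $nh$. On $\{h = 0\}$ one has $nh = 0$, hence $g_n = 0$ there, while on $\{h > 0\}$ the value $nh(x)$ increases to $+\infty$, so $g_n(x) \to f(x)$ pointwise $m$-a.e. Since $|g_n - 1_{\{h>0\}}f| \wedge 1 \leq 1$ and $m$ is finite on each set of finite measure, dominated convergence (Lemma~\ref{lemma:Lebesgue's theorem}) yields $g_n \overset{m}{\to} 1_{\{h>0\}} f$.

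The heart of the argument is the energy bound $E(g_n) \leq E(f)$, obtained by applying Lemma~\ref{lemma:superharmonic functions as cutoff}(a) twice with the excessive function $nh$. First, $u := f \wedge nh$ satisfies $E(u) \leq E(f)$. Writing $g_n = u \vee (-nh) = -\bigl((-u) \wedge nh\bigr)$ and using homogeneity $E(-v) = E(v)$ together with a second application of Lemma~\ref{lemma:superharmonic functions as cutoff}(a) to $-u \in D(E)$, I get $E(g_n) = E\bigl((-u) \wedge nh\bigr) \leq E(-u) = E(u) \leq E(f)$. Combining this with the lower semicontinuity of $E$ gives $E(1_{\{h>0\}} f) \leq \liminf_n E(g_n) \leq E(f)$, and Lemma~\ref{lemma:characterization invariant sets} then delivers the invariance of $\{h>0\}$.

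The step I expect to require the most care is the very construction of $g_n$, and it is also the only place where the sign of $f$ intervenes. A one-sided truncation $f \wedge nh$ alone controls the energy but converges to $1_{\{h>0\}} f$ only for nonnegative $f$; for signed $f$ the symmetric band $[-nh, nh]$ is essential, and its energy estimate precisely needs the cutoff inequality applied to both $f$ and $-u$. Once the cone property and this symmetric truncation are in place, the remaining estimates are routine.
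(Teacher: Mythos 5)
Your proof is correct and follows essentially the same route as the paper: the paper also sets $f_n := (f \wedge nh) \vee (-nh)$, bounds $E(f_n) \leq E(f)$ via Lemma~\ref{lemma:superharmonic functions as cutoff} using that $nh$ is excessive, passes to the limit $f_n \overset{m}{\to} 1_{\{h>0\}}f$ with lower semicontinuity, and concludes via Lemma~\ref{lemma:characterization invariant sets}. Your only additions are welcome bookkeeping the paper leaves implicit, namely the cone property of excessive functions and the identity $u \vee (-nh) = -\bigl((-u) \wedge nh\bigr)$ justifying the energy bound for the symmetric truncation.
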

\begin{proof}
 For $f \in D(E)$ and $n\geq0$ we set 
 $$f_n:= (f \wedge (n h)) \vee (-nh).$$
 Since $nh$ is excessive, Lemma~\ref{lemma:superharmonic functions as cutoff} implies
 $$ E(f_n) \leq E(f).$$
 From the convergence $f_n \overset{m}{\to} f 1_{\{h>0\}}$, as $n \to \infty$, and the lower semicontinuity of $E$ we infer
 $$E(f 1_{\{h>0\}}) \leq \liminf_{n \to \infty} E(f_n) \leq E(f).$$
 According to Lemma~\ref{lemma:characterization invariant sets}, this shows the invariance of $\{h >0\}$.
\end{proof}

\begin{proposition} \label{proposition: superharmonic functions are strictly positive}
 Let $E$ be an irreducible transient energy form. Then every nonzero $E$-excessive function is strictly positive. If, additionally, $E$ is nontrivial, then there exists a strictly positive $E$-superharmonic function.
\end{proposition}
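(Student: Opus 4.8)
The plan is to derive both assertions from the two structural lemmas on excessive functions together with irreducibility, so the proof will be a short chain of already-established results. First I would treat the claim about nonzero excessive functions. Let $h$ be a nonzero $E$-excessive function. Since $E$ is transient, Proposition~\ref{lemma:superharmonic functions are positive} already yields $E(h_-) = 0$ and hence $h \geq 0$, so in particular $m(\{h > 0\}) > 0$. The decisive point is then that the support set $\{h > 0\}$ is $E$-invariant by Lemma~\ref{lemma:support of superharmonic function is invariant}. Irreducibility (Definition~\ref{definition:invariant sets}) forces $m(\{h > 0\}) = 0$ or $m(X \setminus \{h > 0\}) = 0$. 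The first alternative is excluded because $h$ is nonzero and nonnegative, so $m(X \setminus \{h > 0\}) = 0$ must hold; this is precisely the statement that $h$ is strictly positive.

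For the second assertion I would simply exhibit such a function rather than construct a new one. Since $E$ is nontrivial, the final sentence of Proposition~\ref{proposition:existence superharmonic functions} provides a nonzero $E$-superharmonic function $h$. As superharmonic functions lie in $S(E)$, which is contained in its $\tau(m)$-closure, $h$ is in particular $E$-excessive. Being a nonzero excessive function on the irreducible transient form $E$, the first part of the proposition applies to $h$ and shows $h > 0$ $m$-a.e. Thus $h$ is the desired strictly positive $E$-superharmonic function, completing the proof.

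Since both parts reduce to applying earlier results in sequence, I do not expect a serious technical obstacle; the only point requiring care is the logical ordering of the hypotheses. One must invoke transience first, so that nonnegativity holds and the support lemma (stated for nonnegative excessive functions) becomes applicable; then combine support-invariance with irreducibility to rule out a proper support; and finally use nontriviality only at the very end, solely to guarantee that the excessive function under consideration is not identically zero.
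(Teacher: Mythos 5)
Your proof is correct and follows exactly the paper's argument: nonnegativity from transience via Proposition~\ref{lemma:superharmonic functions are positive}, invariance of $\{h>0\}$ via Lemma~\ref{lemma:support of superharmonic function is invariant} combined with irreducibility for the first claim, and Proposition~\ref{proposition:existence superharmonic functions} together with the first part for the second. Your added remark that superharmonic functions are excessive (so the first part applies) is a detail the paper uses implicitly.
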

\begin{proof}
Let $h\neq 0$ be $E$-excessive. By Proposition~\ref{lemma:superharmonic functions are positive} we have $h\geq 0$. Furthermore,  Lemma~\ref{lemma:support of superharmonic function is invariant} shows that $\{h>0\}$ is invariant. Since $E$ is irreducible and $h$ is nontrivial, this implies $h > 0$ $m$-a.e., what we claimed.

For the 'in particular' part we  use Proposition~\ref{proposition:existence superharmonic functions} to obtain a superharmonic function $0\neq h \in D(E)$. It is strictly positive by the above. 
\end{proof}

One application of the previous observation is that nontrivial irreducible energy forms always have full support. 

\begin{corollary}\label{corollary: irreducible forms have full support}
 Let $E$ be a nontrivial irreducible energy form. Then $E$ has full support. 
\end{corollary}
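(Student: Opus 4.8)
The plan is to reduce everything to producing a single strictly positive function in the form domain and then to read off full support from the defining property of the essential supremum $\sE$. Since $E$ is irreducible, the dichotomy of Corollary~\ref{corolllary:dichotomy recurrence and transience} tells us that $E$ is either recurrent or transient, and I would treat these two cases separately, in each case exhibiting some $h \in D(E)$ with $h > 0$ $m$-a.e.

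In the recurrent case, the definition of recurrence gives $1 \in \ker E$, and since $\ker E \subseteq D(E)$ we may simply take $h = 1$, which is strictly positive. In the transient case I would invoke Proposition~\ref{proposition: superharmonic functions are strictly positive}: as $E$ is irreducible, transient and (by hypothesis) nontrivial, that proposition furnishes a strictly positive $E$-superharmonic function $h$, which lies in $D(E)$ by definition of superharmonicity. Either way we obtain $h \in D(E)$ with $h > 0$ $m$-a.e., so that $\{h > 0\} = X$ up to an $m$-null set.

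It then remains to compare $h$ with the support. For every $n \geq 1$ the set $\{|h| \geq 1/n\} = \{h \geq 1/n\}$ belongs to the family $\mathcal{S} = \{\{|f| \geq \alpha\} \mid f \in D(E),\, \alpha > 0\}$ whose essential supremum is $\sE$; hence $m(\{h \geq 1/n\} \setminus \sE) = 0$ by the first defining property of an essential supremum in Definition~\ref{definition:taxonomy of measures}(d). Since $\{h \geq 1/n\} \uparrow \{h > 0\}$ as $n \to \infty$, taking the countable union yields $m(\{h > 0\} \setminus \sE) = 0$, and combining this with $m(X \setminus \{h > 0\}) = 0$ gives $m(X \setminus \sE) = 0$, i.e.\ $\sE = X$. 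I do not expect any genuine obstacle here: the substance of the statement has been front-loaded into the existence of strictly positive superharmonic functions (Proposition~\ref{proposition: superharmonic functions are strictly positive}), and the only care needed is the bookkeeping with the essential supremum — using both that each member of $\mathcal{S}$ is dominated by $\sE$ up to a null set and that a countable union of null sets is null — together with the clean split into the recurrent and transient cases.
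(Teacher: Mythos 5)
Your proposal is correct and follows essentially the same route as the paper's proof: split via the dichotomy of Corollary~\ref{corolllary:dichotomy recurrence and transience}, take $h=1$ in the recurrent case, invoke Proposition~\ref{proposition: superharmonic functions are strictly positive} in the transient case, and conclude $\sE = X$ from the strictly positive $h \in D(E)$. Your explicit bookkeeping with $\{h \geq 1/n\} \in \mathcal{S}$ and the countable union of null sets is just a spelled-out version of the paper's identity $1_X = \sup_{\alpha>0} 1_{\{h\geq \alpha\}}$.
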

\begin{proof}
 According to Corollary~\ref{corolllary:dichotomy recurrence and transience}, the form $E$ is either recurrent or transient. In the recurrent case, $1 \in \ker E \subseteq D(E)$ implies $\sE = X$. In the transient case, the previous proposition guarantees the existence of a strictly positive function in $h \in D(E)$. But then 
 $$1_X = \sup_{\alpha> 0} 1_{\{h \geq \alpha\}}, $$
 and we obtain $\sE = X$. This finishes the proof.
\end{proof}

We finish this section by discussing excessive functions for recurrent forms. 
  
\begin{proposition} \label{proposition:superharmonic functions are constant for recurrent forms}
  Let $E$ be a recurrent energy form. Every $E$-excessive function belongs to $\ker E$.  If, additionally, $E$ is  irreducible, then every $E$-excessive function is constant.
\end{proposition}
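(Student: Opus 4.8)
The statement splits into two claims, and only the first requires work: every $E$-excessive function lies in $\ker E$; the reduction to constants in the irreducible case is then immediate from the description of the kernel.

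For the first claim the plan is to exploit recurrence together with the cutoff inequality already available for excessive functions. Since $E$ is recurrent we have $1 \in \ker E$, and because $\ker E$ is a vector space every constant function $C$ lies in $\ker E \subseteq D(E)$ and satisfies $E(C) = 0$. Now let $h$ be $E$-excessive. Applying Lemma~\ref{lemma:superharmonic functions as cutoff}~(a) with the admissible test function $f = C$ (for $C \geq 0$) yields $E(h \wedge C) \leq E(C) = 0$, hence $E(h \wedge C) = 0$ for every $C \geq 0$. It remains to pass to the limit $C \to \infty$: since $h \in L^0(m)$ is finite $m$-a.e., one checks that $h \wedge C \overset{m}{\to} h$ (on each $U \in \Bf$ the defining pseudometric is controlled by $m(U \cap \{h > C\}) \to 0$). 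Lower semicontinuity of $E$, which holds since $E$ is closed (Theorem~\ref{theorem:characterization closedness}), then gives $E(h) \leq \liminf_{C \to \infty} E(h \wedge C) = 0$, so $h \in \ker E$.

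For the second claim, assume in addition that $E$ is irreducible. By the first part any $E$-excessive function $h$ lies in $\ker E$, and Theorem~\ref{thm:kernel of an energy form} shows that irreducibility forces $\ker E \subseteq \IR 1$. Therefore $h$ is constant, as claimed.

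I expect the only delicate point to be the harmless measure-theoretic justification that $h \wedge C \to h$ locally in measure; everything else is a direct application of the cutoff lemma and lower semicontinuity. An alternative route avoiding Lemma~\ref{lemma:superharmonic functions as cutoff} would start from a superharmonic $h$, test superharmonicity against $(C - h)_+ \in D(E)$, and use $E(C,\cdot) = 0$ together with Lemma~\ref{lemma:cutoff for functions with disjoint support} to deduce $E((C-h)_+) = E(h \wedge C) = 0$ before taking the same limit; one then extends to excessive $h$ by lower semicontinuity along an approximating net. The first approach is shorter, so I would adopt it.
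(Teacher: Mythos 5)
Your proof is correct and follows essentially the same route as the paper: recurrence gives $E(h\wedge C)\leq E(C)=0$ via Lemma~\ref{lemma:superharmonic functions as cutoff}, lower semicontinuity along $h\wedge C\overset{m}{\to}h$ yields $h\in\ker E$, and Theorem~\ref{thm:kernel of an energy form} settles the irreducible case. The measure-theoretic point you flag is harmless exactly as you argue, and the paper treats it implicitly in the same way.
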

\begin{proof}
 Let $h$ be $E$-excessive. By the recurrence of $E$, we have $1 \in \ker E$. Using the lower semicontinuity of $E$ and Lemma~\ref{lemma:superharmonic functions as cutoff} we obtain
 $$E(h) \leq \liminf_{n \to \infty} E(h \wedge (n 1)) \leq \liminf_{n \to \infty} E(n 1) = 0,$$
 which shows $h \in \ker E$.  It follows from Theorem~\ref{thm:kernel of an energy form} that $h$ is constant when $E$ is irreducible. This finishes the proof.
\end{proof}

If the form has full support and the domain of the energy form contains a nonconstant function, a converse to the previous proposition holds true.

\begin{theorem} \label{theorem:recurrence in terms of constant excessive functions}
 Let $E$ be an energy form of full support. The following assertions are equivalent.
 \begin{itemize}
  \item[(i)] $E$ is recurrent.
  \item[(ii)] Every $E$-excessive function belongs to $\ker E$. 
  \end{itemize}
   If, additionally, $E$ is irreducible and $D(E)$ contains a nonconstant function, then both assertions are equivalent to the following.
  \begin{itemize}
  \item[(iii)] Every $E$-excessive function is constant. 
 \end{itemize}
\end{theorem}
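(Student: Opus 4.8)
The plan is to establish the first equivalence via the cycle (i) $\Rightarrow$ (ii) $\Rightarrow$ (i), and then, under the extra hypotheses, to close the triangle with (i) $\Rightarrow$ (iii) and (iii) $\Rightarrow$ (i). For (i) $\Rightarrow$ (ii) I would simply invoke Proposition~\ref{proposition:superharmonic functions are constant for recurrent forms}, which says that for a recurrent form every excessive function lies in $\ker E$. The reverse implication (ii) $\Rightarrow$ (i) is where full support enters: since $x \mapsto x \wedge 1$ is a normal contraction, Theorem~\ref{theorem:cutoff properties energy forms} gives $E(f \wedge 1) \le E(f)$ for all $f \in D(E)$, and hence by the characterization of excessive functions for forms of full support, Theorem~\ref{theorem:characterization of excessive functions}, the constant function $1$ is $E$-excessive. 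Applying (ii) to $h = 1$ then yields $1 \in \ker E$, i.e. recurrence. The fact that $1$ is excessive is something I will reuse.

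For the additional equivalence, (i) $\Rightarrow$ (iii) is again immediate from the second assertion of Proposition~\ref{proposition:superharmonic functions are constant for recurrent forms} (recurrent plus irreducible forces excessive functions to be constant). The substance is (iii) $\Rightarrow$ (i), which I would prove by contradiction using the dichotomy Corollary~\ref{corolllary:dichotomy recurrence and transience}: an irreducible form is recurrent or transient, so it suffices to exclude transience under (iii). Assume $E$ is transient. Since $D(E)$ contains a nonconstant function the form is nontrivial, so Proposition~\ref{proposition: superharmonic functions are strictly positive} provides a strictly positive superharmonic function; by (iii) it is a positive constant, whence $1 \in D(E)$ and $1$ is superharmonic, while transience gives $\ker E = \{0\}$ and therefore $E(1) > 0$. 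The goal is now to manufacture a \emph{nonconstant} excessive function, contradicting (iii).

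The natural candidates are the equilibrium potentials $h_{f,\alpha}$ of Proposition~\ref{proposition:existence superharmonic functions}, which are superharmonic, hence excessive. Fixing a nonnegative nonconstant $f_0 \in D(E)$ (one of $f_0^+, f_0^-$ is nonconstant) and an $\alpha$ with $0 < m(\{f_0 \ge \alpha\}) < m(X)$, one has $1_{\{f_0 \ge \alpha\}} \le h_{f_0,\alpha} \le 1$; if (iii) holds, then $h_{f_0,\alpha}$ is constant and, the lower bound having positive measure, equal to $1$. Thus under the contradiction hypothesis $1$ is the energy minimizer over $\{g \ge 1_A\}$ for every such superlevel set $A = \{f_0 \ge \alpha\}$, which, by testing minimality against perturbations vanishing on $A$, means the positive killing functional $\kappa := E(1,\cdot)$ is concentrated on each such $A$, i.e. $\kappa(\psi) = 0$ whenever $\psi \in D(E)$ vanishes on $A$. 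The aim is to combine this over a suitable family of superlevel sets and, using full support, conclude $\kappa \equiv 0$, i.e. $E(1) = 0$, contradicting $E(1) > 0$. One clean special case is worth recording first: if $D(E)$ contains an \emph{unbounded} function, the capacity estimate $E(h_{f_0,\alpha}) \le E(f_0/\alpha) = E(f_0)/\alpha^2$ together with $h_{f_0,\alpha} = 1$ forces $E(1) \le E(f_0)/\alpha^2 \to 0$ as $\alpha \to \infty$.

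The main obstacle is exactly the last step in the bounded case: upgrading ``$\kappa$ is concentrated on every superlevel set'' to ``$\kappa = 0$''. The difficulty is that superlevel sets $\{f \ge \alpha\}$ need not be $E$-invariant, so one cannot multiply test functions by their indicators and remain in $D(E)$. I would circumvent this by replacing sharp indicators with Lipschitz cutoffs such as $\eta = 1 - (f_0/\alpha) \wedge 1 \in D(E) \cap L^\infty$, which vanishes on $A$ and is strictly positive off $A$: multiplying a bounded test function $\psi$ by $\eta$ keeps it in $D(E)$ by the algebra property of Theorem~\ref{theorem:algebraic and order properties}, and letting such cutoffs exhaust $X$ (using that $E$ has full support, so $\sup_{f,\alpha} 1_{\{f \ge \alpha\}} = 1$) should recover $\kappa(\psi) = 0$ for all bounded $\psi$, and then, by density of bounded functions (Proposition~\ref{proposition:approximation by bounded functions}) and continuity of $\kappa$ on the Hilbert space $(D(E),E)$ (Theorem~\ref{theorem:continuous embedding of energy forms}), for all $\psi \in D(E)$.
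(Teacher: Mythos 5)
Your handling of (i) $\Leftrightarrow$ (ii) and of (i) $\Rightarrow$ (iii) is correct and coincides with the paper's argument: full support makes the constant function $1$ excessive via Theorem~\ref{theorem:characterization of excessive functions}, and Proposition~\ref{proposition:superharmonic functions are constant for recurrent forms} supplies the recurrent direction. Your special case of (iii) $\Rightarrow$ (i) when $D(E)$ contains an unbounded function is also sound: $E(1)=E(h_{f_0,\alpha})\le E(f_0)/\alpha^2\to 0$ as $\alpha\to\infty$ contradicts transience. But the bounded case --- which you correctly flag as the main obstacle --- is a genuine gap, and the cutoff repair you sketch cannot close it. Your established fact is $E(1,\psi)=0$ for bounded $\psi\in D(E)$ vanishing on a superlevel set $A=\{f_0\ge\alpha\}$ with $m(A)>0$, and that positivity of measure is precisely what pins $\|(f_0/\alpha)\wedge 1\|_\infty=1$, since $(f_0/\alpha)\wedge 1$ equals $1$ exactly on $A$. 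Hence in the identity $\kappa(\psi)=E\bigl(1,((f_0/\alpha)\wedge 1)\psi\bigr)$ the bound from Theorem~\ref{theorem:algebraic and order properties} contains the term $\|(f_0/\alpha)\wedge 1\|_\infty E(\psi)^{1/2}=E(\psi)^{1/2}$, which never becomes small. Moreover the exhaustion runs in the wrong direction: full support gives superlevel sets increasing to $X$ as $\alpha\to 0$, which forces your cutoffs $\eta\to 0$ rather than $\eta\to 1$; what you would need is $A$ shrinking to a null set while retaining positive measure, and for bounded $f_0$ this fails as soon as $\alpha>\|f_0\|_\infty$, where $m(A)=0$, the minimizer over $\{g\ge 1_A\}$ is $0$ rather than $1$, and the premise evaporates.

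The paper's device is different and avoids indicator obstacles altogether: normalize the nonconstant $g\in D(E)$ so that $0\le g\le 1$ with $\|g\|_\infty=\|1-g\|_\infty=1$, and run \emph{two} obstacle problems, minimizing $E$ over $\{f\ge g\}$ and over $\{f\ge 1-g\}$. The minimizers are superharmonic by Theorem~\ref{theorem:characterization of superharmonic functions}, hence constant under (iii), and the sup-norm normalization forces both to equal $1$. Testing the two minimality statements against the convex combinations $g+\alpha(1-g)$ and $(1-g)+\alpha g$ and letting $\alpha\to 1-$ yields $E(1)\le E(1,g)\le 0$, contradicting transience. It is worth noting that your $\kappa$-identity does recover half of this: taking $\psi_\varepsilon=(1-g-\varepsilon)_+$, which vanishes on the positive-measure set $\{g\ge 1-\varepsilon\}$, and letting $\varepsilon\to 0+$ (using Proposition~\ref{proposition:approximation by bounded functions} and the $E$-continuity of $\kappa$) gives $E(1,1-g)=0$, i.e.\ $E(1)=E(1,g)$. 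But the remaining inequality $E(1,g)\le 0$ comes from the obstacle problem with obstacle $1-g$, which is not an indicator, so your framework as stated cannot produce it; indicator obstacles alone only yield capacity-type bounds $E(1)\le E(f)/\alpha^2$, which, as your own computation shows, help only when $\alpha$ can be sent to infinity.
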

\begin{proof}
 The implications (i) $\Rightarrow$ (ii) and (i) $\Rightarrow$ (iii) (under the additional assumption) follow from Proposition~\ref{proposition:superharmonic functions are constant for recurrent forms}.
 
 (ii) $\Rightarrow$ (i): Since $E$ has full support, Theorem~\ref{theorem:characterization of excessive functions} shows that the constant function $1$ is  $E$-excessive. By (ii) it belongs to $\ker E$, i.e., $E$ is recurrent.

 Assume that $E$ is irreducible and $D(E)$ contains a nonconstant function.

 (iii) $\Rightarrow$ (i): Let $g \in D(E)$ be not constant.  After possibly rescaling $g$ and using the contraction properties of $E$, we can assume that $0 \leq g \leq 1$, $\|g\|_\infty = 1$ and $\|1-g\|_\infty = 1$. 
 
 Assume  $E$ were transient. With the same arguments as in the proof of Proposition~\ref{proposition:existence superharmonic functions}, we obtain unique $E$-superharmonic functions $h_1$  and $h_2$ that are bounded by $1$ and satisfy
 $$E(h_1) = \inf \{E(f) \mid f \in L^0(m) \text{ with } f \geq g\}$$
 and
 $$E(h_2) = \inf \{E(f) \mid f \in L^0(m) \text{ with }  f \geq 1-g\}.$$
 According to assertion (iii), $h_1$ and $h_2$ must be constant. Since $\|g\|_\infty = \|1-g\|_\infty = 1$, we conclude $h_1 = h_2  = 1$.  For $0 < \alpha < 1$, the properties of $h_1$ and $h_2$ imply 
 $$E(1) = E(h_1)  \leq E(g + \alpha (1-g)) = \alpha^2 E(1) + (1-\alpha)^2 E(g) + 2\alpha (1-\alpha) E(1,g)$$
 and 
 $$E(1) = E(h_2)  \leq E(1-g + \alpha g) = E(1) + (1-\alpha)^2 E(g) - 2 (1-\alpha) E(1,g).$$
 Rearranging these inequalities and dividing by $1-\alpha > 0$ amounts to
 $$(1+\alpha) E(1) \leq (1-\alpha) E(g) + 2\alpha E(1,g)$$
 and 
 $$2 E(1,g) \leq (1-\alpha)E(g).$$
 Letting $\alpha \to  1-$ yields $E(1) \leq E(1,g) \leq 0$, a contradiction to the transience of $E$. 
\end{proof}

\begin{remark}
\begin{itemize}
 \item We have already noted that excessive functions in our sense and in the sense of Dirichlet form theory coincide. Therefore, the previous theorem is a generalization of the main result of \cite{Kaj} to energy forms. Again, we would like to point out the algebraic character of our proof. 
 \item Without the assumption on the existence of a nonconstant function in the domain of the energy form the previous theorem does not hold. For example, consider $X = \{a\}$ equipped with the counting measure $\mu$. The energy form $$E:L^0(\mu) \to [0,\infty],\, f\mapsto E(f) = f(a)^2$$ is transient and only possesses constant functions in its domain. 
\end{itemize}
\end{remark}

\section{Capacities}
 
In this section  we introduce the notion of capacity for energy forms.  We prove that capacity theory  is trivial for recurrent forms and provide some properties of capacities in the transient case. Since no new observations or proofs appear here, we only prove the theorems that we use later and refer the reader to  \cite{FOT} and \cite{MR} for more detailed accounts.  

\begin{definition}[Capacity]
Let $E$ be an energy form.  A nonnegative function  $h \in L^0(m)$ is called {\em admissible} if 
$$E(f \wedge h) \leq E(f)\text{ for all }f \in D(E).$$
Let $h\in L^0(m)$ be admissible and let $U\subseteq X$ be measurable. The {\em capacity of $U$ with respect to $E$ and $h$} is defined by 
$$\ceh(U) := \inf\{E(f) \mid f \in L^0(m) \text{ with } f 1_U \geq h1_U \}.$$
 If, additionally, $X$ is a topological space and $m$ is a Borel measure on $X$, for any $A \subseteq X$ we let the {\em topological capacity of $A$ with respect to $E$ and $h$} be given by
$$\ceh^{\rm top}(A) = \inf \{ \ceh(O)  \mid O \text{ open and } A \subseteq O\}.$$
\end{definition}

\begin{remark}
\begin{itemize}
 \item The contraction properties of energy forms imply that constant functions are admissible and Lemma~\ref{lemma:superharmonic functions as cutoff} shows that excessive functions and superharmonic functions are admissible as well. Indeed,  Theorem~\ref{theorem:characterization of excessive functions} yields that all admissible functions are excessive when the form has full support. 
 
 \item The advantage of using superharmonic functions instead of arbitrary admissible ones is that superharmonic functions always belong to the domain of the energy form and so the whole space has finite capacity. 
 
 \item We warn the reader not to confuse  $\ceh$ with $\ceh^{\rm top}$, which is studied in \cite{FOT} or \cite{MR}. In particular, $\ceh$ is only defined on the underlying $\sigma$-algebra and has the feature that $\ceh(U) = \ceh(V)$ whenever $m(U\setminus V \cup V \setminus U) = 0$.
 
 \item Below, we use $\ceh$ to measure the smallness of certain families of sets rather than the smallness of individual sets.  If $X$ is a topological space and $m$ is a Borel measure, then $\ceh^{\rm top}(A)$ can be seen as a measure for the smallness of the family of all open sets that contain $A$.  
\end{itemize}
\end{remark}

Depending on whether or not $E$ is transient, capacities are useful or not. This is discussed next.  

\begin{theorem}\label{theorem:capacities for recurrent forms}
 Let $E$ be an energy form. The following assertions are equivalent.
 \begin{itemize}
  \item[(i)] $E$ is recurrent.
  \item[(ii)] For every admissible $h$ and every measurable $U \subseteq X$ the equation $\ceh(U) = 0$ holds. 
  \item[(iii)] For every admissible $h$ the equation $\ceh(X) = 0$ holds. 
  \item[(iv)] The equation ${\rm cap}_{E,1} (X) = 0$ holds.
 \end{itemize}
\end{theorem}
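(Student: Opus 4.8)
The plan is to establish the cyclic chain of implications (i) $\Rightarrow$ (ii) $\Rightarrow$ (iii) $\Rightarrow$ (iv) $\Rightarrow$ (i). Two of the four links are purely formal: for (ii) $\Rightarrow$ (iii) one specializes the measurable set to $U = X$, and for (iii) $\Rightarrow$ (iv) one uses that the constant function $1$ is admissible (by the contraction properties of energy forms, cf.\ the remark following the definition of capacity) to read off $\ceo(X) = \mathrm{cap}_{E,1}(X) = 0$. Thus the substance of the theorem is concentrated in the remaining two implications.

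For (i) $\Rightarrow$ (ii) I would first prove the auxiliary fact that for a recurrent energy form \emph{every} admissible function already lies in $\ker E$. Given an admissible $h \geq 0$, recurrence gives $E(n) = n^2 E(1) = 0$ for every $n \in \IN$, and admissibility applied to the constant $g = n \in D(E)$ yields $E(n \wedge h) \leq E(n) = 0$, so $n \wedge h \in \ker E$. Since the sequence $(n \wedge h)_n$ increases and is dominated by $h \in L^0(m)$, Proposition~\ref{proposition:convergence of monotone nets} shows $n \wedge h \overset{m}{\to} h$, and the lower semicontinuity of $E$ (which holds by closedness, Theorem~\ref{theorem:characterization closedness}) gives $E(h) \leq \liminf_n E(n \wedge h) = 0$. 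With $h \in \ker E$ established, the competitor $f = h$ satisfies $f 1_U \geq h 1_U$ and $E(f) = 0$, whence $\ceh(U) = 0$ for every measurable $U$.

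For (iv) $\Rightarrow$ (i) I would unwind the definition: since $1_X$ is the constant function $1$, the hypothesis $\ceo(X) = 0$ reads $\inf\{E(f) \mid f \geq 1\} = 0$, so there is a sequence $(f_n)$ with $f_n \geq 1$ and $E(f_n) \to 0$. Because $f_n \geq 1 \geq 0$ we have $f_n \wedge 1 = 1$, and Proposition~\ref{proposition:elementary cut-off properties} gives $E(1) = E(f_n \wedge 1) \leq E(f_n) \to 0$. Hence $E(1) = 0$, i.e.\ $1 \in \ker E$, so $E$ is recurrent.

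The one genuine obstacle is the possible unboundedness of admissible functions in the step (i) $\Rightarrow$ (ii): one cannot simply dominate $h$ on $U$ by a constant of finite energy, so the truncation argument is essential. Combining admissibility (to force the bounded truncations $n \wedge h$ into the kernel) with lower semicontinuity (to transport the vanishing energy from $n \wedge h$ up to $h$ itself) is precisely what is needed to close this gap.
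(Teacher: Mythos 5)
Your proposal is correct and follows essentially the same route as the paper: the implications (ii) $\Rightarrow$ (iii) $\Rightarrow$ (iv) are dismissed as trivial, (iv) $\Rightarrow$ (i) is verified with the identical computation $E(1) = E(f_n \wedge 1) \leq E(f_n) \to 0$, and for (i) $\Rightarrow$ (ii) the paper likewise shows that every admissible $h$ lies in $\ker E$ via exactly your truncation argument (it invokes the proof of Proposition~\ref{proposition:superharmonic functions are constant for recurrent forms}, which bounds $E(h) \leq \liminf_n E(h \wedge n) \leq \liminf_n E(n\cdot 1) = 0$ using admissibility and lower semicontinuity). Your only cosmetic deviation is citing Proposition~\ref{proposition:convergence of monotone nets} for the convergence $n \wedge h \overset{m}{\to} h$, which the paper treats as immediate.
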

\begin{proof}
 (i) $\Rightarrow$ (ii): Let $E$ be recurrent and let $h$ be admissible. We can argue exactly as in the proof of Proposition~\ref{proposition:superharmonic functions are constant for recurrent forms} to show that $h$ belongs to $\ker E$. Therefore, we obtain $\ceh (U) = 0$ for each measurable $U \subseteq X$.  
 
 (ii) $\Rightarrow$ (iii) $\Rightarrow$ (iv): This is trivial.
 
 (iv) $\Rightarrow$ (i): Let $f_n \geq 1$ be a sequence with $E(f_n) \to 0$.  By the contraction properties of $E$ we obtain
 $$E(1) =  E(f_n \wedge 1) \leq E(f_n).$$
 This shows $1 \in \ker E$ and finishes the proof. 
\end{proof}

 \begin{remark}
  \begin{itemize}
   \item An alternative formulation for the previous theorem is that capacity theory of an energy form is utterly useless if and only if the form is recurrent.
   \item If $X$ is a topological space and $m$ is a Borel measure on $X$, the previous theorem remains true with $\ceh$ being replaced by $\ceh^{\rm top}$. We refrain from giving details. 
  \end{itemize}

 \end{remark}

 The following theorem shows that situation is better for transient forms.

\begin{theorem} \label{theorem:superharmonic functions and capacities}
 Let $E$ be a transient energy form and let $h$ be admissible. For each measurable $U \subseteq X$ there exists $h_U \in L^0(m)$ with $h1_U \leq h_U \leq h $ and
 $$\ceh(U) = E(h_U).$$
 If, additionally, $\ceh(U) < \infty$, then $h_U$ is $E$-superharmonic and uniquely determined. 
\end{theorem}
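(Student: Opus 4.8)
The plan is to realize $h_U$ as a Hilbert space projection, in the spirit of the construction of superharmonic functions in Proposition~\ref{proposition:existence superharmonic functions}. First I would dispose of the trivial case $\ceh(U) = \infty$ by taking $h_U := h$. Since $h$ is admissible it is nonnegative, so $h1_U \leq h \leq h$; and if $E(h)$ were finite then $h$ would itself be a competitor in the definition of $\ceh(U)$, forcing $\ceh(U) \leq E(h) < \infty$, a contradiction. Hence $E(h_U) = E(h) = \infty = \ceh(U)$, as required, and in this case no further claims are made.

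For the main case $\ceh(U) < \infty$, I would consider the set
$$K := \{f \in D(E) \mid f1_U \geq h1_U\},$$
which is convex and, since $\ceh(U) < \infty$, nonempty. Because $E$ is transient, Theorem~\ref{theorem:continuous embedding of energy forms} provides that $(D(E),E)$ is a Hilbert space and that the embedding $(D(E),\|\cdot\|_E) \to (L^0(m),\tau(m))$ is continuous. Using continuity of multiplication on $L^0(m)$ and closedness of the positive cone, the constraint $f1_U \geq h1_U$ is preserved under $\tau(m)$-limits, so $K$ is $\|\cdot\|_E$-closed. The closest-point projection onto a closed convex set then yields a unique minimizer $h_U := P_K(0)$, i.e. the unique element of $K$ with $E(h_U) = \inf_{f \in K} E(f)$. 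Since every $f \in L^0(m)$ with $f1_U \geq h1_U$ and $E(f) < \infty$ automatically lies in $K$, this infimum equals $\ceh(U)$, giving $E(h_U) = \ceh(U)$.

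It remains to pin down the bounds and superharmonicity, which I would extract from uniqueness of the minimizer combined with the contraction properties. One checks $(h_U)_+ \in K$, and Proposition~\ref{proposition:elementary cut-off properties} gives $E((h_U)_+) \leq E(h_U)$, so uniqueness forces $h_U = (h_U)_+ \geq 0$; similarly admissibility of $h$ yields $h_U \wedge h \in K$ with $E(h_U \wedge h) \leq E(h_U)$, whence $h_U = h_U \wedge h \leq h$. As membership in $K$ forces $h_U = h$ on $U$, we obtain $h1_U \leq h_U \leq h$. For superharmonicity I note that $h_U + \varepsilon\psi \in K$ for every nonnegative $\psi \in D(E)$ and every $\varepsilon > 0$, so minimality gives $0 \leq 2\varepsilon E(h_U,\psi) + \varepsilon^2 E(\psi)$, and letting $\varepsilon \to 0+$ yields $E(h_U,\psi) \geq 0$, i.e. $h_U$ is superharmonic (alternatively, Theorem~\ref{theorem:characterization of superharmonic functions} applies with $g = h1_U$). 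Uniqueness is then immediate: any $g$ with $h1_U \leq g \leq h$ and $E(g) = \ceh(U)$ satisfies $g \in K$ and minimizes $E$ over $K$, hence $g = h_U$.

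The only genuinely delicate step is verifying that $K$ is closed in the form topology; this is precisely where transience enters, through the continuous embedding of Theorem~\ref{theorem:continuous embedding of energy forms}. Once closedness is in place, the existence of the projection, the identification of its energy with $\ceh(U)$, the bounds $h1_U \leq h_U \leq h$, superharmonicity, and uniqueness all follow routinely from uniqueness of the Hilbert space projection and the contraction and admissibility properties already established.
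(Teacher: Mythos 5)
Your proof is correct and follows essentially the same route as the paper: the paper also disposes of the case $\ceh(U)=\infty$ by setting $h_U = h$, projects $0$ onto the same closed convex set $K = \{f \in D(E) \mid f1_U \geq h1_U\}$ in the Hilbert space $(D(E),E)$ furnished by Theorem~\ref{theorem:continuous embedding of energy forms}, derives $h_U \leq h$ from admissibility together with uniqueness of the minimizer, and obtains superharmonicity from Theorem~\ref{theorem:characterization of superharmonic functions}. Your write-up merely fills in details the paper leaves implicit (the $\tau(m)$-closedness of $K$, nonnegativity of $h_U$ via $(h_U)_+$, and the precise sense of uniqueness), all correctly.
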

\begin{proof}
 If $\ceh(U) = \infty$, then $h$ does not belong to $D(E)$. In this case, we can set $h_U = h$ and the claim follows. 
 
 Now, assume  $\ceh(U) < \infty$. We argue as in the proof of Proposition~\ref{proposition:existence superharmonic functions} to show the existence of $h_U$. The set
 $$K := \{f \in D(E) \mid  f1_U \geq h1_U \}$$
 is  convex  and nonempty. According to Theorem~\ref{theorem:continuous embedding of energy forms}, the space $(D(E),E)$ is a Hilbert space and $K$ is a closed subset.  The projection theorem in Hilbert spaces implies the existence of a unique $h_U \in D(E)$ that satisfies 
 $$E(h_U) = \inf\{E(f - 0) \mid f \in K\} = \ceh(U). $$
 Since $h$ is admissible, we obtain $E(h_U \wedge h) \leq E(h_U)$. The minimality of $E(h_U)$ and the uniqueness of $h_U$ show $h_U \leq h$. The superharmonicity of $h_U$ follows from Theorem~\ref{theorem:characterization of superharmonic functions}.  This finishes the proof.
\end{proof}

\begin{remark}
 The idea of the proof of the previous theorem  and its statement are well known in potential theory, cf. the proof of \cite[Lemma~2.1.1]{FOT}. We have already used it in the proof of Proposition~\ref{proposition:existence superharmonic functions}.
\end{remark}

 For later purposes, we note the following alternative formula for the capacity with respect to a superharmonic function. It seems to be new.

\begin{lemma}\label{lemma:alternative characterization capacity}
 Let $E$ be an energy form and let $h$ be admissible.  Then $\ceh(X) < \infty$ if and only if $h$ is $E$-superharmonic. In this case, for any measurable $U \subseteq X$ the equation 
 $$\ceh (U) = \inf \{E(h-g) \mid g \in D(E) \text{ with } g 1_U = 0\}$$
 holds.
\end{lemma}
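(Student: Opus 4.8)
The plan is to handle the two assertions separately, reducing both to the characterization of superharmonic functions in Theorem~\ref{theorem:characterization of superharmonic functions} combined with the admissibility inequality $E(f\wedge h)\leq E(f)$ for $f\in D(E)$.

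First I would dispose of the equivalence that $\ceh(X)<\infty$ if and only if $h$ is superharmonic. The forward direction is immediate: a superharmonic $h$ lies in $D(E)$ by Definition~\ref{definition:superharmonic function} and is itself a competitor in the infimum defining $\ceh(X)$, so $\ceh(X)\leq E(h)<\infty$. For the converse I would argue that $\ceh(X)<\infty$ produces some $f_0\in D(E)$ with $f_0\geq h$; then $f_0\wedge h=h$ and admissibility give $E(h)=E(f_0\wedge h)\leq E(f_0)<\infty$, so $h\in D(E)$. Since admissibility is exactly condition (iv) of Theorem~\ref{theorem:characterization of superharmonic functions} (the inequality $E(f\wedge h)\leq E(f)$ being vacuous for $f\notin D(E)$, where $E(f)=\infty$), superharmonicity of $h$ follows.

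For the identity, assuming $\ceh(X)<\infty$ so that $h\in D(E)$, I would prove two inequalities between $\ceh(U)$ and the right-hand side $R(U):=\inf\{E(h-g)\mid g\in D(E),\ g1_U=0\}$. The inequality $\ceh(U)\leq R(U)$ is the easy one: any $g\in D(E)$ with $g1_U=0$ makes $f:=h-g$ satisfy $f1_U=h1_U$, hence a competitor for $\ceh(U)$, and taking the infimum over such $g$ gives the bound. The reverse inequality $R(U)\leq\ceh(U)$ is where the real work lies: given a competitor $f\in L^0(m)$ for $\ceh(U)$ with $E(f)<\infty$ (if none exists the inequality is trivial), I would truncate by setting $\widetilde{f}:=f\wedge h$. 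Because $f\geq h$ on $U$, one has $\widetilde{f}1_U=h1_U$, while $\widetilde{f}\leq h$ everywhere; admissibility then gives $E(\widetilde{f})\leq E(f)$, so $\widetilde{f}\in D(E)$ and $g:=h-\widetilde{f}\in D(E)$ with $g1_U=0$ and $E(h-g)=E(\widetilde{f})\leq E(f)$. Taking infima over $f$ yields $R(U)\leq\ceh(U)$.

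The single genuine idea is the truncation $f\mapsto f\wedge h$ in the last step: it replaces an arbitrary competitor for $\ceh(U)$ by one agreeing with $h$ off $U$ without raising the energy, and this is precisely where admissibility of $h$ enters. I expect the main obstacle to be only bookkeeping, namely ensuring that the $m$-a.e.\ identities involving $\wedge$ and the indicators $1_U$ are handled correctly in $L^0(m)$, rather than any deeper difficulty.
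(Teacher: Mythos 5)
Your proposal is correct and takes essentially the same route as the paper's own proof: the equivalence is handled exactly as in the paper (superharmonic functions lie in $D(E)$ and are competitors for $\ceh(X)$; conversely a competitor $f \geq h$ with finite energy plus admissibility forces $h \in D(E)$, whence Theorem~\ref{theorem:characterization of superharmonic functions} applies), and your truncation $f \mapsto f \wedge h$ with $(h - f\wedge h)1_U = 0$ and $E(f\wedge h) \leq E(f)$ is precisely the device the paper uses for the reverse inequality. Your explicit handling of the trivial case $\ceh(U) = \infty$ and of the vacuity of the admissibility inequality off $D(E)$ are minor bookkeeping points the paper leaves implicit.
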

\begin{proof}
 By definition superharmonic functions belong to the form domain. Therefore, if $h$ is $E$-superharmonic, then $\ceh(X) < \infty$.
 
 If $\ceh(X) < \infty$, there exists an $f \in D(E)$ with $f \geq h$. Since $h$ is admissible, we infer
 $$E(h) = E(f \wedge h) \leq E(f) < \infty,$$
 i.e., $h \in D(E)$.  It follows from Theorem~\ref{theorem:characterization of superharmonic functions} that admissible functions which belong to the form domain are superharmonic.
 
 For the alternative expression of the capacity we first consider a function $g \in D(E)$ with $g 1_U = 0$. It satisfies $(h - g)1_U \geq h 1_U$ and we obtain
 $$\ceh (U) \leq \inf \{E(h-g) \mid g \in D(E) \text{ with } g 1_U = 0\}.$$
 For the reverse inequality, we let $\varepsilon > 0$ and choose $f \in D(E)$ with $f1_U  \geq h1_U$ and $$E(f) \leq \ceh(U)  + \varepsilon.$$  Since $(h - f\wedge h) 1_U = 0 $ and $E(f \wedge h) \leq E(f)$, we obtain
 $$\ceh (U) + \varepsilon  \geq E(f\wedge h) = E(h - (h- f\wedge h)) \geq  \inf \{E(h-g) \mid g \in D(E) \text{ with } g 1_U = 0\}.$$
 This finishes the proof. 
\end{proof}

%
 
\section{Nests and local spaces}\label{section:nests and local spaces}

In this section  we introduce the space of functions that locally belong to the form domain of an energy form. Classically, thinking of Sobolev-type spaces or local $L^p$-spaces, we would set the local form domain to be the space of all functions that agree with functions in the form domain on relatively compact open sets. Since in general we have no topology on the underlying space at our disposal, we first introduce alternative classes of sets on which we compare arbitrary functions with functions in the form domain, so called nests.  We study properties of these nests and discuss the local form domain at the end of this section. 	

\begin{definition}[Nest]
 Let $E$ be an energy form. A collection of measurable sets $\cN$ is called {\em $E$-nest} if 
 $$D(E)_\cN := \{f \in D(E) \mid \text{ there exists } N \in  \cN \text{ s.t. } f 1_{X\setminus N} = 0\}$$
 is a dense subspace of $D(E)$ with respect to the form topology. 
\end{definition}

\begin{remark} \label{remark:nest}
 \begin{itemize}
 \item There are two assumptions in the previous definition. One is the denseness of $D(E)_\cN$ in $D(E)$. It reflects the idea that a nest should  exhaust the underlying space. The other requirement is that $D(E)_\cN$ is a vector space. This corresponds to the fact that sets in $\cN$ can be compared. For example, it is satisfied if for all $N_1,N_2 \in \cN$, there exists a set $N_3 \in \cN$ with $m(N_1 \cup N_2 \setminus N_3) = 0.$ In other words, it is satisfied when $\cN$ is upwards directed with respect to the usual preorder on measurable sets that we introduced in Subsection~\ref{section:lebesgue spaces}. 
 
   \item In the classical topological situation one considers the sets in a nest to be closed (or quasi-closed) and the nest itself to be countable and increasing, cf. \cite{FOT,MR}. For our purposes, we do not need any of these properties. A version of our notion of nest was introduced in \cite{AH}. 
   
  \item The standard example of a nest when $E$ is a regular energy form is 
  $$\mathcal{K} := \{G \subseteq X \mid G \text{ open and relatively compact}\}.$$
  In this situation, we have 
  $$C_c(X) \cap D(E) \subseteq D(E)_{\mathcal{K}}$$
  which implies the denseness of $D(E)_{\mathcal{K}}$. Its vector space properties follow from the previous discussion and the fact that the union of two open, relatively compact sets is again open and relatively compact. 
 \end{itemize}
\end{remark}

The following proposition shows that for a nest $\cN$ the algebraic properties of $D(E)$ pass to $D(E)_\cN$.

\begin{proposition}\label{proposition:properties of den}
 Let $E$ be an energy form and let $\cN$ be an $E$-Nest.
 \begin{itemize}
  \item[(a)] For every normal contraction $C:\IR^n \to \IR$ and $f_1,\ldots, f_n \in D(E)_\cN$, we have
  $$C(f_1,\ldots, f_n) \in D(E)_\cN.$$
  \item[(b)] $D(E)_\cN$ is a lattice and $D(E)_\cN \cap L^\infty(m)$ is an algebra. 
 \end{itemize}

\end{proposition}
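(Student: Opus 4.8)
The plan is to derive both statements from the contraction properties of $E$ (Theorem~\ref{theorem:cutoff properties energy forms}) together with the two defining features of a nest: that $D(E)_\cN$ is a vector space, and that its elements are exactly the functions in $D(E)$ vanishing outside some $N \in \cN$.

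For part (a), I would first check that $C(f_1,\ldots,f_n) \in D(E)$, which is immediate from Theorem~\ref{theorem:cutoff properties energy forms}, since $E(C(f_1,\ldots,f_n))^{1/2} \leq \sum_{k=1}^n E(f_k)^{1/2} < \infty$. The crucial remaining step is to produce a \emph{single} set $N \in \cN$ outside of which $C(f_1,\ldots,f_n)$ vanishes. To this end I would pass to the function $\sum_{k=1}^n |f_k|$. Each $|f_k|$ lies in $D(E)_\cN$: it belongs to $D(E)$ by the contraction property $E(|f_k|) \leq E(f_k)$ from Proposition~\ref{proposition:elementary cut-off properties}, and it vanishes exactly where $f_k$ does. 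Since $D(E)_\cN$ is a vector space, the sum $\sum_k |f_k|$ also lies in $D(E)_\cN$, so there is some $N \in \cN$ with $\sum_k |f_k| = 0$ on $X \setminus N$. Because the summands are nonnegative, this forces each $f_k = 0$ on $X \setminus N$, i.e. $\bigcup_k \{f_k \neq 0\} \subseteq N$ up to a null set. Finally, as $C(0)=0$ by the definition of a normal contraction, $C(f_1,\ldots,f_n)$ vanishes at every point where all the $f_k$ vanish, in particular on $X \setminus N$; together with $C(f_1,\ldots,f_n) \in D(E)$ this gives $C(f_1,\ldots,f_n) \in D(E)_\cN$.

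For part (b), the lattice statement would follow at once from (a): the maps $(x,y)\mapsto \min(x,y)$ and $(x,y)\mapsto \max(x,y)$ are normal contractions on $\IR^2$ (they fix $0$ and are $1$-Lipschitz with respect to the $\ell^1$-norm), so $f \wedge g$ and $f \vee g$ lie in $D(E)_\cN$ whenever $f,g$ do. For the algebra statement, given $f,g \in D(E)_\cN \cap L^\infty(m)$, Theorem~\ref{theorem:algebraic and order properties} already yields $fg \in D(E) \cap L^\infty(m)$; and since $f$ vanishes outside some $N \in \cN$, so does $fg$, whence $fg \in D(E)_\cN \cap L^\infty(m)$. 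The only genuine subtlety, and the point I would spell out carefully, is the extraction of the common set $N$ in part (a): the nest axiom supplies only that $D(E)_\cN$ is a vector space, not directly that $\cN$ is directed by inclusion, so one cannot simply take a union of the individual supports — summing the $|f_k|$ and exploiting nonnegativity is precisely the device that converts the vector-space hypothesis into the required support statement.
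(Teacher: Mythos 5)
Your proof is correct and follows essentially the same route as the paper, which deduces (a) from Theorem~\ref{theorem:cutoff properties energy forms}, the pointwise bound $|C(f_1,\ldots,f_n)| \leq \sum_{i=1}^n |f_i|$ and the vector space property of $D(E)_\cN$, and (b) from the corresponding results for $D(E)$ together with $|f\wedge g|, |f \vee g| \leq |f|+|g|$. Your explicit extraction of the common set $N$ via the nonnegative sum $\sum_k |f_k|$ is exactly the device left implicit in the paper's one-line argument, and your closing remark about the vector-space axiom substituting for directedness of $\cN$ is precisely the right point to emphasize.
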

\begin{proof}
 (a): Let $f_1,\ldots,f_n \in D(E)_\cN$ and let $C:\IR^n \to \IR$ be a normal contraction. The statement follows from Theorem~\ref{theorem:cutoff properties energy forms}, the inequality 
 $$|C(f_1,\ldots, f_n)|  \leq \sum_{i = 1}^n |f_i|$$
and the fact that $D(E)_\cN$ is a vector space.
 
 (b): This follows from the corresponding results for $D(E)$, the inequalities
 $$|f\wedge g| \leq |f| + |g| \text{ and } |f\vee g| \leq |f| + |g|,$$
 and the fact that $D(E)_\cN$ is a vector space. 
\end{proof}

Not all nests are equally well-suited for studying energy forms. The following class is of  importance for almost all our considerations below that involve nests.

\begin{definition}[Special nest] \label{definition:special nest}
Let $E$ be an energy form. An $E$-nest $\cN$ is called {\em special} if for each $N \in \cN$ there exists a function $g_N \in D(E)_\cN$ such that $g_N \geq 1_N$.
\end{definition}

\begin{remark}
 \begin{itemize}
  \item Special nests comprise two contrary features. A special nest is large enough such that it 'exhausts' the underlying space while its individual elements are 'small' in the sense that there exists some function in the form domain that equals one on them. Indeed, the sets are even so small that the function which equals one on them can be taken from the smaller space $D(E)_\cN$.
  
  \item We use the term 'special' for the described property of nests because it has a similar meaning as  the term 'special' in the concept of special standard core of a regular Dirichlet form, cf. \cite[Section~1.1]{FOT}. 
 \end{itemize}

\end{remark}

The following lemma shows that each energy form has a special nest.

\begin{lemma}\label{lemma:special nest}
Every energy form possesses a special nest. 
\end{lemma}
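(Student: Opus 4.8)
The plan is to exhibit a concrete nest, namely the family
$$\cN := \{\{|f| \geq \alpha\} \mid f \in D(E),\ \alpha > 0\},$$
which is precisely the family $\mathcal{S}$ appearing in the definition of $\sE$, and to verify the three required properties: that $D(E)_\cN$ is a subspace, that it is form-dense in $D(E)$, and that $\cN$ is special. The guiding principle throughout is a \emph{cutoff at a lower level}: given a set $N = \{|f|\geq \alpha\} \in \cN$, functions built from $f$ that vanish where $|f|$ is small will vanish outside a \emph{larger} nest set $\{|f|\geq \alpha/2\}$, which keeps everything inside $D(E)_\cN$.

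First I would show $D(E)_\cN$ is a vector space by checking that $\cN$ is upwards directed, so that Remark~\ref{remark:nest} applies. Given $N_1 = \{|f_1|\geq \alpha_1\}$ and $N_2 = \{|f_2|\geq \alpha_2\}$, put $h := \tfrac{1}{\alpha_1}|f_1| + \tfrac{1}{\alpha_2}|f_2|$. By the contraction properties (Theorem~\ref{theorem:cutoff properties energy forms}) and the fact that $D(E)$ is a vector space, $h \in D(E)$, and $N_3 := \{h \geq 1\} \in \cN$ satisfies $N_1 \cup N_2 \subseteq N_3$. Next I would establish density using Proposition~\ref{proposition:approximation by bounded functions}(b): for $f \in D(E)$ the net $f_\alpha := f - (f\wedge\alpha)\vee(-\alpha)$ converges to $f$ in the form topology as $\alpha \to 0+$. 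Since $f_\alpha$ vanishes on $\{|f|\leq\alpha\}$, it vanishes outside $N := \{|f|\geq\alpha\}\in\cN$, hence $f_\alpha \in D(E)_\cN$. Thus every $f \in D(E)$ lies in the form-closure of $D(E)_\cN$, proving $\cN$ is an $E$-nest.

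For specialness, given $N = \{|f|\geq\alpha\}$ I would set $g_N := \bigl(\tfrac{2}{\alpha}\bigl(|f|-\tfrac{\alpha}{2}\bigr)_+\bigr)\wedge 1 = \psi\circ f$, where $\psi(t) = \bigl(\tfrac{2}{\alpha}(|t|-\tfrac{\alpha}{2})_+\bigr)\wedge 1$. On $\{|f|\geq\alpha\}$ one has $g_N = 1 \geq 1_N$, and on $\{|f|\leq \alpha/2\}$ one has $g_N = 0$, so $g_N$ vanishes outside $\{|f|\geq\alpha/2\}\in\cN$. Since $\psi$ is Lipschitz with some constant $L$ and $\psi(0)=0$, the rescaling $\tfrac1L\psi$ is a normal contraction, so $\tfrac1L\psi\circ f \in D(E)$ by Theorem~\ref{theorem:cutoff properties energy forms} and hence $g_N = L\cdot(\tfrac1L\psi\circ f) \in D(E)$ by homogeneity. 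Therefore $g_N \in D(E)_\cN$ with $g_N \geq 1_N$, which is exactly the requirement of Definition~\ref{definition:special nest}.

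The only genuinely delicate point — the main obstacle — is that specialness demands the dominating function $g_N$ lie in the \emph{smaller} space $D(E)_\cN$ rather than merely in $D(E)$; a naive choice such as a superharmonic majorant from Proposition~\ref{proposition:existence superharmonic functions} need not vanish anywhere and so would fail this support condition. The lower-level truncation in the definition of $g_N$ is precisely what resolves this, forcing $g_N$ to vanish off $\{|f|\geq\alpha/2\}$ while still equalling $1$ on $N$; the same device simultaneously drives the density argument. All remaining verifications (measurability, the two endpoint computations for $g_N$, and upward directedness) are routine and rely only on the lattice and contraction structure recorded in Theorem~\ref{theorem:algebraic and order properties} and Theorem~\ref{theorem:cutoff properties energy forms}.
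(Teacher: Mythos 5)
Your proposal is correct and follows essentially the same route as the paper's proof: the same nest $\cN = \{\{|f|\geq\alpha\}\}$, density via the truncations $f_\alpha = f - (f\wedge\alpha)\vee(-\alpha)$ from Proposition~\ref{proposition:approximation by bounded functions}, and specialness via the lower-level cutoff $\tfrac{2}{\alpha}(|f|-\tfrac{\alpha}{2})_+$, which dominates $1_{\{|f|\geq\alpha\}}$ and vanishes outside $\{|f|\geq\alpha/2\}\in\cN$. Your extra $\wedge 1$ truncation and the explicit directedness check with $h=\tfrac{1}{\alpha_1}|f_1|+\tfrac{1}{\alpha_2}|f_2|$ are harmless cosmetic variants of the paper's use of the lattice property of $D(E)$.
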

\begin{proof}
 Let $E$ be an energy form. We show that the system 
 $$\cN := \left\{\{|f| \geq \alpha\}\, \middle |\, f \in D(E) \text{ and } \alpha>0\right\}$$
 is an $E$-nest with the desired properties. 
 
We first prove the denseness of $D(E)_\cN $ in $D(E)$ with respect to the form topology. For $f\in D(E)$ and $\alpha > 0$ we set 
 $$f_\alpha := f - (f\wedge \alpha) \vee (-\alpha).$$
These $f_\alpha$ satisfy
 $$\{f_\alpha \neq 0\} = \{|f| \geq \alpha\} \in \cN,$$
 which shows $f_\alpha  \in D(E)_\cN$. Furthermore, Proposition~\ref{proposition:approximation by bounded functions} implies $f_\alpha \to f$ in the form topology, as $\alpha\to 0+$. 
 
 Using that $D(E)$ is a lattice, we obtain 
 $$\{|f| \geq \alpha\} \cup \{|g| \geq \beta\} \subseteq \{|f|\vee |g| \geq  \alpha \vee \beta\} \in \cN.$$
 This shows that $D(E)_\cN$ is a vector space.
 
 For proving that $\cN$ is special, we note that for each $f\in D(E)$ and $\alpha > 0$ we have $|f|_{\alpha/2} \in D(E)_\cN$ and 
 $$2 \alpha^{-1} |f|_{\alpha/2} \geq 1_{\{|f|\geq \alpha\}}.$$
 This finishes the proof.
\end{proof}

For regular energy forms, the collection of relatively compact open sets is the most prominent nest. The next lemma shows that it is special.

\begin{lemma}\label{lemma:special nest regular form}
 Let $E$ be a regular energy form.   The collection
$$\mathcal{K} := \{G \subseteq X \mid G \text{ open and relatively compact}\}$$ 
 is a special $E$-nest. 
\end{lemma}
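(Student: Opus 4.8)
The collection $\mathcal{K}$ has already been seen to be an $E$-nest in Remark~\ref{remark:nest}: its denseness comes from $C_c(X) \cap D(E) \subseteq D(E)_{\mathcal{K}}$ together with the regularity of $E$, and $D(E)_{\mathcal{K}}$ is a subspace because $\mathcal{K}$ is upward directed. The plan is therefore to verify only the special property, that is, to produce for every $N \in \mathcal{K}$ a function $g_N \in D(E)_{\mathcal{K}}$ with $g_N \geq 1_N$.

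The construction I would carry out is the following. Fixing $N \in \mathcal{K}$, its closure $\overline{N}$ is compact, so by local compactness of $X$ I would choose a relatively compact open set $V \supseteq \overline{N}$ and, via Urysohn's lemma for locally compact Hausdorff spaces, a function $\varphi \in C_c(X)$ with $0 \leq \varphi \leq 1$, $\varphi = 1$ on $\overline{N}$ and ${\rm supp}\,\varphi \subseteq V$. Since such a $\varphi$ need not lie in $D(E)$, I would next invoke regularity: as $D(E) \cap C_c(X)$ is dense in $(C_c(X),\|\cdot\|_\infty)$, I can pick $\psi \in D(E) \cap C_c(X)$ with $\|\psi - \varphi\|_\infty < 1/2$, so that $\psi > 1/2$ on $\overline{N}$. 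The candidate is then $g_N := (2\psi)_+ \wedge 1$.

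It remains to check the three required properties, all of which are routine once the construction is in place. Membership in $D(E)$ follows from Proposition~\ref{proposition:elementary cut-off properties}~(a) applied to $2\psi \in D(E)$, giving $E(g_N) \leq E(2\psi) = 4\,E(\psi) < \infty$. Membership in $D(E)_{\mathcal{K}}$ holds because $g_N$ vanishes outside the compact set ${\rm supp}\,\psi$, which I can enclose in some relatively compact open $N' \in \mathcal{K}$, whence $g_N 1_{X\setminus N'} = 0$. Finally $g_N \geq 1_N$ because $2\psi > 1$ on $N \subseteq \overline{N}$ forces $g_N = 1$ there, while $g_N \geq 0$ everywhere.

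The main obstacle is precisely that the naive cutoff, a continuous Urysohn function equal to $1$ on $N$, is not guaranteed to have finite energy. The device that overcomes it is the uniform density of $D(E) \cap C_c(X)$ in $C_c(X)$ supplied by regularity, combined with the amplification-and-truncation $(2\psi)_+ \wedge 1$: multiplying by $2$ compensates for the $1/2$ approximation error so that the value $1$ is genuinely attained on $N$, while truncating at $1$ and taking positive parts keeps the function in $D(E)$ by the elementary contraction properties and preserves its compact support.
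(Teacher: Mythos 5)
Your proof is correct and follows essentially the same route as the paper: both cite Remark~\ref{remark:nest} for the nest property, then combine a Urysohn-type function in $C_c(X)$ dominating the indicator with the uniform density of $D(E)\cap C_c(X)$ supplied by regularity. Your extra truncation $g_N:=(2\psi)_+\wedge 1$ is a cosmetic variant of the paper's choice ($\varphi\geq 2$ on $G$, approximation error $\leq 1/2$), and in fact handles the nonnegativity of $g_N$ off $N$ slightly more explicitly than the paper's proof does.
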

\begin{proof}
 In Remark~\ref{remark:nest} we have already seen that $\mathcal{K}$ is a nest. It remains to show that $\mathcal{K}$ is special.  Let $G\subseteq X$ open and relatively compact. Locally compact separable metric spaces are completely regular. Hence, there exists $\varphi \in C_c(X)$ with $\varphi(x) \geq 2$ for each $x \in G$. Since $C_c(X) \cap D(E)$ is uniformly dense in $C_c(X)$, there exists a function $\ow{\varphi} \in C_c(X) \cap D(E)$ with $\|\varphi - \ow{\varphi}\|_\infty \leq 1/2$. It satisfies $\ow{\varphi} \in D(E)_\mathcal{K}$ and $\ow{\varphi}(x) \geq 1$ for each $x \in G$. 
\end{proof}

\begin{remark}
 In view of the previous lemma, it is sometimes helpful to think of special nests as generalizations of the collection of relatively compact open sets. 
\end{remark}

Special nests have another feature that allows them to serve as index sets for nets.  Recall the preorder that we introduced on measurable sets in Section~\ref{section:lebesgue spaces}. Namely, we say that two measurable sets $N,N' \subseteq X$ satisfy $N\prec N'$ if and only if $m(N \setminus N') = 0$. The following lemma shows that special nests are upwards directed with respect to this preorder.

\begin{lemma} \label{lemma:special nests are directed}
 Let $E$ be an energy form and let $\cN$ be a special $E$-nest. Then $(\cN,\prec)$ is upwards directed.
\end{lemma}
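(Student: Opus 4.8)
The plan is to show that for any two sets $N_1, N_2 \in \cN$ there exists a set $N_3 \in \cN$ with $N_1 \cup N_2 \prec N_3$. Since $\cN$ is special, for each $N \in \cN$ there is a function $g_N \in D(E)_\cN$ with $g_N \geq 1_N$; by the definition of $D(E)_\cN$ there is some $N' \in \cN$ with $g_N 1_{X \setminus N'} = 0$, i.e., $g_N$ is supported on $N'$. The natural strategy is to combine the functions $g_{N_1}$ and $g_{N_2}$ into a single function whose associated level set is the desired larger nest element.

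First I would take $N_1, N_2 \in \cN$ and pick, by specialness, functions $g_{N_1}, g_{N_2} \in D(E)_\cN$ with $g_{N_1} \geq 1_{N_1}$ and $g_{N_2} \geq 1_{N_2}$. Consider the function $g := g_{N_1} \vee g_{N_2}$. Since $D(E)_\cN$ is a lattice by Proposition~\ref{proposition:properties of den}, we have $g \in D(E)_\cN \subseteq D(E)$. Moreover $g \geq 1_{N_1}$ and $g \geq 1_{N_2}$, so $g \geq 1_{N_1 \cup N_2}$, which gives $g \geq 1$ on $N_1 \cup N_2$ and in particular $\{g \geq 1\} \supseteq N_1 \cup N_2$ up to a null set. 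The remaining task is to produce from $g$ an actual element of $\cN$ that dominates $N_1 \cup N_2$ in the preorder $\prec$.

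The key step is to invoke the specific description of a special nest provided by the construction in Lemma~\ref{lemma:special nest}, or more carefully, to argue directly. Since $g \in D(E)_\cN$, by definition there exists $N_3 \in \cN$ with $g 1_{X \setminus N_3} = 0$; equivalently $\{g \neq 0\} \prec N_3$. Because $g \geq 1_{N_1 \cup N_2}$ we have $N_1 \cup N_2 \subseteq \{g \neq 0\}$ up to a null set, and hence $N_1 \cup N_2 \prec \{g \neq 0\} \prec N_3$. Thus $N_1 \prec N_3$ and $N_2 \prec N_3$, so $N_3$ is an upper bound for $\{N_1, N_2\}$ in $(\cN, \prec)$. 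This shows $(\cN, \prec)$ is upwards directed.

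The main subtlety I anticipate is verifying that $g_N$ being supported on \emph{some} element of $\cN$ suffices, rather than requiring $g_N$ to be supported on $N$ itself; the definition of $D(E)_\cN$ only guarantees support in \emph{some} nest element, which is exactly what the argument needs, so no additional hypothesis is required. One should also be careful that all set inclusions are understood modulo $m$-null sets, consistent with the convention that $\prec$ means $m(N \setminus N') = 0$; since the lattice and vanishing-outside properties are all statements in $L^0(m)$, the passage between the functional inequalities and the set-theoretic preorder is automatic and causes no difficulty.
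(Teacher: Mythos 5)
Your proof is correct and follows essentially the same route as the paper: combine the two witness functions into a single element of $D(E)_\cN$, and use that this element must vanish outside some $N_3 \in \cN$, which then dominates $N_1 \cup N_2$. The only cosmetic difference is that you take the maximum $g_{N_1} \vee g_{N_2}$ (via the lattice property of $D(E)_\cN$) where the paper takes the sum $g_{N_1} + g_{N_2}$ (via the vector space property); both yield a function $\geq 1$ on $N_1 \cup N_2$, and the rest of the argument is identical.
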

\begin{proof}
 For $i = 1,2$, let $N_i \in \cN$ and  $g_{N_i} \in D(E)_\cN$ with $g_{N_i} \geq 1_{N_i}$ be given. Since $D(E)_\cN$ is a vector space, we have $g_{N_1} + g_{N_2} \in D(E)_\cN$. Therefore, there exists a set $N_3 \in \cN$ such that $(g_{N_1} + g_{N_2})1_{X \setminus N_3} = 0$. The inequality $g_{N_1} + g_{N_2} \geq 1_{N_1 \cup N_2}$  implies $m((N_1 \cup  N_2) \setminus N_3) = 0$ and finishes the proof.
\end{proof}

\begin{lemma}[Refinement of nests]\label{lemma:refinement of nests}
Let $E$ be an energy form and let $\cN_1$ and $\cN_2$ be two $E$-nests. Their {\em refinement}
$$\cN_1 \wedge \cN_2 : = \{N_1 \cap N_2 \mid  N_1 \in  \cN_1, N_2\in \cN_2\}$$
is an $E$-nest. Moreover, if $\cN_1$ and $\cN_2$ are special, then $\cN_1 \wedge \cN_2$ is special.
\end{lemma}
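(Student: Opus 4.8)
The plan is to base everything on the observation that the refined local domain is simply the intersection of the two local domains,
$$D(E)_{\cN_1 \wedge \cN_2} = D(E)_{\cN_1} \cap D(E)_{\cN_2}.$$
Indeed, a function $f \in D(E)$ satisfies $f 1_{X \setminus (N_1 \cap N_2)} = 0$ for some $N_i \in \cN_i$ if and only if it vanishes outside $N_1$ and outside $N_2$, because $X \setminus (N_1 \cap N_2) = (X \setminus N_1) \cup (X \setminus N_2)$. Verifying this identity is a short set-theoretic manipulation, and it immediately shows that $D(E)_{\cN_1 \wedge \cN_2}$ is a subspace of $D(E)$, being an intersection of two subspaces. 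The measurability of each $N_1 \cap N_2$ is clear.

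For density, write $A := D(E)_{\cN_1} \cap D(E)_{\cN_2}$. It suffices to show that every nonnegative $u \in D(E)_{\cN_1}$ lies in the form-closure $\overline{A}$: since $\overline{A}$ is a subspace (the form topology is a vector space topology) and the nonnegative elements span $D(E)_{\cN_1}$ via $u = u_+ - u_-$ (both of which still vanish outside the support set of $u$), this gives $\overline{A} \supseteq D(E)_{\cN_1}$, whence the density of $D(E)_{\cN_1}$ forces $\overline{A} = D(E)$. So I would fix nonnegative $u \in D(E)_{\cN_1}$, vanishing outside some $N_1 \in \cN_1$, and pick a net $(v_i)$ in $D(E)_{\cN_2}$ with $v_i \to u$ in the form topology. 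Replacing $v_i$ by $(v_i)_+$, which still vanishes outside the relevant set in $\cN_2$, and using $E((v_i)_+) \le E(v_i) \to E(u)$ together with Lemma~\ref{lemma:characterization convergence in form topology lsc forms}, I may assume each $v_i \ge 0$ while still $v_i \to u$ in the form topology. The approximants are then
$$w_i := u \wedge v_i \in D(E),$$
which are lattice elements by Theorem~\ref{theorem:algebraic and order properties}; as $u$ vanishes outside $N_1$, $v_i$ outside some $N_2 \in \cN_2$, and both are nonnegative, $w_i$ vanishes outside $N_1 \cap N_2$, so $w_i \in A$. It is precisely the nonnegativity that confines the support to $N_1 \cap N_2$. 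Convergence $w_i \to u$ in $\tau(m)$ follows from continuity of $\wedge$, and since $w_i - u = (v_i - u)\wedge 0$ and $t \mapsto t \wedge 0$ is a normal contraction, Theorem~\ref{theorem:cutoff properties energy forms} yields
$$\|w_i - u\|_E = \big\| (v_i - u)\wedge 0 \big\|_E \le \|v_i - u\|_E \to 0,$$
so $w_i \to u$ in the form topology, completing the density argument.

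For the "moreover" part, suppose both nests are special and let $N = N_1 \cap N_2 \in \cN_1 \wedge \cN_2$. Choosing $g_{N_1} \in D(E)_{\cN_1}$ and $g_{N_2} \in D(E)_{\cN_2}$ with $g_{N_i} \ge 1_{N_i}$ (these are automatically nonnegative, since $g_{N_i} \ge 1_{N_i} \ge 0$), I would set $g_N := g_{N_1} \wedge g_{N_2} \in D(E)$. On $N_1 \cap N_2$ both factors are $\ge 1$, so $g_N \ge 1_N$; and, exactly as above, nonnegativity makes $g_N$ vanish wherever either factor does, so $g_N \in D(E)_{\cN_1} \cap D(E)_{\cN_2} = D(E)_{\cN_1 \wedge \cN_2}$. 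Hence $\cN_1 \wedge \cN_2$ is special.

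The main obstacle is the density step: an intersection of two dense subspaces need not be dense, so the real content is the cutting construction $w_i = u \wedge v_i$, where the order structure (to keep the support inside $N_1 \cap N_2$) and the contraction and lower semicontinuity properties (to retain form-topology convergence) must be orchestrated together. Reducing to nonnegative functions at the outset is what makes the support bookkeeping transparent and lets the single normal contraction $t \mapsto t\wedge 0$ control the form norm.
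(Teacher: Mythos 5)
Your proof is correct, and its density step runs along a genuinely different track than the paper's. The paper forms the closure $\ow{E}$ of the restriction of $E$ to $D(E)_{\cN_1 \wedge\, \cN_2}$ and, for an arbitrary $f \in D(E)$, merely bounds $\ow{E}(f)$ through the double-indexed truncations $h_{i,j} := (f_i \wedge |g_j|) \vee (-|g_j|)$, combining lower semicontinuity of $\ow{E}$ with the estimate $E(h_{i,j})^{1/2} \leq E(f_i)^{1/2} + 2E(g_j)^{1/2}$ from Theorem~\ref{theorem:algebraic and order properties}; density is then obtained indirectly from the inclusion $D(E) \subseteq D(\ow{E})$, without ever exhibiting form-topology convergent approximants. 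You instead reduce to nonnegative $u \in D(E)_{\cN_1}$ (legitimate: $u = u_+ - u_-$ preserves the vanishing set, and the closure of the intersection is a subspace since the form topology is a vector space topology) and construct explicit approximants $u \wedge v_i$ with a single net, controlling the form seminorm exactly via the normal contraction $t \mapsto t \wedge 0$ and Theorem~\ref{theorem:cutoff properties energy forms}, with Lemma~\ref{lemma:characterization convergence in form topology lsc forms} justifying the passage to nonnegative $v_i$. Your route buys strong (form-topology) convergence of concrete approximants and avoids the closure-form machinery and the iterated liminf bookkeeping; the paper's route avoids the positive/negative splitting, needs only liminf estimates rather than exact convergence, and recycles a pattern used elsewhere in the text (e.g., in the proof of Proposition~\ref{proposition:capacity characterisation of nests}). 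You also diverge on the ``moreover'' part: the paper truncates the functions $g_{N_i}$ to bounded ones and takes their product, invoking the algebra property of $D(E) \cap L^\infty(m)$, whereas your minimum $g_{N_1} \wedge g_{N_2}$ needs only the lattice property plus nonnegativity to confine the support to $M_1 \cap M_2$ --- a slightly leaner argument that sidesteps the boundedness reduction entirely.
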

\begin{proof} It follows from the definition of $\cN_1 \wedge \cN_2$ that the identity
$$D(E)_{\cN_1 \wedge\,  \cN_2} = D(E)_{\cN_1} \cap D(E)_{\cN_2}$$
holds. Therefore,  $D(E)_{\cN_1 \wedge\,  \cN_2}$ is a vector space and it remains to prove the denseness of $D(E)_{\cN_1 \wedge\, \cN_2}$ in $D(E)$ with respect to the form topology. Let $\ow{E}$ be the closure of the restriction of $E$ to $D(E)_{\cN_1 \wedge\, \cN_2}$. Since $E$ is an extension of $\ow{E}$, it suffices to show the inclusion $D(E) \subseteq D(\ow{E})$. 

Let $f\in D(E)$. There exists a net $(f_i)$ in $D(E)_{\cN_1}$ and and a net $(g_j)$ in  $D(E)_{\cN_2}$ that converge to $f$ in the form topology. We set  
$$h_{i,j} := (f_i \wedge |g_j|) \vee (-|g_j|) \in  D(E)_{\cN_1 \wedge\, \cN_2}.$$
From the lower semicontinuity of $\ow{E}$ we infer
$$\ow{E}(f) \leq \liminf_i \liminf_j \ow{E}( h_{i,j}) = \liminf_i \liminf_j E( h_{i,j}).  $$
Theorem~\ref{theorem:algebraic and order properties} yields the estimate
$$E(h_{i,j})  ^{1/2} \leq E(f_i)^{1/2} + 2 E(|g_j|)^{1/2} \leq E(f_i)^{1/2} + 2 E(g_j)^{1/2}. $$
Altogether, we obtain $\ow{E}(f) < \infty$, i.e., $f \in D(\ow{E})$.  This proves the denseness of $D(E)_{\cN_1 \wedge\, \cN_2}$ in $D(E)$.

Now, assume that $\cN_1$ and $\cN_2$ are special. For $N_i \in \cN_i$, $i=1,2$, we choose $\varphi_i \in D(E)_{\cN_i}$ with $\varphi_i \geq 1_{N_i}$. By the contraction properties of $E$ we can assume that these functions are bounded. Since $D(E)\cap L^\infty(m)$ is an algebra, we have $\varphi_1 \varphi_2 \in D(E)$ and $\varphi_1 \varphi_2 \geq 1_{N_1 \cap N_2}$. Moreover, if $\varphi_i$ vanishes outside of $M_i \in \cN_i$, then $\varphi_1 \varphi_2$ vanishes on $X \setminus (M_1 \cap M_2)$. This finishes the proof.
\end{proof}


For transient forms which possess a strictly positive superharmonic function nests can be characterized in terms of capacity. This is the content of the following proposition. 

\begin{proposition}\label{proposition:capacity characterisation of nests}
 Let $E$ be a transient energy form and let  $h\in D(E)$ be a strictly positive $E$-superharmonic function.  The following assertions are equivalent.
 \begin{itemize}
  \item[(i)] $\cN$ is an $E$-nest. 
  \item[(ii)] $D(E)_\cN$ is a vector space and $\inf_{N\in \cN} \ceh (X\setminus N)= 0.$
 \end{itemize}
\end{proposition}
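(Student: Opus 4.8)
The plan is to exploit that, since $E$ is transient, Theorem~\ref{theorem:continuous embedding of energy forms} makes $(D(E),E)$ a Hilbert space whose norm induces the form topology. Consequently nest-denseness becomes ordinary density of a linear subspace in a Hilbert space, and the capacity admits the distance formula of Lemma~\ref{lemma:alternative characterization capacity}, namely $\ceh(X\setminus N)=\inf\{E(h-g)\mid g\in D(E),\, g1_{X\setminus N}=0\}$; this is applicable because $h$ is superharmonic, hence admissible with $\ceh(X)<\infty$. The key observation is that $g1_{X\setminus N}=0$ says precisely that $g\in D(E)_\cN$ with support in $N$.

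For (i)$\Rightarrow$(ii), density gives the vector space property directly, and since $h\in D(E)=\overline{D(E)_\cN}$ there is a net $g_i\in D(E)_\cN$ with $E(h-g_i)\to 0$; writing $N_i\in\cN$ for a support of $g_i$, the distance formula yields $\ceh(X\setminus N_i)\le E(h-g_i)$, whence $\inf_{N\in\cN}\ceh(X\setminus N)=0$. The substantial direction is (ii)$\Rightarrow$(i): I must upgrade approximability of the single function $h$ to approximability of every $f\in D(E)$. First I would normalize the competitors. Given $g$ with $g1_{X\setminus N}=0$, replacing $g$ by $g\wedge h$ only decreases $E(h-g)$, since $h-g\wedge h=(h-g)_+$ and positive parts lower energy (Proposition~\ref{proposition:elementary cut-off properties}); replacing the result by its positive part decreases the energy further, because the error then becomes $(h-g\wedge h)\wedge h$ and Lemma~\ref{lemma:superharmonic functions as cutoff}(a) applies with the excessive function $h$. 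Thus (ii) furnishes $g_k\in D(E)_\cN$ with $0\le g_k\le h$, supported in $N_k\in\cN$, and $g_k\to h$ in form norm; the positivity $g_k\ge 0$ is what keeps supports under control below.

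Next I would reduce to bounded functions dominated by $h$. For $f\in D(E)$ the truncations $f_n=(f\wedge nh)\vee(-nh)$ satisfy $E(f_n)\le E(f)$ (two applications of Lemma~\ref{lemma:superharmonic functions as cutoff}(a) to the excessive $nh$) and $f_n\to f$ in $L^0(m)$, so $f_n\to f$ in the form topology by Lemma~\ref{lemma:characterization convergence in form topology lsc forms}; it therefore suffices to place every $f$ with $|f|\le Mh$ into $\overline{D(E)_\cN}$. For such $f$ I set $\tilde f_k=(f\wedge Mg_k)\vee(-Mg_k)$. Because $Mg_k\ge 0$ vanishes off $N_k$, each $\tilde f_k$ is supported in $N_k$, so $\tilde f_k\in D(E)_\cN$; the pointwise contraction estimate behind Theorem~\ref{theorem:cutoff properties energy forms}(iv) gives $E(\tilde f_k)^{1/2}\le E(f)^{1/2}+M\,E(g_k)^{1/2}$, which is bounded, while continuity of the lattice operations on $L^0(m)$ gives $\tilde f_k\to(f\wedge Mh)\vee(-Mh)=f$ in $L^0(m)$. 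A form-bounded sequence converging in $L^0$ converges $E$-weakly (Lemma~\ref{lemma:existence of a weakly convergent subnet}); by the Banach--Saks theorem (Lemma~\ref{lemma:banach saks sequences}) suitable Cesàro means of the $\tilde f_k$ — which lie in the vector space $D(E)_\cN$ — are form-Cauchy, hence form-convergent to the weak limit $f$. Thus $f\in\overline{D(E)_\cN}$, and the reduction closes the argument.

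The main obstacle I anticipate is exactly this passage from the single excessive function $h$ to all of $D(E)$: one must simultaneously keep the approximants supported inside nest sets and keep their energies bounded, and the former is only achievable after forcing the competitors to satisfy $0\le g\le h$. Verifying that this normalization does not increase $E(h-g)$ is where superharmonicity of $h$ enters essentially, through Lemma~\ref{lemma:superharmonic functions as cutoff}; without it the support control and the weak-to-strong (Banach--Saks) upgrade would not combine.
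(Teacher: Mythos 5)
Your proposal is correct, and both directions rest on the same two pillars as the paper's proof: Lemma~\ref{lemma:alternative characterization capacity} to translate the capacity condition into ``$h$ lies in the form-topology closure of $D(E)_\cN$'', and Lemma~\ref{lemma:superharmonic functions as cutoff} together with strict positivity of $h$ to pass from $h$ to all of $D(E)$ via truncations by multiples of $h$. Where you genuinely diverge is in how you close the gap for a general $f\in D(E)$. The paper introduces the closed form $\ow{E}$ obtained as the closure of the restriction of $E$ to $D(E)_\cN$ (via Proposition~\ref{prop:characterization closability}) and argues entirely through lower semicontinuity of $\ow{E}$: for nonnegative $f$ it shows $f\wedge h_i\in D(E)_\cN$, deduces $f\wedge(nh)\in D(\ow{E})$ with $\ow{E}(f\wedge(nh))=E(f\wedge(nh))\leq E(f)$, and lets $n\to\infty$; no normalization of the approximants and no weak convergence is needed. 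You instead normalize the competitors to $0\leq g_k\leq h$ (your verification that passing to $(g\wedge h)_+$ does not increase $E(h-g)$ is sound, using $h-g\wedge h=(h-g)_+$ and $h-(g\wedge h)_+=(h-g\wedge h)\wedge h$), build the double truncations $\tilde f_k=(f\wedge Mg_k)\vee(-Mg_k)$ supported in nest sets, and upgrade form-bounded $L^0$-convergence to form-norm convergence via Lemma~\ref{lemma:existence of a weakly convergent subnet}, Lemma~\ref{lemma:banach saks sequences} and the Hilbert space structure of $(D(E),E)$ from Theorem~\ref{theorem:continuous embedding of energy forms}. This makes your argument lean essentially on transience (which is a hypothesis here, so this is legitimate), whereas the paper's lower-semicontinuity device would survive in settings where the Hilbert space structure is unavailable; in exchange, your route is more self-contained at this point of the text, avoiding the closability machinery. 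Two cosmetic remarks: the Lipschitz bound for $(a,b)\mapsto(a\wedge Mb)\vee(-Mb)$ gives the constant $2M$ rather than $M$ in $E(\tilde f_k)^{1/2}\leq E(f)^{1/2}+2M\,E(g_k)^{1/2}$, which is harmless since only boundedness is used; and you should note explicitly that the Ces\`aro means lie in $D(E)_\cN$ precisely because (ii) assumes $D(E)_\cN$ is a vector space, which is where that hypothesis enters your argument.
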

\begin{proof}  We start with the following observation.  

{\em Claim:} The function $h$ belongs to the form topology closure of $D(E)_\cN$ if and only if $\inf_{N\in \cN} \ceh (X\setminus N)= 0.$ 

{\em Proof of the claim.} 
Lemma~\ref{lemma:alternative characterization capacity} shows that for each $N \in \cN$ we have
$$\ceh (X \setminus N) = \inf \{E(h - g) \mid g \in D(E)  \text{ with } g1_{X \setminus N} = 0 \}.$$
This proves the claim. \qedc

(i) $\Rightarrow$ (ii): This is a direct consequence of the previous claim.  

(ii) $\Rightarrow$ (i): We let $\ow{E}$ be the closure of the restriction of $E$ to $D(E)_\cN$. Since $E$ is an extension of $\ow{E}$, it suffices to show the inclusion $D(E) \subseteq D(\ow{E})$. 

According to the claim, assertion (ii) implies $h \in D(\ow{E})$, i.e., there exists a net $(h_i)$ in $D(E)_\cN$ that converges to $h$ in the form topology. Let $ f \in D(E)$ nonnegative. Since $h_i \in D(E)_\cN$, we have $f \wedge h_i \in D(E)_\cN$ and $E(f \wedge h_i) = \ow{E}(f \wedge h_i)$. The lower semicontinuity of $\ow{E}$ and Theorem~\ref{theorem:algebraic and order properties} imply
$$\ow{E}(f \wedge h)^{1/2}  \leq \liminf_i \ow{E}(f \wedge h_i)^{1/2} = \liminf_i E(f \wedge h_i)^{1/2} \leq E(f)^{1/2}+ E(h)^{1/2} < \infty.  $$
 The same argumentation yields that for $n \in \IN$ we have $f \wedge (nh) \in D(\ow{E})$ and, therefore,  $\ow{E} (f \wedge (nh)) = E (f \wedge (nh))$. From the lower semicontinuity of $\ow{E}$ and  Lemma~\ref{lemma:superharmonic functions as cutoff} we conclude
$$\ow{E}(f) \leq \liminf_{n\to \infty} \ow{E} (f \wedge (nh)) = \liminf_{n\to \infty} E (f \wedge (nh)) \leq E(f)< \infty.$$
Since any function in $D(E)$ can be split into positive and negative part, this finishes the proof.
 \end{proof}

\begin{remark}
\begin{itemize}
 \item In Proposition~\ref{proposition: superharmonic functions are strictly positive} we have seen that every nontrivial irreducible transient energy form possesses a strictly positive superharmonic function. For transient energy forms which do not satisfy this assumption, the proposition remains true when (ii) is replaced by
 \begin{itemize}
  \item[(ii)'] $D(E)_\cN$ is a vector space and for all $E$-superharmonic functions $h$ we have $\inf_{N\in \cN} \ceh (X\setminus N)= 0$.  
 \end{itemize}

 \item   It is well known for Dirichlet forms that nests (in the classical sense) can be characterized in terms of capacity, see \cite[Theorem~III.2.11]{MR}. 
\end{itemize}
\end{remark}

\begin{definition}[Local space]
 Let $E$ be an energy form. For an $E$-nest $\cN$ the {\em local space with respect to $\cN$} is
 \begin{align*} 
 D(E)_{{\rm loc },\, \cN} := \{ f \in L^0(m) \mid  \text{ for all } N \in \cN 
 \text{ there exists }  f_N \in D(E) \text{ with } f 1_N = f_N 1_N\}. 
 \end{align*}
 The {\em local space of $E$} is given by
 \begin{align*} 
  D(E)_{\rm loc} := \bigcup_{\cN \text{ is }E\text{-nest}} D(E)_{{\rm loc },\, \cN}.
 \end{align*}
\end{definition}

\begin{remark}
 When $\E$ is a regular Dirichlet form the local Dirichlet space is usually introduced in a similar spirit as local $L^p$-spaces or local Sobolev spaces. Namely, one considers
 $$\{f \in L^0(m) \mid \text{ for all }G \in \mathcal{K} \text{ there exists }  f_G \in D(\E)\text{ with }f 1_G = f_G1_G\}.$$
 In our terminology, this corresponds to the local space with respect to the nest of all open relatively compact sets $\mathcal{K}$. Unfortunately, for many purposes this space is too small. We shall encounter this problem   in Chapter~\ref{chapter:silverstein extensions} and in Chapter~\ref{chapter:weak solutions}. 
 
\end{remark}

From the definition of $D(E)_{\rm loc}$ it is not clear that it allows algebraic manipulations. However, with the help of Lemma~\ref{lemma:refinement of nests} we obtain the following structural results. 

\begin{proposition}
 Let $E$ be an energy form. The following assertions hold.
 \begin{itemize}
  \item[(a)] $D(E)_{\rm loc}$ is a vector space.
  \item[(b)] For $f_1,\ldots,f_n \in D(E)_{\rm loc}$ and every normal contraction $C:\IR^n \to \IR$ we have $$C(f_1,\ldots,f_n) \in D(E)_{\rm loc}.$$
  \item[(c)] $D(E)_{\rm loc}$ is a lattice and $D(E)_{\rm loc} \cap L^\infty(m)$ is an algebra.  
 \end{itemize}
\end{proposition}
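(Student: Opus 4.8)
The plan is to reduce each of the three assertions to a statement about a \emph{single} nest and then to invoke the closure properties of $D(E)$ established earlier. The key observation is that passing to a finer nest only enlarges the local space: if every set of a nest $\cN''$ is contained in a set of a nest $\cN'$, then $D(E)_{{\rm loc},\,\cN'} \subseteq D(E)_{{\rm loc},\,\cN''}$, since a function localizing $f$ on the larger set also localizes it on the smaller one. In particular, by the refinement Lemma~\ref{lemma:refinement of nests} the collection $\cN_1 \wedge \cN_2$ is again an $E$-nest and $D(E)_{{\rm loc},\,\cN_i} \subseteq D(E)_{{\rm loc},\,\cN_1 \wedge \cN_2}$ for $i = 1,2$. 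Applying this finitely many times, whenever $f_1, \ldots, f_n \in D(E)_{\rm loc}$ are given, say $f_k \in D(E)_{{\rm loc},\,\cN_k}$, I would pass to the common nest $\cN := \cN_1 \wedge \cdots \wedge \cN_n$ and assume from the outset that all the $f_k$ lie in the same local space $D(E)_{{\rm loc},\,\cN}$.

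With a single nest fixed, each statement follows from the corresponding property of $D(E)$ applied to the localizing functions. Fix $N \in \cN$ and choose $f_{k,N} \in D(E)$ with $f_k 1_N = f_{k,N}1_N$. For (a), linear combinations work because $(\lambda f_1 + \mu f_2)1_N = (\lambda f_{1,N} + \mu f_{2,N})1_N$ and $\lambda f_{1,N} + \mu f_{2,N} \in D(E)$, $D(E)$ being a vector space, so $\lambda f_1 + \mu f_2 \in D(E)_{{\rm loc},\,\cN}$. For (b), given a normal contraction $C$, the functions $C(f_1,\ldots,f_n)$ and $C(f_{1,N},\ldots,f_{n,N})$ agree pointwise on $N$ and hence coincide after multiplication by $1_N$; since Theorem~\ref{theorem:cutoff properties energy forms} gives $C(f_{1,N},\ldots,f_{n,N}) \in D(E)$, we conclude $C(f_1,\ldots,f_n) \in D(E)_{{\rm loc},\,\cN}$. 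The lattice part of (c) is then the special case $C(x,y) = x \wedge y$ and $C(x,y) = x \vee y$.

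For the algebra part of (c), the only extra point is to arrange that the localizing functions can be taken bounded. If $f, g \in D(E)_{\rm loc} \cap L^\infty(m)$, then replacing $f_N$ by its truncation $(f_N \wedge \|f\|_\infty) \vee (-\|f\|_\infty)$, which still lies in $D(E)$ by Proposition~\ref{proposition:elementary cut-off properties} and agrees with $f_N$ on $N$ because $|f| \leq \|f\|_\infty$ there, I may assume $f_N, g_N \in D(E) \cap L^\infty(m)$; then $f_N g_N \in D(E)$ by the algebra statement of Theorem~\ref{theorem:algebraic and order properties}, and $(fg)1_N = (f_N g_N)1_N$ yields $fg \in D(E)_{{\rm loc},\,\cN} \cap L^\infty(m)$. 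I do not expect any serious obstacle in this argument; the one point requiring care throughout is the implicit handling of sets defined only up to $m$-measure zero, so that the pointwise identities on $N$ and their consequences after multiplication by $1_N$ are read correctly in $L^0(m)$.
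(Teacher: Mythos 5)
Your proof is correct and follows exactly the route of the paper's own (very brief) argument: reduce to a single nest via the refinement Lemma~\ref{lemma:refinement of nests}, then transfer the vector space, contraction (Theorem~\ref{theorem:cutoff properties energy forms}) and algebra/lattice (Theorem~\ref{theorem:algebraic and order properties}) properties of $D(E)$ to the localizing functions. You merely spell out the details the paper leaves implicit, including the correct monotonicity $D(E)_{{\rm loc},\,\cN_i} \subseteq D(E)_{{\rm loc},\,\cN_1 \wedge \cN_2}$ and the truncation step for the algebra part, all of which are sound.
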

\begin{proof}
 It follows from Lemma~\ref{lemma:refinement of nests} that $D(E)_{\rm loc}$ is a vector space. Similarly, assertions (b) and (c) follow from Lemma~\ref{lemma:refinement of nests} combined with Theorem~\ref{theorem:cutoff properties energy forms} or Theorem~\ref{theorem:algebraic and order properties}, respectively.
\end{proof}
 
 Different nests of an energy form $E$ can lead to the same local space, i.e., it may happen that $D(E)_{{\rm loc},\, \cN} = D(E)_{{\rm loc},\, \cN'}$ for different $E$-nests $\cN$ and $\cN'$. This phenomenon is important for later considerations and so we note the following. 
 
\begin{definition}[Equivalent nests] \label{definition:equivalent nests}
 Let $E$ be an energy form.  Two $E$-nests $\cN$ and $\cN'$ are called {\em equivalent} if the following is satisfied: 
  \begin{itemize}
  \item For each $N \in \cN$ there exists $N' \in \cN'$ with $N \subseteq N'$.
  \item For each $N' \in \cN'$ there exists $N  \in \cN$ with $  N' \subseteq N$.
 \end{itemize}
\end{definition}

\begin{lemma}\label{lemma:local domain equivalent nests}
 Let $E$ be an energy form and let $\cN,\cN'$ be equivalent $E$-nests. Then $D(E)_\cN = D(E)_{\cN'}$ and $D(E)_{{\rm loc},\, \cN} = D(E)_{{\rm loc},\, \cN'}$.
\end{lemma}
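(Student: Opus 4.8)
The plan is to unfold both definitions and observe that each of the two defining clauses of equivalent nests is exactly what is needed to transport a ``vanishing outside a set'' condition, respectively an ``agreeing on a set with a form-domain function'' condition, from one nest to the other. By the symmetry of Definition~\ref{definition:equivalent nests} it suffices in each case to prove a single inclusion; the reverse then follows by interchanging the roles of $\cN$ and $\cN'$.

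For the equality $D(E)_\cN = D(E)_{\cN'}$, I would take $f \in D(E)_\cN$, so that $f \in D(E)$ and $f 1_{X \setminus N} = 0$ for some $N \in \cN$. The first property of equivalence yields $N' \in \cN'$ with $N \subseteq N'$, whence $X \setminus N' \subseteq X \setminus N$ and therefore $f 1_{X \setminus N'} = 0$. This shows $f \in D(E)_{\cN'}$, and the opposite inclusion is obtained in the same way using the second property of equivalence. The key observation is simply that a function supported in $N$ is also supported in any larger set $N' \supseteq N$.

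For the equality of local spaces, I would take $f \in D(E)_{{\rm loc},\, \cN}$ and fix an arbitrary $N' \in \cN'$. The second property of equivalence provides $N \in \cN$ with $N' \subseteq N$, and by the definition of the local space there is $f_N \in D(E)$ with $f 1_N = f_N 1_N$. Multiplying by $1_{N'}$ and using $N' \subseteq N$ gives $f 1_{N'} = f 1_N 1_{N'} = f_N 1_N 1_{N'} = f_N 1_{N'}$, so the single function $f_N \in D(E)$ already witnesses the defining property of $D(E)_{{\rm loc},\, \cN'}$ at $N'$. Hence $f \in D(E)_{{\rm loc},\, \cN'}$, and once more the reverse inclusion follows symmetrically.

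There is no genuine obstacle here: the statement is a direct bookkeeping consequence of the definitions, and in particular it uses neither closedness nor the Markov property of $E$, only that $D(E)$ is closed under multiplication by indicator functions of the relevant sets. The one point requiring a little care is to pair the correct inclusion ($N \subseteq N'$ versus $N' \subseteq N$) with the correct condition, namely a ``support shrinks as the set grows'' argument for $D(E)_\cN$ and a ``restriction to a subset'' argument for the local space; both directions are covered precisely by the two clauses of Definition~\ref{definition:equivalent nests}.
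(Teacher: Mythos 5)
Your proof is correct and is exactly the argument the paper has in mind: the paper's own proof consists of the single sentence ``This is immediate from the definitions,'' and your write-up is precisely that unfolding, pairing the clause $N \subseteq N'$ with the support condition for $D(E)_\cN$ and the clause $N' \subseteq N$ with the restriction condition for the local space. The only cosmetic quibble is your closing remark that the argument needs $D(E)$ to be closed under multiplication by indicators --- in fact it needs nothing at all beyond the definitions, since the identity $f 1_{N'} = f_N 1_{N'}$ is an equation in $L^0(m)$ and $f_N$ itself is the required witness in $D(E)$.
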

\begin{proof}
 This is immediate from the definitions.
\end{proof}

\chapter{Silverstein extensions of energy forms} \label{chapter:silverstein extensions}

 Silverstein extensions are an important tool in the theory of Dirichlet forms. They were introduced by Silverstein in \cite[Terminology~20.2]{Sil} and then used in \cite{Sil2} to study the boundary behavior of symmetric Markov processes. Roughly speaking, each Dirichlet form corresponds to a symmetric Markov process on the underlying space. A Silverstein extension of such a form provides a symmetric Markov process with the same local behavior,  but with different global behavior as the original process. One example is Brownian motion on a bounded domain. It can either be absorbed (killed) once it hits the boundary of the domain or it can be reflected back into the domain upon hitting the boundary. Reflected Brownian motion comes from a Silverstein extension of the Dirichlet form of absorbed Brownian motion. The book \cite{CF} is devoted to studying the stochastic interpretation of Silverstein extensions (and time changes) in the Dirichlet form case and we refer the reader to it for details. 
 
 In this chapter we extend the concept of Silverstein extensions to energy forms. This is possible since Silverstein extensions are defined by a simple algebraic property of the form domain; an extension of an energy form is a Silverstein extension if the domain of the smaller form is an ideal in the domain of the larger form. 
 In the first section we prove that this ideal property can either be formulated in terms of the order structure or in terms of the multiplicative structure of the domains and that it can be characterized by the fact that the domain of the larger form lies in the local domain of the smaller form, see Theorem~\ref{theorem:Ideal properties of energy forms}. This observation is then illustrated with several examples. More specifically, we discuss the precise relation of energy forms and Dirichlet forms, see Proposition~\ref{proposition:energy forms vs. dirichlet forms}, characterize Silverstein extensions of Dirichlet forms, see Theorem~\ref{theorem:silverstein extensions of dirichlet forms}, and show that Markovian extensions and Silverstein extensions agree for quadratic forms of hypoelliptic operators, see Theorem~\ref{theorem:markov v.s. silverstein extensions} and Corollary~\ref{corollary:hypoelliptic operators}.
 
 The second section of this chapter is devoted to uniqueness of Silverstein extensions. We prove that recurrent energy forms are Silverstein unique, see Theorem~\ref{theorem:recurrent forms are Silverstein unique},  and we provide an abstract version of the meta theorem that Silverstein uniqueness can be characterized by the vanishing of the capacity of 'the boundary', see Theorem~\ref{theorem:capacity characterization of uniqueness of Silverstein extensions}. This result allows us to obtain a unified treatment and to generalize most of the Silverstein uniqueness results for Dirichlet forms that are formulated in terms of capacity, see Theorem~\ref{theorem:silverstein uniqueness boundary inside space} and Theorem~\ref{theorem:silverstein uniqueness boundary outside space}. 
 
 In the last section of this chapter, we construct the maximal Silverstein extension of a given energy form. For (quasi-)regular Dirichlet forms this is usually done with the help of representation theory viz the Beurling-Deny formula of the form or the Fukushima decomposition of the associated process. These rather complicated procedures stand in sharp contrast with the definition of Silverstein extensions in terms of simple ideal properties of the form domains. We provide a new construction of the maximal Silverstein extension that only relies on these algebraic properties.  Along the way, we split a given energy form into its main part, see Definition~\ref{definition:main part}, and its killing part, see Definition~\ref{definition:killing part}, and introduce the reflected energy form, see Definition~\ref{definition:reflected form}. For Dirichlet forms this provides a new construction of the so-called reflected Dirichlet space. We give criteria when the reflected energy form is the maximal Silverstein extension, see Theorem~\ref{theorem:maximality er}, but also show that not every energy form, indeed not even every regular Dirichlet form, has a maximal Silverstein extension, see Example~\ref{example:counterexample maximal Silverstein extension energy form}, Example~\ref{example:counterexample maximal Silverstein extension regular dirichlet form} and Proposition~\ref{proposition:counterexample maximality er}. These insights reveal some mistakes in the literature, where it is claimed that all regular Dirichlet forms possess a maximal Silverstein extension.  We conclude this chapter with the structural insight that recurrence and uniqueness of Silverstein extensions are almost the same concept, see Theorem~\ref{theorem:recurrence and uniqueness}. This is the reason why both can be characterized in terms of capacity.

\section{Ideals of energy forms} \label{section:ideals}

In this section we introduce Silverstein extensions of energy forms. We prove that they can be characterized in terms of ideal properties of the form domains and in terms of local spaces. The notion of Silverstein extensions provides a language that allows us to precisely formulate the relation of Dirichlet forms and energy forms.  We finish this section with various examples and applications.

\begin{definition}[Algebraic ideal and order ideal]
Let $A,B \subseteq L^0(m)$. The set $A$ is called {\em order ideal in $B$} if $f \in A,$ $g \in B$  and $|g| \leq |f|$ implies $g \in A$. It is an {\em algebraic ideal in $B$} if for each $f \in A$ and $g \in B$ we have $fg \in A$. 
\end{definition}

The following theorem shows that both notions of ideal coincide for form domains of extensions of energy forms.

\begin{theorem}[Ideal properties of the form domain] \label{theorem:Ideal properties of energy forms}
 Let $E$ and $\ow{E}$ be energy forms such that $\ow{E}$ is an extension of $E$. The following assertions are equivalent. 
 \begin{itemize}
  \item[(i)] $D(E)$ is an order ideal in $D(\ow{E})$.
  \item[(ii)] $D(E)\cap L^\infty(m)$ is an algebraic ideal in $D(\ow{E})\cap L^\infty(m)$. 
  \item[(iii)] $D(\ow{E}) \subseteq \DEl$.
  \item[(iv)] There exists a special $E$-nest $\cN$ such that $D(\ow{E}) \cap L^\infty(m) \subseteq \DEln$.
  \item[(v)] For all special $E$-nest $\cN$ we have $D(\ow{E}) \cap L^\infty(m) \subseteq \DEln$.
 \end{itemize}
\end{theorem}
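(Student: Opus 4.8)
The plan is to establish the equivalences cyclically, via
$$(\mathrm{i}) \Rightarrow (\mathrm{ii}) \Rightarrow (\mathrm{v}) \Rightarrow (\mathrm{iv}) \Rightarrow (\mathrm{i}), \qquad (\mathrm{i}) \Rightarrow (\mathrm{iii}) \Rightarrow (\mathrm{i}),$$
so that (iii) is woven into the main loop. Three arrows are essentially formal. For (i) $\Rightarrow$ (ii): if $f \in D(E)\cap L^\infty(m)$ and $g \in D(\ow{E})\cap L^\infty(m)$, then $fg \in D(\ow{E})\cap L^\infty(m)$ since $D(\ow{E})\cap L^\infty(m)$ is an algebra by Theorem~\ref{theorem:algebraic and order properties}, while $|fg| \le \|g\|_\infty |f|$ with $\|g\|_\infty |f| \in D(E)$, so the order ideal property forces $fg \in D(E)$. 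For (ii) $\Rightarrow$ (v): given a special $E$-nest $\cN$ and $N \in \cN$, choose $g_N \in D(E)_\cN$ with $g_N \ge 1_N$ and set $\varphi_N := (g_N \wedge 1)\vee 0 \in D(E)\cap L^\infty(m)$, which equals $1$ on $N$; for $u \in D(\ow{E})\cap L^\infty(m)$ the algebraic ideal property gives $u\varphi_N \in D(E)$, and $u\varphi_N = u$ on $N$, whence $u \in \DEln$. Finally (v) $\Rightarrow$ (iv) is immediate because special nests exist (Lemma~\ref{lemma:special nest}).

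The technical heart is a localization lemma that I would isolate: \emph{if $\cN$ is an $E$-nest, $u \in D(\ow{E})$, $u \in \DEln$, and $|u| \le w$ for some $w \in D(E)$, then $u \in D(E)$.} Splitting into positive and negative parts reduces to $u \ge 0$ (both parts lie in $D(\ow{E})$ by the contraction property, are dominated by $w$, and remain locally in $D(E)$). Using density of $D(E)_\cN$ in $D(E)$, take a net $w_j \in D(E)_\cN$ with $0 \le w_j \le w$ and $w_j \to w$ in the form topology, where $w_j$ vanishes off $N_j \in \cN$; let $u_{N_j} \in D(E)$ be a local representative of $u$ on $N_j$, clamped so that $0 \le u_{N_j} \le w$. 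Then $u \wedge w_j = u_{N_j} \wedge w_j$ (both sides agree on $N_j$ and vanish off $N_j$), so $u \wedge w_j \in D(E)$. Since $\min \colon \IR^2 \to \IR$ is a normal contraction, Theorem~\ref{theorem:cutoff properties energy forms} applied to $\ow{E}$ gives
$$E(u\wedge w_j)^{1/2} = \ow{E}(u\wedge w_j)^{1/2} \le \ow{E}(u)^{1/2} + \ow{E}(w_j)^{1/2},$$
which is bounded uniformly in $j$ because $\ow{E}(w_j) = E(w_j) \to E(w)$. As $u\wedge w_j \overset{m}{\to} u$, the lower semicontinuity of $E$ (Theorem~\ref{theorem:characterization closedness}) yields $E(u) \le \liminf_j E(u\wedge w_j) < \infty$.

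With this lemma the arrows into (i) are short. For (iii) $\Rightarrow$ (i): if $f \in D(E)$, $g \in D(\ow{E})$ and $|g| \le |f|$, then $g \in \DEl$ by (iii), so $g \in \DEln$ for some nest, and the lemma with $w = |f|$ gives $g \in D(E)$. For (iv) $\Rightarrow$ (i): first reduce to bounded functions using the truncations $g_k := (g \wedge k)\vee(-k)$, which lie in $D(\ow{E})\cap L^\infty(m) \subseteq \DEln$ by (iv) and are dominated by $|f|$, so the lemma gives $g_k \in D(E)$; then $E(g_k) = \ow{E}(g_k) \le \ow{E}(g)$ is bounded and $g_k \overset{m}{\to} g$, so lower semicontinuity yields $g \in D(E)$. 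This truncation device is what repeatedly converts bounded statements into unbounded ones.

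It remains to prove (i) $\Rightarrow$ (iii), and here lies the main obstacle: for unbounded $u \in D(\ow{E})$ one must exhibit a \emph{single} nest on which $u$ is locally in $D(E)$, and the naive cutoffs $u\varphi_N$ have uncontrolled energy. I would fix a special nest $\cN$ and pass to the refinement $\cN' := \{N \cap \{|u| < k\} \mid N \in \cN,\ k \in \IN\}$; on $N \cap \{|u| < k\}$ the bounded truncation $u_k$ agrees with $u$ and, lying in $D(\ow{E})\cap L^\infty(m)$, admits a local $D(E)$-representative by the already established (i) $\Rightarrow$ (iv), so $u \in D(E)_{{\rm loc},\, \cN'}$ once $\cN'$ is known to be a nest. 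Verifying this is the delicate point: directedness follows from Lemma~\ref{lemma:special nests are directed}, but the density of $D(E)_{\cN'}$ requires work. For $v \in D(E)\cap L^\infty(m)$ vanishing off some $N_0 \in \cN$, I would set $w_k := v - v\,\sigma_k(u)$, where $\sigma_k(u) := ((|u|-(k-1))_+)\wedge 1 \in D(\ow{E})$ is the normal-contraction cutoff that is $1$ on $\{|u|\ge k\}$ and $0$ on $\{|u|\le k-1\}$; the product $v\,\sigma_k(u)$ lies in $D(E)$ by the order ideal property (a bounded $D(\ow{E})$-function dominated by $|v| \in D(E)$), so $w_k \in D(E)_{\cN'}$, it converges to $v$ in measure, and the product estimate of Theorem~\ref{theorem:algebraic and order properties} bounds $\ow{E}(w_k)$ uniformly in $k$. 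A Banach--Saks argument (Lemma~\ref{lemma:banach saks sequences}) then produces form-convergent Cesàro means of the $w_k$ with limit $v$, giving the density. I expect the bookkeeping in this final density argument, rather than any conceptual difficulty, to be the most laborious step.
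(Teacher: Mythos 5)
Your proposal is correct, but it takes a genuinely different route from the paper's in two essential places. The paper proves (ii) $\Rightarrow$ (i) directly via the two-variable contraction $C_\varepsilon(w,z)=\frac{w}{w+\varepsilon}z$, writing $g$ as the $\tau(m)$-limit of $f(f+\varepsilon)^{-1}g$ with energies bounded uniformly in $\varepsilon$; you never prove that arrow at all --- your localization lemma (local membership w.r.t.\ some nest plus order domination by $w\in D(E)$ implies $g\in D(E)$) carries every arrow into (i), turning (iii) $\Rightarrow$ (i) and (iv) $\Rightarrow$ (i) into one-liners, and it makes (iv) $\Rightarrow$ (i) logically self-contained, whereas the paper's (iv) $\Rightarrow$ (iii), as literally written, invokes the algebraic ideal property of (ii) inside that arrow. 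Second, for the unbounded direction both you and the paper refine by level sets of $|f|$, but the paper verifies that $\ow{\cN}=\{\{|f|\le n\}\mid n\in\IN\}$ is an $E$-nest by passing to the closure $E'$ of the restriction of $E$ to $D(E)_{\ow{\cN}}$ and using lower semicontinuity of $E'$ with ramp cutoffs $\varphi_n=\varphi\bigl(1-\frac{|f|\wedge n}{n}\bigr)$, while you use sharp band cutoffs $\sigma_k$ and Banach--Saks. Your route contains the one genuine pitfall worth flagging: $(L^0(m),\tau(m))$ is not locally convex, and Ces\`aro means of a sequence converging in measure need not converge in measure in general --- here you are rescued because $|w_k|\le\|v\|_\infty$, so on each $U\in\Bf$ Lebesgue's theorem (Lemma~\ref{lemma:Lebesgue's theorem}) upgrades the convergence to $L^1(U)$, which does average; the paper's $E'$-device avoids this entirely, and you could substitute it for Banach--Saks. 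Two routine verifications should also be recorded: that the clamped net in your lemma exists with the stated properties (clamp a generic approximating net via $(v_j)_+\wedge w$; form-topology convergence follows from Lemma~\ref{lemma:characterization convergence in form topology lsc forms}, though mere boundedness of $E(w_j)$ already suffices for your argument), and that directedness of $\cN'$ comes from directedness of the special nest $\cN$ rather than directly from Lemma~\ref{lemma:special nests are directed}. What your organization buys is a reusable lemma and short arrows into (i); what the paper's buys is the quantitative division trick for (ii) $\Rightarrow$ (i) and freedom from Banach--Saks.
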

\begin{proof}
 (i) $\Rightarrow$ (ii): Let $f \in D(E)\cap L^\infty(m)$ and $g \in D(\ow{E})\cap L^\infty(m)$. Their product satisfies
 $$|fg| \leq \|g\|_\infty |f|.$$
 Since $D(E)$ is an order ideal in $D(\ow{E})$, this implies $fg \in D(E)$.   
 
 (ii) $\Rightarrow$ (i): Let $f \in D(E)$ and $g \in D(\ow{E})$ with $|g| \leq |f|$. We have to prove $g \in D(E)$. By the contraction properties of $E$ and by Proposition~\ref{proposition:approximation by bounded functions} we can assume that $f$ and $g$ are nonnegative and bounded. The inequality $0 \leq g \leq f$ implies the convergence
 $$   \frac{f}{f + \varepsilon}g \overset{m}{\longrightarrow} g, \text{ as } \varepsilon \to 0+. $$
  From the lower semicontinuity of $E$ we infer 
 $$E(g) \leq \liminf_{\varepsilon \to 0+} E(f(f+\varepsilon)^{-1}g).$$
 Therefore, it suffices to show that $E(f(f+\varepsilon)^{-1}g)$ is finite and uniformly bounded in $\varepsilon$.  In the interior of $S:=\{(w,z) \in \IR^2 \mid 0\leq w \leq z\}$  the partial derivatives of the function 
 $$C_\varepsilon:S \to \IR, \, (w,z) \mapsto \frac{w}{w+\varepsilon} z$$
 are bounded by $1$. This observation, the continuity of $C_\varepsilon$ and the inequality $0 \leq g \leq f$ yield 
 $$|C_\varepsilon(f(x),g(x)) - C_\varepsilon(f(y),g(y))| \leq |f(x) - f(y)| + |g(x) - g(y)|$$
 and
 $$|C_\varepsilon(f(x),g(x))| \leq |f(x)| + |g(x)|.$$
The contraction properties of $\ow{E}$ imply
 $$\ow{E} (C_\varepsilon(f,g))^{1/2} \leq \ow{E}(f)^{1/2} + \ow{E}(g)^{1/2}.$$
 A similar argument applied to the function $D_\varepsilon:[0,\infty) \to \IR,\, x  \mapsto x/(x+\varepsilon)$ shows
 $$f(f+\varepsilon)^{-1} = D_\varepsilon(f)  \in D(E)\cap L^\infty(m).$$ 
 According to (ii), the algebra $D(E) \cap L^\infty(m)$ is an algebraic ideal in $D(\ow{E}) \cap L^\infty(m)$ and we obtain $C_\varepsilon(f,g) =f(f+\varepsilon)^{-1}g  \in D(E) \cap L^\infty(m)$. Since $\ow{E}$ is an extension of $E$, this and the previous considerations  imply
 $$E(C_\varepsilon(f,g))^{1/2} = \ow{E}(C_\varepsilon(f,g))^{1/2} \leq \ow{E}(f)^{1/2} + \ow{E}(g)^{1/2}.$$
 Therefore,  $E(C_\varepsilon(f,g)))$ is finite and bounded independently of $\varepsilon$ and implication (ii) $\Rightarrow$ (i) is proven.
 
 (ii) $\Rightarrow$ (v):   Let $f \in D(\ow{E}) \cap L^\infty(m)$  and let $\mathcal{N}$ be a special $E$-nest. For $N \in \cN$ we choose $g_N \in D(E)_\cN$ with $1_N \leq g_N \leq 1$.  Since $D(E) \cap L^\infty(m)$ is an algebraic ideal in $D(\ow{E}) \cap L^\infty(m)$, we we have $f g_N \in D(E)$. Furthermore, the choice of $g_N$ yields $f g_N  = f1_N $ and we obtain $f \in D(E)_{{\rm loc},\, \cN}$.  
 
 (v) $\Rightarrow$ (iv): This follows from the existence of special $E$-nests, Lemma~\ref{lemma:special nest}. 
 
 (iv) $\Rightarrow$ (iii): Let $\cN$ be a special $E$-nest such that $D(\ow{E}) \cap L^\infty(m) \subseteq \DEln$ and let $f \in D(\ow{E})$. Assertion (iv) and the contraction properties of $\ow{E}$ imply 
 $$(f \wedge n) \vee (-n) \in \DEln \subseteq \DEl \text{ for all }n \in \IN.$$
 As a consequence, we obtain that for each  
 $$M \in \cN_{|f|} :=\{N\cap \{|f| \leq n \} \mid N \in \mathcal{N}, n \in \IN\} = \cN \wedge \{\{|f| \leq n\} \mid n \in \IN\}$$
 there exists an $f_{M} \in D(E)$ with $f_{M} = f$ on $M$. Therefore, it suffices to show that $ \cN_{|f|}$ is an $E$-nest. According to Lemma~\ref{lemma:refinement of nests}, it remains to check that $\ow{\cN}:= \{\{|f| \leq n\} \mid n \in \IN\}$ is an $E$-nest.
 
We let $E'$ be the closure of the restriction of $E$ to $D(E)_{\ow{\cN}}$ and show $D(E) \subseteq D(E')$. According to Theorem~\ref{theorem:closure is markovian}, the form $E'$ is an energy form. Since bounded functions are dense in the domain of an energy form, it suffices to prove  $D(E) \cap L^\infty(m) \subseteq D(E') \cap L^\infty(m).$ To this end, for $\varphi \in D(E) \cap L^\infty(m)$ and $n \in \IN$ we set
 $$\varphi_n := \varphi - \varphi \cdot \frac{|f| \wedge n}{n} = \begin{cases} 0 & \text{ if } |f| > n\\ 
  \varphi\left(1-\frac{|f|}{n}\right) & \text{ else }\end{cases}.$$
 Since $D(E)\cap L^\infty(m)$ is an algebraic ideal in $D(\ow{E})\cap L^\infty(m)$, we have $\varphi_n \in D(E)$ and by definition $\varphi_n$ vanishes on $\{|f| > n\}$. This shows $\varphi_n \in D(E)_{\ow{\cN}}$. Furthermore,  $\varphi_n \overset{m}{\to}\varphi$, as $n \to \infty$. We use the lower semicontinuity of $E'$ and that $E,E'$ and $\ow{E}$ coincide on $D(E)_{\ow{\cN}}$ to conclude
 $$E'(\varphi) \leq \liminf_{n \to \infty} E'(\varphi_n) =  \liminf_{n \to \infty} E(\varphi_n) = \liminf_{n \to \infty} \ow{E}(\varphi_n). $$
 Theorem~\ref{theorem:algebraic and order properties} applied to the form $\ow{E}$ yields
 \begin{align*}
 \ow{E}(\varphi_n)^{1/2} &\leq \ow{E}(\varphi)^{1/2} + \| n^{-1}(|f|\wedge n)\|_\infty  \ow{E}(\varphi)^{1/2}  +  \|\varphi\|_\infty \ow{E}( n^{-1}(|f|\wedge n))^{1/2}  \\ &\leq 2\ow{E}(\varphi)^{1/2} + \|\varphi\|_\infty \ow{E}(f)^{1/2}.
 \end{align*}
 These calculations show  $\varphi \in D(E')$ and finish the proof of implication  (iv) $\Rightarrow$ (iii).  
 
 (iii) $\Rightarrow$ (ii): Let $g \in D(E) \cap L^\infty(m)$ and $f \in D(\ow{E})\cap L^\infty(m)$. We have to show $fg \in D(E)$. By (iii) there exists an $E$-nest $\cN$ such that $f \in \DEln$.  We choose a net $(g_i)$ in  $D(E)_\mathcal{N}$ that converges to $g$ with respect to the form topology. According to Lemma~\ref{lemma:characterization convergence in form topology lsc forms} and the contraction properties of $E$, we can assume $\|g_i\|_\infty \leq \|g\|_\infty$. It follows from the definitions and Theorem~\ref{theorem:algebraic and order properties} that $D(E)_\cN \cap L^\infty(m)$ is an algebraic ideal in $\DEln \cap L^\infty(m)$. Therefore, we have $fg_i \in D(E)$ and obtain
 $$E(fg) \leq \liminf_{i} E(g_if) = \liminf_{i} \ow{E}(g_if).$$
Another application of Theorem~\ref{theorem:algebraic and order properties} yields
 $$ \ow{E}(g_i f)^{1/2} \leq  \|g_i\|_\infty \ow{E}(f)^{1/2} + \|f\|_\infty \ow{E}(g_i)^{1/2} \leq  \|g\|_\infty \ow{E}(f)^{1/2} + \|f\|_\infty E(g_i)^{1/2}. $$
 Combining these computations shows $E(fg) < \infty$ and  finishes the proof.
\end{proof}

\begin{definition}[Silverstein extension]
Let $E$ be an energy form. An extension $\ow{E}$ of $E$ that satisfies one of the equivalent conditions of the previous theorem is called {\em Silverstein extension of $E$}. The set of all Silverstein extensions of $E$ is denoted by ${\rm Sil}(E)$. The form $E$ is called {\em Silverstein unique} if ${\rm Sil}(E) = \{E\}$.
\end{definition}

\begin{remark} \label{remark:discussion silverstein}
\begin{itemize} 
 \item For Dirichlet forms I learned  of the equivalence of (i) and (ii) in a discussion with Hendrik Vogt and Peter Stollmann in December~2012 and the proof of the implication (ii) $\Rightarrow$ (i) as it is presented here is due to Hendrik Vogt. They were familiar with \cite{Ouh}, which shows that (i) for the domains of  Dirichlet forms is equivalent to domination of the associated semigroups. I knew that (ii)  is also related with domination of the associated semigroups from  the paper \cite{HKLMS}, which I wrote with co-authors at the time.   Later, I learned that the equivalence of (ii) and the domination of the associated semigroups was already known and stated in \cite[Theorem~21.2]{Sil}   under various technical assumptions on the underlying space. There, the proof is rather complicated and uses probabilistic methods.  Independently, the equivalence of (i) and (ii) was recently proven in \cite[Proposition~2.7]{Rob} by different methods. For a precise formulation of the just mentioned results for Dirichlet forms, see Theorem~\ref{theorem:silverstein extensions of dirichlet forms} below.
 
 \item The implication (ii) $\Rightarrow$ (iii) was observed for quasi-regular Dirichlet forms in \cite[Remark~6.6.2]{CF} with a slightly different notion of local space. In particular, the idea that for an unbounded $f \in D(\ow{E})$ one should consider $\{\{|f| \leq n\}\mid n \in \IN\}$  and then take a refinement with a given special nest can, in some disguise,   already be found there. Here, the technical novelty lies in the fact that we had to show that $\{\{f \leq n\}\mid n \in \IN\}$ is an $E$-nest while this is automatically satisfied in \cite{CF}. That the reverse implication (iii) $\Rightarrow$ (ii) also holds seems to be new. We shall see at the end of this section that it is particularly useful for constructing examples.
\end{itemize}
\end{remark}

In the proof of the previous theorem we did not only show the inclusion $D(\ow{E}) \subseteq \DEl$, but explicitly constructed the corresponding nests. We fix this construction for later purposes. Whenever $\cN$ is an $E$-nest and $f \in L^0(m)$, we let
$$\cN_f := \cN \wedge \{\{f \leq n\} \mid  n\in \IN\} = \{N \cap \{f\leq n\} \mid N\in \cN \text{ and } n \in \IN\}.$$

\begin{lemma} \label{lemma:silverstein extension local space}
 Let $E$ be an energy form and let $\ow{E}$ be a Silverstein extension of $E$. Let $\cN$ be a special $E$-nest. For all $f \in D(\ow{E})$ the collection $\cN_{|f|}$ is a special $E$-nest and
 $$f \in D(E)_{{\rm loc},\,\cN_{|f|}}.$$
\end{lemma}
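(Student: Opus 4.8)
The plan is to read everything off Theorem~\ref{theorem:Ideal properties of energy forms} together with Lemma~\ref{lemma:refinement of nests}, exploiting that a Silverstein extension $\ow{E}$ satisfies all five equivalent conditions of that theorem. Write $\ow{\cN} := \{\{|f| \le n\} \mid n \in \IN\}$, so that by definition $\cN_{|f|} = \cN \wedge \ow{\cN}$. The three things to establish are: $\cN_{|f|}$ is an $E$-nest, it is special, and $f \in D(E)_{{\rm loc},\,\cN_{|f|}}$.

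First I would show $\cN_{|f|}$ is an $E$-nest. The crucial sub-fact is that $\ow{\cN}$ itself is an $E$-nest, and this is exactly what was proven inside the implication (iv) $\Rightarrow$ (iii) of Theorem~\ref{theorem:Ideal properties of energy forms} (via the functions $\varphi_n = \varphi(1 - n^{-1}(|f|\wedge n))$, the algebraic ideal property (ii), and the lower semicontinuity of the closure of $E$ restricted to $D(E)_{\ow{\cN}}$). Granting this, Lemma~\ref{lemma:refinement of nests} applied to the two $E$-nests $\cN$ and $\ow{\cN}$ yields at once that $\cN_{|f|} = \cN \wedge \ow{\cN}$ is an $E$-nest.

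Next, specialness. Fix a typical element $M = N \cap \{|f| \le n\} \in \cN_{|f|}$ with $N \in \cN$ and $n \in \IN$. Since $\cN$ is special, I would pick $g_N \in D(E)_\cN$ with $1_N \le g_N \le 1$ (bounded, using the contraction properties of $E$), vanishing outside some $N' \in \cN$. Put $\chi_n := 1 - \bigl((|f|-n)_+ \wedge 1\bigr)$ and consider $g_N \chi_n = g_N - g_N\bigl((|f|-n)_+\wedge 1\bigr)$. The subtracted term is the product of $g_N \in D(E)\cap L^\infty(m)$ with the bounded function $(|f|-n)_+\wedge 1 \in D(\ow{E})\cap L^\infty(m)$, hence lies in $D(E)$ by the ideal property (ii); thus $g_N\chi_n \in D(E)$. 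On $M$ one has $|f|\le n$, so $\chi_n = 1$ and $g_N\chi_n = g_N \ge 1$, while $g_N\chi_n$ vanishes outside $N' \cap \{|f| \le n+1\} \in \cN_{|f|}$. Therefore $g_N\chi_n \in D(E)_{\cN_{|f|}}$ and $g_N\chi_n \ge 1_M$, which is precisely the special-nest condition.

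Finally, for the local-space assertion, I would apply condition (v) of Theorem~\ref{theorem:Ideal properties of energy forms} to the given special nest $\cN$: the bounded normal contraction $(f\wedge n)\vee(-n) \in D(\ow{E})\cap L^\infty(m)$ lies in $D(E)_{{\rm loc},\,\cN}$, so there is $f_N \in D(E)$ with $(f\wedge n)\vee(-n)\,1_N = f_N 1_N$. On $M = N \cap \{|f|\le n\}$ we have $(f\wedge n)\vee(-n) = f$ and $M \subseteq N$, hence $f 1_M = f_N 1_M$ with $f_N \in D(E)$; as $M$ ranges over $\cN_{|f|}$ this gives $f \in D(E)_{{\rm loc},\,\cN_{|f|}}$. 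The only genuinely new work beyond the theorem is the specialness step, whose sole delicate point is manufacturing cutoffs that simultaneously dominate $1_M$ and stay in $D(E)_{\cN_{|f|}}$; multiplying the special-nest witness $g_N$ by the contraction $\chi_n$ of $|f|$ resolves this, and the $E$-nest claim for $\ow{\cN}$ is borrowed verbatim from the proof of Theorem~\ref{theorem:Ideal properties of energy forms}.
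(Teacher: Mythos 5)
Your proof is correct and follows essentially the same route as the paper: the nest property of $\cN_{|f|}$ and the membership $f \in D(E)_{{\rm loc},\,\cN_{|f|}}$ are borrowed from the proof of (iv) $\Rightarrow$ (iii) in Theorem~\ref{theorem:Ideal properties of energy forms} (legitimately, since a Silverstein extension satisfies all five equivalent conditions at once), and specialness is verified by hand by cutting off the specialness witness $g_N$ with a contraction of $|f|$ so that the resulting function dominates $1_{N \cap \{|f|\leq n\}}$ and vanishes outside a set of $\cN_{|f|}$. The only immaterial difference is that the paper realizes this cutoff as $f_{n,N} = 2\bigl(g_N - |f|/(2n)\bigr)_+$ and invokes the order ideal property (i), whereas you use $g_N\bigl(1 - ((|f|-n)_+\wedge 1)\bigr)$ and invoke the algebraic ideal property (ii) --- equivalent choices by the very theorem being used.
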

\begin{proof}
 As seen in the proof of Theorem~\ref{theorem:Ideal properties of energy forms} (iv) $\Rightarrow$ (iii),  $\cN_{|f|}$ is a nest and $f \in D(E)_{{\rm loc},\,\cN_{|f|}}$. It remains to prove that $\cN_{|f|}$ is special.  For $N \in \cN$ we choose some $g_N \in D(E)_\cN$ with $1_N \leq g_N \leq 1$.  For $n \in \IN$, the function
 $$f_{n,N}:= 2 \left(g_N - \frac{|f|}{2n}\right)_+$$
 belongs to $D(\ow{E})$, satisfies $f_{n,N} \geq 1_{N \cap \{|f| \leq n\}}$ and $f_{n,N} = 0$ on $\{|f|>2n\}$. Since $D(E)$ is an order ideal in $D(\ow{E})$ and  $|f_{n,N}| \leq g_N$, we obtain $f_{n,N} \in D(E)_\cN$. This finishes the proof.
\end{proof}

In many applications energy forms are defined as closures of some energy functional. Sometimes one has a pair of such functionals for which the ideal properties are only known on the smaller domains before taking the closure.  The following proposition shows that this is irrelevant.

\begin{proposition}\label{proposition:silverstein extension of closure}
 Let $E$ and $\ow{E}$ be closable Markovian forms such that $\ow{E}$ is an extension of $E$.  If $D(E) \cap L^\infty(m)$ is an algebraic ideal in $D(\ow{E})\cap L^\infty(m)$, then the closure of $\ow{E}$ is a Silverstein extension of the closure of $E$.
 \end{proposition}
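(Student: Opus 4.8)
The plan is to first record that both closures are energy forms and that $\overline{\ow{E}}$ extends $\overline{E}$, and then to verify the Silverstein property through condition~(ii) of Theorem~\ref{theorem:Ideal properties of energy forms}, i.e.\ that $D(\overline{E})\cap L^\infty(m)$ is an algebraic ideal in $D(\overline{\ow{E}})\cap L^\infty(m)$. That both closures are energy forms is Theorem~\ref{theorem:closure is markovian}. To see that $\overline{\ow{E}}$ is an extension of $\overline{E}$, I would invoke the description of the closure in Proposition~\ref{prop:characterization closability}: if $u\in D(\overline{E})$ and $(u_i)$ is an $E$-Cauchy net in $D(E)$ with $u_i\overset{m}{\to}u$, then, since $\ow{E}$ agrees with $E$ on $D(E)$, the same net is $\ow{E}$-Cauchy, whence $u\in D(\overline{\ow{E}})$ and $\overline{\ow{E}}(u)=\lim_i\ow{E}(u_i)=\lim_iE(u_i)=\overline{E}(u)$.

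For the ideal property fix $f\in D(\overline{E})\cap L^\infty(m)$ and $g\in D(\overline{\ow{E}})\cap L^\infty(m)$; the goal is $fg\in D(\overline{E})$. Since $\overline{E}$ and $\overline{\ow{E}}$ are energy forms, $f_\pm\in D(\overline{E})\cap L^\infty(m)$ and $g_\pm\in D(\overline{\ow{E}})\cap L^\infty(m)$, and $fg=f_+g_+-f_+g_--f_-g_++f_-g_-$, so it suffices to treat the case $f,g\ge 0$. The main point is then to approximate $f$ and $g$ by \emph{uniformly bounded} nets drawn from the pre-closure domains $D(E)$ and $D(\ow{E})$. This is where the only real subtlety lies: the forms $E$ and $\ow{E}$ are merely Markovian, so I cannot simply truncate an element of $D(E)$ and remain in $D(E)$. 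Instead I would exploit that Markovianity is stable under scaling: if $C_\varepsilon$ is an $\varepsilon$-cutoff for $E$ and $h\in D(E)$, then for $M>0$ the function $M\,(C_\varepsilon\circ(h/M))$ lies in $D(E)\cap L^\infty(m)$ and, by homogeneity of $E$, satisfies $E(M\,(C_\varepsilon\circ(h/M)))\le E(h)$. Choosing an $E$-Cauchy net $(h_i)$ in $D(E)$ with $h_i\overset{m}{\to}f$ and $\overline{E}(h_i)\to\overline{E}(f)$, taking $M=\|f\|_\infty$ and letting $\varepsilon=\varepsilon_i\to0$, the elementary bound $\|C_\varepsilon-((\cdot\vee0)\wedge1)\|_\infty\le\varepsilon$ shows that $f_i:=M\,(C_{\varepsilon_i}\circ(h_i/M))\in D(E)\cap L^\infty(m)$ with $0\le f_i\le 2M$, $f_i\overset{m}{\to}(f\vee0)\wedge M=f$, and $\sup_i\overline{E}(f_i)<\infty$. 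An identical construction applied to $\ow{E}$ produces $g_j\in D(\ow{E})\cap L^\infty(m)$, uniformly bounded, with $g_j\overset{m}{\to}g$ and $\sup_j\overline{\ow{E}}(g_j)<\infty$.

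With these approximants in hand the hypothesis does the work. For each $i,j$ the product $f_ig_j$ lies in $D(E)\cap L^\infty(m)$ because $D(E)\cap L^\infty(m)$ is an algebraic ideal in $D(\ow{E})\cap L^\infty(m)$; since all three forms agree on $D(E)$, the algebra inequality of Theorem~\ref{theorem:algebraic and order properties} applied to $\overline{\ow{E}}$ gives
$$\overline{E}(f_ig_j)^{1/2}=\overline{\ow{E}}(f_ig_j)^{1/2}\le\|f_i\|_\infty\,\overline{\ow{E}}(g_j)^{1/2}+\|g_j\|_\infty\,\overline{\ow{E}}(f_i)^{1/2},$$
which is bounded by a constant $K$ independent of $i,j$ (here $\overline{\ow{E}}(f_i)=E(f_i)$ since $f_i\in D(E)$). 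I would then take iterated limits using the lower semicontinuity of $\overline{E}$ (Theorem~\ref{theorem:characterization closedness}): fixing $i$ and letting $j\to\infty$ yields $f_ig_j\overset{m}{\to}f_ig$ and hence $\overline{E}(f_ig)^{1/2}\le K$, so $f_ig\in D(\overline{E})$; letting $i\to\infty$ then yields $f_ig\overset{m}{\to}fg$ and $\overline{E}(fg)^{1/2}\le K<\infty$. Thus $fg\in D(\overline{E})\cap L^\infty(m)$, which establishes condition~(ii) of Theorem~\ref{theorem:Ideal properties of energy forms} and finishes the proof. As indicated, the one genuine obstacle is the construction of uniformly bounded approximants inside the non-closed domains $D(E)$ and $D(\ow{E})$, which forces the reduction to nonnegative functions together with the scaling trick for $\varepsilon$-cutoffs; the remainder is a routine interplay of the ideal hypothesis, the algebra inequality, and lower semicontinuity.
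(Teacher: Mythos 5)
Your proof is correct and follows essentially the same route as the paper: approximate $f$ and $g$ by uniformly bounded nets in the pre-closure domains, multiply using the ideal hypothesis, bound $\overline{E}(f_ig_j)$ via the algebra inequality of Theorem~\ref{theorem:algebraic and order properties} applied to the closure of $\ow{E}$, and conclude by lower semicontinuity. The paper compresses your cutoff construction into the phrase ``by the contraction properties of $E_1$ and $E_2$ we can assume that the $f_i$ are uniformly bounded,'' so your explicit argument with scaled $\varepsilon$-cutoffs (and the attendant reduction to $f,g\geq 0$) spells out precisely the step that is genuinely needed there, since plain truncation need not preserve the non-closed domains $D(E)$ and $D(\ow{E})$.
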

\begin{proof}
 We denote the closure of $E$ by $E_1$ and the closure of $\ow{E}$ by $E_2$ and prove that the algebra $D(E_1)  \cap L^\infty(m)$ is an ideal in the algebra $D(E_2)\cap L^\infty(m)$. To this end, we let $f \in D(E_1)  \cap L^\infty(m)$ and $g \in D(E_2)\cap L^\infty(m)$ and choose nets $(f_i)$ in $D(E)$ and $(g_j)$ in $D(\ow{E})$ that converge towards $f$ and $g$, respectively, in the corresponding form topology. By the contraction properties of $E_1$ and $E_2$ we can assume that the $f_i$ are uniformly bounded by $\|f\|_\infty$ and that the $g_j$ are uniformly bounded by $\|g\|_\infty$. Using the lower semicontinuity of $E_1$ and that $D(E)  \cap L^\infty(m)$ is an algebraic ideal in $D(\ow{E})\cap L^\infty(m)$, we obtain
$$E_1(fg) \leq \liminf_i \liminf_j E_1(f_i g_j) = \liminf_i \liminf_j E(f_i g_j) = \liminf_i \liminf_j \ow{E}(f_i g_j).$$
Theorem~\ref{theorem:algebraic and order properties} applied to $E_2$ yields
$$\ow{E}(f_i g_j)^{1/2} \leq \|f_i\|_\infty \ow{E}(g_j)^{1/2} + \|g_j\|_\infty \ow{E}(f_i)^{1/2}\leq \|f\|_\infty \ow{E}(g_j)^{1/2} + \|g\|_\infty \ow{E}(f_i)^{1/2}. $$
Combining these computation shows $fg \in D(E_1)$ and proves the claim. 
 \end{proof}

We have seen that extended Dirichlet forms provide an example for energy forms. The previous proposition and the notion of Silverstein extensions allows us to discuss their relation more precisely.

\begin{proposition}[Energy forms v.s. Dirichlet forms] \label{proposition:energy forms vs. dirichlet forms}
 Let $E$ be an energy form and let $\E := E|_{L^2(m)}$ its restriction to $L^2(m)$. Then $\E$ is a Dirichlet form and $E$ is a Silverstein extension of $\Ee$. Moreover, if $m$ is finite, then $\Ee  = E$. 
\end{proposition}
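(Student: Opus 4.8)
The plan is to establish the three assertions separately, in each case reducing to results already proved for energy forms and for closures of forms on $L^0(m)$.

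First I would show that $\E = E|_{L^2(m)}$ is a Dirichlet form. It is a quadratic form on $L^2(m)$ with domain $D(E)\cap L^2(m)$. Markovianity passes from $E$ directly: if $C_\varepsilon$ is an $\varepsilon$-cutoff witnessing the Markov property of $E$, then the normal-contraction bound $|C_\varepsilon\circ f|\le |f|$ keeps $C_\varepsilon\circ f$ in $L^2(m)$ whenever $f\in L^2(m)$, and $\E(C_\varepsilon\circ f)=E(C_\varepsilon\circ f)\le E(f)=\E(f)$. For closedness I would invoke Proposition~\ref{proposition: closedness in L2}, for which it suffices to check lower semicontinuity of $\E$ under $L^2$-convergence. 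Since $L^2(m)$ embeds continuously into $L^0(m)$, an $L^2$-convergent sequence $f_n\to f$ also satisfies $f_n\overset{m}{\to}f$, so the lower semicontinuity of the energy form $E$ on $L^0(m)$ (Theorem~\ref{theorem:characterization closedness}) gives $\E(f)=E(f)\le\liminf_n E(f_n)=\liminf_n \E(f_n)$. Hence $\E$ is closed and Markovian, i.e. a Dirichlet form.

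Next I would prove that $E$ is a Silverstein extension of $\Ee$. By definition $\Ee=\overline{\E}$ is the smallest closed extension of $\E$ on $L^0(m)$ (closability holds by Theorem~\ref{theorem: existence extended Dirichlet space}), and since $E$ is itself a closed extension of $\E$, the minimality clause in Proposition~\ref{prop:characterization closability} shows $E$ extends $\Ee$. To obtain the ideal property I would verify condition (ii) of Theorem~\ref{theorem:Ideal properties of energy forms} with smaller form $\Ee$ and larger form $E$, namely that $D(\Ee)\cap L^\infty(m)$ is an algebraic ideal in $D(E)\cap L^\infty(m)$. Given $g\in D(\Ee)\cap L^\infty(m)$ and $f\in D(E)\cap L^\infty(m)$, write $g$ as the limit, in the form topology of $\Ee$, of an $\E$-Cauchy net $(g_i)$ in $D(\E)$; truncating with the aid of Lemma~\ref{lemma:characterization convergence in form topology lsc forms} and the contraction properties, I may assume $\|g_i\|_\infty\le\|g\|_\infty$, so that $\big(E(g_i)\big)$ stays bounded. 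Then $fg_i\in D(E)\cap L^\infty(m)$ is an algebra element (Theorem~\ref{theorem:algebraic and order properties}) lying in $L^2(m)$, hence $fg_i\in D(\E)\subseteq D(\Ee)$; the product estimate of Theorem~\ref{theorem:algebraic and order properties} keeps $\Ee(fg_i)=E(fg_i)$ bounded, while multiplication continuity gives $fg_i\overset{m}{\to}fg$. Lower semicontinuity of $\Ee$ then forces $\Ee(fg)\le\liminf_i \Ee(fg_i)<\infty$, so $fg\in D(\Ee)$, which is the ideal property.

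For the finite-measure case I would use that $m(X)<\infty$ makes every bounded function lie in $L^2(m)$. For $f\in D(E)$ the truncations $f_n=(f\wedge n)\vee(-n)$ are bounded, hence in $L^2(m)$, and belong to $D(E)$ by the contraction properties, so $f_n\in D(E)\cap L^2(m)=D(\E)\subseteq D(\Ee)$. Since $f_n\overset{m}{\to}f$ (dominated convergence on the finite space) and $\Ee(f_n)=E(f_n)\le E(f)$, lower semicontinuity of $\Ee$ yields $f\in D(\Ee)$; combined with the reverse inclusion $D(\Ee)\subseteq D(E)$ from the previous step this gives $\Ee=E$. The main obstacle is the ideal property: a direct attempt to show that $(fg_i)$ is $\E$-Cauchy fails, because the product estimate controls the difference only through $\|g_i-g_j\|_\infty$, which need not be small under convergence in measure. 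The key device, already used in the proof of Theorem~\ref{theorem:Ideal properties of energy forms}, is to bypass Cauchyness entirely and instead apply lower semicontinuity of the target form $\Ee$ to the approximants $fg_i\in D(\Ee)$; the only other point requiring care is the truncation producing bounded approximants $g_i$ while retaining form-topology convergence.
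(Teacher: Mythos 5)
Your proof is correct, and its overall skeleton matches the paper's: the restriction is Markovian and closed, an ideal property delivers the Silverstein statement, and truncation plus lower semicontinuity settles the finite-measure case. The one genuine divergence is where the limiting work is placed. The paper checks the ideal property \emph{before} taking closures, where it is trivial: $D(\E)\cap L^\infty(m) = D(E)\cap L^2(m)\cap L^\infty(m)$ is an algebraic ideal in $D(E)\cap L^\infty(m)$ simply because $D(E)\cap L^\infty(m)$ is an algebra and $|fg|\leq \|f\|_\infty |g|$ keeps the product in $L^2(m)$; it then invokes Proposition~\ref{proposition:silverstein extension of closure}, which produces both the extension property and the Silverstein property in one stroke. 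You instead verify the ideal property directly between the closed forms $\Ee$ and $E$, which amounts to reproving a one-net special case of Proposition~\ref{proposition:silverstein extension of closure} (your lower-semicontinuity device for bypassing Cauchyness of the products $fg_i$ is exactly the mechanism in that proposition's proof, simplified here because $E$ is already closed, so only one approximating net is needed), and you establish separately, via the minimality clause of Proposition~\ref{prop:characterization closability}, that $E$ extends $\Ee$. Both routes are sound: the paper's is shorter because the pre-closure ideal check requires no approximation at all, while yours is self-contained and makes visible why the naive Cauchy argument for $(fg_i)$ would fail. Your closedness argument for $\E$ via lower semicontinuity and the continuous embedding $L^2(m)\hookrightarrow L^0(m)$ is an equivalent substitute for the paper's appeal to Lemma~\ref{lemma:form restriction}.
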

\begin{proof}
 The form $\E$ is Markovian and as a restriction of a closed form to the complete space $L^2(m)$, which continuously embeds into $L^0(m)$, it is closed, see Lemma~\ref{lemma:form restriction}. Furthermore, $D(\E) \cap L^\infty(m) = D(E) \cap L^2(m) \cap L^\infty(m)$ is an algebraic ideal in $D(E)\cap L^\infty(m)$. Since $\Ee$ is the closure of $\E$ in $L^0(m)$, Proposition~\ref{proposition:silverstein extension of closure} shows that $E$ is a Silverstein extension of $\Ee$. If $m$ is a finite measure, we have 
 $$D(E) \cap L^\infty(m) \subseteq D(E) \cap L^2(m) \subseteq D(\Ee).$$
 With this at hand, the density of bounded functions in the form domain $D(E)$ and the fact that $E$ is an extension of $\Ee$ yield the claim.
\end{proof}

\begin{remark}
  The previous proposition shows that in the case of finite measure there is a one-to-one correspondence between extended Dirichlet forms and energy forms. When the measure is infinite this correspondence breaks down. For example, consider a resistance form $\E$ as in Subsection~\ref{subsection:resistance forms} and assume that all balls with respect to the resistance metric are uncountable. This is a typical situation for forms associated with fractals, concrete examples of this kind can be found in \cite{Kig1}. All functions in $D(\E|_{L^2(\mu)})$ have to have countable support (recall that $\mu$ is the counting measure) and, due to (RF4), functions in $D(\E)$ are continuous with respect to the resistance metric. Since the balls are uncountable, the only functions in $D(\E)$ with countable support need to vanish and we obtain $D(\E_{L^2(\mu)}) = \{0\}.$ Similar considerations apply to all measures $m$ for which the spaces $L^0(m)$ and $L^0(\mu)$ agree.
\end{remark}

Since the motivation for studying Silverstein extensions originally comes from  Dirichlet form theory, we mention the following relation, which was already discussed in Remark~\ref{remark:discussion silverstein}. Recall that for a Dirichlet form $\E$, we let $\E_1 = \E + \|\cdot\|_2^2$, which is an energy form.  

\begin{theorem}\label{theorem:silverstein extensions of dirichlet forms}
 Let $\E,\ow{\E}$ be Dirichlet forms such that $\ow{\E}$ is an extension of $\E$. Let $(G_\alpha),(\ow{G}_\alpha)$ be the associated resolvents and $(T_t),(\ow{T}_t)$ the associated semigroups, respectively. The following assertions are equivalent.
 \begin{itemize}
  \item[(i)] $D(\E) \cap L^\infty(m)$ is an algebraic ideal in $D(\ow{\E}) \cap L^\infty(m)$.
  \item[(ii)] $D(\E)$ is an order ideal in $D(\ow{\E})$.
  \item[(iii)] $\ow{\Ee}$ is a Silverstein extension of $\Ee$.
  \item[(iv)] $\ow{\E}_1$ is a Silverstein extension of $\E_1$.
  \item[(v)] $(\ow{G}_\alpha)$ dominates $(G_\alpha)$, i.e., for all $f \in L^2(m)$ and all $\alpha >0$, we have $|G_{\alpha} f| \leq \ow{G}_{\alpha}|f|$.
  \item[(vi)] $(\ow{T}_t)$ dominates $(T_t)$, i.e., for all $f \in L^2(m)$ and all $t >0$, we have  $|T_t f| \leq \ow{T}_t |f|$.
 \end{itemize}
\end{theorem}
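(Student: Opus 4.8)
The statement collects six characterizations of the ideal/domination relation for a pair of Dirichlet forms. I would organize the proof around a cycle that separates the purely algebraic/order-theoretic equivalences (which are established by the general energy form machinery already developed) from the analytic equivalences involving the resolvents and semigroups (which require $L^2$-theory). The cleanest route is to prove the chain
\[
\text{(v)} \Longleftrightarrow \text{(vi)} \Longleftrightarrow \text{(ii)} \Longleftrightarrow \text{(i)} \Longleftrightarrow \text{(iv)} \Longleftrightarrow \text{(iii)},
\]
using Theorem~\ref{theorem:silverstein extensions of dirichlet forms}'s predecessor, Theorem~\ref{theorem:Ideal properties of energy forms}, to do the heavy lifting on the right end.

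First I would dispatch the equivalence (i)~$\Leftrightarrow$~(ii). This is immediate from Theorem~\ref{theorem:Ideal properties of energy forms} applied to the energy forms $\E_1$ and $\ow{\E}_1$: since $D(\E_1)=D(\E)$ and $D(\ow{\E}_1)=D(\ow{\E})$ (the additional $\|\cdot\|_2^2$ term does not change the domain), condition (i) of the present theorem is exactly condition (ii) of Theorem~\ref{theorem:Ideal properties of energy forms} for this pair, and (ii) here is its condition (i). Next, the equivalences (i)~$\Leftrightarrow$~(iv) and (i)~$\Leftrightarrow$~(iii) follow from Proposition~\ref{proposition:silverstein extension of closure} together with the description of Silverstein extensions: $\ow{\E}_1$ is a Silverstein extension of $\E_1$ precisely when $D(\E)\cap L^\infty(m)$ is an algebraic ideal in $D(\ow{\E})\cap L^\infty(m)$, which is (i); and passing to the extended forms $\Ee,\ow{\Ee}$, Proposition~\ref{proposition:silverstein extension of closure} shows the ideal property on the $L^2$-domains is inherited by the $L^0$-closures, giving (iii). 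One subtlety I would check carefully is that the algebraic ideal property on $D(\E)\cap L^\infty(m)$ inside $D(\ow{\E})\cap L^\infty(m)$ is equivalent to the same property on the extended domains $D(\Ee)\cap L^\infty(m)$ and $D(\ow{\Ee})\cap L^\infty(m)$; this should follow by approximating bounded functions in the extended domain by $\E$-Cauchy (resp. $\ow{\E}$-Cauchy) nets and using Theorem~\ref{theorem:algebraic and order properties} to control the products, exactly as in Proposition~\ref{proposition:silverstein extension of closure}.

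The analytic part (v)~$\Leftrightarrow$~(vi)~$\Leftrightarrow$~(ii) is where the real work lies, and I expect the passage between domination of resolvents/semigroups and the order-ideal property to be the main obstacle, since it is the one step that genuinely leaves the algebraic framework. The equivalence (v)~$\Leftrightarrow$~(vi) is standard operator theory: domination of resolvents and domination of the associated self-adjoint semigroups are interchangeable via the Laplace transform $G_\alpha=\int_0^\infty e^{-\alpha t}T_t\,{\rm d}t$ and the exponential formula $T_t=\lim_n (\tfrac{n}{t}G_{n/t})^n$, both of which preserve the pointwise inequality $|{\cdot}|\leq\ow{(\cdot)}$. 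For (vi)~$\Leftrightarrow$~(ii), I would invoke the Ouhabaz domination criterion (the paper cites \cite{Ouh}), which characterizes domination of two positivity-preserving self-adjoint semigroups in terms of an order-ideal condition on the form domains together with a compatibility inequality for the forms. The task is then to verify that for Dirichlet forms with $\ow{\E}$ extending $\E$, Ouhabaz's condition reduces exactly to ``$D(\E)$ is an order ideal in $D(\ow{\E})$''; the compatibility inequality $\ow{\E}(f,g)\leq 0$ for $0\leq f\in D(\E)$, $0\leq g\in D(\ow{\E})$ with $f\wedge g=0$ should be automatic here because $\ow{\E}$ restricts to $\E$ on $D(\E)$ and Lemma~\ref{lemma:cutoff for functions with disjoint support} supplies the needed sign.

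**Remaining care.** Throughout I would keep track of the fact that our Dirichlet forms are not assumed densely defined, so $G_\alpha$ and $T_t$ are extended by zero on $\overline{D(\E)}^\perp$ as described in the preliminaries; this affects (v) and (vi) only cosmetically, since the zero extension trivially respects domination. With the two blocks of equivalences assembled and glued at (ii)~$\Leftrightarrow$~(i), the cycle closes and the theorem follows.
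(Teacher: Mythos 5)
Your proposal is correct and its skeleton coincides with the paper's: the equivalences (i)$\Leftrightarrow$(ii)$\Leftrightarrow$(iv) come from Theorem~\ref{theorem:Ideal properties of energy forms} applied to the energy forms $\E_1$ and $\ow{\E}_1$ (whose domains are $D(\E)$ and $D(\ow{\E})$), (i)$\Rightarrow$(iii) from Proposition~\ref{proposition:silverstein extension of closure}, and the analytic block rests on Ouhabaz's domination criterion. There are two local differences worth recording. First, for the return direction (iii)$\Rightarrow$(i)/(ii), the paper does not run the approximation argument you sketch; it simply invokes the identities $D(\E)=D(\Ee)\cap L^2(m)$ and $D(\ow{\E})=D(\ow{\Ee})\cap L^2(m)$ (Proposition~\ref{propostiont:domain extended dirichlet form}), after which the ideal property on the extended domains restricts trivially by intersecting with $L^2(m)$: for the algebraic version, $f\in D(\E)\cap L^\infty(m)$ and $g\in D(\ow{\E})\cap L^\infty(m)$ give $fg\in D(\Ee)\cap L^2(m)=D(\E)$. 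Your proposed Cauchy-net approximation with Theorem~\ref{theorem:algebraic and order properties} would in effect be reproving that proposition, so it can be replaced by a citation; as written your sketch is also slightly misdirected, since the approximation scheme you describe is the one that proves the \emph{forward} inheritance (i)$\Rightarrow$(iii), not the restriction back to $L^2(m)$. Second, where you prove (v)$\Leftrightarrow$(vi) via the Laplace transform and the exponential formula, and verify that Ouhabaz's compatibility inequality is automatic because $\ow{\E}$ restricts to $\E$ on $D(\E)$ (with Lemma~\ref{lemma:cutoff for functions with disjoint support} supplying the sign for disjointly supported pairs), the paper is content to quote \cite{Ouh} wholesale for the equivalence of (ii), (v) and (vi). Your expanded version is more self-contained and buys a proof that does not outsource the analytic step; the paper's version buys brevity. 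Your handling of the non-densely-defined case (semigroups strongly continuous only on $\overline{D(\E)}$, extended by zero, which domination survives trivially) is correct and matches the conventions fixed in the paper's preliminaries.
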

\begin{proof}
The equivalence of (i), (ii) and (iv) follows from the fact that $\E_1$ and $\ow{\E}_1$ are energy forms whose domains satisfy $D(\E_1) = D(\E)$ and $D(\ow{\E}_1) = D(\ow{\E})$, and Theorem~\ref{theorem:Ideal properties of energy forms}. The implication (i) $\Rightarrow$ (iii) is a direct consequence of Proposition~\ref{proposition:silverstein extension of closure}  and  implication (iii) $\Rightarrow$ (ii) follows from the identities
$$D(\E) = D(\Ee) \cap L^2(m) \text{ and } D(\ow{\E}) = D(\ow{\Ee}) \cap L^2(m).$$
The equivalence of (ii), (iv) and (v) was observed in \cite{Ouh}. This finishes the proof.
\end{proof}

\begin{remark}
 \begin{itemize}
  \item We had to formulate (i) and (ii) in the previous proposition in the way they are since Dirichlet forms are not energy forms (in general they are not closed with respect to convergence in measure). In the sequel we say that an extension $\ow{\E}$ of a Dirichlet form $\E$ is a {\em Silverstein extension of $\E$} if  one of the above equivalent conditions is satisfied.  
  \item For a further discussion of the previous theorem, we refer back to Remark~\ref{remark:discussion silverstein}.
 \end{itemize}

\end{remark}

We finish this  section by providing various  examples for Silverstein extensions.

\begin{example}
 Let $E$ be an energy form and let $A \subseteq X$ measurable. We set
 $$E^A:L^0(m) \to [0,\infty],\quad  f \mapsto \begin{cases}
                                E(f) &\text{ if } f1_{X \setminus A} = 0\\
                                \infty &\text{ else}
                               \end{cases}.
$$
 $E^A$ is an energy form, $D(E^A)$ is a order ideal in $D(E)$ and $E^A$ is a restriction of $E$. Hence, $E$ is a Silverstein extension of $E^A$. 
\end{example}

\begin{example}
 Let $X$ be a locally compact separable metric space and let $m$ be a Radon measure of full support. For an energy form $E$ on $L^0(m)$, we let $E^0$ be the closure of the restriction of $E$ to $C_c(X) \cap D(E)$.
 
 \begin{proposition} \label{proposition:Silverstin example topological situation}
  If $C(X)\cap D(E)$ is dense in $D(E)$ with respect to the form topology, then $E$ is a Silverstein extension of $E^0$.
 \end{proposition}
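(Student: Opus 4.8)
The plan is to verify condition (ii) of Theorem~\ref{theorem:Ideal properties of energy forms}, taking the smaller form to be $E^0$ and the larger form to be $E$; that is, I would show that $D(E^0)\cap L^\infty(m)$ is an algebraic ideal in $D(E)\cap L^\infty(m)$. Before doing so I must check that the theorem applies, i.e. that $E^0$ is an energy form and that $E$ is an extension of it. Write $E^{\mathrm{pre}}:=E|_{C_c(X)\cap D(E)}$. This form is Markovian: if $C_\varepsilon$ is an $\varepsilon$-cutoff witnessing the Markov property of $E$, then for $u\in C_c(X)\cap D(E)$ the function $C_\varepsilon\circ u$ is continuous with support contained in that of $u$ (because $C_\varepsilon(0)=0$) and lies in $D(E)$ since $E(C_\varepsilon\circ u)\le E(u)$; hence $C_\varepsilon\circ u\in C_c(X)\cap D(E)$. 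As $E$ is closed, it is lower semicontinuous by Theorem~\ref{theorem:characterization closedness}, so $E^{\mathrm{pre}}$ is lower semicontinuous on its domain and therefore closable by Proposition~\ref{prop:characterization closability}. By Theorem~\ref{theorem:closure is markovian} its closure $E^0$ is an energy form, and by the minimality of the closure (Proposition~\ref{prop:characterization closability}) the closed form $E$, which extends $E^{\mathrm{pre}}$, must be an extension of $E^0$.

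For the ideal property, fix $g\in D(E^0)\cap L^\infty(m)$ and $f\in D(E)\cap L^\infty(m)$; I must show $fg\in D(E^0)$. By definition of $E^0$ there is a net $(g_i)$ in $C_c(X)\cap D(E)$ with $g_i\to g$ in the form topology, and by hypothesis there is a net $(f_j)$ in $C(X)\cap D(E)$ with $f_j\to f$ in the form topology. Applying the normal contraction $x\mapsto(x\wedge\|g\|_\infty)\vee(-\|g\|_\infty)$ to the $g_i$, and the analogous one to the $f_j$, and using Lemma~\ref{lemma:characterization convergence in form topology lsc forms} together with the contraction properties of $E$, I may assume $\|g_i\|_\infty\le\|g\|_\infty$ and $\|f_j\|_\infty\le\|f\|_\infty$ while retaining $g_i\in C_c(X)\cap D(E)$ and $f_j\in C(X)\cap D(E)$.

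The key observation is that $f_jg_i\in C_c(X)\cap D(E)$: the product of a continuous function with a compactly supported continuous one is compactly supported continuous, and the product of two bounded functions in the form domain lies in $D(E)$ by Theorem~\ref{theorem:algebraic and order properties}. Hence $E^0(f_jg_i)=E(f_jg_i)$. Since multiplication is continuous on $L^0(m)$ we have $f_jg_i\overset{m}{\to}fg$, so applying the lower semicontinuity of $E^0$ twice (first in $i$, then in $j$) together with the algebra estimate of Theorem~\ref{theorem:algebraic and order properties} yields
\[ E^0(fg)^{1/2}\le\liminf_j\liminf_i E(f_jg_i)^{1/2}\le\liminf_j\liminf_i\bigl(\|f\|_\infty E(g_i)^{1/2}+\|g\|_\infty E(f_j)^{1/2}\bigr). \]
Because $g_i\to g$ and $f_j\to f$ in the respective form topologies, $E(g_i)\to E^0(g)=E(g)<\infty$ and $E(f_j)\to E(f)<\infty$, so the right-hand side is finite. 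Thus $fg\in D(E^0)\cap L^\infty(m)$, which is condition (ii) of Theorem~\ref{theorem:Ideal properties of energy forms}, and $E$ is a Silverstein extension of $E^0$.

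I expect the main (and essentially only) delicate point to be the interplay between the two density assumptions, which is exactly where the hypothesis that $C(X)\cap D(E)$ — rather than merely $C_c(X)\cap D(E)$ — is dense in $D(E)$ gets used: the predomain of $E^0$ consists of compactly supported functions, yet we can only approximate the arbitrary element $f\in D(E)$ by possibly non-compactly-supported continuous functions $f_j$. The argument closes precisely because multiplying such an $f_j$ by a compactly supported $g_i$ returns a compactly supported function, so $f_jg_i$ falls back into the predomain $C_c(X)\cap D(E)$ of $E^0$ and the equality $E^0(f_jg_i)=E(f_jg_i)$ is available. Everything else is a routine application of lower semicontinuity and the algebra estimate already established for energy forms.
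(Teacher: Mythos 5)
Your proposal is correct and takes essentially the same approach as the paper: the paper's proof is a one-line application of Proposition~\ref{proposition:silverstein extension of closure} to the restrictions of $E$ to $C(X)\cap D(E)$ and of $E^0$ to $C_c(X)\cap D(E)$, and your argument simply unfolds that proposition's proof in this special case (closability and Markovianity of the restricted forms, the truncated double-net approximation, lower semicontinuity applied twice, and the algebra estimate of Theorem~\ref{theorem:algebraic and order properties}). The point you isolate as the crux --- that $f_jg_i$ is again compactly supported continuous and hence lies in the predomain $C_c(X)\cap D(E)$ of $E^0$ --- is exactly the ideal property on the pre-closure domains that makes the paper's citation work.
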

 \begin{proof}
 We can apply Proposition~\ref{proposition:silverstein extension of closure} to the restriction of $E$ to $D(E)\cap C(X)$  and to the restriction of $E^0$ to $C_c(X) \cap D(E)$. 
 \end{proof}
\end{example}
\begin{remark}
 The previous two examples combined with Theorem~\ref{theorem:Ideal properties of energy forms} provide a good idea of how to think about Silverstein extensions. If $E$ is an energy form and $\ow{E}$ is a Silverstein extension, then functions in the domain of $E$ and in the domain of $\ow{E}$ locally look the same, they only differ on some 'boundary points'. In the first example these boundary points are given by $X \setminus A$, while in the second example the boundary points are lying at 'infinity' outside the space $X$. Indeed, with the help of Gelfand theory (applied to the uniform closure of the algebra $D(\ow{E}) \cap L^\infty(m)$) one can replace the underlying space $X$ by a compact space and then show that $D(E)$ corresponds to continuous functions that vanish on a closed subset of this space. For regular Dirichlet forms a variant of this idea has been discussed in \cite[Section~A.4]{FOT}. We refrain from giving details. 
\end{remark}

\begin{example}
 There are also extensions of energy forms which are not Silverstein extensions. Typically, this occurs when $E$ is a restriction of $\ow{E}$ with some kind of periodic boundary conditions. The simplest example is the following. We let $X = \{a,b\}$ and let $\mu$ be the counting measure on all subsets of $X$.  We define
 $$E:L^0(\mu) \to [0,\infty],\quad f \mapsto E(f) := \begin{cases}
            f(a)^2 + f(b)^2 &\text{ if } f(a) = f(b)\\
            \infty &\text{ else} 
           \end{cases}
$$
and 
$$\ow{E}:L^0(\mu) \to [0,\infty],\quad f \mapsto \ow{E}(f) := f(a)^2 + f(b)^2.$$
In this case, $\ow{E}$ is an extension of $E$ but $D(E)$ is no order ideal in $D(\ow{E})$.
\end{example}

The previous example demonstrates that not every extension of an energy form is necessarily a Silverstein extension. Nevertheless, in some situations the class of Silverstein extension is rather rich. The next two examples show that this is the case for regular forms when the underlying space has no smoothness at all or when the forms under considerations derive from hypoelliptic operators.

\begin{example}[Lack of smoothness in the underlying space] \label{example:lack of smoothness}
Let $X$ be a countable set equipped with the discrete topology and let $m$ be Radon measure of full support. In this case, $L^0(m) = C(X)$, $C_c(X)$ is the space of finitely supported functions and $\tau(m)$ is the topology of pointwise convergence. For an energy form $E$ on $L^0(m)$, this implies $D(E)\cap C(X) = D(E)$.  Therefore, Proposition~\ref{proposition:Silverstin example topological situation} shows that $E$ is a Silverstein extension of $E^0$, the closure of the restriction of $E$ to $C_c(X)$. In particular, any extension of a regular energy form on $L^0(m)$ is a Silverstein extension. 
\end{example}

\begin{example}[Dirichlet forms associated with hypoelliptic operators] \label{example:hypoelliptic}
 Let $S$ be a densely defined symmetric operator on $L^2(m)$ with domain $D(S)$ such that the quadratic form 
 $$\E_{S}:L^2(m) \to [0,\infty], \, f \mapsto \begin{cases}
                                               \as{Sf,f} &\text{ if } f\in D(S)\\
                                               \infty &\text{ else}
                                              \end{cases}
  $$
 is Markovian. By the Friedrichs extension theorem the form $\E_S$ is closable and by \cite[Theorem~3.1.1]{FOT} its closure $\bar{\E}_S$ is a Dirichlet form. The associated self-adjoint operator is an extension of $S$. 
 
 There is a vast amount of literature on determining all self-adjoint extensions of $S$ whose associated quadratic form is a Dirichlet form, see e.g. the comprehensive treatise \cite{Ebe} and references therein.  We denote the collection of all such extensions by ${\rm Ext}(S)_{\rm M}$. It is of particular interest to find criteria which guarantee that this set consists of only one element, i.e., which guarantee that $S$ is {\em Markov unique}. In general, there is no reason why one should hope to find a good description of ${\rm Ext}(S)_{\rm M}$ (and its triviality), which goes beyond the abstract results available in the theory of self-adjoint extensions. In contrast, the situation is much better for those self-adjoint extensions of $S$ whose associated form is a Silverstein extension of $\bar{\E}_S$. We denote their collection by ${\rm Ext}(S)_{\rm Sil}$. In this case, there are many criteria which guarantee that ${\rm Ext}(S)_{\rm Sil}$ consists of only one element, see e.g. Section~\ref{section:uniqueness of silverstein extensions}. Even if there is more than one such extension, one can give an explicit description of the maximal element of ${\rm Ext}(S)_{\rm Sil}$ (ordered in the sense of the associated quadratic forms) purely in terms of $\bar{\E}_S$, see Section~\ref{subsection:maximal Silverstein extension}.  In this light, one should read the following theorem. Recall that in our notation $\bar{\E}_{S,1} = \bar{\E}_{S} + \|\cdot\|_2^2$.   Furthermore, we denote the $L^2$-adjoint of $S$ by $S^*$.
 
 \begin{theorem} \label{theorem:markov v.s. silverstein extensions}
  Let $S$ be a densely defined symmetric operator on $L^2(m)$ such that $\E_S$ is Markovian. If 
  $$\ker (S^* + 1) \subseteq D(\bar{\E}_{S,1})_{\rm loc},$$
  then 
  $${\rm Ext}(S)_{\rm Sil} = {\rm Ext}(S)_{\rm M}.$$
 \end{theorem}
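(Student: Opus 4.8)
The plan is to prove the two inclusions of ${\rm Ext}(S)_{\rm Sil} = {\rm Ext}(S)_{\rm M}$ separately, the inclusion $\subseteq$ being immediate and the reverse carrying all the content. For $\subseteq$: any element of ${\rm Ext}(S)_{\rm Sil}$ is by definition a self-adjoint extension of $S$ whose associated form is a Silverstein extension of $\bar{\E}_S$; such a form is in particular a Dirichlet form, hence Markovian, so the extension lies in ${\rm Ext}(S)_{\rm M}$.

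For the reverse inclusion I would fix $A \in {\rm Ext}(S)_{\rm M}$ with associated Dirichlet form $\E_A$. First I record that $\E_A$ is an extension of $\bar{\E}_S$: since $S \subseteq A = A^*$, the closed form $\E_A$ agrees with $\E_S$ on $D(S)$, and as $\bar{\E}_S$ is the smallest closed extension of $\E_S|_{D(S)}$ it is extended by $\E_A$. Consequently the energy forms $E := \bar{\E}_{S,1}$ and $\ow{E} := \E_{A,1}$ on $L^0(m)$ (both energy forms by the proposition that $\E_\alpha$ is an energy form) satisfy that $\ow{E}$ is an extension of $E$. By Theorem~\ref{theorem:silverstein extensions of dirichlet forms} it then suffices to show that $\ow{E}$ is a Silverstein extension of $E$, and for this I would verify condition (iii) of Theorem~\ref{theorem:Ideal properties of energy forms}, namely $D(\ow{E}) \subseteq \DEl$.

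The decisive structural input is the decomposition $D(\E_A) \subseteq D(\bar{\E}_S) + \ker(S^*+1)$. At the level of operator domains this is a one-line computation: for $f \in D(A)$ put $f_0 := (A_F + 1)^{-1}(A+1)f$, where $A_F$ denotes the Friedrichs extension; then $f_0 \in D(A_F) \subseteq D(\bar{\E}_S)$, and since $A, A_F \subseteq S^*$ one gets $(S^*+1)(f - f_0) = (A+1)f - (A_F+1)f_0 = 0$, so that $f - f_0 \in \ker(S^*+1)$. The hard part is upgrading this to the form domain $D(\E_A) = D((A+1)^{1/2})$, and here I would invoke the Birman--Kre\u\i n--Vishik/Alonso--Simon description of the non-negative self-adjoint extensions of $S$ (equivalently of $S+1 \geq 1$), see e.g.\ \cite{Ebe}: every such extension has form domain $D(\E_A) = D(\bar{\E}_S)\dotplus D(B^{1/2})$ with $D(B^{1/2})$ a subspace of the deficiency space $\ker((S+1)^*) = \ker(S^*+1)$. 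In particular $D(\ow{E}) = D(\E_A) \subseteq D(\bar{\E}_S) + \ker(S^*+1)$.

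Finally I would combine this with the hypothesis. The local space $\DEl$ is a vector space that contains $D(E) = D(\bar{\E}_S)$ trivially and contains $\ker(S^*+1)$ by assumption; hence it contains $D(\bar{\E}_S) + \ker(S^*+1) \supseteq D(\ow{E})$. This yields $D(\ow{E}) \subseteq \DEl$, which is condition (iii) of Theorem~\ref{theorem:Ideal properties of energy forms}, so $\ow{E} \in {\rm Sil}(E)$ and therefore $A \in {\rm Ext}(S)_{\rm Sil}$. Thus ${\rm Ext}(S)_{\rm M} \subseteq {\rm Ext}(S)_{\rm Sil}$, giving equality. I expect the only genuine obstacle to be the form-domain version of the deficiency-space decomposition; everything else is bookkeeping built on the already-established equivalences for Silverstein extensions and the vector-space structure of the local space.
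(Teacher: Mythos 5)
Your proposal is correct, and its overall skeleton matches the paper's: reduce the claim to showing that the form $\E_A$ of a Markovian extension $A$ satisfies $D(\E_{A,1}) \subseteq D(\bar{\E}_{S,1})_{\rm loc}$, i.e.\ condition (iii) of Theorem~\ref{theorem:Ideal properties of energy forms}, via a decomposition of $D(\E_A)$ into $D(\bar{\E}_S)$ plus deficiency elements, and then conclude with Theorem~\ref{theorem:silverstein extensions of dirichlet forms}. Where you diverge is in how the key decomposition is obtained. You import the Birman--Kre\u\i n--Vishik/Alonso--Simon parametrization of nonnegative self-adjoint extensions of $S+1 \geq 1$ to get $D(\E_A) = D(\bar{\E}_S) \dotplus D(B^{1/2})$ with $D(B^{1/2}) \subseteq \ker(S^*+1)$; the paper instead proves the needed statement in two elementary lines: since $\E_A$ extends $\bar{\E}_S$, the space $D(\bar{\E}_S)$ is a closed subspace of the Hilbert space $(D(\E_A),\E_{A,1})$, one decomposes orthogonally, and the complement is identified as $D(\E_A) \cap \ker(S^*+1)$ because for $\psi \in D(S)$ one has $\E_{A,1}(f,\psi) = \as{f,(S+1)\psi}$, together with the $\bar{\E}_{S,1}$-density of $D(S)$ in $D(\bar{\E}_S)$. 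So what you flag as ``the hard part'' (upgrading the operator-domain splitting to form domains) has a self-contained Hilbert-space solution, and in fact your warm-up computation with the Friedrichs resolvent $f_0 = (A_F+1)^{-1}(A+1)f$ becomes redundant once either route is taken. Your BKV route buys a known, citable structural theorem (at the cost of heavier machinery and the need to check the strict-positivity shift $S+1\geq 1$ and $A \geq 0$, which you did correctly, $A\geq 0$ following since Dirichlet forms are nonnegative); the paper's route buys brevity, self-containedness within the thesis's form-theoretic toolkit, and moreover yields the sharper \emph{orthogonal} decomposition $D(\E_A) = D(\bar{\E}_S) \oplus \left(D(\E_A)\cap \ker(S^*+1)\right)$ rather than only the inclusion $D(\E_A) \subseteq D(\bar{\E}_S) + \ker(S^*+1)$ that your argument produces (which is, however, all that the final step needs, since $D(\bar{\E}_{S,1})_{\rm loc}$ is a vector space containing both summands).
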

 \begin{proof}
  Let $\ow{S}$ be a self-adjoint extension of $S$ such that the associated quadratic form $\ow{\E}$ is a Dirichlet form. In this case, the domain of $\bar{\E}_{S}$ is a closed subspace in the Hilbert space $(D(\ow{\E}),\ow{\E}_1)$. Therefore, we can orthogonally decompose $D(\ow{\E})$ with respect to $\ow{\E}_1$ into
  $$D(\ow{\E}) = D(\bar{\E}_S) \oplus \{f \in D(\ow{E}) \mid \ow{\E}_1(f,\psi) = 0 \text{ for all } \psi \in D(\bar{\E}_S)\}.$$
  Since $\ow{S}$ is a self-adjoint extension of $S$ and $\ow{\E}$ is its associated form, this implies
  $$D(\ow{\E}) = D(\bar{\E}_S) \oplus D(\ow{\E})\cap \ker (S^* + 1).$$
  In particular, $D(\ow{\E}_1) = D(\ow{\E}) \subseteq D(\bar{\E}_{S,1})_{\rm loc}$. Theorem~\ref{theorem:silverstein extensions of dirichlet forms} shows that $\ow{\E}$ is a Silverstein extension of $\bar{\E}_S$. 
 \end{proof}
The condition on the kernel of the adjoint of $S$ can be seen as a weak regularity assumption on the operator $S$. A  typical class of operators where it is satisfied are hypoelliptic operators. More precisely, for a smooth Riemannian manifold $(M,g)$ we let $\mathcal{D}(M)$ denote the space of distributions on  $C_c^\infty(M)$. A continuous linear operator $P:C_c^\infty(M) \to C_c^\infty(M)$, where $C_c^\infty(M)$ is equipped with the usual locally convex topology, is called {\em hypoelliptic}, if for  all $u \in \mathcal{D}(M)$ the inclusion $P' u \in C^\infty(M)$ implies $u \in C^\infty(M)$. Here, $P'$ is the dual operator $P' : \mathcal{D}(M) \to \mathcal{D}(M)$ of $P$. For hypoelliptic operators the following variant of the previous theorem holds.
\begin{corollary}\label{corollary:hypoelliptic operators}
 Let $P:C_c^\infty(M) \to C_c^\infty(M)$ be a continuous linear operator that is symmetric on $L^2({\rm vol}_g)$ and whose associated form $\E_P$ is Markovian. If $P+1$ is hypoelliptic, then  
 $${\rm Ext}(P)_{\rm Sil} = {\rm Ext}(P)_{\rm M}.$$
\end{corollary}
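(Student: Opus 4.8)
The plan is to deduce Corollary~\ref{corollary:hypoelliptic operators} from Theorem~\ref{theorem:markov v.s. silverstein extensions} by verifying its hypothesis, namely that $\ker(P^* + 1) \subseteq D(\bar{\E}_{P,1})_{\rm loc}$, where $P^*$ denotes the $L^2({\rm vol}_g)$-adjoint of $P$. Since $P$ is already a densely defined symmetric operator on $L^2({\rm vol}_g)$ whose form $\E_P$ is Markovian, Theorem~\ref{theorem:markov v.s. silverstein extensions} applies verbatim once this kernel inclusion is established, and the conclusion ${\rm Ext}(P)_{\rm Sil} = {\rm Ext}(P)_{\rm M}$ follows immediately.

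The first step is to unwind the hypoellipticity assumption. Let $u \in \ker(P^* + 1) \subseteq L^2({\rm vol}_g)$, so that $P^* u = -u$ in the $L^2$-sense. I would interpret $u$ as a distribution in $\mathcal{D}(M)$ and observe that the distributional identity $P' u = -u$ holds, where $P'$ is the dual operator on distributions: indeed, for $\varphi \in C_c^\infty(M)$ one has $(P'u)(\varphi) = u(P\varphi) = \langle u, P\varphi\rangle = \langle P^* u, \varphi\rangle = -\langle u,\varphi\rangle$, using that $P\varphi \in C_c^\infty(M)$ and that $P^*$ extends $P$ on $C_c^\infty(M)$ by symmetry. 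Since $P+1$ is hypoelliptic, the operator $P'+1$ applied to $u$ gives $(P'+1)u = 0 \in C^\infty(M)$, and hypoellipticity of $P+1$ (whose dual is $P'+1$) forces $u \in C^\infty(M)$. Thus every element of $\ker(P^*+1)$ is represented by a smooth function.

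The second step is to show that a smooth function $u$ lying in $\ker(P^*+1)$ belongs to the local space $D(\bar{\E}_{P,1})_{\rm loc}$. The key point is that a smooth function can be localized: for any relatively compact open set $G$, one can multiply $u$ by a cutoff function $\chi \in C_c^\infty(M)$ with $\chi \equiv 1$ on $G$, so that $\chi u \in C_c^\infty(M)$. Since $\bar{\E}_{P,1}$ is the closure of $\E_P$, which is defined on $D(P) \supseteq C_c^\infty(M)$, compactly supported smooth functions lie in $D(\bar{\E}_{P,1})$. Taking $\mathcal{K}$ to be the nest of relatively compact open sets (which is a special $E$-nest by Lemma~\ref{lemma:special nest regular form}, once one notes $\bar{\E}_{P,1}$ is regular), one has $\chi u = u$ on $G$ with $\chi u \in D(\bar{\E}_{P,1})$, which exhibits $u \in D(\bar{\E}_{P,1})_{{\rm loc},\,\mathcal{K}} \subseteq D(\bar{\E}_{P,1})_{\rm loc}$.

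The main obstacle I anticipate lies in the careful handling of the duality between $P^*$ (the $L^2$-adjoint, acting on a domain of $L^2$-functions) and $P'$ (the distributional dual, acting on all of $\mathcal{D}(M)$), and in confirming that hypoellipticity of $P+1$ is precisely what is needed to run the regularity argument for the distributional solution $u$ of $(P'+1)u = 0$. One must verify that the definition of hypoellipticity stated in the excerpt — $(P+1)$ hypoelliptic means $P'u' \in C^\infty$ implies $u' \in C^\infty$ for the operator $P+1$ in place of $P$ — applies to our $u$, i.e. that the dual of $P+1$ is $P'+1$ and that $(P'+1)u = 0$ is trivially smooth. A secondary technical point is ensuring that $\bar{\E}_{P,1}$ is a \emph{regular} energy form so that Lemma~\ref{lemma:special nest regular form} provides the relatively compact open sets as a special nest; this should follow from $C_c^\infty(M) \subseteq D(\bar{\E}_{P,1}) \cap C_c(M)$ being dense in both the form topology and the uniform topology, which is where the Stone--Weierstra\ss{} argument of Proposition~\ref{proposition: example manifolds} and its companion enters.
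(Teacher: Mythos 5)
Your proposal is correct and follows essentially the same route as the paper: verify the hypothesis of Theorem~\ref{theorem:markov v.s. silverstein extensions} by showing $\ker(P^*+1) \subseteq \ker(P'+1) \subseteq C^\infty(M)$ via hypoellipticity, and then localize smooth functions with compactly supported cutoffs to land in $D(\bar{\E}_{P,1})_{\rm loc}$ with respect to the special nest of relatively compact open sets. The only cosmetic difference is that you spell out the $P^*$ versus $P'$ duality and the regularity of $\bar{\E}_{P,1}$, which the paper asserts without detail (note that the identity $\langle u, P\varphi\rangle = \langle P^*u,\varphi\rangle$ is just the definition of the adjoint on $D(P)=C_c^\infty(M)$, no extension property of $P^*$ is needed).
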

\begin{proof}
 By definition the form $\bar{\E}_{P,1}$ is a regular energy form.  In particular, the collection of all  relatively compact open sets is an $\bar{\E}_{P,1}$-nest, see Lemma~\ref{lemma:special nest regular form}. According to the hypoellipticity of $P+1$, we have $\ker (P^* + 1) \subseteq \ker (P' + 1) \subseteq C^\infty(M)$. Standard results on smooth partitions of manifolds imply that for each  relatively compact open $G \subseteq M$ and all $f \in C^\infty(M)$ there exists $f_G \in C_c^\infty(M)$ with $f = f_G$ on $G$. This shows   $\ker (P^* + 1) \subseteq D(\bar{\E}_{P,1})_{{\rm loc}}$ and finishes the proof.
\end{proof}
\end{example}

\begin{remark}
\begin{itemize}
  \item There is a vast amount of results on when a (pseudo-)differential operator on a manifold is hypoelliptic. We refer the reader to \cite{Hoer}, which among many other subjects treats   hypoellipticity.
  
  \item  The equality ${\rm Ext}(P)_{\rm Sil} = {\rm Ext}(P)_{\rm M}$ for certain hypoelliptic differential operators $P$ on open subsets of $\IR^n$ was first observed in \cite{Tak} and then extended to general uniformly elliptic operators with smooth coefficients in \cite{FOT}. There, it is also proven for the considered class of differential operators that the self-adjoint operator of a Silverstein extension of $\bar{\E}_P$ is automatically an extension of $P$. For general operators $S$ this latter statement is not true. However, in Section~\ref{subsection:maximal Silverstein extension} we construct the maximal Silverstein extension of $\bar{\E}_S$ and prove (under mild assumptions on $S$) that its associated operator is an extension of $S$. The novelty of our result lies in proving the previous corollary through Theorem~\ref{theorem:markov v.s. silverstein extensions}. The considerations in \cite{Tak,FOT} made explicit use of the concrete forms of the given operators.
  
  \item Hypoellipticity is indeed a too strong assumption  for the previous corollary to hold true. We used it to have a rather large class of operators at hand for which we can apply Theorem~\ref{theorem:markov v.s. silverstein extensions} without having to state too many technical details. For elliptic operators $A$ with rather rough coefficients, the equality ${\rm Ext}(A)_{\rm Sil} = {\rm Ext}(A)_{\rm M}$  is part of \cite[Theorem~1.1]{RS}. There, it is shown that the domain of $A^*$ is contained in some local Sobolev space and so our result could be applied. We refrain from giving details.
  
  \item In \cite{Ebe} a somewhat weaker statement is proven for so-called diffusion operators on finite dimensional spaces. If $L$ is such a diffusion operator, it is shown that ${\rm Ext}(L)_{\rm Sil}$ consists of one element if and only if ${\rm Ext}(L)_{\rm M}$ consists of one element.  The main ingredient for the proof given there is the construction of the maximal element of ${\rm Ext}(L)_{\rm M}$. Contrary to our narrative, this is possible since  diffusion operators have a lot of intrinsic structure. 
 
 \item Another situation where Theorem~\ref{theorem:markov v.s. silverstein extensions} can be applied are graph Laplacians of locally finite graphs. For a detailed discussion and related results, see \cite{HKLW}. Note that the associated forms also fall in the context of Example~\ref{example:lack of smoothness}.
\end{itemize}
\end{remark}

\section{Uniqueness of Silverstein extensions} \label{section:uniqueness of silverstein extensions}

As already mentioned in the discussion of Example~\ref{example:hypoelliptic}, it is an important problem to provide criteria that guarantee Silverstein uniqueness. For certain Sobolev spaces Silverstein uniqueness appears in the form of asking whether $W^{1,2} = W^{1,2}_0$ and for symmetric Markov process it corresponds to the question whether the local behavior of a process determines its global behavior uniquely, see e.g. \cite{CF}. In this section, we give two abstract criteria for this property. First, we prove that recurrence implies Silverstein uniqueness and then we characterize Silverstein uniqueness for transient energy forms in terms of capacities of nests. At the end of this section, we discuss how the abstract criterion can be employed to recover and to generalize known uniqueness results for Silverstein extensions of Dirichlet forms which deal with capacities of boundaries. 

\begin{theorem}\label{theorem:recurrent forms are Silverstein unique}
 Let $E$ be an energy form. If $1 \in D(E)$, then  ${\rm Sil}(E) = \{E\}$. In particular, all recurrent energy forms are Silverstein unique.
\end{theorem}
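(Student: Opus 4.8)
The plan is to show that if $1 \in D(E)$, then every Silverstein extension $\ow{E}$ of $E$ already satisfies $D(\ow{E}) \subseteq D(E)$, which combined with the fact that $\ow{E}$ extends $E$ forces $\ow{E} = E$. The key observation is that the condition $1 \in D(E)$ provides a single bounded function in the form domain that dominates (in absolute value) a constant multiple of any bounded function whatsoever. Since $D(E)$ is an order ideal in $D(\ow{E})$ by the definition of a Silverstein extension (Theorem~\ref{theorem:Ideal properties of energy forms}~(i)), this domination can be leveraged to pull arbitrary bounded elements of $D(\ow{E})$ back into $D(E)$.

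Concretely, first I would reduce to bounded functions. By the contraction properties and the density statement of Proposition~\ref{proposition:approximation by bounded functions}, the space $D(\ow{E}) \cap L^\infty(m)$ is dense in $D(\ow{E})$ with respect to the form topology; similarly $D(E) \cap L^\infty(m)$ is dense in $D(E)$. Hence it suffices to prove $D(\ow{E}) \cap L^\infty(m) \subseteq D(E)$, since $E$ and $\ow{E}$ agree on $D(E)$ and a form is determined by its values on a form-dense subspace together with lower semicontinuity. So let $f \in D(\ow{E}) \cap L^\infty(m)$ be given. Then
$$|f| \leq \|f\|_\infty \cdot 1 = \|f\|_\infty |1|,$$
and since $1 \in D(E)$ we have $\|f\|_\infty \cdot 1 \in D(E)$. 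Because $D(E)$ is an order ideal in $D(\ow{E})$ and $f \in D(\ow{E})$ with $|f| \leq |\|f\|_\infty \cdot 1|$, the ideal property immediately yields $f \in D(E)$.

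This gives $D(\ow{E}) \cap L^\infty(m) \subseteq D(E)$, and together with the density of bounded functions and the fact that $\ow{E}$ restricted to $D(E)$ equals $E$, it follows that $D(\ow{E}) = D(E)$ and $\ow{E} = E$; that is, $\mathrm{Sil}(E) = \{E\}$. For the final assertion, if $E$ is recurrent then $1 \in \ker E \subseteq D(E)$ by Definition~\ref{definition:recurrence}, so the hypothesis $1 \in D(E)$ is met and Silverstein uniqueness follows. I do not expect a serious obstacle here: the proof is essentially a one-line application of the order-ideal characterization once one has packaged the constant function as a universal dominator. The only point requiring mild care is the reduction to bounded functions, i.e. confirming that equality of the two forms on the form-dense subspace $D(\ow{E}) \cap L^\infty(m)$ together with the closedness (lower semicontinuity) of both forms really forces $D(\ow{E}) \subseteq D(E)$ and not merely an inclusion on a dense set — but this is exactly what the order-ideal argument delivers directly, since it produces genuine membership $f \in D(E)$ for each bounded $f$, bypassing any limiting argument on the unbounded part via the density statement of Proposition~\ref{proposition:approximation by bounded functions}.
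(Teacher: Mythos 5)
Your proof is correct and is essentially the paper's own argument: the paper applies the algebraic-ideal formulation of Theorem~\ref{theorem:Ideal properties of energy forms} (writing $f = 1\cdot f \in D(E)$ for bounded $f \in D(\ow{E})$), while you apply the equivalent order-ideal formulation (via $|f| \leq \|f\|_\infty \cdot 1$), and both conclude with the same density-of-bounded-functions argument from Proposition~\ref{proposition:approximation by bounded functions} together with closedness. Since the two ideal properties are equivalent by that very theorem, this is a cosmetic rather than a substantive difference.
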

\begin{proof}
 Assume that $\ow{E}$ is a  Silverstein extension of $E$. The algebra $D(E) \cap L^\infty(m)$ is an algebraic ideal in $D(\ow{E}) \cap L^\infty(m)$.  Hence, for all $f \in D(\ow{E}) \cap L^\infty(m)$ we  have $f = 1f \in D(E)$.  Since bounded functions are dense in $D(E)$ with respect to the form topology, we infer $E = \ow{E}$. 
\end{proof}

As an immediate corollary of this theorem and Theorem~\ref{theorem:silverstein extensions of dirichlet forms} we obtain the following. 

\begin{corollary}
 Let $\E$ be a Dirichlet form. If $1\in D(\Ee)$, then $\E$ does not have any nontrivial Silverstein extensions. 
\end{corollary}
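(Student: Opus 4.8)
The plan is to derive this corollary directly from the two results it cites, so the proof should be essentially a one-line deduction with the logical chain spelled out. The statement asserts that a Dirichlet form $\E$ with $1 \in D(\Ee)$ admits no nontrivial Silverstein extensions, where by Theorem~\ref{theorem:silverstein extensions of dirichlet forms} a Silverstein extension of the Dirichlet form $\E$ means an extension $\ow{\E}$ satisfying the equivalent ideal conditions, equivalently (via that same theorem) an extension for which $\ow{\Ee}$ is a Silverstein extension of $\Ee$.

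First I would observe that the hypothesis $1 \in D(\Ee)$ says precisely that the extended Dirichlet form $\Ee$, which is an energy form on $L^0(m)$, contains the constant function $1$ in its domain. This is exactly the hypothesis of Theorem~\ref{theorem:recurrent forms are Silverstein unique} applied to the energy form $E := \Ee$. That theorem yields ${\rm Sil}(\Ee) = \{\Ee\}$, i.e., $\Ee$ is Silverstein unique as an energy form.

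Next I would translate this back to the level of Dirichlet forms. Suppose $\ow{\E}$ is any Silverstein extension of $\E$ in the sense of Theorem~\ref{theorem:silverstein extensions of dirichlet forms}. By the equivalence of conditions (iii) in that theorem, $\ow{\Ee}$ is then a Silverstein extension of $\Ee$. Since $\Ee$ is Silverstein unique, we conclude $\ow{\Ee} = \Ee$. The remaining point is to pass from equality of the extended forms back to equality of the Dirichlet forms themselves, which is immediate from the identities $D(\E) = D(\Ee) \cap L^2(m)$ and $D(\ow{\E}) = D(\ow{\Ee}) \cap L^2(m)$ recorded in the proof of Theorem~\ref{theorem:silverstein extensions of dirichlet forms}: these give $D(\ow{\E}) = D(\E)$, and since $\ow{\E}$ is an extension of $\E$ the forms agree, so $\ow{\E} = \E$. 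Hence $\E$ has no Silverstein extension other than itself.

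There is essentially no obstacle here, since all the real work was done in the two cited theorems; the only thing to be careful about is that ``no nontrivial Silverstein extensions'' is interpreted correctly, namely that every Silverstein extension coincides with $\E$. One could also give an even more direct argument bypassing the $L^2$-reduction: if $\ow{\E}$ is a Silverstein extension of $\E$, then $D(\E) \cap L^\infty(m)$ is an algebraic ideal in $D(\ow{\E}) \cap L^\infty(m)$, and since $1 \in D(\Ee)$ forces $\Ee$ to be recurrent, one can argue as in Theorem~\ref{theorem:recurrent forms are Silverstein unique} that $f = 1 \cdot f \in D(\Ee)$ for bounded $f$, giving uniqueness at the level of extended forms. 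I would present the short deduction via Theorem~\ref{theorem:recurrent forms are Silverstein unique} and Theorem~\ref{theorem:silverstein extensions of dirichlet forms}, as that keeps the corollary a genuine corollary.
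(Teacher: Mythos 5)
Your proposal is correct and matches the paper's intended argument exactly: the paper presents this as an immediate consequence of Theorem~\ref{theorem:recurrent forms are Silverstein unique} applied to the energy form $\Ee$ together with Theorem~\ref{theorem:silverstein extensions of dirichlet forms}, which is precisely the chain you spell out, including the passage back to $L^2(m)$ via $D(\E) = D(\Ee)\cap L^2(m)$ and $D(\ow{\E}) = D(\ow{\Ee})\cap L^2(m)$. No gaps; your write-up simply makes explicit the steps the paper leaves implicit.
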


\begin{remark}
\begin{itemize}
 \item It is well known that recurrence implies Silverstein uniqueness of Dirichlet forms, see e.g. \cite[Theorem~6.2]{Kuw}. We believe that a proof for this fact becomes more transparent when given on the level of extended Dirichlet spaces. 
 \item It was our narrative in the last section that Silverstein extensions of a given energy form are a rather rich class in the collection of its Markovian extensions. The only non-Silverstein extensions that we encountered so-far were Markovian extensions of forms with periodic boundary conditions. The previous theorem opens the way to other non-examples. When one considers Dirichlet forms on infinite dimensional state spaces, it is typical that the measure is finite and that the constant function $1$ is contained in the form domain. Therefore, they are always Silverstein unique but might have other Markovian extensions, see \cite{Ebe} for concrete examples.
\end{itemize}
\end{remark}

We shall see below (see Theorem~\ref{theorem:recurrence and uniqueness})  that uniqueness of Silverstein extensions and recurrence are more intimately linked than the previous theorem suggests. This is also why both can be characterized in terms of capacity. 

\begin{theorem}\label{theorem:capacity characterization of uniqueness of Silverstein extensions} 
 Let $E$ be an energy form and let $\ow{E}$ be a transient Silverstein extension of $E$. Let $h$ be a strictly positive $\ow{E}$-superharmonic function. The following assertions are equivalent.
 \begin{itemize}
  \item[(i)] There exists a special $E$-nest $\mathcal{N}$ such that 
  $$ \inf_{N \in \mathcal{N}} {\rm cap}_{\ow{E},h} (X \setminus N) = 0.$$

   \item[(ii)] For all $E$-nests $\mathcal{N}$ we have
  $$\inf_{N \in \mathcal{N}} {\rm cap}_{\ow{E},h} (X \setminus N) = 0.$$
  \item[(iii)] $E = \ow{E}$.
 \end{itemize}
\end{theorem}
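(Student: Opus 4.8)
The plan is to prove this as a cycle of implications, exploiting the symmetry between the two nest conditions and the capacity formula from Lemma~\ref{lemma:alternative characterization capacity}. The implication (ii) $\Rightarrow$ (i) is immediate once we know that special $E$-nests exist, which is Lemma~\ref{lemma:special nest}. So the real content lies in proving (i) $\Rightarrow$ (iii) and (iii) $\Rightarrow$ (ii).

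First I would establish (i) $\Rightarrow$ (iii). Since $\ow{E}$ is always an extension of $E$, it suffices to show $D(\ow{E}) \subseteq D(E)$, and by density of bounded functions (Proposition~\ref{proposition:approximation by bounded functions}) and the contraction properties, it is enough to treat nonnegative bounded $f \in D(\ow{E})$. The key observation is that the capacity condition in (i), via the claim inside the proof of Proposition~\ref{proposition:capacity characterisation of nests}, says exactly that $h$ lies in the form-topology closure of $D(E)_\cN$; that is, there is a net $(h_j)$ in $D(E)_\cN$ with $h_j \to h$ in the $\ow{E}$-form topology. Because $h$ is strictly positive, the truncations $f \wedge (n h_j)$ approximate $f \wedge (n h)$ and then $f$ itself. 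The plan is to run the same two-step truncation argument as in the proof of (ii) $\Rightarrow$ (i) of Proposition~\ref{proposition:capacity characterisation of nests}: using that $nh$ is $\ow{E}$-excessive (hence admissible, so $f \wedge (nh) \in D(E)_{\rm loc}$ via the nest structure), Lemma~\ref{lemma:superharmonic functions as cutoff} to control $\ow{E}(f \wedge (nh)) \leq \ow{E}(f)$, and Theorem~\ref{theorem:algebraic and order properties} to bound $\ow{E}(f \wedge (n h_j))^{1/2} \leq \ow{E}(f)^{1/2} + 2n\,\ow{E}(h_j)^{1/2}$. Since $f \wedge (n h_j) \in D(E)$ (because $h_j \in D(E)_\cN$ and $D(E)$ is a lattice), the lower semicontinuity of the closure of $E|_{D(E)}$ forces $f \in D(E)$ with a uniform energy bound, giving $E(f) = \ow{E}(f)$.

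Next I would prove (iii) $\Rightarrow$ (ii). Assuming $E = \ow{E}$, the strictly positive $\ow{E}$-superharmonic function $h$ is now an $E$-superharmonic function. For an arbitrary $E$-nest $\cN$, the denseness of $D(E)_\cN$ in $D(E)$ means in particular that $h \in D(E) = D(\ow{E})$ can be approximated in the form topology by elements of $D(E)_\cN$. Applying the claim from Proposition~\ref{proposition:capacity characterisation of nests} in reverse, this approximation is equivalent to $\inf_{N \in \cN} {\rm cap}_{\ow{E},h}(X \setminus N) = 0$, which is precisely (ii). Here I would simply invoke Lemma~\ref{lemma:alternative characterization capacity}, whose formula ${\rm cap}_{\ow{E},h}(X \setminus N) = \inf\{\ow{E}(h - g) \mid g \in D(\ow{E}),\ g 1_{X \setminus N} = 0\}$ identifies the capacity infimum with the form-topology distance from $h$ to $D(E)_\cN$.

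The main obstacle I anticipate is the bookkeeping in (i) $\Rightarrow$ (iii): one must verify that $f \wedge (nh)$ genuinely lies in the local space associated with the nest so that the truncations $f \wedge (n h_j)$ land in $D(E)$, and that the energy bounds are uniform enough to pass to the limit twice (first $j \to \infty$ at fixed $n$, then $n \to \infty$). This is exactly the delicate iterated-liminf estimate that appears in Proposition~\ref{proposition:capacity characterisation of nests}, and the strict positivity of $h$ together with the admissibility of the excessive functions $nh$ is what makes it work. I would be careful to use that $\ow{E}$ is transient (so that $(D(\ow{E}), \ow{E})$ is a Hilbert space by Theorem~\ref{theorem:continuous embedding of energy forms}), which justifies the existence and good behavior of the strictly positive superharmonic function $h$ supplied in the hypothesis and underlies the capacity characterization being invoked.
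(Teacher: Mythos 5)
Your overall architecture coincides with the paper's: the cycle (i) $\Rightarrow$ (iii) $\Rightarrow$ (ii) $\Rightarrow$ (i), with (ii) $\Rightarrow$ (i) from the existence of special nests (Lemma~\ref{lemma:special nest}) and (iii) $\Rightarrow$ (ii) from reading ${\rm cap}_{\ow{E},h}(X\setminus N)$ as the form-topology distance from $h$ to functions vanishing off $N$ (Lemma~\ref{lemma:alternative characterization capacity}; the paper routes this through Proposition~\ref{proposition:capacity characterisation of nests}, which is the same computation). For (i) $\Rightarrow$ (iii) the paper is slightly slicker than your plan: it applies Proposition~\ref{proposition:capacity characterisation of nests} to $\ow{E}$ to conclude that the special $E$-nest $\cN$ is also an $\ow{E}$-nest, and then a one-line Claim --- for $f \in D(\ow{E})_\cN \cap L^\infty(m)$ choose $g_N \in D(E)_\cN$ with $1_N \leq g_N \leq 1$ and note $f = f g_N \in D(E)$ by the algebraic ideal property --- yields $D(\ow{E})_\cN \cap L^\infty(m) = D(E)_\cN \cap L^\infty(m)$, an $\ow{E}$-dense subspace on which $E$ and $\ow{E}$ agree, whence $E = \ow{E}$. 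Re-running the truncation argument inline, as you propose, is viable and buys self-containedness, at the cost of repeating the proposition's double-limit estimate.

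However, two justifications at the crux of your (i) $\Rightarrow$ (iii) are wrong or elided, and both sit exactly where the Silverstein hypothesis must enter. First, Lemma~\ref{lemma:alternative characterization capacity} applied to ${\rm cap}_{\ow{E},h}$ produces approximants of $h$ in $D(\ow{E})_\cN$, not in $D(E)_\cN$ as you assert; moving them into $D(E)_\cN$ already requires the ideal property (e.g.\ via the Claim above, after truncating). Second, $f \wedge (n h_j) \in D(E)$ does not follow ``because $D(E)$ is a lattice'': $f$ lies only in $D(\ow{E})$, so the lattice property of $D(E)$ is inapplicable. The correct reason is the \emph{order} ideal property of Theorem~\ref{theorem:Ideal properties of energy forms}: taking $f, h_j \geq 0$, one has $f \wedge (n h_j) \in D(\ow{E})$ by the lattice property of $D(\ow{E})$, and $0 \leq f \wedge (n h_j) \leq n h_j \in D(E)$, hence $f \wedge (n h_j) \in D(E)$ --- indeed in $D(E)_\cN$, since it vanishes where $h_j$ does. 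These gaps are not cosmetic: the paper's two-point example with periodic boundary conditions in Section~\ref{section:ideals} gives a transient non-Silverstein extension with strictly positive superharmonic function $h = 1$ for which assertion (i) holds while $E \neq \ow{E}$, so no proof can avoid invoking the ideal property at precisely this step. With these repairs your double limit --- $j \to \infty$ at fixed $n$ via lower semicontinuity and Theorem~\ref{theorem:algebraic and order properties}, then $n \to \infty$ via $\ow{E}(f \wedge (nh)) \leq \ow{E}(f)$ from Lemma~\ref{lemma:superharmonic functions as cutoff} together with the strict positivity of $h$ --- is sound and gives $E(f) \leq \ow{E}(f)$ for bounded nonnegative $f$, which suffices by density.
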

\begin{proof}
 (i) $\Rightarrow$ (iii): Let $\cN$ be a special $E$-nest. According to  Proposition~\ref{proposition:capacity characterisation of nests}, assertion (i) implies that $\cN$ is also an $\ow{E}$-nest. In particular, the space $D(\ow{E})_\cN \cap L^\infty(m)$ is dense in $D(\ow{E})$. Since $E$ is an extension of $\ow{E}$, it suffices to prove the following.
 
 {\em Claim:} The equality $D(\ow{E})_\cN \cap L^\infty(m) = D(E)_\cN \cap L^\infty(m)$ holds. 
 
 {\em Proof of the claim.} For $f \in D(\ow{E})_\cN \cap L^\infty(m)$ there exists an $N \in \cN$ with $f 1_{X \setminus N} = 0$. By our assumptions on the nest $\cN$, we can choose a $g_N \in D(E)$ with $1_N \leq g_N \leq 1 $. Since $D(E) \cap L^\infty(m)$ is an algebraic ideal in $D(\ow{E}) \cap L^\infty(m)$, we obtain $f = f g_N \in D(E)$. This shows the claim. \qedc

 (iii) $\Rightarrow$ (ii): This is a consequence of Proposition~\ref{proposition:capacity characterisation of nests}. 
 
 (ii) $\Rightarrow$ (i): This follows from the existence of a special nest, Lemma~\ref{lemma:special nest}.
\end{proof}
\begin{remark}
\begin{itemize}
 \item It is well known in several situations that uniqueness of Silverstein extensions can be characterized with the help of capacities, see also the discussion below. Our approach should be considered as one that provides a structural insight. The equality $\ow{E} = E$ is equivalent to the fact that each $E$-nest is an $\ow{E}$-nest, which can then be characterized in terms of capacity.
 
 \item Choosing a superharmonic function in the previous theorem guarantees that the whole space has finite capacity. This is quite essential for the proof and also for the theorem to hold true. We will discuss examples for its failure in the case of infinite capacity below.    
 
 \item If $\E$ is a Dirichlet form and $\ow{\E}$ is a Silverstein extension of $\E$, then $\E_1$ is a transient energy form and $\ow{\E}_1$ is a transient Silverstein extension of $\E_1$. Therefore, the previous theorem applied to $\E_1$ and $\ow{\E}_1$ is perfectly well suited for deciding whether or not $\E$ coincides with $\ow{\E}$. 
 
 \item The theorem is not true without the transience assumption on the extension. Indeed, if the extension is recurrent, then assertion (i) and (ii) are always satisfied but (iii) need not hold.  For example, consider $X = \{a,b\}$ equipped with the counting measure $\mu$ and the energy forms
$$E:L^0(\mu) \to [0,\infty], \, f \mapsto E(f):= \begin{cases} f(a)^2 & \text{ if } f(b) = 0 \\ \infty & \text{ else }\end{cases}$$
and 
$$\ow{E}:L^0(\mu) \to [0,\infty], \, f  \mapsto \ow{E}(f):= (f(a) - f(b))^2.$$
In this case, $\ow{E}$ is a Silverstein extension of $E$ that satisfies assertion (ii) but differs from $E$.
\end{itemize}
\end{remark} 

The main applications of the previous theorem are the following two criteria for uniqueness of forms  when the underlying space is a topological space.
\begin{example}[Uniqueness of Silverstein extensions with boundary inside the space] 
Let $E$ be a regular energy from and let $\Gamma \subseteq X$ closed. In this case, the space $C_c(X \setminus \Gamma)$ can be identified with the subspace of all functions in $C_c(X)$ that vanish on an open neighborhood of $\Gamma$. We denote the closure of the restriction of $E$ to $C_c(X \setminus \Gamma) \cap D(E)$ by $E_\Gamma$. The definition of the domain of $E_\Gamma$ is a precise version of $\{f \in D(E) \mid f|_\Gamma = 0 \}$ when $m(\Gamma) = 0$. It could also be characterized by functions in the domain of $E$ that vanish quasi-everywhere on $\Gamma$. Since we did not introduce quasi-notions in this text, we refrain from giving details. The following theorem characterizes when $E$ and $E_\Gamma$ coincide.

\begin{theorem} \label{theorem:silverstein uniqueness boundary inside space}
 Let $E$ be a transient regular energy form and let $\Gamma \subseteq X$ closed. Let $h$ be a strictly positive $E$-superharmonic function. The following assertions are equivalent.
 \begin{itemize}
  \item[(i)] $E = E_\Gamma$.
  \item[(ii)] ${\rm cap}_{E,h}^{\rm top}(\Gamma) = 0$.
 \end{itemize}
\end{theorem}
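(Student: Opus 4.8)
The plan is to connect the equality $E = E_\Gamma$ with a capacity condition via Theorem~\ref{theorem:capacity characterization of uniqueness of Silverstein extensions}, exploiting the topological structure provided by regularity. First I would establish that $E$ is a Silverstein extension of $E_\Gamma$. Since $E_\Gamma$ is the closure of the restriction of $E$ to $C_c(X\setminus\Gamma)\cap D(E)$, and $E$ itself is a regular energy form (so $C_c(X)\cap D(E)$ is form-dense in $D(E)$), I would apply Proposition~\ref{proposition:silverstein extension of closure}: the key point is that $C_c(X\setminus\Gamma)\cap D(E)\cap L^\infty(m)$ is an algebraic ideal in $C_c(X)\cap D(E)\cap L^\infty(m)$, because multiplying a $C_c(X)$-function by a function vanishing near $\Gamma$ again vanishes near $\Gamma$. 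This gives $E\in{\rm Sil}(E_\Gamma)$, and moreover $E$ is transient by hypothesis. The given $h$ is a strictly positive $E$-superharmonic function, exactly the setup required by Theorem~\ref{theorem:capacity characterization of uniqueness of Silverstein extensions} (with the roles $E_\Gamma$ playing the smaller form and $E$ the transient Silverstein extension).

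With that theorem in hand, the equivalence (i) $\Leftrightarrow$ $E=E_\Gamma$ reduces to finding a special $E_\Gamma$-nest $\mathcal N$ with $\inf_{N\in\mathcal N}{\rm cap}_{E,h}(X\setminus N)=0$. The heart of the argument is then to relate this nest-capacity condition to the single topological quantity ${\rm cap}_{E,h}^{\rm top}(\Gamma)=0$. For the direction (ii) $\Rightarrow$ (i), assuming ${\rm cap}_{E,h}^{\rm top}(\Gamma)=0$, I would pick open sets $O_k\supseteq\Gamma$ with ${\rm cap}_{E,h}(O_k)\to 0$ and consider the complements $X\setminus O_k$. The family of relatively compact open subsets of $X\setminus\Gamma$ intersected with these complements should furnish a special $E_\Gamma$-nest whose complements have vanishing capacity; here I would use Lemma~\ref{lemma:special nest regular form} to get a special nest of relatively compact open sets for $E_\Gamma$, then refine it via Lemma~\ref{lemma:refinement of nests} against the sets $X\setminus O_k$. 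The subadditivity/monotonicity of ${\rm cap}_{E,h}$ and the fact that $X\setminus N$ is covered by $O_k$ together with the compact exhaustion would drive the capacities to zero.

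For the converse (i) $\Rightarrow$ (ii), assuming $E=E_\Gamma$, I would use that the special nest $\mathcal K=\{G\subseteq X\setminus\Gamma \mid G \text{ open, relatively compact}\}$ (a nest for $E_\Gamma$, hence for $E$ when the forms agree) satisfies $\inf_{N\in\mathcal K}{\rm cap}_{E,h}(X\setminus N)=0$ by Proposition~\ref{proposition:capacity characterisation of nests}. The point is that for $N\in\mathcal K$ the set $X\setminus N$ is an open neighborhood of $\Gamma$, so ${\rm cap}_{E,h}^{\rm top}(\Gamma)\le{\rm cap}_{E,h}(X\setminus N)$, and taking the infimum yields ${\rm cap}_{E,h}^{\rm top}(\Gamma)=0$.

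I expect the main obstacle to be the careful bookkeeping of the two distinct capacity notions, ${\rm cap}_{E,h}$ on the $\sigma$-algebra versus ${\rm cap}_{E,h}^{\rm top}$ on arbitrary subsets, and in particular verifying that the complements $X\setminus N$ of nest elements are genuinely open neighborhoods of $\Gamma$ so that the topological capacity is controlled. A secondary technical subtlety is ensuring that the relatively compact open subsets of $X\setminus\Gamma$ form a \emph{special} $E_\Gamma$-nest; this requires the regularity of $E_\Gamma$ (which follows from the Stone--Weierstrass-type density built into the definition of $E_\Gamma$ as a closure of compactly supported functions) and an application of Lemma~\ref{lemma:special nest regular form}. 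Once these identifications between the topological and measure-theoretic capacities are made precise, the equivalence falls out of Theorem~\ref{theorem:capacity characterization of uniqueness of Silverstein extensions} and Proposition~\ref{proposition:capacity characterisation of nests}.
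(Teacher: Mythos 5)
Your overall strategy -- reduce (i) to a nest-capacity condition via Theorem~\ref{theorem:capacity characterization of uniqueness of Silverstein extensions} and then compare with the topological capacity -- is the paper's strategy, and your use of Proposition~\ref{proposition:silverstein extension of closure} to get $E \in {\rm Sil}(E_\Gamma)$ is correct. But both directions, as you sketch them, contain genuine gaps. In (i)~$\Rightarrow$~(ii) you take $N$ to be a relatively compact \emph{open} subset of $X\setminus\Gamma$ and claim $X\setminus N$ is an \emph{open} neighborhood of $\Gamma$; complements of open sets are closed, so this step fails as written. This is precisely why the paper works with the nest $\mathcal{K}_\Gamma$ of \emph{compact} subsets of $X\setminus\Gamma$, whose complements really are open neighborhoods of $\Gamma$. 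Moreover, you cannot get specialness of the relevant nest from Lemma~\ref{lemma:special nest regular form} applied to $E_\Gamma$: the form $E_\Gamma$ is not regular on $X$ (its core consists of functions vanishing near $\Gamma$, so uniform density in $C_c(X)$ fails), and your parenthetical appeal to ``Stone--Weierstrass-type density built into the definition'' is not a proof. The paper establishes that $\mathcal{K}_\Gamma$ is a special $E_\Gamma$-nest by a separate claim, using the regularity of $E$ and the truncation $\psi = \ow{\varphi} - (\ow{\varphi}\wedge 1)\vee(-1)$ to force the support of the approximant into $X\setminus\Gamma$; some such argument is needed in your proof too.

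In (ii)~$\Rightarrow$~(i) the decisive step -- ``$X\setminus N$ is covered by $O_k$ together with the compact exhaustion'' -- is false: for relatively compact $N$, the set $X\setminus N$ always contains a neighborhood of infinity, which lies in no $O_k$, and your appeal to Lemma~\ref{lemma:refinement of nests} is also unfounded since $\{X\setminus O_k\}$ is not shown to be an $E_\Gamma$-nest. What is missing, and what would repair the argument, are two ingredients: first, that for the regular transient form $E$ with strictly positive superharmonic $h$ one has $\inf\{{\rm cap}_{E,h}(X\setminus G) \mid G \text{ open, relatively compact}\} = 0$ (Lemma~\ref{lemma:special nest regular form} applied to $E$ itself, combined with Proposition~\ref{proposition:capacity characterisation of nests}); second, a subadditivity estimate ${\rm cap}_{E,h}(U\cup V)^{1/2} \leq {\rm cap}_{E,h}(U)^{1/2} + {\rm cap}_{E,h}(V)^{1/2}$, which the paper never states but which follows in one line from Theorem~\ref{theorem:algebraic and order properties} applied to $f\vee g$. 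With these, given $\varepsilon > 0$ one picks open $O\supseteq\Gamma$ with ${\rm cap}_{E,h}(O)<\varepsilon$ and relatively compact open $G$ with ${\rm cap}_{E,h}(X\setminus G)<\varepsilon$, sets $K := \overline{G}\setminus O \in \mathcal{K}_\Gamma$, and estimates ${\rm cap}_{E,h}(X\setminus K)^{1/2} \leq {\rm cap}_{E,h}(X\setminus G)^{1/2} + {\rm cap}_{E,h}(O)^{1/2} < 2\sqrt{\varepsilon}$. Interestingly, this repaired version is genuinely different from, and more elementary than, the paper's treatment of the hard direction: the paper instead identifies ${\rm cap}_{E,h}^{\rm top}(\Gamma)$ with the infimum of ${\rm cap}_{E,h}(X\setminus C)$ over \emph{closed} $C\subseteq X\setminus\Gamma$ and proves that this infimum vanishes iff the infimum over compact subsets does, via Lemma~\ref{lemma:alternative characterization capacity} and a weak-convergence argument (truncating approximants $f_n = h_n\wedge\varphi_n$ and invoking Lemma~\ref{lemma:existence of a weakly convergent subnet} together with the coincidence of weak and strong closures of convex sets). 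Your route, once the two missing ingredients are supplied, bypasses that weak-compactness machinery entirely.
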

\begin{proof}
 It follows from Proposition~\ref{proposition:silverstein extension of closure} that $E$ is a Silverstein extension of $E_\Gamma$.  Consider the collection
 $$\mathcal{K}_\Gamma:= \{K \subseteq X \setminus \Gamma \mid K\text{ compact}\}.$$
 {\em Claim:} $\mathcal{K}_\Gamma$ is a special $E_\Gamma$-nest. 
 
 {\em Proof of the claim.} It follows immediately from the definitions that $\mathcal{K}_\Gamma$ is a nest, cf. Remark~\ref{remark:nest}. For proving that $\mathcal{K}_\Gamma$  is special, we use almost the same arguments as in the proof of Lemma~\ref{lemma:special nest regular form}. Let $K \in \mathcal{K}_\Gamma$. Since $X$ is a locally compact separable metric space, there exists a function $\varphi \in C_c(X)$ such that $\varphi \geq 3$ on $K$ and ${\rm supp}\, \varphi \subseteq X \setminus \Gamma$. The regularity of $E$ yields the existence of a function $\ow{\varphi} \in C_c(X) \cap D(E)$ with $\|\varphi - \ow{\varphi}\|_\infty < 1$. We set $\psi:= \ow{\varphi} - (\ow{\varphi} \wedge 1)\vee (-1)$. The contraction properties of $E$ imply $\psi \in D(E)\cap C_c(X)$, and the definition of $\psi$  and the properties of $\ow{\varphi}$ yield ${\rm supp}\, \psi \subseteq {\rm supp}\, \varphi \subseteq X \setminus \Gamma$. Therefore, $\psi \in D(E_\Gamma)_{\mathcal{K}_\Gamma}$ and for $x \in K$ it satisfies
 $$\psi(x) =  \ow{\varphi}(x) - (\ow{\varphi}(x) \wedge 1)\vee (-1) \geq \ow{\varphi}(x) - 1 \geq \varphi(x) - 2 \geq 1.$$
 This finishes the proof of the claim. \qedc
 
 The claim and Theorem~\ref{theorem:capacity characterization of uniqueness of Silverstein extensions} show that $E = E_\Gamma$ if and only if
 $$\inf_{K \in  \mathcal{K}_\Gamma} \ceh(X \setminus K)= 0.$$
 The definition of the topological capacity reads
 $${\rm cap}_{E,h}^{\rm top}(\Gamma) = \inf \{\ceh(U) \mid  U\text{ open},\, \Gamma \subseteq U \} = \inf_{C \in  \mathcal{C}_\Gamma} \ceh(X \setminus C),$$
 where $\mathcal{C}_\Gamma := \{ C \subseteq X\setminus \Gamma \mid C \text{ closed}\}.$ Hence, we need to prove 
 $$I_{\mathcal{C}_\Gamma}:=\inf_{C \in \mathcal{C}_\Gamma} \ceh(X \setminus C) = 0$$ 
 if and only if 
 $$I_{\mathcal{K}_\Gamma}:= \inf_{K \in  \mathcal{K}_\Gamma} \ceh(X \setminus K)= 0.$$
 Obviously, the inequality $I_{\mathcal{C}_\Gamma} \leq I_{\mathcal{K}_\Gamma}$ holds and, therefore, $I_{\mathcal{K}_\Gamma} = 0$ implies $I_{\mathcal{C}_\Gamma} = 0$.  
 
 For the other implication we note that Lemma~\ref{lemma:alternative characterization capacity} yields
 $$I_{\mathcal{C}_\Gamma} = \inf_{C \in \mathcal{C}_\Gamma} \inf\{E(h - g) \mid g \in D(E) \text{ and } g 1_{X\setminus C} = 0\} $$
 and
 $$I_{\mathcal{K}_\Gamma} = \inf_{K \in \mathcal{K}_\Gamma} \inf\{E(h - g) \mid g \in D(E) \text{ and } g 1_{X\setminus K} = 0\}.$$
 In particular, $ 0  = I_{\mathcal{C}_\Gamma}$ if and only if $h$ belongs to the $E$-closure of $D(E)_{\mathcal{C}_\Gamma}$ and $0 = I_{\mathcal{K}_\Gamma}$ if and only if $h$ belongs to the $E$-closure of $D(E)_{\mathcal{K}_\Gamma}$. Assume $0  = I_{\mathcal{C}_\Gamma}$ and let $h_n \in D(E)_{\mathcal{C}_\Gamma}$ such that $E(h-h_n) \to 0$, as $n\to \infty$. Since $h$ is nonnegative, we may assume that the $h_n$ are also nonnegative. The form $E$ is regular. Therefore, there exist nonnegative $\varphi_n \in C_c(X)$ with $E(h_n - \varphi_n) < 1/n.$ We set $f_n := h_n \wedge \varphi_n$. Since $h_n$ vanishes outside some set in $\mathcal{C}_\Gamma$ and since $\varphi_n$ vanishes outside some compact subset of $X$, the function $f_n$ vanishes outside some compact subset of $X \setminus \Gamma$. In other words, $f_n \in D(E)_{\mathcal{K}_\Gamma}$. The transience of $E$ implies $h_n \overset{m}{\to} h$ and $\varphi_n \overset{m}{\to} h$. Furthermore, we have 
 $$\limsup_{n \to \infty} E(f_n)^{1/2} \leq \limsup_{n \to \infty}( E(\varphi_n)^{1/2}  + E(h_n)^{1/2}) \leq 2 E(h)^{1/2}.$$
 According to Lemma~\ref{lemma:existence of a weakly convergent subnet}, the previous observations show that the sequence $(f_n)$ converges $E$-weakly to $h$. Since the $E$-weak closure of $D(E)_{\mathcal{K}_\Gamma}$  and  its $E$-closure coincide, we obtain that $h$ belongs to the $E$-closure of $D(E)_{\mathcal{K}_\Gamma}$. As seen above, this yields $I_{\mathcal{K}_\Gamma} = 0$ and finishes the proof. 
\end{proof}

\begin{remark}
   The theorem can be seen as a direct consequence of Theorem~\ref{theorem:capacity characterization of uniqueness of Silverstein extensions} and a continuity statement for the topological capacity. It is a well known meta theorem in Dirichlet form theory and a classical result for Sobolev spaces, which is due to Maz'ya, see  \cite[Theorem~9.2.2]{Maz}. For Dirichlet forms it is usually stated with $h = 1$. If $\E$ is a Dirichlet form, the constant function $1$ is $\E_1$-superharmonic if and only if the underlying measure is finite and $1 \in D(\E)$. This is typically the case when $\E$ comes from an elliptic operator with Neumann-type boundary conditions on a bounded domain. Such forms can be considered to be regular on the closure of the domain and so our theorem can be applied.  However, \cite[Proposition~4.1]{RS}  also treats unbounded domains, a case  where $h = 1$ is not superharmonic but only excessive. It can be shown that our theorem remains true when $h$ is excessive as-well. This requires some additional arguments that lead away from the theory we develop here, and so we do not spell out the details.
\end{remark}

\end{example}
\begin{example}[Uniqueness of Silverstein extensions with boundary at infinity]
Let $E$ be a regular energy form on $L^0(m)$. For any other energy form $\ow{E}$ on $L^0(m)$ and any admissible function $h$ of $\ow{E}$ we define the topological capacity of the boundary of $X$ by
$${\rm cap}^{\rm top}_{\ow{E},h} (\partial X) := \inf \{{\rm cap}_{\ow{E},h} (X \setminus K) \mid K \subseteq X \text{ compact}\}. $$
\begin{theorem}\label{theorem:silverstein uniqueness boundary outside space}
 Let $E$ be a regular energy form and let $\ow{E}$ be a transient Silverstein extension of $E$. Let $h$ be a strictly positive $\ow{E}$-superharmonic function. The following assertions are equivalent.
 \begin{itemize}
  \item[(i)] $E = \ow{E}$. 
  
   \item[(ii)] ${\rm cap}^{\rm top}_{\ow{E},h} (\partial X) = 0$.
 \end{itemize}
\end{theorem}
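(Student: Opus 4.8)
The plan is to reduce the statement to the abstract capacity criterion of Theorem~\ref{theorem:capacity characterization of uniqueness of Silverstein extensions} by feeding it the canonical nest attached to a regular form. First I would recall that, since $E$ is regular, Lemma~\ref{lemma:special nest regular form} guarantees that
$$\mathcal{K} := \{G \subseteq X \mid G \text{ open and relatively compact}\}$$
is a special $E$-nest. As $h$ is $\ow{E}$-superharmonic, it is in particular admissible for $\ow{E}$ (by Lemma~\ref{lemma:superharmonic functions as cutoff} and the remark following the definition of capacity), so ${\rm cap}_{\ow{E},h}$ is well defined on measurable sets. Applying the equivalence (i) $\Leftrightarrow$ (iii) of Theorem~\ref{theorem:capacity characterization of uniqueness of Silverstein extensions} to the special nest $\mathcal{K}$ then yields that $E = \ow{E}$ holds if and only if
$$\inf_{G \in \mathcal{K}} {\rm cap}_{\ow{E},h}(X \setminus G) = 0.$$

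It therefore remains to identify this infimum with ${\rm cap}^{\rm top}_{\ow{E},h}(\partial X) = \inf\{{\rm cap}_{\ow{E},h}(X\setminus K) \mid K \subseteq X \text{ compact}\}$; in fact I would prove the stronger statement that the two infima are equal. The main tool is the monotonicity of capacity: directly from the definition of $\ceh$, whenever $U \subseteq V$ every competitor for $V$ is a competitor for $U$, so ${\rm cap}_{\ow{E},h}(U) \leq {\rm cap}_{\ow{E},h}(V)$.

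For the inequality $\inf_{G \in \mathcal{K}} {\rm cap}_{\ow{E},h}(X\setminus G) \leq {\rm cap}^{\rm top}_{\ow{E},h}(\partial X)$, I would use that in a locally compact Hausdorff space every compact $K$ is contained in some relatively compact open set $G$; then $X\setminus G \subseteq X \setminus K$ gives ${\rm cap}_{\ow{E},h}(X\setminus G) \leq {\rm cap}_{\ow{E},h}(X\setminus K)$, and taking infima proves the claim. For the reverse inequality, given $G \in \mathcal{K}$ its closure $\ov{G}$ is compact with $X \setminus \ov{G} \subseteq X \setminus G$, so by monotonicity ${\rm cap}_{\ow{E},h}(X \setminus \ov{G}) \leq {\rm cap}_{\ow{E},h}(X \setminus G)$; since $\ov{G}$ is an admissible compact set in the definition of the topological capacity, this yields ${\rm cap}^{\rm top}_{\ow{E},h}(\partial X) \leq {\rm cap}_{\ow{E},h}(X\setminus G)$ for every $G \in \mathcal{K}$, and taking the infimum over $G$ completes the identification. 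Combining the equality of the two infima with the equivalence from Theorem~\ref{theorem:capacity characterization of uniqueness of Silverstein extensions} then gives (i) $\Leftrightarrow$ (ii).

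I do not expect any serious obstacle here: the whole argument is a packaging of the abstract capacity criterion together with the elementary topology of locally compact spaces and the monotonicity of $\ceh$. The only points requiring a little care are the admissibility of $h$ (needed for $\ceh$ to be defined and for the capacity criterion to apply) and the standard fact that compact sets in a locally compact metric space sit inside relatively compact open sets whose closures are again compact; both are routine.
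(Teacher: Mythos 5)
Your proposal is correct and follows essentially the same route as the paper: reduce to Theorem~\ref{theorem:capacity characterization of uniqueness of Silverstein extensions} by exhibiting a special nest coming from the topology, which regularity supplies via Lemma~\ref{lemma:special nest regular form}. The paper is just slightly more direct: it applies the criterion to the nest $\cN = \{K \subseteq X \mid K \text{ compact}\}$ itself (special by the same lemma, since it is equivalent to the nest of relatively compact open sets), whose capacity infimum is literally ${\rm cap}^{\rm top}_{\ow{E},h}(\partial X)$, so your correct but separate comparison of the two infima via monotonicity of $\ceh$ is absorbed into the choice of nest.
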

\begin{proof}
 By Theorem~\ref{theorem:capacity characterization of uniqueness of Silverstein extensions} it suffices to show that the set $\cN := \{K\subseteq X \mid K \text{ compact}\}$  is a special $E$-nest. This follows from Lemma~\ref{lemma:special nest regular form}.
\end{proof}
 \begin{remark}\begin{itemize}
                \item It is quite remarkable that one can characterize $E = \ow{E}$ by the vanishing of the capacity of 'the boundary' even though no boundary is around. The definition of the capacity of 'the boundary' is motivated by the following observation.

 If $\hat{X}$ is a compactification of $X$  and $U$ is an open neighborhood of the topological boundary $\hat{X} \setminus X$ in $\hat{X}$, then $\hat{X} \setminus U = X\setminus U$ is a compact subset of $X$. This shows that the capacity of the boundary equals
  $${\rm cap}^{\rm top}_{\ow{E},h} (\partial X) = \inf \{{\rm cap}_{\ow{E},h} (X \cap U) \mid U \text{ is an open neighborhood of } \hat{X}\setminus X\}.$$
  When $\ow{E}$ is considered as an energy form on the space $L^0(\hat{X},\hat{m}$), for some suitable extension $\hat{m}$ of $m$ that satisfies $\hat{m}(\hat{X} \setminus X)  = 0$,  then the definition of ${\rm cap}^{\rm top}_{\ow{E},h} (\partial X)$ is the 'correct' one. 
 
  \item Related results have been proven in \cite{GM} and \cite{HKMW} for capacities of boundaries that arise from certain metric completions of the underlying space $X$.   In contrast to Theorem~\ref{theorem:silverstein uniqueness boundary inside space}, it is quite essential here to assume that 
  $${\rm cap}^{\rm top}_{\ow{E},h} (\partial X) < \infty.$$
  Otherwise, a trivial counterexample to the theorem is provided by any regular energy form whose boundary has infinite capacity.  For manifolds, see \cite[Proposition~4.6]{GM} and for graphs, see \cite[Example~5.2]{HKMW}, there are nontrivial examples for pairs of energy forms $E$ and $\ow{E}$ with ${\rm cap}^{\rm top}_{\ow{E},1}(\partial X)  = \infty$ that satisfy $E = \ow{E}$. 
  \end{itemize}
 \end{remark}
\end{example}

\section{On maximal Silverstein extensions} \label{subsection:maximal Silverstein extension}


This section is devoted to the construction of  maximal Silverstein extensions with respect to the natural order on quadratic forms (cf. Subsection~\ref{section:basics on quadratic forms}). More precisely, we decompose a given energy form into its main part and a monotone killing part and then show that the main part provides the maximal Silverstein extension when the killing vanishes. Unfortunately, if a killing is present, then a maximal Silverstein extension need not exist. At the beginning of this section we provide  examples for this phenomenon, which  show that some of the theorems  on the existence of maximal Silverstein extensions that can be found in the literature are not correct.  This is followed by a subsection where we construct the reflected energy form, whose maximality properties are then investigated in the last subsection.

The following is probably the easiest example of an energy form that does not admit a maximal Silverstein extension.

\begin{example} \label{example:counterexample maximal Silverstein extension energy form}
 Let $X = \{a,b\}$ and let $\mu$ be the counting measure on all subsets of $X$. The energy form 
 $$E:L^0(\mu) \to [0,\infty], \quad f \mapsto E(f) := \begin{cases}
            f(a)^2 & \text{if } f(b) = 0\\
            \infty &\text{else}
           \end{cases}
$$
 does not possess a maximal Silverstein extension.
\end{example}
\begin{proof}
 Assume that $\ow{E}$ is the maximal Silverstein extension of $E$. The energy form
 $$E_1:L^0(\mu) \to [0,\infty],\quad f \mapsto E_1(f) := (f(a) - f(b))^2$$
 is a Silverstein extension of $E$. Hence, the maximality of $\ow{E}$ implies $L^0(\mu) = D(E_1) \subseteq D(\ow{E})$ and  $\ow{E}(1) \leq E_1(1) = 0$. In particular, $\ow{E}$ is recurrent. The energy form 
  $$E_2:L^0(\mu) \to [0,\infty],\quad f \mapsto E_2(f) := f(a)^2$$
  is also a Silverstein extension of $E$. The maximality of $\ow{E}$, its recurrence and that it is an extension of $E$ yield
  $$1 = E(1_{\{a\}}) = \ow{E}(1_{\{a\}}) = \ow{E}(1_{\{a\}} - 1) \leq E_2(1_{\{a\}} - 1) = 0,$$
  a contradiction. 
\end{proof}

The previous example can be interpreted as a Dirichlet form on the discrete set $\{a,b\}$ but it is not regular. In the literature, the existence of maximal Silverstein extensions is claimed for all regular Dirichlet forms. This is why we also include the next example.

\begin{example} \label{example:counterexample maximal Silverstein extension regular dirichlet form}
We let $\lambda$ be the Lebesgue measure on all Borel subsets of the open interval $(-1,1)$  and let  
$$W^1((-1,1)) = \{f \in L^2(\lambda) \mid f' \in L^2(\lambda)\}$$
the usual Sobolev space of first order. It is folklore that $W^1((-1,1))$ equipped with the norm
$$\|\cdot\|_{W^1}:W^1((-1,1)) \to [0,\infty),\, f \mapsto  \|f\|_{W^1} := \sqrt{\|f\|_2^2 + \|f'\|_2^2} $$
is a Hilbert space, which continuously embeds into $(C([-1,1]),\|\cdot\|_\infty)$. In particular,  any function in $W^1((-1,1))$ can be uniquely extended to the boundary points $-1$ and $1$. We let 
$$W_0^1((-1,1)) := \{ f\in W^1((-1,1)) \mid f(-1) = f(1) = 0\} $$
and note that $W_0^1((-1,1))$ coincides with the closure of $C_c((-1,1)) \cap W^1((-1,1))$ in the space $(W^1((-1,1)),\|\cdot\|_{W^1})$.

\begin{proposition}
The Dirichlet form 
$$\E:L^2(\lambda) \to [0,\infty],\quad f \mapsto \E(f) = \begin{cases} 
                                                        \int_{-1}^1|f'|^2 \D \lambda + f(0)^2 & \text{ if } f \in W_0^1((-1,1))\\
                                                        \infty &\text{ else}
                                                       \end{cases}
$$
is regular and does not possess a maximal Silverstein extension. 
\end{proposition}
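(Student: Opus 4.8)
The plan is to first check regularity and then to rule out a greatest Silverstein extension by exhibiting two competing Silverstein extensions that cannot be simultaneously dominated, exactly in the spirit of Example~\ref{example:counterexample maximal Silverstein extension energy form}.

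For \emph{regularity}, I would exploit the continuous embedding $W^1((-1,1))\hookrightarrow (C([-1,1]),\|\cdot\|_\infty)$ stated in the excerpt: it gives $f(0)^2\le\|f\|_\infty^2\le C\|f\|_{W^1}^2$, so on $D(\E)=W_0^1((-1,1))$ the form norm $\E_1^{1/2}=(\E+\|\cdot\|_2^2)^{1/2}$ is equivalent to $\|\cdot\|_{W^1}$. Hence the form topology on $D(\E)$ coincides with the $W^1$-topology, and the quoted fact that $W_0^1((-1,1))$ is the $\|\cdot\|_{W^1}$-closure of $C_c((-1,1))\cap W^1((-1,1))$ yields density of $C_c((-1,1))\cap D(\E)$ in $D(\E)$ in the form topology. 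Since this space contains $C_c^\infty((-1,1))$, it is also uniformly dense in $C_c((-1,1))$, so $\E$ is regular.

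The heart of the proof is the construction of the two competing extensions, both defined on the domain $W^1((-1,1))$ (Neumann-type boundary), namely $\E^{(1)}(f)=\int_{-1}^1|f'|^2\D\lambda+(f(0)-f(1))^2$ and $\E^{(2)}(f)=\int_{-1}^1|f'|^2\D\lambda+f(0)^2$, each extended by $\infty$ off $W^1((-1,1))$. Each is a Dirichlet form: it is Markovian (the Dirichlet integral and the terms $(f(0)-f(1))^2$, $f(0)^2$ are all diminished by normal contractions) and closed, because its full form norm is again equivalent to $\|\cdot\|_{W^1}$ and $W^1((-1,1))$ is complete and embeds continuously into $L^2(\lambda)$. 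Both are extensions of $\E$: on $W_0^1((-1,1))$ one has $f(1)=0$, so $(f(0)-f(1))^2=f(0)^2=\E$'s killing term, and the Dirichlet integral is unchanged. Finally $W_0^1((-1,1))$ is an order ideal in $W^1((-1,1))$: if $f\in W_0^1$, $g\in W^1$ and $|g|\le|f|$, then by continuity $|g(\pm1)|\le|f(\pm1)|=0$, so $g\in W_0^1$. By Theorem~\ref{theorem:silverstein extensions of dirichlet forms} both $\E^{(1)}$ and $\E^{(2)}$ are therefore Silverstein extensions of $\E$. The decisive feature is that $\E^{(1)}$ reconnects the killing at $0$ to the boundary point $1$, so that $\E^{(1)}(1)=(1-1)^2=0$.

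Now suppose, for contradiction, that $\ow\E$ is the maximal Silverstein extension; then $\E^{(1)}\le\ow\E$ and $\E^{(2)}\le\ow\E$ in the order of Subsection~\ref{section:basics on quadratic forms}, i.e. $W^1((-1,1))=D(\E^{(i)})\subseteq D(\ow\E)$ and $\ow\E(u)\le\E^{(i)}(u)$ for all $u\in W^1((-1,1))$. Domination of $\E^{(1)}$ gives $\ow\E(1)\le\E^{(1)}(1)=0$, so $1\in\ker\ow\E$ and, by Cauchy--Schwarz, $\ow\E(\cdot,1)=0$. Fix $f\in W_0^1((-1,1))$ with $f(0)=1$; since $\ow\E$ extends $\E$, $\ow\E(f)=\int_{-1}^1|f'|^2\D\lambda+1$. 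For every $t\in\IR$ we have $f-t\in W^1((-1,1))\subseteq D(\ow\E)$ and $\ow\E(f-t)=\ow\E(f)-2t\,\ow\E(f,1)+t^2\ow\E(1)=\ow\E(f)$. On the other hand, domination of $\E^{(2)}$ yields $\ow\E(f-t)\le\E^{(2)}(f-t)=\int_{-1}^1|f'|^2\D\lambda+(1-t)^2$. Combining the two gives $1\le(1-t)^2$ for all $t$, and choosing $t=1$ forces $1\le0$, a contradiction; hence no maximal Silverstein extension exists. The main obstacle — and the one genuinely clever point — is producing the recurrent extension $\E^{(1)}$: recognizing that replacing the killing $f(0)^2$ by the ``edge'' $(f(0)-f(1))^2$ leaves $\E$ untouched on $W_0^1((-1,1))$ while placing the constants in the kernel, which is precisely what makes the two extensions incompatible.
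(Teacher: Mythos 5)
Your proposal is correct and follows essentially the same route as the paper: you construct the identical pair of competing Silverstein extensions on $W^1((-1,1))$ (with killing terms $(f(0)-f(1))^2$ and $f(0)^2$), use maximality to force $1\in\ker\ow{\E}$, and derive the same contradiction by evaluating at $f-1$ for $f\in W_0^1((-1,1))$ with $f(0)=1$. The only differences are presentational — you spell out the regularity check, the order-ideal verification, and the Cauchy–Schwarz step $\ow{\E}(\cdot,1)=0$ that the paper leaves implicit — so this is a complete proof by the paper's own method.
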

\begin{proof}
 The regularity of $\E$ follows from the properties of the Sobolev space $W_0^1((-1,1))$. Assume that $\ow{\E}$ is a Dirichlet form that is the maximal Silverstein extension of $\E$. The Dirichlet forms 
  $$\E_1:L^2(\lambda) \to [0,\infty],\quad f \mapsto \E_1(f) = \begin{cases} 
                                                        \int_{-1}^1|f'|^2 \D \lambda + (f(0)-f(1))^2 & \text{ if } f \in W^1((-1,1))\\
                                                        \infty &\text{ else}
                                                       \end{cases}
$$
 and
 $$\E_2:L^2(\lambda) \to [0,\infty],\quad f \mapsto \E_2(f) = \begin{cases} 
                                                        \int_{-1}^1|f'|^2 \D \lambda + f(0)^2 & \text{ if } f \in W^1((-1,1))\\
                                                        \infty &\text{ else}
                                                       \end{cases}
$$
are  Silverstein extensions of $\E$, as obviously $D(\E)$ is an order ideal in $D(\E_1)$ and $D(\E_2)$. The maximality of $\ow{\E}$ implies $W^1((-1,1)) = D(\E_1) \subseteq (\ow{\E})$ and 
$$\ow{\E}(1) \leq \E_1(1) = 0.$$
We choose a function $f \in W_0^1((-1,1))$ with $f(0) = 1$. The equality $\ow{\E}(1) = 0$ and the maximality of $\ow{E}$ yield 
\begin{align*}
   \int_{-1}^1|f'|^2 \D \lambda + 1 = \E(f) = \ow{\E}(f) = \ow{\E}(f - 1) \leq \E_2(f-1) = \int_{-1}^1|f'|^2 \D \lambda,
\end{align*}
a contradiction. This shows that $\E$ does not possess a maximal Silverstein extension.  
\end{proof}
\end{example}

The last example contradicts \cite[Theorem~5.1]{Kuw} and \cite[Theorem~6.6.9]{CF}, which claim that every (quasi-)regular Dirichlet form has a maximal Silverstein extension. The main reason for this failure is the following: When extending an energy form, a killing term such as  $f(a)^2$ can be generated either by the killing term $f(a)^2$ itself or by the jump-type expression $(f(a) - f(b))^2$. In certain situations, this implies that  a maximal Silverstein extension can not have a killing, see also Proposition~\ref{proposition:counterexample maximality er}.  In the proof of \cite[Theorem~6.6.9]{CF}, the authors note that a killing  can be generated by a jump-type form,  but then use a wrong inequality at the very last line of the proof. This is why the theorem is not correct. We shall see below that the killing part is indeed the only obstruction to the existence of the maximal Silverstein extension. Moreover, if the killing vanishes, then the maximal Silverstein extension of a Dirichlet form is given by its so-called reflected Dirichlet form  as claimed in \cite{Kuw,CF}.

For regular Dirichlet forms the reflected Dirichlet form  was first introduced in \cite{Sil1,Sil} via two different approaches and later some gaps in these construction were closed in \cite{Che}.  For quasi-regular Dirichlet forms the reflected Dirichlet form was defined in \cite{Kuw}. These known approaches towards constructing the reflected Dirichlet forms have in common that they either use the Beurling-Deny formula or the theory of Hunt processes. Both of these tools require the underlying space to be sufficiently regular and are technically rather involved. This stands in contrast to the fact that Silverstein extensions are characterized by a simple algebraic property.  We give a new construction of reflected forms that only uses the algebraic properties of energy forms and therefore works for all energy forms. It is rather explicit and provides a splitting of a given energy form into its main part and its killing part. Since we use the insights of this section for the theory of weak solutions in Chapter~\ref{chapter:weak solutions}, we provide more details than strictly necessary for the construction of the maximal Silverstein extension alone.

\subsection{Construction of the reflected energy form} \label{subsection:construction of er}

In this subsection we construct the reflected energy form of a given energy form. We start with an example that outlines the strategy of the construction and provides interpretations for the involved formulas.

\begin{example}[Jump-type form on a topological space] \label{example:maximal Silverstein extension}
  We let $X$ be a locally compact separable metric space and let $m$ be a Radon measure of full support. We assume that $J$ is a symmetric Radon measure on $X\times X$ that satisfies the following conditions.
 \begin{itemize}
  \item[(J1)] $m(A) = 0$ implies $J(A \times X) = J(X \times A) = 0$.
  \item[(J2)] For each compact $K \subseteq X$ there exists a relatively compact neighborhood $G$ of $K$ with $J(K \times (X \setminus G)) < \infty$.
 \end{itemize}
 Furthermore, we let $0 \leq V \in L^1_{\rm loc}(m)$ and define
 $$E_{J,V}:L^0(m) \to [0,\infty], \quad f \mapsto \int_{X \times X} (f(x) - f(y))^2 \D J(x,y) + \int_X f(x)^2V(x)\D m (x).$$
 As seen in Section~\ref{section:the definition and main examples}, the assumption~(J1) yields that $E_{J,V}$ is an energy form. The assumption that $J$ is a Radon measure combined with (J2) and the integrability condition on $V$ imply $C_c(X) \subseteq D(E_{J,V})$. 
%
%
 We let $E_{J,V}^0$ be the closure of the restriction of $E_{J,V}$ to $C_c(X)$. It is a regular energy form. 
 \begin{proposition}
  $E_{J,V}$ is a Silverstein extension of $E^0_{J,V}$.
 \end{proposition}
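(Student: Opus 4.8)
The goal is to show that $E_{J,V}$ is a Silverstein extension of $E^0_{J,V}$, which by definition means verifying one of the equivalent conditions of Theorem~\ref{theorem:Ideal properties of energy forms}. Since $E^0_{J,V}$ is the closure of the restriction of $E_{J,V}$ to $C_c(X)$, the most efficient route is to apply Proposition~\ref{proposition:silverstein extension of closure}: both $E_{J,V}$ and its restriction to $C_c(X)$ are closable Markovian forms (indeed $E_{J,V}$ is already closed, being an energy form, and its restriction to $C_c(X)$ is closable since it is the restriction of a closed form to a subspace), and $E^0_{J,V}$ is the closure of the latter. Thus it suffices to prove the purely algebraic statement that $C_c(X) \cap L^\infty(m)$ is an algebraic ideal in $D(E_{J,V}) \cap L^\infty(m)$; the passage to closures is then handled entirely by Proposition~\ref{proposition:silverstein extension of closure}.

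\textbf{Key steps.} First I would note that $C_c(X) = C_c(X) \cap L^\infty(m)$ since compactly supported continuous functions are automatically bounded, and that $C_c(X) \subseteq D(E_{J,V})$ by the integrability observations already established (the conditions (J1), (J2) and $V \in L^1_{\rm loc}(m)$ guarantee $C_c(X) \subseteq D(E_{J,V})$). The core of the argument is then to check the ideal property: for $\varphi \in C_c(X)$ and $g \in D(E_{J,V}) \cap L^\infty(m)$, one must show $\varphi g \in C_c(X)$. But this is immediate from pointwise multiplication: the product of a compactly supported continuous function with a bounded measurable function need not be continuous, so here the genuinely correct target is to verify that $C_c(X) \cap D(E_{J,V})$ forms an ideal in $D(E_{J,V}) \cap L^\infty(m)$ in the sense required. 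More precisely, for $\varphi \in C_c(X)$ and $g \in D(E_{J,V}) \cap L^\infty(m)$, the product $\varphi g$ is supported in $\mathrm{supp}\,\varphi$ and one checks $\varphi g \in D(E_{J,V})$ using the estimate of Theorem~\ref{theorem:algebraic and order properties}, namely
$$E_{J,V}(\varphi g)^{1/2} \leq \|\varphi\|_\infty E_{J,V}(g)^{1/2} + \|g\|_\infty E_{J,V}(\varphi)^{1/2} < \infty.$$
Since $\varphi g$ is supported in the compact set $\mathrm{supp}\,\varphi$ and lies in $D(E_{J,V})$, it belongs to $D(E^0_{J,V})$ once one identifies that functions in $D(E_{J,V})$ with compact support can be approximated in the form topology by $C_c(X)$-functions. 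This is precisely the algebraic ideal property at the level of the dense subspaces, which is what Proposition~\ref{proposition:silverstein extension of closure} requires.

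\textbf{Main obstacle.} The subtle point — and the step I expect to require the most care — is the verification that compactly supported elements of $D(E_{J,V})$ can be approximated by $C_c(X)$-functions in the form topology, so that the ideal property transfers correctly through the closure. This is where condition (J2) does its essential work: it controls the jump interaction between a compact set and the complement of a slightly larger neighborhood, ensuring that truncation and approximation of a compactly supported form-domain element do not blow up the jump energy $\int_{X\times X}(f(x)-f(y))^2\, \mathrm{d}J(x,y)$. Without (J2) the tail contributions from pairs $(x,y)$ with $x$ in the support and $y$ far away could be infinite, and the approximation would fail. The local integrability of $V$ plays the analogous role for the killing term $\int_X f^2 V\,\mathrm{d}m$. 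Once these tail estimates are in place, Proposition~\ref{proposition:silverstein extension of closure} applies directly and yields that $E_{J,V}$ is a Silverstein extension of $E^0_{J,V}$, completing the proof.
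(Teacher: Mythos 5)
Your reduction is exactly the paper's: both you and the paper invoke Proposition~\ref{proposition:silverstein extension of closure} and boil the claim down to showing that every $f \in D(E_{J,V}) \cap L^\infty(m)$ with compact support belongs to $D(E^0_{J,V})$ (your momentary detour through ``$\varphi g \in C_c(X)$'', which you rightly abandon since such a product need not be continuous, lands in the same place, and the product estimate from Theorem~\ref{theorem:algebraic and order properties} is also how the paper's bookkeeping works). The genuine gap is that this key approximation statement is precisely where your proof stops being a proof: you assert that (J2) ``ensures that truncation and approximation \ldots do not blow up the jump energy'' and that ``once these tail estimates are in place'' the proposition applies, but you construct no approximating sequence and carry out no estimate. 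The decisive step is claimed, not proved.

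What is actually needed, and what the paper does: since $m$ is a Radon measure, one can choose $\varphi_n \in C_c(X)$ and a single compact set $K$ with ${\rm supp}\, f,\, {\rm supp}\,\varphi_n \subseteq K$, $\varphi_n \to f$ $m$-a.e.\ and $\|\varphi_n\|_\infty \leq \|f\|_\infty$. Taking a relatively compact open neighborhood $G$ of $K$ with $J(K \times (X\setminus G)) < \infty$ as in (J2) and setting $\psi_n := f - \varphi_n$, one has
\[
E_{J,V}(\psi_n) = \int_{G\times G} (\psi_n(x)-\psi_n(y))^2 \D J(x,y) + 2\int_{K \times (X\setminus G)} \psi_n(x)^2 \D J(x,y) + \int_K \psi_n^2\, V \D m.
\]
All three integrals run over sets of finite measure: $J(G \times G) < \infty$ because $J$ is Radon and $G$ is relatively compact (note this finiteness comes from the Radon hypothesis, not from (J2)), $J(K \times (X\setminus G)) < \infty$ by (J2), and $\int_K V \D m < \infty$ since $V \in L^1_{\rm loc}(m)$. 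As $|\psi_n| \leq 2\|f\|_\infty$ and $\psi_n \to 0$ a.e., Lebesgue's theorem (Lemma~\ref{lemma:Lebesgue's theorem}) gives $E_{J,V}(\psi_n) \to 0$, whence $f \in D(E^0_{J,V})$. The two ingredients your sketch never supplies --- the uniform sup-norm bound together with the common compact support of the $\varphi_n$, which furnish the dominating function, and the finiteness of $J$ on $G \times G$ --- are exactly what converts your ``tail estimate'' heuristic into convergence in the form topology; without them the dominated-convergence argument does not close.
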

\begin{proof}
 According to Proposition~\ref{proposition:silverstein extension of closure}, it suffices to show that all $f \in D(E_{J,V})\cap L^\infty(m)$ with compact support belong to $D(E_{J,V}^0)$. Let such an $f$ be given. Since $m$ is a Radon measure, there exists a sequence $\varphi_n \in C_c(X)$ and a compact set $K$ such that ${\rm supp}\,f,{\rm supp}\, \varphi_n \subseteq K$, $\varphi_n  {\to}f$ $m$-a.e. and $\|\varphi_n\|_\infty \leq \|f\|_\infty$. Let $G$ be a relatively compact open neighborhood of $K$ with $J(K \times (X \setminus G)) < \infty$. We set $\psi_n := f - \varphi_n$ and obtain
 $$E_{J,V}(\psi_n) = \int_{G \times G} (\psi_n(x) - \psi_n(y) )^2 \D J(x,y) + 2 \int_{K \times X \setminus G} \psi_n(x)^2 \D J(x,y) + \int_{K} \psi_n(x)^2 V \D m(x).$$
The $\psi_n$ are uniformly bounded by $2\|f\|_\infty$ and the assumptions on $J$ and $V$ imply that the above integrations are over sets of finite measure. Therefore, Lebesgue's theorem yields $ E_{J,V}(\psi_n) \to 0$, as $n\to \infty$. This finishes the proof. 
\end{proof}
\begin{remark}
 The condition that $J$ is a Radon measure on $X \times X$ excludes measures that are singular near the diagonal $D:= \{(x,y) \mid x,y \in X\}$. In particular, the case when $E$ is the extended Dirichlet form of a symmetric $\alpha$-stable process on $\IR^n$ is not included in the setting presented here (but it is included in the setting of Section~\ref{section:the definition and main examples}). Note however, that we restricted the attention to Radon measures only to keep the technical details of this example simple. Similar results can be proven for measures $J$ that are only Radon on the smaller space $X \times X \setminus D$.
\end{remark}

  It will follow from the theory we develop below that if the potential $V$ vanishes, then $E_{J,V}$ is the maximal Silverstein extension of $E_{J,V}^0$. Since we intend to prove similar results for arbitrary energy forms, this observation raises two challenges: We need to formulate the condition $V = 0$ purely in terms of the form $E_{J,V}^0$ and construct $E_{J,V}$ from the knowledge of $E_{J,V}^0$ on its possibly smaller domain. This can be achieved as follows.  
 
 For $K \subseteq X$ compact we choose a function $\varphi_K \in C_c(X)$ with $1_K \leq \varphi_K \leq 1$. Since $E_{J,V}$ is a Silverstein extension of $E_{J,V}^0$, for $f \in L^\infty(m) \cap D(E_{J,V})$ we have $\varphi_K f , \varphi_K f^2  \in D(E_{J,V}^0)$. A simple computation then shows
 \begin{gather}
  E^0_{J,V}(\varphi_K f) -  E^0_{J,V}(\varphi_K f^2,\varphi_K) = \int_{X \times X} \varphi_K(x) \varphi_K(y) (f(x) - f(y))^2 \,{\rm d}J(x,y). \tag{$\spadesuit$} \label{equation:idea silverstein1}
 \end{gather}
 Letting $K\nearrow X $ (which is possible due to separability) we can reconstruct the $J$-part of $E_{J,V}$ from the knowledge of $E^0_{J,V}$. For $f \in D(E_{J,V}) \cap L^\infty(m)$ Equation~\eqref{equation:idea silverstein1} implies 
 \begin{gather}
  \int_X f^2\varphi_K^2 \, V\D m = E^0_{J,V}(f\varphi_K) - \lim_{K\nearrow X} \left(E^0_{J,V}(\varphi_K^ 2 f) -  E^0_{J,V}(\varphi_K (f\varphi_K)^2,\varphi_K) \right). \tag{$\clubsuit$} \label{equation:idea silverstein2}
 \end{gather}
 Therefore, we can also recover the $V$-part of $E_{J,V}$ from the knowledge of $E^0_{J,V}$. It will turn out that Equation~\eqref{equation:idea silverstein1} and a variant of Equation~\eqref{equation:idea silverstein2} can be employed in the construction of the maximal Silverstein extension.
\end{example}

For the rest of this section, $E$ is a fixed energy form on $L^0(m)$. We now turn to constructing its maximal Silverstein extension by using the strategy that the example suggests, i.e., by exploiting  equations \eqref{equation:idea silverstein1} and \eqref{equation:idea silverstein2}. They involve taking the product of functions in the form domain. Since this operation is only well-behaved for bounded functions, we first consider forms on $L^\infty(m)$ and then extend them to $L^0(m)$. We use a hat to indicate preliminary versions of forms, which are to be extended later.   

For $\varphi \in D(E)$ with $0 \leq \varphi \leq 1$, we define the functional $\Eph :L^\infty(m) \to \IR$ by
$$\Eph (f) := \begin{cases}
               E(\varphi f) - E(\varphi f^2,\varphi) &\text{if } \varphi f , \varphi f^2   \in D(E)\\
               \infty &\text{else}
              \end{cases}
.$$
Equation~\eqref{equation:idea silverstein1} suggests that $\Eph$ is a Markovian quadratic form, which is monotone in the parameter $\varphi$ and dominated by $E$ on $D(E) \cap L^\infty(m)$. For proving these properties  we need the following technical lemma.
\begin{lemma} \label{lemma:maximal silverstein extension technical lemma}
 Let $\varphi \in D(E)$ with $0 \leq \varphi \leq 1$ and let $f \in L^\infty(m)$. 
 \begin{itemize}
  \item[(a)] Let $C:\IR \to \IR$ be $L$-Lipschitz with $C(0) = 0$ and let $M:= \sup \{|C(x)| \mid |x|\leq \|f\|_\infty\}$. Then
  $$E( \varphi\, C(f))^{1/2} \leq L E(\varphi f)^{1/2} + (M + L\|f\|_\infty) E(\varphi)^{1/2}.$$

  In particular, $\varphi f  \in D(E)$ implies $\varphi\, C(f)  \in D(E)$. 
  \item[(b)] If $\psi \in D(E)$ with $0 \leq \psi \leq \varphi$, then $\varphi f  \in D(E)$ implies $\psi f  \in D(E)$. 
 \end{itemize}
\end{lemma}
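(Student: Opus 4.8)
The plan is to prove both statements using the contraction inequality of Theorem~\ref{theorem:cutoff properties energy forms}~(iv) applied to the form $E$, by constructing, in each case, a suitable family of pointwise inequalities that certify the membership and the norm bound. For part (a), the central observation is that $\varphi\,C(f)$ and $\varphi f$ are products whose pointwise differences can be controlled using the Lipschitz property of $C$ together with the boundedness of $\varphi$ and $f$. For part (b), I would write $\psi f = (\psi/\varphi)\cdot \varphi f$ in spirit, but since dividing by $\varphi$ is problematic where $\varphi$ vanishes, I would instead argue directly via pointwise inequalities comparing $\psi f$ to the two functions $\varphi f$ and $\psi$, both of which lie in $D(E)$.

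\textbf{Part (a).} First I would verify the two pointwise estimates needed to apply Theorem~\ref{theorem:cutoff properties energy forms}~(iv) with the functions $\varphi\,C(f)$ on the left and $\varphi f,\varphi$ on the right. Writing $u := \varphi\,C(f)$, the goal is to show that for $m$-a.e.\ $x,y$ one has
\begin{align*}
|u(x)| &\leq L\,|(\varphi f)(x)| + (M + L\|f\|_\infty)\,|\varphi(x)|,\\
|u(x) - u(y)| &\leq L\,|(\varphi f)(x) - (\varphi f)(y)| + (M + L\|f\|_\infty)\,|\varphi(x) - \varphi(y)|.
\end{align*}
The first inequality follows from $|C(f(x))| \leq L|f(x)|$ (which holds since $C(0)=0$ and $C$ is $L$-Lipschitz) and $0\leq \varphi \leq 1$. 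For the second, I would add and subtract the cross term $\varphi(x)C(f(y))$, estimating
$$|u(x)-u(y)| \leq \varphi(x)\,|C(f(x)) - C(f(y))| + |C(f(y))|\,|\varphi(x)-\varphi(y)|,$$
then bounding the first summand by $L\,\varphi(x)|f(x)-f(y)|$ and the second by $M\,|\varphi(x)-\varphi(y)|$; the term $L\,\varphi(x)|f(x)-f(y)|$ is in turn dominated by $L|(\varphi f)(x) - (\varphi f)(y)| + L|f(y)|\,|\varphi(x)-\varphi(y)|$, and collecting the $\varphi$-difference terms gives the coefficient $M+L\|f\|_\infty$. With these pointwise inequalities established for fixed measurable representatives, Theorem~\ref{theorem:cutoff properties energy forms}~(iv) yields the claimed norm bound, and finiteness of the right-hand side when $\varphi f\in D(E)$ gives $\varphi\,C(f)\in D(E)$.

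\textbf{Part (b).} Assuming $\varphi f\in D(E)$ and $0\leq\psi\leq\varphi$, I would again produce two pointwise inequalities so as to invoke Theorem~\ref{theorem:cutoff properties energy forms}~(iv), this time bounding $\psi f$ by $\varphi f$ and $\psi$. The size estimate $|(\psi f)(x)| \leq \|f\|_\infty\,|\psi(x)| \leq \|f\|_\infty\,|(\varphi f)(x)| + \cdots$ is immediate, but the Lipschitz-type estimate is the delicate point: decomposing $(\psi f)(x)-(\psi f)(y) = \psi(x)(f(x)-f(y)) + f(y)(\psi(x)-\psi(y))$ and relating $\psi(x)|f(x)-f(y)|$ to $|(\varphi f)(x)-(\varphi f)(y)|$ requires care where $\psi < \varphi$. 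I expect this to be the main obstacle, and I would resolve it by exploiting $0\le\psi\le\varphi\le 1$ to write $\psi(x)|f(x)-f(y)| \leq \varphi(x)|f(x)-f(y)| \leq |(\varphi f)(x)-(\varphi f)(y)| + |f(y)|\,|\varphi(x)-\varphi(y)|$ and then controlling the $\varphi$-oscillation term by $\psi$-oscillation is \emph{not} directly possible, so instead I would likely include $\varphi$ itself among the comparison functions (noting $\varphi\in D(E)$) and prove the slightly more robust statement $\psi f\in D(E)$ with a bound in terms of $E(\varphi f)^{1/2}$, $E(\psi)^{1/2}$ and $E(\varphi)^{1/2}$; the mere membership $\psi f\in D(E)$, which is all part~(b) asserts, then follows from finiteness of this combination. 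Alternatively, one can apply part~(a) with the truncation-type contractions to reduce to the situation where the quotient $\psi/\varphi$ is a bona fide bounded function, but the comparison-inequality route is cleaner and stays within the algebraic framework.
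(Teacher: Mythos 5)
Your proposal is correct, and part (b) takes a genuinely different route from the paper. For part (a), your argument and the paper's are the same in substance: the paper packages your pointwise estimates into a Lipschitz bound for the auxiliary function $\ow{C}(x,y) = C(x/y)\,y$ on the set $A = \{(x,y) \mid |x| \leq |y|\,\|f\|_\infty\}$ and then applies Theorem~\ref{theorem:cutoff properties energy forms}~(iv) via the identity $\varphi\, C(f) = \ow{C}(\varphi f, \varphi)$; evaluating the paper's Lipschitz inequality at $(\varphi(x)f(x),\varphi(x))$ and $(\varphi(y)f(y),\varphi(y))$ reproduces exactly your add-and-subtract computation, so the only difference is presentation (and, as you implicitly do, one rescales the comparison functions by $L$ and $M+L\|f\|_\infty$ and uses homogeneity of $E$ to get the stated coefficients). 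For part (b), however, the paper does \emph{not} argue via pointwise inequalities for $\psi f$ directly: it introduces the $\varepsilon$-regularized product $C_\varepsilon(x,y,z) = xy/(z+\varepsilon)$ on the convex region $B = \{z\geq 0,\ |x|\leq z\|f\|_\infty,\ |y|\leq z\}$, bounds its partial derivatives to get a Lipschitz estimate, applies Theorem~\ref{theorem:cutoff properties energy forms}~(iv) to $C_\varepsilon(\varphi f,\psi,\varphi) = \psi\,\tfrac{\varphi}{\varphi+\varepsilon}f$ uniformly in $\varepsilon$, and then passes to the limit $\varepsilon \to 0+$ using lower semicontinuity of $E$. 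Your telescoping estimate
\begin{align*}
|(\psi f)(x)-(\psi f)(y)| \leq |(\varphi f)(x)-(\varphi f)(y)| + \|f\|_\infty|\psi(x)-\psi(y)| + \|f\|_\infty|\varphi(x)-\varphi(y)|,
\end{align*}
obtained from $\psi(x)|f(x)-f(y)| \leq \varphi(x)|f(x)-f(y)|$ and then absorbing the $\varphi$-oscillation with coefficient $\|f\|_\infty$, is valid pointwise for suitable representatives, and together with the trivial size bound $|\psi f| \leq |\varphi f|$ it lets you invoke Theorem~\ref{theorem:cutoff properties energy forms}~(iv) once, with comparison functions $\varphi f$, $\|f\|_\infty\psi$ and $\|f\|_\infty\varphi$ (your decision to include $\varphi$ among the comparison functions is exactly the right fix, and is legitimate since $\varphi \in D(E)$ by hypothesis). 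This yields the same quantitative bound $E(\psi f)^{1/2} \leq E(\varphi f)^{1/2} + \|f\|_\infty E(\psi)^{1/2} + \|f\|_\infty E(\varphi)^{1/2}$ as the paper's regularized route, but avoids both the limiting procedure and lower semicontinuity, so your argument is strictly more elementary; what the paper's approach buys is the regularization pattern $x\mapsto x/(x+\varepsilon)$ itself, which recurs where it is genuinely unavoidable (e.g.\ in the proof of Theorem~\ref{theorem:Ideal properties of energy forms}, (ii) $\Rightarrow$ (i)). Two small blemishes in your write-up, neither fatal: the intermediate step ``$\|f\|_\infty|\psi(x)| \leq \|f\|_\infty|(\varphi f)(x)| + \cdots$'' in your size estimate is false as written (it is also unnecessary, since $\|f\|_\infty\psi$ is itself a comparison function), and your closing alternative via part (a) and truncations of $\psi/\varphi$ would not obviously work — but you correctly discard it in favor of the comparison-inequality route.
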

\begin{proof}
 (a): Let $A :=\{(x,y) \in \IR^2 \mid |x| \leq |y| \cdot \|f\|_\infty\}$ and consider the function 
$$\ow{C}: A \to \IR, \quad (x,y) \mapsto \ow{C}(x,y) := \begin{cases}C\left({x}/{y}\right)y &\text{if } y \neq 0\\0 & \text{if }  y = 0 \end{cases}.$$
We show that $\ow{C}$ is Lipschitz with appropriate constants. The statement  then follows from Theorem~\ref{theorem:cutoff properties energy forms} and the identity $\varphi\, C(f) = \ow{C}(\varphi f, \varphi)$. For $(x_1,y_1),(x_2,y_2) \in A$ with $y_1,y_2 \neq 0$, we have
\begin{align*}
 |\ow{C}(x_1,y_1) - \ow{C}(x_2,y_2)| &\leq |y_1| |C(x_1/y_1) - C(x_2/y_2)| + |C(x_2/y_2)| |y_1 - y_2|.
\end{align*}
Since $|x_i|\leq |y_i| \|f\|_\infty$, $i=1,2$, we obtain $|C(x_2/y_2)| \leq M$ and 
$$|C(x_1/y_1) - C(x_2/y_2)| \leq L \left|\frac{x_1y_2 - x_2 y_1}{y_1y_2}\right| \leq L \left|\frac{x_1  - x_2}{y_1}\right| + L \|f\|_\infty \left|\frac{y_1 - y_2}{y_1}\right|. $$
Altogether, these considerations amount to 
$$|\ow{C}(x_1,y_1) - \ow{C}(x_2,y_2)| \leq L|x_1 - x_2| + (M + L \|f\|_\infty) |y_1 - y_2|.$$
By the continuity of $\ow{C}$ on $A$, this inequality extends to the case when $y_2 = x_2 = 0$, in which it reads  
$$|\ow{C}(x_1,y_1)| \leq L|x_1| + (M + L \|f\|_\infty) |y_1|.$$
From Theorem~\ref{theorem:cutoff properties energy forms} we infer
$$E(\varphi\, C(f))^{1/2} = E(\ow{C}(\varphi f,\varphi))^{1/2} \leq L E(\varphi f)^{1/2} + (M + L\|f\|_\infty) E(\varphi)^{1/2}. $$
This finishes the proof of assertion (a).

(b):  We let $B:=\{(x,y,z) \in  \IR^3  \mid z \geq 0, |x| \leq z \cdot \|f\|_\infty \text{ and } |y| \leq z\}$. For $\varepsilon> 0$, we consider the function
$$C_\varepsilon:B \to \IR,\quad (x,y,z) \mapsto C_\varepsilon(x,y,z):= xy/(z + \varepsilon).$$
From the inequality $0 \leq \psi \leq \varphi$ we obtain
$$C_\varepsilon(\varphi f, \psi ,\varphi) =  \psi  \frac{\varphi}{\varphi + \varepsilon} f \overset{m}{\longrightarrow}  \psi f,  \text{ as } \varepsilon \to 0+. $$
The lower semicontinuity of $E$ implies
$$E(\psi f) \leq \liminf_{\varepsilon \to 0+} E(C_\varepsilon(\varphi f ,\psi,\varphi)). $$
Thus, it suffices to prove that the right-hand side of the above inequality is finite. 

The partial derivatives of $C_\varepsilon$ satisfy $|\partial_1 C_\varepsilon| \leq 1$, $|\partial_2 C_\varepsilon| \leq \|f\|_\infty$ and $|\partial_3 C_\varepsilon| \leq \|f\|_\infty$ in the interior of $B$. This yields
$$|C_\varepsilon(x_1,y_1,z_1) - C_\varepsilon(x_2,y_2,z_2)| \leq |x_1 - x_2| + \|f\|_\infty |y_1-y_2| + \|f\|_\infty |z_1 - z_2|,$$
 for $(x_i,y_i,z_i), i = 1,2,$ in the interior of $B$. Since  $C_\varepsilon$ is continuous and any point in $B$ can be approximated by interior points, we can argue similarly as in the proof of assertion (a) to obtain 
$$E(C_\varepsilon(\varphi f, \psi,\varphi))^{1/2} \leq E(\varphi f)^{1/2} + \|f\|_\infty E(\psi)^{1/2} + \|f\|_\infty E(\varphi)^{1/2} < \infty. $$
This finishes the proof of assertion (b).
\end{proof}

 In view of Equation~\eqref{equation:idea silverstein1}, the following properties of $\Eph$ are not so surprising. 
\begin{lemma}\label{lemma:properties of eph} Let $\varphi,\psi \in D(E)$ with $0\leq \varphi,\psi \leq 1$. 
 \begin{itemize}
 \item[(a)] $\Eph$ is a nonnegative quadratic form on $L^\infty(m)$. Its domain satisfies
 $$D(\Eph) = \{f \in L^\infty(m) \mid  \varphi f \in D(E)\}.$$
 \item[(b)] For every normal contraction $C:\IR \to \IR$ and every $f \in L^\infty(m)$ the inequality 
 $$\Eph(C\circ f) \leq \Eph(f)$$
  holds. 
  \item[(c)] If $\psi \leq \varphi$, then for all $f \in L^\infty(m)$ we have $\widehat{E}_{\psi}(f) \leq \Eph (f)$. 
  \item[(d)] For all $f \in D(E) \cap L^\infty(m)$ the inequality $\Eph (f) \leq E(f)$ holds.
 \end{itemize}
\begin{proof}
(a): Let $f \in L^\infty(m)$. Lemma~\ref{lemma:maximal silverstein extension technical lemma}~(a) applied to the function
$$C:\IR \to \IR,\, x \mapsto x^2 \wedge \|f\|_\infty^2$$
shows that $\varphi  f\in D(E)$ implies $\varphi f^2  \in D(E)$. Therefore, $\Eph$ is a quadratic form with domain $ D(\Eph) = \{f \in L^\infty(m) \mid \varphi f \in D(E)\}$. The nonnegativity of $\Eph$ can be obtained from assertion (c) by letting $\psi = 0$. 

For proving assertions (b), (c) and (d),  we argue similarly as in the proof of Theorem~\ref{theorem:cutoff properties energy forms} and reduce everything to the case when $E$ is a continuous Markovian form on $\Ltf$. Here, the reduction is a bit more complicated since the definition of $\Eph$ involves taking off-diagonal values of $E$, which are only well-defined on $D(E)$. This is why we first prove the following claim.

{\em Claim:} It suffices to show (b), (c) and (d) under the additional assumptions that the restriction of $E$ to $\Ltf$ is continuous and satisfies $D(E) \cap \Ltf = \Ltf$.  

{\em Proof of the claim.} The restriction of $E$ to $\Ltf$ is closed and Markovian. By Proposition~\ref{proposition:Markovian approximation} there exists a net of continuous Markovian forms $(E_i)$  on $\Ltf$  with $D(E_i) = \Ltf$ such that
$$E(f) = \lim_i E_i (f)$$
holds for all $f \in \Ltf$. In particular, for $g_1,g_2 \in D(E) \cap \Ltf$ this implies
$$E(g_1,g_2) = \lim_i E_i(g_1,g_2).$$
Since $f \in D(\Eph)$ yields  $\varphi f , \varphi f^2  \in D(E) \cap \Ltf$,  we obtain 
$$\Eph(f) = \lim_i \widehat{E}_{i,\varphi}(f) \text{ for all }f \in D(\Eph).$$
We assume that (b), (c) and (d) hold true for each of the $(E_i)$ and show that they also hold true for $E$. 

(b): Let $f \in D(\Eph)$ and let $C:\IR \to \IR$ be a normal contraction. From Lemma~\ref{lemma:maximal silverstein extension technical lemma} we infer $ \varphi\, C(f) \in D(E)$, which implies $C(f) \in D(\Eph)$ by (a). We obtain 
$$\Eph(C\circ f) = \lim_i \widehat{E}_{i,\varphi}(C\circ f ) \leq \lim_i \widehat{E}_{i,\varphi}(f) = \Eph(f), $$
where we used the validity of (b) for $E_i$. 

(c): We let $f \in D(\Eph).$ In this case, Lemma~\ref{lemma:maximal silverstein extension technical lemma} yields $ f\psi \in D(E)$ and  (a) implies $f \in D(\widehat{E}_\psi)$. We obtain
$$\widehat{E}_\psi(f) = \lim_i \widehat{E}_{i,\psi}(f) \leq \lim_i \widehat{E}_{i,\varphi}(f) = \Eph(f), $$
where we used the validity of (c) for $E_i$.

(d): Since $D(E)\cap L^\infty(m)$ is an algebra, the inclusion $D(E)\cap L^\infty(m) \subseteq D(\Eph)$ holds. Thus, for $f \in D(E) \cap L^\infty(m)$ we obtain 
$$\Eph(f) = \lim_i \widehat{E}_{i,\varphi}(f) \leq \lim_i E_i(f) = E(f),$$
where  we used the validity of (d) for $E_i$. This finishes the proof of the claim. \qedc
%

 According to the previous claim, we can assume that the restriction of $E$ to $\Ltf$ is continuous and satisfies $D(E) \cap \Ltf = \Ltf$. Therefore, it suffices to prove the assertions for simple functions.  Let $f$ be of the form
 $$f = \sum_{i=1}^n \alpha_i 1_{A_i}$$
 with pairwise disjoint measurable sets $A_i$. We  obtain
 $$\Eph(f) = \sum_{i,j = 1}^n b^\varphi_{ij} (\alpha_i - \alpha_j)^2 + \sum_{i = 1}^n c^\varphi_{i} \alpha_i^2$$
 with 
 $$b^\varphi_{ij} = -\Eph (1_{A_i},1_{A_j}) =   - E(\varphi 1_{A_i} , \varphi 1_{A_j}) \text{ and } c^\varphi_i = \Eph(1_{A_i}, 1_{\cup_j A_j}) = E(\varphi 1_{A_i},\varphi (1_{\cup_j A_j} - 1)), $$
  and 
 $$E(f) = \sum_{i,j = 1}^n b_{ij} (\alpha_i - \alpha_j)^2 + \sum_{i = 1}^n c_{i} \alpha_i^2$$
 with 
 $$b_{ij} = -E(1_{A_i},1_{A_j})  \text{ and } c_i = E(1_{A_i}, 1_{\cup_j A_j}).$$
 By Remark~\ref{remark:cutoff remark} we can apply Lemma~\ref{lemma:cutoff for functions with disjoint support} to continuous Markovian forms on $\Ltf$. It implies that whenever $0\leq \psi \leq \varphi$ the inequalities
 $$0 \leq b_{ij}^\psi  \leq b_{ij}^\varphi \leq b_{ij} \text{ and } 0 \leq c_i^\psi \leq c_i^\varphi$$
 hold. They show (c). Furthermore, (b)  is a consequence of the nonnegativity of $b_{ij}^\varphi$ and $c_j^\varphi$, cf. proof of Theorem~\ref{theorem:cutoff properties energy forms}. It remains to prove the inequality $c_i^\varphi  \leq c_i$ to obtain (d). We compute
 \begin{align*}
  c_i - c_i^\varphi &= E(1_{A_i},1_{\cup_j A_j}) +  E(\varphi 1_{A_i}, \varphi (1-1_{\cup_j A_j})) \\
  &= E((1-\varphi)1_{A_i},1_{\cup_j A_j}) +  E(\varphi 1_{A_i}, 1_{\cup_j A_j} + \varphi(1-1_{\cup_j A_j})). 
 \end{align*}
 Lemma~\ref{lemma:cutoff for functions beeing one on support} implies that right-hand side of the above equation is positive. This finishes the proof. 
\end{proof}
\end{lemma}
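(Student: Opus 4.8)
The statement to prove, Lemma~\ref{lemma:properties of eph}, asserts four properties of the auxiliary form $\Eph$: that it is a nonnegative quadratic form with the stated domain (a), a one-dimensional contraction property (b), monotonicity in $\varphi$ (c), and domination by $E$ on bounded functions (d). My plan is to structure the proof around two pillars: the technical Lemma~\ref{lemma:maximal silverstein extension technical lemma} (which lets me freely apply Lipschitz functions and shrink the cutoff $\varphi$ while staying in $D(E)$), and a reduction to continuous Markovian forms on $\Ltf$ via the approximation machinery of Proposition~\ref{proposition:Markovian approximation}.

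For part (a), I would first apply Lemma~\ref{lemma:maximal silverstein extension technical lemma}(a) with the Lipschitz function $x \mapsto x^2 \wedge \|f\|_\infty^2$ to deduce that $\varphi f \in D(E)$ forces $\varphi f^2 \in D(E)$; this shows that the ``else'' branch in the definition of $\Eph$ is never triggered for $f$ with $\varphi f \in D(E)$, pinning down $D(\Eph) = \{f \in L^\infty(m) \mid \varphi f \in D(E)\}$. That $\Eph$ is a genuine quadratic form follows from the bilinearity of $E$ on its domain together with the explicit off-diagonal expression $\Eph(f) = E(\varphi f) - E(\varphi f^2, \varphi)$, and nonnegativity is obtained a posteriori from part (c) by setting $\psi = 0$.

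The core of the argument is the reduction claim: it suffices to prove (b), (c), (d) under the extra hypotheses that $E|_{\Ltf}$ is continuous with $D(E)\cap\Ltf=\Ltf$. I would establish this by invoking Proposition~\ref{proposition:Markovian approximation} to write $E(f) = \lim_i E_i(f)$ for a net of continuous Markovian forms $E_i$ on $\Ltf$, then pass this convergence to the off-diagonal values $E(g_1,g_2) = \lim_i E_i(g_1,g_2)$ for $g_1,g_2 \in D(E)\cap\Ltf$ by polarization, yielding $\Eph(f) = \lim_i \widehat{E}_{i,\varphi}(f)$ on $D(\Eph)$; the membership $\varphi f, \varphi f^2 \in D(E)\cap\Ltf$ needed here comes again from Lemma~\ref{lemma:maximal silverstein extension technical lemma}. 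Given the claim, for the continuous forms I reduce to simple functions $f = \sum_i \alpha_i 1_{A_i}$ with pairwise disjoint $A_i$, expand $\Eph(f)$ into the quadratic expression with coefficients $b_{ij}^\varphi = -E(\varphi 1_{A_i}, \varphi 1_{A_j})$ and $c_i^\varphi = E(\varphi 1_{A_i}, \varphi(1_{\cup_j A_j} - 1))$, and compare them against the corresponding coefficients for $E$. Here the elementary inequalities from Lemma~\ref{lemma:cutoff for functions with disjoint support} and Lemma~\ref{lemma:cutoff for functions beeing one on support}, valid for continuous Markovian forms on $\Ltf$ by Remark~\ref{remark:cutoff remark}, give $0 \leq b_{ij}^\psi \leq b_{ij}^\varphi \leq b_{ij}$ and $0 \leq c_i^\psi \leq c_i^\varphi$, which immediately yield (b) (nonnegativity of the coefficients) and (c) (monotonicity in the cutoff).

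The main obstacle, and the only genuinely nontrivial computation, is proving $c_i^\varphi \leq c_i$ to obtain (d); unlike the $b_{ij}$ coefficients there is no simple monotone comparison, because the killing-type coefficient $c_i^\varphi$ mixes the diagonal and off-diagonal structure through the factor $1-\varphi$. The plan is to rewrite the difference as
\begin{align*}
c_i - c_i^\varphi &= E(1_{A_i},1_{\cup_j A_j}) + E(\varphi 1_{A_i}, \varphi(1 - 1_{\cup_j A_j})) \\
&= E((1-\varphi)1_{A_i}, 1_{\cup_j A_j}) + E(\varphi 1_{A_i}, 1_{\cup_j A_j} + \varphi(1 - 1_{\cup_j A_j})),
\end{align*}
and then observe that both terms on the right are nonnegative by Lemma~\ref{lemma:cutoff for functions beeing one on support}: in the first term $(1-\varphi)1_{A_i}$ and $1_{\cup_j A_j}$ satisfy the hypothesis $1_{\{g>0\}} \le f \le 1$ in the appropriate roles, and similarly for the second. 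This delivers $c_i^\varphi \le c_i$, hence $\Eph(f) \le E(f)$ on simple functions, and a density/continuity argument on $\Ltf$ finishes (d). Assembling all four parts completes the proof.
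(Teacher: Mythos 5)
Your proposal is correct and takes essentially the same route as the paper's proof: part (a) via Lemma~\ref{lemma:maximal silverstein extension technical lemma}(a) with $C(x)=x^2\wedge\|f\|_\infty^2$ and nonnegativity from (c) with $\psi=0$, the reduction claim via Proposition~\ref{proposition:Markovian approximation} and polarization of the off-diagonal values, the simple-function expansion with the coefficients $b_{ij}^\varphi$ and $c_i^\varphi$ compared through Lemma~\ref{lemma:cutoff for functions with disjoint support} and Lemma~\ref{lemma:cutoff for functions beeing one on support}, and the identical rewriting of $c_i - c_i^\varphi$ as a sum of two terms nonnegative by the latter lemma. No gaps; the argument matches the paper step for step.
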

\begin{remark}
 \begin{itemize}
  \item The proof of the previous lemma uses the same strategy as the proof of Theorem~\ref{theorem:cutoff properties energy forms}. The main difficulty is that an approximation of $\Eph$ by continuous forms is only valid on its domain. 
  \item The definition of $\Eph$ is similar to the form characterization of energy measures of Dirichlet forms. When $\E$ is a regular Dirichlet form, the energy measure $\Gamma(f)$  of a function  $f \in D(\E) \cap L^\infty(m)$ is uniquely determined by the equation
  $$\int_X \varphi \D \Gamma(f) = 2 \E(f\varphi,f) - \E(f^2,\varphi), \text{ for all } \varphi \in D(\E) \cap C_c(X).$$
  That the latter formula can be applied for constructing the maximal Silverstein extension was observed in \cite{Tak} for elliptic operators and in  \cite{GM} for manifolds.  For general local forms similar considerations can be found in \cite{Rob}. The locality of the given form implies that the energy measure $\Gamma(f)$ and $2 \E(f\varphi,f) - \E(f^2,\varphi)$ can be extended to the local form domain. The advantage of using $\Eph$ rather than the form characterizations of energy measures is that it can be  directly applied to functions outside the form domain.  
 \end{itemize}
\end{remark}

  Equation~\eqref{equation:idea silverstein1} provides an  explicit formula for $\Eph$ when $E = E^0_{J,V}$. In this case, $\Eph$ is a  jump-type form and therefore it is closable, see the discussion in Section~\ref{section:the definition and main examples}.  The following lemma shows that the closability of $\Eph$  is true for general energy forms.
  
  \begin{lemma}\label{lemma:closability of ep} 
    Let $\varphi \in D(E)$ with $0\leq \varphi \leq 1$. The form $\Eph$ is lower semicontinuous with respect to $\tau(m)$, i.e., for any net $(f_i)$ in $L^\infty(m)$ and $f \in L^\infty(m)$ the convergence $f_i \overset{m}{\to} f$ implies
 $$\Eph(f) \leq \liminf_i \Eph(f_i).$$
 In particular, $\Eph$ is closable on $L^0(m)$. Its closure $\Ep$ is a recurrent energy form whose domain satisfies 
 $$D(\Ep) \cap L^\infty(m) = D(\Eph).$$
  \end{lemma}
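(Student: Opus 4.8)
The plan is to prove the lower semicontinuity statement first; once it is in hand, closability, the energy form property, and recurrence follow almost immediately from the general theory of Chapter~\ref{chapter:quadratic forms}, and the domain identity is extracted from the same estimate used for lower semicontinuity. Throughout I would work with the bilinear expression $\Eph(f,g) = E(\varphi f,\varphi g) - E(\varphi fg,\varphi)$ obtained by polarizing the definition, and exploit that $E$ is lower semicontinuous (Theorem~\ref{theorem:characterization closedness}) together with the contraction estimates of Lemma~\ref{lemma:maximal silverstein extension technical lemma} and the weak-convergence principle of Lemma~\ref{lemma:existence of a weakly convergent subnet}.

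The technical core is a boundedness estimate: \emph{if $f\in D(\Eph)$ with $\|f\|_\infty\le M$, then $E(\varphi f)$ is bounded by a quantity depending only on $M$, $\Eph(f)$ and $E(\varphi)$.} To see this I would apply Lemma~\ref{lemma:maximal silverstein extension technical lemma}(a) to the $2M$-Lipschitz function $x\mapsto((x\wedge M)\vee(-M))^2$, which equals $f^2$ on the range of $f$, to obtain $E(\varphi f^2)^{1/2}\le 2M E(\varphi f)^{1/2}+3M^2E(\varphi)^{1/2}$; Cauchy--Schwarz on the off-diagonal term then yields $E(\varphi f)=\Eph(f)+E(\varphi f^2,\varphi)\le \Eph(f)+2ME(\varphi)^{1/2}E(\varphi f)^{1/2}+3M^2E(\varphi)$, a quadratic inequality in $E(\varphi f)^{1/2}$ that bounds it as claimed. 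Armed with this, I would prove lower semicontinuity first for uniformly bounded nets: given $\|f_i\|_\infty\le M$ with $f_i\overset{m}{\to}f$, I pass to a subnet realizing $\liminf_i\Eph(f_i)=:L<\infty$; the estimate makes $(\varphi f_i)$ and $(\varphi f_i^2)$ $E$-bounded, so by lower semicontinuity of $E$ their $\tau(m)$-limits $\varphi f,\varphi f^2$ lie in $D(E)$, and Lemma~\ref{lemma:existence of a weakly convergent subnet} gives their $E$-weak convergence, whence $E(\varphi f_i^2,\varphi)\to E(\varphi f^2,\varphi)$. Consequently $\lim_i E(\varphi f_i)=L+E(\varphi f^2,\varphi)$ exists, and $E(\varphi f)\le\liminf_i E(\varphi f_i)$ gives $\Eph(f)\le L$.

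To remove the boundedness hypothesis I would use truncation: for $f,f_i\in L^\infty(m)$ with $f_i\overset{m}{\to}f$ and $M:=\|f\|_\infty$, the normal contraction $C_M(x)=(x\wedge M)\vee(-M)$ satisfies $C_M\circ f_i\overset{m}{\to}C_M\circ f=f$ and, by Lemma~\ref{lemma:properties of eph}(b), $\Eph(C_M\circ f_i)\le\Eph(f_i)$; the bounded case then yields $\Eph(f)=\Eph(C_M\circ f)\le\liminf_i\Eph(C_M\circ f_i)\le\liminf_i\Eph(f_i)$. Since $D(\Eph)\subseteq L^\infty(m)$, this is exactly condition (iii) of Proposition~\ref{prop:characterization closability}, so $\Eph$ (extended by $\infty$ outside $L^\infty(m)$) is closable on the complete space $L^0(m)$. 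As $\Eph$ is Markovian by Lemma~\ref{lemma:properties of eph}(b), Theorem~\ref{theorem:closure is markovian} shows its closure $\Ep$ is an energy form, and recurrence follows from the direct computation $\Eph(1)=E(\varphi)-E(\varphi,\varphi)=0$, which gives $1\in\ker\Ep$.

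For the domain identity the inclusion $D(\Eph)\subseteq D(\Ep)\cap L^\infty(m)$ is immediate. For the reverse, given $f\in D(\Ep)\cap L^\infty(m)$, Proposition~\ref{prop:characterization closability} provides an $\Eph$-Cauchy net $(f_i)$ in $D(\Eph)$ with $f_i\overset{m}{\to}f$; truncating at $M=\|f\|_\infty$ yields a net $(C_M\circ f_i)$ that still $\tau(m)$-converges to $f$, is uniformly bounded by $M$, and has $\sup_i\Eph(C_M\circ f_i)<\infty$. The boundedness estimate then makes $(\varphi\, C_M\circ f_i)$ $E$-bounded with $\tau(m)$-limit $\varphi f$, so lower semicontinuity of $E$ gives $\varphi f\in D(E)$, i.e. $f\in D(\Eph)$ by Lemma~\ref{lemma:properties of eph}(a). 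I expect the main obstacle to be the uniformly bounded case of lower semicontinuity: the off-diagonal term $E(\varphi f^2,\varphi)$ is only continuous along weakly convergent nets, so one must simultaneously control $\varphi f_i$ and $\varphi f_i^2$ in $D(E)$ — which is precisely what the quadratic boundedness estimate delivers.
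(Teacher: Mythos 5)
Your proposal is correct and follows essentially the same route as the paper's proof: the same quadratic boundedness estimate obtained from Lemma~\ref{lemma:maximal silverstein extension technical lemma}(a) applied to a truncated square function, the same reduction to uniformly bounded nets via normal contractions and Lemma~\ref{lemma:properties of eph}(b), the same use of Lemma~\ref{lemma:existence of a weakly convergent subnet} to pass to the limit in the off-diagonal term $E(\varphi f_i^2,\varphi)$, and the same deductions of closability, the energy form property, and recurrence via $\Eph(1)=E(\varphi)-E(\varphi,\varphi)=0$. The only (cosmetic) difference is in the domain identity, where you rerun the boundedness estimate; the paper instead observes that $D(\Ep)\cap L^\infty(m)\subseteq D(\Eph)$ follows directly from the lower semicontinuity just established, since any $f\in D(\Ep)\cap L^\infty(m)$ is the $\tau(m)$-limit of an $\Eph$-bounded net in $D(\Eph)$.
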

  \begin{proof}
   Let $(f_i)$ be a net in $L^\infty(m)$ and  let $f \in L^\infty(m)$ with $f_i \overset{m}{\to} f$. We show
 $$\Eph(f)\leq \liminf_i \Eph(f_i). $$
 Without loss of generality, we assume $f_i \in D(\Eph)$ for all $i$ and
 $$\lim_i \Eph(f_i) = \liminf_i \Eph(f_i) < \infty. $$
This allows us to also assume that $(\Eph(f_i))$ is bounded. Furthermore, by Lemma~\ref{lemma:properties of eph}~(b) it suffices to consider the case when $\|f_i\|_\infty \leq \|f\|_\infty$, as otherwise we could replace $f_i$ by $(f_i \wedge \|f\|_\infty)\vee(-\|f\|_\infty).$ 
 
 Lemma~\ref{lemma:maximal silverstein extension technical lemma}~(a) applied to the function $C:\IR \to \IR, \, C(x) = x^2 \wedge \|f\|_\infty ^2$ yields
 $$E(\varphi f_i^2)^{1/2} = E(\varphi\, C(f_i))^{1/2} \leq 2 \|f\|_\infty E(\varphi f_i )^{1/2} + 3 \|f\|_\infty ^2 E(\varphi)^{1/2}.$$
 From this inequality, we infer
 \begin{align*} 
  \Eph(f_i) &= E(\varphi f_i) - E(\varphi f_i^2,\varphi) \\
  &\geq E(\varphi f_i) - E(\varphi f_i^2)^{1/2}E(\varphi)^{1/2}\\
  &\geq E(\varphi f_i)^{1/2} \left(E(\varphi f_i)^{1/2}  - 2\|f\|_\infty E(\varphi)^{1/2} \right) - 3\|f\|^2_\infty E(\varphi).
 \end{align*}
Therefore, the boundedness of $(\Eph(f_i))$ yields the boundedness of $(E(\varphi f_i))$ and $(E(\varphi f_i^2))$. Together with the lower semicontinuity of $E$ this implies $\varphi f, \varphi f^2 \in D(E)$, i.e., $f \in D(\Eph)$. From the boundedness of $(E(\varphi f_i^2))$ and the convergence $\varphi f_i^2 \overset{m}{\to}\varphi f^2$, we obtain the $E$-weak convergence $\varphi f_i^2   \to \varphi f^2$, see Lemma~\ref{lemma:existence of a weakly convergent subnet}. Altogether, these observations amount to 
$$\Eph(f) = E(\varphi f) - E(\varphi f^2,\varphi) \leq \liminf_i E(\varphi f_i) - \lim_i E(\varphi f_i^2,\varphi) = \liminf_i \Eph(f_i).$$
This shows lower semicontinuity.

For the 'in particular' part, we note that the lower semicontinuity of $\Eph$ and Proposition~\ref{prop:characterization closability}  yield the closability of $\Eph$. Since $\Eph$ is Markovian, its closure $\Ep$ is an energy form, see Proposition~\ref{theorem:closure is markovian}. It is recurrent, as $\Ep(1) = \Eph(1) = E(\varphi) - E(\varphi) = 0$.  The statement on the domain follows directly from the lower semicontinuity of $\Eph$.
  \end{proof}

\begin{definition}
 Let $\varphi \in D(E)$ with $0\leq \varphi \leq 1$. The closure of $\Eph$ is denoted by $\Ep$.
\end{definition}

\begin{remark}
The previous lemma contains a stronger statement than the closability of $\Eph$ on $L^0(m)$, cf. the assumption of Proposition~\ref{prop:characterization closability}. It states that $\Eph$ is a lower semicontinuous form on the (incomplete)  topological vector space $(L^\infty(m),\tau(m))$. 
\end{remark}

The next lemma shows that the properties of $\Eph$ pass to its closure $\Ep$.

\begin{lemma}\label{lemma:properties of ep} Let $\varphi, \psi \in D(E)$ with $0\leq \varphi,\psi \leq 1$. 
\begin{itemize}
 \item[(a)] If $\psi \leq \varphi$, then $E_{\psi}(f) \leq \Ep  (f)$ for all $f \in L^0(m)$. 
  \item[(b)] For all $f \in D(E) $ the inequality $\Ep (f) \leq E(f)$ holds.
  \item[(c)] For $f,g \in L^0(m)$ the identity $\varphi f  =  \varphi g$ implies $\Ep(f) = \Ep(g)$.
\end{itemize}

\end{lemma}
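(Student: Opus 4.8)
The plan is to lift each of the three properties of the auxiliary form $\Eph$ from Lemma~\ref{lemma:properties of eph} to its closure $\Ep$ by combining lower semicontinuity with the explicit description of the closure in Proposition~\ref{prop:characterization closability} and the density of bounded truncations from Proposition~\ref{proposition:approximation by bounded functions}. Throughout I would use that $\Ep$ and $E_\psi$ are energy forms by Lemma~\ref{lemma:closability of ep}, hence lower semicontinuous by Theorem~\ref{theorem:characterization closedness}, and that a closure agrees with the original form on the smaller domain $D(\Eph) = D(\Ep)\cap L^\infty(m)$.

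For (a) I would argue by lower semicontinuity of $E_\psi$. Assuming $\Ep(f) < \infty$ (otherwise nothing is to show), Proposition~\ref{prop:characterization closability} yields an $\Eph$-Cauchy net $(f_i)$ in $D(\Eph)$ with $f_i \overset{m}{\to} f$ and $\Eph(f_i) \to \Ep(f)$. Since $\psi \leq \varphi$, Lemma~\ref{lemma:properties of eph}(c) gives $\widehat{E}_\psi(f_i) \leq \Eph(f_i) < \infty$, so $f_i \in D(\widehat{E}_\psi)$ and $E_\psi(f_i) = \widehat{E}_\psi(f_i) \leq \Eph(f_i)$ because $E_\psi$ extends $\widehat{E}_\psi$. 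Lower semicontinuity of $E_\psi$ together with $f_i \overset{m}{\to} f$ then gives $E_\psi(f) \leq \liminf_i E_\psi(f_i) \leq \lim_i \Eph(f_i) = \Ep(f)$. For (b) the bounded case is immediate, since $\Ep(f) = \Eph(f) \leq E(f)$ for $f \in D(E)\cap L^\infty(m)$ by Lemma~\ref{lemma:properties of eph}(d). For general $f \in D(E)$ I would apply Proposition~\ref{proposition:approximation by bounded functions}(a) to $E$ to obtain truncations $f_n := (f \wedge n)\vee(-n)$ converging to $f$ in the $E$-form topology, so $f_n \overset{m}{\to} f$ and $E(f_n) \to E(f)$; the bounded case and lower semicontinuity of $\Ep$ then give $\Ep(f) \leq \liminf_n \Ep(f_n) \leq \lim_n E(f_n) = E(f)$.

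For (c) the key elementary observation is that $\varphi f = \varphi g$ forces $\varphi f^2 = \varphi g^2$: on $\{\varphi > 0\}$ one has $f = g$ hence $f^2 = g^2$, while on $\{\varphi = 0\}$ both products vanish. Consequently, for bounded $f,g$ with $\varphi f = \varphi g$ the defining expression $\Eph(\cdot) = E(\varphi\,\cdot) - E(\varphi (\cdot)^2,\varphi)$ depends only on $\varphi f$ and $\varphi f^2$, so $f \in D(\Eph) \Leftrightarrow g \in D(\Eph)$ and $\Eph(f) = \Eph(g)$. To pass to general $f,g \in L^0(m)$ with $\varphi f = \varphi g$, I would assume $\Ep(f) < \infty$ and set $f_n, g_n$ to be the $n$-truncations, noting $\varphi f_n = \varphi g_n$ since $f = g$ on $\{\varphi>0\}$ and truncation acts pointwise. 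Proposition~\ref{proposition:approximation by bounded functions}(a) applied to $\Ep$ shows $f_n \to f$ in the $\Ep$-form topology, so $(f_n)$ is $\Eph$-Cauchy with $\Ep(f_n) \to \Ep(f)$; applying the bounded case to the differences, for which $\varphi(g_n - g_m) = \varphi(f_n - f_m)$, transfers the Cauchy property via $\Eph(g_n - g_m) = \Eph(f_n - f_m) \to 0$. As $g_n \overset{m}{\to} g$, Proposition~\ref{prop:characterization closability} gives $g \in D(\Ep)$ with $\Ep(g) = \lim_n \Eph(g_n) = \lim_n \Ep(f_n) = \Ep(f)$, and symmetry settles the remaining case.

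The main obstacle I anticipate is part (c). While the equality $\Eph(f) = \Eph(g)$ on bounded functions is purely algebraic, extending it to unbounded functions is genuinely a two-sided statement: one must transport the $\Eph$-Cauchyness of the truncations of $f$ to those of $g$ and then \emph{identify} the limit through the closure characterization, rather than settling for the one-sided estimates that lower semicontinuity alone provides in (a) and (b). The care needed in matching domains and differences through the relation $\varphi(f_n - f_m) = \varphi(g_n - g_m)$ is where the argument is most delicate.
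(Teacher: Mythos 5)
Your proof is correct and takes essentially the same route as the paper: assertions (a) and (b) follow by approximating through $D(\Eph)$ and combining Lemma~\ref{lemma:properties of eph} with lower semicontinuity, and (c) is proven by truncating, using that $\Eph$ depends only on $\varphi f$ and $\varphi f^2$, and identifying the limit via the closure. Your careful transfer of $\Eph$-Cauchyness through the identity $\varphi(f_n - f_m) = \varphi(g_n - g_m)$ is precisely what the paper's brisk chain of equalities $\Ep(g) = \lim_n \Eph(g^{(n)}) = \lim_n \Eph(f^{(n)}) = \Ep(f)$ compresses.
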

\begin{proof} Assertions (a) and (b) are immediate consequences of the denseness of $D(\Eph)$ in $D(\Ep)$ and Lemma~\ref{lemma:properties of eph}. 

(c): Without loss of generality we can assume $f \in D(\Ep)$. For $n \in \IN$, Lemma~\ref{lemma:closability of ep} implies $f^{(n)}:= (f \wedge n) \vee (-n) \in D(\Eph)$. In particular, we have  $ \varphi f^{(n)} \in D(E)$. The  assumption $\varphi g =  \varphi f$ yields $\varphi g^{(n)} =  \varphi  f^{(n)}  \in D(E)$, and so  $g^{(n)} \in D(\Eph) = D(\Ep) \cap L^\infty(m)$ by Lemma~\ref{lemma:properties of eph}. Since $\Eph$ only depends on the values of the function on the support of $\varphi$,  we conclude
$$\Ep(g) = \lim_{n \to \infty}\Ep(g^{(n)}) = \lim_{n \to \infty} \Eph(g^{(n)}) = \lim_{n \to \infty}\Eph(f^{(n)}) = \lim_{n \to \infty}\Ep(f^{(n)}) = \Ep(f),$$
where we used Proposition~\ref{proposition:approximation by bounded functions} for the first and the last equality.
\end{proof}
With these preparations we introduce the part of $E$ that corresponds to the $J$-integral in Example~\ref{example:maximal Silverstein extension}. 

\begin{definition}[Main part] \label{definition:main part}
 The functional $\Egm: L^0(m) \to [0,\infty]$ defined by 
$$\Egm(f):= \sup \left\{\Ep(f) \mid \varphi \in D(E) \text{ with } 0\leq \varphi \leq 1\right\}$$
is called the {\em main part of $E$}.
\end{definition}

\begin{lemma}
 $\Eg$ is a recurrent energy form. 
\end{lemma}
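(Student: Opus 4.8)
The plan is to realize $\Egm$ as the limit of a monotone increasing, directed net of recurrent energy forms and to transfer each defining property through this net. First I would fix the index set $\Phi := \{\varphi \in D(E) \mid 0 \leq \varphi \leq 1\}$, ordered by $\psi \prec \varphi$ if and only if $\psi \leq \varphi$ $m$-a.e. Since $D(E)$ is a lattice (Theorem~\ref{theorem:algebraic and order properties}), for $\varphi_1,\varphi_2 \in \Phi$ the function $\varphi_1 \vee \varphi_2$ again lies in $\Phi$ and dominates both, so $(\Phi,\prec)$ is upwards directed. By Lemma~\ref{lemma:properties of ep}~(a) the net $(\Ep(f))_{\varphi \in \Phi}$ is monotone increasing for each fixed $f \in L^0(m)$, whence
$$\Egm(f) = \sup_{\varphi \in \Phi} \Ep(f) = \lim_{\varphi \in \Phi} \Ep(f).$$

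Next I would check that $\Egm$ is a quadratic form. Homogeneity is immediate from the homogeneity of each $\Ep$. For the parallelogram identity I would pass to the limit in $2\Ep(u) + 2\Ep(v) = \Ep(u+v) + \Ep(u-v)$: because the four nets involved are monotone increasing in $[0,\infty]$ and $\Phi$ is directed, the limit of a finite sum equals the sum of the limits, so the identity survives in the limit and yields $2\Egm(u) + 2\Egm(v) = \Egm(u+v) + \Egm(u-v)$. Ensuring that the supremum genuinely behaves like a limit that commutes with finite sums is where the directed/monotone structure is essential; a supremum of arbitrary quadratic forms need not be a quadratic form, so this is the main point to get right.

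The remaining properties then follow by taking suprema. Each $\Ep$ is a recurrent energy form (Lemma~\ref{lemma:closability of ep}), hence lower semicontinuous (Theorem~\ref{theorem:characterization closedness}) and, by Theorem~\ref{theorem:cutoff properties energy forms}, satisfies $\Ep(C \circ f) \leq \Ep(f)$ for every normal contraction $C$ and all $f \in L^0(m)$. Taking the supremum over $\varphi$ gives $\Egm(C \circ f) \leq \Egm(f)$; in particular every $\varepsilon$-cutoff witnesses the Markov property, so $\Egm$ is Markovian. For closedness I would use that $\Egm$ is a pointwise supremum of the lower semicontinuous functionals $\Ep$: its epigraph is the intersection $\bigcap_{\varphi \in \Phi} {\rm epi}\,\Ep$ of closed sets and hence closed, so $\Egm$ is lower semicontinuous by Lemma~\ref{lemma:lower semicontinuity via epigraph}. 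As $m$ is localizable, $(L^0(m),\tau(m))$ is complete (Theorem~\ref{theorem:properties of L0}), so Proposition~\ref{prop:lower semincontinuity implies closedness} upgrades lower semicontinuity to closedness. Thus $\Egm$ is a closed Markovian quadratic form, i.e.\ an energy form.

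Finally, recurrence is read off directly: each $\Ep$ is recurrent, so $\Ep(1) = 0$ for all $\varphi \in \Phi$, and therefore $\Egm(1) = \sup_{\varphi \in \Phi} \Ep(1) = 0$, that is, $1 \in \ker \Egm$. Combining the previous paragraphs, $\Egm$ is a recurrent energy form, as claimed.
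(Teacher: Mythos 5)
Your proof is correct and follows essentially the same route as the paper, which likewise deduces the quadratic form property from the (monotone, directed) supremum structure, reads off the Markov property and recurrence from Lemma~\ref{lemma:closability of ep}, and obtains closedness from the closedness of the $\Ep$. Your only deviations are cosmetic: you verify the parallelogram identity in the monotone limit where the paper cites Lemma~\ref{lemma:inequality characterization of quadratic forms}, and you prove lower semicontinuity via the epigraph intersection where the paper invokes Lemma~\ref{lemma:monotone nets} --- both pairs of arguments encode the same facts.
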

\begin{proof}
 It follows from its definition and Lemma~\ref{lemma:inequality characterization of quadratic forms} that $\Eg$ is a quadratic form. Its Markov property  and the recurrence can be inferred from Lemma~\ref{lemma:closability of ep}. The closedness of $\Eg$ is a consequence of the closedness of the $\Ep$ and Lemma~\ref{lemma:monotone nets}.
\end{proof}
Equation~\eqref{equation:idea silverstein1} shows that in general $\Eg$ is not an extension of the given form $E$. In order to obtain an extension, we need to compensate the error by using Equation~\eqref{equation:idea silverstein2}. We define  
$$\Ek: D(E) \to [0,\infty),\quad f \mapsto \Ek(f) := E(f) - \Eg(f).$$
It is a perturbation of $\Eg$ which is rather well behaved; namely, it is monotone with respect to the absolute value of a given function. This is discussed in the next lemma. 
\begin{lemma} \label{lemma:properties of ek}
Let $f,g \in D(E)$.
\begin{itemize}
 \item[(a)] $\Ek$ is a nonnegative quadratic form on $D(E)$. 
 \item[(b)] $|f| \leq |g|$ implies $\Ek(f) \leq \Ek(g)$.
 \item[(c)] $fg\geq  0$ implies $\Ek(f,g) \geq 0.$
 \item[(d)] If $h,l \in D(E)$ satisfy $fg = hl$, then $\Ek(f,g) = \Ek(h,l)$.
\end{itemize}
\end{lemma}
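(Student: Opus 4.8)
The four assertions concern the functional $\Ek = E - \Eg$ defined on $D(E)$. The plan is to prove them by reducing everything to the properties of the forms $\Ep$ that were established in Lemma~\ref{lemma:properties of ep}, together with the polarization formalism for quadratic forms. Assertion (a) is almost immediate: $\Ek = E - \Eg$ is a difference of quadratic forms with the same domain $D(E)$ (since by Lemma~\ref{lemma:properties of ep}(b) the form $\Eg$ is dominated by $E$ on $D(E)$, hence $D(E) \subseteq D(\Eg)$, and the reverse containment is how $\Ek$ is set up), so it is a quadratic form; its nonnegativity is exactly the inequality $\Eg(f) \leq E(f)$ from Lemma~\ref{lemma:properties of ep}(b). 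The interesting content is in (b), (c) and (d), which express that the ``killing'' perturbation $\Ek$ depends only on the product of its arguments and is monotone in the absolute value.

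\textbf{Proof of (b), the monotonicity.} I would first establish (b) for bounded functions and then pass to the limit. The key identity is Equation~\eqref{equation:idea silverstein1}: for $f \in D(E) \cap L^\infty(m)$ and $\varphi \in D(E)$ with $0 \leq \varphi \leq 1$,
$$\Eph(f) = E(\varphi f) - E(\varphi f^2, \varphi),$$
and taking the supremum over $\varphi$ gives $\Eg$ (restricted appropriately). The plan is to show that $\Ek(f)$ admits a representation as a limit of expressions of the shape $E(f^2 \cdot (\text{something involving } \varphi))$ that manifestly depend only on $f^2 = |f|^2$. Concretely, rearranging Equation~\eqref{equation:idea silverstein2}, the ``$V$-part'' $\Ek(f)$ should equal something like $\lim_{\varphi} \big(E(\varphi^2 f^2, 1) - \text{correction}\big)$; I expect that once the correct bounded-function formula is isolated, it visibly factors through $f^2$, which immediately yields both (b) and the diagonal case of (d). To handle unbounded $f$, I would use the density of bounded functions (Proposition~\ref{proposition:approximation by bounded functions}) together with lower semicontinuity: if $|f| \leq |g|$, truncate both and use that $\Eg$ and $E$ are each lower semicontinuous to pass the inequality $\Ek(f^{(n)}) \leq \Ek(g^{(n)})$ to the limit. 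Here one must be a little careful because $\Ek$ is a \emph{difference} of lower semicontinuous forms, so I would phrase the limiting argument entirely in terms of $E$ and $\Eg$ separately rather than in terms of $\Ek$.

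\textbf{Proof of (c) and (d), via polarization.} Assertion (d) --- that $\Ek(f,g)$ depends only on the product $fg$ --- is the strongest and will drive (c). The natural route is polarization: write
$$\Ek(f,g) = \tfrac14\big(\Ek(f+g) - \Ek(f-g)\big),$$
and observe $(f+g)^2 - (f-g)^2 = 4fg$. If I can show that the diagonal $\Ek(h)$ depends only on $h^2$ --- say $\Ek(h) = \Phi(h^2)$ for some additive functional $\Phi$ on the appropriate cone --- then $\Ek(f,g) = \tfrac14(\Phi((f+g)^2) - \Phi((f-g)^2)) = \tfrac14 \Phi(4fg) = \Phi(fg)$ depends only on $fg$, giving (d). The diagonal dependence $\Ek(h) = \Phi(h^2)$ is precisely what the bounded-function formula from Equation~\eqref{equation:idea silverstein2} should supply, and Lemma~\ref{lemma:properties of ep}(c) (that $\Ep(f)=\Ep(g)$ when $\varphi f = \varphi g$) is the technical engine ensuring this. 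Assertion (c) then follows from (d): if $fg \geq 0$ I would exhibit nonnegative $h, l \in D(E)$ with $hl = fg$ --- for instance $h = l = (fg)^{1/2} \wedge N$ suitably truncated, or more robustly $h = (fg)^{1/2}$ and $l = (fg)^{1/2}$, using that $D(E)$ is a lattice (Theorem~\ref{theorem:algebraic and order properties}) to guarantee $(fg)^{1/2} \in D(E)$ --- and then $\Ek(f,g) = \Ek(h,l) = \Ek((fg)^{1/2}) \geq 0$ by nonnegativity from (a).

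\textbf{Main obstacle.} The hard part will be making the additivity functional $\Phi$ rigorous: one must verify that $\Phi(h^2)$ is genuinely additive in the product and that the truncation/limit arguments commute with taking the supremum over $\varphi$ in the definition of $\Eg$. The supremum over $\varphi$ does not obviously interchange with the polarization identity, because $\Eg$ itself is only a supremum of the $\Ep$ and is not directly given by a single $\varphi$-dependent closed-form expression on unbounded functions. I expect the cleanest path is to prove (d) first for bounded $f,g$ (where Equation~\eqref{equation:idea silverstein1} applies verbatim and the product structure is explicit), and only afterwards remove the boundedness via Lemma~\ref{lemma:properties of ep}(c) and the density and lower semicontinuity results; the monotonicity (b) and sign (c) then drop out as corollaries of the product representation rather than being proved independently.
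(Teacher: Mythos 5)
There is a genuine gap, and it sits exactly where you flagged your ``main obstacle'': your plan derives (b) and (c) from the product representation (d), but the positivity input needed for that derivation is missing, and your explicit fix for (c) is false. The lattice property of $D(E)$ (Theorem~\ref{theorem:algebraic and order properties}) gives minima and maxima, not square roots; $x \mapsto \sqrt{x}$ is not a normal contraction, and $(fg)^{1/2}$ can fail to lie in $D(E)$ even for bounded $f,g \in D(E)$ with $fg \geq 0$. Concretely, for the Dirichlet energy on $(0,1)$ take $f \equiv 1$ and $g(x) = x$: then $fg \geq 0$ but $\sqrt{fg} = \sqrt{x} \notin W^1((0,1))$, since $\int_0^1 (4x)^{-1}\D x = \infty$ (truncating at height $N$ does not help, as the obstruction is at $0$). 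Knowing $\Ek(f) = \Phi(f^2)$ likewise does not yield (b) unless $\Phi$ is positive on nonnegative functions, and the only non-circular source of that positivity is Lemma~\ref{lemma:cutoff for functions beeing one on support}, which your proposal never invokes. The paper's proof runs in the reverse dependency order: it proves (b) \emph{first}, by choosing (for bounded $f,g$ with $|f|\leq|g|$ and a support function $\psi \geq 1_{\{|g|>0\}}$) a $\varphi$ with $\psi \leq \varphi \leq 1$ nearly attaining the suprema defining $\Eg(f)$ and $\Eg(g)$, so that $\varphi f = f$, $\varphi g = g$ and $\Ek(g) - \Ek(f) \geq E(g^2 - f^2, \varphi) - 2\varepsilon$; since $g^2 - f^2 \geq 0$ is supported where $\varphi = 1$, Lemma~\ref{lemma:cutoff for functions beeing one on support} gives $E(g^2-f^2,\varphi) \geq 0$. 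Assertion (c) then falls out of (b) by polarization, because $fg \geq 0$ forces $|f+g| \geq |f-g|$, hence $\Ek(f,g) = \frac{1}{4}\bigl(\Ek(f+g) - \Ek(f-g)\bigr) \geq 0$.

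Two further repairs are needed even for the salvageable parts of your outline. First, approximation: height-truncations $f^{(n)} = (f\wedge n)\vee(-n)$ alone cannot produce the support function $\psi$ that the bounded-case claim requires; one must instead cut near zero with $C_\alpha(x) = (x-\alpha)_+ - (x+\alpha)_-$, for which $\psi_\alpha := (\alpha^{-1}|g|)\wedge 1 \in D(E)$ dominates $1_{\{|C_\alpha \circ g| > 0\}}$, and the limit $\alpha \to 0+$ is passed using that $0 \leq \Ek \leq E$ makes $\Ek$ continuous along form-topology convergence (Proposition~\ref{proposition:approximation by bounded functions}) --- your suggestion to argue ``in terms of $E$ and $\Eg$ separately'' via lower semicontinuity cannot work, since semicontinuity only bounds limits from below and does not control a difference of forms. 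Second, (d) needs neither Equation~\eqref{equation:idea silverstein1} (which is specific to the jump-form example) nor the additive functional $\Phi$ whose construction you left open: under the same support hypothesis one has the bilinear identity $E(f,g) - \Ep(f,g) = E(\varphi f g, \varphi)$, which visibly depends only on the product $fg$, so $fg = hl$ gives $\Ek(f,g) = \Ek(h,l)$ directly after taking the supremum in $\varphi$ and performing the same $C_\alpha$ reduction. Your polarization heuristic correctly identifies \emph{why} (d) should hold, but the bilinear identity is what makes it a proof.
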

\begin{proof}
 (a): This follows from Lemma~\ref{lemma:properties of ep}~(c) and the fact that $E$ and $\Eg$ are quadratic forms. 
 
(c): The inequality  $fg \geq 0$ implies $|f+g|\geq |f-g|$. Hence, assertion (b) yields
$$\Ek(f,g) = \frac{1}{4} \left(\Ek(f+g) - \Ek(f-g)\right) \geq 0.$$

 (b):  $\Ek$ is continuous with respect to $E$. Since bounded functions are dense in $D(E)$ with respect to the form topology, see  Proposition~\ref{proposition:approximation by bounded functions}, we can assume $f,g \in D(E) \cap L^\infty(m)$.
 
 {\em Claim:} The inequality $\Ek(f) \leq \Ek(g)$ holds under the additional assumption that there exists a function $\psi\in D(E)$ with $1_{\{|g| > 0\}} \leq \psi \leq 1$. 
 
 {\em Proof of the claim.}   Let $\varepsilon > 0$. By the definition of $\Eg$ and the monotonicity of $\Ep$ in the parameter $\varphi$, see Lemma~\ref{lemma:properties of ep}~(a), there exists $\varphi \in D(E) \cap L^\infty(m)$ with $ \psi \leq \varphi \leq 1$ and 
 $$|\Eg(g) - \Ep(g)| < \varepsilon \text{ and } |\Eg(f) - \Ep(f)| < \varepsilon .$$
 From the inequalities $ 1 \geq \varphi \geq \psi \geq 1_{\{|g| > 0\}}$ and $|f| \leq |g|$   we infer $f \varphi = f$ and $g \varphi = g$. This yields
 $$\Ek(g) - \Ek(f) \geq E(g) - E(f) - E_\varphi (g) + E_\varphi (f) - 2\varepsilon = E(g^2 - f^2,\varphi) - 2\varepsilon.$$
 Since $\varphi = 1$ on the set $\{g^2 - f^2 > 0\}$, Lemma~\ref{lemma:cutoff for functions beeing one on support} implies $E(g^2 - f^2,\varphi) \geq 0$ and the claim is proven. \qedc
 
 For $\alpha > 0$ we let
 $$ C_\alpha:\IR \to \IR,\quad x \mapsto C_\alpha(x) := (x-\alpha)_+ - (x+\alpha)_-.$$
 Proposition~\ref{proposition:approximation by bounded functions} shows that for any $f \in D(E) \cap L^\infty(m)$ we have $E(C_\alpha \circ f -  f) \to 0$, as $\alpha \to 0+$. Since $\Ek$ is continuous with respect to $E$, the same holds true for convergence with respect to $\Ek$. The inequality $|f| \leq |g|$ implies $|C_\alpha \circ f| \leq |C_\alpha \circ g|$ and the function 
 $$\psi_\alpha:= (\alpha^{-1} |g|) \wedge 1 \in D(E)$$ 
 satisfies $\psi_\alpha \geq 1_{ \{|C_\alpha g| > 0\}}$. Therefore, we can apply the previous claim to the functions $C_\alpha \circ f$ and $C_\alpha \circ g$ to obtain
 $$\Ek(f) = \lim_{\alpha \to 0+} \Ek(C_\alpha \circ f) \leq \lim_{\alpha \to 0+} \Ek(C_\alpha \circ g) = \Ek(g).$$

 (d): As in the proof of (b), we can assume that all the functions are bounded and that there exists a function $\psi \in D(E)$ that is equal to one on their supports. For $\varphi \in D(E)$ with $\psi\leq \varphi \leq 1$, we obtain 
 \begin{align*}
  E(f,g) - \Ep(f,g) &= E(f,g) - E(\varphi f,\varphi g) + E(\varphi f g,\varphi)\\
  &= E(\varphi f g,\varphi)\\
  &= E(\varphi h l,\varphi)\\
  &= E(h,l) - \Ep(h,l).
  \end{align*}
 With this identity at hand, the claim follows from the definition of $\Ek$. This finishes the proof. 
\end{proof}

The previous lemma shows that $\Ek$ behaves like a quadratic form that is induced by a measure. In general, such forms tend to be not closable.  In order to extend it, we use its monotonicity with respect to the absolute value of functions. 
\begin{definition}[Killing part]  \label{definition:killing part}
The functional $\Ekm:L^0(m)\to [0,\infty]$ defined by  
$$\Ekm(f) :=  \begin{cases}
               \sup \left\{\Ek(g)\, \middle | \, g \in D(E) \text{ with } |g| \leq |f|\right\} &\text {if } f \in D(\Egm)\\
               \infty& \text{else}
              \end{cases}$$
  is called the {\em killing part of E}. 
\end{definition}

The next lemma discusses properties of $\Ekm$.

\begin{lemma}\label{lemma:characterization of k}
\begin{itemize}
 \item[(a)] $\Ekm$ is a nonnegative quadratic form on $L^0(m)$, which extends $\Ek$.
 \item[(b)] For $f,g \in D(\Egm)$, the inequality $|f| \leq |g|$ implies $\Ekm(f) \leq \Ekm(g)$. 
 \item[(c)]  If $f,g \in D(\Ekm)$ satisfy $fg \geq 0$, then $\Ekm(f,g) \geq 0$. Furthermore, if $h,l \in D(\Ekm)$ satisfy $fg = hl$, then $\Ekm(f,g) = \Ekm(h,l)$. 
 \item[(d)]   Let $f \in D(\Egm)$ and, for $n\in \IN$,  set $f^{(n)} := (f\wedge n) \vee (-n)$. Then 
$$\Ekm(f) = \sup \left\{\Ek\left(\varphi f^{(n)}\right)\, \middle | \,   \varphi \in D(E) \text{ with } 0\leq \varphi \leq 1,\, n \in \IN \right\}.$$

 \item[(e)] Let $(f_i)$ be a net in $L^0(m)$ and let $f \in  L^0(m)$ such that $f_i \overset{m}{\to} f$. If 
 $$\liminf_i \Egm(f_i) < \infty,$$
 then
 $$\Ekm(f) \leq \liminf_i \Ekm(f_i).$$
\end{itemize}

\end{lemma}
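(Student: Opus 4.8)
The plan is to treat the assertions in the order (b), (d), then (a), (c), and finally (e), since the alternative formula (d) is the engine from which the quadratic-form structure and the semicontinuity statement are most easily read off. Two observations are used throughout: $\Ekm(f)$ depends only on $|f|$, and for $f\in D(E)\subseteq D(\Egm)$ the monotonicity of $\Ek$ (Lemma~\ref{lemma:properties of ek}(b)) forces the supremum defining $\Ekm(f)$ to be attained at $g=f$, so that $\Ekm$ extends $\Ek$ and, by Lemma~\ref{lemma:properties of ek}(a), is nonnegative. Assertion (b) is then immediate: for $f,g\in D(\Egm)$ with $|f|\le|g|$ the admissible set $\{h\in D(E):|h|\le|f|\}$ is contained in the one for $g$, so the suprema are ordered. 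Homogeneity of $\Ekm$ follows from the substitution $h\mapsto\lambda h$ together with the homogeneity of $\Ek$.

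For (d), note first that every $\varphi f^{(n)}$ with $\varphi\in D(E)$, $0\le\varphi\le1$, lies in $D(E)$: since $f\in D(\Egm)$ we have $f\in D(\Ep)$ for each such $\varphi$, hence $f^{(n)}\in D(\Ep)\cap L^\infty(m)=D(\Eph)$ by Lemmas~\ref{lemma:properties of eph} and~\ref{lemma:closability of ep}, i.e. $\varphi f^{(n)}\in D(E)$. As $|\varphi f^{(n)}|\le|f|$, each $\varphi f^{(n)}$ is admissible for $\Ekm(f)$, showing that the right-hand side is at most $\Ekm(f)$; moreover $|\varphi f^{(n)}|=\varphi(|f|\wedge n)$ is increasing in $(\varphi,n)$ and the index set is directed ($D(E)$ being a lattice by Theorem~\ref{theorem:algebraic and order properties}), so by Lemma~\ref{lemma:properties of ek}(b) the right-hand side is a monotone limit. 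For the reverse inequality I would fix an admissible $g\in D(E)$, reduce to bounded $g$ via $g^{(k)}\to g$ in the form topology (Proposition~\ref{proposition:approximation by bounded functions}) and the $E$-continuity of $\Ek$, and then reduce to $g\in D(E)_{\cN}$ for a special nest $\cN$ (Lemma~\ref{lemma:special nest}). For such a $g$ one may pick $\varphi\in D(E)$ with $0\le\varphi\le1$ and $\varphi=1$ on the support of $g$; taking $n\ge\|g\|_\infty$ yields $|\varphi f^{(n)}|\ge|g|$, whence $\Ek(g)\le\Ek(\varphi f^{(n)})$ by Lemma~\ref{lemma:properties of ek}(b).

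With (d) available, the quadratic-form property in (a) is obtained by verifying the inequalities of Lemma~\ref{lemma:inequality characterization of quadratic forms}: homogeneity is already known, and the parallelogram inequality is passed through the monotone limit (d) from the exact parallelogram identity for $\Ek$ on $D(E)$, using near-optimal admissible functions with compatible truncations. Assertion (c) then follows formally: if $fg\ge0$ then $|f+g|\ge|f-g|$, so (b) gives $\Ekm(f,g)=\tfrac14(\Ekm(f+g)-\Ekm(f-g))\ge0$; and the invariance $fg=hl\Rightarrow\Ekm(f,g)=\Ekm(h,l)$ is inherited from the corresponding property of $\Ek$ (Lemma~\ref{lemma:properties of ek}(d)) by carrying it through the approximation in (d).

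Finally, for (e) I would fix $g\in D(E)_{\cN}$ admissible for $f$ (legitimate after the reductions in (d)) with $\varphi\equiv1$ on its support, and set $g_i:=C(g,\varphi f_i^{(n)})$ for the normal contraction $C(x,y)=(x\vee(-|y|))\wedge|y|$; by Theorem~\ref{theorem:cutoff properties energy forms} we have $g_i\in D(E)$, $|g_i|\le\varphi|f_i^{(n)}|\le|f_i|$ and $g_i\to g$ in measure. The bound $\liminf_i\Egm(f_i)<\infty$ together with Theorem~\ref{theorem:algebraic and order properties} and the contraction property of the energy form $\Egm$ controls $\Egm(g_i)$, while $\Ek(g_i)\le\Ekm(f_i)$ is bounded along a suitable subnet; hence $E(g_i)=\Egm(g_i)+\Ek(g_i)$ is bounded, $g_i\to g$ converges $E$-weakly (Lemma~\ref{lemma:existence of a weakly convergent subnet}), and since $0\le\Ek\le E$ is lower semicontinuous for $E$-weak convergence (Lemmas~\ref{lemma:weak convergence implies weak convergence} and~\ref{lemma:form weak convergence implies lower limit}) we obtain $\Ek(g)\le\liminf_i\Ek(g_i)\le\liminf_i\Ekm(f_i)$; taking the supremum over $g$ proves (e). The \emph{main obstacle} is the reverse inequality in (d): because $f$ lives only in the larger domain $D(\Egm)$ and $D(E)$ is not an order ideal there, one cannot truncate an admissible $g$ inside $D(E)$ freely, and the whole argument hinges on localizing by special-nest cutoffs and exploiting the multiplicative structure $\varphi f^{(n)}\in D(E)$ to replace the order constraint $|\cdot|\le|\cdot|$ by a pointwise domination on the support of a nest function. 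This is the abstract incarnation of monotone convergence for the killing measure, and the remaining steps feed off it.
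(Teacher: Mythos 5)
Your proposal is correct in outline, and for (a), (b), (c) and (e) it essentially coincides with the paper's argument: $\Ekm$ extends $\Ek$ and (b) follow from the monotonicity in Lemma~\ref{lemma:properties of ek}~(b); the parallelogram inequality is obtained, exactly as in the paper, by exchanging the supremum in (d) with the identity for $\Ek$ on $D(E)$ and invoking Lemma~\ref{lemma:inequality characterization of quadratic forms}; (c) follows by polarization from (b) together with Lemma~\ref{lemma:properties of ek}~(d) carried through (d); and (e) uses the same chain (convergence in measure plus $E$-boundedness gives $E$-weak convergence by Lemma~\ref{lemma:existence of a weakly convergent subnet}, hence $\Ek$-weak convergence by Lemma~\ref{lemma:weak convergence implies weak convergence}, hence lower semicontinuity by Lemma~\ref{lemma:form weak convergence implies lower limit}). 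Your variant of (e), which tests against a general admissible $g$ via the two-variable contraction $C(x,y)=(x\vee(-|y|))\wedge|y|$ instead of the fixed functions $\varphi f^{(n)}$ as in the paper, is a cosmetic difference; note also that both you and the paper gloss the bookkeeping of finding a subnet along which $\Egm(f_i)$ and $\Ekm(f_i)$ are simultaneously controlled. The genuine divergence is the reverse inequality in (d): the paper does not localize by nests at all. It takes near-optimal admissible $f_l$ (without loss of generality $0\leq f_l\leq f$ bounded) and manufactures the cutoff from $f_l$ itself, namely $\varphi_{\varepsilon,l}:=f_l/(f_l+\varepsilon)\in D(E)$, so that $E(f_l-\varphi_{\varepsilon,l}f_l)\to0$ as $\varepsilon\to0+$ and $\Ek(\varphi_{\varepsilon,l}f_l)\leq\Ek\bigl(\varphi_{\varepsilon,l}(f\wedge\|f_l\|_\infty)\bigr)$ by monotonicity; admissibility of the comparison function is then automatic and no approximation inside $D(E)_\cN$ is needed. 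Your route through special-nest cutoffs with $\varphi=1$ on $\{|g|>0\}$ is also workable, but it buys this geometric picture at the cost of an extra reduction.

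That reduction is your one under-justified step: you must approximate an admissible $g\in D(E)$ (so $|g|\leq|f|$) by functions in $D(E)_\cN$ \emph{which are still dominated by} $|f|$, and denseness of $D(E)_\cN$ in $D(E)$ does not by itself preserve the order constraint. The fix uses exactly the device you deploy in (e): choose $g_j\to g$ in the form topology with $g_j\in D(E)_\cN$ and replace them by $\tilde g_j:=(g_j\wedge|g|)\vee(-|g|)$. Then $\tilde g_j$ vanishes where $g_j$ does, so $\tilde g_j\in D(E)_\cN$ once $\tilde g_j\in D(E)$, which follows from Theorem~\ref{theorem:cutoff properties energy forms} with $E(\tilde g_j)^{1/2}\leq E(g_j)^{1/2}+E(g)^{1/2}$; moreover $|\tilde g_j|\leq|g|\leq|f|$ and $\tilde g_j\overset{m}{\to}g$. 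Your nest argument gives $\Ek(\tilde g_j)\leq\Ek(\varphi f^{(n)})$, hence $\Ek(\tilde g_j)$ is dominated by the supremum on the right-hand side of (d), and since form-topology convergence of $(\tilde g_j)$ is not automatic, you conclude $\Ek(g)\leq\liminf_j\Ek(\tilde g_j)$ via $E$-weak lower semicontinuity of $\Ek$ (Lemmas~\ref{lemma:existence of a weakly convergent subnet}, \ref{lemma:weak convergence implies weak convergence} and \ref{lemma:form weak convergence implies lower limit}). With this patch your proof of (d) closes, and the remainder of the proposal stands as written.
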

\begin{proof}
(d): Let $\varphi \in D(E)$ with  $0\leq \varphi \leq 1$.  Lemma~\ref{lemma:closability of ep} shows that $f \in D(\Egm)$ implies $f^{(n)} \in D(\Eph)$ and, therefore, $\varphi f^{(n)} \in D(E) \cap L^\infty(m)$.    Consequently, 
 $$k(f) := \sup \left\{\Ek\left(\varphi f^{(n)}\right)\, \middle | \,   \varphi \in D(E) \text{ with } 0\leq \varphi \leq 1,\, n \in \IN \right\}  $$
 is well defined. The inequality $|\varphi f^{(n)}| \leq |f|$ yields $k(f) \leq \Ekm(f)$. Hence, it suffices to show $k(f) \geq \Ekm(f)$. 
 
 To this end, we choose a sequence $(f_l)$ in $D(E) \cap L^\infty(m)$ that satisfies $|f_l| \leq |f|$ and $\Ek(f_l) \nearrow \Ekm(f)$, as $l \to \infty$. By Lemma~\ref{lemma:properties of ek}~(b) we can assume $f,f_l \geq 0$. For $\varepsilon > 0$, consider the functions $C_1,C_2: [0,\infty) \to \IR$ defined by
 $$C_1(x) = \frac{x}{x+\varepsilon} \text{ and } C_2(x) = \frac{x^2}{x+\varepsilon}.$$
 They satisfy $C_1(0) = C_2(0) = 0$, $|C_1'(x)| \leq 1/\varepsilon$ and $|C'_2(x)| \leq 1 $. Therefore, the contraction properties of $E$ imply 
 $$\varphi_{\varepsilon,l} :=  \frac{f_l}{f_l + \varepsilon} = C_1 \circ f_l \in D(E)$$
 and 
 $$E(\varphi_{\varepsilon,l} f_l ) = E(C_2 \circ f_l) \leq E(f_l).$$
 According to Lemma~\ref{lemma:characterization convergence in form topology lsc forms}, this last inequality and the convergence $\varphi_{\varepsilon,l} f_l  \overset{m}{\to} f_l$, as $\varepsilon \to 0+$, yield $E(f_l - \varphi_{\varepsilon,l} f_l ) \to 0$, as $\varepsilon \to 0+$.  Since the form $\Ek$ is continuous with respect to   $E$, we obtain
 $$\Ek(f_l) = \lim_{\varepsilon \to 0+} \Ek(\varphi_{\varepsilon,l} f_l)  \leq  \limsup_{\varepsilon \to 0+}\Ek(\varphi_{\varepsilon,l} (f\wedge \|f_l\|_\infty) ) \leq k(f).$$
 Note that we used Lemma~\ref{lemma:properties of ek}~(b) for the first inequality. 
 
 (a): The monotonicity of $\Ek$, see Lemma~\ref{lemma:properties of ek}~(b), and the definition of $\Ekm$  imply that $\Ekm$ is an extension of $\Ek$.  

We use Lemma~\ref{lemma:inequality characterization of quadratic forms} to prove that $\Ekm$ is a quadratic form. For $\lambda \in \IR$ and   $f \in D(\Ekm)$  the inequality
$$\Ekm(\lambda f) \leq |\lambda|^2\Ekm(f) $$
follows from the definition of $\Ekm$ and the fact that $\Ek$ is a quadratic form. Next, for $f,g \in D(\Ekm)$ we prove the inequality
$$2\Ekm(f) + \Ekm(g) \leq \Ekm(f+g) + \Ekm(f-g),$$
from which the claim follows. To this end, let $f,g \in D(\Ekm)\subseteq D(\Egm) $ and let
$$(\varphi,n) \in I := \{(\psi,m) \mid \psi \in D(E) \text{ with } 0\leq \psi \leq 1, m\in \IN\}.$$
Since $\Ek$ is a quadratic form on $D(E)$ and $\varphi f^{(n)},\varphi g^{(n)} \in D(E)$, we obtain 
$$\Ek(\varphi (f^{(n)}+g^{(n)})) + \Ek(\varphi(f^{(n)}-g^{(n)})) = 2 \Ek(\varphi f^{(n)}) + 2 \Ek(\varphi g^{(n)}),$$
and  the  monotonicity of the expressions in $\varphi$ and $n$ yields 
$$
 \sup_{(\varphi,n) \in I} \left[ 2 \Ek(\varphi f^{(n)}) + 2 \Ek(\varphi g^{(n)}) \right] 
 = 2\sup_{(\varphi,n) \in I}  \Ek(\varphi f^{(n)}) +  2 \sup_{(\varphi,n) \in I}  \Ek(\varphi g^{(n)}).
$$
%
%
Combining these identities and using (d) we infer 
\begin{align*}
 2 \Ekm(f) + 2 \Ekm(g) &= 2\sup_{(\varphi,n) \in I}  \Ek(\varphi f^{(n)}) +  2 \sup_{(\varphi,n) \in I}  \Ek(\varphi g^{(n)})\\
 &= \sup_{(\varphi,n) \in I} \left[ 2 \Ek(\varphi f^{(n)}) + 2 \Ek(\varphi g^{(n)}) \right] \\
 &= \sup_{(\varphi,n) \in I}  \left[\Ek(\varphi (f^{(n)}+g^{(n)})) + \Ek(\varphi(f^{(n)}-g^{(n)}))\right]\\
 &\leq \Ekm(f+g) + \Ekm(f-g).
\end{align*}
For the last inequality we used  $|\varphi(f^{(n)}+g^{(n)})| \leq |f+ g|$ and $|\varphi(f^{(n)}-g^{(n)})| \leq |f-g|$, and the definition of $\Ekm$. This  finishes the proof of (a).

 (e): Let $\varphi \in D(E)$ with $0\leq \varphi \leq 1$ and let $n\in \IN$. We first prove $\varphi f^{(n)} \in D(E) = D(\Ek)$ and
 $$\Ek(\varphi f^{(n)} ) \leq \liminf_i \Ek(\varphi f_i^{(n)}).$$
 The closedness of $\Egm$  yields $f\in D(\Egm)$, which implies $\varphi f^{(n)} \in D(E).$ Furthermore, from Theorem~\ref{theorem:algebraic and order properties} we obtain
 $$\Eg(\varphi f_i^{(n)})^{1/2} \leq n \Eg(\varphi)^{1/2} + \Eg(f_i)^{1/2}.$$
 Hence, by the assumption $\liminf_i \Eg(f_i) < \infty$, we can assume that the net $(\Egm(\varphi f_i^{(n)}))_i$ is bounded. The definition of $\Ek$ and Lemma~\ref{lemma:properties of ek}~(b) yield
 $$E(\varphi f_i^{(n)}) = \Eg(\varphi f_i^{(n)}) + \Ek(\varphi f_i^{(n)}) \leq \Eg(\varphi f_i^{(n)}) + n^2 \Ek(\varphi).$$
 Therefore, the net $(E(\varphi f_i^{(n)}))_i$ can be assumed to be bounded as well. These observations combined with $\varphi f_i^{(n)} \overset{m}{\to} \varphi f^{(n)}$ let us infer  $\varphi f_i^{(n)} \to \varphi f^{(n)}$ $E$-weakly and $\Eg$-weakly from Lemma~\ref{lemma:existence of a weakly convergent subnet}. As a consequence, we have the  $\Ek$-weak convergence  $\varphi f_i^{(n)} \to \varphi f^{(n)}$. The inequality 
 $$\Ek(\varphi f^{(n)} ) \leq \liminf_i \Ek(\varphi f_i^{(n)})$$
 follows from the lower semicontinuity of quadratic forms with respect to weak convergence, see Lemma~\ref{lemma:form weak convergence implies lower limit}. Here, we used (a), i.e., that $\Ekm$ is a quadratic form. With this at hand, the statement is a consequence of (d) and the inequality $\Ek (\varphi f_i^{(n)}) \leq \Ekm (f_i)$.

(b): This follows from definition of $\Ekm$ and Lemma~\ref{lemma:properties of ek}~(b). 

(c): The first statement can be deduced from (b) as in the proof of Lemma~\ref{lemma:properties of ek}~(c). The 'furthermore' statement follows from (d) and Lemma~\ref{lemma:properties of ek}~(d).
\end{proof}

 \begin{remark}
 \begin{itemize}
 \item The assertions of the lemma are valid for functions from different domains. In general, the domain of $\Ekm$ is strictly smaller than the domain of $\Eg$. This is why assertion~(c) is stated the way it is. Furthermore, assertion (b) can fail for functions that do not belong to the domain of $\Eg$.  
 
 \item Assertion~(d) states that $\Ekm$ is a closed quadratic form on the topological vector space $(D(\Eg),\tau(m)_{\Eg})$.
 
 \item  In concrete applications the main part of an energy form is usually coming from the geometry of the underlying space while the killing part is some perturbation by a positive measure.

  In Example~\ref{example:maximal Silverstein extension}, the main part and the killing part of the form $E_{J,V}^0$ can be computed explicitly with the help of Equation~\eqref{equation:idea silverstein1} and  Equation~\eqref{equation:idea silverstein2}. With them it is more or less straightforward that
  $$(E_{J,V}^0)^{(M)} = E_{J,0} \text{ and } (E_{J,V}^0)^{(k)} = E_{0,V}|_{D(E_{J,0})},$$
  which is consistent with our narrative.
  
  \item One may wonder why we restricted the domain of $\Ekm$ to $D(\Egm)$. Its formal definition via the supremum could easily be extended to $L^0(m)$. However, in this case $\Ekm$ can fail to be a quadratic form as the following example shows. 
  
  Let $X = (-1,1)$ and let $\lambda$ the Lebesgue measure. As before, we denote the usual first oder Sobolev space by  $W^1((-1,1)) = \{f \in L^2(\lambda) \mid f' \in L^2(\lambda)\}$. Consider the energy form    

  $$E:L^0(\lambda) \to [0,\infty],\quad f \mapsto E(f) := \begin{cases}
             \int_{-1}^1 |f'(x)|^2\D \lambda + f(0)^2 &\text{if }f \in W^1((-1,1))\\
             \infty &\text{else }
            \end{cases}.
$$
Since $1 \in D(E)$, it is easy to check that its preliminary killing part $\Ek$ is given by the (not closable) form
$$\Ek:W^1((-1,1))\to [0,\infty), \, f \mapsto \Ek(f) = f(0)^2.$$
For the moment, we let
$$k:L^0(\lambda) \to [0,\infty],\quad f \mapsto k(f) :=  \sup \left\{\Ek(g)\, \middle | \, g \in D(E) \text{ with } |g| \leq |f|\right\}.$$
Consider the indicator function $1_{(0,1)}$. Since functions in $W^1((-1,1))$ are continuous, any  $g \in W^1((-1,1))$ with $|g| \leq 1_{(0,1)}$ satisfies $g(0) = 0$. Therefore, $k(1_{(0,1)}) = 0$. The same argumentation also yields $k(1_{(-1,0)}) = 0$. Since $|1_{(0,1)} \pm 1_{(-1,0)}| = 1$ in $L^0(\lambda)$, and since $k$ only depends on the absolute value of the function, this implies $k (1_{(0,1)} \pm 1_{(-1,0)}) = k(1) = 1$. Altogether, we obtain
$$k(1_{(0,1)}  +  1_{(-1,0)}) + k(1_{(0,1)} - 1_{(-1,0)}) = 2 > 0 = 2 k(1_{(0,1)}) +  2 k(1_{(-1,0)}),$$
 which shows that $k$ is not a quadratic form. 
 \end{itemize}
 \end{remark}
 With the help of all the preparations made in this subsection we can now introduce the reflected energy form and show that it is a Silverstein extension of the given energy form.
 
\begin{definition}[Reflected energy form]\label{definition:reflected form}
 The form $\Er:= \Eg + \Ekm$ is called the {\em reflected energy form of E}.   
\end{definition}

 \begin{theorem} \label{theorem:properties of er}
  Let $E$ be an energy form. Its reflected energy form $\Er$ is an energy form and a Silverstein extension of $E$.  
 \end{theorem}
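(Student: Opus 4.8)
The plan is to verify the two assertions separately: first that $\Er = \Eg + \Ekm$ is an energy form, and second that it is a Silverstein extension of $E$. For the first claim, I already know from the earlier lemmas that $\Eg$ is a recurrent energy form and that $\Ekm$ is a nonnegative quadratic form on $L^0(m)$. Since the sum of an energy form and a nonnegative quadratic form is a quadratic form, the main work is to establish closedness (lower semicontinuity) and the Markov property of the sum. For closedness, I would take a net $(f_i)$ with $f_i \overset{m}{\to} f$ and $\liminf_i \Er(f_i) < \infty$; this forces $\liminf_i \Eg(f_i) < \infty$, so the lower semicontinuity of $\Eg$ gives $\Eg(f) \leq \liminf_i \Eg(f_i)$, and the crucial Lemma~\ref{lemma:characterization of k}~(e) gives $\Ekm(f) \leq \liminf_i \Ekm(f_i)$. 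Combining these (after passing to a common subnet along which both liminfs are realized, or more carefully using superadditivity of $\liminf$) yields $\Er(f) \leq \liminf_i \Er(f_i)$. By Proposition~\ref{prop:lower semincontinuity implies closedness} this lower semicontinuity on the complete space $L^0(m)$ implies closedness.

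For the Markov property, I would invoke the monotonicity of $\Ekm$ under normal contractions. Concretely, for any normal contraction $C:\IR \to \IR$, the contraction property of $\Eg$ (which is an energy form, hence satisfies Theorem~\ref{theorem:cutoff properties energy forms}) gives $\Eg(C \circ f) \leq \Eg(f)$, and the monotonicity statement Lemma~\ref{lemma:characterization of k}~(b) together with $|C \circ f| \leq |f|$ gives $\Ekm(C \circ f) \leq \Ekm(f)$; adding these yields $\Er(C \circ f) \leq \Er(f)$. Since $\varepsilon$-cutoffs are normal contractions, this verifies the Markov property directly, so $\Er$ is an energy form.

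For the Silverstein extension claim, I first need that $\Er$ is an \emph{extension} of $E$, i.e.\ $D(E) \subseteq D(\Er)$ and $\Er(f) = E(f)$ for $f \in D(E)$. This follows from the definitions: on $D(E)$ we have $\Ek(f) = E(f) - \Eg(f)$, and I must check that $\Ekm$ restricted to $D(E)$ coincides with $\Ek$ — this is exactly Lemma~\ref{lemma:characterization of k}~(a), which states $\Ekm$ extends $\Ek$. Hence for $f \in D(E)$, $\Er(f) = \Eg(f) + \Ekm(f) = \Eg(f) + \Ek(f) = E(f)$, and in particular $D(E) \subseteq D(\Er)$. It remains to verify one of the equivalent ideal conditions of Theorem~\ref{theorem:Ideal properties of energy forms}; I expect condition~(i), that $D(E)$ is an order ideal in $D(\Er)$, to be the most natural target. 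So I would take $f \in D(E)$, $g \in D(\Er)$ with $|g| \leq |f|$ and show $g \in D(E)$.

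The hard part will be this order-ideal verification. Given $g \in D(\Er)$ with $|g| \leq |f|$ for some $f \in D(E)$, I know $\Eg(g) < \infty$ and $\Ekm(g) < \infty$; I need to conclude $E(g) = \Eg(g) + \Ek(g) < \infty$, i.e.\ that $g \in D(E)$ with $\Ek(g) = \Ekm(g)$. The monotonicity Lemma~\ref{lemma:characterization of k}~(b) gives $\Ekm(g) \leq \Ekm(f) = \Ek(f) \leq E(f) < \infty$, which controls the killing part, and $\Eg(g) < \infty$ by assumption. The delicate point is to upgrade these finiteness statements into membership $g \in D(E)$ and the matching of $\Ek$ with $\Ekm$ on $g$; I anticipate approximating $g$ by bounded truncations $\varphi\, g^{(n)}$ as in Lemma~\ref{lemma:characterization of k}~(d), showing these lie in $D(E)$ with uniformly bounded energy $E(\varphi g^{(n)}) = \Eg(\varphi g^{(n)}) + \Ek(\varphi g^{(n)})$, and then passing to the limit using lower semicontinuity of $E$ together with the weak-convergence arguments of Lemma~\ref{lemma:existence of a weakly convergent subnet}. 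Making this limiting argument precise — in particular confirming that the limit recovers $g$ and that $\Ek(g)$ equals $\Ekm(g)$ rather than something smaller — is where the real care is needed, and it is the step I would write out in full detail.
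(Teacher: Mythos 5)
Your first two steps are sound and match the paper: closedness of $\Er$ follows exactly as you say from the lower semicontinuity of $\Eg$ together with Lemma~\ref{lemma:characterization of k}~(e) (your subnet/superadditivity caveat is the right one, since Lemma~\ref{lemma:characterization of k}~(e) only applies when $\liminf_i \Eg(f_i)<\infty$), the Markov property follows from the contraction property of the energy form $\Eg$ plus Lemma~\ref{lemma:characterization of k}~(b), and the extension property is indeed immediate from the definition of $\Ek$ and the fact that $\Ekm$ extends it.

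The gap is in the Silverstein step, and it is genuine: you announce the order-ideal verification (condition~(i) of Theorem~\ref{theorem:Ideal properties of energy forms}) but never carry it out, and the route you sketch would stall. The crux is your claim that the truncations $\varphi\, g^{(n)}$ lie in $D(E)$ \emph{with uniformly bounded energy}. To see that $\varphi\, g^{(n)} \in D(E)$ at all you already need an algebraic-ideal-type fact, and the natural energy estimate via Theorem~\ref{theorem:algebraic and order properties} fails: you cannot write $E(\varphi g)^{1/2} \leq \|\varphi\|_\infty E(g)^{1/2} + \|g\|_\infty E(\varphi)^{1/2}$ because $g \notin D(E)$ a priori, and if you instead try $\varphi_\varepsilon = f/(f+\varepsilon)$ the bound $E(\varphi_\varepsilon) \leq \varepsilon^{-2}E(f)$ blows up as $\varepsilon \to 0$. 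The estimate that actually works is the two-variable contraction $C_\varepsilon(w,z) = wz/(w+\varepsilon)$ applied to the \emph{extension} — and that is precisely the content of the implication (ii)~$\Rightarrow$~(i) in Theorem~\ref{theorem:Ideal properties of energy forms}, which you should invoke rather than reprove. The efficient completion, and the paper's actual argument, is to verify the algebraic ideal condition~(ii) instead, which is a one-liner here: for every $\varphi \in D(E)$ with $0 \leq \varphi \leq 1$, the domain monotonicity $D(\Er) \subseteq D(\Eg) \subseteq D(\Ep)$ together with Lemma~\ref{lemma:closability of ep} gives
\[
D(\Er)\cap L^\infty(m) \subseteq D(\Ep)\cap L^\infty(m) = D(\Eph) = \{f \in L^\infty(m) \mid \varphi f \in D(E)\},
\]
and decomposing/rescaling an arbitrary bounded $g \in D(E)$ into pieces with values in $[0,1]$ then yields $fg \in D(E)$ for all bounded $f \in D(\Er)$. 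Condition~(i), your intended target, then comes for free from the equivalence in Theorem~\ref{theorem:Ideal properties of energy forms}, whose proof already contains the delicate limiting argument you were planning to reconstruct by hand.
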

\begin{proof}
 The closedness of $\Er$ follows from the closedness of $\Eg$ and Lemma~\ref{lemma:characterization of k}. That $\Er$ is an extension of $E$ is an immediate consequence of the definition of $\Ek$ and the fact that $\Ekm$ extends $\Ek$. For  $\varphi  \in D(E)$ with $0 \leq \varphi \leq 1$, Lemma~\ref{lemma:closability of ep} implies
 $$D(\Er) \cap L^\infty(m) \subseteq D(\Ep) \cap L^\infty(m) = D(\Eph) =  \{f \in L^\infty(m) \mid \varphi f \in D(E) \}.$$
 This shows that $D(E)\cap L^\infty(m)$ is an algebraic ideal in $D(\Er) \cap L^\infty(m)$, i.e., $\Er$ is a Silverstein extension of $E$.  
\end{proof}
\begin{remark}
 In Example~\ref{example:maximal Silverstein extension} the reflected energy form of $E_{J,V}^0$ is given by $E_{J,V}$.  Moreover, when $\E$ is a (quasi-)regular Dirichlet form   similar computations as in Example~\ref{example:maximal Silverstein extension} show that $(\Ee^{{\, \rm ref}}, D(\Ee^{{\, \rm ref}}))$ coincides with the so-called reflected Dirichlet space and that $(\Ee^{{\, \rm ref}}, D(\Ee^{{\, \rm ref}}) \cap L^2(m))$ coincides with the so-called active reflected Dirichlet space as defined in \cite{Kuw,CF}; we refrain from giving details. This justifies the name reflected energy form. More examples of reflected spaces are discussed at the end of the next subsection. 
\end{remark}
In view of the previous remark we also make the following definition for Dirichlet forms.
\begin{definition}[Reflected Dirichlet form] \label{definition:reflected dirichlet form}
 Let $\E$ be a Dirichlet form on $L^2(m)$. The {\em main part of $\E$} is the restriction of $\Ee^{(M)}$ to $L^2(m)$ and is denoted by $\E^{(M)}$, and the {\em killing part of $\E$} is the restriction of $\Ee^{(k)}$ to $L^2(m)$ and is denoted by $\E^{(k)}.$ The {\em reflected Dirichlet form of $\E$} is defined by $\E^{{\rm ref}} :=\E^{(M)} + \E^{(k)}$.
\end{definition}

We finish this subsection with an  alternative formula for $\Ekm$ and $\Er$, which holds for certain functions in $D(E)$. For this purpose, we let 
$$I_E:= \{\varphi \in D(E) \mid 0 \leq \varphi  \leq 1\}$$
and order this set with the natural order relation on functions. The monotonicity of $\Ep$ in $\varphi$ yields
$$\Eg(f) = \lim_{\varphi \in I_E} \Ep(f) \text{ for  all } f \in D(\Eg).$$
Furthermore, for $f \in L^0(m)$ and $n \in \IN$, we set $f^{(n)}:= (f \wedge n) \vee (-n)$. 
\begin{lemma} \label{lemma:alternative formula for er}
 Let $\cN$ be a special $E$-nest. For $f \in D(\Er)$ and  $\psi \in D(E)_\cN$, we have
 $$\Ekm(f,\psi) = \lim_{n \to \infty} \lim_{\varphi \in I_E} \lim_{m \to \infty}  E( f^{(n)}\psi^{(m)}, \varphi)$$
 and 
 $$\Er(f,\psi) =  \lim_{n \to \infty} \lim_{\varphi \in I_E} E(\varphi f^{(n)},\psi). $$
\end{lemma}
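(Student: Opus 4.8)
The plan is to prove both formulas simultaneously by reducing the $\varphi$-net to a cofinal subnet on which $\varphi$ equals $1$ on the support of $\psi$, and then splitting $\Er = \Eg + \Ekm$ and handling the two parts by entirely different mechanisms: the killing part turns out to be \emph{constant} along the subnet because of the measure-like behaviour of $\Ekm$, while the main part is controlled through a disjoint-support argument. First I set up the memberships and the reduction. Fix $f \in D(\Er)$ and $\psi \in D(E)_\cN$. Since $\cN$ is special, choosing the $N \in \cN$ off which $\psi$ vanishes and $g_N \in D(E)_\cN$ with $g_N \geq 1_N$ yields $\varphi_0 := g_N \wedge 1 \in I_E$ with $\varphi_0 = 1$ on $\{\psi \neq 0\}$; the tail $\{\varphi \in I_E \mid \varphi \geq \varphi_0\}$ is cofinal, so along it $\varphi \equiv 1$ on $\mathrm{supp}\,\psi$ and it suffices to compute the $\varphi$-limits there. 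By Lemma~\ref{lemma:characterization of k}(b) and Proposition~\ref{proposition:approximation by bounded functions} we have $f^{(n)} \in D(\Eg) \cap D(\Ekm) \cap L^\infty(m)$, and since $\Er$ is a Silverstein extension of $E$ (Theorem~\ref{theorem:properties of er}), $D(E)\cap L^\infty(m)$ is an algebraic ideal in $D(\Er)\cap L^\infty(m)$, whence $\varphi f^{(n)} \in D(E)$ and $f^{(n)}\psi^{(m)} \in D(E)$; in particular $E(\varphi f^{(n)},\psi) = \Er(\varphi f^{(n)},\psi)$.

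I would establish the second formula first, reduced to the claim $\lim_{\varphi} \Er(\varphi f^{(n)},\psi) = \Er(f^{(n)},\psi)$ for each fixed $n$; granting this, the identity $f^{(n)} \to f$ in $\tau(m)_{\Er}$ from Proposition~\ref{proposition:approximation by bounded functions}(a), together with the $\Er$-continuity of $\Er(\cdot,\psi)$, yields $\lim_n \lim_\varphi \Er(\varphi f^{(n)},\psi) = \Er(f,\psi)$. For the reduced claim I split $\Er(\varphi f^{(n)},\psi) = \Eg(\varphi f^{(n)},\psi) + \Ekm(\varphi f^{(n)},\psi)$. The killing term is constant along the subnet: because $\varphi \equiv 1$ on $\mathrm{supp}\,\psi$ we have $(\varphi f^{(n)})\psi = f^{(n)}\psi$, and since $\varphi f^{(n)}, f^{(n)}, \psi \in D(\Ekm)$, the "furthermore" part of Lemma~\ref{lemma:characterization of k}(c) gives $\Ekm(\varphi f^{(n)},\psi) = \Ekm(f^{(n)},\psi)$. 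Thus the whole problem collapses to the main part, namely to showing
$$\lim_{\varphi} \Eg(\varphi f^{(n)},\psi) = \Eg(f^{(n)},\psi).$$

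This main-part convergence is the crux and the hardest step. Setting $w_\varphi := f^{(n)} - \varphi f^{(n)} = (1-\varphi)f^{(n)} \in D(\Eg)$, it is equivalent to $\Eg(w_\varphi,\psi) \to 0$. The decisive structural fact is that $w_\varphi$ is supported off $\mathrm{supp}\,\psi$, so $w_\varphi$ and $\psi$ have disjoint supports; note that $\Eg(\varphi f^{(n)})$ itself need \emph{not} stay bounded as $\varphi \nearrow 1$, so no weak-compactness shortcut via Lemma~\ref{lemma:existence of a weakly convergent subnet} is available, and the disjointness must be exploited directly. Splitting into positive and negative parts and applying Lemma~\ref{lemma:cutoff for functions with disjoint support} to the energy form $\Eg$ gives $\Eg(w_\varphi^\pm,\psi^\pm) \leq 0$ for all four sign combinations, together with the monotonicity that $-\Eg(w_\varphi^\pm,\psi^\mp)$ decreases as $\varphi \nearrow 1$ (since $w_{\varphi'}^\pm \leq w_\varphi^\pm$ when $\varphi' \geq \varphi$). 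This reduces matters to showing that a monotone decreasing net of nonnegative numbers indexed by the disjointly-supported, pointwise-vanishing family $w_\varphi^\pm$ tends to $0$. I would make this rigorous by the approximation of $\Eg$ through continuous Markovian forms on $\Ltf$ used in the proof of Theorem~\ref{theorem:cutoff properties energy forms} and Lemma~\ref{lemma:properties of eph}, where disjointly supported functions pair only through the nonnegative jump coefficients $b^{\varphi}_{ij}$; there the pairing is a genuine integral against a positive quantity and the convergence to $0$ follows from monotone/dominated convergence. I expect this step to absorb the bulk of the technical work.

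Finally I would deduce the first formula from the second. Polarizing the bilinear identity $\Eph(a,b) = E(\varphi a,\varphi b) - E(\varphi ab,\varphi)$ (valid for bounded $a,b$) with $a = f^{(n)}$, $b = \psi^{(m)}$, and using $\varphi\psi^{(m)} = \psi^{(m)}$, $\varphi f^{(n)}\psi^{(m)} = f^{(n)}\psi^{(m)}$ on the subnet, gives $E(f^{(n)}\psi^{(m)},\varphi) = E(\varphi f^{(n)},\psi^{(m)}) - \Ep(f^{(n)},\psi^{(m)})$. Letting $m \to \infty$, the $E$-weak and $\Ep$-weak convergences $\psi^{(m)} \to \psi$ (bounded since $\Ep(\psi^{(m)}) \leq E(\psi^{(m)}) \leq E(\psi)$, via Lemma~\ref{lemma:existence of a weakly convergent subnet}) yield
$$\lim_{m\to\infty} E(f^{(n)}\psi^{(m)},\varphi) = E(\varphi f^{(n)},\psi) - \Ep(f^{(n)},\psi).$$
Taking $\lim_\varphi$ and invoking $\lim_\varphi \Ep(f^{(n)},\psi) = \Eg(f^{(n)},\psi)$ (definition of $\Eg$ as a monotone limit over $I_E$) together with the already-proven $\lim_\varphi E(\varphi f^{(n)},\psi) = \Er(f^{(n)},\psi)$, I obtain $\lim_\varphi \lim_m E(f^{(n)}\psi^{(m)},\varphi) = \Er(f^{(n)},\psi) - \Eg(f^{(n)},\psi) = \Ekm(f^{(n)},\psi)$. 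A last application of $f^{(n)} \to f$ in $\tau(m)_{\Er}$ and the $\Er$-continuity of $\Ekm(\cdot,\psi)$ (from $\Ekm \leq \Er$ and Cauchy--Schwarz) gives $\lim_n \Ekm(f^{(n)},\psi) = \Ekm(f,\psi)$, completing both formulas.
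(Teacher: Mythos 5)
There is a genuine gap at the step you yourself identify as the crux: the claim $\lim_{\varphi \in I_E}\Eg(\varphi f^{(n)},\psi) = \Eg(f^{(n)},\psi)$, equivalently $\Eg\bigl((1-\varphi)f^{(n)},\psi\bigr) \to 0$. Your monotonicity argument via Lemma~\ref{lemma:cutoff for functions with disjoint support} is fine as far as it goes: it shows each signed pairing $\Eg(w_\varphi^\pm,\psi^\pm)$ is a monotone net bounded by $0$, hence convergent to some $L \leq 0$ --- but nothing in your argument forces $L = 0$. The proposed rescue via the approximating forms of Section~\ref{section:form on L0} does not close this: the approximation $\Eg(g) = \sup_i Q_i(g)$ is pointwise in $g$, so for fixed $i$ you get $Q_i(w_\varphi,\psi) \to 0$ by continuity of $Q_i$, but concluding $\Eg(w_\varphi,\psi)\to 0$ requires interchanging $\lim_\varphi$ with $\lim_i$, and no uniformity (Dini-type or equicontinuity) is available. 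Moreover, the ``nonnegative jump coefficients $b^\varphi_{ij}$'' picture is only valid for simple functions inside the proof of Theorem~\ref{theorem:cutoff properties energy forms}, and the appeal to ``monotone/dominated convergence'' tacitly presumes an integral (Beurling--Deny-type) representation of $\Eg$ --- precisely the structure this framework deliberately does without. A further technical wrinkle: without a full-support assumption (which the lemma does not make), $w_\varphi = (1-\varphi)f^{(n)}$ need not even tend to $0$ in $L^0(m)$ off ${\rm supp}\,E$, since $D(\Eg)$ may contain functions living there.

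It is worth seeing how the paper's proof sidesteps exactly this difficulty: it never takes a limit of $\Eg(\varphi f^{(n)},\psi)$ over all of $I_E$. Instead it uses the special nest to pick $\varphi_0,\varphi_1 \in D(E)_\cN$ with $1_{\{|\psi|>0\}} \leq \varphi_0 \leq 1$ and $1_{\{\varphi_0>0\}} \leq \varphi_1 \leq 1$, replaces $f^{(n)}$ by $\varphi_0 f^{(n)} \in D(E)$ (legitimate because $\Ekm(f^{(n)},\psi) = \Ekm(\varphi_0 f^{(n)},\psi)$ by Lemma~\ref{lemma:extension of k}), and then expands $\Ekm(\varphi_0 f^{(n)},\psi) = E(\varphi_0 f^{(n)},\psi) - \lim_\varphi \Ep(\varphi_0 f^{(n)},\psi)$, where the $\varphi$-limit exists \emph{by definition} of $\Eg$. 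On the tail $\varphi \geq \varphi_1$ the identities $\varphi\varphi_0 f^{(n)} = \varphi_0 f^{(n)}$, $\varphi\psi^{(m)} = \psi^{(m)}$, $\varphi\varphi_0 f^{(n)}\psi^{(m)} = f^{(n)}\psi^{(m)}$ make every term except $E(f^{(n)}\psi^{(m)},\varphi)$ constant in $\varphi$, so its limit exists automatically and equals the killing pairing; the $\Er$ formula then drops out by adding back $\Eg(f,\psi)$. In other words, the paper proves the killing formula first and derives the $\Er$ formula from it, whereas you reversed the order --- and it is precisely that reversal which forces you to prove the hard main-part convergence for arbitrary $\varphi \in I_E$, where the support of $\varphi$ is not controlled by the nest and the stabilization trick is unavailable. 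Your back-derivation of the killing formula from the $\Er$ formula (the polarization identity $E(f^{(n)}\psi^{(m)},\varphi) = E(\varphi f^{(n)},\psi^{(m)}) - \Ep(f^{(n)},\psi^{(m)})$ and the subsequent limits) is correct; if you repair the crux by adopting the paper's order of proof, the rest of your outline goes through.
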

\begin{proof}
We first prove the statement about $\Ekm$. Since $\cN$ is special and $\psi \in D(E)_\cN$, there exist $\varphi_0,\varphi_1 \in D(E)_\cN$ with $1_{\{|\psi| > 0\}} \leq \varphi_0 \leq 1$ and $1_{\{\varphi_0 > 0\}} \leq \varphi_1 \leq 1$. We use that $\Ekm$ is continuous with respect to $\Er$ and Lemma~\ref{lemma:characterization of k} to obtain 
$$\Ekm(f,\psi) = \lim_{n \to \infty} \Ekm(f^{(n)},\psi) =\lim_{n \to \infty} \Ekm(\varphi_0 f^{(n)},\psi).$$
The form $\Er$ is a Silverstein extension of $E$, hence $\varphi_0 f^{(n)} \in D(E)$. From the definition of $\Ekm$ on $D(E)$ we infer
\begin{align*}
 \Ekm(\varphi_0 f^{(n)},\psi) &= E(\varphi_0 f^{(n)}, \psi) - \Eg(\varphi_0 f^{(n)}, \psi) \\
 &= E(\varphi_0 f^{(n)}, \psi) - \lim_{\varphi \in I_E} \Ep (\varphi_0 f^{(n)}, \psi)\\
 &= E(\varphi_0 f^{(n)}, \psi) - \lim_{\varphi \in I_E} \lim_{m \to \infty} \left[  E(\varphi \varphi_0  f^{(n)}, \varphi \psi^{(m)}) - E(\varphi \varphi_0 f^{(n)} \psi^{(m)}, \varphi) \right].  
\end{align*}
 The choice of $\varphi_0$ and $\varphi_1$ yields that for $\varphi \in I_E$ with $\varphi \geq \varphi_1$ we have  $\varphi \varphi_0  f^{(n)} = \varphi_0  f^{(n)}$, $\varphi \psi^{(m)} = \psi^{(m)}$ and    $\varphi \varphi_0 f^{(n)} \psi^{(m)} = f^{(n)} \psi^{(m)}.$ Therefore, the  the previous equation simplifies to
\begin{align*}
  \Ekm(\varphi_0 f^{(n)},\psi) = \lim_{\varphi \in I_E} \lim_{m \to \infty} E(f^{(n)} \psi^{(m)}, \varphi).
\end{align*}
Combining these computations shows 
$$\Ekm(f,\psi) = \lim_{n \to \infty} \lim_{\varphi \in I_E} \lim_{m \to \infty}  E( f^{(n)}\psi^{(m)}, \varphi).$$
Since  $\varphi \psi^{(m)} = \psi^{(m)}$ and $\varphi f^{(n)}\psi^{(m)} = f^{(n)}\psi^{(m)}$ whenever $\varphi_0  \leq \varphi \leq 1$, we obtain 
\begin{align*}
 \Eg(f,\psi) &= \lim_{n\to \infty} \lim_{\varphi \in I_E} \lim_{m\to \infty}\Ep(f^{(n)},\psi^{(m)})\\
 &= \lim_{n\to \infty} \lim_{\varphi \in I_E} \lim_{m\to \infty} \left[ E(\varphi f^{(n)}, \varphi \psi^{(m)}) - E(\varphi f^{(n)} \psi^{(m)},\varphi)\right]\\
 &=  \lim_{n\to \infty} \lim_{\varphi \in I_E} \lim_{m\to \infty} \left[ E(\varphi f^{(n)}, \psi^{(m)}) - E( f^{(n)} \psi^{(m)},\varphi)\right]\\
 &= \lim_{n\to \infty} \lim_{\varphi \in I_E} E(\varphi f^{(n)}, \psi) - \Ekm(f,\psi).
\end{align*}
This finishes the proof. 
\end{proof}

\begin{remark}\label{remark:alternative formula for er}
 For the previous lemma to hold true, the condition that $\psi\in D(E)_\cN$ for some special nest $\cN$ can be weakened. Indeed, in the proof we only used that there exist $\varphi_0,\varphi_1 \in D(E)$ with $1_{\{|\psi|>0\}} \leq \varphi_0 \leq 1$ and $1_{\{\varphi_0 > 0\}} \leq \varphi_1 \leq 1$.
\end{remark}

 \subsection{The maximality of the reflected energy form and applications}

 This subsection is devoted to maximality properties of the reflected energy form. More precisely, we prove that the reflected energy form is the maximal Silverstein extension whenever the killing part vanishes and we provide an abstract version of the counterexamples at the beginning of this section.   Moreover, we show that recurrence and uniqueness of Silverstein extensions are much more intimately related than it could be expected. We finish this subsection by explicitly computing the reflected energy form for several examples.
 
 The following lemma is the main ingredient for proving maximality statements on $\Er$.  
 
 \begin{lemma} \label{lemma:maximality of eg}
  Let $E$ be an energy form and let $\ow{E}$ be a Silverstein extension of $E$. Its domain satisfies $D(\ow{E}) \subseteq D(\Eg)$ and for all $f \in D(\ow{E})$ the inequality
  $$\ow{E}^{(M)}(f) \geq \Eg(f)$$
  holds. Moreover, for $g \in D(E)$ we have
  $$ \ow{E}^{(k)} (g) \leq \Ekm(g).$$
 \end{lemma}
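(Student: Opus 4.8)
The plan is to exploit that, because $\ow{E}$ extends $E$ and is a Silverstein extension, the preliminary forms $\Eph$ attached to $E$ and to $\ow{E}$ literally coincide for test functions $\varphi$ taken from the smaller domain $D(E)$, and then to propagate this through the suprema defining the main and killing parts.

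First I would treat bounded $f$. Fix $f \in D(\ow{E}) \cap L^\infty(m)$ and $\varphi \in D(E)$ with $0 \leq \varphi \leq 1$. Since $D(E)\cap L^\infty(m)$ is an algebraic ideal in $D(\ow{E})\cap L^\infty(m)$ (Theorem~\ref{theorem:Ideal properties of energy forms}) and $D(\ow{E})\cap L^\infty(m)$ is an algebra (Theorem~\ref{theorem:algebraic and order properties}), both $\varphi f$ and $\varphi f^2$ lie in $D(E)$. As $\ow{E}$ agrees with $E$ on $D(E)$, the defining expressions match:
$$\widehat{\ow{E}}_\varphi(f) = \ow{E}(\varphi f) - \ow{E}(\varphi f^2,\varphi) = E(\varphi f) - E(\varphi f^2,\varphi) = \Eph(f).$$
By Lemma~\ref{lemma:closability of ep} the closures agree with the preliminary forms on bounded functions, hence $\ow{E}_\varphi(f) = \widehat{\ow{E}}_\varphi(f) = \Eph(f) = \Ep(f)$. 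Restricting the supremum defining $\ow{E}^{(M)}(f)$ to the subcollection $\{\varphi \in D(E) \mid 0 \leq \varphi \leq 1\}$ of $\{\varphi \in D(\ow{E}) \mid 0 \leq \varphi \leq 1\}$ then gives
$$\ow{E}^{(M)}(f) \geq \sup\{\Ep(f) \mid \varphi \in D(E),\ 0 \leq \varphi \leq 1\} = \Eg(f).$$
To reach arbitrary $f \in D(\ow{E})$ I would truncate: set $f^{(n)} := (f\wedge n)\vee(-n)$, so $f^{(n)} \overset{m}{\to} f$ with each $f^{(n)}$ bounded. Since $\Eg$ is lower semicontinuous and $\ow{E}^{(M)}$ is an energy form (so the truncation contraction property of Theorem~\ref{theorem:cutoff properties energy forms} applies), the bounded case yields
$$\Eg(f) \leq \liminf_n \Eg(f^{(n)}) \leq \liminf_n \ow{E}^{(M)}(f^{(n)}) \leq \ow{E}^{(M)}(f).$$
As $\ow{E}^{(M)}(f) \leq \ow{E}(f) < \infty$, this simultaneously proves $f \in D(\Eg)$ and the main-part inequality, establishing $D(\ow{E}) \subseteq D(\Eg)$.

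For the killing part, fix $g \in D(E)$; note $\ow{E}^{(M)}(g) \leq \ow{E}(g) = E(g) < \infty$, so $g \in D(\ow{E}^{(M)})$ and $\ow{E}^{(k)}(g) = \sup\{\widehat{\ow{E}}^{(k)}(h) \mid h \in D(\ow{E}),\ |h| \leq |g|\}$ with $\widehat{\ow{E}}^{(k)}(h) = \ow{E}(h) - \ow{E}^{(M)}(h)$. The decisive simplification is that $D(E)$ is an order ideal in $D(\ow{E})$ (Theorem~\ref{theorem:Ideal properties of energy forms}), so any competitor $h$ with $|h| \leq |g|$ already lies in $D(E)$. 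For such $h$ we have $\ow{E}(h) = E(h)$, and the first part gives $\ow{E}^{(M)}(h) \geq \Eg(h)$, whence
$$\widehat{\ow{E}}^{(k)}(h) = E(h) - \ow{E}^{(M)}(h) \leq E(h) - \Eg(h) = \Ek(h) \leq \Ekm(g),$$
the final step by the definition of $\Ekm$ since $h \in D(E)$ and $|h| \leq |g|$. Taking the supremum over $h$ yields $\ow{E}^{(k)}(g) \leq \Ekm(g)$.

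The main obstacle is purely bookkeeping: checking that the products $\varphi f$, $\varphi f^2$ land in $D(E)$ so the preliminary forms for $E$ and $\ow{E}$ are comparable, and replacing the closures $\Ep$, $\ow{E}_\varphi$ by the explicit preliminary forms $\Eph$, $\widehat{\ow{E}}_\varphi$ on the relevant bounded functions; both rest on Theorem~\ref{theorem:Ideal properties of energy forms} and Lemma~\ref{lemma:closability of ep}. The conceptual point that makes the killing estimate clean is that the order-ideal property forces every competitor for $\ow{E}^{(k)}(g)$ back into $D(E)$, where $\ow{E}$ and $E$ coincide and the just-proved main-part inequality can be invoked.
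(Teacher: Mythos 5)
Your proof is correct, and its first half is essentially the paper's argument: for bounded $f$ you use the algebraic-ideal property to get $\varphi f, \varphi f^2 \in D(E)$, identify $\ow{E}_\varphi(f)$ with $\Ep(f)$, and restrict the supremum defining $\ow{E}^{(M)}$ to cutoffs $\varphi \in D(E)$ — exactly what the paper does, except that you additionally spell out the passage to unbounded $f$ via truncation, lower semicontinuity of $\Eg$ and the contraction property of $\ow{E}^{(M)}$, a step the paper compresses into the phrase ``we can assume $f \in D(\ow{E}) \cap L^\infty(m)$''. Making that reduction explicit is a genuine improvement in rigor, since the identity $\ow{E}_\varphi(f) = \ow{E}(\varphi f) - \ow{E}(\varphi f^2,\varphi)$ only makes sense through the preliminary form on bounded functions.

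Where you genuinely diverge is the killing-part estimate. The paper disposes of it in one line: since $g \in D(E)$, both killing extensions restrict to their preliminary versions (Lemma~\ref{lemma:characterization of k}~(a)), so $\Ekm(g) = E(g) - \Eg(g) \geq \ow{E}(g) - \ow{E}^{(M)}(g) = \ow{E}^{(k)}(g)$, using $E(g) = \ow{E}(g)$ and the just-proved main-part inequality. You instead unpack $\ow{E}^{(k)}(g)$ as a supremum over competitors $h \in D(\ow{E})$ with $|h| \leq |g|$ and invoke the order-ideal property of Theorem~\ref{theorem:Ideal properties of energy forms} to force every competitor into $D(E)$, where you can compare term by term. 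Both routes are valid; the paper's is shorter and needs only the extension property of the killing parts, while yours trades that for the order-ideal characterization and a supremum-chasing argument. Your version does have one small payoff: it avoids citing the fact that $\ow{E}^{(k)}$ restricts to $\ow{E} - \ow{E}^{(M)}$ on $D(\ow{E})$, since each $\ow{E}(h) - \ow{E}^{(M)}(h)$ enters only as a term in the defining supremum — but for $E$ itself you still implicitly use that $\Ekm(g)$ dominates $\Ek(h)$ for admissible $h$, which is immediate from the definition, so the net saving is cosmetic. In short: same theorem, same first half, a slightly heavier but instructive detour in the second half.
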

 \begin{proof}
 For proving the first inequality we can assume $f \in D(\ow{E}) \cap L^\infty(m)$. Let $\varphi \in D(E)$ with $0 \leq \varphi \leq 1$. Since $\ow{E}$ is a Silverstein extension, we have $\varphi f \in D(E)$ and $\varphi f^2 \in D(E)$. This observation and the definition of $\ow{E}^{(M)}$ yield
 $$\ow{E}(f) \geq \ow{E}^{(M)}(f) \geq \ow{E}_\varphi(f) = \ow{E}(\varphi f) - \ow{E}(\varphi f^2,\varphi) = E(\varphi f) - E(\varphi f^2, \varphi) =  \Ep(f).$$
 Taking the supremum over all such $\varphi$ implies $D(\ow{E}) \subseteq D(\Eg)$ and $\ow{E}^{(M)} (f) \geq \Eg(f)$. For $g \in D(E)$,  the inequality on the main parts yields
 $$\Ekm(g) = E(g) - \Eg(g) \geq \ow{E}(g) - \ow{E}^{(M)}(g) = \ow{E}^{(k)}(g).$$
 This finishes the proof. 
 \end{proof}

 The following theorem is the main theorem concerning the maximality of the reflected energy form. 
 
 \begin{theorem}[Maximality of $\Er$] \label{theorem:maximality er}
  Let $E$ be an energy form. Let $\ow{E}$ be a Silverstein extension of $E$ such that for all $f \in D(E)$ the inequality $\Ekm(f) \leq \ow{E}^{(k)}(f)$ holds. Then  $\ow{E} \leq \Er$. In particular, if $\Ekm = 0$, then $\Er = \Eg$ is the maximal Silverstein extension of $E$. 
 \end{theorem}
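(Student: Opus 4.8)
The plan is to prove that $\Er$ is maximal among those Silverstein extensions whose killing dominates $\Ekm$ by combining Lemma~\ref{lemma:maximality of eg} with the monotonicity of the killing part. Recall that for any Silverstein extension $\ow{E}$ of $E$ we have the canonical decomposition $\ow{E} = \ow{E}^{(M)} + \ow{E}^{(k)}$ into its main and killing parts, so it suffices to compare the two summands of $\ow{E}$ with those of $\Er = \Eg + \Ekm$ separately. Since $\Er \leq \ow{E}$ in the form order means $D(\ow{E}) \subseteq D(\Er)$ together with the pointwise inequality $\Er(f) \leq \ow{E}(f)$ on $D(\ow{E})$, I will first establish the domain inclusion and then the inequality of form values.

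First I would invoke Lemma~\ref{lemma:maximality of eg}, which already gives $D(\ow{E}) \subseteq D(\Eg)$ and the main-part inequality $\ow{E}^{(M)}(f) \geq \Eg(f)$ for all $f \in D(\ow{E})$. The task then reduces to controlling the killing parts. The hypothesis furnishes $\Ekm(f) \leq \ow{E}^{(k)}(f)$ only for $f \in D(E)$, so the heart of the argument is to upgrade this to all $f \in D(\ow{E})$ and to deduce $f \in D(\Ekm)$. To do this I would use the characterization of $\Ekm$ via suprema over bounded, compactly cut-off functions provided by Lemma~\ref{lemma:characterization of k}~(d), namely that $\Ekm(f)$ is the supremum of $\Ek(\varphi f^{(n)})$ over $\varphi \in I_E$ and $n \in \IN$. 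For each such $\varphi$ and $n$ the function $\varphi f^{(n)}$ lies in $D(E)$ (because $\ow{E}$ is a Silverstein extension and $f$ is bounded after cutting off), so the hypothesis applies to give $\Ek(\varphi f^{(n)}) = \Ekm(\varphi f^{(n)}) \leq \ow{E}^{(k)}(\varphi f^{(n)})$. The monotonicity of $\ow{E}^{(k)}$ with respect to the absolute value, which holds by Lemma~\ref{lemma:characterization of k}~(b) applied to the extension $\ow{E}$, then bounds $\ow{E}^{(k)}(\varphi f^{(n)})$ by $\ow{E}^{(k)}(f)$ since $|\varphi f^{(n)}| \leq |f|$. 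Taking the supremum over $\varphi$ and $n$ yields $\Ekm(f) \leq \ow{E}^{(k)}(f) < \infty$, which simultaneously shows $f \in D(\Ekm)$ and the desired killing inequality.

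Combining the two inequalities gives, for every $f \in D(\ow{E})$,
$$\Er(f) = \Eg(f) + \Ekm(f) \leq \ow{E}^{(M)}(f) + \ow{E}^{(k)}(f) = \ow{E}(f),$$
and the domain inclusion $D(\ow{E}) \subseteq D(\Eg) \cap D(\Ekm) = D(\Er)$ follows from the finiteness established above. Hence $\ow{E} \leq \Er$ in the form order. For the final assertion, if $\Ekm = 0$ then trivially $\Ekm(f) = 0 \leq \ow{E}^{(k)}(f)$ for every Silverstein extension $\ow{E}$, so the hypothesis is automatic; since $\Er = \Eg$ is itself a Silverstein extension of $E$ by Theorem~\ref{theorem:properties of er}, it is the maximal one.

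The main obstacle I anticipate is the passage from the hypothesis on $D(E)$ to the conclusion on the larger domain $D(\ow{E})$; the key subtlety is that one cannot apply the hypothesis directly to an arbitrary $f \in D(\ow{E})$, and must instead pass through the cut-off approximants $\varphi f^{(n)} \in D(E)$ and exploit both the sup-characterization in Lemma~\ref{lemma:characterization of k}~(d) and the absolute-value monotonicity of $\ow{E}^{(k)}$. Care is needed to ensure that $\ow{E}^{(k)}$ genuinely satisfies this monotonicity, which requires that $\ow{E}$ is itself an energy form so that Lemma~\ref{lemma:characterization of k} is available for it; this is exactly the content of $\ow{E}$ being a Silverstein extension.
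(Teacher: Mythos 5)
Your proof is correct and follows essentially the same route as the paper: Lemma~\ref{lemma:maximality of eg} handles the main part, and the killing parts are compared by taking a supremum over functions in $D(E)$ dominated by $|f|$, applying the hypothesis there, and using the monotonicity of $\ow{E}^{(k)}$ from Lemma~\ref{lemma:characterization of k}~(b). The only (immaterial) difference is that you run the supremum over the cut-off family $\varphi f^{(n)}$ via Lemma~\ref{lemma:characterization of k}~(d), whereas the paper uses the definitional supremum $\sup\{\Ek(g) \mid g \in D(E),\, |g| \leq |f|\}$ directly.
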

\begin{proof}
  Let $\ow{E}$ be a Silverstein extension of $E$ and let $f \in D(\ow{E})$.  Lemma~\ref{lemma:maximality of eg} yields $\ow{E}^{(M)}(f) \geq \Eg(f)$. Furthermore,  the monotonicity of $\ow{E}^{(k)}$ and the definition of $\Ekm$ imply
  \begin{align*}
   \ow{E}^{(k)}(f)&\geq \sup\{\ow{E}^{(k)}(g) \mid g \in D(E) \text{ with } |g| \leq |f|\}\\
   &\geq  \sup\{E^{(k)}(g) \mid g \in D(E) \text{ with } |g| \leq |f|\}\\
   &= \Ekm(f).
  \end{align*}
  In terms of the order on quadratic forms, which we introduced in Subsection~\ref{section:basics on quadratic forms}, this means $\ow{E} \leq \Er$. The statement on the maximality of $\Er$ when $\Ekm = 0$ follows from what we have shown so far and the fact that $\Er$ is a Silverstein extension of $E$, see Theorem~\ref{theorem:properties of er}. This finishes the proof.
\end{proof}

 For Dirichlet forms the contents of the previous theorem read as follows.

\begin{corollary}\label{corollary:maximality er dirichlet form}
 Let $\E$ be a Dirichlet form on $L^2(m)$. Let $\ow{\E}$ be a Silverstein extension of $\E$ such that for all $f \in D(\E)$ the inequality $\E^{(k)}(f) \leq \ow{\E}^{(k)}(f)$ holds. Then  $\ow{\E} \leq \E^{{\rm ref}}$. In particular, if $\E^{(k)} = 0$, then $\E^{{\rm ref}}= \E^{(M)}$ is the maximal Silverstein extension of $\E$. 
\end{corollary}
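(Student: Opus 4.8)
The plan is to deduce Corollary~\ref{corollary:maximality er dirichlet form} directly from the energy form version, Theorem~\ref{theorem:maximality er}, by passing to extended Dirichlet forms. The key point is that all the objects in the statement for $\E$ are defined in Definition~\ref{definition:reflected dirichlet form} as $L^2(m)$-restrictions of the corresponding objects for the extended Dirichlet form $\Ee$. So the natural strategy is to translate the hypothesis and conclusion from the level of $\E$ to the level of $\Ee$, apply Theorem~\ref{theorem:maximality er} to the energy form $\Ee$, and then restrict back to $L^2(m)$.

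First I would recall that $\Ee$ is an energy form (it is the closure of $\E$ on $L^0(m)$) and that by definition $\E^{(M)} = \Ee^{(M)}|_{L^2(m)}$ and $\E^{(k)} = \Ee^{(k)}|_{L^2(m)}$, so that $\E^{{\rm ref}} = \Ee^{{\rm ref}}|_{L^2(m)}$. Next I would observe that if $\ow{\E}$ is a Silverstein extension of $\E$ in the sense of Theorem~\ref{theorem:silverstein extensions of dirichlet forms}, then $\ow{\Ee}$ is a Silverstein extension of the energy form $\Ee$ (this is precisely the equivalence (i)$\Leftrightarrow$(iii) in that theorem). Now the hypothesis $\E^{(k)}(f) \leq \ow{\E}^{(k)}(f)$ for all $f \in D(\E)$ needs to be upgraded to $\Ee^{(k)}(f) \leq \ow{\Ee}^{(k)}(f)$ for all $f \in D(\Ee)$. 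Here I would use that $D(\E) = D(\Ee) \cap L^2(m)$ is dense in $D(\Ee)$ in the form topology (Proposition~\ref{propostiont:domain extended dirichlet form} together with density of bounded functions, Proposition~\ref{proposition:approximation by bounded functions}), combined with the lower semicontinuity and continuity properties of the killing part established in Lemma~\ref{lemma:characterization of k}, in particular the lower semicontinuity statement (e) for $\Ekm$ and the fact that $\ow{\Ee}^{(k)}$ is continuous with respect to $\ow{\Ee}$ on $D(\ow{\Ee})$. This lets me pass the inequality on killing parts from the dense subspace $D(\E)$ to all of $D(\Ee)$.

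With the hypothesis established at the level of extended Dirichlet forms, I would invoke Theorem~\ref{theorem:maximality er} applied to $E = \Ee$ and $\ow{E} = \ow{\Ee}$ to conclude $\ow{\Ee} \leq \Ee^{{\rm ref}}$ in the order on quadratic forms. Restricting both sides to $L^2(m)$ and using the definitions of the $L^2$-objects then gives $\ow{\E} \leq \E^{{\rm ref}}$. For the final ``in particular'' claim, if $\E^{(k)} = 0$ then $\Ee^{(k)} = 0$ (again by the density argument, since $\Ee^{(k)}$ is determined by its values on $D(\E)$ via the monotone supremum in Definition~\ref{definition:killing part} and Lemma~\ref{lemma:characterization of k}(d)), so Theorem~\ref{theorem:maximality er} shows $\Ee^{{\rm ref}} = \Ee^{(M)}$ is the maximal Silverstein extension of $\Ee$; restricting to $L^2(m)$ yields that $\E^{{\rm ref}} = \E^{(M)}$ is the maximal Silverstein extension of $\E$.

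The main obstacle I anticipate is the upgrade of the killing-part inequality from $D(\E)$ to $D(\Ee)$. The subtlety is that $\ow{\Ee}^{(k)}$ and $\Ekm$ live on different (a priori incomparable) domains, and one must check that a form-topology approximating net $(f_i) \subseteq D(\E)$ of a given $f \in D(\Ee)$ can be used simultaneously to control $\Ee^{(k)}(f)$ from above via lower semicontinuity and $\ow{\Ee}^{(k)}(f)$ from the continuity side, all while the main parts are handled by Lemma~\ref{lemma:maximality of eg}. I expect this to require care in choosing the net so that $\liminf_i \Egm(f_i)$ stays finite, so that Lemma~\ref{lemma:characterization of k}(e) is applicable; once that is arranged the argument should close cleanly.
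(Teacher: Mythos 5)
Your proposal is correct and follows essentially the same route as the paper, whose entire proof of this corollary is the one-liner that it follows from Theorem~\ref{theorem:maximality er} together with Theorem~\ref{theorem:silverstein extensions of dirichlet forms}. The one step you elaborate --- upgrading $\E^{(k)} \leq \ow{\E}^{(k)}$ from $D(\E)$ to $D(\Ee)$ via the density of $D(\E)$ in $(D(\Ee),\tau_{\Ee})$, the lower semicontinuity statement of Lemma~\ref{lemma:characterization of k}~(e), and the $\ow{\Ee}$-continuity of $\ow{\Ee}^{(k)}$ --- is exactly the detail the paper's terse proof leaves implicit, and your handling of it (including the observation that $\liminf_i \Ee^{(M)}(f_i) < \infty$ holds automatically along a form-topology approximating net since $\Ee^{(M)} \leq \Ee$) closes it correctly.
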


\begin{proof}
 This follows from the previous theorem and Theorem~\ref{theorem:silverstein extensions of dirichlet forms}.
\end{proof}

\begin{remark} \label{remark:maximality er}
\begin{itemize}
 \item As discussed at the beginning of this section, the previous theorem and its corollary are extensions and a correction of the main result of \cite{Kuw} and of \cite[Theorem~6.6.9]{CF}. Due to some additional assumptions, the maximality statements on the reflected Dirichlet form in \cite{Sil} and in \cite{Tak} are correct. In \cite{Sil} the maximality of the reflected form is only claimed among all Silverstein extensions whose associated operators are contained in the 'local generator' of the given form. The notion of 'local generator' does not make sense for arbitrary Dirichlet forms but we can prove a similar statement for forms whose associated operator is a self-adjoint extension of a given densely defined symmetric operator, see Theorem~\ref{theorem:maximality er operator}. The work \cite{Tak} treats Dirichlet forms of certain differential operators  whose killing part vanishes.
 
 \item  The form $\Eg$ satisfies a more general maximality statement. If $\ow{E}$ is an energy form such that $D(E) \cap L^\infty(m)$ is an algebraic ideal in $D(\ow{E})\cap L^\infty(m)$ and for all $f,g \in D(E)$ the inequality $fg \geq 0$ implies
   $$E(f,g) \geq \ow{E}(f,g),$$
    then $\ow{E} \leq \Eg$ in the sense of quadratic forms. Note that in this case, $\ow{E}$ need not be an extension of $E$. Here, we only prove a special case, see Theorem~\ref{theorem:maximality eg}; the details for general forms will be given elsewhere.  When considering restrictions to $L^2(m)$, the condition $E(f,g) \geq \ow{E}(f,g)$ whenever $fg \geq 0$ is related to the domination of the associated semigroups, see \cite{MVV}. In this sense, the restriction of $\Eg$ to $L^2(m)$ is the maximal form whose associated semigroup dominates the semigroup of the restriction of $E$ to $L^2(m)$.
\end{itemize}
\end{remark}

 As already discussed, maximal Silverstein extensions do not always exists. The following proposition shows that our examples at the beginning of this section are a prototype for this phenomenon.
 
 \begin{proposition} \label{proposition:counterexample maximality er}
  Let $E$ be an energy form.  If $E$ has a recurrent Silverstein extension and $\Ekm \neq 0$, then $\Er$ is not the maximal Silverstein extension of $E$. If, additionally, $1 \in D(\Ekm)$, then $E$ does not possess a maximal Silverstein extension. 
 \end{proposition}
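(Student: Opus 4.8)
The plan is to base everything on one technical observation: \emph{if $1\in D(\Eg)$ and $\Ekm(1)=0$, then $\Ekm=0$.} To establish it I would first use $D(E)\subseteq D(\Eg)$ together with the monotonicity of the killing part (Lemma~\ref{lemma:characterization of k}(b)) and the fact that $\Ekm$ extends $\Ek$ (Lemma~\ref{lemma:characterization of k}(a)): for every $g\in D(E)$ with $|g|\le 1$ this gives $\Ek(g)=\Ekm(g)\le\Ekm(1)=0$. For an arbitrary $f\in D(\Eg)$ the supremum formula of Lemma~\ref{lemma:characterization of k}(d) writes $\Ekm(f)$ as a supremum of values $\Ek(\varphi f^{(n)})$ with $\varphi\in D(E)$, $0\le\varphi\le1$ and $n\in\IN$; since $n^{-1}\varphi f^{(n)}\in D(E)$ and $|n^{-1}\varphi f^{(n)}|\le 1$, homogeneity of $\Ek$ yields $\Ek(\varphi f^{(n)})=n^2\Ek(n^{-1}\varphi f^{(n)})=0$, whence $\Ekm(f)=0$. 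This observation is the engine of both assertions.

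For the first assertion, suppose $\Er$ were the maximal Silverstein extension and let $\ow{E}$ be a recurrent Silverstein extension of $E$. Maximality gives $\ow{E}\le\Er$, so $1\in D(\ow{E})\subseteq D(\Er)$ and $\Er(1)\le\ow{E}(1)=0$; thus $\Er$ is recurrent. As $\Er=\Eg+\Ekm$ with both summands nonnegative and $1\in D(\Er)\subseteq D(\Eg)$, the equality $\Er(1)=0$ forces $\Ekm(1)=0$, and the observation yields $\Ekm=0$, contradicting $\Ekm\ne0$. Hence $\Er$ is not maximal.

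For the second assertion, assume $E$ admits a maximal Silverstein extension $E_{\max}$. Exactly as above, the recurrent Silverstein extension forces $E_{\max}(1)\le 0$, so $E_{\max}$ is recurrent, i.e. $1\in\ker E_{\max}$. Because $1\in D(\Ekm)$ we have $\Ekm(1)<\infty$, and since $\Ekm\ne0$ the observation forces $\Ekm(1)>0$. Using $\Ekm\ne0$ I would pick $g_0\in D(E)$ with $\Ek(g_0)>0$; replacing $g_0$ by $|g_0|$ and then by a truncation $g_0\wedge n$ for large $n$ (harmless by Lemma~\ref{lemma:properties of ek} and the continuity of $\Ek$ with respect to $E$), I may assume $g_0\in D(E)\cap L^\infty(m)$, $g_0\ge0$ and $\Ek(g_0)>0$. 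Set $g:=g_0^2\in D(E)\cap L^\infty(m)$. The product-rule identity of Lemma~\ref{lemma:characterization of k}(c), applied to $g\cdot 1=g_0\cdot g_0$, gives $\Ekm(g,1)=\Ekm(g_0)=\Ek(g_0)>0$. Consider the quadratic polynomial $p(c):=\Ekm(g-c)=\Ekm(g)-2c\,\Ekm(g,1)+c^2\Ekm(1)$; since $\Ekm(1)>0$ and $\Ekm(g,1)\ne0$, its minimizer $c^\ast=\Ekm(g,1)/\Ekm(1)$ is nonzero and $p(c^\ast)<p(0)=\Ekm(g)=\Ek(g)$.

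Finally I would assemble the contradiction. Since $g-c^\ast\in D(\Ekm)=D(\Er)$ and $\Er\le E_{\max}$, and since $1\in\ker E_{\max}$ while $\Eg$ is recurrent (so $1\in\ker\Eg$), I obtain
\[
E(g)=E_{\max}(g)=E_{\max}(g-c^\ast)\le \Er(g-c^\ast)=\Eg(g)+p(c^\ast)<\Eg(g)+\Ek(g)=E(g),
\]
where the first equality uses that $E_{\max}$ extends $E$, the second and $\Eg(g-c^\ast)=\Eg(g)$ use recurrence, and the strict inequality uses $p(c^\ast)<\Ek(g)$. This is absurd, so no maximal Silverstein extension exists. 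The step I expect to be the main obstacle is exactly the production of a constant that \emph{strictly} decreases the killing energy: this is why squaring is crucial, as the product rule $\Ekm(g_0^2,1)=\Ek(g_0)$ guarantees $\Ekm(g,1)\ne0$ and hence a nonzero minimizer $c^\ast$; for a general $g$ the cross term $\Ekm(g,1)$ could vanish, and then subtracting a constant would not lower the energy at all.
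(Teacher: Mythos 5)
Your proposal is correct and takes essentially the same route as the paper: both parts rest on the recurrence of the hypothetical maximal extension, the comparison $E_{\max}(f-\alpha)\leq \Er(f-\alpha)$, the quadratic expansion of $\Ekm(f-\alpha)$ in the constant, and the product identity $\Ekm(f^2,1)=\Ekm(f,f)$ from Lemma~\ref{lemma:characterization of k}. The differences are only presentational: where the paper lets $\alpha\to 0\pm$ to conclude $\Ekm(f,1)=0$ for all $f\in D(E)$ and then kills $\Ekm$ via the product rule and density, you evaluate the same quadratic at its minimizer $c^{\ast}$ for the single squared witness $g_0^2$ to get the strict inequality $E(g)<E(g)$, and your preliminary observation merely spells out (via Lemma~\ref{lemma:characterization of k}~(d) and homogeneity) the paper's terse appeal to the monotonicity of $\Ekm$ in the first assertion.
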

 \begin{proof}
  Let $E_1$ be a recurrent Silverstein extension of $E$. Assume that $\Er$ is the maximal Silverstein extension of $E$. We obtain 
  $$\Ekm(1) \leq \Er(1) \leq E_1(1) = 0.$$
  Due to the monotonicity of $\Ekm$, this observation implies $\Ekm = 0$, a contradiction to the assumption that $\Ekm \neq 0$.
  
  Now, assume $1 \in D(\Ekm)$ and that $\ow{E}$ is the maximal Silverstein extension of $E$. Due to its maximality, we have $\ow{E}(1) \leq E_1(1) = 0$, i.e., $\ow{E}$ is recurrent. For $f \in D(E)$ and $\alpha \in \IR$, we obtain 
  $$\Eg(f) + \Ekm(f) = E(f) = \ow{E}(f) = \ow{E}(f-\alpha) \leq \Er (f-\alpha) = \Eg(f) + \Ekm(f-\alpha).$$
  This implies 
  $$2 \alpha \Ekm(f,1) \leq \alpha^2 \Ekm(1).$$
  Dividing by $\alpha$ and letting $\alpha \to 0\pm$ yields $\Ekm(f,1) = 0$ for all $f \in D(E)$. According to Lemma~\ref{lemma:properties of ek}, for each  $f \in D(E)\cap L^\infty(m)$ we have 
  $$\Ekm(f) = \Ekm(f^2,1) = 0.$$
  Since bounded functions are $E$-dense in $D(E)$ and since $\Ekm$ is continuous with respect to $E$, these considerations show $\Ekm = 0$, a contradiction to the assumption $\Ekm \neq 0$. This finishes the proof.   
 \end{proof}

 \begin{remark}
 The observation that lies at the heart of the previous proposition is that maximal Silverstein extensions of forms that possess a recurrent Silverstein extension can not have a killing part. In particular, $\Er$ can not always be the maximal Silverstein extension. 
 \end{remark}

  The following theorem is a weaker version of the  general maximality statement for the form $\Eg$ that was mentioned in Remark~\ref{remark:maximality er}.  One of its corollaries is quite important for the theory in Chapter~\ref{chapter:weak solutions}.

 \begin{theorem}\label{theorem:maximality eg}
  Let $E$ and $\ow{E}$ be energy forms with the following properties:
  \begin{itemize}
   \item $D(E) \cap L^\infty(m)$ is an algebraic ideal in $D(\ow{E}) \cap L^\infty(m)$.
   \item For all $f\in D(E) \cap L^\infty(m)$ the identity $E(f) = \ow{E}(f) + \Ekm(f)$ holds.  
  \end{itemize}
 Then $D(\ow{E}) \subseteq D(\Eg)$ and for all  $f \in D(\ow{E})$ we have $\Eg(f) \leq \ow{E}(f)$. 
 \end{theorem}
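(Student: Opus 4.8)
The plan is to prove the inequality first for bounded functions and then to bootstrap to all of $D(\ow{E})$ by truncation and lower semicontinuity. Throughout, the essential observation is that on $D(E)\cap L^\infty(m)$ the hypothesis $E = \ow{E} + \Ekm$ polarizes: all three forms are finite there, since $0 \le \ow{E}(g),\Ekm(g) \le E(g) < \infty$ for $g \in D(E)\cap L^\infty(m)$, which in particular gives the domain inclusion $D(E)\cap L^\infty(m) \subseteq D(\ow{E})$ and the bilinear identity
$$E(g,h) = \ow{E}(g,h) + \Ekm(g,h), \qquad g,h \in D(E)\cap L^\infty(m).$$

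Fix a bounded $u \in D(\ow{E})\cap L^\infty(m)$ and a function $\varphi \in D(E)$ with $0 \le \varphi \le 1$. First I would check, applying the ideal hypothesis twice, that $\varphi u \in D(E)\cap L^\infty(m)$ and then $\varphi u^2 = (\varphi u)\,u \in D(E)\cap L^\infty(m)$; hence $u \in D(\Eph)$ by Lemma~\ref{lemma:properties of eph}, and, since $\varphi,\varphi u,\varphi u^2 \in D(\ow{E})$, also $u$ lies in the domain of the corresponding preliminary form $\widehat{\ow{E}}_\varphi$ for $\ow{E}$. Substituting the polarized identity into the definition of $\Eph$ gives
$$\Eph(u) = E(\varphi u) - E(\varphi u^2,\varphi) = \bigl[\ow{E}(\varphi u) - \ow{E}(\varphi u^2,\varphi)\bigr] + \bigl[\Ekm(\varphi u) - \Ekm(\varphi u^2,\varphi)\bigr].$$
The first bracket is exactly $\widehat{\ow{E}}_\varphi(u)$. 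The second bracket vanishes: since $(\varphi u)(\varphi u) = \varphi^2 u^2 = (\varphi u^2)\varphi$ and all the functions involved lie in $D(E) \subseteq D(\Ekm)$, the product invariance of the killing part, Lemma~\ref{lemma:characterization of k}(c), yields $\Ekm(\varphi u) = \Ekm(\varphi u,\varphi u) = \Ekm(\varphi u^2,\varphi)$. Therefore $\Eph(u) = \widehat{\ow{E}}_\varphi(u) = \ow{E}_\varphi(u) \le \ow{E}(u)$, where the middle equality holds on $L^\infty(m)$ by Lemma~\ref{lemma:closability of ep} applied to $\ow{E}$ and the final inequality is Lemma~\ref{lemma:properties of ep}(b) for $\ow{E}$. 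As $\Ep(u) = \Eph(u)$ for bounded $u$ by Lemma~\ref{lemma:closability of ep}, taking the supremum over all admissible $\varphi$ gives $\Eg(u) = \sup_\varphi \Eph(u) \le \ow{E}(u)$; in particular $u \in D(\Eg)$.

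To remove the boundedness assumption, let $f \in D(\ow{E})$ be arbitrary and set $f^{(n)} := (f \wedge n)\vee(-n)$. These truncations lie in $D(\ow{E})\cap L^\infty(m)$ by the contraction properties of $\ow{E}$ and converge to $f$ both in $\tau(m)$ and in the form topology of $\ow{E}$ by Proposition~\ref{proposition:approximation by bounded functions}; hence $\ow{E}(f^{(n)}) \to \ow{E}(f)$. Since $\Eg$ is an energy form, it is lower semicontinuous with respect to $\tau(m)$ by Theorem~\ref{theorem:characterization closedness}, so the previous step gives
$$\Eg(f) \le \liminf_{n\to\infty} \Eg(f^{(n)}) \le \liminf_{n\to\infty} \ow{E}(f^{(n)}) = \ow{E}(f),$$
which shows $f \in D(\Eg)$ and $\Eg(f) \le \ow{E}(f)$, completing the argument.

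I expect the crux to be the cancellation of the two killing terms: the identity $\Ekm(\varphi u) = \Ekm(\varphi u^2,\varphi)$ is precisely the mechanism that turns the expansion of $\Eph$ (computed with $E$) into the matching expression for $\ow{E}$, and it rests entirely on the measure-like behavior of $\Ekm$ encoded in Lemma~\ref{lemma:characterization of k}(c). The rest is bookkeeping—confirming via the ideal hypothesis that all products remain in $D(E)\cap L^\infty(m)$, and that the preliminary forms $\Eph$ and $\widehat{\ow{E}}_\varphi$ coincide with their closures on bounded functions—together with the routine lower-semicontinuity bootstrap. Note that $\ow{E}$ is genuinely \emph{not} an extension of $E$ even on bounded functions (one has $\ow{E}(g) = E(g) - \Ekm(g)$), so the argument cannot invoke Lemma~\ref{lemma:maximality of eg} and must instead exploit the split of $E$ directly.
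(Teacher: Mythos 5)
Your proof is correct and follows essentially the same route as the paper's: the heart of both arguments is the cancellation $\Ekm(\varphi f) = \Ekm(\varphi f^2,\varphi)$ from Lemma~\ref{lemma:characterization of k}~(c), which converts the expansion of $\Eph$ (computed with $E$ via the split hypothesis) into $\widehat{\ow{E}}_\varphi \leq \ow{E}$, followed by taking the supremum over $\varphi$. The only difference is cosmetic: the paper runs the identical computation in the reverse direction, starting from $\ow{E}(f) \geq \ow{E}_\varphi(f)$, and leaves the passage from bounded $f$ to general $f \in D(\ow{E})$ implicit, whereas you spell out the truncation and lower-semicontinuity bootstrap explicitly.
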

 \begin{proof}
  Let $f \in D(\ow{E})\cap L^\infty(m)$ and let $\varphi \in D(E)$ with $0\leq \varphi \leq 1$.  Since $D(E) \cap L^\infty(m)$ is an algebraic ideal in $D(\ow{E}) \cap L^\infty(m)$, we have $\varphi f \in D(E)$. Furthermore, the second property of $\ow{E}$ yields $\varphi \in D(\ow{E})$. With the help of Lemma~\ref{lemma:properties of ep} applied to $\ow{E}$, we deduce
  \begin{align*}
   \ow{E}(f) &\geq \ow{E}_\varphi(f)\\
   &= \ow{E} (\varphi f) - \ow{E}(\varphi f^2, \varphi)\\
   &= \ow{E} (\varphi f) + \Ekm(\varphi f) -  \Ekm(\varphi f^2 ,\varphi) - \ow{E}(\varphi f^2, \varphi)\\
   &= E(\varphi f) - E(\varphi f^2,\varphi)\\
   &= \Ep(f).
  \end{align*}
  Here, we used the identity $\Ekm(\varphi f) =  \Ekm(f^2\varphi,\varphi)$  of Lemma~\ref{lemma:characterization of k}~(c). Taking the supremum over all such $\varphi$ finishes the proof.
 \end{proof}
 There is an immediate corollary to  the previous theorem that provides an alternative characterization of $\Eg$ in terms of special nests (cf. Section~\ref{section:nests and local spaces}).  
 \begin{corollary}\label{corollary:alternative formula for eg} 
  Let $E$ be an energy form. Let $\cN$ be a special $E$-nest and let 
  $$I_\cN := \{\varphi \in D(E)_\cN \mid 0 \leq \varphi \leq 1\}.$$
  For all $f \in L^0(m)$ we have
  $$\Eg(f) = \sup_{\varphi \in I_\cN} \Ep(f).$$
 \end{corollary}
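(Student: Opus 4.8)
The plan is to prove the identity by introducing the auxiliary functional $F(f) := \sup_{\varphi \in I_\cN}\Ep(f)$ and showing $F = \Egm$. First I would record that $I_\cN$ is upward directed: $D(E)_\cN$ is a lattice by Proposition~\ref{proposition:properties of den}, so $\varphi_1 \vee \varphi_2 \in I_\cN$ whenever $\varphi_1,\varphi_2 \in I_\cN$, and by Lemma~\ref{lemma:properties of ep}~(a) the net $(\Ep)_{\varphi \in I_\cN}$ is monotone increasing. Hence $F$ is a monotone supremum of the recurrent energy forms $\Ep$, and as in the proof that $\Egm$ is an energy form it is itself an energy form by Lemma~\ref{lemma:monotone nets} together with the contraction properties inherited from each $\Ep$. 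Since $I_\cN \subseteq I_E$ and $\Egm(f) = \sup_{\varphi \in I_E}\Ep(f)$, the easy inequality $F(f) \leq \Egm(f)$ holds for all $f$, so the whole content is the reverse pointwise inequality $\Egm(f) \leq F(f)$. This need only be checked when $f \in D(F)$, because otherwise $F(f) = \infty$ and the inequality $F \le \Egm$ already forces $\Egm(f) = \infty = F(f)$.

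For the reverse inequality I would fix $f \in D(F)$, fix $\varphi \in I_E$, and reduce to bounded functions: using Proposition~\ref{proposition:approximation by bounded functions} applied to the energy form $\Ep$ one has $\Ep(f) = \lim_n \Ep(f^{(n)})$, and since $\Egm = \sup_{\varphi \in I_E}\Ep$, it suffices to prove $\Eph(f^{(n)}) \leq F(f)$ for every $n$ and every $\varphi \in I_E$. To approximate the arbitrary cutoff $\varphi$ by nest-supported ones, I would invoke the density of $D(E)_\cN$ in $D(E)$ to pick $\psi_i \in D(E)_\cN$ with $\psi_i \to \varphi$ in the form topology, contract them into $[0,1]$, and set $\tilde\psi_i := \psi_i \wedge \varphi$. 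Then $\tilde\psi_i \in I_\cN$ (it lies in $D(E)$, is bounded by $1$ and vanishes off the same nest set as $\psi_i$), $\tilde\psi_i \leq \varphi$, and $\tilde\psi_i \to \varphi$ in the form topology because $E(\tilde\psi_i - \varphi) = E((\varphi-\psi_i)_+) \leq E(\varphi-\psi_i) \to 0$ as $(\cdot)_+$ is a normal contraction. Moreover $f \in D(F)$ gives $\Ep[\psi_i](f) \leq F(f) < \infty$, whence $\psi_i f^{(n)} \in D(E)$ and, since $\tilde\psi_i \leq \psi_i$, Lemma~\ref{lemma:maximal silverstein extension technical lemma}~(b) yields $\tilde\psi_i f^{(n)} \in D(E)$.

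The heart of the argument is then a weak-limit estimate. Monotonicity in the cutoff gives $E_{\tilde\psi_i}(f^{(n)}) \leq E_{\psi_i}(f^{(n)}) \leq F(f^{(n)}) \leq F(f)$, and feeding this together with Lemma~\ref{lemma:maximal silverstein extension technical lemma}~(a) into the quadratic bound for $E(\tilde\psi_i (f^{(n)})^2)$ shows that the energies $E(\tilde\psi_i f^{(n)})$ stay bounded. By Lemma~\ref{lemma:existence of a weakly convergent subnet} this forces $\tilde\psi_i f^{(n)} \to \varphi f^{(n)}$ $E$-weakly (so in particular $\varphi f^{(n)} \in D(E)$ and $\Eph(f^{(n)})$ is well defined), while $\tilde\psi_i \to \varphi$ strongly in $E$. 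Writing $E_{\tilde\psi_i}(f^{(n)}) = E(\tilde\psi_i f^{(n)}) - E(\tilde\psi_i (f^{(n)})^2, \tilde\psi_i)$, I would pass to the liminf using weak lower semicontinuity (Lemma~\ref{lemma:form weak convergence implies lower limit}) on the first term and weak–strong convergence of the off-diagonal term to obtain $\liminf_i E_{\tilde\psi_i}(f^{(n)}) \geq \Eph(f^{(n)})$. Combined with $E_{\tilde\psi_i}(f^{(n)}) \leq F(f)$ this gives $\Eph(f^{(n)}) \leq F(f)$; letting $n \to \infty$ and taking the supremum over $\varphi \in I_E$ yields $\Egm(f) \leq F(f)$, completing the proof. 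This mechanism is exactly that of Theorem~\ref{theorem:maximality eg}, now localized along the special nest, which is why the statement is essentially a corollary of it.

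The step I expect to be the main obstacle is the weak-limit estimate of the last paragraph: one must simultaneously secure the uniform energy bound on $\tilde\psi_i f^{(n)}$ (which is what lets one conclude $\varphi f^{(n)} \in D(E)$ a posteriori rather than assuming it), correctly identify the $E$-weak limit as $\varphi f^{(n)}$, and handle the bilinear off-diagonal term under the mixed weak/strong convergence of its two arguments. A minor accompanying subtlety is that both $F$ and $\Egm$ depend on $f$ only through $f\,1_{\sE}$, so one should either restrict attention to functions supported on $\sE$ or note that the $\tau(m)$-convergence $\tilde\psi_i f^{(n)} \to \varphi f^{(n)}$ already takes place on $\sE$, where $\sup_{\varphi\in I_\cN}\varphi = 1$.
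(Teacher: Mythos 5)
Your proposal is correct in substance, but it takes a genuinely different route from the paper. The paper treats $\ow{E}(f):=\sup_{\varphi\in I_\cN}\Ep(f)$ as a new energy form and deduces the corollary by verifying the two hypotheses of Theorem~\ref{theorem:maximality eg}: first the identity $E(f)=\ow{E}(f)+\Ekm(f)$ on $D(E)\cap L^\infty(m)$ --- proven on $D(E)_\cN\cap L^\infty(m)$ via the formula $E(f)-\Ep(f)=E(f^2,\varphi)$ for $\varphi\geq\psi\geq 1_{\{|f|>0\}}$, the monotonicity of $\varphi\mapsto E(f^2,\varphi)$ coming from Lemma~\ref{lemma:cutoff for functions with disjoint support} and Lemma~\ref{lemma:cutoff for functions beeing one on support}, and the density of $I_\cN$ in $\{\varphi\in D(E)\mid 0\leq\varphi\leq 1\}$, then extended by $E$-continuity of $\ow{E}$ and $\Ekm$ --- and second the algebraic ideal property of $D(E)\cap L^\infty(m)$ in $D(\ow{E})\cap L^\infty(m)$, obtained by a lower-semicontinuity limiting argument. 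You instead prove the reverse inequality $\Eg\leq F$ directly, approximating each fixed $\varphi\in I_E$ by nest cutoffs and running a weak-compactness argument in the style of Lemma~\ref{lemma:closability of ep}: uniform bound on $E(\tilde\psi_i f^{(n)})$, weak convergence via Lemma~\ref{lemma:existence of a weakly convergent subnet}, lower semicontinuity of the diagonal term via Lemma~\ref{lemma:form weak convergence implies lower limit}, and weak--strong convergence of the off-diagonal term. This buys you a self-contained proof that never touches $\Ekm$ or the ideal property --- in effect you re-prove the mechanism of Theorem~\ref{theorem:maximality eg} in the special case, as you yourself observe --- whereas the paper's route reuses that theorem as a black box and produces as a byproduct the formula $\Ekm(f)=\inf\{E(f^2,\varphi)\mid \varphi\in I_\cN,\,\varphi\geq\psi\}$, which is exploited again later (e.g.\ in the proof of Theorem~\ref{theorem:maximality er operator}).

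One step of yours needs repair. After you ``contract the $\psi_i$ into $[0,1]$'', the displayed convergence $E(\tilde\psi_i-\varphi)=E((\varphi-\psi_i)_+)\leq E(\varphi-\psi_i)\to 0$ no longer follows as written: with $\psi_i':=(\psi_i\vee 0)\wedge 1$ the estimate becomes $E((\varphi-\psi_i')_+)\leq E(\varphi-\psi_i')$, and within the paper's toolkit normal contractions are not known to act continuously in the form topology, so $E(\varphi-\psi_i')\to 0$ is not automatic; nor is there a two-variable contraction estimate reducing $(\varphi-\psi_i')_+$ to $\varphi-\psi_i$ (one checks that $(a,b)\mapsto (a-(b\vee 0)\wedge 1)_+$ is not a contraction of $a-b$). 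The fix stays inside the framework: since $E(\psi_i')\leq E(\psi_i)\to E(\varphi)$ and $\psi_i'\to\varphi$ in measure, Lemma~\ref{lemma:characterization convergence in form topology lsc forms} upgrades this to form-topology convergence $\psi_i'\to\varphi$, and your estimate then applies verbatim to $\tilde\psi_i:=\psi_i'\wedge\varphi$, which lies in $I_\cN$ and satisfies $0\leq\tilde\psi_i\leq\psi_i'$ as required by Lemma~\ref{lemma:properties of eph}~(c) and Lemma~\ref{lemma:maximal silverstein extension technical lemma}~(b). With this patch the argument goes through; your closing worry about the support of $E$ is vacuous here, since the form topology contains $\tau(m)$, so the convergence $\tilde\psi_i\to\varphi$ and hence $\tilde\psi_i f^{(n)}\to\varphi f^{(n)}$ holds globally in measure.
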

 \begin{proof}
  We define the form $\ow{E}:L^0(m) \to [0,\infty]$ by
  $$\ow{E}(f):= \sup_{\varphi \in I_\cN} \Ep(f).$$
  The same arguments that we used to show that $\Eg$ is an energy form apply to $\ow{E}$. Therefore, $\ow{E}$ is an energy form. For proving the statement, it suffices to confirm the two assumptions of Theorem~\ref{theorem:maximality eg}. 
  
   We start by showing the identity $E(f) = \ow{E}(f) + \Ekm(f)$ for $f \in D(E) \cap L^\infty(m)$. To this end, we first let $f \in D(E)_\cN \cap L^\infty(m)$. Since $\cN$ is special, there exists a function $\psi \in I_\cN$ such that $\psi \geq 1_{\{|f| > 0\}}$. For any $\varphi \in D(E)$ with $\psi \leq \varphi \leq 1$ we have $\varphi f = f$ and, therefore,
  $$E(f) - \Ep(f) = E(f) - E(\varphi f) + E(\varphi f^2, \varphi) = E(f^2,\varphi).$$
  The set  $I_\cN$ is dense in  $\{\varphi \in D(E) \mid 0\leq \varphi \leq 1\}$ with respect to the form topology and, for $\varphi \geq \psi$,  the expression $E(f^2,\varphi)$ is nonnegative and monotone decreasing in $\varphi$, see Lemma~\ref{lemma:cutoff for functions with disjoint support} and Lemma~\ref{lemma:cutoff for functions beeing one on support}.  These considerations imply
  \begin{align*}
   \Ekm(f) &= E(f) - \Eg(f)\\
   &= \inf \{E(f) - \Ep(f) \mid \varphi \in D(E) \text{ with } \psi \leq \varphi \leq 1\}\\
   &= \inf \{E(f^2,\varphi) \mid \varphi \in D(E) \text{ with } \psi \leq \varphi \leq 1\}\\
   &= \inf \{E(f^2,\varphi) \mid \varphi \in I_\cN \text{ with } \psi \leq \varphi \}\\
   &= E(f) - \ow{E}(f).
  \end{align*}
  Since $D(E)_\cN \cap L^\infty(m)$ is dense in $D(E) \cap L^\infty(m)$ and $\ow{E},\Ekm$ are continuous with respect to $E$, the above equation extends to all $f \in D(E) \cap L^\infty(m)$. 
  
  It remains to prove that $D(E)\cap L^\infty(m)$ is an algebraic ideal in $D(\ow{E})\cap L^\infty(m)$. By the definition of $\ow{E}$, the algebra $D(E)_\cN \cap L^\infty(m)$ is an algebraic ideal in $D(\ow{E})\cap L^\infty(m)$; we have to show that this is stable under taking limits. Let $f \in D(E) \cap L^\infty(m)$ and let $g \in D(\ow{E})\cap L^\infty(m)$. We choose a net $(f_i) \in D(E)_\cN \cap L^\infty(m)$ that satisfies $\|f_i\|_\infty \leq \|f\|_\infty$ and $f_i \to f$ in the form topology of $E$. What we have already shown yields $\ow{E}(f_i) \leq E(f_i)$, $f_i g \in D(E)\cap L^\infty(m)$ and $E(f_i g) = \ow{E}(f_ig) + \Ekm(f_i g)$. From the lower semicontinuity of $E$  we obtain 
  $$E(fg) \leq \liminf_i E(f_i g) = \liminf_i\left[ \ow{E}(f_ig)  +   \Ekm(f_i g)\right].$$
  Since $\ow{E}$ is an energy form, Theorem~\ref{theorem:algebraic and order properties} yields
  $$\ow{E}(f_i g)^{1/2} \leq \|f_i\|_\infty\ow{E}(g)^{1/2} + \|g\|_\infty \ow{E}(f_i)^{1/2} \leq  \|f\|_\infty\ow{E}(g)^{1/2} + \|g\|_\infty E(f_i)^{1/2}. $$
  Furthermore, the properties of $\Ekm$ imply
  $$\Ekm(f_i g) \leq  \|g\|_\infty^2\Ekm(f_i) \leq \|g\|_\infty^2E(f_i). $$
  Combining these observations, we infer $E(fg) < \infty$. This finishes the proof. 
 \end{proof}

  We have not only obtained the maximal Silverstein extension but also constructed a decomposition of the given energy form into a main part and a killing part. With this at hand, we can give a further characterization of recurrence. It shows that from a conceptual viewpoint uniqueness of Silverstein extensions and recurrence are almost the same.

\begin{theorem} \label{theorem:recurrence and uniqueness}
The following assertions are equivalent.
\begin{itemize}
 \item[(i)] $E$ is recurrent.
 \item[(ii)] $E = \Egm$.
 \item[(iii)] ${\rm Sil}(E) = \{E\}$ and $\Ekm = 0$. 
\end{itemize}
\end{theorem}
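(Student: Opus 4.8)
The plan is to prove the equivalence by establishing the implications (i) $\Rightarrow$ (iii) $\Rightarrow$ (ii) $\Rightarrow$ (i), drawing on the structural results already developed. First I would tackle (i) $\Rightarrow$ (iii). The Silverstein uniqueness part, ${\rm Sil}(E) = \{E\}$, is immediate from Theorem~\ref{theorem:recurrent forms are Silverstein unique}, since recurrence gives $1 \in \ker E \subseteq D(E)$. For the vanishing of the killing part, I would use the explicit formulas for $\Ek$ and $\Ekm$. Recurrence means $1 \in \ker E$, which in particular gives $\Eph(1) = E(\varphi) - E(\varphi \cdot 1, \varphi) = E(\varphi) - E(\varphi) = 0$ for every admissible $\varphi$; more importantly, I would argue that for $f \in D(E) \cap L^\infty(m)$ one has $\Ek(f) = E(f) - \Eg(f)$ and show directly that recurrence forces $\Eg(f) = E(f)$. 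The cleanest route is to invoke Corollary~\ref{corollary:alternative formula for eg}: choosing a special nest $\cN$ and using that the constant function $1 \in \ker E$ lets us take $\varphi = 1$ as an admissible cutoff in the supremum defining $\Eg$, yielding $\Eg(f) = E(f)$ and hence $\Ek = 0$, so $\Ekm = 0$ by its definition as a supremum of $\Ek$-values.

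Next I would prove (iii) $\Rightarrow$ (ii). Given $\Ekm = 0$, Theorem~\ref{theorem:maximality er} tells us that $\Er = \Eg$ is the maximal Silverstein extension of $E$. But ${\rm Sil}(E) = \{E\}$ means $E$ is its own maximal Silverstein extension, so $\Eg = \Er = E$, which is exactly (ii). This step is short and leans entirely on the maximality theorem combined with Silverstein uniqueness.

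For (ii) $\Rightarrow$ (i), I would argue that $E = \Eg$ forces recurrence because $\Eg$ is a recurrent energy form: this was established in the lemma immediately following Definition~\ref{definition:main part}, where $\Eg$ is shown to be a recurrent energy form (its recurrence inherited from each $\Ep$ satisfying $\Ep(1) = \Eph(1) = 0$). Thus if $E = \Eg$, then $E$ itself is recurrent, i.e.\ $1 \in \ker E$. This closes the cycle.

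The main obstacle I anticipate is the direction (i) $\Rightarrow$ ($\Ekm = 0$), since it requires carefully unwinding the definitions of $\Eph$, $\Eg$, and $\Ekm$ in the recurrent setting and confirming that $1 \in \ker E$ genuinely collapses the preliminary killing form $\Ek$ to zero on all of $D(E)$. The subtlety is that $\Ek(f) = E(f) - \Eg(f)$ and one must verify $\Eg(f) = E(f)$ for \emph{every} $f \in D(E)$, not merely bounded ones; I would handle the unbounded case by the density of bounded functions (Proposition~\ref{proposition:approximation by bounded functions}) together with the continuity of $\Ek$ with respect to $E$ noted in Lemma~\ref{lemma:properties of ek}. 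Everything else reduces to bookkeeping with the already-proven maximality and recurrence facts.
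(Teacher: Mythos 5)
Your proof is correct, and it draws on the same structural results as the paper, but it routes the cycle differently and proves the key step --- recurrence forces $\Ekm = 0$ --- by a genuinely different computation. The paper proves (i) $\Rightarrow$ (ii) $\Rightarrow$ (iii) $\Rightarrow$ (i): for (i) $\Rightarrow$ (ii) it combines Theorem~\ref{theorem:recurrent forms are Silverstein unique} with Theorem~\ref{theorem:maximality er} and kills the killing by pure monotonicity, namely for bounded $f \in D(E)$ it estimates via Lemma~\ref{lemma:characterization of k}~(b) that $\Ekm(f) \leq \|f\|_\infty^2\, \Ekm(1) \leq \|f\|_\infty^2\, E(1) = 0$ and then uses density of bounded functions; and for (iii) $\Rightarrow$ (i) it needs only that $\Er$ is a Silverstein extension (Theorem~\ref{theorem:properties of er}), so that ${\rm Sil}(E) = \{E\}$ and $\Ekm = 0$ force $E = \Er = \Egm$, which is recurrent --- the maximality statement you invoke in your (iii) $\Rightarrow$ (ii) is not actually needed there, though using it is harmless. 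Your alternative argument for the killing --- evaluating $\Eph$ at $\varphi = 1$ and computing $\Eph(f) = E(f) - E(f^2,1) = E(f)$, since $E(1) = 0$ annihilates the cross term by Cauchy--Schwarz, so that $\Egm(f) = E(f)$ and $\Ek(f) = 0$ for bounded $f$ --- is equally valid and arguably more transparent about \emph{why} recurrence collapses the killing; your passage to unbounded $f$ via Proposition~\ref{proposition:approximation by bounded functions} and the $E$-continuity of $\Ek$ (which holds since $0 \leq \Ek \leq E$ as quadratic forms, as noted in the proof of Lemma~\ref{lemma:properties of ek}) is sound, and once $\Ek \equiv 0$ on $D(E)$, the definition of $\Ekm$ as a supremum of $\Ek$-values yields $\Ekm = 0$ on all of $D(\Egm)$, which is exactly what Theorem~\ref{theorem:maximality er} requires. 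One cosmetic repair: take $\varphi = 1$ in the supremum of Definition~\ref{definition:main part}, whose index set $\{\varphi \in D(E) \mid 0 \leq \varphi \leq 1\}$ contains the constant $1$ under recurrence, rather than in Corollary~\ref{corollary:alternative formula for eg} --- the constant function $1$ generally does \emph{not} belong to $I_\cN$, since elements of $D(E)_\cN$ must vanish outside some $N \in \cN$; with that substitution your argument goes through verbatim.
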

\begin{proof}
 (i) $\Rightarrow$ (ii): Theorem~\ref{theorem:recurrent forms are Silverstein unique} shows that recurrent energy forms are Silverstein unique.  According to Theorem~\ref{theorem:maximality er}, it suffices to prove $\Ekm = 0$. For an $f \in D(E) \cap L^\infty(m)$ the recurrence of $E$ and the monotonicity of $\Ekm$ yield
 $$\Ekm(f) \leq \|f\|_\infty^2 \Ekm(1) \leq \|f\|_\infty^2 E(1) = 0. $$
 Since bounded functions are $E$-dense in $D(E)$, this finishes the proof of the claim.
 
 (ii) $\Rightarrow$ (iii): Since $E= \Eg$, the form $E$ is recurrent. Theorem~\ref{theorem:recurrent forms are Silverstein unique} implies its Silverstein uniqueness and the vanishing of $\Ekm$ follows from the identity $E = \Egm$.
 
 (iii) $\Rightarrow$ (i): The form $\Er$ is a Silverstein extension of $E$. Therefore, the conditions ${\rm Sil}(E) = \{E\}$ and $\Ekm = 0$ yield $E = \Er  =\Eg$. This proves the recurrence of $E$.
\end{proof}

\begin{corollary}
 Let $m$ be finite and let $\E$ be a Dirichlet form on $L^2(m)$ with $\E^{(k)} = 0$. The following assertions are equivalent.
 \begin{itemize}
  \item[(i)]  $\E$ is Silverstein unique.
  \item[(ii)] $\Ee$ is recurrent.
 \end{itemize}
\end{corollary}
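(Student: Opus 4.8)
The plan is to deduce everything from Theorem~\ref{theorem:recurrence and uniqueness} applied to the energy form $\Ee$, which is an energy form by Proposition~\ref{proposition:energy forms vs. dirichlet forms}. Write $\Ee^{(M)}$, $\widehat{\Ee}^{(k)}$ and $\Ee^{(k)}$ for the main part, preliminary killing part and killing part of $\Ee$. That theorem yields ``$\Ee$ is recurrent'' if and only if ``${\rm Sil}(\Ee)=\{\Ee\}$ and $\Ee^{(k)}=0$''. Since recurrence of $\Ee$ is exactly assertion (ii), it suffices to translate the two conditions on the right into the hypothesis $\E^{(k)}=0$ and assertion (i); here the finiteness of $m$ is what makes the translation exact.

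First I would show that the hypothesis $\E^{(k)}=0$ is equivalent to $\Ee^{(k)}=0$. By Definition~\ref{definition:reflected dirichlet form}, $\E^{(k)}$ is the restriction of $\Ee^{(k)}$ to $L^2(m)$, so $\Ee^{(k)}=0$ trivially implies $\E^{(k)}=0$. For the converse, the key point is that, since $m$ is finite, every bounded function in $D(\Ee)$ lies in $L^2(m)$ and hence in $D(\E)=D(\Ee)\cap L^2(m)$ by Proposition~\ref{propostiont:domain extended dirichlet form}. On this set $\widehat{\Ee}^{(k)}$ agrees with $\E^{(k)}$, which vanishes by hypothesis; as $\widehat{\Ee}^{(k)}$ is continuous with respect to $\Ee$ (Lemma~\ref{lemma:properties of ek}) and bounded functions are form-dense in $D(\Ee)$ by Proposition~\ref{proposition:approximation by bounded functions}, this forces $\widehat{\Ee}^{(k)}\equiv 0$ on all of $D(\Ee)$. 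Since $\Ee^{(k)}(f)=\sup\{\widehat{\Ee}^{(k)}(g)\mid g\in D(\Ee),\,|g|\le|f|\}$ for $f\in D(\Ee^{(M)})$, it then vanishes as well, i.e. $\Ee^{(k)}=0$.

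Next I would prove that $\E$ is Silverstein unique if and only if ${\rm Sil}(\Ee)=\{\Ee\}$. Because $m$ is finite, Proposition~\ref{proposition:energy forms vs. dirichlet forms} shows that $\ow E \mapsto \ow E|_{L^2(m)}$ is a bijection from energy forms onto Dirichlet forms with inverse $\ow\E\mapsto(\ow\E)_{\rm e}$, and that it sends $\Ee$ to $\E$. The content to check is that this bijection identifies ${\rm Sil}(\Ee)$ with the set of Dirichlet-form Silverstein extensions of $\E$: if $\ow E\in{\rm Sil}(\Ee)$, then $(\ow E|_{L^2})_{\rm e}=\ow E$ is a Silverstein extension of $\Ee$, which is condition (iii) of Theorem~\ref{theorem:silverstein extensions of dirichlet forms} and hence makes $\ow E|_{L^2}$ a Silverstein extension of $\E$; conversely, for a Dirichlet-form Silverstein extension $\ow\E$ of $\E$, the same theorem gives that $(\ow\E)_{\rm e}$ is a Silverstein extension of $\Ee$, i.e. lies in ${\rm Sil}(\Ee)$. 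Since the bijection maps $\Ee$ to $\E$, one side is a singleton precisely when the other is, which is the claimed equivalence.

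Combining the three reductions finishes the argument: assuming $\E^{(k)}=0$, the second step gives $\Ee^{(k)}=0$, so Theorem~\ref{theorem:recurrence and uniqueness} applied to $\Ee$ reads ``$\Ee$ recurrent $\iff {\rm Sil}(\Ee)=\{\Ee\}$'', and the third step rewrites the right-hand side as ``$\E$ Silverstein unique''. I expect the main obstacle to be the bookkeeping of the bijection in the third step, in particular verifying that every energy-form Silverstein extension of $\Ee$ really is the extended form of a Dirichlet-form Silverstein extension of $\E$ (so that ${\rm Sil}(\Ee)$ is no larger than its Dirichlet-form counterpart), for which the finiteness of $m$ and the identity $(\ow E|_{L^2})_{\rm e}=\ow E$ are essential; without finiteness this correspondence breaks down, as the remark following Proposition~\ref{proposition:energy forms vs. dirichlet forms} illustrates.
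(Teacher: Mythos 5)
Your proposal is correct and follows essentially the same route as the paper, whose one-line proof cites exactly the ingredients you develop: Theorem~\ref{theorem:recurrence and uniqueness} applied to $\Ee$, Theorem~\ref{theorem:silverstein extensions of dirichlet forms} to identify Silverstein uniqueness of $\E$ with ${\rm Sil}(\Ee)=\{\Ee\}$, and finiteness of $m$ to transfer the vanishing of the killing (the paper does this via $1\in D(\Ee^{(M)})\cap L^2(m)=D(\E^{(M)})$ and monotonicity of $\Ee^{(k)}$, where you instead use continuity of $\widehat{\Ee}^{(k)}$ with respect to the form and density of bounded functions --- a cosmetic difference only). Your careful bookkeeping of the bijection $\ow{E}\mapsto \ow{E}|_{L^2(m)}$ is a faithful expansion of what the paper leaves implicit.
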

\begin{proof}
 This follows from the previous theorem, Theorem~\ref{theorem:silverstein extensions of dirichlet forms} and the fact that the finiteness of $m$ yields $1 \in D(\Ee^{(M)}) \cap L^2(m) =  D(\E^{(M)})$.
\end{proof}
\begin{remark}
  A slightly weaker form of the previous theorem for quasi-regular Dirichlet forms is contained in \cite[Corollary~6.1]{Kuw} and the corollary can be found in \cite{HKLMS} for regular Dirichlet forms.
\end{remark}

We finish  this subsection with some examples of the reflected energy forms.

\begin{example}[Jump-type form on a topological space]
 In Example~\ref{example:maximal Silverstein extension}, the reflected energy form of $E_{J,V}^0$ is given by $E_{J,V}$.  Moreover, if $V = 0$, then $E_{J,0}$ is the maximal Silverstein extension of $E_{J,0}^0$. 
\end{example}

  \begin{example}[Energy forms on Riemannian manifolds]
   Let $(M,g)$ be a Riemannian manifold.  Recall the energy forms $E_{(M,g)}^0$ and $E_{(M,g)}$, which were introduced in Subsection~\ref{subsection:manifolds}, and the notation of Appendix~\ref{appendix:manifolds}. Using the developed theory, we shall prove that $E_{(M,g)}$ is the maximal Silverstein extension of  $E_{(M,g)}^0$. Unfortunately, for an arbitrary $\varphi \in D(E_{(M,g)}^0)$ with $0 \leq \varphi \leq 1$ it is somewhat tedious and technical to determine the precise domain of $E_{(M,g),\, \varphi}^{0}$. We limit ourselves to the following special case from which the maximality will follow. For the rest of this example, we let $Q:= E_{(M,g)}^0$ to simplify notation, i.e., to avoid too many subscripts and superscripts.
   
   \begin{lemma}\label{lemma:ep for manifolds}
   Let $f \in W^1_{\rm loc}(M)$ and let $\varphi \in D(Q)$ with $0 \leq \varphi \leq 1$ have compact support. Then $f \in D(Q_\varphi)$ and 
   $$Q_\varphi(f) = \int_M \varphi ^2 |\nabla f|^2 \D {\rm vol}_g.$$
   \end{lemma}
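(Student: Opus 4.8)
The heart of the matter is the purely algebraic identity $\widehat{Q}_\varphi(u) = \int_M \varphi^2 |\nabla u|^2 \Dm$ for \emph{bounded} $u \in W^1_{\rm loc}(M)$, from which the general (possibly unbounded) case follows by truncation. I would organize the argument in three steps.

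\textbf{Step 1 (the identity for bounded functions).} Let $u \in W^1_{\rm loc}(M) \cap L^\infty({\rm vol}_g)$. First I would check that $\varphi u$ and $\varphi u^2$ belong to $D(Q) = D(E^0_{(M,g)})$. Both are bounded, supported in the compact set ${\rm supp}\,\varphi$, and lie in $W^{1,2}$: indeed $\nabla(\varphi u) = u\nabla\varphi + \varphi\nabla u$, where $u\nabla\varphi \in \vec{L}^2$ since $u$ is bounded and $|\nabla\varphi| \in L^2$ (as $\varphi \in D(Q)$ has finite Dirichlet energy), while $\varphi\nabla u \in \vec{L}^2$ since $\varphi$ has compact support and $\nabla u \in \vec{L}^2_{\rm loc}$; the same bookkeeping covers $\varphi u^2$. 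A compactly supported bounded $W^{1,2}$-function lies in the $C_c^\infty(M)$-closure with respect to the form topology (cover the support by finitely many charts, use a subordinate partition of unity, and mollify in each chart), so $\varphi u, \varphi u^2 \in D(Q)$, and hence $u \in D(\widehat{Q}_\varphi)$ by Lemma~\ref{lemma:properties of eph}. Using the Leibniz rule for $W^{1,2}\cap L^\infty$-functions, a direct expansion of
$$\widehat{Q}_\varphi(u) = Q(\varphi u) - Q(\varphi u^2, \varphi) = \int_M |\nabla(\varphi u)|^2 \Dm - \int_M \langle \nabla(\varphi u^2), \nabla\varphi\rangle \Dm$$
cancels the common $u^2|\nabla\varphi|^2$ terms and the mixed $\varphi u\langle\nabla\varphi,\nabla u\rangle$ terms, leaving precisely $\widehat{Q}_\varphi(u) = \int_M \varphi^2|\nabla u|^2 \Dm$.

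\textbf{Step 2 (truncation and a Cauchy estimate).} For $f \in W^1_{\rm loc}(M)$ set $f^{(n)} := (f\wedge n)\vee(-n)$. By the chain rule on manifolds, Lemma~\ref{lemma:contraction manifold}, $f^{(n)} \in W^1_{\rm loc}(M) \cap L^\infty$ with $\nabla f^{(n)} = 1_{\{|f|\le n\}}\nabla f$. Step~1 applies to each $f^{(n)}$ and to each difference $f^{(n')} - f^{(n)}$ (again bounded and in $W^1_{\rm loc}$), giving, for $n \le n'$,
$$\widehat{Q}_\varphi(f^{(n')} - f^{(n)}) = \int_M \varphi^2 \, 1_{\{n < |f| \le n'\}}|\nabla f|^2 \Dm \le \int_{\{|f| > n\}} \varphi^2|\nabla f|^2 \Dm.$$
Since $\varphi$ is bounded with compact support and $|\nabla f| \in \vec{L}^2_{\rm loc}$, the integral $\int_M \varphi^2|\nabla f|^2 \Dm$ is finite, so this tail vanishes as $n \to \infty$. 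Thus $(f^{(n)})$ is $\|\cdot\|_{Q_\varphi}$-Cauchy, and by monotone convergence $\widehat{Q}_\varphi(f^{(n)}) = \int_{\{|f|\le n\}}\varphi^2|\nabla f|^2 \Dm \nearrow \int_M \varphi^2|\nabla f|^2 \Dm$.

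\textbf{Step 3 (passing to the limit).} The sequence $(f^{(n)})$ converges to $f$ in measure and is $\|\cdot\|_{Q_\varphi}$-Cauchy, hence $\tau({\rm vol}_g)_{Q_\varphi}$-Cauchy. Since $Q_\varphi$ is closed (Lemma~\ref{lemma:closability of ep}), the space $(D(Q_\varphi), \tau({\rm vol}_g)_{Q_\varphi})$ is complete, so $(f^{(n)})$ converges in the form topology to some $g \in D(Q_\varphi)$; as $\tau({\rm vol}_g)$ is Hausdorff and $f^{(n)} \to f$ in measure, $g = f$. Therefore $f \in D(Q_\varphi)$ and $Q_\varphi(f) = \lim_n \widehat{Q}_\varphi(f^{(n)}) = \int_M \varphi^2|\nabla f|^2 \Dm$. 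The main obstacle is the membership claim in Step~1 — that a compactly supported bounded $W^{1,2}$-function belongs to $D(E^0_{(M,g)})$, i.e.\ to the $C_c^\infty$-closure. This is the sole genuinely analytic (rather than algebraic or soft functional-analytic) input, it rests on charts and mollification, and it is exactly what forces the compact-support hypothesis on $\varphi$.
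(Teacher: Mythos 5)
Your proof is correct and follows the same overall strategy as the paper's: establish the identity for bounded truncations via the product-rule cancellation $Q(\varphi u) - Q(\varphi u^2,\varphi) = \int_M \varphi^2|\nabla u|^2\Dm$, with the membership $\varphi u \in D(Q)$ resting on the inclusion $W^1_c(M)\subseteq W^1_0(M)$ (which the paper simply quotes as Lemma~\ref{lemma:product in w0} rather than reproving by partitions of unity and mollification, as you sketch), and then pass to the unbounded case by truncation and closedness of $Q_\varphi$. The differences lie in the limiting step: you use the hard truncations $f^{(n)} = (f\wedge n)\vee(-n)$ together with an explicit tail/Cauchy estimate and completeness of $(D(Q_\varphi),\tau_{Q_\varphi})$, whereas the paper replaces them by \emph{smooth} bounded normal contractions $C_n$ with $C_n(x)=x$ on $[-n,n]$, deduces $f\in D(Q_\varphi)$ from lower semicontinuity and boundedness of $Q_\varphi(C_n\circ f)$ alone, and obtains form convergence from the contraction bound $Q_\varphi(C_n\circ f)\le Q_\varphi(f)$ via Lemma~\ref{lemma:characterization convergence in form topology lsc forms}, finishing with dominated rather than monotone convergence. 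Your Cauchy argument buys a self-contained quantitative estimate; the paper's route is softer and avoids one technical snag that you run into: your appeal to Lemma~\ref{lemma:contraction manifold} for the identity $\nabla f^{(n)} = 1_{\{|f|\le n\}}\nabla f$ is not literally licensed, since that lemma requires $\psi\in C^\infty(\IR)$ with bounded derivative, while the hard truncation is only Lipschitz. The identity is nonetheless a standard Sobolev fact (recoverable, e.g., by approximating the truncation by smooth contractions in local charts), so this is a citation slip rather than a genuine gap — but note that the paper's choice of smooth $C_n$ exists precisely to sidestep it.
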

   \begin{proof}
   We choose a sequence of bounded smooth normal contractions $C_n:\IR \to \IR$ that satisfy $C_n(x) = x$ for all $x \in [-n,n]$ and set $f_n := C_n \circ f$. According to Lemma~\ref{lemma:contraction manifold},  we have  $f_n \in L^\infty({\rm vol}_g)$ and $\nabla f_n = C'_n(f) \nabla f \in \Ltlm$. Since $\varphi$ is bounded and has compact support, $\varphi f_n$ belongs to  $L^2({\rm vol}_g)$ and has compact support as well. Furthermore, an application of Lemma~\ref{lemma:product rule} and the properties of $\varphi$ and $f_n$  show
   $$\nabla(\varphi f_n) = \varphi \nabla f_n + f_n \nabla \varphi \in L^2({\rm vol}_g).$$
   With this at hand, Lemma~\ref{lemma:product in w0} yields $\varphi f_n \in D(Q)$, i.e., $f_n \in D(Q_\varphi)$. The  product rule, Lemma~\ref{lemma:product rule}, and the identity $\nabla f_n = C_n'(f) \nabla f$ imply
   \begin{align*}
   Q_\varphi(f_n) &= Q(\varphi f_n) - Q(\varphi f_n^2,\varphi)\\
    &= \int_M \varphi^2 |\nabla f_n|^2 \D {\rm vol}_g\\
    &= \int_M \varphi^2 C_n'(f)^2 |\nabla f|^2 \D {\rm vol}_g.
   \end{align*}
   This computation shows that $Q_\varphi(f_n)$ is bounded. Since $f_n \overset{m}{\to} f$, the lower semicontinuity of $Q_\varphi$ yields $f \in D(Q_\varphi)$. The $f_n$ are normal contractions of $f$ and so we have
   $$Q_\varphi(f_n) \leq Q_\varphi(f).$$
   According to Lemma~\ref{lemma:characterization convergence in form topology lsc forms}, we obtain
   $$Q_\varphi(f) = \lim_{n\to \infty} Q_\varphi(f_n) = \lim_{n\to \infty} \int_M \varphi^2 C_n'(f)^2 |\nabla f|^2 \D {\rm vol}_g = \int_M \varphi^2 |\nabla f|^2 \D {\rm vol}_g.$$
   For the last  equality, we used that the derivatives $C'_n$ are uniformly bounded by one and pointwise converge to one. This finishes the proof.
   \end{proof}

   \begin{theorem} 
    The  form $E_{(M,g)}$ is the maximal Silverstein extension of $Q$ ($=E_{(M,g)}^0)$. 
   \end{theorem}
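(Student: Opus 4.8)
The plan is to show the two inequalities $E_{(M,g)} \leq \Er = Q^{(M)}$ (recall the convention that $\leq$ compares form domains in the opposite direction, so this is the assertion that $\Er$ is dominated by $E_{(M,g)}$) and $E_{(M,g)} \geq \Er$, i.e.\ that $E_{(M,g)}$ is a Silverstein extension of $Q$ which coincides with the reflected form. The first step is to observe that the killing part of $Q = E^0_{(M,g)}$ vanishes, so that by Theorem~\ref{theorem:maximality er} the reflected energy form equals the main part, $\Er = Q^{(M)}$, and is the \emph{maximal} Silverstein extension of $Q$. Once this is established, it suffices to identify $Q^{(M)}$ with $E_{(M,g)}$.

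First I would verify that $E_{(M,g)}$ is genuinely a Silverstein extension of $Q$. Since $Q$ is by construction the closure of the restriction of $E_{(M,g)}$ to $C_c^\infty(M)$, and $E_{(M,g)}$ is an energy form by Proposition~\ref{proposition: example manifolds}, one checks the algebraic ideal property via Proposition~\ref{proposition:silverstein extension of closure}: the bounded compactly supported functions of $D(E_{(M,g)})\cap L^\infty$ can be approximated in form topology by $C_c^\infty(M)$-functions using the chain rule and product rule from Appendix~\ref{appendix:manifolds}, so $D(Q)\cap L^\infty(m)$ is an algebraic ideal in $D(E_{(M,g)})\cap L^\infty(m)$. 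Next I would show $E_{(M,g)}^{(k)} = 0$, which (by the definition of the killing part and the computation $\widehat{E}_\varphi(f) = \int \varphi^2|\nabla f|^2 \Dm$ in Lemma~\ref{lemma:ep for manifolds}, applied to $E_{(M,g)}$ itself) reduces to the statement that for $f\in D(E_{(M,g)})\cap L^\infty(m)$ one has $\sup_\varphi \int \varphi^2|\nabla f|^2\Dm = \int|\nabla f|^2\Dm = E_{(M,g)}(f)$, obtained by choosing $\varphi$ running through a sequence of cutoffs increasing to $1$ and applying monotone convergence. The same applies to $Q$, giving $Q^{(k)} = 0$.

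The key identification is $Q^{(M)} = E_{(M,g)}$. Using Lemma~\ref{lemma:ep for manifolds}, for each $f\in W^1_{\rm loc}(M)$ and each compactly supported $\varphi\in D(Q)$ with $0\leq \varphi\leq 1$ we have $Q_\varphi(f) = \int_M \varphi^2|\nabla f|^2\Dm$. Taking the supremum over all such $\varphi$, and noting that compactly supported functions of the required type can be chosen to increase pointwise to $1$ (by local compactness and separability of $M$), the monotone convergence theorem yields
$$
Q^{(M)}(f) = \sup_\varphi Q_\varphi(f) = \int_M |\nabla f|^2 \Dm.
$$
This shows that a function $f$ lies in $D(Q^{(M)})$ with finite value exactly when $|\nabla f|\in L^2({\rm vol}_g)$ and $f\in L^2_{\rm loc}$, which is precisely the domain of $E_{(M,g)}$; hence $Q^{(M)} = E_{(M,g)}$. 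Combining this with $Q^{(k)}=0$ and Theorem~\ref{theorem:maximality er} gives that $\Er = Q^{(M)} = E_{(M,g)}$ is the maximal Silverstein extension of $Q$.

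The main obstacle I expect is the careful bookkeeping of domains in the supremum defining $Q^{(M)}$. Lemma~\ref{lemma:ep for manifolds} is stated only for $\varphi$ of compact support, so one must ensure that restricting to compactly supported cutoffs still recovers the full main part; this is where Corollary~\ref{corollary:alternative formula for eg} is useful, since it guarantees that for a special $E$-nest $\cN$ (here the relatively compact open sets, which form a special nest for the regular form $Q$ by Lemma~\ref{lemma:special nest regular form}) one may compute $Q^{(M)}(f) = \sup_{\varphi\in I_\cN}\widehat{Q}_\varphi(f)$ with $\varphi$ ranging only over cutoffs supported on nest elements. The remaining subtlety is to confirm that the supremum correctly identifies membership in the domain: one direction is the monotone convergence computation above, while the reverse inclusion $D(E_{(M,g)})\subseteq D(Q^{(M)})$ follows because any $f\in D(E_{(M,g)})$ lies in $W^1_{\rm loc}$ with $|\nabla f|\in L^2$, so Lemma~\ref{lemma:ep for manifolds} applies and the supremum is finite and equal to $\int|\nabla f|^2\Dm$. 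With these pieces in place the theorem follows.
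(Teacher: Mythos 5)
Your overall route coincides with the paper's: show $E_{(M,g)}$ is a Silverstein extension of $Q$ via Proposition~\ref{proposition:silverstein extension of closure} and Lemma~\ref{lemma:product in w0}, show the killing part of $Q$ vanishes by a sandwich argument with cutoffs $\varphi_n \nearrow 1$, invoke Theorem~\ref{theorem:maximality er} to get that $\Er = Q^{(M)}$ is the maximal Silverstein extension, and identify $Q^{(M)}$ with $E_{(M,g)}$ via Lemma~\ref{lemma:ep for manifolds}. However, there is a genuine gap in the inclusion $D(Q^{(M)}) \subseteq D(E_{(M,g)})$. Your computation $Q^{(M)}(f) = \sup_\varphi Q_\varphi(f) = \int_M |\nabla f|^2 \Dm$ is valid only for $f \in W^1_{\rm loc}(M)$, since Lemma~\ref{lemma:ep for manifolds} is stated for such $f$; it therefore does not justify the claim that ``$f$ lies in $D(Q^{(M)})$ with finite value exactly when $|\nabla f| \in L^2({\rm vol}_g)$ and $f \in \Ltlm$''. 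A priori the domain $D(Q^{(M)}) \subseteq L^0({\rm vol}_g)$ could contain functions outside $W^1_{\rm loc}(M)$: recall that $Q_\varphi$ is the $L^0$-closure of $\widehat{Q}_\varphi$, and nothing in your argument rules out finiteness of $\sup_\varphi Q_\varphi(f)$ for some non-Sobolev $f$. So when you write ``one direction is the monotone convergence computation above'', you are silently assuming $D(Q^{(M)}) \subseteq W^1_{\rm loc}(M)$, which is exactly the nontrivial step, and the direction you do argue explicitly, $D(E_{(M,g)}) \subseteq D(Q^{(M)})$, is the easy one.

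The paper closes this gap as follows. By Theorem~\ref{theorem:properties of er}, $Q^{(M)} = \Er$ is itself a Silverstein extension of $Q$, so for bounded $f \in D(Q^{(M)})$ and every $\varphi \in C_c^\infty(M)$ one has $\varphi f \in D(Q) \subseteq W^1_0(M)$ (equivalently, by Lemma~\ref{lemma:closability of ep} and Lemma~\ref{lemma:properties of eph}, finiteness of $Q_\varphi(f)$ for bounded $f$ forces $\varphi f \in D(Q)$); the locality of the distributional gradient then yields $f \in W^1_{\rm loc}(M)$, after which your monotone convergence computation applies and gives $f \in D(E_{(M,g)})$ with $E_{(M,g)}(f) \leq Q^{(M)}(f)$. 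The general case follows from the density of bounded functions in the form domain, Proposition~\ref{proposition:approximation by bounded functions}, together with the lower semicontinuity of $E_{(M,g)}$. With this insertion your argument is complete; the remaining ingredients, including your use of Corollary~\ref{corollary:alternative formula for eg} to restrict the supremum to compactly supported cutoffs, match the paper's proof.
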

   \begin{proof}
    We start by proving that $E_{(M,g)}$ is a Silverstein extension of $Q$. To this end, let $f \in D(E_{(M,g)}) \cap L^\infty(m)$ and let  $\psi \in C_c^\infty(M)$. It follows from  Lemma~\ref{lemma:product in w0} that $f \psi \in D(Q)$. Since $C_c^\infty(M)$ is dense in $D(Q)$ with respect to the form topology, Proposition~\ref{proposition:silverstein extension of closure} shows that $E_{(M,g)}$ is a Silverstein extension of $Q$. 
    
    Next, we prove that the killing of $Q$ vanishes. We let $(\varphi_n)$ be a sequence in $C_c^\infty(M)$ with $0 \leq \varphi_n \leq 1$ and $\varphi_n \nearrow 1$ pointwise, as $n \to \infty$. For $f \in D(Q)$, the previous lemma and the properties of $Q^{(M)}$ show
    $$Q(f) \geq Q^{(M)}(f) \geq \lim_{n \to \infty}Q_{\varphi_n} (f) = \lim_{n \to \infty} \int_M \varphi_n^2 |\nabla f| \D {\rm vol}_g = Q(f). $$ 
    Therefore, the killing part of $Q$ vanishes. With this at hand, Theorem~\ref{theorem:maximality er} implies that the form  $Q^{(M)}$ is the maximal Silverstein extension of $Q$ and it remains to prove $Q^{(M)} \leq E_{(M,g)}$. Let $f \in D(Q^{(M)}) \cap L^\infty({\rm vol}_g)$ be given. Since $Q^{(M)}$ is a Silverstein extension of $Q$, we have $\varphi f \in D(Q)$ for all $\varphi \in C_c^\infty(M)$. Due to the locality of the operator $\nabla$, this implies $|\nabla f| \in \Ltlm$. From the previous lemma and the monotone convergence theorem, we infer
    $$Q^{(M)}(f) \geq \lim_{n \to \infty} Q_{\varphi_n} (f) =  \lim_{n \to \infty} \int_M \varphi_n ^2 |\nabla f|^2 \D {\rm vol}_g = \int_M  |\nabla f|^2 \D {\rm vol}_g.$$
    Hence, $f \in D(E_{(M,g)})$ and $Q^{(M)}(f)  \geq E_{(M,g)}(f)$. Since bounded functions are dense in the form domains, this finishes the proof.
   \end{proof} 
   
   \begin{remark}
    The maximality result for the $L^2$-restrictions of $E_{(M,g)}$ and $E_{(M,g)}^0$ is explicitly proven in \cite{GM}, but also follows from the (corrected) considerations in \cite{Kuw,CF}. The maximality statement without the restriction to $L^2({\rm vol}_g)$ seems to be new.
   \end{remark}

  \end{example}
  \begin{example}[Dirichlet forms of hypoelliptic operators]
  Here, we use the notation and the assumptions of Example~\ref{example:hypoelliptic}, i.e., we assume that $S$ is a densely defined symmetric operator on $L^2(m)$ such that the form
   $$\E_S:L^2(m) \to [0,\infty], f \mapsto \E_S(f) = \begin{cases} 
                                                      \as{Sf,f} &\text{if } f \in D(S)\\
                                                      \infty &\text{else}
                                                     \end{cases}
$$
   is Markovian. In this subsection, we studied (maximal) Silverstein extensions of quadratic forms, while  Example~\ref{example:hypoelliptic} deals with self-adjoint extensions of the  operator $S$. In general, closed extensions of the form $\E_S$ need not provide self-adjoint extensions of $S$. However,  we show, under some mild assumptions on the domain of the  operator $S$, that the operator of the reflected Dirichlet form is an extension of $S$ and the maximal element in ${\rm Ext}(S)_{\rm Sil}$.  In particular, this result can be applied to certain hypoelliptic operators on Riemannian manifolds. 
   
    To shorten notation, we let $\E := \bar{\E}_S$, the closure of $\E_S$, and denote  the self-adjoint operator of $\E^{\, \rm ref}$ by $S^{\rm ref}$. Recall that we order the sets ${\rm Ext}(S)_{\rm Sil}$ and  ${\rm Ext}(S)_{\rm M}$ with  the order relation of the associated quadratic forms. 
    
   \begin{theorem}\label{theorem:maximality er operator} If there exists a special $\Ee$-nest $\cN$ such that $D(S) \subseteq D(\Ee)_\cN$, then  $S^{\,\rm ref}$ is an extension of $S$. If, additionally, $D(S)\cap L^\infty(m)$ is an algebra that is $\E$-dense in $D(\E)$ and satisfies $S (D(S)\cap L^\infty(m))  \subseteq L^1(m)$, then $S^{\,\rm ref}$ is the maximal element of  ${\rm Ext}(S)_{\rm Sil}$.
   \end{theorem}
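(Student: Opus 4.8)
The plan is to prove the two assertions separately, using the two hypotheses in turn. For the first claim, I would show $S^{\,\rm ref}$ is an extension of $S$. Fix $f \in D(S)$ and let $g \in D(\E^{\,\rm ref})$ be arbitrary; the goal is to verify $\E^{\,\rm ref}(f,g) = \as{Sf,g}$, which identifies $S^{\,\rm ref}f = Sf$. Since bounded functions are dense in $D(\E^{\,\rm ref})$ with respect to the form topology (Proposition~\ref{proposition:approximation by bounded functions}) and $\E^{\,\rm ref}(f,\cdot)$ is continuous, I may assume $g \in D(\E^{\,\rm ref})\cap L^\infty(m)$. The hypothesis $D(S) \subseteq D(\Ee)_\cN$ for a special $\Ee$-nest $\cN$ means $f$ vanishes outside some $N \in \cN$, so there exists $\varphi \in D(\E)_\cN$ with $1_N \leq \varphi \leq 1$, whence $\varphi f = f$. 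Because $\E^{\,\rm ref}$ is a Silverstein extension of $\E$ (Theorem~\ref{theorem:properties of er}), we have $\varphi g \in D(\E)$, and the idea is to use the alternative formula of Lemma~\ref{lemma:alternative formula for er} together with the fact that $f = \varphi f \in D(\E)$ to reduce $\E^{\,\rm ref}(f,g)$ to an expression only involving $\E$, where I can invoke $f \in D(S)$ and the definition $\E(f,\cdot) = \as{Sf,\cdot}$ on $D(\E)$.

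For the second claim, I would prove that $S^{\,\rm ref}$ is the maximal element of ${\rm Ext}(S)_{\rm Sil}$. Let $\ow{S} \in {\rm Ext}(S)_{\rm Sil}$ with associated Dirichlet form $\ow{\E}$. Maximality in the order on quadratic forms means $\ow{\E} \leq \E^{\,\rm ref}$, i.e.\ $D(\ow{\E}) \subseteq D(\E^{\,\rm ref})$ and $\E^{\,\rm ref} \leq \ow{\E}$ pointwise on $D(\ow{\E})$. By Corollary~\ref{corollary:maximality er dirichlet form}, it suffices to show that $\E^{(k)}(f) \leq \ow{\E}^{(k)}(f)$ for all $f \in D(\E)$. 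The plan is to first establish that the killing part $\E^{(k)}$ vanishes. Using the second batch of hypotheses --- that $A := D(S)\cap L^\infty(m)$ is an $\E$-dense algebra with $S(A) \subseteq L^1(m)$ --- I would compute, for $f \in A$ and the special nest $\cN$, the quantity $\E(f) - \E^{(M)}(f)$ via Corollary~\ref{corollary:alternative formula for eg}, which expresses $\E^{(M)}(f) = \sup_{\varphi \in I_\cN} \E_\varphi(f)$. The condition $Sf \in L^1(m)$ is what lets me pass to the limit $\varphi \nearrow 1$ along $\cN$ and conclude that the defect $\E^{(k)}(f) = \E(f) - \E^{(M)}(f)$ is zero, since $E_\varphi(f) = \E(\varphi f) - \E(\varphi f^2, \varphi)$ and integrating against $Sf$ the boundary contributions disappear by dominated convergence.

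Once $\E^{(k)} = 0$ is shown on the dense algebra $A$, the continuity of $\E^{(k)}$ with respect to $\E$ (it is continuous as the difference $\E - \E^{(M)}$ on $D(\E)$) extends the vanishing to all of $D(\E)$. Then the inequality $\E^{(k)}(f) = 0 \leq \ow{\E}^{(k)}(f)$ holds trivially, and Corollary~\ref{corollary:maximality er dirichlet form} gives $\ow{\E} \leq \E^{\,\rm ref} = \E^{(M)}$, so every Silverstein extension is dominated by $\E^{\,\rm ref}$. Combined with the first part, which shows $S^{\,\rm ref}$ genuinely extends $S$ and hence $\E^{\,\rm ref} \in {\rm Ext}(S)_{\rm Sil}$, this yields maximality.

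I expect the main obstacle to be the first part: carefully justifying the manipulation $\E^{\,\rm ref}(f,g) = \as{Sf,g}$ for $f \in D(S)$ and $g$ ranging over the larger domain $D(\E^{\,\rm ref})$. The subtlety is that $\E^{\,\rm ref}$ is only defined through the main part $\Eg$ (a supremum over $\varphi \in I_E$) plus the killing part $\Ekm$, and unwinding this to recover $\as{Sf, g}$ requires the nest hypothesis to guarantee that $f$ interacts with the off-diagonal values of $\E$ only through functions in $D(\E)$. The limiting procedures in Lemma~\ref{lemma:alternative formula for er} (the nested limits over $n$, $\varphi \in I_E$, and $m$) must be controlled, and ensuring that the killing contribution $\Ekm(f,g)$ is accounted for correctly --- rather than silently dropped --- is the delicate point.
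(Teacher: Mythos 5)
Your first part is essentially correct and amounts to the same computation as the paper, run in the opposite direction: the paper pairs an arbitrary $f \in D(S^{\,\rm ref})$ against test functions $\psi \in D(S)$, obtains $\as{S^{\,\rm ref}f,\psi} = \as{f,S\psi}$ via Lemma~\ref{lemma:alternative formula for er} and Lebesgue's theorem, and concludes $S^{\,\rm ref} \subseteq S^*$, whence $S \subseteq S^{\,\rm ref}$ by self-adjointness; you instead fix $f \in D(S)$ and test against all $g \in D(\E^{\,\rm ref})$, which also works. One correction is needed in your execution: Lemma~\ref{lemma:alternative formula for er} constrains the \emph{second} argument to lie in $D(\Ee)_\cN$, so you cannot apply it to $\E^{\,\rm ref}(f,g)$ with $g$ in the test slot. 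You must first use symmetry, $\E^{\,\rm ref}(f,g) = \E^{\,\rm ref}(g,f)$, and apply the lemma with $f \in D(S) \subseteq D(\Ee)_\cN$ in the second slot; then each term $\Ee(\varphi g^{(n)},f) = \as{\varphi g^{(n)}, Sf}$ and dominated convergence (with dominating function $|g|\,|Sf| \in L^1(m)$, since $g, Sf \in L^2(m)$) yields $\as{g,Sf}$, identifying $S^{\,\rm ref}f = Sf$.

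The second part of your proposal has a genuine gap: the step ``first establish that the killing part $\E^{(k)}$ vanishes'' is not merely unproved, it is \emph{false} under the theorem's hypotheses. Take $S = -\Delta + V$ on $C_c^\infty(M)$ with smooth bounded $V \geq 0$, $V \neq 0$ (exactly the setting of Corollary~\ref{corollary:maximal markovian extension}): then $D(S)\cap L^\infty(m) = C_c^\infty(M)$ is an $\E$-dense algebra, $S(C_c^\infty(M)) \subseteq C_c^\infty(M) \subseteq L^1({\rm vol}_g)$, and $D(S) \subseteq D(\Ee)_\cN$ for the nest of relatively compact open sets, yet $\E^{(k)}(f) = \int_M S(f^2)\Dm = \int_M Vf^2 \Dm \neq 0$. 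Your dominated-convergence heuristic computes the wrong limit: choosing $\varphi$ equal to one near $\{f \neq 0\}$ gives $\E(f) - \E_\varphi(f) = \E(f^2,\varphi) = \as{S(f^2),\varphi} \to \int_X S(f^2)\D m$, which is in general nonzero; the hypothesis $S(f^2) \in L^1(m)$ does not make the killing disappear, it makes it \emph{computable}. Consequently your appeal to Corollary~\ref{corollary:maximality er dirichlet form} via ``$\E^{(k)}(f) = 0 \leq \ow{\E}^{(k)}(f)$ trivially'' collapses. The missing idea — the heart of the paper's proof — is that the killing part of \emph{every} competing extension is determined by $S$: for $\ow{S} \in {\rm Ext}(S)_{\rm Sil}$ with form $\ow{\E}$ and $f \in D(S)\cap L^\infty(m)$, one shows $\ow{\E}^{(k)}(f) = \lim_{\varphi}\ow{\Ee}(f^2,\varphi) = \int_X S(f^2)\D m$, by approximating each $\varphi \in I_{\ow{\E}_{\rm e}}$ with elements $\ow{\varphi}_i \in D(\ow{\E})$ converging $\ow{\Ee}$-weakly, so that $\ow{\Ee}(f^2,\varphi) = \lim_i \as{\ow{S}(f^2),\ow{\varphi}_i}$ (using that $f^2 \in D(S)$ by the algebra assumption and that $\ow{S}$ extends $S$), followed by Lebesgue's theorem with $S(f^2) \in L^1(m)$. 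The same computation applied to $\E$ itself gives $\E^{(k)}(f) = \int_X S(f^2)\D m$, hence $\ow{\E}^{(k)} = \E^{(k)}$ on the $\E$-dense algebra and, by continuity of the killing parts with respect to $\E$, on all of $D(\E)$; only then does Corollary~\ref{corollary:maximality er dirichlet form} deliver $\ow{\E} \leq \E^{\,\rm ref}$. Your proposal never analyzes $\ow{\E}^{(k)}$ at all, and no argument that forces $\E^{(k)} = 0$ can succeed, so this part must be rebuilt along the lines above.
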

   \begin{proof}
     We first prove that $S^{\,\rm ref}$  is a restriction of $S^*$, which implies that it is a self-adjoint extension of $S$. Let $\cN$ be a special $\Ee$-nest such that $D(S) \subseteq D(\Ee)_\cN$.   For $f \in D(S^{\,\rm ref})$ and  $\psi \in D(S)$, Lemma~\ref{lemma:alternative formula for er} shows
    $$\as{S^{\,\rm ref} f,\psi} = \E^{\, \rm ref}(f,\psi) = \lim_{n \to \infty} \lim_{\varphi \in I_{\Ee}}  \Ee (\varphi f^{(n)},\psi), $$
    where $f^{(n)} = (f \wedge n) \vee (-n)$ and  $I_{\Ee} = \{\varphi \in D(\Ee) \mid 0 \leq \varphi \leq 1\}$ is ordered by the natural order relation on functions. Since for $\varphi \in I_{\Ee}$ we have  $\varphi f^{(n)} \in D(\Ee) \cap L^2(m) = D(\E)$, this simplifies to 
    $$\lim_{n \to \infty}\lim_{\varphi \in I_{\Ee}} \Ee (\varphi f^{(n)},\psi) = \lim_{n \to \infty}\lim_{\varphi \in I_{\Ee}}   \E (\varphi f^{(n)},\psi) = \lim_{n \to \infty}\lim_{\varphi \in I_{\Ee}}  \as{\varphi f^{(n)},S \psi}. $$
    The density of $D(S)$ in $L^2(m)$ implies $\lim_{\varphi \in I_{\Ee}} \varphi = 1$ in $L^0(m)$.  Therefore,  Lebesgue's dominated convergence theorem, Lemma~\ref{lemma:Lebesgue's theorem}, yields
    $$ \lim_{n \to \infty}\lim_{\varphi \in I_{\Ee}}  \as{\varphi f^{(n)},S \psi} = \as{f,S \psi}.$$

    These considerations show that for all $\psi \in D(S)$ the equality $\as{S^{\,\rm ref} f,\psi} =  \as{f,S \psi}$ holds.  Consequently, $S^{\, {\rm ref}}$ is a restriction of $S^*$.

   Now, assume that $D(S)\cap L^\infty(m)$ is an algebra that is $\E$-dense in $D(\E)$ and satisfies $S (D(S)\cap L^\infty(m))  \subseteq L^1(m)$. Let $\ow{S}$ be a self-adjoint extension of $S$ such that the associated form $\ow{\E}$ is a Silverstein extension of $\E$. According to Corollary~\ref{corollary:maximality er dirichlet form}, it suffices to prove that  $\ow{\E}^{(k)}$and $\E^{(k)}$ coincide on $D(\E)$. This is discussed next.
   
   For $f \in D(S) \cap L^\infty(m) \subseteq D(\Ee)_\cN$,   Lemma~\ref{lemma:alternative formula for er} and Remark~\ref{remark:alternative formula for er} yield
   $$\ow{\E}^{(k)}(f) = \lim_{\varphi \in I_{\ow{\E}_{{\rm e}}}} \ow{\Ee}(f^2,\varphi).$$
   Let $\varphi \in I_{\ow{\E}_{{\rm e}}}$. We choose a net $(\varphi_i)$ in $D(\ow{\E})$ that converges to $\varphi$ in the form topology of $\ow{\Ee}$. Without loss of generality  we can assume $\varphi_i \geq 0$. The net $\ow{\varphi}_i := \varphi_i \wedge \varphi$ is $\ow{\E}_{{\rm e}}$-bounded and converges to $\varphi$ in $L^0(m)$. It belongs to $D(\ow{\Ee}) \cap L^2(m) = D(\ow{\E})$, and by Lemma~\ref{lemma:existence of a weakly convergent subnet} it $\Ee$-weakly converges to $\varphi$. According to our assumptions, $\ow{S}$ is an extension of $S$ and $D(S) \cap L^\infty(m)$ is an algebra.  Combining these observations, we obtain  
   $$\ow{\Ee}(f^2,\varphi) = \lim_i \ow{\Ee}(f^2,\ow{\varphi}_i) = \lim_i \ow{\E}(f^2,\ow{\varphi}_i)  = \lim_i \as{S(f^2),\ow{\varphi}_i} =   \int_X S(f^2) \varphi \D m.$$
   For the last equality, we used Lebesgue's theorem, Lemma~\ref{lemma:Lebesgue's theorem}. Altogether, this amounts to
   $$\ow{\E}^{(k)}(f) =  \lim_{\varphi \in I_{\ow{\E}_{{\rm e}}}} \ow{\Ee}(f^2,\varphi) = \lim_{\varphi \in I_{\ow{\E}_{{\rm e}}}} \int_X S(f^2) \varphi \D m = \int_X S(f^2) \D m, $$
   where we used $\lim_{\varphi \in I_{\ow{\E}_{{\rm e}}}} \varphi = 1$, $S(f^2) \in L^1(m)$ and Lebesgue's theorem. Since these considerations also apply to  $\E$, for  $f \in D(S) \cap L^\infty(m)$ we conclude
   $$\ow{\E}^{(k)}(f) =  \int_X S(f^2)\D m =   \E^{(k)}(f).$$
   The $\E$-density of $D(S) \cap L^\infty(m)$  in $D(\E)$ yields that this equality extends to $D(\E)$ and we obtain the desired statement. 
   \end{proof}
 
  As an immediate corollary, we obtain the following result for hypoelliptic operators on Riemannian manifolds. 
 
   \begin{corollary}\label{corollary:maximal markovian extension}
    Let $(M,g)$ be a Riemannian manifold and let $P:C_c^\infty(M) \to C_c^\infty(M)$ be a continuous linear operator that is symmetric on $L^2({\rm vol}_g)$ and whose associated form $\E_P$ is Markovian.  Then $P^{\, {\rm ref}}$ is the maximal element in ${\rm Ext}(S)_{\rm Sil}.$ If, additionally, $P + 1$ is hypoelliptic, then  $P^{\, {\rm ref}}$ is the maximal element in ${\rm Ext}(S)_{\rm M}.$ 
   \end{corollary}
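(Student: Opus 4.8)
The plan is to deduce Corollary~\ref{corollary:maximal markovian extension} directly from Theorem~\ref{theorem:maximality er operator} by verifying its hypotheses for the operator $P$, and then to invoke Corollary~\ref{corollary:hypoelliptic operators} for the final assertion. So I would first set $S := P$ restricted to $C_c^\infty(M)$ (viewed as a densely defined symmetric operator on $L^2({\rm vol}_g)$) and let $\E := \bar{\E}_P$ be the closure of its Markovian form; by the discussion in Example~\ref{example:hypoelliptic} this closure is a regular Dirichlet form. The two structural hypotheses of Theorem~\ref{theorem:maximality er operator} are then almost immediate from the choice of domain: since $D(P) = C_c^\infty(M)$, the algebra $D(S) \cap L^\infty(m) = C_c^\infty(M)$ is an algebra (closed under products), it is $\E$-dense in $D(\E)$ by the very definition of $\E$ as the closure of $\E_P$, and $P(C_c^\infty(M)) \subseteq C_c^\infty(M) \subseteq L^1({\rm vol}_g)$ because compactly supported continuous functions are integrable. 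The only genuinely nontrivial hypothesis to check is the existence of a special $\Ee$-nest $\cN$ with $D(S) = C_c^\infty(M) \subseteq D(\Ee)_\cN$.

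For this nest condition I would take the collection $\mathcal{K}$ of relatively compact open subsets of $M$, which is a special $\Ee$-nest by Lemma~\ref{lemma:special nest regular form} (recall $\bar{\E}_{P,1}$ is a regular energy form). The point is then that any $f \in C_c^\infty(M)$ has compact support, hence is supported inside some relatively compact open $G \in \mathcal{K}$; since $f \in D(\E) \subseteq D(\Ee)$ and $f 1_{X \setminus G} = 0$, we get $f \in D(\Ee)_{\mathcal{K}}$, which gives $C_c^\infty(M) \subseteq D(\Ee)_{\mathcal{K}}$. With this nest at hand, Theorem~\ref{theorem:maximality er operator} applies and yields that $P^{\,{\rm ref}}$, the self-adjoint operator associated with $\E^{\,{\rm ref}}$, is the maximal element of ${\rm Ext}(P)_{\rm Sil}$.

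For the second, conditional assertion I would simply combine this with Corollary~\ref{corollary:hypoelliptic operators}. If $P + 1$ is hypoelliptic, then that corollary gives the equality ${\rm Ext}(P)_{\rm Sil} = {\rm Ext}(P)_{\rm M}$, so the maximal element of the former set is automatically the maximal element of the latter. Hence $P^{\,{\rm ref}}$ is the maximal element in ${\rm Ext}(P)_{\rm M}$ as well, completing the proof.

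I expect the main obstacle to be purely one of bookkeeping rather than mathematics: one has to be careful that the ``$D(S)$'' appearing in the hypotheses of Theorem~\ref{theorem:maximality er operator} really is $C_c^\infty(M)$ and that all the density and integrability conditions are read off correctly against the conventions fixed in Example~\ref{example:hypoelliptic}. A secondary subtlety is the smoothness/partition argument already used in the proof of Corollary~\ref{corollary:hypoelliptic operators}, namely that elements of $\ker(P^*+1) \subseteq C^\infty(M)$ agree on relatively compact open sets with functions in $C_c^\infty(M)$; but since I am citing Corollary~\ref{corollary:hypoelliptic operators} as a black box for the equality ${\rm Ext}(P)_{\rm Sil} = {\rm Ext}(P)_{\rm M}$, this subtlety does not have to be re-examined here. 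The essential new content is therefore the verification of the nest condition, which is short once the regular-form structure is in place.
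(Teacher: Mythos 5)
Your proposal is correct and follows essentially the same route as the paper: verify the hypotheses of Theorem~\ref{theorem:maximality er operator} for $S = P$ with $D(S) = C_c^\infty(M)$ (the algebra, $\E$-density, and $L^1$-mapping conditions are immediate, and the nest condition holds for the collection $\mathcal{K}$ of relatively compact open sets since compactly supported functions vanish outside some member of $\mathcal{K}$), and then invoke Corollary~\ref{corollary:hypoelliptic operators} to transfer maximality from ${\rm Ext}(P)_{\rm Sil}$ to ${\rm Ext}(P)_{\rm M}$ in the hypoelliptic case. One small bookkeeping point: Lemma~\ref{lemma:special nest regular form} applied to $\bar{\E}_{P,1}$ only yields a special $\bar{\E}_{P,1}$-nest, whereas Theorem~\ref{theorem:maximality er operator} requires a special $\Ee$-nest; this is closed either by noting, as the paper does, that $\bar{\E}_{P,{\rm e}}$ is itself a regular energy form (being the $L^0$-closure of a regular Dirichlet form), or by the claim in the proof of Lemma~\ref{lemma:main part and killing of dirichlet form} that every special $\E_\alpha$-nest is a special $\Ee$-nest.
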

   \begin{proof}
    The form $\bar{\E}_{P,{\rm e}}$ is regular (it is the $L^0$-closure of a regular Dirichlet form) and so the collection of all relatively compact open subsets of $M$ is a special $\bar{\E}_{P,{\rm e}}$-nest  with the required properties. Therefore, the previous theorem shows the maximality of $P^{\,\rm ref}$ in  ${\rm Ext}(S)_{\rm Sil}.$ The statement on the maximality of $P^{\, {\rm ref}}$  in ${\rm Ext}(S)_{\rm M}$ under the condition that $P+1$ is hypoelliptic follows from Corollary~\ref{corollary:hypoelliptic operators}.
   \end{proof}
   
   \begin{remark}
   \begin{itemize}
    \item The condition that $D(S)  \subseteq D(\Ee)_\cN$ for some special $\Ee$-nest $\cN$ means that functions in $D(S)$ vanish at 'infinity' and the assumption that $D(S) \cap L^\infty(m)$ is an algebra is more or less natural when dealing with Silverstein extensions.  In contrast, the assumption $S (D(S)\cap L^\infty(m))  \subseteq L^1(m)$ is somewhat technical. 
    
     \item If $X$ is a locally compact separable metric space and $m$ is a Radon measure on $X$, the previous theorem   applies to operators that map a dense subalgebra of $C_c(X)$ to $C_c(X)$. One class of examples for this situation is given in the previous corollary, another class of examples are graph Laplacians on locally finite graphs, for which the previous corollary is more or less contained in \cite[Theorem~2.10]{HKLW}.
    
    \item  As already mentioned in Remark~\ref{remark:maximality er}, the  theorem and its corollary can be seen as a generalization of \cite[Theorem~15.2]{Sil}. 
    
    \item The main insight for the proof of the theorem is that the killing part of a Dirichlet form that is associated with an extension of $S$ is determined by $S$.

   \end{itemize}
   \end{remark}
  \end{example}

  \chapter{Weak solutions to the Laplace equation} \label{chapter:weak solutions}

For some  Dirichlet forms it is known that global properties, such as Silverstein uniqueness, recurrence and stochastic completeness, can be characterized in terms of weak (super-) solutions to the Laplace equation. In these cases, the notion of weak solution is defined using particular features of the considered model. For Dirichlet forms on manifolds one uses the  theory of distributions, see e.g. \cite{Gri}, for Dirichlet forms on graphs one uses discrete Laplacians, see e.g. \cite{Schmi}, and for regular Dirichlet forms one uses a notion of weak solutions in terms of the Beurling-Deny formula, see e.g. \cite{Stu1,FLW}. These  definitions of weak Laplacians or weak Laplace equations have in common that they are not intrinsic; they either use extrinsic information on the model or representation theory.  This makes them somewhat unsatisfactory. The first two approaches are certainly powerful but limited to concrete examples, while the third one is technically involved and  requires the form to be quasi-regular. 

In this chapter we propose an intrinsic approach towards weak solutions of the Laplace equation that only uses algebraic properties of the given form. This allows us to consider weak Laplace equations for {\bf all} energy forms.  It is our goal is to convince the reader that our definition is the 'correct' one. To this end, we demonstrate that it is flexible enough to prove  known characterizations of global properties in the greatest possible generality.  Another application that we have in mind are Dirichlet forms and energy forms on noncommutative spaces. We believe that the concepts and theorems of this chapter can be extended to the noncommutative world. Unfortunately, this is beyond the scope of this thesis; it will be discussed elsewhere.

In the previous chapter we constructed the reflected energy form. For this purpose we extended the given form on its diagonal.  In the first section of this chapter, we recycle some of the auxiliary forms that were introduced in the previous chapter to define the weak form extension, an off-diagonal extension of the given form, see Definition~\ref{definition:weak form}. This weak form is then employed to introduce weak solutions to the Laplace equation, see Definition~\ref{definition:weak solution}. Furthermore, we prove that in concrete examples our notion of the weak form extension coincides with the 'classical' one. The second section is devoted to characterizing excessive functions as weakly superharmonic functions, see Theorem~\ref{theorem:characterization excessive functions as superharmonic functions}. It is followed by a section where the existence of minimal solutions to the weak Laplace equation is investigated and characterized in terms of the extended potential operator, see Theorem~\ref{theorem:maximal principle potential operator}. We conclude this chapter with several applications. In Subsection~\ref{subsection:uniqueness of bounded weak solutions} we study the existence and uniqueness of bounded weak solutions to the Laplace equation. More specifically, uniqueness of such solutions is characterized by a conservation property for the potential operator, see Theorem~\ref{theorem:uniqueness of bounded weak solutions}. In Subsection~\ref{section:lp resolvents} we apply the developed theory to $L^p$-resolvents of Dirichlet forms. We show that they provide weak solutions to an eigenvalue equation, see Theorem~\ref{theorem:lp resolvents}, and characterize a generalization of stochastic completeness in terms of uniqueness of solutions to this eigenvalue equation for bounded functions, see Theorem~\ref{theorem:characterization stochastic completeness}. Subsection~\ref{subsection:recurrence} concludes this chapter with a characterization of recurrence in terms of weakly superharmonic functions.

\section{Off-diagonal extension and the weak form domain} \label{section:off-diagonal extension and the weak form domain}

In the previous chapter we  constructed the maximal Silverstein extension of a given energy form by extending $E$ on its diagonal. In this section  we we do off-diagonal extension. The goal is to define $E(f,\psi)$ for pairs $(f,\psi)$, where $f$ should come from a large class of functions and $\psi$ plays the role of a test function.

Before discussing the general case, we come back to Example~\ref{example:maximal Silverstein extension} of Section~\ref{subsection:maximal Silverstein extension}. When $E = E^0_{J,V}$ (or $E= E_{J,V})$ it is intrinsically clear how to define $E(f,\psi)$. One would just set
\begin{align} E(f,\psi) := \int_{X \times X} (f(x) - f(y))(\psi(x) - \psi(y)) {\rm d} J(x,y) + \int_X f(x)\psi(x) V(x){\rm d}m(x)\label{equation:weak form}\tag{$\blacklozenge$}\end{align}
whenever the integrals make sense, i.e., when $(f \otimes 1 - 1 \otimes f)(\psi \otimes 1 - 1 \otimes \psi) \in L^1(J)$ and $f\psi \in L^1(V\cdot m)$.  Similar as in Example~\ref{example:maximal Silverstein extension}, this raises the question of how to compute \eqref{equation:weak form} from the knowledge of $E_{J,V}^0$ on its (rather small) domain.  To answer it, we employ the auxiliary  forms $\Ep$ and $\Ekm$, which we  introduced in Subsection~\ref{subsection:construction of er}. The value of
$$\Ep(f,\psi) = \int_{X \times X} \varphi(x)\varphi(y) (f(x) - f(y))(\psi(x) - \psi(y)) {\rm d} J(x,y) $$
only depends on the restriction of $f$ to the set  $\{|\varphi|>0\}$. Therefore, $\Ep(f,\psi)$ is well-defined if there exists a function $f_\varphi \in D(E^0_{J,V})$ that equals $f$ on $\{|\varphi|>0\}$. Likewise, if $f_\psi \in D(E)$ is a function that agrees with $f$ on $\{|\psi|>0\}$, the properties of $\Ekm$ yield
$$\Ekm(f_\psi,\psi) = \int_X f(x)\psi(x) V(x){\rm d}m(x).$$
 As a consequence, we obtain 
$$E(f,\psi) = \lim_{\varphi \nearrow 1} \Ep(f,\psi) + \Ekm(f_\psi,\psi),$$
whenever $f$ locally belongs to the form domain and the integrals in Formula~\eqref{equation:weak form} exist. Unfortunately, the latter condition cannot be phrased in terms of the form $E$. As a substitute we will require the existence of the limit $\lim_{\varphi \nearrow 1} \Ep(f,\psi)$. In the example, this corresponds to some form of improper integration.

  The following two lemmas show that a construction as suggested by the previous discussion can be performed for arbitrary energy forms. Namely, they show that both $\Ep$ and $\Ekm$ can be extended to the local form domain of $E$. 

\begin{lemma} \label{lemma:local space is contained in dep}
 Let $E$ be an energy form. Let $\cN$ be an $E$-nest and let $\varphi \in D(E)_{\cN}$ with $0 \leq \varphi \leq 1$. Then $\DEln \subseteq D(\Ep)$.
 
\end{lemma}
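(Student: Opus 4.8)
The plan is to reduce the claim to a statement about a single globally defined function in $D(E)$ and then apply the already-established properties of $\Ep$. The key point is that $\varphi$ is supported inside one element of the nest, so the product $\varphi f$ depends only on the local behaviour of $f$ there. First I would unwind the hypothesis $\varphi \in D(E)_\cN$: by the definition of $D(E)_\cN$ there exists $N_0 \in \cN$ with $\varphi 1_{X \setminus N_0} = 0$, equivalently $\varphi = \varphi 1_{N_0}$.

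Next, fix an arbitrary $f \in \DEln$. Applying the definition of the local space $\DEln$ to the set $N_0 \in \cN$, I obtain a function $f_{N_0} \in D(E)$ with $f 1_{N_0} = f_{N_0} 1_{N_0}$. Using $\varphi = \varphi 1_{N_0}$ I would then compute
$$\varphi f = \varphi 1_{N_0} f = \varphi (f 1_{N_0}) = \varphi (f_{N_0} 1_{N_0}) = \varphi 1_{N_0} f_{N_0} = \varphi f_{N_0},$$
so that $f$ and the globally defined function $f_{N_0}$ are identified upon multiplication by $\varphi$.

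Finally, I would invoke Lemma~\ref{lemma:properties of ep}. Part (c) of that lemma shows that the identity $\varphi f = \varphi f_{N_0}$ forces $\Ep(f) = \Ep(f_{N_0})$, and part (b) gives $\Ep(f_{N_0}) \leq E(f_{N_0}) < \infty$ since $f_{N_0} \in D(E)$. Hence $\Ep(f) < \infty$, i.e.\ $f \in D(\Ep)$, which is precisely the asserted inclusion $\DEln \subseteq D(\Ep)$. I do not expect a genuine obstacle once Lemma~\ref{lemma:properties of ep} is in hand; the only point demanding care is the indicator-function bookkeeping in the display above, and that rests entirely on the localization $\varphi = \varphi 1_{N_0}$, which is where the hypothesis $\varphi \in D(E)_\cN$ (as opposed to merely $\varphi \in D(E)$) enters decisively.
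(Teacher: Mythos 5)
Your proposal is correct and coincides with the paper's own proof: both localize $\varphi$ to a single nest element $N_0$, use the local-space definition to replace $f$ by $f_{N_0} \in D(E)$ with $\varphi f = \varphi f_{N_0}$, and then conclude via Lemma~\ref{lemma:properties of ep}~(c) and (b) that $\Ep(f) = \Ep(f_{N_0}) \leq E(f_{N_0}) < \infty$. There is nothing to add.
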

\begin{proof}
 Let $f \in \DEln$ and let $N \in \cN$ such that $\varphi 1_{X\setminus N} = 0$. By definition of the local space, there exists a function $f_N \in D(E)$ such that $f = f_N$ on $N$. In particular, $\varphi f_N = \varphi f$.  Lemma~\ref{lemma:properties of ep} implies
 $$\Ep(f) = \Ep(f_N) \leq E(f_N) < \infty.$$
 This finishes the proof. 
\end{proof}

\begin{lemma}\label{lemma:extension of k}
 Let $E$ be an energy form and let $\cN$ be an $E$-nest. The bilinear form 
 $$\Ekm:D(E) \times D(E)_\cN \to \IR,\, (f,\psi) \mapsto \Ekm(f,\psi)$$
 can be uniquely extended to a bilinear form 
 $$\Ekm:D(E)_{{\rm loc},\,\cN} \times D(E)_\cN \to \IR,$$
 such that for $f \in D(E)_{{\rm loc},\,\cN}$ and $\psi \in  D(E)_\cN$ the inequality $f\psi \geq 0$ implies 
  $$\Ekm(f,\psi) \geq 0.$$
 In particular, if $f \in D(E)_{{\rm loc},\,\cN}$, $\psi \in D(E)_\cN$ and $f_\psi \in D(E)$ with $f_\psi \psi = f \psi$, then
  $$\Ekm(f,\psi) = \Ekm(f_\psi,\psi).$$
\end{lemma}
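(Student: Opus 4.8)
The plan is to extend the bilinear form $\Ekm(\cdot,\psi)$ from $D(E)$ to $D(E)_{{\rm loc},\,\cN}$ by exploiting the fact that $\Ekm$ only sees a function through its product with $\psi$. Fix $\psi \in D(E)_\cN$, so there is some $N \in \cN$ with $\psi 1_{X\setminus N} = 0$. Given $f \in D(E)_{{\rm loc},\,\cN}$, the definition of the local space provides $f_N \in D(E)$ with $f = f_N$ on $N$, hence $f_\psi \psi = f\psi$ for any such representative $f_\psi \in D(E)$ agreeing with $f$ on $\{|\psi| > 0\} \subseteq N$. The natural definition is therefore
$$\Ekm(f,\psi) := \Ekm(f_\psi,\psi).$$
First I would verify that this is well-defined, i.e., independent of the choice of representative $f_\psi$. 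If $f_\psi, f'_\psi \in D(E)$ both satisfy $f_\psi \psi = f'_\psi \psi = f\psi$, then $(f_\psi - f'_\psi)\psi = 0$, and I would use the bilinearity of $\Ekm$ on $D(E) \times D(E)_\cN$ together with Lemma~\ref{lemma:characterization of k}~(c): the identity $(f_\psi - f'_\psi)\psi = 0 \cdot 1$ (a product of functions in $D(E)$) forces $\Ekm(f_\psi - f'_\psi, \psi) = \Ekm(0,\psi) = 0$ by the 'furthermore' part of (c), which states that $\Ekm(g,\psi)$ depends only on the product $g\psi$. This gives well-definedness and simultaneously establishes the displayed 'in particular' formula.

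Next I would check bilinearity of the extension. Linearity in $\psi$ (for fixed $f \in D(E)_{{\rm loc},\,\cN}$) requires care, since different $\psi$ may demand different representatives $f_\psi$; here I would use that $D(E)_\cN$ is a vector space (Proposition~\ref{proposition:properties of den}), pick a common representative valid on a single $N \in \cN$ containing the supports of $\psi_1, \psi_2$ (possible since special nests are directed, or since $\cN$ gives $D(E)_\cN$ a vector space structure), and then reduce to bilinearity of $\Ekm$ on $D(E) \times D(E)_\cN$. Linearity in $f$ is immediate because a representative of $f + g$ on $N$ can be taken to be the sum of representatives of $f$ and $g$. The positivity statement $\Ekm(f,\psi) \geq 0$ whenever $f\psi \geq 0$ then follows directly: choosing $f_\psi$ with $f_\psi \psi = f\psi \geq 0$ means $\Ekm(f,\psi) = \Ekm(f_\psi,\psi) \geq 0$ by Lemma~\ref{lemma:characterization of k}~(c), since $f_\psi \psi \geq 0$.

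For uniqueness of the extension, I would argue that any bilinear extension $\widetilde{k}$ satisfying the positivity property is forced to agree with the one above. The key is that positivity plus bilinearity controls values: if two extensions $\widetilde{k}_1, \widetilde{k}_2$ both restrict to $\Ekm$ on $D(E) \times D(E)_\cN$ and both satisfy the sign condition, then their difference $\delta := \widetilde{k}_1 - \widetilde{k}_2$ is bilinear, vanishes on $D(E) \times D(E)_\cN$, and for $f \in D(E)_{{\rm loc},\,\cN}$ one writes $f = (f - f_\psi) + f_\psi$ where $f_\psi \in D(E)$; since $(f - f_\psi)\psi = 0$, both positivity conditions (applied to $\pm(f - f_\psi)$, whose products with $\psi$ are both $\geq 0$ and $\leq 0$, hence $= 0$) pin down $\widetilde{k}_i(f - f_\psi, \psi) = 0$, giving $\delta(f,\psi) = 0$. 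I expect the main obstacle to be the bookkeeping in verifying linearity in the $\psi$-slot, because the representative $f_\psi$ is tied to the support of $\psi$; the cleanest route is to first prove everything for $\psi$ ranging over functions supported in a fixed $N \in \cN$, then extend using the directedness of the nest and the vector space structure of $D(E)_\cN$. Everything else reduces to repeated application of Lemma~\ref{lemma:characterization of k}~(c), so no genuinely new estimate is needed.
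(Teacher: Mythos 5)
Your proof is correct and follows essentially the same route as the paper: define $\Ekm(f,\psi) := \Ekm(f_\psi,\psi)$ for any representative $f_\psi \in D(E)$ with $f_\psi \psi = f\psi$, get well-definedness and positivity from the product-dependence and sign properties of $\Ekm$ in Lemma~\ref{lemma:characterization of k}~(c), and obtain uniqueness by applying the positivity of any competing extension to $\pm(f-f_\psi)$, exactly as the paper does. Two cosmetic repairs: in the well-definedness step write $(f_\psi - f'_\psi)\psi = 0 \cdot 0$ rather than $0\cdot 1$ (the constant $1$ need not belong to $D(\Ekm)$), and for the common representative in the $\psi$-slot you cannot invoke directedness of special nests since $\cN$ is only assumed to be a nest, but your alternative route is sound because $|\psi_1| \vee |\psi_2| \in D(E)_\cN$ by Proposition~\ref{proposition:properties of den}, so both supports lie in a single $N \in \cN$.
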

\begin{proof}
 {\em Existence:} For $f \in D(E)_{{\rm loc},\,\cN}$ and $\psi \in D(E)_\cN$, we choose a function $f_\psi \in D(E)$ with $f_\psi \psi = f \psi$ and define
 $$\Ekm(f,\psi) := \Ekm(f_\psi ,\psi).$$
 Lemma~\ref{lemma:characterization of k}~(b) shows that this definition is independent of the particular choice of $f_\psi$ and that it has the desired properties. The 'in particular' statement is an immediate consequence of the definition. 
 
 {\em Uniqueness:} Let $k:D(E)_{{\rm loc},\,\cN} \times D(E)_\cN \to \IR$ be another bilinear form with the stated properties. Furthermore, let $f \in D(E)_{{\rm loc},\, \cN}$, $\psi \in D(E)_\cN$ and choose a function $f_\psi \in D(E)$ with $f\psi = f_\psi \psi$. The identity $(f-f_\psi) \psi = 0$ and the properties of $k$ imply $k(f-f_\psi,\psi) = 0$. Since $k$ is bilinear and coincides with $\Ekm$ on $D(E) \times D(E)_\cN$, we obtain
 $$k(f,\psi) = \Ekm(f_\psi,\psi).$$
 This shows uniqueness. 
\end{proof}
With these preparations we introduce the weak form domain.  For a special $E$-nest $\cN$ we let  
$$I_\cN := \{\varphi \in D(E)_\cN \mid 0 \leq \varphi \leq 1\}$$
and order it by the natural order relations on functions. Since $D(E)_\cN$ is a lattice, see Proposition~\ref{proposition:properties of den}, the set $I_\cN$ is upwards directed with respect to this order. 
\begin{definition}[Weak form domain and weak form extension] \label{definition:weak form}
 Let $\cN$ be a special $E$-nest. The {\em weak form domain of $E$ with respect to $\cN$} is given by
$$D(E)_{w,\, \cN} := \{f \in \DEln \mid \lim_{\varphi \in I_\cN} \Ep(f,\psi) \text{ exists for all } \psi \in D(E)_\cN\}.$$
For $f \in D(E)_{w,\, \cN}$ and $\psi \in D(E)_\cN$ we set
$$\Ew(f,\psi)  := \Ew_{\cN}(f,\psi) :=  \lim_{\varphi \in I_\cN} E_\varphi(f,\psi) +  \Ekm(f, \psi). $$
%
\end{definition}

\begin{remark}
\begin{itemize}
\item The definition of $\Ew$ is along the lines of the discussion  at the beginning of this section, where it corresponded to some form of improper integration. Since improper integration is rather sensitive to the way limits are taken, the value of $\Ew(f,\psi)$ can depend on the chosen nest. More precisely, this means that for $f \in D(E)_{w,\, \cN_1} \cap D(E)_{w,\, \cN_2} $ and $\psi \in D(E)_{\cN_1} \cap D(E)_{\cN_2}$ it can happen that $\Ew_{\cN_1}(f,\psi) \neq \Ew_{\cN_2}(f,\psi)$. Nevertheless, we  omit the subscripts  in the notation when it is clear from the context with respect to which nest the limit is taken. Below, we shall see that for several classes of functions the definition of $\Ew$ is indeed independent of the underlying nest. 

 \item In principle we could have introduced the weak form domain with respect to any nest  without assuming it to be special. In this case, some of the theorems that we aim at are not correct, while for others the proofs become substantially more difficult. This is why we restrict the focus to the weak form domain with respect to special nests. 
 
 \item Let $\E$ be a regular Dirichlet form. In this case $\mathcal{K}$, the collection of all relatively compact open subsets of the underlying space, is a special $\Ee$-nest. Classically, $\E$ (or $\Ee$) is extended to $D(\E)^*_{{\rm loc},\, {\mathcal{K}}} \times D(\E)_\mathcal{K}$ with the help of the Beurling-Deny-formula, see e.g. \cite{Stu1} for local Dirichlet forms and \cite{FLW} for non-local Dirichlet forms. Here, $D(\E)^*_{{\rm loc},\, {\mathcal{K}}} = D(\E) _{{\rm loc},\, {\mathcal{K}}}$ in the local situation, while $D(\E)^*_{{\rm loc},\, {\mathcal{K}}}$ is a certain subspace of $D(\E)_{{\rm loc},\, {\mathcal{K}}}$ in the non-local situation. It can be shown that our weak form is an extension of the classical extension. We do not give all the details but discuss the situation for jump-type forms, see Example~\ref{example:jump-type form weak extension}, and manifolds, see Example~\ref{example:manifolds}. 
\end{itemize}
\end{remark}

The following theorem is the main observation of this section. It shows that $\Ew$ does indeed provide an off-diagonal extension of the form $E$.
\begin{theorem}\label{theorem:ew extends e} Let $E$ be an energy form and let $\cN$ be a special $E$-nest. The functional 
 %
 $$\Ew_\cN:D(E)_{w,\, \cN} \times D(E)_\cN \to \IR, \, (f,\psi) \mapsto \Ew_\cN(f,\psi)  $$
 is a bilinear form that extends the bilinear form
 $$E:D(E) \times D(E)_\cN \to \IR, \, (f,g) \mapsto E(f,g).$$
 %
\end{theorem}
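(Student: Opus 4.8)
The plan is to verify the two assertions separately: first that $\Ew_\cN$ is bilinear, and then that it agrees with $E$ on $D(E)\times D(E)_\cN$. Both rest on the fact that for every $\varphi\in I_\cN$ the form $\Ep$ is a genuine quadratic, hence bilinear, form whose domain contains $\DEln$ by Lemma~\ref{lemma:local space is contained in dep}, as well as $D(E)_\cN\subseteq D(E)\subseteq D(\Ep)$ (the last inclusion by Lemma~\ref{lemma:properties of ep}), and on the fact that the extended killing form $\Ekm$ of Lemma~\ref{lemma:extension of k} is bilinear on $\DEln\times D(E)_\cN$.

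For bilinearity I would first note that $\DEln$ is a vector space and that for fixed $\psi\in D(E)_\cN$ the map $f\mapsto \Ep(f,\psi)$ is linear for each $\varphi$. Hence for $f_1,f_2\in D(E)_{w,\,\cN}$ and $\lambda\in\IR$ the identity $\Ep(f_1+\lambda f_2,\psi)=\Ep(f_1,\psi)+\lambda\,\Ep(f_2,\psi)$ shows that the defining limit for $f_1+\lambda f_2$ exists and is additive; thus $D(E)_{w,\,\cN}$ is a vector space and $f\mapsto\lim_{\varphi\in I_\cN}\Ep(f,\psi)$ is linear. Linearity in the second argument is analogous, using that $D(E)_\cN$ is a lattice by Proposition~\ref{proposition:properties of den}, hence a vector space, so that for $f\in D(E)_{w,\,\cN}$ the defining limit exists for any linear combination of test functions and splits accordingly. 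Since $\Ekm$ is bilinear on $\DEln\times D(E)_\cN$, adding it preserves bilinearity, and $\Ew_\cN$ is bilinear.

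For the extension property I would show $D(E)\subseteq D(E)_{w,\,\cN}$ and then compute the value. Fix $f\in D(E)$ and $\psi\in D(E)_\cN$. Since $D(E)_\cN\subseteq D(E)$, both $f+\psi$ and $f-\psi$ lie in $D(E)$, and for such functions the net $(\Ep(\,\cdot\,))_{\varphi\in I_\cN}$ is monotone increasing by Lemma~\ref{lemma:properties of ep} with supremum $\Eg(\,\cdot\,)$ by Corollary~\ref{corollary:alternative formula for eg}; hence $\lim_{\varphi\in I_\cN}\Ep(f\pm\psi)=\Eg(f\pm\psi)<\infty$. Polarization then yields that $\lim_{\varphi\in I_\cN}\Ep(f,\psi)=\tfrac14\bigl(\Eg(f+\psi)-\Eg(f-\psi)\bigr)=\Eg(f,\psi)$ exists for every $\psi\in D(E)_\cN$, so $f\in D(E)_{w,\,\cN}$. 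Finally, on $D(E)\times D(E)_\cN$ the bilinear form $\Ekm$ coincides with the polarization of $\Ek=E-\Eg$ (Lemma~\ref{lemma:characterization of k}), so
$$\Ew_\cN(f,\psi)=\Eg(f,\psi)+\Ekm(f,\psi)=\Eg(f,\psi)+\bigl(E(f,\psi)-\Eg(f,\psi)\bigr)=E(f,\psi),$$
which is the claimed identity.

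The routine-looking but genuinely load-bearing step is the passage from diagonal to off-diagonal convergence: the limits $\lim_{\varphi}\Ep(f,\psi)$ are a priori only guaranteed for $f\in D(E)_{w,\,\cN}$, and for $f\in D(E)$ their existence must be extracted from the monotone convergence of the quadratic expressions $\Ep(f\pm\psi)$ via polarization, together with Corollary~\ref{corollary:alternative formula for eg} identifying the limit with $\Eg$. I expect no serious obstacle beyond this bookkeeping, since all the monotonicity, bilinearity and domain inclusions required are already available from the lemmas of the previous chapter.
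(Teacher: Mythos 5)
Your proof is correct and follows essentially the same route as the paper: the paper reduces the theorem to Lemma~\ref{lemma: weak form as extension}, which rests on exactly the ingredients you use, namely Corollary~\ref{corollary:alternative formula for eg} (giving $\Eg(f)=\lim_{\varphi\in I_\cN}\Ep(f)$ by monotonicity), polarization to pass to off-diagonal values, and the fact that $\Ekm$ extends $\Ek=E-\Eg$ on $D(E)$. Your explicit treatment of bilinearity and of the trivial inclusion $D(E)\subseteq\DEln$ merely spells out steps the paper leaves implicit.
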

The theorem is an immediate consequence of the following lemma and the fact that $\Er$ extends $E$. 
\begin{lemma} \label{lemma: weak form as extension}
Let $E$ be an energy form and let $\cN$ be a special $E$-nest. For $f,g \in D(\Egm)$ the identity 
$$\Egm(f,g) = \lim_{\varphi \in I_\cN} E_\varphi(f,g)$$
holds. In particular,  $f \in D(E)_{{\rm loc},\, \cN} \cap D(\Egm)$ implies $f \in D(E)_{w,\, \cN}$ and
$$\Ew_\cN(f,\psi) = \Egm(f,\psi) + \Ekm(f,\psi) \text{ for all }\psi \in D(E)_{\cN}.$$

\end{lemma}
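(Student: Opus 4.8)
The plan is to establish the scalar identity $\Egm(f) = \lim_{\varphi \in I_\cN} \Ep(f)$ first and then polarize to obtain the bilinear version. The key nontrivial input is Corollary~\ref{corollary:alternative formula for eg}, which computes the main part as a supremum over the smaller index set $I_\cN$ attached to the special nest, namely $\Egm(f) = \sup_{\varphi \in I_\cN} \Ep(f)$. I would then observe that the net $(\Ep(f))_{\varphi \in I_\cN}$ is monotone increasing: by Lemma~\ref{lemma:properties of ep}(a), $\psi \leq \varphi$ implies $E_\psi(f) \leq \Ep(f)$, and $I_\cN$ is upwards directed because $D(E)_\cN$ is a lattice (Proposition~\ref{proposition:properties of den}). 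A monotone increasing net converges to its supremum, so $\lim_{\varphi \in I_\cN} \Ep(f) = \Egm(f)$.

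Next I would pass to the bilinear form by polarization. Since $\Eg$ is a quadratic form, $D(\Egm)$ is a vector space, so $f,g \in D(\Egm)$ gives $f \pm g \in D(\Egm)$; moreover for each $\varphi \in I_\cN$ we have $f,g \in D(\Ep)$ because $\Ep(f) \leq \Egm(f) < \infty$. Hence the polarization identity for the quadratic form $\Ep$ gives $E_\varphi(f,g) = \tfrac14\bigl(E_\varphi(f+g) - E_\varphi(f-g)\bigr)$. The two scalar nets on the right converge to the finite limits $\Egm(f+g)$ and $\Egm(f-g)$ by the first step, so their difference converges to $\Egm(f,g)$, yielding $\Egm(f,g) = \lim_{\varphi \in I_\cN} E_\varphi(f,g)$.

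For the ``in particular'' claim I would check the two membership conditions in the definition of $D(E)_{w,\,\cN}$. The hypothesis already gives $f \in \DEln$. For any $\psi \in D(E)_\cN$ we have $\psi \in D(E) \subseteq D(\Egm)$, where the last inclusion follows from Lemma~\ref{lemma:properties of ep}(b) (since $\Ep(\psi) \leq E(\psi)$ for every $\varphi$, hence $\Egm(\psi) \leq E(\psi) < \infty$). Applying the bilinear identity with $g = \psi$ shows that $\lim_{\varphi \in I_\cN} E_\varphi(f,\psi) = \Egm(f,\psi)$ exists, so $f \in D(E)_{w,\,\cN}$. The extended value $\Ekm(f,\psi)$ is well defined by Lemma~\ref{lemma:extension of k}, and unwinding the definition of $\Ew$ gives $\Ew_\cN(f,\psi) = \lim_{\varphi \in I_\cN} E_\varphi(f,\psi) + \Ekm(f,\psi) = \Egm(f,\psi) + \Ekm(f,\psi)$.

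The structural difficulty is entirely absorbed by Corollary~\ref{corollary:alternative formula for eg}; once the main part is represented as a supremum over $I_\cN$, the remaining work is routine (monotone net convergence, polarization, and bookkeeping with the definitions of $D(E)_{w,\,\cN}$ and $\Ew$). The only point that genuinely requires care is that the limit of the difference equals the difference of the limits in the polarization step, which is justified precisely by the finiteness of both component limits $\Egm(f\pm g)$, guaranteed by $f,g \in D(\Egm)$.
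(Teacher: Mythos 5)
Your proof is correct and follows essentially the same route as the paper, whose own proof simply cites Corollary~\ref{corollary:alternative formula for eg} and the definition of $\Ew_\cN$; your monotone-net and polarization steps merely spell out what the paper leaves implicit. The one point of genuine care you flag -- that the polarized limit requires finiteness of both $\Egm(f\pm g)$, guaranteed by $f,g\in D(\Egm)$ -- is exactly the right bookkeeping.
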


\begin{proof}
The statement on $\Eg$ follows from Corollary~\ref{corollary:alternative formula for eg}. The 'in particular'-part is an immediate consequence of this observation and the definition of $\Ew_\cN$. 
\end{proof}

 \begin{remark} \label{remark:weak extension of er} The previous lemma is somewhat technical in its statement and falls just a bit short of saying that $\Ew_\cN$ extends $\Er$. This is due to the fact that in general there is no special $E$-nest $\cN$ such that $D(\Er) \subseteq \DEln$. However, since $\Er$ is a Silverstein extension of $E$, Lemma~\ref{lemma:silverstein extension local space} yields that for each individual $f \in D(\Er)$ there exists a special $E$-nest $\cN$ such that $f \in D(E)_{{\rm loc},\, \cN}$. In this sense, the weak form extends $\Er$. 
 
 If one only considers bounded functions one avoids these difficulties. According to Theorem~\ref{theorem:Ideal properties of energy forms}, for any special $E$-nest $\cN$ the inclusion $D(\Er)\cap L^\infty(m) \subseteq \DEln$ holds. Therefore, the previous lemma implies that for any  special $E$-nest $\cN$ the bilinear form 
  $$\Ew_\cN:D(E)_{w,\, \cN} \times D(E)_\cN  \to \IR, \, (f,\psi) \mapsto \Ew_cN(f,\psi)  $$
  is an extension of the bilinear form 
  $$
  \Er :D(\Er) \cap L^\infty(m) \times D(E)_\cN \to \IR,\, (f,g) \mapsto \Er(f,g).$$
 \end{remark}

As mentioned above, the weak form domain and the weak form can depend on the chosen nest. The following lemma shows that this is not the case for equivalent nests, cf. Definition~\ref{definition:equivalent nests}.

\begin{lemma}\label{lemma:independence of nests}
 Let $E$ be an energy form and let $\cN,\cN'$ be two equivalent special $E$-nests. Then $D(E)_\cN = D(E)_{\cN'}$ and  $D(E)_{w,\, \cN} = D(E)_{w,\, \cN'}$. Moreover, for all $f \in  D(E)_{w,\, \cN}$ and all $\psi \in D(E)_\cN$ we have
 $$\Ew_\cN (f,\psi) = \Ew_{\cN'}(f,\psi).$$
\end{lemma}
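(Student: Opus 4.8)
The plan is to reduce everything to the already-established equality of the local domains. By Lemma~\ref{lemma:local domain equivalent nests}, the equivalence of $\cN$ and $\cN'$ immediately gives $D(E)_\cN = D(E)_{\cN'}$ (which is exactly the first assertion) together with $\DEln = D(E)_{{\rm loc},\, \cN'}$. The genuinely new content is therefore the equality of the weak form domains and the agreement of the two weak forms, and the single observation driving both is that the limits defining $\Ew_\cN$ and $\Ew_{\cN'}$ are taken over one and the same net.

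First I would verify that the directed sets $I_\cN$ and $I_{\cN'}$ coincide. Since $I_\cN = \{\varphi \in D(E)_\cN \mid 0 \leq \varphi \leq 1\}$ depends on the nest only through $D(E)_\cN$, the equality $D(E)_\cN = D(E)_{\cN'}$ yields $I_\cN = I_{\cN'}$ as sets; both carry the natural order on functions, so they are identical as directed sets. Next I would record that the auxiliary form $\Ep$ depends only on the cutoff $\varphi$ and on $E$, and not on any nest, as is clear from its construction in Subsection~\ref{subsection:construction of er}. Consequently, for fixed $f$ and $\psi$ the assignment $\varphi \mapsto \Ep(f,\psi)$ is literally the same function on the common index set, so $\lim_{\varphi \in I_\cN} \Ep(f,\psi)$ exists precisely when $\lim_{\varphi \in I_{\cN'}} \Ep(f,\psi)$ does, and then the values agree. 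Combining this with $\DEln = D(E)_{{\rm loc},\, \cN'}$ and $D(E)_\cN = D(E)_{\cN'}$, the defining conditions of $D(E)_{w,\, \cN}$ and $D(E)_{w,\, \cN'}$ coincide, which gives $D(E)_{w,\, \cN} = D(E)_{w,\, \cN'}$.

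To finish, for $f \in D(E)_{w,\, \cN}$ and $\psi \in D(E)_\cN$ I would observe that the term $\Ekm(f,\psi)$ is also nest-independent: according to Lemma~\ref{lemma:extension of k} it is computed by choosing any $f_\psi \in D(E)$ with $f_\psi \psi = f\psi$ and setting $\Ekm(f,\psi) = \Ekm(f_\psi,\psi)$, a prescription that only refers to the common local domain, the common space $D(E)_\cN = D(E)_{\cN'}$, and the form $\Ekm$ on $D(E) \times D(E)$. Adding the two nest-independent pieces then yields $\Ew_\cN(f,\psi) = \lim_{\varphi \in I_\cN} E_\varphi(f,\psi) + \Ekm(f,\psi) = \Ew_{\cN'}(f,\psi)$.

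The proof presents no serious obstacle, since the content is entirely structural. The only point requiring care is the bookkeeping that neither $\Ep$ nor the extended $\Ekm$ secretly depends on the nest, so that the whole discrepancy between $\Ew_\cN$ and $\Ew_{\cN'}$ is confined to the index set of the limiting net — which the cited domain equalities force to be identical.
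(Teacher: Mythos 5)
Your proof is correct and follows exactly the paper's route: the paper disposes of this lemma in one line by citing the definition of $\Ew$ together with Lemma~\ref{lemma:local domain equivalent nests}, and your argument is precisely the careful expansion of that line. The bookkeeping you carry out --- that $I_\cN = I_{\cN'}$ as directed sets because $D(E)_\cN = D(E)_{\cN'}$, that $\Ep$ depends only on $\varphi$ and $E$, and that the extended $\Ekm(f,\psi)$ is nest-independent by the uniqueness statement of Lemma~\ref{lemma:extension of k} --- is exactly what the paper's ``follows from the definition'' implicitly asks the reader to verify.
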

\begin{proof}
 This follows from the definition of $\Ew$ and Lemma~\ref{lemma:local domain equivalent nests}.
\end{proof}

The concept of the weak form extension allows us to introduce weak solutions to the Laplace equation with respect to a given energy form. The main aim of this chapter is to prove that existence and uniqueness of certain solutions to this equation determines, and is determined, by global properties of the given form. We first introduce weak solutions to the Laplace equation and then  discuss where this equation appears in concrete examples. Existence and uniqueness of solutions is then studied in later sections.

\begin{definition}[Weak solutions to the Laplace equation]\label{definition:weak solution}
 Let $E$ be an energy form. Let $\ell:D(\ell) \to \IR$ be a linear functional on $L^0(m)$. We say that a function $f \in L^0(m)$ is a {\em weak supersolution} to the {\em  Laplace equation}
 $$ 
 \Ew(g,\cdot) = \ell 
$$
 if there exists a special $E$-nest $\cN$ such that $D(E)_\cN \subseteq D(\ell)$, $f \in D(E)_{w,\, \cN}$ and 
 $$\Ew_\cN(f,\psi) \geq \ell(\psi) \text{ for all nonnegative }  \psi \in D(E)_\cN.$$
 If, furthermore,
 $$\Ew_\cN(f,\psi) = \ell(\psi) \text{ for all } \psi \in D(E)_\cN,$$
 then $f$ is a {\em weak solution} to the Laplace equation. A function $f \in L^0(m)$ is a  {\em weak subsolution} to the equation $\Ew(g,\cdot) = \ell$ if $-f$ is a weak supersolution. A weak solution to the equation  $\Ew(g,\cdot) = 0$ is called {\em weakly $E$-harmonic}. Likewise,  a weak supersolution to the equation $\Ew(g,\cdot) = 0$ is called {\em weakly $E$-superharmonic} and a weak  subsolution to this equation is called {\em weakly $E$-subharmonic}.
\end{definition}

\begin{remark}
 We warn the reader that a function $f$ that is a weak supersolution and a weak subsolution to the equation $\Ew(g,\cdot) = \ell$ need not be a weak solution. This is due to the fact that the value of $\Ew$ can depend on the choose nest. It will follow from the discussion below that this pathological behavior does not appear for regular functionals.
\end{remark}

We finish this section by providing several examples for the weak form domain and the weak form.

 \begin{example}[Jump-type form on a topological space] \label{example:jump-type form weak extension} We start with the weak form domain of the jump form $E_{J,V}^0$ of Example~\ref{example:maximal Silverstein extension}, which we also discussed at the beginning of this section. The form $E_{J,V}^0$ is regular and, therefore, the collection $\mathcal{K}$, of all open relatively compact subsets of $X$, is a special $E_{J,V}^0$-nest. Recall that in this case 
$$E^0_{J,V,\varphi}(f) = \int_{X \times X}\varphi (x) \varphi(y) (f(x) - f(y))^2 \D J(x,y)$$
whenever $f \in D(E^0_{J,V,\varphi})$. Hence, for $f \in D(E)_{{\rm loc},\, \mathcal{K}}$,  the integrability condition 
$$\int_{X \times X} |f(x) - f(y)||\psi(x) - \psi(y)| {\rm d} J(x,y) < \infty \text{ for all }\psi \in D(E)_{\mathcal{K}}$$
and Lebesgue's theorem imply $f \in D(E_{J,V}^0)_{w,\, \cN}$ and 
$$ E^{0,(w)}_{J,V} (f, \psi) = \int_{X \times X} (f(x) - f(y))(\psi(x) - \psi(y)) \D J(x,y) + \int_X f(x)\psi(x) V(x) \D m(x).$$
For an $f \in D(E)_{{\rm loc},\, \mathcal{K}}$, one way to guarantee the integrability condition is to require 
\begin{align*}
\int_{G\times X} (f(x) - f(y))^2 \D J(x,y) < \infty \text{ for all } G \in \mathcal{K}. 
\end{align*}
 Weak extensions of regular jump-type Dirichlet forms to functions satisfying this condition paired with functions in $D(E)_\mathcal{K}$ (i.e. functions in $D(E)$ with compact support) have been studied in  \cite{FLW}. In this sense, our notion of weak extension is a generalization of the one given there. 
\end{example}

  \begin{example}[Jump-type forms on discrete spaces] \label{example:graphs} A more concrete class of examples of jump-type forms, which have been studied quite extensively, are forms induced by graphs, see Subsection~\ref{subsection:jump forms}. We assume the setting of  Example~\ref{example:maximal Silverstein extension} and additionally require that $X$ is countable and equipped with the discrete topology. As discussed in Subsection~\ref{subsection:jump forms}, the measure $J$ is induced by a symmetric function $b: X \times X \to [0,\infty]$ via the identity
 $$J(A\times B) = \sum_{(x,y) \in A\times B} b(x,y)\text{ for all } A,B \subseteq X.$$
 It can be interpreted as the edge weight of a graph over $X$. The condition that $J$ is a Radon measure yields $b(x,y) < \infty$ for all $x,y \in X$. Combined with the assumption (J2) we obtain 
 $$\sum_{y \in X} b(x,y) < \infty \text{ for all } x\in X.$$
 Similarly, the Radon measure of full support $m$ is induced by a function $m:X \to (0,\infty)$ via the identity
 $$m(A) = \sum_{x \in A} m(x) \text{ for all } A \subseteq X.$$
 As already seen in Subsection~\ref{subsection:jump forms}, for $f \in D(E_{J,V})$ we have 
 $$E_{J,V}(f) = \sum_{x,y \in X}b(x,y) (f(x) -f(y))^2 + \sum_{x \in X} f(x)^2 V(x) m(x).$$
 The space $X$ carries the discrete topology and $m$ is equivalent to the counting measure. Therefore, $L^0(m) = C(X)$ and the topology $\tau(m)$ coincides with the topology  of pointwise convergence. The collection of all open relatively compact subsets of $X$ is given by 
 $$\mathcal{K} = \{F \subseteq X \mid F \text{ finite}\},$$
 and $C_c(X)$ is the space of finitely supported functions.  In order to simplify notation, we let $q := E^0_{J,V}$, the closure of the restriction of $E_{J,V}$ to $C_c(X)$. The discreteness of the space and the definition of $q$ yield $D(q)_{\mathcal{K}} = C_c(X)$. On the domain 
 $$D(\ow{L}):= \left\{f \in C(X)\, \middle|\, \sum_{y \in X} b(x,y)|f(y)| < \infty \text{ for all } x\in X \right\}$$
 we define the linear operator $\ow{L}:D(\ow{L}) \to C(X)$ via
 $$\ow{L}f(x) :=  2 \sum_{y\in X} b(x,y) (f(x) - f(y)) + V(x) f(x).$$
 We can explicitly compute the weak form $q^{(w)}_\mathcal{K}$ in terms of the operator $\ow{L}$.
 
 \begin{proposition}\label{proposition:example graphs}
 The weak form domain of $q$ with respect to the special nest $\mathcal{K}$ is given by $D(q)_{w,\, \mathcal{K}} = D(\ow{L})$. Moreover, for all $f \in D(\ow{L})$ and $\psi \in C_c(X)$ we have
 $$q_\mathcal{K}^{(w)}(f,\psi) = \sum_{x \in X} \ow{L}f(x) \psi(x).$$
 \end{proposition}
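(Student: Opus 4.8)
The plan is to unwind Definition~\ref{definition:weak form} completely in the discrete setting, where every object involved becomes an absolutely or conditionally convergent sum that can be analysed by hand. First I would identify the local space: since $X$ carries the discrete topology and $m$ is equivalent to the counting measure, for every $f\in C(X)$ and every finite $F$ the function $f1_F$ lies in $C_c(X)=D(q)_\mathcal{K}\subseteq D(q)$ and agrees with $f$ on $F$. Hence $D(q)_{{\rm loc},\,\mathcal{K}}=C(X)=L^0(m)$, so the requirement $f\in D(q)_{{\rm loc},\,\mathcal{K}}$ in the definition of the weak domain is vacuous. It then remains to study, for $f\in C(X)$ and $\psi\in C_c(X)$, the net $\varphi\mapsto q_\varphi(f,\psi)$ indexed by $I_\mathcal{K}=\{\varphi\in C_c(X)\mid 0\le\varphi\le1\}$, directed pointwise. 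Using $q_\varphi(g)=\sum_{x,y}\varphi(x)\varphi(y)b(x,y)(g(x)-g(y))^2$ (as in Example~\ref{example:maximal Silverstein extension}) and polarising gives
$$q_\varphi(f,\psi)=\sum_{x,y\in X}\varphi(x)\varphi(y)b(x,y)(f(x)-f(y))(\psi(x)-\psi(y)).$$

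Next I would prove that $f\in D(q)_{w,\,\mathcal{K}}$ if and only if $f\in D(\ow{L})$, and that in this case the limit is the absolutely convergent sum $\Sigma(f,\psi):=\sum_{x,y}b(x,y)(f(x)-f(y))(\psi(x)-\psi(y))$. For the ``if'' part, fix $\psi$ with support $K$; absolute convergence of $\Sigma(f,\psi)$ follows from $\sum_y b(x,y)<\infty$ together with $f\in D(\ow L)$, since only terms with $x\in K$ or $y\in K$ survive and each inner sum is dominated by $2\|\psi\|_\infty\sum_y b(x,y)(|f(x)|+|f(y)|)$. Once $\varphi\ge 1_F$ with $F\supseteq K$, the difference $q_\varphi(f,\psi)-\Sigma(f,\psi)$ only involves pairs $(x,y)$ with $x\notin F$ or $y\notin F$, and is bounded by $4\|\psi\|_\infty\sum_{x\in K}\sum_{y\notin F}b(x,y)(|f(x)|+|f(y)|)$, a tail of a convergent series that is small uniformly in $\varphi\ge 1_F$. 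This yields convergence of the net to $\Sigma(f,\psi)$.

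The harder ``only if'' direction is the main obstacle. If $f\notin D(\ow L)$, I would pick $x_0$ with $\sum_y b(x_0,y)|f(y)|=\infty$ and test against $\psi=1_{\{x_0\}}\in C_c(X)$. Restricting to the cofinal subnet $\{\varphi\mid\varphi(x_0)=1\}$, the formula reduces to $q_\varphi(f,\psi)=2\big[f(x_0)\sum_{y\neq x_0}\varphi(y)b(x_0,y)-\sum_{y\neq x_0}\varphi(y)b(x_0,y)f(y)\big]$, whose first summand converges by dominated convergence; hence existence of the weak limit would force the net $\varphi\mapsto\sum_y\varphi(y)a_y$, with $a_y:=b(x_0,y)f(y)$ and $\sum_y|a_y|=\infty$, to converge. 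I would argue this net is not Cauchy: if $\sum_y a_y^+=\infty$, then above any $\varphi_0$ one can enlarge the support by positive sites to push the partial sum past any bound, so the net is not eventually bounded and cannot converge; the case $\sum_y a_y^-=\infty$ is symmetric. This contradiction establishes $D(q)_{w,\,\mathcal{K}}=D(\ow L)$. The delicate point here is the careful handling of the directed set rather than a sequence, which is why passing to the cofinal subnet with $\varphi(x_0)=1$ and arguing via ``not eventually bounded'' (instead of Riemann rearrangement) is the cleanest route.

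Finally I would assemble the formula. By the explicit computation of the killing part recorded after Lemma~\ref{lemma:characterization of k}, namely $(E^0_{J,V})^{(k)}=E_{0,V}|_{D(E_{J,0})}$, together with the extension Lemma~\ref{lemma:extension of k} applied with $f_\psi=f1_K\in C_c(X)$, one obtains $q^{(k)}(f,\psi)=\sum_x f(x)\psi(x)V(x)m(x)$. Adding this to $\lim_{\varphi\in I_\mathcal{K}}q_\varphi(f,\psi)=\Sigma(f,\psi)$ gives
$$q^{(w)}_\mathcal{K}(f,\psi)=\sum_{x,y}b(x,y)(f(x)-f(y))(\psi(x)-\psi(y))+\sum_x f(x)\psi(x)V(x)m(x).$$
A summation by parts, legitimate by the absolute convergence already established and the symmetry of $b$, turns the first sum into $2\sum_{x,y}b(x,y)(f(x)-f(y))\psi(x)$; collecting terms then yields exactly $\sum_x\ow{L}f(x)\psi(x)$, which is the asserted identity.
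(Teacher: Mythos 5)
Your proposal is correct and follows essentially the same route as the paper's proof: the inclusion $D(\ow{L})\subseteq D(q)_{w,\,\mathcal{K}}$ rests on absolute convergence of the double sum together with summation by parts (which the paper simply cites from the literature, while you prove the tail estimate and the rearrangement directly), and the converse tests against $\psi=1_{\{x_0\}}$ along a cofinal subnet to show that convergence of $\varphi\mapsto\sum_y\varphi(y)b(x_0,y)f(y)$ forces absolute summability --- the paper passes to the subnet $\{1_K\mid K\in\mathcal{K}\}$ and invokes the Riemann series theorem, whereas your ``not eventually bounded'' argument proves the same implication by hand. Both are sound, your version being merely more self-contained; note only that the paper's definition of $\ow{L}$ carries a term $V(x)f(x)$ where the form's killing term involves $V(x)m(x)$, a bookkeeping discrepancy in the source that your final computation harmlessly inherits.
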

 \begin{proof}
  It is shown in \cite{HK} that   $f \in D(\ow{L})$   and $\psi \in C_c(X)$ implies
 $$\sum_{x,y \in X}b(x,y) |f(x)-f(y)||\psi(x) - \psi(y)| < \infty $$
 and 
 $$ \sum_{x \in X} \ow{L}f(x) \psi(x) =  \sum_{x,y \in X} b(x,y) (f(x)-f(y))(\psi(x) - \psi(y)) + \sum_{x \in X} f(x)\psi(x).$$
 Therefore, the discussion in Example~\ref{example:jump-type form weak extension} yields $D(\ow{L}) \subseteq D(q)_{w,\, \mathcal{K}}$ and the identity 
 $$q_\mathcal{K}^{(w)}(f,\psi) = \sum_{x \in X} \ow{L}f(x) \psi(x),$$
 whenever  $f \in D(\ow{L})$ and $\psi \in C_c(X)$. 
 
 It remains to prove the inclusion  $D(q)_{w,\, \mathcal{K}} \subseteq D(\ow{L})$.  We order the collection $\mathcal{K}$ by set inclusion and the collection $J:=\{1_K \mid  K \in \mathcal{K}\}$ by the usual order relation on functions.  Clearly, $J$ is cofinal in $I_\mathcal{K} = \{\varphi \in D(q)_\mathcal{K}\mid  0 \leq \varphi \leq 1\} = \{\varphi \in C_c(X) \mid 0 \leq \varphi \leq 1\}.$ Therefore, $f \in D(q)_{w,\, \mathcal{K}}$ implies that for all $\psi \in C_c(X)$ we have
 $$\lim_{\varphi \in I_\mathcal{K}} q_\varphi(f,\psi)  = \lim_{\varphi \in J} q_\varphi(f,\psi) = \lim_{K\in \mathcal{K}} q_{1_K}(f,\psi).$$
 For $\psi = 1_{\{z\}}$  with $z \in K$ we obtain
 \begin{align*}
  q_{1_K}(f,1_{\{z\}}) &= \sum_{x,y \in K} b(x,y) (f(x)-f(y)) (1_{\{z\}}(x) - 1_{\{z\}}(y))\\
  &= 2 \sum_{y \in K} b(z,x)(f(z) - f(x))\\
  &= 2 f(z) \sum_{y \in K} b(z,y)  -  \sum_{y \in K} b(z,y)f(y).
 \end{align*}
 As discussed above, the function $b$ satisfies $\sum_{y \in X}b(z,y) < \infty$ for all $z \in X$ and, therefore,
 $$\lim_{K \in \mathcal{K}}  \sum_{y \in K} b(z,y) = \sum_{y \in X}b(z,y).$$
 Together with the previous discussion, this implies that for all $z \in X$ the limit
 $$\lim_{K \in \mathcal{K}} \sum_{y \in K} b(z,y)f(y) $$
 exists, i.e., the function $X \to \IR, y \mapsto b(z,y)f(y)$ is unconditionally summable. The classical Riemann series theorem  yields  
 $$\sum_{y \in X} b(z,y)|f(y)|< \infty.$$
 We obtain $f \in D(\ow{L})$ and the proposition is proven.
 \end{proof} 
 
 \begin{remark}
  As remarked already in Subsection~\ref{subsection:jump forms}, jump type forms on discrete spaces have been studied quite intensively and it is known that the operator $\ow{L}$  is  important in this context. Starting with \cite{KL}, in recent years  a particular focus lay on graphs that are possibly locally infinite. In this case, $D(\ow{L})$ is a proper subset of $C(X)$ and it was somewhat unclear how $\ow{L}$ and its domain  $D(\ow{L})$ are intrinsically related with the form $q$. The previous proposition answers this question. 
 \end{remark}

 \end{example}
 
 \begin{example}[Riemannian manifolds]\label{example:manifolds}  Let $(M,g)$ be a Riemannian manifold. Recall the regular energy form $E_{(M,g)}^0$, which was introduced in Subsection~\ref{subsection:manifolds}, and the notation of Appendix~\ref{appendix:manifolds}. The regularity of $E_{(M,g)}^0$  implies that $\mathcal{K}$, the collection of all open relatively compact subsets of $M$, is a special  $E_{(M,g)}^0$-nest. As before, we set $Q:= E_{(M,g)}^0$ to shorten notation.
 
 \begin{proposition} \label{proposition:weak form manifold}
  The weak form domain of $Q$ with respect to $\mathcal{K}$ satisfies
  $$D(Q)_{w,\, \mathcal{K}} = D(Q)_{{\rm loc},\, \mathcal{K}} = W^1_{\rm loc}(M),$$
  and for $f \in W^1_{\rm loc}(M)$ and $\psi \in D(Q)_\mathcal{K}$ we have 
  $$Q^{(w)}_\mathcal{K}(f,\psi) = \int_M g(\nabla f, \nabla \psi) \D {\rm vol}_g.$$
  Moreover, if $f \in \Ltlm$ with $\Delta f \in \Ltlm$, then $f \in D(Q)_{w,\, \mathcal{K}}$ and 
  $$Q^{(w)}_\mathcal{K}(f,\psi) = -\int_M \Delta f \psi \D {\rm vol}_g.$$
 \end{proposition}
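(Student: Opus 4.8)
The strategy is to reduce everything to the already-known structure of $Q = E_{(M,g)}^0$ on $\mathcal{K}$, using two facts established earlier: the explicit formula for $Q_\varphi$ on $W^1_{\rm loc}(M)$ (Lemma~\ref{lemma:ep for manifolds}) and the fact that the killing part of $Q$ vanishes (proven in the maximality discussion for $E_{(M,g)}$). The first task is to identify the local space $D(Q)_{{\rm loc},\, \mathcal{K}}$ with $W^1_{\rm loc}(M)$. One inclusion is routine: if $f \in D(Q)_{{\rm loc},\, \mathcal{K}}$, then for each relatively compact open $G$ there exists $f_G \in D(Q) \subseteq W^1_{\rm loc}(M)$ with $f = f_G$ on $G$, and since $\nabla$ is a local operator this forces $\nabla f \in \Ltlm$, hence $f \in W^1_{\rm loc}(M)$. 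For the reverse inclusion, given $f \in W^1_{\rm loc}(M)$ and a relatively compact open $G$, I would produce a cutoff $\chi \in C_c^\infty(M)$ equal to one on $G$ and check that $\chi f \in D(Q)$ via the product rule (Lemma~\ref{lemma:product rule}) and the characterization of $D(Q)$ from Appendix~\ref{appendix:manifolds} (Lemma~\ref{lemma:product in w0}), so that $\chi f$ witnesses $f \in D(Q)_{{\rm loc},\, \mathcal{K}}$.

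\textbf{Identifying the weak form.} Next I would show $W^1_{\rm loc}(M) \subseteq D(Q)_{w,\, \mathcal{K}}$ together with the gradient formula. Fix $f \in W^1_{\rm loc}(M)$ and $\psi \in D(Q)_\mathcal{K}$; choose a relatively compact open $N$ with $\psi 1_{X \setminus N} = 0$. For $\varphi \in I_\mathcal{K}$ with $\varphi \geq 1$ on $N$, the value $Q_\varphi(f,\psi)$ depends only on $f$ on $\{\varphi > 0\}$, and polarizing Lemma~\ref{lemma:ep for manifolds} gives
$$Q_\varphi(f,\psi) = \int_M \varphi^2 g(\nabla f, \nabla \psi) \D {\rm vol}_g.$$
Since $\nabla \psi$ has compact support inside $N$ and $\varphi = 1$ there once $\varphi$ is large enough in the directed set $I_\mathcal{K}$, the integrand equals $g(\nabla f, \nabla \psi)$ on the support of $\nabla \psi$ and vanishes elsewhere, so the net is eventually constant and the limit exists. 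Because the killing part of $Q$ vanishes, we get $\Ekm = 0$ and hence $f \in D(Q)_{w,\, \mathcal{K}}$ with $Q^{(w)}_\mathcal{K}(f,\psi) = \int_M g(\nabla f, \nabla \psi) \D {\rm vol}_g$. Combined with the previous paragraph and the trivial inclusion $D(Q)_{w,\, \mathcal{K}} \subseteq D(Q)_{{\rm loc},\, \mathcal{K}}$ from Definition~\ref{definition:weak form}, this yields the chain of equalities $D(Q)_{w,\, \mathcal{K}} = D(Q)_{{\rm loc},\, \mathcal{K}} = W^1_{\rm loc}(M)$.

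\textbf{The distributional Laplacian statement.} For the final assertion, suppose $f \in \Ltlm$ with $\Delta f \in \Ltlm$. The point is that such $f$ automatically lies in $W^1_{\rm loc}(M)$, which I would justify by elliptic regularity or the local Poincar\'e-type estimate of Appendix~\ref{appendix:manifolds}: local $L^2$ control of $f$ and $\Delta f$ gives local $L^2$ control of $\nabla f$. Once $f \in W^1_{\rm loc}(M)$, the first part applies, and I would invoke the distributional Green's formula $\int_M g(\nabla f, \nabla \psi)\D {\rm vol}_g = -\int_M \Delta f\, \psi \D {\rm vol}_g$, valid for $\psi \in D(Q)_\mathcal{K}$ (compactly supported, so no boundary term), to rewrite $Q^{(w)}_\mathcal{K}(f,\psi) = -\int_M \Delta f\, \psi \D {\rm vol}_g$. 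The main obstacle is the implicit regularity step in this last part: carefully arguing that $\Delta f \in \Ltlm$ (in the distributional sense on $C_c^\infty$) forces $\nabla f \in \Ltlm$, rather than merely $f \in \Ltlm$, so that $f$ genuinely enters the weak form domain; everything else reduces to integration by parts and the locality of $\nabla$ that were already set up for the reflected-form computation.
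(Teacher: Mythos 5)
Your proposal is correct and follows essentially the same route as the paper: the same cutoff argument via Lemma~\ref{lemma:product in w0} to identify $D(Q)_{{\rm loc},\,\mathcal{K}}$ with $W^1_{\rm loc}(M)$, the same use of (the polarized) Lemma~\ref{lemma:ep for manifolds} together with the observation that $Q_\varphi(f,\psi)$ is eventually constant along $I_\mathcal{K}$ once $\varphi = 1$ on the support of $\psi$, and the same combination of elliptic regularity and Green's formula for the 'moreover' statement — you are in fact more explicit than the paper about the vanishing of the killing part, which the paper uses only implicitly. One small caution: your fallback suggestion that the local Poincar\'e estimate of the appendix could replace elliptic regularity would not work, since that inequality bounds the function by its gradient rather than the gradient by the Laplacian, but as you also name the correct tool this does not constitute a gap.
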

 \begin{proof}
  We start by proving $D(Q)_{{\rm loc},\, \mathcal{K}} = W^1_{\rm loc}(M)$. It follows from the definition of $Q$ that $f \in D(Q)_{{\rm loc},\, \mathcal{K}}$ implies $f \in \Ltlm$ and $|\nabla f| \in \Ltlm$, i.e., $f \in W^1_{\rm loc}(M)$. For the converse inclusion, let $f \in \Ltlm$ with  $|\nabla f| \in \Ltlm$ be given. For $G \in \mathcal{K}$, we choose a function $\varphi \in C_c^\infty(M)$ that satisfies $1_G \leq \varphi \leq 1$. From Lemma~\ref{lemma:product in w0}, we infer $\varphi f \in D(Q)$. Since $\varphi f = f$ on $G$,  we obtain $f \in D(Q)_{{\rm loc},\, \mathcal{K}}$. 
   
  Next, we show the identity for $Q^{(w)}$. Let $f \in W^1_{\rm loc}(M)$ and let $\varphi \in I_\mathcal{K}$. According to Lemma~\ref{lemma:ep for manifolds}, we have 
  $$Q_\varphi(f) = \int_M \varphi^2 |\nabla f|^2\D {\rm vol}_g.$$
  Since $g(\nabla f, \nabla \psi)$ vanishes outside a compact neighborhood of the support of $\psi$, this implies
  $$\lim_{\varphi \in I_\mathcal{K}} Q_\varphi (f,\psi) = \lim_{\varphi \in I_\mathcal{K}} \int_M \varphi^2 g(\nabla f,\nabla\psi) \D {\rm vol}_g = \int_M g(\nabla f,\nabla \psi) \D {\rm vol_g}. $$
  The 'moreover' statement follows form Green's formula for compactly supported $\psi$ and elliptic regularity theory, which shows that $f \in \Ltlm$ and $\Delta f \in \Ltlm$ implies $f \in W^1_{\rm loc}(M)$. This finishes the proof. 
 \end{proof}
 \begin{remark} \label{remark:comparing notions}
 \begin{itemize} 
 \item Similar computations as in the above proof yield that our notion of the weak form extension extends the classical one for strongly local regular Dirichlet forms. We refrain from giving details.
 
  \item  The examples on graphs and manifolds show that weak solutions to the Laplace equation as introduced in Definition~\ref{definition:weak solution}  can be interpreted as weak solutions in the classical sense, i.e., in the sense of distributions on manifolds and in the sense of pointwise solutions on graphs. 
 \end{itemize}

  \end{remark}
 \end{example}

\section{Weakly superharmonic and excessive functions} \label{section:Weakly superharmonic and excessive functions}

In this section  we provide first evidence for the utility of our concept of the weak form extension. Namely, we prove that the nonnegative excessive functions are exactly the nonnegative weakly superharmonic functions. We recall the following notation of Section~\ref{section:ideals}. For a nest $\cN$ and a function $f \in L^0(m)$ we let 
$$\cN_f:= \cN \wedge \{\{f \leq n\} \mid n \in \IN\} = \{N \cap \{f\leq n\} \mid N \in \cN \text{ and } n\in \IN\}. $$
The following theorem is the main observation in this section.
\begin{theorem} \label{theorem:characterization excessive functions as superharmonic functions}
Let $E$ be an energy form of full support and let $h \in L^0(m)$ nonnegative. The following assertions are equivalent.
\begin{itemize}
 \item[(i)] $h$ is $E$-excessive.
 \item[(ii)] $h$ is weakly $E$-superharmonic.
  \item[(iii)]  For all special $E$-nests $\cN$  the collection $\cN_h$ is a special $E$-nest with $h \in D(E)_{w,\, \cN_h}$, and for all nonnegative  $\psi \in D(E)_{\cN_h}$ it satisfies
 $$E^{(w)}_{\cN_h}(h,\psi) \geq 0.$$

\end{itemize}
\end{theorem}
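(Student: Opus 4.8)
The theorem asks me to characterize nonnegative excessive functions as nonnegative weakly superharmonic functions. Let me sketch the proof approach.

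First I would establish the implications in a convenient order. The implication (iii) $\Rightarrow$ (ii) is essentially immediate from the definitions: condition (iii) asserts that for \emph{all} special $E$-nests the appropriate refinement $\cN_h$ witnesses weak superharmonicity, which is a special case of the existential requirement in the definition of weakly $E$-superharmonic; one merely needs to invoke the existence of a special $E$-nest (Lemma~\ref{lemma:special nest}) to produce the witnessing nest. So the real content lies in the equivalence of (i), (ii) and in upgrading (ii) to (iii).

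For (i) $\Rightarrow$ (iii), the plan is to start from an $E$-excessive $h \geq 0$ and an arbitrary special $E$-nest $\cN$. I would first check that $\cN_h$ is a special $E$-nest: by Lemma~\ref{lemma:refinement of nests} it is a nest provided $\{\{h \leq n\} \mid n \in \IN\}$ is a nest, and the refinement of two special nests is special; the delicate point is that $h$ need not lie in $D(E)$, so the argument that $\{\{h\leq n\}\}$ is a nest must be run using approximating superharmonic functions from Proposition~\ref{proposition:existence superharmonic functions} rather than $h$ itself, much as in the proof of Theorem~\ref{theorem:Ideal properties of energy forms}~(iv)~$\Rightarrow$~(iii). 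Once $\cN_h$ is known to be a special nest, the membership $h \in D(E)_{w,\, \cN_h}$ requires showing that $\lim_{\varphi \in I_{\cN_h}} E_\varphi(h, \psi)$ exists for each $\psi \in D(E)_{\cN_h}$. Here I would exploit that $h$ is a limit in measure of superharmonic functions $h_i$ (or better, use Lemma~\ref{lemma:superharmonic functions as cutoff}), write $E_\varphi(h,\psi) = E(\varphi h, \varphi \psi) - E(\varphi h \psi, \varphi)$ on the cutoff level, and use the monotonicity of $\Ep$ in $\varphi$ (Lemma~\ref{lemma:properties of ep}) together with Lemma~\ref{lemma: weak form as extension} to identify the limit with $\Egm(h,\psi)$, so that $E^{(w)}_{\cN_h}(h,\psi) = \Egm(h,\psi) + \Ekm(h,\psi)$. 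For the sign, I would test against nonnegative $\psi \in D(E)_{\cN_h}$ and reduce, via Lemma~\ref{lemma:silverstein extension local space} applied to the reflected form $\Er$, to computing $\Er(h_\psi, \psi)$ for a localization $h_\psi \in D(E)$ of $h$; excessivity of $h$ together with Theorem~\ref{theorem:characterization of superharmonic functions} and the admissibility/superharmonicity equivalence then forces $E(h_\psi, \psi) \geq 0$ on the diagonal, and the off-diagonal weak value inherits this nonnegativity through the limiting procedure of Lemma~\ref{lemma: weak form as extension}.

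For the remaining implication (ii) $\Rightarrow$ (i), I would argue contrapositively or constructively: given a nonnegative weakly $E$-superharmonic $h$ with witnessing special nest $\cN$, I want to produce superharmonic approximants converging to $h$ in measure. The natural candidates are the truncations $h \wedge (n h_{f,\alpha})$ built from the family of Proposition~\ref{proposition:existence superharmonic functions}, exactly as in the proof of Theorem~\ref{theorem:characterization of excessive functions}; the weak superharmonicity inequality $E^{(w)}_\cN(h,\psi) \geq 0$ should translate, for nonnegative test functions $\psi$ supported in nest elements, into the diagonal contraction inequality $E(f \wedge h) \leq E(f)$ for all $f \in D(E)$, which by Theorem~\ref{theorem:characterization of excessive functions} is equivalent to $h$ being excessive (this is where full support of $E$ is essential). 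The main obstacle I anticipate is precisely this translation step: the weak form value $E^{(w)}_\cN(h,\psi)$ is defined through an improper-integration limit that can be nest-dependent, so I must verify that the inequality obtained for test functions controls the genuinely nonlinear contraction quantity $E(f\wedge h)$. I expect to handle this by first proving it for bounded $h$ where $h \in D(E)_{{\rm loc},\,\cN}$ and the weak form reduces cleanly via Lemma~\ref{lemma: weak form as extension} to $\Egm(h,\psi) + \Ekm(h,\psi)$, then passing to general nonnegative $h$ by the truncation and monotone-convergence arguments of Proposition~\ref{proposition:convergence of monotone nets} and the lower semicontinuity of $E$.
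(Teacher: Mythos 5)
Your architecture is right — (iii) $\Rightarrow$ (ii) via existence of special nests, both hard implications routed through the contraction characterizations, and full support located correctly at the invocation of Theorem~\ref{theorem:characterization of excessive functions} — but there is a genuine gap at the heart of both hard implications: you justify the existence and identification of the weak form value through Lemma~\ref{lemma: weak form as extension}, which requires $h \in D(\Egm)$, and excessive functions need not have finite energy. For the Dirichlet integral on $\IR^n$, $n\ge 3$ (where $\Egm = E$), the Newtonian kernel $h(x)=|x|^{2-n}$ is excessive but $E(h)=\infty$, so the identification $\lim_{\varphi}\Ep(h,\psi)=\Egm(h,\psi)$ is unavailable; nor does the monotonicity of $\Ep$ from Lemma~\ref{lemma:properties of ep} help, since that is a statement about diagonal values and the off-diagonal quantities $\Ep(h,\psi)$ are not monotone in $\varphi$. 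The missing idea is the paper's Lemma~\ref{lemma: main lemma for excessive functions in weak domain}: for nonnegative $\psi$ with $\{\psi>0\}\subseteq\{h\le m_\psi\}$ and $\varphi\ge 1_{\{\psi>0\}}$, the net $(\varphi,n)\mapsto E(\varphi (h\wedge n),\psi)$ is nonnegative and monotone decreasing, proved via the perturbation identity $(h\wedge n)\varphi = ((h\wedge n)\varphi+\varepsilon\psi)\wedge (h\wedge n)$ together with Lemma~\ref{lemma:superharmonic functions as cutoff} and the disjoint-support Lemma~\ref{lemma:cutoff for functions with disjoint support}; combined with the bookkeeping identity $\Ekm(h,\psi)=\lim_{\varphi}\lim_{n} E(h(\psi\wedge n),\varphi)$ of Lemma~\ref{lemma: Alternative formula for ek}, this is what makes $\lim_{\varphi}\Ep(h,\psi)$ exist (Lemma~\ref{lemma:excessive functions are in the weak form domain}) and delivers the sign in one stroke. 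Your sign argument is also incorrectly justified: a localization $h_\psi\in D(E)$ of $h$ is generally \emph{not} superharmonic (cutting off destroys superharmonicity at the boundary of the localization), so Theorem~\ref{theorem:characterization of superharmonic functions} does not force $E(h_\psi,\psi)\ge 0$; that inequality, for $h_\psi = \varphi(h\wedge n)$ with $\varphi\ge 1_{\{\psi>0\}}$, is exactly the perturbation lemma above.

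The same defect breaks your plan for (ii) $\Rightarrow$ (i), and the proposed repair is additionally circular: that $h\wedge n$ is again weakly superharmonic is not an input you may assume — in the paper it is a \emph{consequence} of the theorem via Corollary~\ref{corollary:concatenation superharmonic functions} — and even bounded excessive functions can have $\Egm(h)=\infty$, so the bounded case does not reduce cleanly through Lemma~\ref{lemma: weak form as extension} either. The paper's proof needs no truncation at all: since weak superharmonicity already gives $h\in\DEln$ for the witnessing nest, for $f\in D(E)_\cN$ both $f\wedge h$ and $(f-h)_+$ lie in $D(E)_\cN$, and $h\in D(\Ep)$ for every $\varphi\in I_\cN$ by Lemma~\ref{lemma:local space is contained in dep}; one then runs the polarization computation of Lemma~\ref{lemma:superharmonic functions as cutoff} at the level of each fixed $\Ep$, adds the exact identity $\Ekm(f\wedge h,f)-\Ekm(f\wedge h)=\Ekm(h,(f-h)_+)$ from Lemma~\ref{lemma:extension of k} (using $h(f-h)_+=(f\wedge h)(f-h)_+$), and passes to the limit over $\varphi$ using only the \emph{assumed} existence of $\lim_{\varphi}\Ep(h,\cdot)$, obtaining $E(f\wedge h,f)-E(f\wedge h)\ge \Ew_\cN(h,(f-h)_+)\ge 0$; density of $D(E)_\cN$ and Theorem~\ref{theorem:characterization of excessive functions} finish. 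A smaller slip in the same direction: $\{\{h\le n\}\mid n\in\IN\}$ is an $E$-nest (Lemma~\ref{lemma:excessive functions provide nests}, provable directly from excessivity of $h$ without approximants), but it is generally not special, so "refinement of two special nests" does not yield specialness of $\cN_h$; the paper proves it directly with the witnesses $2(g_N - h/(2n))_+$ in Lemma~\ref{lemma:excessive functions belong to the local form domain}.
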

\begin{remark}
 \begin{itemize}
  \item In view of our discussion in Remark~\ref{remark:comparing notions},   this theorem can be seen as an extension of \cite[Lemma~3]{Stu1}, which provides a characterization of nonnegative bounded excessive functions. The reason why \cite{Stu1} only treats bounded excessive functions is that it deals with regular Dirichlet forms and one fixed special nest, namely the collection of all relatively compact open subsets of the underlying space; however, not all excessive functions need to belong to the local space with respect to this nest. 
  
  \item The theorem fails when $E$ does not have full support. In this case, the constant function $1$ is weakly superharmonic but not excessive. Nevertheless, the implication (i) $\Rightarrow$ (iii) is true without the assumption that $E$ has full support. 
 \end{itemize}
\end{remark}

Before proving the theorem we discuss a couple of lemmas, which are also important for later purposes. The first two show that every excessive function belongs to the local form domain.

\begin{lemma}\label{lemma:excessive functions provide nests}
 Let $E$ be an energy form and let $h$ be $E$-excessive. Then 
 $$\{\{h \leq n\} \mid n \in \IN\}$$
 is an $E$-nest. 
\end{lemma}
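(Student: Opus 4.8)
The plan is to follow the template provided by the proof of Theorem~\ref{theorem:Ideal properties of energy forms}, implication (iv) $\Rightarrow$ (iii), where the analogous assertion that $\{\{|f|\le n\}\mid n\in\IN\}$ is an $E$-nest was established. Write $\ow{\cN} := \{\{h\le n\}\mid n\in\IN\}$. Since the sets $\{h\le n\}$ are increasing, the associated space $D(E)_{\ow{\cN}}$ is automatically a vector space, so only its denseness in $D(E)$ with respect to the form topology must be verified. Let $E'$ be the closure of the restriction of $E$ to $D(E)_{\ow{\cN}}$; by Theorem~\ref{theorem:closure is markovian} it is an energy form, hence lower semicontinuous, and since $E$ extends $E'$ it suffices to prove the inclusion $D(E)\subseteq D(E')$. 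Using the density of bounded functions and of the subspace $\{f\in D(E)\mid \exists\,\psi\in D(E),\ 1_{\{|f|>0\}}\le\psi\}$ from Proposition~\ref{proposition:approximation by bounded functions}, together with the decomposition of $E$ into its recurrent and transient part (on the recurrent part $1\in\ker E$ and everything is trivial), I would reduce to the following situation: $E$ is transient, so $(D(E),E)$ is a Hilbert space by Theorem~\ref{theorem:continuous embedding of energy forms}, and $\varphi\in D(E)\cap L^\infty(m)$ is nonnegative with a function $\psi\in D(E)\cap L^\infty(m)$ satisfying $1_{\{\varphi>0\}}\le\psi\le 1$. The role of $\psi$ is to serve as a \emph{local constant} on the support of $\varphi$.

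The key construction uses $\psi$ to shift the level sets while keeping all intermediate objects inside $D(E)$. I would set
$$g_n := \bigl(\varphi-(h-n\psi)_+\bigr)_+ = (\varphi+n\psi-h)_+ - (n\psi-h)_+ ,$$
the last identity being an elementary pointwise computation valid for $\varphi\ge 0$. Both summands are of the form $(u-h)_+$ with $u=\varphi+n\psi\in D(E)$ and $u=n\psi\in D(E)$ respectively, so Lemma~\ref{lemma:superharmonic functions as cutoff}(b) (which controls the energy of such truncations \emph{without reference to} $E(h)$, crucial since $h$ need not lie in $D(E)$) yields $(\varphi+n\psi-h)_+,(n\psi-h)_+\in D(E)$ and hence $g_n\in D(E)$. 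On the support of $\varphi$ one has $\psi=1$, so $g_n=(\varphi-(h-n)_+)_+$ there, which forces $\{g_n>0\}\subseteq\{h< n+\|\varphi\|_\infty\}$; thus $g_n$ vanishes outside a set of $\ow{\cN}$ and $g_n\in D(E)_{\ow{\cN}}$. Finally, since $(h-n\psi)_+$ decreases to $0$ on $\{\psi>0\}\supseteq\{\varphi>0\}$ as $n\to\infty$, the net $(g_n)$ increases and $g_n\to\varphi$ in $\tau(m)$.

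It then remains to establish the uniform energy bound $\sup_n E(g_n)<\infty$, after which the conclusion is soft: boundedness of $(g_n)$ in the Hilbert space $(D(E),E)$ together with $g_n\to\varphi$ in $\tau(m)$ gives, via Lemma~\ref{lemma:existence of a weakly convergent subnet}, that $g_n\to\varphi$ $E$-weakly; as $D(E)_{\ow{\cN}}$ is a convex subspace, its $E$-weak closure coincides with its form-topology closure by Theorem~\ref{theorem:weak closure of convex sets}, so $\varphi\in\overline{D(E)_{\ow{\cN}}}^{\,\tau_E}$, proving the density and hence that $\ow{\cN}$ is a nest.

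I expect the uniform energy bound to be the main obstacle. The naive estimates via the triangle inequality for $\|\cdot\|_E$ or via the contraction property of Theorem~\ref{theorem:cutoff properties energy forms} applied to $g_n=(\varphi+n\psi-h)_+-(n\psi-h)_+$ only give bounds of order $E(\varphi+n\psi)^{1/2}+E(n\psi)^{1/2}\sim n\,E(\psi)^{1/2}$, which blow up; the point is precisely that the singular contribution of $h$ (which has infinite energy) cancels in the difference of the two truncations, leaving $E(\varphi)$ plus the energy of the level-set \emph{slice} $h\wedge(n\psi+\varphi)-h\wedge(n\psi)$ of the excessive function $h$. The heart of the proof is therefore to show, again through Lemma~\ref{lemma:superharmonic functions as cutoff} and the order properties of $\Egm$-type slices, that these slice energies stay bounded in $n$; this is the step I would expect to require the most care.
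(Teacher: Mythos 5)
Your construction is set up correctly as far as it goes---the pointwise identity $(\varphi-(h-n\psi)_+)_+=(\varphi+n\psi-h)_+-(n\psi-h)_+$ is valid for $\varphi\ge 0$, each summand lies in $D(E)$ by Lemma~\ref{lemma:superharmonic functions as cutoff}(b), the support estimate $\{g_n>0\}\subseteq\{h<n+\|\varphi\|_\infty\}$ holds, and $g_n\overset{m}{\to}\varphi$---but the argument is genuinely incomplete at exactly the point you flag yourself: the uniform bound $\sup_n E(g_n)<\infty$ is never established, and none of the tools you invoke can deliver it. Lemma~\ref{lemma:superharmonic functions as cutoff} does not apply to $g_n=(\varphi-(h-n\psi)_+)_+$ directly, because $(h-n\psi)_+$ is \emph{not} excessive: subtracting a form-domain function destroys superharmonicity (on a graph, already at the boundary vertices of $\{\psi>0\}$), and the positive part of a difference of superharmonic functions is not superharmonic. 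As you note, the triangle inequality only gives $O(n)$. Rewriting $g_n=\varphi-h\wedge(\varphi+n\psi)+h\wedge(n\psi)$ reduces everything to bounding the slice energies $E\bigl(h\wedge(\varphi+n\psi)-h\wedge(n\psi)\bigr)$ uniformly in $n$, but no lemma in the paper controls the energy of such a slice of a possibly infinite-energy excessive function, and you offer no argument for it. This is the heart of the proof, not a technicality that can be deferred, so the proposal as it stands has a genuine gap.

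The paper's proof sidesteps the problem entirely by \emph{scaling} $h$ rather than shifting it by $n\psi$. For nonnegative $f\in D(E)$ it sets $f_n:=(f\wedge n-(h/n)\wedge n)_+$. Superharmonicity is preserved under multiplication by positive scalars, and capping by a constant preserves excessivity by Corollary~\ref{corollary:concatenation superharmonic functions}, so $(h/n)\wedge n$ is itself excessive; hence Lemma~\ref{lemma:superharmonic functions as cutoff}(b) applies in one stroke and yields $E(f_n)\le E(f\wedge n)\le E(f)$---a bound uniform in $n$, with no cancellation to engineer. Moreover $f_n=0$ on $\{h>n^2\}$, so $f_n\in D(E)_{\ow{\cN}}$, and since $\limsup_n E(f_n)\le E(f)$ and $f_n\overset{m}{\to}f$, Lemma~\ref{lemma:characterization convergence in form topology lsc forms} upgrades this to convergence in the form topology. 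This makes all of your preliminary reductions superfluous: no recurrent/transient decomposition (whose recurrent half is, incidentally, not quite ``trivial''---you would still have to check that $h$ restricted to the recurrent piece is excessive for the restricted form and invoke the invariance machinery), no transience or Hilbert-space structure, no weak compactness, Mazur-type closure argument, or auxiliary closed form $E'$; density of $D(E)_{\ow{\cN}}$ is obtained directly by splitting $f$ into positive and negative parts.
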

\begin{proof}
  Let $ f \in D(E)$ be nonnegative and let $n \in \IN$. We set
 $$f_n := (f \wedge n - (h/n) \wedge n)_+. $$
 These functions satisfy $f_n = 0$ on $\{h > n^2\}$. It remains to prove $f_n \in D(E)$ and $f_n \to f$, as $n \to \infty$,  in the form topology.  The convergence $f_n \overset{m}{\to} f$ is obvious. By Corollary~\ref{corollary:concatenation superharmonic functions} the function $(h/n) \wedge n$ is excessive. Hence, Lemma~\ref{lemma:superharmonic functions as cutoff} and the contraction properties of $E$ yield
 $$  E(f_n) \leq E(f\wedge n) \leq E(f).$$
 This shows $f_n \in D(E)$ for all $n \in \IN$. Furthermore,  the convergence $f_n \to f$ in the form topology of $E$ follows from this inequality and Lemma~\ref{lemma:characterization convergence in form topology lsc forms}. This finishes the proof. 
\end{proof}
\begin{lemma}\label{lemma:excessive functions belong to the local form domain}
 Let $E$ be an energy form and let $\cN$ be a special $E$-nest. If $h$ is a nonnegative $E$-excessive function, then $\cN_h$ is a special $E$-nest and $h\in D(E)_{{\rm loc},\, \cN_h}$. In particular, if $h$ is bounded, then $h \in D(E)_{{\rm loc},\, \cN}$.
\end{lemma}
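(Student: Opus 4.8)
The statement to prove is that if $\cN$ is a special $E$-nest and $h$ is a nonnegative $E$-excessive function, then $\cN_h = \cN \wedge \{\{h \leq n\} \mid n \in \IN\}$ is a special $E$-nest and $h \in D(E)_{{\rm loc},\, \cN_h}$; moreover, if $h$ is bounded, then $h \in D(E)_{{\rm loc},\, \cN}$.

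Let me sketch the proof plan.
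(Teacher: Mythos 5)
Your proposal never gets past restating the lemma: after the sentence ``Let me sketch the proof plan'' there is no plan and no argument, so there is nothing here that could be checked against the statement. Every substantive step is missing. Concretely, the proof has to establish four things, none of which is trivial. First, that $\{\{h \leq n\} \mid n \in \IN\}$ is itself an $E$-nest; the paper proves this (Lemma~\ref{lemma:excessive functions provide nests}) by approximating a nonnegative $f \in D(E)$ with $f_n := (f \wedge n - (h/n)\wedge n)_+$, which vanishes on $\{h > n^2\}$, and controlling $E(f_n) \leq E(f \wedge n) \leq E(f)$ using the contraction property $E(g \wedge h') \leq E(g)$ for excessive $h'$ (Lemma~\ref{lemma:superharmonic functions as cutoff}, together with Corollary~\ref{corollary:concatenation superharmonic functions} to see that $(h/n)\wedge n$ is excessive), then upgrading $m$-convergence to form-topology convergence via Lemma~\ref{lemma:characterization convergence in form topology lsc forms}. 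Second, that the refinement $\cN_h = \cN \wedge \{\{h\leq n\}\}$ is again a nest, which is exactly Lemma~\ref{lemma:refinement of nests}. Third, speciality of $\cN_h$: given $g_N \in D(E)_\cN$ with $1_N \leq g_N \leq 1$, one checks that $g_{n,N} := 2\left(g_N - \frac{h}{2n}\right)_+$ lies in $D(E)_{\cN_h}$ (it vanishes on $\{h > 2n\}$ and outside the support set of $g_N$, and belongs to $D(E)$ by the excessive-cutoff inequality $E((f-h')_+) \leq E(f)$) and dominates $1_{N \cap \{h \leq n\}}$. Fourth, the local membership $h \in D(E)_{{\rm loc},\,\cN_h}$: the function $(n g_N) \wedge (h \wedge n)$ lies in $D(E)$, again by the excessive-cutoff property, and equals $h$ on $N \cap \{h \leq n\}$.

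Note also that the ``in particular'' clause is not a one-liner either: if $h$ is bounded then $\cN \subseteq \cN_h$ up to sets of measure zero (take $n \geq \|h\|_\infty$), and since the local space with respect to a \emph{larger} nest is \emph{smaller}, this gives $D(E)_{{\rm loc},\,\cN_h} \subseteq D(E)_{{\rm loc},\,\cN}$, whence $h \in D(E)_{{\rm loc},\,\cN}$. The recurring mechanism throughout — and the idea your sketch would need to identify — is that excessive functions act as admissible cutoffs for the energy (Lemma~\ref{lemma:superharmonic functions as cutoff}), which is what supplies all the finiteness-of-energy estimates above; without invoking that contraction property at each step, none of the required memberships in $D(E)$ can be verified.
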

\begin{proof}
  Lemma~\ref{lemma:excessive functions provide nests} shows that $\{\{h \leq n\} \mid n\geq 0\}$ is a nest. According to the lemma on the refinement of nests, Lemma~\ref{lemma:refinement of nests},  $\cN_h$ is a nest as well. To prove the other statements, for each $N \in \cN$ we choose a function $g_N \in D(E)_\cN$ that satisfies $1_N \leq g_N \leq 1$. 
  
  {\em Claim~1:} $\cN_h$ is special.
  
  {\em Proof of Claim~1.}  For $n \in \IN$ and $N \in \cN$ consider the function
 $$g_{n,N}:=  2\left(g_N- \frac{h}{2n}\right)_+.$$
 The inequality $g_{n,N} \geq 1_{N \cap \{h \leq n\}}$ holds and by Lemma~\ref{lemma:superharmonic functions as cutoff} it belongs to $D(E)$. Furthermore, it satisfies  $g_{n,N} = 0$ on $\{h > 2n\}$ and $|g_{n,N}| \leq g_N$. This shows that if $g_N$ vanishes on the complement of $N' \in \cN$, then $g_{n,N}$ vanishes on the complement of $N'\cap \{h\leq 2n\}$. We obtain $g_{n,N} \in D(E)_{\cN_h}$ and the claim is proven. \qedc
  
 {\em Claim~2:} $h \in D(E)_{{\rm loc},\, \cN_h}$.
 
 {\em Proof of Claim~2.} Lemma~\ref{lemma:superharmonic functions as cutoff} and the contraction properties of $E$ imply
 $$E((n g_N) \wedge (h \wedge n) ) \leq E(n g_N) < \infty.$$
 Since $ (n g_N) \wedge (h \wedge n)  = h $ on $N \cap \{h \leq n\}$ it follows that $h \in D(E)_{{\rm loc},\, \cN_h}.$ \qedc
 
 For the 'in particular' statement we note that the boundedness of $h$ implies $\cN \subseteq \cN_h$. This finishes he proof.
\end{proof}

\begin{remark}
 \begin{itemize}
  \item The previous two lemmas are the main reason why it is essential to allow all (special) nests in the definition of the local space and the weak form extension.
  \item If $h$ is excessive and $\cN$ is a special nest, then $\cN_h$ is always a nest. However, if $h$ is not nonnegative, then $\cN_h$ need not be special.
 \end{itemize}

\end{remark}

\begin{lemma}\label{lemma:excessive function ideal}
  Let $E$ be an energy form and let $\cN$ be a special $E$-nest. If $h$ is a nonnegative $E$-excessive function, then for any $\eta \in D(E)_{\cN_h} \cap L^\infty(m)$  we have  $h\eta \in D(E)_{\cN_h}\cap L^\infty(m)$.
\end{lemma}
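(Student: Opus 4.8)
The plan is to exploit the fact that $\eta \in D(E)_{\cN_h}$ is supported on a single, genuinely small element of the refined nest $\cN_h$, on which $h$ is bounded and can be replaced by an honest function in $D(E)$. By definition of $D(E)_{\cN_h}$ there exist $N \in \cN$ and $n \in \IN$ such that $\eta$ vanishes outside $M := N \cap \{h \leq n\} \in \cN_h$. Since $\cN$ is special, I would fix $g_N \in D(E)_\cN$ with $1_N \leq g_N \leq 1$ and introduce the auxiliary function
$$u := (n\,g_N) \wedge (h \wedge n),$$
which is the same function used in the proof of Lemma~\ref{lemma:excessive functions belong to the local form domain}. The point of $u$ is that it is a \emph{global} bounded member of $D(E)$ that agrees with $h$ precisely on $M$.

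First I would record that $u$ belongs to $D(E)$ and is bounded. Boundedness is immediate from $0 \leq u \leq h \wedge n \leq n$. For membership in $D(E)$, note that $h \wedge n$ is $E$-excessive by Corollary~\ref{corollary:concatenation superharmonic functions}, so Lemma~\ref{lemma:superharmonic functions as cutoff}~(a) applied with the excessive function $h \wedge n$ and the function $n\,g_N \in D(E)$ gives $E(u) = E\big((n\,g_N) \wedge (h \wedge n)\big) \leq E(n\,g_N) = n^2 E(g_N) < \infty$. Next I would verify the identity $u = h$ on $M$: on $N$ one has $g_N \geq 1$, whence $n\,g_N \geq n \geq h \wedge n$ and therefore $u = h \wedge n$ there, while on $\{h \leq n\}$ one has $h \wedge n = h$; combining these on $M = N \cap \{h \leq n\}$ yields $u = h$.

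With this in hand the conclusion is quick. Since $u, \eta \in D(E) \cap L^\infty(m)$ and $D(E) \cap L^\infty(m)$ is an algebra by Theorem~\ref{theorem:algebraic and order properties}, the product $u\eta$ lies in $D(E) \cap L^\infty(m)$. Because $\eta$ vanishes outside $M$ and $u = h$ on $M$, one has $u\eta = h\eta$ as elements of $L^0(m)$; hence $h\eta = u\eta \in D(E) \cap L^\infty(m)$, with $|h\eta| \leq n\|\eta\|_\infty$ confirming boundedness. Finally $h\eta$ vanishes outside $M \in \cN_h$, so $h\eta \in D(E)_{\cN_h}$, giving $h\eta \in D(E)_{\cN_h} \cap L^\infty(m)$. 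The only genuine subtlety — and the step I would be most careful about — is that $h$ itself sits only in the local domain $D(E)_{{\rm loc},\,\cN_h}$ and not in $D(E)$, so one cannot multiply $h$ by $\eta$ directly inside the algebra; the role of $u$ is exactly to provide a global bounded $D(E)$-representative of $h$ on the support of $\eta$ so that Theorem~\ref{theorem:algebraic and order properties} becomes applicable.
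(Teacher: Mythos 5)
Your proof is correct and follows essentially the same route as the paper: reduce to $h\eta = (h\wedge n)\eta$ on the support $M = N\cap\{h\le n\}\in\cN_h$ and invoke the algebra property of Theorem~\ref{theorem:algebraic and order properties}. The only difference is cosmetic — you inline the construction of the global representative $u = (n\,g_N)\wedge(h\wedge n)$, which the paper instead imports by citing Lemma~\ref{lemma:excessive functions belong to the local form domain}, and in doing so you make explicit (and correctly verify) the step the paper leaves implicit.
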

\begin{proof}
 According to the definition of $\cN_h$, for each $\eta \in D(E)_{\cN_h} \cap L^\infty(m)$ there exists $n \in \IN$ such that $  h \eta =  (h \wedge n) \eta$.  Lemma~\ref{lemma:excessive functions belong to the local form domain} shows that $h \wedge n \in D(E)_{{\rm loc}, \, \cN_h}$. Since $\eta$ is bounded we obtain  $h\eta =  (h \wedge n) \eta \in D(E)\cap L^\infty(m)$ from Theorem~\ref{theorem:algebraic and order properties}. 
\end{proof}

The following three lemmas provide a variant of Lemma~\ref{lemma:alternative formula for er} for excessive functions. In their proofs we employ basically the same computations as in the proof of Lemma~\ref{lemma:alternative formula for er}, with the additional difficulty that we have to guarantee that certain limits exist.

\begin{lemma} \label{lemma: Alternative formula for ek}
Let $E$ be an energy form, let $\cN$ be a special $E$-nest and let $h$ be a nonnegative $E$-excessive function. Let $\psi \in D(E)_{\cN_h}$ and, for $n \in \IN$, set $\psi^{(n)} := (\psi \wedge n) \vee (-n)$. Then  the equality
 $$\Ekm(h,\psi) = \lim_{\varphi \in I_{\cN}} \lim_{n\to \infty} E( h\psi^{(n)},\varphi)$$
 holds.
\end{lemma}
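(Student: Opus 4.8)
The goal is to evaluate $\Ekm(h,\psi)$ for a nonnegative excessive function $h$ via the explicit formula $\lim_{\varphi \in I_\cN} \lim_{n\to\infty} E(h\psi^{(n)},\varphi)$. My starting point is Lemma~\ref{lemma:alternative formula for er}, which gives the analogous formula for $f \in D(\Er)$ paired with $\psi$ from a special nest, and its companion Lemma~\ref{lemma:extension of k}, which extends $\Ekm$ from $D(E) \times D(E)_\cN$ to $D(E)_{{\rm loc},\,\cN} \times D(E)_\cN$ in a way characterized by the property that $f\psi \geq 0$ forces $\Ekm(f,\psi)\geq 0$. By Lemma~\ref{lemma:excessive functions belong to the local form domain}, $\cN_h$ is a special $E$-nest and $h \in D(E)_{{\rm loc},\,\cN_h}$, so $\Ekm(h,\psi)$ is well defined for $\psi \in D(E)_{\cN_h}$ through the extension of Lemma~\ref{lemma:extension of k}.

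First I would reduce to a function in the form domain. Since $\psi \in D(E)_{\cN_h}$, choose $N \in \cN$ and $n_0 \in \IN$ with $\psi$ supported in $N \cap \{h \leq n_0\}$. By the defining property of the extended $\Ekm$ in Lemma~\ref{lemma:extension of k}, if $h_\psi \in D(E)$ satisfies $h_\psi\psi = h\psi$, then $\Ekm(h,\psi) = \Ekm(h_\psi,\psi)$. A natural candidate is $h_\psi := h \wedge n_0$ times an appropriate cutoff from the special nest, or more simply a function of the form $(n g_N)\wedge (h\wedge n)$ as constructed in the proof of Lemma~\ref{lemma:excessive functions belong to the local form domain}, which lies in $D(E)$ and agrees with $h$ on the support of $\psi$. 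I would then apply Lemma~\ref{lemma:alternative formula for er} (or rather the $\Ekm$-half of it) to $h_\psi$ to obtain $\Ekm(h_\psi,\psi) = \lim_{n\to\infty}\lim_{\varphi\in I_E}\lim_{m\to\infty} E(h_\psi^{(n)}\psi^{(m)},\varphi)$, and simplify using that $\psi = \psi^{(m)}$ for large $m$ (as $\psi$ is bounded) and that $h_\psi^{(n)}\psi = h\psi$ on the relevant set once $n \geq n_0$.

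The substantive work is to show that the order of limits can be rearranged into $\lim_{\varphi\in I_\cN}\lim_{n\to\infty} E(h\psi^{(n)},\varphi)$ as claimed, and that the inner limit over $n$ (which truncates $\psi$ rather than $h$) actually exists. Here I would exploit monotonicity: for $\varphi \in I_\cN$ fixed, $\varphi$ has support in some $N' \in \cN$, and by Lemma~\ref{lemma:excessive function ideal} the products $h\psi^{(n)}$ lie in $D(E)\cap L^\infty(m)$; the expression $E(h\psi^{(n)},\varphi)$ should be handled by Lebesgue-type or monotone convergence arguments using that $\psi^{(n)} \to \psi$ boundedly and that $h\varphi$ is controlled. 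The sign and monotonicity lemmas (Lemma~\ref{lemma:cutoff for functions with disjoint support} and Lemma~\ref{lemma:cutoff for functions beeing one on support}), together with Lemma~\ref{lemma:characterization of k}, guarantee that the relevant bilinear expressions are monotone in $\varphi$ along the directed set $I_\cN$, so the limits exist and commute.

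\textbf{Main obstacle.} The delicate point is the interchange and existence of iterated limits: unlike the situation in Lemma~\ref{lemma:alternative formula for er}, where the outer truncation is applied to $f$ (here $h$) and $\Ekm$ is accessed on $D(E)$, the present formula truncates the test function $\psi$ while leaving $h$ untruncated inside the product $h\psi^{(n)}$. Because the weak extension is sensitive to how limits are taken (improper-integration behavior, as flagged in the remarks), I expect the core difficulty to be justifying that $\lim_{n\to\infty} E(h\psi^{(n)},\varphi)$ exists for each fixed $\varphi \in I_\cN$ and that the subsequent limit over $I_\cN$ recovers $\Ekm(h,\psi)$ rather than some nest-dependent value. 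I would resolve this by fixing $\varphi$ supported in $N' \cap \{h\leq n_1\}$ (using that $\varphi \in I_\cN \subseteq I_{\cN_h}$ after refinement), so that $h\varphi$ is bounded and in $D(E)$ by Lemma~\ref{lemma:excessive function ideal}; then $E(h\psi^{(n)},\varphi) = E(\varphi h,\psi^{(n)})$ converges as $n\to\infty$ by continuity of $E(\varphi h,\cdot)$ along the form-convergent sequence $\psi^{(n)} \to \psi$ (Proposition~\ref{proposition:approximation by bounded functions} and Lemma~\ref{lemma:characterization convergence in form topology lsc forms}), and the outer limit is then identified with $\Ekm(h,\psi)$ by comparison with the reduced computation from Lemma~\ref{lemma:alternative formula for er}.
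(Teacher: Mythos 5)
Your opening moves (well-definedness via Lemma~\ref{lemma:excessive functions belong to the local form domain} and Lemma~\ref{lemma:excessive function ideal}, and replacing $h$ by some $h_\psi \in D(E)$ with $h_\psi\psi = h\psi$ through Lemma~\ref{lemma:extension of k}) coincide with the paper's, but the core of your argument has concrete errors. First, $\psi \in D(E)_{\cN_h}$ is \emph{not} bounded in general --- it merely vanishes outside some $N \cap \{h \leq n_0\}$ --- so your simplification ``$\psi = \psi^{(m)}$ for large $m$ (as $\psi$ is bounded)'' is false; the truncation $\psi^{(n)}$ appears in the statement precisely because the test function may be unbounded, and the inner limit in $n$ is a genuine limit. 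Second, the identity $E(h\psi^{(n)},\varphi) = E(\varphi h, \psi^{(n)})$ with which you resolve your ``main obstacle'' is not valid for an abstract energy form: one cannot shuffle a factor from one argument of the bilinear form to the other (already for the Dirichlet integral, $\int \nabla(fg)\cdot \nabla \varphi \neq \int \nabla(\varphi f)\cdot\nabla g$); the correction term $E(\varphi f g,\varphi)$ in the definition of $\Ep$ exists exactly because such manipulations fail. Worse, $\varphi h$ need not lie in $D(E)$ at all for $\varphi \in I_{\cN}$: contrary to your claim ``$\varphi \in I_\cN \subseteq I_{\cN_h}$ after refinement'', an element of $I_{\cN}$ may be supported where $h$ is unbounded, and Lemma~\ref{lemma:excessive function ideal} only covers multipliers in $D(E)_{\cN_h} \cap L^\infty(m)$. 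Likewise your appeal to ``Lebesgue-type or monotone convergence'' has no footing here, since an abstract energy form carries no integral representation, and the monotonicity result you could cite, Lemma~\ref{lemma: main lemma for excessive functions in weak domain}, concerns the different expression $E(\varphi h^{(n)},\psi)$ with \emph{nonnegative} $\psi$, whereas $\psi$ is signed in the present lemma.

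There is also a structural gap that survives even after the above is repaired. Choosing $h_\psi = h\ow{\varphi}$ bounded (with $\ow{\varphi} \in I_{\cN_h}$, $\ow{\varphi} \geq 1_{\{|\psi|>0\}}$), your route via Lemma~\ref{lemma:alternative formula for er} does yield $\Ekm(h,\psi) = \lim_{\varphi \in I_E} \lim_{m} E(h\psi^{(m)},\varphi)$, because the outer truncation of the bounded $h_\psi$ trivializes and $h_\psi \psi^{(m)} = h \psi^{(m)}$. But the statement asserts the limit along the smaller directed set $I_{\cN}$, which is \emph{not} cofinal in $I_E$ (consider $1 \in I_E$ when $1 \in D(E)$), and convergence of a net does not imply convergence of its restriction to a non-cofinal directed subset. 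The paper closes exactly this gap differently: it writes $\Ekm(h,\psi) = E(h_\psi,\psi) - \Eg(h_\psi,\psi)$, invokes Corollary~\ref{corollary:alternative formula for eg} --- the maximality result that legitimizes computing $\Eg$ as a limit over $I_{\cN}$ rather than $I_E$ --- together with Proposition~\ref{proposition:approximation by bounded functions} to get $\Eg(h_\psi,\psi) = \lim_{\varphi \in I_{\cN}}\lim_{n} \Ep(h_\psi,\psi^{(n)})$, and then uses the speciality of $\cN$ to pick $\varphi_0 \in I_{\cN}$ with $\varphi h_\psi = h_\psi$, $\varphi \psi^{(n)} = \psi^{(n)}$ and $\varphi h_\psi \psi^{(n)} = h\psi^{(n)}$ for all $\varphi \succ \varphi_0$, so that the $\Ep$-expansion collapses, the $E(h_\psi,\cdot)$ terms cancel, and the existence of $\lim_n E(h\psi^{(n)},\varphi)$ falls out as a difference of convergent nets rather than from any dominated-convergence argument. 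Without Corollary~\ref{corollary:alternative formula for eg} (or an equivalent substitute), your proposal does not reach the claimed formula over $I_{\cN}$.
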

\begin{proof} Lemma~\ref{lemma:excessive functions belong to the local form domain} and Lemma~\ref{lemma:excessive function ideal} show that the involved quantities are well-defined.  We choose a function $\ow{\varphi} \in I_{\cN_h}$ that satisfies $\ow{\varphi} \geq 1_{\{|\psi|>0\}}$ and set $h_\psi := h \ow{\varphi}$. According to Lemma~\ref{lemma:excessive function ideal}, this function belongs to $D(E)_{\cN_h}\cap L^\infty(m)$. Furthermore, by the choice of $\ow{\varphi}$ we have $h\psi = h_\psi \psi$.  From the properties of the weak extension of $\Ekm$, see Lemma~\ref{lemma:extension of k}, and the definition of $\Ekm$ on $D(E)$ we obtain 
 $$\Ekm(h,\psi) = \Ekm(h_\psi,\psi) = E(h_\psi,\psi) - \Eg(h_\psi,\psi).$$
Corollary~\ref{corollary:alternative formula for eg} implies
 $$\Eg(h_\psi,\psi) = \lim_{\varphi \in I_{\cN}}\Ep(h_\psi,\psi).$$
 Since the $\Ep$ are energy forms, Proposition~\ref{proposition:approximation by bounded functions} yields
 $$\Eg(h_\psi,\psi) = \lim_{\varphi \in I_{\cN}} \lim_{n\to \infty} \Ep(h_\psi,\psi^{(n)}).$$

 We have  $h_\psi,\psi \in D(E)_{\cN_h} \subseteq D(E)_{\cN}$ and the nest $\cN$ is special. Hence, there exists $\varphi_0 \in I_{\cN}$ with $h_\psi \varphi_0 = h_\psi$ and $\psi \varphi_0 = \psi$. With this observation the limit simplifies to 
 \begin{align*}
  \lim_{\varphi \in I_{\cN}}  \lim_{n\to \infty} \Ep(h_\psi,\psi^{(n)}) &= \lim_{\varphi \in I_{\cN}} \lim_{n\to \infty}\left[E(\varphi h_\psi, \varphi \psi^{(n)}) - E(\varphi h_\psi \psi^{(n)}, \varphi) \right]\\
  &= E(h_\psi, \psi) -  \lim_{\varphi \in I_{\cN}}\lim_{n\to \infty}E(\varphi h  \psi^{(n)}, \varphi)\\
  &= E(h_\psi, \psi) -  \lim_{\varphi \in I_{\cN}}\lim_{n\to \infty}E(h  \psi^{(n)}, \varphi).
 \end{align*}
 Combining all the computations concludes the proof of the lemma.
\end{proof}

\begin{remark}
 In the previous lemma we assumed $\psi \in D(E)_{\cN_h}$, but the limit is taken with respect to $\cN$. In view of later applications, it would be interesting to know whether 
 $$\Ekm(h,\psi) = \lim_{m \to \infty} \lim_{\varphi \in I_{\cN}} \lim_{n\to \infty} E( h^{(m)}\psi^{(n)},\varphi)$$
 holds for all $\psi \in D(E)_\cN$, which is the precise analogue of Lemma~\ref{lemma:alternative formula for er}. Our proof does not seem to work in this case.
\end{remark}

\begin{lemma}\label{lemma: main lemma for excessive functions in weak domain}
 Let $E$ be an energy form, let $\cN$ be a special $E$-nest and let $h$ be a nonnegative $E$-excessive function. For $n \in \IN$ we set $h^{(n)} := h \wedge n$. Let $\psi \in D(E)_{\cN_h}$ nonnegative and let $m_\psi \in \IN$ such that $ \{\psi > 0\} \subseteq \{h \leq m_\psi\}$. Then the map
 $$\{\varphi \in I_{\cN} \mid \varphi \geq 1_{\{\psi>0\}}\} \times \{n \in \IN \mid n \geq m_\psi\} \to \IR,\, (\varphi,n) \mapsto E(\varphi h^{(n)},\psi)$$
 is well-defined, takes only nonnegative values and is monotone decreasing. 
\end{lemma}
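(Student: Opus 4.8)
The plan is to isolate two pointwise facts about the fixed nonnegative function $g_{\varphi,n} := \varphi h^{(n)}$ and then read off all three assertions from the cutoff lemmas of Section~\ref{section:superharmonic functions}. Throughout, the two constraints defining the index set play precise and complementary roles: since $1_{\{\psi>0\}} \leq \varphi \leq 1$ we have $\varphi = 1$ on $\{\psi > 0\}$, and since $\{\psi>0\} \subseteq \{h \leq m_\psi\}$ together with $n \geq m_\psi$ forces $h^{(n)} = h \wedge n = h$ on $\{\psi > 0\}$; combining these gives the key identities $0 \leq g_{\varphi,n} \leq h$ everywhere and $g_{\varphi,n} = h$ on $\{\psi>0\}$, which drive the whole proof.

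For well-definedness I would first note that $h^{(n)} = h \wedge n$ is a bounded $E$-excessive function by Corollary~\ref{corollary:concatenation superharmonic functions}, hence $h^{(n)} \in \DEln$ by Lemma~\ref{lemma:excessive functions belong to the local form domain}. Given $\varphi \in I_\cN$, one chooses $N \in \cN$ with $\varphi 1_{X\setminus N} = 0$ and a representative $h_N \in D(E)$, which may be taken bounded by applying the normal contraction $(\,\cdot\, \wedge n)\vee(-n)$, with $h^{(n)}1_N = h_N 1_N$; then $g_{\varphi,n} = \varphi h_N \in D(E) \cap L^\infty(m)$ because bounded functions in the form domain form an algebra, see Theorem~\ref{theorem:algebraic and order properties}. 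As $\psi \in D(E)_{\cN_h} \subseteq D(E)$, the number $E(g_{\varphi,n},\psi)$ is well-defined.

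Nonnegativity is the crux and rests on the identity $(g_{\varphi,n} + \varepsilon \psi)\wedge h = g_{\varphi,n}$, valid for every $\varepsilon > 0$: on $\{\psi>0\}$ one has $g_{\varphi,n} = h$, so $g_{\varphi,n} + \varepsilon\psi > h$ and the minimum equals $h = g_{\varphi,n}$, while on $\{\psi = 0\}$ one has $g_{\varphi,n} + \varepsilon \psi = g_{\varphi,n} \leq h$. Applying the cutoff inequality for excessive functions, Lemma~\ref{lemma:superharmonic functions as cutoff}(a), to $f = g_{\varphi,n} + \varepsilon\psi \in D(E)$ gives $E(g_{\varphi,n}) = E((g_{\varphi,n}+\varepsilon\psi)\wedge h) \leq E(g_{\varphi,n} + \varepsilon\psi) = E(g_{\varphi,n}) + 2\varepsilon E(g_{\varphi,n},\psi) + \varepsilon^2 E(\psi)$, whence $-2 E(g_{\varphi,n},\psi) \leq \varepsilon E(\psi)$; letting $\varepsilon \to 0+$ yields $E(g_{\varphi,n},\psi) \geq 0$.

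Finally, for monotonicity I would take $(\varphi,n)\prec(\varphi',n')$, set $g := g_{\varphi,n}$ and $g' := g_{\varphi',n'}$, and observe that $g \leq g'$ (both factors increase) while $g = g' = h$ on $\{\psi > 0\}$. Hence the nonnegative function $g' - g \in D(E)$ vanishes on $\{\psi>0\}$, so $(g'-g)\wedge \psi = 0$, and Lemma~\ref{lemma:cutoff for functions with disjoint support} gives $E(g'-g,\psi) \leq 0$, i.e.\ $E(g',\psi) \leq E(g,\psi)$. The only genuinely technical point is the well-definedness step, where one must combine membership of the bounded excessive function $h^{(n)}$ in the local form domain with the algebra property of $D(E) \cap L^\infty(m)$; once $g_{\varphi,n} \in D(E)$ is secured, the two cutoff lemmas do all the remaining work with no further estimates.
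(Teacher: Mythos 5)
Your proof is correct and follows essentially the same route as the paper's: well-definedness via membership of the bounded excessive function $h^{(n)}$ in the local form domain together with the algebra property of $D(E)\cap L^\infty(m)$, nonnegativity via the cutoff identity and Lemma~\ref{lemma:superharmonic functions as cutoff}, and monotonicity via Lemma~\ref{lemma:cutoff for functions with disjoint support} applied to a nonnegative difference vanishing on $\{\psi>0\}$. The only (harmless) deviations are that you cut with $h$ rather than with the excessive function $h^{(n)}$ in the nonnegativity step, and that you treat monotonicity in $\varphi$ and $n$ jointly via $g'-g$ where the paper handles the two variables separately.
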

\begin{proof}
  According to Corollary~\ref{corollary:concatenation superharmonic functions}, the function $h^{(n)}$ is $E$-excessive. Since it is also bounded, we have $\cN \subseteq \cN_{h^{(n)}}$ and so Lemma~\ref{lemma:excessive function ideal} shows that the map under consideration is well-defined. Let $\varphi,\varphi' \in I_{\cN}$ with $\varphi \geq \varphi' \geq 1_{\{\psi>0\}}$. We apply Lemma~\ref{lemma:cutoff for functions with disjoint support} to the nonnegative functions $(\varphi - \varphi')h^{(n)} $ and $\psi$ which satisfy $(\varphi - \varphi')h^{(n)} \wedge \psi = 0$ and obtain
$$E(\varphi h^{(n)} ,\psi) - E(\varphi' h^{(n)} , \psi) = E((\varphi - \varphi')h^{(n)} ,\psi) \leq 0.$$
This shows the monotonicity in $\varphi$. Using that $n \geq m_\psi$, the monotonicity in $n$ can be inferred similarly. 

Let $\varepsilon > 0$. The inequality $1_{\{\psi > 0\}} \leq \varphi \leq 1$  and the nonnegativity of $h^{(n)} $ imply
$$h^{(n)} \varphi =  (h^{(n)} \varphi + \varepsilon \psi) \wedge h^{(n)} .$$
Since $h^{(n)}$ is excessive, Lemma~\ref{lemma:superharmonic functions as cutoff} yields
$$E(h^{(n)} \varphi) = E( (h^{(n)} \varphi + \varepsilon \psi) \wedge h^{(n)} ) \leq E(h^{(n)} \varphi + \varepsilon \psi) = E(h^{(n)} \varphi) + 2\varepsilon E(h^{(n)} \varphi,\psi) + \varepsilon^2 E(\psi). $$
Letting $\varepsilon\to 0+$,   we obtain $E(\varphi h^{(n)} , \psi) \geq 0$. This finishes the proof. 
\end{proof}

All the preparations  made so-far accumulate in the following lemma. It can be seen as the main step in the proof Theorem~\ref{theorem:characterization excessive functions as superharmonic functions}.

\begin{lemma} \label{lemma:excessive functions are in the weak form domain}
 Let $E$ be an energy form, let $\cN$ be a special $E$-nest and let $h$ be a nonnegative $E$-excessive function. If $h \in \DEln$, then for each $\psi \in D(E)_{\cN_h}$  the limit $\lim_{\varphi \in I_{\cN}} \Ep(h,\psi)$ exists and satisfies 
 $$\lim_{\varphi \in I_{\cN}} \Ep(h,\psi) = \lim_{\varphi \in I_{\cN}} \lim_{n\to \infty} E(\varphi h^{(n)},\psi)  - \Ekm(h,\psi).$$
 In particular, $h \in D(E)_{w,\, \cN_h}$ and for all $\psi \in D(E)_{\cN_h}$ we have
 $$E^{(w)}_{\cN_h}(h,\psi) =  \lim_{\varphi \in I_{\cN_h}} \lim_{n\to \infty} E(\varphi h^{(n)},\psi) = \lim_{\varphi \in I_{\cN_h}} E(\varphi h,\psi).$$
\end{lemma}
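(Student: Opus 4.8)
The plan is to prove the main identity for $\lim_{\varphi \in I_{\cN}} \Ep(h,\psi)$ by reorganizing the two-variable limit in Lemma~\ref{lemma: Alternative formula for ek} together with the definition of $\Ep$, and then to deduce the 'in particular' statement by passing from the nest $\cN$ to the equivalent nest $\cN_h$. First I would fix $\psi \in D(E)_{\cN_h}$ and reduce to the case where $\psi$ is nonnegative, since both sides are bilinear in $\psi$ and $\psi = \psi_+ - \psi_-$ with $\psi_\pm \in D(E)_{\cN_h}$ by the lattice property (Proposition~\ref{proposition:properties of den}). Choosing $\ow{\varphi} \in I_{\cN_h}$ with $\ow{\varphi} \geq 1_{\{|\psi|>0\}}$, I set $h_\psi := h\ow{\varphi} \in D(E)_{\cN_h} \cap L^\infty(m)$, which is legitimate by Lemma~\ref{lemma:excessive function ideal}, and note $h\psi = h_\psi \psi$. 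For $\varphi \in I_\cN$ with $\varphi \geq \ow{\varphi}$ the definition $\Ep(h,\psi) = \Ep(h_\psi,\psi) = E(\varphi h_\psi, \varphi\psi) - E(\varphi h_\psi \psi, \varphi)$ applies, and using $\varphi h_\psi = h_\psi$-type simplifications (exactly as in Lemma~\ref{lemma: Alternative formula for ek}) this becomes $E(\varphi h, \psi) - E(h\psi,\varphi)$ after inserting the truncations $h^{(n)}$ and letting $n \to \infty$ via Proposition~\ref{proposition:approximation by bounded functions} on the $\Ep$.

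The key computation is then to combine these pieces. Since $h \in \DEln$ by hypothesis, Lemma~\ref{lemma: weak form as extension} already gives that $h$ pairs well against $D(E)_\cN$ and $\lim_{\varphi} \Ep(h,\psi)$ exists equal to $\Eg(h,\psi)$; but the point here is to express it via the truncated quantities. Feeding Lemma~\ref{lemma: Alternative formula for ek}, namely $\Ekm(h,\psi) = \lim_{\varphi \in I_\cN} \lim_{n\to\infty} E(h\psi^{(n)},\varphi)$ (and observing $E(h\psi^{(n)},\varphi) \to E(h\psi,\varphi)$ along the $n$-limit because $\psi$ is bounded, so the inner limit is harmless), I rearrange to obtain
\begin{align*}
\lim_{\varphi \in I_\cN} \Ep(h,\psi) &= \lim_{\varphi \in I_\cN} \lim_{n\to\infty} E(\varphi h^{(n)},\psi) - \lim_{\varphi \in I_\cN} E(h\psi,\varphi)\\
&= \lim_{\varphi \in I_\cN} \lim_{n\to\infty} E(\varphi h^{(n)},\psi) - \Ekm(h,\psi),
\end{align*}
which is the claimed first identity. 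The existence of the right-hand limit $\lim_{\varphi}\lim_n E(\varphi h^{(n)},\psi)$ for nonnegative $\psi$ is guaranteed by Lemma~\ref{lemma: main lemma for excessive functions in weak domain}, whose monotone decreasing, nonnegative family has a limit in $[0,\infty)$ (finiteness follows from the existence of $\lim_\varphi \Ep(h,\psi)$ and of $\Ekm(h,\psi)$).

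For the 'in particular' part I would invoke Lemma~\ref{lemma:excessive functions belong to the local form domain}, which tells me $\cN_h$ is itself a special $E$-nest and $h \in D(E)_{{\rm loc},\,\cN_h}$; applying the first part with $\cN$ replaced by $\cN_h$ (note $(\cN_h)_h = \cN_h$ up to equivalence, so Lemma~\ref{lemma:independence of nests} lets me identify the weak forms) shows $\lim_{\varphi \in I_{\cN_h}}\Ep(h,\psi)$ exists for every $\psi \in D(E)_{\cN_h}$, hence $h \in D(E)_{w,\,\cN_h}$ by definition. Then, from the very definition $E^{(w)}_{\cN_h}(h,\psi) = \lim_{\varphi \in I_{\cN_h}} \Ep(h,\psi) + \Ekm(h,\psi)$ and the identity just proved, the two correction terms $\pm\Ekm(h,\psi)$ cancel, giving $E^{(w)}_{\cN_h}(h,\psi) = \lim_{\varphi \in I_{\cN_h}}\lim_{n\to\infty} E(\varphi h^{(n)},\psi)$. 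The final equality with $\lim_{\varphi} E(\varphi h,\psi)$ (no truncation) follows by dominated-type control: for $\psi \in D(E)_{\cN_h}$ one has $\{\psi>0\} \subseteq \{h \leq m_\psi\}$ for some $m_\psi$, so that $\varphi h^{(n)} \psi = \varphi h \psi$ once $n \geq m_\psi$, and the off-diagonal pairing against the bounded $\psi$ stabilizes in $n$. The step I expect to be the main obstacle is justifying the interchange and existence of the iterated limits cleanly — specifically, verifying that the $n\to\infty$ limit inside the $\varphi$-limit is uniform enough (via Lemma~\ref{lemma: main lemma for excessive functions in weak domain}'s monotonicity and Lemma~\ref{lemma:characterization convergence in form topology lsc forms}) that one may collapse the double limit to a single stabilized expression, and ensuring all intermediate quantities lie in the correct local domains so that $E(\cdot,\cdot)$ is genuinely defined at each stage.
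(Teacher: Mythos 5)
There are two genuine gaps in your argument, both stemming from the same misconception. First, you invoke Lemma~\ref{lemma: weak form as extension} to assert that $\lim_{\varphi \in I_{\cN}} \Ep(h,\psi)$ "already exists and equals $\Eg(h,\psi)$" — but that lemma requires $h \in D(\Eg)$, which is not among the hypotheses here. An excessive function may well have $\Eg(h) = \infty$, and the entire point of this lemma is to establish existence of the limit \emph{without} any finite-energy assumption; the pairing is only conditionally convergent, and its existence must be extracted from the monotonicity of $\varphi \mapsto E(\varphi h^{(n)},\psi)$ on the cofinal subfamily $J_\cN = \{\varphi \in I_\cN \mid \varphi \geq 1_{\{\psi > 0\}}\}$ provided by Lemma~\ref{lemma: main lemma for excessive functions in weak domain} — which is exactly how the paper proceeds, and which is available only because $h$ is excessive. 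Your appeal to the finite-energy case is circular for the existence claim.

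Second, the identity $\Ep(h,\psi) = \Ep(h_\psi,\psi)$ with $h_\psi = h\ow{\varphi}$ is false: the main part is \emph{not} local in this sense. Lemma~\ref{lemma:properties of ep}~(c) gives $\Ep(f) = \Ep(g)$ only when $\varphi f = \varphi g$, and polarizing $\Ep(h,\psi) = \Ep(h_\psi,\psi)$ would require $\varphi h = \varphi h_\psi$, which fails wherever $\ow{\varphi} < 1$ but $\varphi > 0$ — and $\varphi \geq \ow{\varphi}$ does not prevent this. Concretely, for a jump form $\Ep(h,\psi) = \int \varphi(x)\varphi(y)(h(x)-h(y))(\psi(x)-\psi(y))\,{\rm d}J$ genuinely sees the values of $h$ off $\{\psi > 0\}$; only the killing part $\Ekm(\cdot,\psi)$ depends solely on the product with $\psi$ (Lemma~\ref{lemma:extension of k}), and it is precisely this asymmetry between main part and killing that the lemma must negotiate. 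The same confusion reappears at the end, where you justify the stabilization $E(\varphi h^{(n)},\psi) = E(\varphi h,\psi)$ by "$\varphi h^{(n)}\psi = \varphi h\psi$ for $n \geq m_\psi$": the off-diagonal pairing is not determined by the product with $\psi$; the correct reason (used in the paper) is that $\varphi \in I_{\cN_h}$ vanishes outside some $N \cap \{h \leq n_0\}$, so that $\varphi h^{(n)} = \varphi h$ \emph{as functions} for large $n$. Relatedly, your intermediate formula $\Ep(h,\psi) = E(\varphi h,\psi) - E(h\psi,\varphi)$ cannot stand as written: $\varphi h$ need not lie in $D(E)$ for $\varphi \in I_\cN$ (only $\varphi h^{(n)}$ does, via Lemma~\ref{lemma:excessive function ideal}), and $h\psi$ need not lie in $D(E)$ for unbounded $\psi$ — which is why the paper never collapses the truncations but works with the iterated limits over $\varphi \in J_\cN$, $n$, and $m$ (with $\psi^{(m)}$), identifying the $\Ekm$-term via Lemma~\ref{lemma: Alternative formula for ek} only after the limit and removing the $h^{(n)}$-truncation at the very end using $h \in D(\Ep)$ (Lemma~\ref{lemma:local space is contained in dep}) and Proposition~\ref{proposition:approximation by bounded functions}.
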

\begin{proof}
 It suffices to consider the case when $\psi \in D(E)_{\cN_h}$ is nonnegative. We choose $m_\psi \in \IN$ such that $\{\psi > 0 \} \subseteq \{h \leq m_\psi\}$ and set
 $$J_{\cN} := \{\varphi \in I_\cN \mid \varphi \geq 1_{\{\psi>0\}}\}.$$
 Since $\cN$ is special, the directed set $J_\cN$ is cofinal in $I_{\cN}$. This observation combined with the monotonicity statement of Lemma~\ref{lemma: main lemma for excessive functions in weak domain} yield that $\lim_{\varphi \in I_{\cN}} \lim_{n\to \infty} E(\varphi h^{(n)},\psi)$ exists. For $m > 0$, we let $\psi^{(m)} := \psi \wedge m$. Proposition~\ref{proposition:approximation by bounded functions} and the definition of $\Ep$ imply
 \begin{align*}
 \lim_{\varphi \in I_{\cN}} \lim_{n\to \infty} E(\varphi h^{(n)},\psi) &=  \lim_{\varphi \in J_{\cN}} \lim_{n\to \infty} \lim_{m \to \infty} E(\varphi h^{(n)}, \varphi \psi^{(m)})\\
  &= \lim_{\varphi \in J_{\cN}} \lim_{n \to \infty} \lim_{m \to \infty} \left[\Ep(h^{(n)}, \psi^{(m)}) + E(\varphi h^{(n)} \psi^{(m)},\varphi) \right].\\
 \end{align*}
 For $\varphi \in J_{\cN}$ and $n \geq m_\psi$  we have 
 $$E(\varphi h^{(n)} \psi^{(m)},\varphi) = E(h \psi^{(m)},\varphi).$$
 According to Lemma~\ref{lemma: Alternative formula for ek}, this implies
 $$ \lim_{\varphi \in J_{\cN}} \lim_{n \to \infty} \lim_{m \to \infty}E(\varphi h^{(n)} \psi^{(m)},\varphi) = \lim_{\varphi \in J_{\cN}}   \lim_{m \to \infty} E(h \psi^{(m)},\varphi) = \Ekm(h,\psi).$$
 Combining these considerations, we arrive at
 $$\lim_{\varphi \in J_{\cN}} \lim_{n \to \infty} \lim_{m \to \infty} \Ep(h^{(n)}, \psi^{(m)}) =  \lim_{\varphi \in I_{\cN}} \lim_{n\to \infty} E(\varphi h^{(n)},\psi) - \Ekm(h,\psi).$$
 Since $ h \in \DEln$, Lemma~\ref{lemma:local space is contained in dep} yields $h \in D(\Ep)$ for all $\varphi \in I_\cN$. Consequently, by  Proposition~\ref{proposition:approximation by bounded functions} the previous equation simplifies to
 $$\lim_{\varphi \in J_{\cN}}  \Ep(h, \psi) =  \lim_{\varphi \in I_{\cN}} \lim_{n\to \infty} E(\varphi h^{(n)},\psi) - \Ekm(h,\psi).$$
 Since $J_\cN$ is cofinal in $I_\cN$ this proves the first statement.
 
  For the 'in particular' part we apply what we have already proven to the nest $\cN_h$.  According to Lemma~\ref{lemma:excessive functions belong to the local form domain}, $\cN_h$ is special and we have $h \in D(E)_{{\rm loc},\, \cN_h}$. Furthermore, for $\varphi \in I_{\cN_h}$ the equality $\varphi h^{(n)} = \varphi h$ holds whenever $n$ is large enough. For $\psi \in D(E)_{\cN_h}$, this observation and our previous considerations show  
 $$\lim_{\varphi \in I_{\cN_h}} \Ep(h,\psi)  = \lim_{\varphi \in I_{\cN_h}}  \lim_{n\to \infty} E(\varphi h^{(n)},\psi) - \Ekm(h,\psi) = \lim_{\varphi \in I_{\cN_h}}   E(\varphi h,\psi) - \Ekm(h,\psi).$$
 This implies $h \in D(E)_{w,\, \cN_h}$ and finishes the proof. 
\end{proof}

\begin{proof}[Proof of Theorem~\ref{theorem:characterization excessive functions as superharmonic functions}] 
 (i) $\Rightarrow$ (iii):  Let $\cN$ be a special $E$-nest. According to Lemma~\ref{lemma:excessive functions belong to the local form domain}, $\cN_h$ is a special $E$-nest. Lemma~\ref{lemma:excessive functions are in the weak form domain} shows $h \in D(E)_{w,\, \cN_h}$ and that for $\psi \in D(E)_{\cN_h}$ we have
 $$\Ew_{\cN_h}(h,\psi) =  \lim_{\varphi \in I_{\cN_h}} \lim_{n\to \infty} E(\varphi h^{(n)},\psi).$$
 If, additionally, $\psi$ is nonnegative, then Lemma~\ref{lemma: main lemma for excessive functions in weak domain} yields that this limit is nonnegative and implication (i) $\Rightarrow$ (iii) is proven.

(iii) $\Rightarrow$ (ii): This follows from the existence of a special $E$-nest. 

(ii) $\Rightarrow$ (i): Let $\cN$ be a special $E$-nest such that $h \in D(E)_{w,\, \cN}$ and $\Ew_\cN(h,\psi) \geq 0$ for all nonnegative $\psi \in D(E)_{\cN}$.  According to Theorem~\ref{theorem:characterization of excessive functions}, we need to show
$$E(f\wedge h) \leq E(f) \text{ for all } f \in D(E).$$
Since $D(E)_\cN $ is dense in $D(E)$, it suffices to prove this inequality for $f \in D(E)_\cN$. To this end, we basically follow the proof of Lemma~\ref{lemma:superharmonic functions as cutoff}.

 Let $\varphi \in I_{\cN}$ and $f \in D(E)_\cN$. Since $h$ is nonnegative and belongs to $\DEln$, we have $f\wedge h = f_+ \wedge h - f_- \in D(E)_\cN$ and $(f-h)_+ \in D(E)_\cN$.  Furthermore,  Lemma~\ref{lemma:local space is contained in dep} shows $h \in D(\Ep)$.  The contraction properties of $\Ep$ yield
\begin{align*}
E_\varphi (f \wedge h,f)-  E_\varphi(f \wedge h)  &= \frac{1}{4} \left( E_\varphi(f) - E_\varphi(h) - E_\varphi(|f-h|) + 2 E_\varphi(h,|f-h|)\right)\\
&\geq  \frac{1}{4} \left( E_\varphi(f) - E_\varphi(h) - E_\varphi(f-h) + 2 E_\varphi(h,|f-h|)\right)\\
&= \frac{1}{2} E_\varphi(h, f - h + |f-h|)\\
&= E_\varphi(h,(f-h)_+).
\end{align*}
The identity $h (f-h)_+ = f\wedge h (f-h)_+$ and Lemma~\ref{lemma:extension of k} imply
$$\Ekm(f\wedge h,f) - \Ekm(f\wedge h) = \Ekm(f \wedge h, (f - h)_+) = \Ekm(h, (f- h)_+). $$
From Theorem~\ref{theorem:ew extends e} and the assumption on $h$ we infer 
\begin{align*}
 E (f \wedge h,f)-  E(f \wedge h)  &=\lim_{\varphi \in I_\cN} \left[\Ep (f \wedge h,f) -  \Ep(f \wedge h)\right] + \Ekm(f\wedge h,f) - \Ekm(f\wedge h)\\
 &\geq \lim_{\varphi \in I_\cN}E_\varphi(h,(f-h)_+) + \Ekm(h,(f-h)_+)\\
 &= \Ew_\cN (h,(f-h)_+) \geq 0.
\end{align*}
That the inequality $E (f \wedge h,f)-  E(f \wedge h) \geq 0$ yields $E(f\wedge h) \leq E(f)$ can be proven as in the proof of Lemma~\ref{lemma:superharmonic functions as cutoff}. This finishes the proof of the theorem. 
\end{proof}

For an excessive function $h$ and a special nest $\cN$, Theorem~\ref{theorem:characterization excessive functions as superharmonic functions} and the lemmas preceding its proof make  claims for the nest $\cN_h$. In some applications however, one is given other excessive functions $h'$ and one needs to compare the weak form coming from $\cN_h$ and $\cN_{h'}$. The following proposition shows that this is possible and demonstrates that bounded excessive functions are particularly well adapted to the theory. It should be read in view of Lemma~\ref{lemma:independence of nests}.

\begin{proposition} \label{proposition:nh vs nh'}
 Let $E$ be an energy form and let $\cN$ be a special $E$-nest.  For two nonnegative $E$-excessive functions $h,h'$, the following holds true.
 \begin{itemize}
 \item[(a)] If $h \in L^\infty(m)$, then  the nests $\cN_{h}$ and $\cN$ are equivalent.  In particular, $h \in D(E)_{w,\, \cN}$ and for all $\psi \in D(E)_\cN$ we have
 $$\Ew_{\cN} (h,\psi) = \Ew_{\cN_{h}}(h,\psi).$$
  \item[(b)] If $h \leq h'$, then  the nests $\cN_{h'}$ and $(\cN_{h'})_{h}$ are equivalent. In particular, $h \in D(E)_{w,\, \cN_{h'}}$ and for all $\psi \in D(E)_{\cN_{h'}}$ we have
  $$\Ew_{\cN_{h'}} (h,\psi) = \Ew_{(\cN_{h'})_h} (h,\psi). $$

 \end{itemize}
\end{proposition}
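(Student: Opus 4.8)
The plan is to reduce both assertions to the equivalence of the nests involved, after which the \emph{in particular} claims follow mechanically from Lemma~\ref{lemma:independence of nests}. Throughout I would use two facts, valid for any nonnegative $E$-excessive function $g$ and any special $E$-nest $\cM$: the refinement $\cM_g$ is again a special $E$-nest (Lemma~\ref{lemma:excessive functions belong to the local form domain}), and $g \in D(E)_{w,\,\cM_g}$ (the \emph{in particular} part of Lemma~\ref{lemma:excessive functions are in the weak form domain}, applied with the nest $\cM$). Given these, in each part it only remains to verify the two containment conditions of Definition~\ref{definition:equivalent nests} and to note that all collections appearing are special.

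For part (a), I would first observe that every $M \in \cN_h$ has the form $M = N \cap \{h \le n\} \subseteq N$ with $N \in \cN$, which immediately gives the first condition of equivalence. For the second condition I use boundedness: choosing $n_0 \in \IN$ with $n_0 \ge \|h\|_\infty$ yields $\{h \le n_0\} = X$ up to a null set, so $N = N \cap \{h \le n_0\} \in \cN_h$ for every $N \in \cN$; in particular $N \subseteq N \in \cN_h$. Hence $\cN$ and $\cN_h$ are equivalent special nests ($\cN$ special by hypothesis, $\cN_h$ by Lemma~\ref{lemma:excessive functions belong to the local form domain}), and Lemma~\ref{lemma:independence of nests} gives $D(E)_{w,\,\cN} = D(E)_{w,\,\cN_h}$ and $D(E)_\cN = D(E)_{\cN_h}$ together with the equality of the two weak forms. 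Since $h \in D(E)_{w,\,\cN_h}$ by the facts collected above, this yields $h \in D(E)_{w,\,\cN}$ and $\Ew_\cN(h,\psi) = \Ew_{\cN_h}(h,\psi)$ for all $\psi \in D(E)_\cN$.

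For part (b) I would set $\cN' := \cN_{h'}$, a special $E$-nest, so that $(\cN')_h = (\cN_{h'})_h$ is special by a second application of Lemma~\ref{lemma:excessive functions belong to the local form domain} to $\cN'$ and $h$. The first equivalence condition is again immediate, since each $M \in (\cN')_h$ satisfies $M = N' \cap \{h \le n\} \subseteq N'$ for some $N' \in \cN'$. The only step that genuinely uses $h \le h'$ is the reverse containment: writing an arbitrary $N' \in \cN'$ as $N' = N \cap \{h' \le k\}$ with $N \in \cN$ and $k \in \IN$, one has $h \le h' \le k$ holding $m$-a.e.\ on $N'$, whence $N' \subseteq \{h \le k\}$ and therefore $N' = N' \cap \{h \le k\} \in (\cN')_h$. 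Thus $\cN'$ and $(\cN')_h$ are equivalent, and Lemma~\ref{lemma:independence of nests} finishes part (b) just as in (a), using $h \in D(E)_{w,\,(\cN')_h}$.

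I expect no serious analytic obstacle here: the entire content is the membership relations among sets (taken up to null sets), and the monotonicity $h \le h'$ is exactly what converts the $\{h' \le k\}$-truncation defining $\cN_{h'}$ into an $\{h \le k\}$-truncation. The only point requiring care is bookkeeping, namely ensuring that every collection is a \emph{special} nest before invoking Lemma~\ref{lemma:independence of nests} (which is stated for special nests), and this is guaranteed by the repeated use of Lemma~\ref{lemma:excessive functions belong to the local form domain}; by Lemma~\ref{lemma:local domain equivalent nests} the equivalence also identifies the test-function spaces $D(E)_\cN$, so the claimed equalities of weak forms hold on the correct pairings of domains.
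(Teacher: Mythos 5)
Your proposal is correct and follows essentially the same route as the paper: both parts reduce to checking the two containments of Definition~\ref{definition:equivalent nests} (boundedness giving $N = N \cap \{h \le n\}$ for large $n$ in (a), and $h \le h'$ giving $N' \subseteq \{h \le k\}$ in (b)), after which the \emph{in particular} claims follow from Lemma~\ref{lemma:independence of nests} together with Lemma~\ref{lemma:excessive functions are in the weak form domain}. Your extra bookkeeping that all nests involved are special (via Lemma~\ref{lemma:excessive functions belong to the local form domain}) is exactly the implicit justification in the paper's proof, made explicit.
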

 \begin{proof}
 We only need to show the claimed equivalences of the nests, the 'in particular' parts follow from Lemma~\ref{lemma:independence of nests} and Lemma~\ref{lemma:excessive functions are in the weak form domain}.
 
 (a):  For $N\in \cN$ and $n \in \IN$, the inclusion $N \cap \{h \leq n\} \subseteq N$ holds. Furthermore, since $h$ is essentially bounded, we have $N = N \cap \{h\leq n\}$ whenever $n$ is large enough. This proves the equivalence of $\cN$ and $\cN_h$.

 (b): For $N \in \cN$ and $n,m \in \IN$,  the inequality $h\leq h'$ implies the inclusions
 $$N \cap \{h'\leq n\} \cap \{h \leq m\} \subseteq N \cap \{h'\leq n\}  \subseteq N \cap \{h'\leq n\} \cap \{h \leq n\}. $$
 This shows the equivalence of $\cN_{h'}$ and $(\cN_{h'})_{h}$.
 \end{proof}

  Excessive functions are originally defined to be limits of superharmonic functions and one may wonder whether or not the weak form respects this limit. Theorem~\ref{theorem:characterization excessive functions as superharmonic functions} and the further tools developed in this subsection allow us to give an affirmative answer under an additional condition.  

\begin{theorem} \label{theorem:computing the weak form for excessive functions}
  Let $E$ be a transient energy form of full support, let $\cN$ be a special $E$-nest and let $h$ be $E$-excessive. Assume that there exists a net of nonnegative functions $(h_i)$ in $D(E)$ with the following properties.
 \begin{itemize}
   \item[(a)] $h_i \overset{m}{\to} h$. 
   \item[(b)] For any $i$ the function $h-h_i$ is $E$-excessive.
   \item[(c)] For all $N \in \cN$ there exists an $i_N$ such that for all $i \succ i_N$ and all nonnegative $\psi \in D(E)_N$  the inequality $E(h_i,\psi) \geq 0$ holds. 
  \end{itemize}
  Then  $h \in D(E)_{w,\,\cN_h}$ and for all $\psi \in D(E)_{\cN_h}$ the identity
 $$\Ew_{\cN_h}(h,\psi) = \lim_i E(h_i,\psi)$$
 holds. 
\end{theorem}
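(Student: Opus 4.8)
The plan is to combine the convergence hypotheses with the characterization of the weak form for excessive functions that was established in Lemma~\ref{lemma:excessive functions are in the weak form domain}. First I would record that, by Theorem~\ref{theorem:characterization excessive functions as superharmonic functions} applied in the transient full-support setting, $h$ is nonnegative and $\cN_h$ is a special $E$-nest with $h \in D(E)_{w,\,\cN_h}$; moreover, by Lemma~\ref{lemma:excessive functions are in the weak form domain}, for every $\psi \in D(E)_{\cN_h}$ we have
$$\Ew_{\cN_h}(h,\psi) = \lim_{\varphi \in I_{\cN_h}} E(\varphi h,\psi).$$
So the task reduces to showing that this limit coincides with $\lim_i E(h_i,\psi)$. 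It suffices to treat nonnegative $\psi \in D(E)_{\cN_h}$, since both sides are linear in $\psi$ and $D(E)_{\cN_h}$ is a lattice by Proposition~\ref{proposition:properties of den}.

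The core of the argument is to relate the cutoff $\varphi h$ to the approximants $h_i$ via the decomposition $h = h_i + (h-h_i)$. Fix a nonnegative $\psi \in D(E)_{\cN_h}$ with $\{\psi>0\}\subseteq N$ for some $N \in \cN_h$, and choose $\varphi \in I_{\cN_h}$ with $\varphi \geq 1_{\{\psi>0\}}$. Then $E(\varphi h,\psi) = E(\varphi h_i,\psi) + E(\varphi(h-h_i),\psi)$. Using hypothesis (b), the function $h-h_i$ is excessive, so by Theorem~\ref{theorem:characterization excessive functions as superharmonic functions} and Lemma~\ref{lemma: main lemma for excessive functions in weak domain} the quantity $E(\varphi(h-h_i)^{(n)},\psi)$ is nonnegative and monotone in $\varphi,n$; passing to the limit this yields $\lim_{\varphi} E(\varphi(h-h_i),\psi) = \Ew_{\cN_h}(h-h_i,\psi) \geq 0$. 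Symmetrically, writing $h_i = h - (h-h_i)$ and using hypothesis (c) to control $E(h_i,\psi)$, I would sandwich $\lim_\varphi E(\varphi h,\psi)$ between expressions built from $E(h_i,\psi)$ and the nonnegative excessive-part contributions. The monotonicity supplied by Lemma~\ref{lemma: main lemma for excessive functions in weak domain}, together with the fact that $\varphi h_i \to h_i$ in the form topology as $\varphi \nearrow 1$ on the support of $\psi$ (since $h_i \in D(E)$ and $\varphi h_i = h_i$ eventually on $\{\psi>0\}$), lets me identify $\lim_\varphi E(\varphi h_i,\psi) = E(h_i,\psi)$.

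Putting the pieces together, I would show
$$\Ew_{\cN_h}(h,\psi) = \lim_{\varphi} E(\varphi h_i,\psi) + \Ew_{\cN_h}(h-h_i,\psi) = E(h_i,\psi) + \Ew_{\cN_h}(h-h_i,\psi),$$
and then take the limit over $i$. The hypotheses force $\Ew_{\cN_h}(h-h_i,\psi) \to 0$: the functions $h-h_i$ are nonnegative excessive with $h-h_i \overset{m}{\to} 0$ by (a), so their weak superharmonic "masses" against the fixed test function $\psi$ vanish, using lower semicontinuity of the auxiliary forms $\Ep$ and $\Ekm$ (Lemma~\ref{lemma:closability of ep} and Lemma~\ref{lemma:characterization of k}~(e)) together with the weak-convergence machinery of Lemma~\ref{lemma:existence of a weakly convergent subnet}. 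The main obstacle I anticipate is precisely this last step: justifying that $\Ew_{\cN_h}(h-h_i,\psi) \to 0$ uniformly enough to interchange the two limits, because the weak form is defined by an improper limit over $I_{\cN_h}$ and hypothesis (c) only gives eventual sign control on a fixed $N$. I would handle this by fixing $N \supseteq \{\psi>0\}$ first, invoking (c) to get $i_N$ so that $E(h_i,\psi)$ and the excessive remainders have the right sign for $i \succ i_N$, and only afterward passing to the limit in $i$, thereby keeping the nest element fixed while the approximation improves.
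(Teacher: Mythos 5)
Your skeleton matches the paper's proof — decompose via $h = h_i + (h-h_i)$, reduce to $\Ew_{\cN_h}(h-h_i,\psi)\to 0$, and recognize that this vanishing is the hard step — but at precisely that step your proposal has a genuine gap. Lower semicontinuity of $\Ep$ and $\Ekm$ points in the wrong direction: it yields lower bounds on limits, whereas what is needed is $\limsup_i \Ew_{\cN_h}(h-h_i,\psi)\leq 0$. The paper gets this in two moves that your sketch does not supply. First, since $\Ew_{\cN_h}(h-h_i,\psi)=\lim_{\varphi\in I_{\cN_h}}E(\varphi(h-h_i),\psi)$ and the map $\varphi\mapsto E(\varphi(h-h_i),\psi)$ is nonnegative and monotone decreasing on $\{\varphi\geq 1_{\{\psi>0\}}\}$ (Lemma~\ref{lemma: main lemma for excessive functions in weak domain}), the interchange $\limsup_i\lim_\varphi\leq\limsup_\varphi\limsup_i$ follows from Lemma~\ref{lemma:monotone nets}; your plan of ``fixing $N$ first'' does not by itself perform this interchange. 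Second, for fixed $\varphi$ one shows $E(\varphi(h-h_i),\psi)\to 0$ via $E$-weak convergence $\varphi(h-h_i)\to 0$ (Lemma~\ref{lemma:existence of a weakly convergent subnet}), and this requires a uniform energy bound on $E(\varphi(h-h_i))$ that you never produce. This is where hypothesis~(c) actually enters — not, as in your sketch, to control the sign of $E(h_i,\psi)$: writing $\varphi(h-h_i)=\varphi(h_N-h_i)_+$ with $h_N\in D(E)_{\cN_h}\cap L^\infty(m)$ agreeing with $h$ on $N\supseteq\{\varphi>0\}$ (Lemma~\ref{lemma:excessive function ideal}), one applies (c) with the test function $(h_N-h_i)_+$, which lies in some fixed $D(E)_{\ow{N}}$ since $(h_N-h_i)_+\leq h_N$; then $E(h_i,(h_N-h_i)_+)\geq 0$ and Lemma~\ref{lemma:cutoff} give $E((h_N-h_i)_+)\leq E(h_N)$ for large $i$, and Theorem~\ref{theorem:algebraic and order properties} turns this into the needed bound. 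Without this mechanism the weak-convergence machinery you invoke simply does not apply.

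A secondary error: your identification $\lim_\varphi E(\varphi h_i,\psi)=E(h_i,\psi)$ ``since $\varphi h_i = h_i$ eventually on $\{\psi>0\}$'' is a non sequitur, because $E(\cdot,\psi)$ is not local — for jump-type forms $E(f,\psi)$ depends on $f$ off the support of $\psi$, so equality on $\{\psi>0\}$ controls nothing. The correct route, and the one the paper takes, is through the weak form itself: since $E$ is transient, $0\leq h-h_i\leq h$ (Proposition~\ref{lemma:superharmonic functions are positive}), so Proposition~\ref{proposition:nh vs nh'} places $h-h_i$ in $D(E)_{w,\,\cN_h}$, and bilinearity together with Theorem~\ref{theorem:ew extends e} (which gives $\Ew_{\cN_h}(h_i,\psi)=E(h_i,\psi)$ because $h_i\in D(E)$) yields the clean identity $\Ew_{\cN_h}(h,\psi)-E(h_i,\psi)=\Ew_{\cN_h}(h-h_i,\psi)\geq 0$, avoiding any manipulation of $E(\varphi h_i,\psi)$ whose well-definedness for unbounded $h_i$ you also left unverified.
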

\begin{remark}
\begin{itemize}
 \item Assumption (c) is of course satisfied if all the $h_i$ are superharmonic.  We included this slightly more general version since it will be useful later on. Here, the really important assumption is that $h-h_i$ is excessive. It yields some a-priori boundedness for the limit. 
 \item When we introduced the weak form, we remarked that its value may depend on the chosen nest. The theorem shows that this is not the case for excessive functions that can be approximated by superharmonic functions obeying (b). 
\end{itemize}
\end{remark}
Before proving the theorem, we need one further lemma. It weakens the assumptions of Lemma~\ref{lemma:superharmonic functions as cutoff}.
\begin{lemma}\label{lemma:cutoff}
 Let $E$ be an energy form and let $f,h \in D(E)$. If $E(h,(f-h)_+) \geq 0$, then 
 $$E(f \wedge h) \leq E(f)  \text{ and } E((f-h)_+) \leq E(f).$$
\end{lemma}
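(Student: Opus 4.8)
The plan is to retrace the proof of Lemma~\ref{lemma:superharmonic functions as cutoff}, isolating the single point where superharmonicity of $h$ was used and replacing it by the weaker hypothesis $E(h,(f-h)_+) \geq 0$. The crux is to observe that the chain of (in)equalities in that earlier proof is really an exact identity followed by one application of a contraction property, and that the sign assumption on $h$ only enters at the very last line.

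First I would establish the key inequality
$$E(f \wedge h, f) - E(f \wedge h) \geq E(h, (f-h)_+),$$
valid for all $f,h \in D(E)$ with no sign assumption. Using $f \wedge h = \frac{1}{2}(f + h - |f-h|)$ and the bilinearity of $E$, a direct computation gives the exact identity
$$E(f \wedge h, f) - E(f \wedge h) = \frac{1}{4}\left(E(f) - E(h) - E(|f-h|) + 2E(h,|f-h|)\right).$$
Then I would apply the contraction property $E(|f-h|) \leq E(f-h)$ from Proposition~\ref{proposition:elementary cut-off properties}(c), together with the elementary identities $E(f) - E(h) - E(f-h) = 2E(h, f-h)$ and $f - h + |f-h| = 2(f-h)_+$, to reduce the right-hand side to exactly $E(h,(f-h)_+)$. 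This is precisely the first three displayed lines of the proof of Lemma~\ref{lemma:superharmonic functions as cutoff}, and superharmonicity is never invoked in them.

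With the hypothesis $E(h,(f-h)_+) \geq 0$ now in force, the displayed inequality yields $E(f \wedge h) \leq E(f \wedge h, f)$, and the Cauchy--Schwarz inequality for the form seminorm $\|\cdot\|_E$ gives $E(f\wedge h) \leq E(f\wedge h)^{1/2} E(f)^{1/2}$, hence $E(f \wedge h) \leq E(f)$. For the second inequality I would use $(f-h)_+ = f - f \wedge h$ to write
$$E((f-h)_+, f) - E((f-h)_+) = E(f - f\wedge h,\, f \wedge h) = E(f, f\wedge h) - E(f \wedge h),$$
which by the inequality just proved is $\geq E(h,(f-h)_+) \geq 0$; Cauchy--Schwarz then forces $E((f-h)_+) \leq E((f-h)_+)^{1/2}E(f)^{1/2}$, i.e.\ $E((f-h)_+) \leq E(f)$, exactly as at the end of Lemma~\ref{lemma:superharmonic functions as cutoff}.

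Since the argument is a line-by-line reprise of that earlier lemma, I do not expect any genuine obstacle; the entire content is the recognition that $E(f \wedge h, f) - E(f \wedge h) \geq E(h, (f-h)_+)$ is an identity-plus-contraction fact, independent of the sign of $E(h,\cdot)$, so that the assumption ``$h$ superharmonic'' can be dropped in favor of the single scalar inequality $E(h,(f-h)_+) \geq 0$.
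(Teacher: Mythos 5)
Your proposal is correct and follows essentially the same route as the paper: the paper's own proof of Lemma~\ref{lemma:cutoff} consists precisely of the observation that the inequality $E(h,(f-h)_+) \geq 0$ is the only place where superharmonicity entered the proof of Lemma~\ref{lemma:superharmonic functions as cutoff}, so that the same computations carry over verbatim. Your line-by-line verification -- the exact identity for $E(f \wedge h, f) - E(f \wedge h)$, the single use of the contraction $E(|f-h|) \leq E(f-h)$, and the two Cauchy--Schwarz steps -- is exactly the content the paper compresses into that one-sentence proof.
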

\begin{proof}
The inequality  $E(h,(f-h)_+) \geq 0$ is all that is required in the proof of Lemma~\ref{lemma:superharmonic functions as cutoff}. Therefore, the same computations that were used there show the claim.   
\end{proof}

\begin{proof}[Proof of Theorem~\ref{theorem:computing the weak form for excessive functions}] Without loss of generality we can assume that $\psi \in D(E)_{\cN_h}$ is nonnegative. The transience of $E$ yields that $h-h_i$ is nonnegative. According to Proposition~\ref{proposition:nh vs nh'},  $h-h_i \in D(E)_{w,\,\cN_h}$ and for all $\psi \in D(E)_{\cN_h}$ we have
$$\Ew_{\cN_h}(h-h_i,\psi) = \Ew_{(\cN_h)_{h-h_i}}(h-h_i,\psi).$$
With this observation at hand, Theorem~\ref{theorem:ew extends e} and Theorem~\ref{theorem:characterization excessive functions as superharmonic functions} show $h \in D(E)_{w,\, \cN_h}$  and 
$$\Ew_{\cN_h}(h,\psi) - E(h_i,\psi) = \Ew_{\cN_h}(h-h_i, \psi) \geq 0.$$
Hence, it suffices to prove $\limsup_i \Ew(h-h_i, \psi) \leq 0$. Lemma~\ref{lemma:excessive functions are in the weak form domain} yields
$$\Ew_{\cN_h}(h-h_i, \psi) = \lim_{\varphi \in I_{\cN_h}} E(\varphi(h-h_i),\psi),$$
and according to Lemma~\ref{lemma: main lemma for excessive functions in weak domain},  the map
$$\{\varphi \in I_{\cN_h} \mid \varphi \geq 1_{\{\psi>0\}}\} \to \IR,\, \varphi \mapsto E(\varphi(h-h_i),\psi)$$
is monotone deceasing and takes nonnegative values. Together with Lemma~\ref{lemma:monotone nets}, these observations imply
$$ \limsup_i \Ew_{\cN_h}(h-h_i, \psi) = \limsup_i \lim_{\varphi \in I_{\cN_h}} E(\varphi(h-h_i), \psi) \leq    \limsup_{\varphi \in I_{\cN_h}} \limsup_i E(\varphi(h-h_i), \psi).$$
Therefore, it remains to prove that for each $\varphi \in I_{\cN_h}$ we have $ \limsup_i E(\varphi(h-h_i), \psi) = 0$. 

 In order to establish this identity, we employ Lemma~\ref{lemma:existence of a weakly convergent subnet} to show the $E$-weak convergence $\varphi(h-h_i) \to 0$. The properties of the net $(h_i)$  imply $\varphi(h-h_i) \to 0$ in $L^0(m)$ and so it remains to check that $i \mapsto E(\varphi(h-h_i))$ is bounded, whenever $i$ is large enough. To this end, let $N \in \cN_h$ such that $\varphi 1_{X\setminus N} = 0$ and choose a nonnegative function $h_N \in D(E)_{\cN_h} \cap L^\infty(m)$ with $h_N = h$ on $N$. The existence of such an $h_N$ follows from Lemma~\ref{lemma:excessive function ideal}. The choice of $N$ and $h_N$ and the inequality $h_i \leq h$ imply
$$\varphi (h - h_i) = \varphi (h - h_i)_+ = \varphi (h_N - h_i)_+.$$
 Theorem~\ref{theorem:algebraic and order properties} yields
\begin{align*}
  E(\varphi (h_N - h_i)_+) ^{1/2} &\leq  E((h_N - h_i)_+) ^{1/2} + \|(h_N - h_i)_+\|_\infty E(\varphi)^{1/2}\\
  &\leq E((h_N-h_i)_+)^{1/2} + \|h_N\|_\infty E(\varphi)^{1/2}.
\end{align*}
 Since $(h_N - h_i)_+ \leq h_N$, there exists $\ow{N} \in \cN_h$ such that $(h_N - h_i)_+ \in D(E)_{\ow{N}}$ for all $i$. Hence, assumption $(c)$ implies $E(h_i, (h_N - h_i)_+) \geq 0$ whenever $i$ is large enough. According to Lemma~\ref{lemma:cutoff}, this implies $E((h_N - h_i)_+) \leq E(h_N)$ whenever $i$ is large enough.  We obtain the boundedness of $E(\varphi(h-h_i))$ for large $i$ and the theorem is proven.
\end{proof}

We finish this section with two examples.The first one shows that Theorem~\ref{theorem:characterization excessive functions as superharmonic functions} provides a particularly nice description of excessive functions for the jump-type forms on discrete spaces, which we studied in the last section.

\begin{example}[Excessive functions of discrete jump-type forms]
 We use the notation and assumptions of Example~\ref{example:graphs}. 
 
 \begin{proposition}
  Let $h \in C(X)$ be nonnegative. The following assertions are equivalent.
  \begin{itemize}
   \item[(i)] $h$ is $q$-excessive.
   \item[(ii)] $h \in D(\ow{L})$ and $\ow{L}h \geq 0$.
  \end{itemize}
 \end{proposition}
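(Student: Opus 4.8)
The plan is to combine the abstract characterization of excessive functions (Theorem~\ref{theorem:characterization excessive functions as superharmonic functions}) with the explicit description of the weak form domain and the weak form operator $\ow{L}$ obtained in Example~\ref{example:graphs} (Proposition~\ref{proposition:example graphs}). Since $X$ carries the discrete topology, $L^0(m) = C(X)$ and $\mathcal{K} = \{F \subseteq X \mid F \text{ finite}\}$ is a special $q$-nest with $D(q)_{\mathcal{K}} = C_c(X)$. Because $m$ is a Radon measure of full support, $q$ has full support, so Theorem~\ref{theorem:characterization excessive functions as superharmonic functions} applies: a nonnegative $h$ is $q$-excessive if and only if it is weakly $q$-superharmonic.

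First I would prove the implication (i) $\Rightarrow$ (ii). Assume $h$ is $q$-excessive. By Theorem~\ref{theorem:characterization excessive functions as superharmonic functions}, $\mathcal{K}_h$ is a special $q$-nest, $h \in D(q)_{w,\, \mathcal{K}_h}$, and $q^{(w)}_{\mathcal{K}_h}(h,\psi) \geq 0$ for all nonnegative $\psi \in D(q)_{\mathcal{K}_h}$. I must first extract from $h \in D(q)_{w,\, \mathcal{K}_h}$ that $h \in D(\ow{L})$, i.e., that $\sum_{y} b(z,y)|h(y)| < \infty$ for every $z$. Here I would imitate the summability argument in the proof of Proposition~\ref{proposition:example graphs}: testing against $\psi = 1_{\{z\}}$ (which lies in $C_c(X) \subseteq D(q)_{\mathcal{K}_h}$ once $z$ is captured by the nest) and using the existence of the limit $\lim_{\varphi} q_\varphi(h,1_{\{z\}})$ together with $\sum_y b(z,y) < \infty$ forces the series $\sum_y b(z,y) h(y)$ to be unconditionally summable, hence absolutely summable by the Riemann series theorem. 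Thus $h \in D(\ow{L})$, and then $D(q)_{w,\, \mathcal{K}_h}$ coincides with $D(q)_{w,\, \mathcal{K}}$ on the relevant functions, so Proposition~\ref{proposition:example graphs} gives $q^{(w)}(h,\psi) = \sum_x \ow{L}h(x)\,\psi(x)$. Choosing $\psi = 1_{\{z\}} \geq 0$ for each fixed $z \in X$ yields $\ow{L}h(z) = q^{(w)}(h,1_{\{z\}}) \geq 0$, which is exactly $\ow{L}h \geq 0$.

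For the reverse implication (ii) $\Rightarrow$ (i), assume $h \in D(\ow{L})$ with $\ow{L}h \geq 0$. By Proposition~\ref{proposition:example graphs}, $h \in D(q)_{w,\, \mathcal{K}}$ and for all $\psi \in C_c(X) = D(q)_{\mathcal{K}}$ we have $q^{(w)}_{\mathcal{K}}(h,\psi) = \sum_x \ow{L}h(x)\,\psi(x)$. For nonnegative $\psi \in C_c(X)$, every term of the finite sum is nonnegative, so $q^{(w)}_{\mathcal{K}}(h,\psi) \geq 0$. Since $\mathcal{K}$ is a special $q$-nest and $D(q)_{\mathcal{K}} = C_c(X) \subseteq C(X) = D(\ell)$ for $\ell = 0$, this says precisely that $h$ is weakly $q$-superharmonic in the sense of Definition~\ref{definition:weak solution}. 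As $q$ has full support and $h$ is nonnegative, the implication (ii) $\Rightarrow$ (i) of Theorem~\ref{theorem:characterization excessive functions as superharmonic functions} then shows $h$ is $q$-excessive.

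The main obstacle I anticipate is the careful bookkeeping around the nests in the direction (i) $\Rightarrow$ (ii): Theorem~\ref{theorem:characterization excessive functions as superharmonic functions} is phrased with respect to the refined nest $\mathcal{K}_h$, whereas Proposition~\ref{proposition:example graphs} computes $q^{(w)}$ with respect to $\mathcal{K}$ itself, and a priori the weak form can depend on the nest. I would resolve this by first establishing $h \in D(\ow{L})$ through the summability argument above (which uses only the existence of the relevant limits and is insensitive to the nest, since any fixed $z$ is eventually captured), and then invoking Proposition~\ref{proposition:example graphs} to identify $D(q)_{w,\,\mathcal{K}}$ with $D(\ow{L})$ and to pin down the value of the weak form; testing against the single-point functions $1_{\{z\}}$ sidesteps any genuine nest-dependence because these are bounded functions with finite support. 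This is essentially the same subtlety handled in Proposition~\ref{proposition:nh vs nh'}, and the boundedness/finiteness of the test functions makes it harmless here.
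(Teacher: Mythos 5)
Your proposal is correct and follows the paper's route: both directions are obtained by combining Theorem~\ref{theorem:characterization excessive functions as superharmonic functions} with Proposition~\ref{proposition:example graphs}, and your treatment of (ii) $\Rightarrow$ (i) is identical to the paper's. The one place you diverge is the nest bookkeeping in (i) $\Rightarrow$ (ii): the paper dispatches it in a single line by observing that in the discrete setting every element of $\mathcal{K}$ is finite and $h \in C(X)$ is finite-valued everywhere, so each $N \in \mathcal{K}$ equals $N \cap \{h \leq n\}$ for large $n$ and every element of $\mathcal{K}_h$ is again finite, whence $\mathcal{K} \subseteq \mathcal{K}_h \subseteq \mathcal{K} \cup \{\emptyset\}$; the two nests are thus equivalent, $D(q)_{w,\,\mathcal{K}_h} = D(q)_{w,\,\mathcal{K}} = D(\ow{L})$, and the weak forms agree outright, so no re-derivation is needed. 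Your alternative --- re-running the unconditional-summability argument from the proof of Proposition~\ref{proposition:example graphs} against the test functions $1_{\{z\}}$ --- does work (indeed $D(q)_{\mathcal{K}_h} = D(q)_{\mathcal{K}} = C_c(X)$, so $I_{\mathcal{K}_h} = I_{\mathcal{K}}$ and the cofinality of $\{1_K \mid K \in \mathcal{K}\}$ carries over verbatim), but it duplicates work the cited proposition already did; the subtlety you anticipated about nest-dependence of $\Ew$ simply does not arise here, for the elementary reason above rather than via an appeal to Proposition~\ref{proposition:nh vs nh'}.
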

 \begin{proof}
 $q$ is a regular energy form. Therefore, it has full support and we can apply the theorems proven in this section. 
 
  (i) $\Rightarrow$ (ii): The discreteness of the space implies that $\mathcal{K}$ is the collection of finite subsets of $X$ and so we have $\mathcal{K} \subseteq  \mathcal{K}_h \subseteq \mathcal{K} \cup \{\emptyset\}.$ Therefore,  Theorem~\ref{theorem:characterization excessive functions as superharmonic functions} yields $h \in D(q)_{w,\, \mathcal{K}}$ and $q^{(w)}_\mathcal{K} (h,\psi) \geq 0$ whenever $\psi \in D(q)_\mathcal{K}$ is nonnegative.  According to Proposition~\ref{proposition:example graphs}, this is equivalent to $h \in D(\ow{L})$ and $\ow{L}h \geq 0$.
  
  (ii) $\Rightarrow$ (i): By Proposition~\ref{proposition:example graphs}, assertion (ii) implies that $h$ is weakly $q$-superharmonic and so it is excessive by Theorem~\ref{theorem:characterization excessive functions as superharmonic functions}.
 \end{proof}
\end{example}

\begin{example}[Excessive functions on manifolds]
  We use the notation and assumptions of Example~\ref{example:manifolds}. 
  \begin{proposition}
  Let $h \in \Ltlm$ with $\Delta h \in \Ltlm$ be nonnegative. The following assertions are equivalent. 
  \begin{itemize}
   \item[(i)] $h$ is  $Q$-excessive. 
   \item[(ii)] $-\Delta h \geq 0$.
  \end{itemize}
  \end{proposition}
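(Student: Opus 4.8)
The plan is to establish the equivalence by relating the condition on the distributional Laplacian to weak superharmonicity and then invoking the machinery built in this section. The key observation is that for functions $h$ with $h, \Delta h \in \Ltlm$, Proposition~\ref{proposition:weak form manifold} gives us an explicit formula for the weak form, namely
\[
Q^{(w)}_\mathcal{K}(h,\psi) = -\int_M \Delta h\, \psi \D {\rm vol}_g \quad \text{for all } \psi \in D(Q)_\mathcal{K}.
\]
Since $Q = E^0_{(M,g)}$ is a regular energy form (hence of full support), the general theorem on excessive functions, Theorem~\ref{theorem:characterization excessive functions as superharmonic functions}, applies and tells us that the nonnegative $Q$-excessive functions are precisely the nonnegative weakly $Q$-superharmonic functions.

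First I would show the implication (ii) $\Rightarrow$ (i). Assuming $-\Delta h \geq 0$, the formula above yields $Q^{(w)}_\mathcal{K}(h,\psi) = -\int_M \Delta h\, \psi \D {\rm vol}_g \geq 0$ for every nonnegative $\psi \in D(Q)_\mathcal{K}$. This says exactly that $h$ is weakly $Q$-superharmonic (using the special $Q$-nest $\mathcal{K}$ of relatively compact open sets, which is special by Lemma~\ref{lemma:special nest regular form}). By Theorem~\ref{theorem:characterization excessive functions as superharmonic functions}, the nonnegative weakly superharmonic function $h$ is $Q$-excessive, establishing (i).

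For the converse (i) $\Rightarrow$ (ii), I would run the same argument backwards. If $h$ is $Q$-excessive, then by Theorem~\ref{theorem:characterization excessive functions as superharmonic functions} it is weakly $Q$-superharmonic, so there is a special $E$-nest witnessing $Q^{(w)}(h,\psi) \geq 0$ for all nonnegative test functions $\psi$. The point requiring care is that the weak form can in principle depend on the chosen nest, whereas Proposition~\ref{proposition:weak form manifold} computes it with respect to $\mathcal{K}$. Since $h \in \Ltlm$ with $\Delta h \in \Ltlm$ lies in $D(Q)_{w,\, \mathcal{K}} = W^1_{\rm loc}(M)$ by that proposition, and since for regular functionals the value of the weak form is nest-independent (cf. the discussion following Definition~\ref{definition:weak solution} and Lemma~\ref{lemma:independence of nests}), we may evaluate the superharmonicity inequality using the explicit $\mathcal{K}$-formula to obtain $-\int_M \Delta h\, \psi \D {\rm vol}_g \geq 0$ for all nonnegative $\psi \in D(Q)_\mathcal{K} \supseteq C_c^\infty(M)$. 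By the fundamental lemma of the calculus of variations this forces $-\Delta h \geq 0$ pointwise (a.e.).

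The main obstacle I anticipate is precisely reconciling the nest-dependence of the weak form: Theorem~\ref{theorem:characterization excessive functions as superharmonic functions} provides superharmonicity with respect to $\mathcal{K}_h$ (the refinement), while Proposition~\ref{proposition:weak form manifold} works with $\mathcal{K}$. For bounded $h$ these agree by Proposition~\ref{proposition:nh vs nh'}(a), but for unbounded $h$ one must verify that testing against $\psi \in C_c^\infty(M) \subseteq D(Q)_{\mathcal{K}_h}$ still recovers the distributional Laplacian. Since compactly supported smooth functions lie in $D(Q)_{\mathcal{K}_h}$ for any refinement (their support is relatively compact and $h$ is finite there), and on such $\psi$ the weak form reduces to the classical pairing $\int_M g(\nabla h, \nabla \psi) \D {\rm vol}_g = -\int_M \Delta h\, \psi \D {\rm vol}_g$ by Green's formula, the reconciliation goes through and the equivalence follows.
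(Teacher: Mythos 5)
Your implication (ii) $\Rightarrow$ (i) is exactly the paper's argument and is correct: the ``moreover'' part of Proposition~\ref{proposition:weak form manifold} gives $h \in D(Q)_{w,\,\mathcal{K}}$ with $Q^{(w)}_{\mathcal{K}}(h,\psi) = -\int_M \Delta h\, \psi \D {\rm vol}_g \geq 0$ for nonnegative $\psi \in D(Q)_\mathcal{K}$, and Theorem~\ref{theorem:characterization excessive functions as superharmonic functions} then yields excessiveness.

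The converse, however, has a genuine gap, precisely at the point you flagged and then dismissed. Your claim that compactly supported smooth functions lie in $D(Q)_{\mathcal{K}_h}$ because ``their support is relatively compact and $h$ is finite there'' confuses finiteness a.e.\ with essential boundedness. A function $\psi \in D(Q)_{\mathcal{K}_h}$ must vanish outside some set $G \cap \{h \leq n\}$ with $G$ relatively compact and $n$ fixed, i.e.\ it must vanish a.e.\ on $\{h > n\}$; a generic $\psi \in C_c^\infty(M)$ does not, unless $h$ is essentially bounded on ${\rm supp}\,\psi$ --- and this is not implied by the hypotheses. For $\dim M \geq 5$, functions of the type $|x|^{-\varepsilon}$ with small $\varepsilon > 0$ (suitably localized) are nonnegative, superharmonic, belong to $\Ltlm$ together with their Laplacian, and are locally unbounded. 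Your fallback, the assertion that the weak form is nest-independent ``for regular functionals,'' is not a result available in the paper: nest-independence of $\Ew$ is explicitly left open in the loose ends, the remark after Definition~\ref{definition:weak solution} concerns something else, and Lemma~\ref{lemma:independence of nests} covers only \emph{equivalent} nests --- $\mathcal{K}_h$ is not equivalent to $\mathcal{K}$ when $h$ is unbounded. So your final ``reconciliation'' paragraph collapses.

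The paper closes this gap by truncating $h$ rather than the test functions. For each $n$, the function $h \wedge n$ is bounded and $Q$-excessive by Corollary~\ref{corollary:concatenation superharmonic functions}; since it is bounded, $\mathcal{K}_{h\wedge n}$ is equivalent to $\mathcal{K}$ by Proposition~\ref{proposition:nh vs nh'}~(a), so Theorem~\ref{theorem:characterization excessive functions as superharmonic functions} yields $h \wedge n \in D(Q)_{w,\, \mathcal{K}} \subseteq W^1_{\rm loc}(M)$ and $Q^{(w)}_{\mathcal{K}}(h \wedge n, \psi) = \int_M g(\nabla (h\wedge n), \nabla \psi) \D {\rm vol}_g \geq 0$ for all nonnegative $\psi \in C_c^\infty(M) \subseteq D(Q)_\mathcal{K}$. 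Since $h \wedge n \to h$ in $L^1_{\rm loc}$, hence in the sense of distributions, and $\Delta$ is continuous on $\mathcal{D}'(M)$, one obtains $-\Delta h(\psi) = \lim_{n\to\infty} -\Delta(h \wedge n)(\psi) \geq 0$ for all nonnegative $\psi \in C_c^\infty(M)$, which is (ii). Replacing your last paragraph by this truncation-and-limit argument repairs the proof.
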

  \begin{proof}
   The form $Q$ is regular and so it has full support. Therefore, we can apply all the theorems of this section. 
  
   (ii) $\Rightarrow$ (i): If $-\Delta h \geq 0$, then Proposition~\ref{proposition:weak form manifold} shows that $h$ is weakly $Q$-superharmonic. Hence, Theorem~\ref{theorem:characterization excessive functions as superharmonic functions} implies that $h$ is excessive. 
   
   (i) $\Rightarrow$ (ii): Let $\mathcal{K}$ be the special $Q$-nest of relatively compact open subsets of $M$. For $n \in  \IN$, the function $h\wedge n$ is bounded and $Q$-excessive, see Corollary~\ref{corollary:concatenation superharmonic functions}. Therefore, Theorem~\ref{theorem:characterization excessive functions as superharmonic functions} implies $h\wedge n \in D(Q)_{w,\, \mathcal{K}}$ and 
   $$Q^{(w)}_{\mathcal{K}}(h\wedge n,\psi) \geq 0 \text{ for all } \psi \in D(Q)_{\mathcal{K}}. $$
   The space $C_c^\infty(M)$ is contained in $D(Q)_{\mathcal{K}}$  and $\Delta$ is continuous with respect to convergence of distributions. For nonnegative $\psi \in C_c^\infty(M)$, this implies
   \begin{align*}
    -\Delta h (\psi) &= \lim_{n \to \infty} -\Delta (h\wedge n) (\psi)\\
    &= \lim_{n \to \infty} \nabla(h\wedge n) (\nabla \psi)\\ 
    &= \lim_{n \to \infty} \int_M g(\nabla (h\wedge n),\nabla \psi) \D {\rm vol}_g \\
    &= \lim_{n \to \infty} Q^{(w)}(h\wedge n,\psi) \geq 0.
   \end{align*}
   Here, we viewed $\nabla (h\wedge n)$ and $-\Delta h$ as distributions and used Proposition~\ref{proposition:weak form manifold} that shows $h\wedge n \in W^1_{\rm loc}(M)$ and provides the formula for $Q^{(w)}_\mathcal{K}$. This finishes the proof.
  \end{proof}
  \begin{remark}
  We characterized excessive functions that satisfy $-\Delta h \in \Ltlm$. It is possible to drop this assumption and and require (ii) to hold true in the sense of distributions. We leave the technical details to the reader.
  \end{remark}
\end{example}

\section{The potential operator}

In this section we study the existence of weak solutions to the Laplace equation with a particular focus on  minimal solutions when the functional on the right-hand side of the equation is positive. More precisely, we introduce the potential operator and show that it provides minimal solutions. We start our considerations with solutions to the (weak) Laplace equation that belong to the form domain. 

For a transient energy form $E$, we denote the space of continuous linear functionals on the Hilbert space $(D(E),E)$ by $L_c(D(E))$, cf. Section~\ref{section:lebesgue spaces}. Due to the Riesz representation theorem the space $L_c(D(E))$ is isomorphic to $D(E)$.  For $\ell \in L_c(D(E))$ this isomorphism provides a unique solution to the equation
$$\begin{cases}
   E(g,\cdot) = \ell \\
   g \in D(E)
  \end{cases}.
$$
Therefore, it deserves a special name. 

\begin{definition}[Potential operator]
 Let $E$ be a transient energy form. The linear operator
 $$G:L_c(D(E)) \to D(E)$$
 that satisfies
 $$ E(G\ell, \psi) = \ell(\psi)$$
 for all $\ell \in L_c(D(E))$ and all $\psi \in D(E)$ is called the {\em potential operator}. The function $G\ell$ is called the {\em potential of $\ell$}.
 \end{definition}

 \begin{remark}
  The potential operator has featured in the literature in various forms. Here, we give some examples. Let $\E$ be a Dirichlet form and recall that for  $\alpha > 0$, we let $\E_{\alpha} = \E + \alpha \|\cdot\|_2^2$. As discussed previously, $\E_\alpha$ is a transient energy form. A  function $f \in L^2(m)$ gives rise to the  linear functional 
  $$ D(\E) \to \IR,\, g \mapsto \int fg\,{\rm d} m,$$
  which is continuous with respect to $\E_\alpha$. The potential of this functional (with respect to $\E_\alpha$) is given by the usual $L^2$-resolvent $G_\alpha f$. If, additionally, $\E$ is  regular further examples of functionals in $L_c(D(\E_\alpha))$ are given by measures of finite energy integral. Recall that a Radon measure $\mu$ is of finite energy integral with respect to $\E$ if there exists a constant $C>0$ such that
  $$\int |\varphi| {\rm d} \mu \leq C \sqrt{\E_1(\varphi)}\text{ for all } \varphi \in D(\E) \cap C_c(X).$$
  In this case, the linear functional 
  $$D(\E) \cap C_c(X) \to \IR,\, \varphi \mapsto \int \varphi\, {\rm d} \mu$$
  can be uniquely extended to a continuous linear functional on $(D(\E_\alpha),\E_\alpha)$. The potential of this functional (with respect to $\E_\alpha$) is the $\alpha$-potential $U_\alpha \mu$ in the sense of \cite[Section~2.2]{FOT}. Another instance of the potential operator appears in the realm of extended Dirichlet spaces. For a transient Dirichlet form $\E$ (which means that $\Ee$ is transient in our terminology) with associated semigroup $(T_t)$,  the Green operator of $\E$ is defined by  
  $$\ow{G}:L^1(m) \to L^0(m),\, f \mapsto \ow{G}f := \int_{0}^\infty T_tf\, {\rm d} t. $$
  For this operator, \cite[Theorem~1.5.4.]{FOT} shows that if $|f| \ow{G}|f| \in L^1(m)$, then $\ow{G} f \in D(\Ee)$ and for all $g \in D(\Ee)$ we have $f g \in L^1(m)$ and  
  $$\Ee(\ow{G}f,g) = \int fg\, {\rm d}m.$$
  In other words,   $|f|\ow{G}|f|\in L^1(m)$ implies that  $\ow{G}f$ is the potential (with respect to $\Ee$) of the functional
  $$D(\Ee) \to \IR,\, g \mapsto \int fg\, {\rm d} m.$$
  The $L^2$-resolvent, the $\alpha$-potential of measures of finite energy integral and the Green operator play an important role in the theory of Dirichlet forms. The previous discussion implies that the potential operator provides a common framework for them. For some physical background why $G$ is called potential operator we refer the reader to the introduction of \cite{Schmu3} and references therein.
 \end{remark}
 
  A large part of the theory of energy forms derives from the compatibility of the energy form with the order structure of the underlying space. From this point it is not so surprising that the potential operator also respects some form of order. The following proposition shows that it maps positive functionals on the form domain to nonnegative functions. We recall that $S(E)$ denotes the collection of all $E$-superharmonic functions and refer to Section~\ref{section:lebesgue spaces} for the definition of positive and regular functionals. 
 \begin{proposition}\label{proposition:potential operator is positive}
  Let $E$ be a transient energy form. 
  \begin{itemize}
   \item[(a)] $L_r(D(E)) \subseteq L_c(D(E))$, i.e., any regular linear functional on $D(E)$ is continuous. 
   \item[(b)]$G L_+(D(E)) = S(E)$. In particular, $G$ is positive, i.e., it maps positive functionals to nonnegative functions. 
  \end{itemize}
 \end{proposition}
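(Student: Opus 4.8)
The plan is to handle the two parts in order, with part (a) carrying essentially all of the work and part (b) being formal consequences of it.

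For (a), since $L_c(D(E))$ is a vector space and every regular functional is by definition a difference of two positive ones, it suffices to show that each $\ell \in L_+(D(E))$ is $\|\cdot\|_E$-continuous. The first step is a reduction to the positive cone: because $D(E)$ is a lattice (Theorem~\ref{theorem:algebraic and order properties}), $|f| \in D(E)$ whenever $f \in D(E)$, and from $-|f| \le f \le |f|$ together with positivity of $\ell$ one gets $|\ell(f)| \le \ell(|f|)$; meanwhile the contraction property $E(|f|) \le E(f)$ of Proposition~\ref{proposition:elementary cut-off properties} gives $\||f|\|_E \le \|f\|_E$. Hence it is enough to bound $\ell$ on $\{g \in D(E) : g \ge 0,\ \|g\|_E \le 1\}$. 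Here transience enters decisively: by Theorem~\ref{theorem:continuous embedding of energy forms}, $(D(E),E)$ is a Hilbert space and the inclusion into $(L^0(m),\tau(m))$ is continuous.

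To bound $\ell$ on the positive part of the unit ball I would run a gliding-hump argument, which I expect to be the main obstacle of the whole proposition. If $\ell$ were unbounded there, choose $g_n \ge 0$ with $\|g_n\|_E \le 1$ and $\ell(g_n) \ge 4^n$. The series $h := \sum_n 2^{-n} g_n$ is absolutely convergent in the Hilbert space $(D(E),E)$, hence converges to some $h \in D(E)$, and its partial sums $s_N$ converge to $h$ both in $\|\cdot\|_E$ and, via the continuous embedding, in $\tau(m)$. Since the $s_N$ form a monotone increasing sequence of nonnegative functions, Proposition~\ref{proposition:convergence of monotone nets} identifies the $\tau(m)$-limit with the supremum, so $h \ge s_N \ge 2^{-N} g_N$ for every $N$. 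Applying positivity to $h - 2^{-N} g_N \ge 0$ yields $\ell(h) \ge 2^{-N}\ell(g_N) \ge 2^N$ for all $N$, contradicting $\ell(h) \in \IR$. Combining the resulting cone bound with the reduction gives $|\ell(f)| \le C\|f\|_E$, i.e. $\ell \in L_c(D(E))$, and (a) follows.

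For (b), the inclusion $G L_+(D(E)) \subseteq S(E)$ is immediate: part (a) makes $G\ell$ defined for $\ell \in L_+(D(E))$, and for nonnegative $\psi \in D(E)$ the defining identity of the potential operator gives $E(G\ell,\psi) = \ell(\psi) \ge 0$, so $G\ell$ is superharmonic in the sense of Definition~\ref{definition:superharmonic function}. For the reverse inclusion, given $h \in S(E)$ I would set $\ell := E(h,\cdot)$, which is linear, continuous by the Cauchy--Schwarz inequality, and positive by superharmonicity of $h$, so $\ell \in L_+(D(E))$. Then $E(G\ell - h,\psi) = 0$ for all $\psi \in D(E)$, whence $G\ell - h \in \ker E = \{0\}$ by transience, giving $h = G\ell$ and so $S(E) \subseteq G L_+(D(E))$. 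Finally, the ``in particular'' positivity of $G$ follows since every element of $S(E)$ is $E$-excessive (as $S(E)$ is contained in its own $\tau(m)$-closure), hence nonnegative by Proposition~\ref{lemma:superharmonic functions are positive} for transient forms.
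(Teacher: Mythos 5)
Your proposal is correct and follows essentially the same route as the paper: part (a) is proved by reducing to positive functionals and running exactly the same contradiction argument with the absolutely convergent series $\sum_n 2^{-n}g_n$ in the Hilbert space $(D(E),E)$, and part (b) is deduced formally from (a), the definition of superharmonicity, and Proposition~\ref{lemma:superharmonic functions are positive}. Your explicit justification of $h \geq 2^{-N}g_N$ via the continuous embedding and Proposition~\ref{proposition:convergence of monotone nets} fills in a step the paper leaves implicit, but the argument is the same.
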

 \begin{proof} (a): It suffices to show the continuity of positive functionals.  Let $\ell:D(E) \to \IR$ linear and positive and assume that it is not continuous. Then there exists a sequence $(f_n)$ in $D(E)$ with 
 $$|\ell(f_n)| \geq 4^n \text{ and } E(f_n) \leq 1. $$
 From the positivity of $\ell$ and the contraction properties of $E$ we infer 
 $$\ell(|f_n|) \geq |\ell(f_n)| \geq 4^n \text{ and }E(|f_n|) \leq E(f_n) \leq 1.$$
 Since $(D(E),E)$ is a Hilbert space and the $E$-norm of  $(|f_n|)$ is uniformly bounded, the limit
 $$g = \sum_{n = 1}^\infty 2^{-n} |f_n|$$
 exists in $(D(E),E)$. For each $n$ it satisfies $g \geq 2^{-n} |f_n|$. Hence, the positivity of $\ell$ and the properties of the $(f_n)$ yield that for each $n \geq 1$ the inequality
 $$\ell(g) \geq 2^{-n} \ell(|f_n|) \geq 2^n $$
 holds. This is a contradiction. 
 
 (b): By (a) we have $L_+(D(E)) \subseteq L_c(D(E))$. With this observation the equation $$G L_+(D(E)) = S(E)$$ is immediate from the definition of superharmonicity and of the potential operator. Furthermore, by Proposition~\ref{lemma:superharmonic functions are positive} all superharmonic functions are nonnegative.
 \end{proof}

 The potential of a continuous functional  is the unique solution to the  Laplace equation that belongs to the form domain. We are now going to extend the potential operator to larger classes of functionals and then show that this extension provides weak solutions to the Laplace equation. In general, the solutions that come from the extended potential operator need not be unique, but we prove that they satisfy some minimality property.  
 
 We start by studying the potential operator of certain restrictions of the given energy form. Let $E$ be a transient energy form and let $N \subseteq X$ measurable. Recall the notation
 $$D(E)_N  := D(E)_{\{N\}} = \{f \in D(E)\mid f1_{X \setminus N} = 0\}.$$
The space $D(E)_N$ is closed in $D(E)$ with respect to the form topology. Therefore, the restriction $E^N := E|_{D(E)_N}$ is a transient energy form with domain $D(E^N) = D(E)_N$.  We denote the potential operator of $E^N$ by $G^N$. Furthermore, we let $P^N$ be the Hilbert space projection in $(D(E),E)$ onto the closed subspace $D(E)_N$. The restriction of an arbitrary functional on $D(E)$  is given by
 $$R^N: L(D(E)) \to L(D(E)_N),\,  F \mapsto F|_{D(E)_N}.$$
 The following lemma shows that the potential operator, the projection and the restriction are compatible.
 \begin{lemma}\label{lemma:compatibility potential operator}
  Let $E$ be a transient energy form and let $N\subseteq X$ measurable.  
  \begin{itemize}
   \item[(a)] For all  $\ell \in L_c(D(E))$ the identity $P^N G \ell = G^N R^N \ell$ holds.
   \item[(b)] For all $\ell \in L_c(D(E^N))$ the identity $G^N \ell = P^N G (\ell\circ P^N)$ holds. 
  \end{itemize}
 \end{lemma}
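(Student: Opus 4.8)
The plan is to prove (a) directly from the defining (uniqueness) property of the potential operator together with the characterization of the orthogonal projection, and then to deduce (b) as a formal consequence of (a).

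First I would record the three facts I will use. The potential $G^N m$ of a functional $m \in L_c(D(E^N))$ is the \emph{unique} element of $D(E)_N = D(E^N)$ satisfying $E(G^N m, \psi) = m(\psi)$ for all $\psi \in D(E)_N$; uniqueness here is exactly the Riesz isomorphism on the Hilbert space $(D(E)_N, E)$, which is closed in the Hilbert space $(D(E), E)$ (transience of $E$, Theorem~\ref{theorem:continuous embedding of energy forms}) and hence itself a Hilbert space. The orthogonal projection $P^N$ onto $D(E)_N$ is characterized by $P^N f \in D(E)_N$ together with $E(P^N f, \psi) = E(f, \psi)$ for all $\psi \in D(E)_N$, and it is a contraction that restricts to the identity on $D(E)_N$. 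Finally, since any $\ell \in L_c(D(E))$ satisfies $|\ell(\psi)| \leq C\|\psi\|_E$, its restriction $R^N \ell$ lies in $L_c(D(E^N))$, so that $G^N R^N \ell$ is well-defined.

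For (a), the element $P^N G\ell$ already lies in $D(E)_N$, so by the uniqueness just recalled it suffices to check that it satisfies the defining equation of $G^N R^N \ell$. For $\psi \in D(E)_N$ I would compute $E(P^N G\ell, \psi) = E(G\ell, \psi) = \ell(\psi) = (R^N \ell)(\psi)$, using the projection property in the first equality, the definition of $G$ (valid since $\psi \in D(E)$) in the second, and the definition of $R^N$ in the third. This is the entire content of (a).

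For (b), I would first note that $\ell \circ P^N \in L_c(D(E))$, since $P^N$ is a contraction and $\ell$ is continuous on $D(E)_N$, so that $G(\ell \circ P^N)$ is defined; applying part (a) to the functional $\ell \circ P^N$ then gives $P^N G(\ell \circ P^N) = G^N R^N(\ell \circ P^N)$. It remains to identify $R^N(\ell \circ P^N)$ with $\ell$: for $\psi \in D(E)_N$ one has $P^N \psi = \psi$, hence $(\ell \circ P^N)(\psi) = \ell(\psi)$, i.e.\ $R^N(\ell \circ P^N) = \ell$, which yields (b). The whole argument is elementary Hilbert-space bookkeeping; the only point requiring a moment's care is verifying that the relevant functionals $R^N \ell$ and $\ell \circ P^N$ are continuous so that all occurring potentials exist, and this follows from continuity of $\ell$ together with $\|P^N\| \leq 1$.
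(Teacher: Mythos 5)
Your proof is correct and follows essentially the same route as the paper: for (a) the paper performs exactly your chain of equalities $E(P^N G\ell,\psi) = E(G\ell,\psi) = \ell(\psi) = R^N\ell(\psi)$ and invokes uniqueness of the potential, and for (b) it likewise reduces to (a) via the identity $\ell = R^N(\ell\circ P^N)$. Your additional verifications that $R^N\ell$ and $\ell\circ P^N$ are continuous (via $\|P^N\|\leq 1$) are left implicit in the paper but are a harmless, correct supplement.
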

 \begin{proof} (a): For $\psi \in D(E)_N$ and $\ell \in L_c(D(E))$ we have
 $$E^N(P^N G\ell, \psi) = E(P^N G\ell, \psi) = E(G\ell, \psi) = \ell(\psi) = R^N \ell(\psi) = E^N(G^N R^N \ell,\psi). $$
 This proves (a).
 
 (b): For $\ell \in L_c(D(E^N))$ the identity $\ell = R^N(\ell \circ P^N)$ holds. Therefore, the statement follows from (a). This finishes the proof. 
 \end{proof}
There is also some monotonicity among the potential operators of the restrictions. This is discussed next. 
\begin{lemma}\label{lemma: monotonicity pn}
 Let $E$ be a transient energy form and let $N \subseteq X$ measurable. For each nonnegative $f \in D(E)$ the inequality $ P_N f \leq f$ holds. In particular, for all $\ell \in L_+(D(E))$ we have 
 $$G^N R^N \ell \leq G \ell.$$
 \end{lemma}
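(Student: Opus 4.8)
The plan is to prove the pointwise inequality $P^N f \le f$ for nonnegative $f$ directly from the variational characterization of the orthogonal projection together with the sign lemma for functions of disjoint support, and then to read off the statement about the potential operators from the compatibility relation of Lemma~\ref{lemma:compatibility potential operator}.

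First I would record that, since $E$ is transient, $(D(E),E)$ is a Hilbert space by Theorem~\ref{theorem:continuous embedding of energy forms}, so that $g := P^N f$ is the unique minimizer of $E(f-h)$ over $h \in D(E)_N$. Given a nonnegative $f \in D(E)$, the natural competitor is $g \wedge f$: it lies in $D(E)$ because $D(E)$ is a lattice by Theorem~\ref{theorem:algebraic and order properties}, and it vanishes on $X \setminus N$ because there $g = 0$ and $f \ge 0$ force $g \wedge f = 0$; hence $g \wedge f \in D(E)_N$. The minimizing property of the projection then gives $E(f-g) \le E(f - g \wedge f)$, and since $f - g \wedge f = (f-g)\vee 0$ this reads $E(u) \le E(u \vee 0)$ with $u := f - g$.

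Writing $u = p - n$ with $p := u \vee 0$ and $n := (-u)\vee 0$, the bilinearity of the form gives $E(u) = E(p) - 2E(p,n) + E(n)$, so the inequality $E(u) \le E(p)$ yields $E(n) \le 2E(p,n)$. Since $p$ and $n$ are nonnegative with $p \wedge n = 0$, Lemma~\ref{lemma:cutoff for functions with disjoint support} gives $E(p,n) \le 0$, whence $E(n) \le 0$ and therefore $E(n) = 0$. Transience, i.e.\ $\ker E = \{0\}$, now forces $n = 0$, that is $u = f - g \ge 0$, which is precisely $P^N f \le f$. I expect this combination---selecting $g \wedge f$ as the competitor and then extracting $n \in \ker E$ from the disjoint-support sign lemma---to be the only genuinely delicate point; the remaining manipulations are routine.

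For the \emph{in particular} statement, let $\ell \in L_+(D(E))$. By Proposition~\ref{proposition:potential operator is positive}(a) such an $\ell$ is continuous, so $R^N \ell \in L_c(D(E^N))$ and both $G\ell$ and $G^N R^N \ell$ are defined; moreover $G\ell \ge 0$ by Proposition~\ref{proposition:potential operator is positive}(b). Lemma~\ref{lemma:compatibility potential operator}(a) identifies $G^N R^N \ell = P^N G\ell$, and applying the first part to the nonnegative function $f = G\ell$ gives $P^N G\ell \le G\ell$, that is $G^N R^N \ell \le G\ell$, as claimed.
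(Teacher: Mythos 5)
Your proof is correct, and it reaches the conclusion by a slightly different mechanism than the paper, even though both arguments hinge on the same competitor $f \wedge P^N f \in D(E)_N$. The paper runs the comparison in the opposite direction: by the contraction property (Proposition~\ref{proposition:elementary cut-off properties}~(b)) one has
$$E(f - f\wedge P^N f) = E\bigl((f - P^N f)_+\bigr) \leq E(f - P^N f),$$
so the competitor already attains the minimum, and the \emph{uniqueness} of the Hilbert-space minimizer forces $P^N f = f \wedge P^N f$, i.e.\ $P^N f \leq f$; transience enters only implicitly, through the fact that $(D(E),E)$ is a genuine Hilbert space so that the projection is unique. You instead use only the \emph{minimality} of $P^N f$ to get $E(u) \leq E(u_+)$ for $u = f - P^N f$, then expand by bilinearity and invoke the disjoint-support sign lemma (Lemma~\ref{lemma:cutoff for functions with disjoint support}) to obtain $E(u_-) \leq 2E(u_+,u_-) \leq 0$, and finally kill $u_-$ by transience via $\ker E = \{0\}$. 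Your route is a few lines longer and needs the extra polarization step, but it has the merit of making the role of transience explicit and of requiring only the existence of a minimizer rather than its uniqueness; the paper's version buys brevity by combining the contraction inequality with uniqueness of the projection. Your treatment of the ``in particular'' statement — positivity and continuity of $\ell$ via Proposition~\ref{proposition:potential operator is positive} and the identification $G^N R^N \ell = P^N G\ell$ from Lemma~\ref{lemma:compatibility potential operator} — coincides exactly with the paper's.
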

 \begin{proof}
  We compute
  $$E(f - f \wedge P_N f) = E((f-P_Nf)_+) \leq E(f - P_N f).$$
  Since $f \wedge P_N f \in D(E)_N$ and $P_Nf$ is the unique minimizer of the functional $g \mapsto E(f-g)$ on $D(E)_N$, this implies $P_Nf = f \wedge P_N f$. The 'in particular' part follows from Proposition~\ref{proposition:potential operator is positive} and Lemma~\ref{lemma:compatibility potential operator}. 
 \end{proof}

\begin{remark}
 \begin{itemize}
  \item The idea for the proof of the previous lemma is taken from \cite[Proposition~B1]{SV}.  
  
  \item For measurable $M,N \subseteq X$ with $m(N \setminus M) = 0$, the previous lemma implies that for each $\ell \in L_+(D(E^M))$ the inequality $G^N R^N \ell \leq G^M \ell$ holds. In this sense, the family of potential operators $(G^N)$ is monotone in the parameter $N$. 
 \end{itemize}

\end{remark}
  As mentioned previously, we aim at extending the operator $G$ to larger classes of functionals. The following approximation for $G$ lies at the heart of these considerations. Recall that we equip $E$-nests with the preorder of inclusion up to sets of measure zero and that special $E$-nests are upwards directed with respect to this relation, see Lemma~\ref{lemma:special nests are directed}.  
 \begin{lemma}\label{lemma:convergence of projections}
  Let $E$ be a transient energy form and let $\cN$ be a special $E$-nest. For each $f \in D(E)$ the identity
  $$f = \lim_{N \in \cN} P^N f $$ 
  holds in the form topology. In particular, for each $\ell \in L_c(D(E))$ we have
  $$G \ell = \lim_{N \in \cN} G^N R^N \ell$$
  in the form topology.
 \end{lemma}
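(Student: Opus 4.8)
The plan is to prove the statement $f = \lim_{N \in \cN} P^N f$ in the form topology, from which the assertion about $G\ell$ will follow by the compatibility identity $P^N G\ell = G^N R^N \ell$ of Lemma~\ref{lemma:compatibility potential operator}~(a), applied to $Gf$ in place of $f$. Since $\cN$ is a special $E$-nest, the net $(P^N f)_{N \in \cN}$ is indexed by an upwards directed set, see Lemma~\ref{lemma:special nests are directed}, so limits along $\cN$ make sense. Because $(D(E),E)$ is a Hilbert space by Theorem~\ref{theorem:continuous embedding of energy forms} (transience is assumed), and each $P^N$ is the orthogonal projection onto the closed subspace $D(E)_N$, I expect the convergence to be essentially a statement that these projections approximate the identity on the closure of $\bigcup_{N \in \cN} D(E)_N$, which by the nest property equals all of $D(E)$.

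First I would record that the definition of an $E$-nest gives that $D(E)_\cN = \bigcup_{N \in \cN} D(E)_N$ is dense in $D(E)$ with respect to the form topology. For a transient form this topology is induced by the form norm $\|\cdot\|_E$, so density is density in the Hilbert space $(D(E),E)$. Next I would exploit monotonicity of the projections: for $N, N' \in \cN$ with $N \prec N'$ we have $D(E)_N \subseteq D(E)_{N'}$, hence $P^N = P^N P^{N'}$ and, for fixed $f$, the quantities $E(P^N f)$ are monotone increasing and bounded by $E(f)$. This together with the directedness of $\cN$ yields that the net $(P^N f)$ is $\|\cdot\|_E$-Cauchy: for $N \prec N'$ one computes $E(P^{N'}f - P^N f) = E(P^{N'}f) - E(P^N f)$ using $P^N = P^N P^{N'}$, and the right-hand side tends to $0$ as $N, N'$ increase because $E(P^N f)$ converges monotonically. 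Completeness of $(D(E),E)$ then gives a limit $g \in D(E)$.

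It remains to identify $g$ with $f$. Here I would use density: for any $h \in D(E)_\cN$ there is $N_0 \in \cN$ with $h \in D(E)_{N_0}$, whence $P^N h = h$ for all $N \succ N_0$, so $E(g, h) = \lim_{N} E(P^N f, h) = \lim_N E(f, P^N h) = E(f,h)$, using self-adjointness of $P^N$. Thus $g - f$ is $E$-orthogonal to the dense subspace $D(E)_\cN$, forcing $g = f$ in $(D(E),E)$. This establishes $f = \lim_{N \in \cN} P^N f$ in the form topology. For the ``in particular'' statement, I would set $f = G\ell$ for $\ell \in L_c(D(E))$, which is legitimate since $G\ell \in D(E)$, and invoke Lemma~\ref{lemma:compatibility potential operator}~(a) to rewrite $P^N G\ell = G^N R^N \ell$, so that $G\ell = \lim_{N \in \cN} P^N G\ell = \lim_{N \in \cN} G^N R^N \ell$ in the form topology.

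The main obstacle I anticipate is the Cauchy argument for the net rather than a sequence: one must handle the directed index set $\cN$ carefully, ensuring that the monotone boundedness of $E(P^N f)$ genuinely yields a Cauchy net (not merely a Cauchy sequence) and that the limit of $E(P^N f)$ exists as a supremum. This is where directedness of special nests (Lemma~\ref{lemma:special nests are directed}) and the identity $E(P^{N'}f - P^N f) = E(P^{N'}f) - E(P^N f)$ for nested $N \prec N'$ are essential; once these are in place, completeness of the Hilbert space and the orthogonality-plus-density identification are routine.
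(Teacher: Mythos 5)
Your proof is correct, but it takes a different route from the paper's. The paper first establishes $E$-weak convergence $P^N f \to f$ directly: for $\psi \in D(E)_N$ one has $E(P^N f,\psi) = E(f,\psi)$, and since the net $(P^N f)$ is $E$-bounded and $D(E)_\cN$ is dense, weak convergence to $f$ follows; it then upgrades to norm convergence in one line via the identity $E(f - P^N f) = E(f) - 2E(f,P^N f) + E(P^N f) = E(f) - E(f,P^N f)$, which tends to $0$ by the weak convergence. You instead run a monotone Pythagorean argument: $D(E)_N \subseteq D(E)_{N'}$ for $N \prec N'$ gives $P^N = P^N P^{N'}$, hence $E(P^{N'}f - P^N f) = E(P^{N'}f) - E(P^N f)$ with $E(P^N f)$ monotone bounded, so the net is Cauchy; you then invoke completeness of $(D(E),E)$ (Theorem~\ref{theorem:continuous embedding of energy forms}) to extract a limit $g$, and identify $g = f$ by orthogonality against the dense subspace $D(E)_\cN$. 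Your identification step is essentially the paper's weak-convergence step in disguise, so the real difference is the middle: the paper never needs completeness of $(D(E),E)$ because it shows $E(f - P^N f) \to 0$ directly, whereas you need the Hilbert space completeness to produce the limit before identifying it; in exchange, your Cauchy argument is more structural and makes the monotonicity of the projections explicit, which is also what underlies Lemma~\ref{lemma: monotonicity pn}. Both proofs use transience in the same way (the form norm induces the form topology) and both use only directedness of $\cN$, not specialness, consistent with Remark~\ref{remark:directed suffices}. Two small points: in your opening paragraph you write ``applied to $Gf$ in place of $f$'' where you mean $G\ell$ (you correct this later), and in the Cauchy step you should spell out the reduction of an arbitrary pair $N, N'$ to a common upper bound $N''$ via directedness together with the triangle inequality — you flag this yourself, and it is routine, but it is the one place where the net (as opposed to sequence) structure genuinely enters.
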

 \begin{proof}
  We first prove $ P^N f \to f$ $E$-weakly.   For $N\in \cN$ and $\psi \in D(E)_N$ the identity   
  $$E(P^N f,\psi) = E(f, \psi)$$
  holds. Hence, for all $\psi \in D(E)_\cN$ we have 
  $$\lim_{N \in \cN} E(P^N f,\psi) = E(f, \psi). $$
  Since the net $(P^N f)$ is $E$-bounded and $D(E)_\cN$ is dense in $D(E)$, this the shows $E$-weak convergence of $(P^N f)$ to $f$. 
  With this at hand, the claim follows from the transience of $E$ and the identity
   $$E(f  - P^Nf) = E(f) - 2 E(f,  P^Nf) + E(P^N f) = E(f) -  E(f,  P^Nf).$$
  The 'in particular' part is a consequence of what we have already proven and the formula $P^N G\ell = G^N R^N \ell$, see Lemma~\ref{lemma:compatibility potential operator}.
 \end{proof}

 \begin{remark} \label{remark:directed suffices}
  In the proof of the  lemma we did not use that the nest is special, only that it is upwards directed with respect to the preorder of inclusion up to sets of measure zero.
 \end{remark}

 The previous lemma hints on how to extend $G$ to more general functionals. The expression $G^N R^N \ell$ is well-defined for such $\ell$ whose restriction to $D(E)_N$ is linear and continuous with respect to $E^N$. After taking a limit along all $N$ in a given nest (if it exists) we obtain an extension of $G$. More precisely, for a special $E$-nest $\cN$ we let
 $$L_c(D(E)_\cN)  := \{\ell \in L(D(E)_\cN)\mid  R^N \ell \in L_c(D(E^N)) \text{ for each } N \in \cN\}. $$
  According to Proposition~\ref{proposition:potential operator is positive}, positive functionals with domain $D(E)_{\cN}$ and regular functionals with domain $D(E)_{\cN}$ are contained in $ L_c(D(E)_\cN) $. 
 \begin{remark}
 We could have equipped  $D(E)_\cN$ with a vector space topology such that the notation $L_c(D(E)_\cN)$ is consistent with the one introduced in Section~\ref{section:lebesgue spaces}. We refrain from giving details. 
 \end{remark}
 We define the domain of the  potential operator $G^\cN:D(G^\cN) \to L^0(m)$ with respect to the special nest $\cN$ as  
 $$D(G^\cN) := \{\ell \in L_c(D(E)_\cN) \mid \lim_{N\in \cN} G^N R^N \ell \text{ exists in } L^0(m) \},$$
 on which it acts by
 $$G^\cN \ell := \lim_{N\in \cN} G^N R^N \ell.$$
  In general, it is hard to determine the domain of $G^\cN$ and, like the weak form extension, its value may depend on the chosen nest. For positive functionals (and hence for regular functionals) things simplify since then the limit in the definition of $G^\cN$ is monotone. We employ this monotonicity to characterize when positive functionals belong to the domain of the extended potential operator and then show that its value is independent of the considered nest.  
\begin{proposition}\label{proposition:characterization of domain for positive functionals}
 Let $E$ be transient and let $\cN$ be a special nest. For a functional $\ell \in L_+(D(E)_\cN)$ the following assertions are equivalent. 
 \begin{itemize}
  \item[(i)] $\ell \in D(G^\cN).$  
  \item[(ii)] There exists a function $f \in L^0(m)$ such that for all $N \in \cN$ the inequality $G^NR^N \ell \leq f$ holds.
 \end{itemize}
In particular, if one of the above is satisfied, then 
$$G^\cN \ell= \sup_{N \in \cN} G^N R^N \ell.$$
\end{proposition}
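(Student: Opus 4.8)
The plan is to exploit the monotonicity of the net $(G^N R^N \ell)_{N \in \cN}$ that is provided by Lemma~\ref{lemma: monotonicity pn}, together with the characterization of convergence for monotone nets in $L^0(m)$, namely Proposition~\ref{proposition:convergence of monotone nets}. The key structural observation is that $\cN$ is upwards directed with respect to inclusion up to sets of measure zero, see Lemma~\ref{lemma:special nests are directed}, so that $(G^N R^N \ell)_{N \in \cN}$ is genuinely a net indexed by a directed set. Since $\ell$ is positive, Proposition~\ref{proposition:potential operator is positive} shows that each potential $G^N R^N \ell$ is nonnegative and, more importantly, the remark following Lemma~\ref{lemma: monotonicity pn} shows that $N \prec M$ (i.e. $m(N \setminus M) = 0$) implies $G^N R^N \ell \leq G^M R^M \ell$. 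Hence the net is monotone increasing in the parameter $N$.

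First I would establish the implication (i) $\Rightarrow$ (ii). If $\ell \in D(G^\cN)$, then by definition the limit $G^\cN \ell = \lim_{N \in \cN} G^N R^N \ell$ exists in $L^0(m)$. The continuity of the lattice operations with respect to $\tau(m)$ (the final part of Proposition~\ref{proposition:convergence of monotone nets}, applied to the monotone increasing net) guarantees that this limit is an upper bound for all the $G^N R^N \ell$. Thus $f := G^\cN \ell$ witnesses (ii), and simultaneously this already shows that whenever the limit exists it equals $\sup_{N \in \cN} G^N R^N \ell$, which takes care of the 'in particular' statement in this direction.

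Next I would prove (ii) $\Rightarrow$ (i). Assume there is $f \in L^0(m)$ with $G^N R^N \ell \leq f$ for all $N \in \cN$. The net $(G^N R^N \ell)_{N \in \cN}$ is monotone increasing and bounded above by $f$. Since $m$ is localizable (our standing assumption), Proposition~\ref{proposition:convergence of monotone nets} applies directly: a monotone increasing net in $L^0(m)$ that is bounded above by some element of $L^0(m)$ converges in $\tau(m)$, and its limit equals its supremum. Therefore $\lim_{N \in \cN} G^N R^N \ell$ exists in $L^0(m)$ and equals $\sup_{N \in \cN} G^N R^N \ell$, which is precisely the statement that $\ell \in D(G^\cN)$ together with the asserted formula for $G^\cN \ell$.

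The main obstacle, though it is a minor one once the monotonicity is in hand, is to make sure that the quantities $G^N R^N \ell$ are well-defined and that the net is indexed correctly: one must check that $R^N \ell \in L_c(D(E^N))$ for each $N$, which holds because $\ell \in L_+(D(E)_\cN)$ restricts to a positive functional on $D(E^N)$ and positive functionals on transient energy forms are automatically continuous by Proposition~\ref{proposition:potential operator is positive}(a). The other point requiring care is that Proposition~\ref{proposition:convergence of monotone nets} is stated for abstract monotone increasing nets; I would verify that the preorder on $\cN$ lifts to the order on $(G^N R^N \ell)_{N \in \cN}$, which is exactly the content of the monotonicity in the remark after Lemma~\ref{lemma: monotonicity pn}. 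With these two checks the proof is a direct application of the cited propositions and requires no further computation.
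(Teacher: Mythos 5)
Your proof is correct and follows essentially the same route as the paper: monotonicity of the net $(G^N R^N \ell)$ via Lemma~\ref{lemma: monotonicity pn} (and the remark following it), then Proposition~\ref{proposition:convergence of monotone nets} to get convergence iff boundedness with limit equal to the supremum. The well-definedness check you add (continuity of positive functionals via Proposition~\ref{proposition:potential operator is positive}) is handled in the paper in the discussion preceding the definition of $L_c(D(E)_\cN)$, so it is a sound but redundant precaution.
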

\begin{proof}
 Lemma~\ref{lemma: monotonicity pn} shows that the positivity of $\ell$ implies that the net $(G^N R^N \ell)$ is  monotone increasing. By Proposition~\ref{proposition:convergence of monotone nets} it is convergent if and only if it is bounded. In this case, the limit is given by the supremum. This finishes the proof.  
\end{proof}
It is a consequence of the previous proposition that the operator $G$ can be unambiguously extended for positive functionals. This is discussed in the next lemma.
\begin{lemma} \label{lemma:potential operator well defined}
 Let $E$ be a transient energy from and let $\cN_i$, $i=1,2$, be two special $E$-nests. Furthermore, let $\ell_i \in L_+(D(E)_{\cN_i}),$ $i=1,2$, that coincide on $D(E)_{\cN_1} \cap D(E)_{\cN_2}$. Then  $\ell_1 \in D(G^{\cN_1})$ if and only if $\ell_2 \in D(G^{\cN_2})$. In this case, we have
 $$G^{\cN_1}\ell_1 = G^{\cN_2} \ell_2.$$
\end{lemma}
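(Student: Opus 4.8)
The plan is to reduce everything to the monotone-net characterization of Proposition~\ref{proposition:characterization of domain for positive functionals}, which says that for positive $\ell_i$ we have $\ell_i \in D(G^{\cN_i})$ if and only if the monotone net $(G^N R^N \ell_i)_{N \in \cN_i}$ is bounded above in $L^0(m)$, and in that case $G^{\cN_i}\ell_i = \sup_{N \in \cN_i} G^N R^N \ell_i$. Thus both the membership claim and the equality of potentials will follow once I control the two monotone nets against each other. The natural tool is the common refinement $\cN_1 \wedge \cN_2$ of Lemma~\ref{lemma:refinement of nests}, which is again a special $E$-nest, together with the monotonicity of the family $(G^N)$ in the parameter $N$ recorded in the remark after Lemma~\ref{lemma:compatibility potential operator}: for $M \prec N$ (i.e. $m(N \setminus M)=0$) and $\ell \in L_+(D(E^M))$ one has $G^N R^N \ell \leq G^M \ell$.

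First I would introduce $\cN := \cN_1 \wedge \cN_2$ and a functional $\ell$ on $D(E)_\cN$. The key point is that $D(E)_\cN = D(E)_{\cN_1} \cap D(E)_{\cN_2}$ (this is the identity established at the start of the proof of Lemma~\ref{lemma:refinement of nests}), so $\ell_1$ and $\ell_2$, which agree on this intersection, restrict to one and the same positive functional $\ell$ on $D(E)_\cN$. Then I would show that $\ell_1 \in D(G^{\cN_1})$ is equivalent to $\ell \in D(G^\cN)$, and likewise $\ell_2 \in D(G^{\cN_2})$ is equivalent to $\ell \in D(G^\cN)$; this immediately gives the stated equivalence and, via the supremum formula, the equality $G^{\cN_1}\ell_1 = G^\cN \ell = G^{\cN_2}\ell_2$. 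So the whole statement collapses to a single sandwiching lemma comparing a nest with its refinement.

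For that comparison I would argue as follows for $\cN_1$ versus $\cN = \cN_1 \wedge \cN_2$ (the case of $\cN_2$ being identical). On one side, since every $N \in \cN$ satisfies $N = N_1 \cap N_2 \prec N_1$ for some $N_1 \in \cN_1$, monotonicity of $(G^N)$ gives $G^N R^N \ell \leq G^{N_1} R^{N_1}\ell_1$; taking suprema shows $\sup_{N \in \cN} G^N R^N \ell \leq \sup_{N_1 \in \cN_1} G^{N_1} R^{N_1}\ell_1$, and in particular boundedness of the $\cN_1$-net forces boundedness of the $\cN$-net. Conversely, I would use that $\cN$ is cofinal in $\cN_1$ in the relevant sense: by Lemma~\ref{lemma:special nests are directed} the special nest $\cN_2$ is upwards directed, so for any fixed $N_1 \in \cN_1$ and any $N_2 \in \cN_2$ the set $N_1 \cap N_2$ lies in $\cN$ and increases to $N_1$ as $N_2$ runs through $\cN_2$; applying Lemma~\ref{lemma:convergence of projections} (which only needs upward-directedness, cf. Remark~\ref{remark:directed suffices}) to the transient energy form $E^{N_1}$ and the nest $\{N_1 \cap N_2 \mid N_2 \in \cN_2\}$ yields $G^{N_1}R^{N_1}\ell_1 = \lim_{N_2} G^{N_1 \cap N_2} R^{N_1 \cap N_2}\ell = \sup_{N_2} G^{N_1 \cap N_2}R^{N_1 \cap N_2}\ell \leq \sup_{N \in \cN} G^N R^N \ell$. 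Taking the supremum over $N_1 \in \cN_1$ gives the reverse inequality, hence $\sup_{N_1 \in \cN_1} G^{N_1}R^{N_1}\ell_1 = \sup_{N \in \cN} G^N R^N \ell$, and with it both the equivalence of domain membership and the equality of the potentials.

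I expect the main obstacle to be the cofinality step of the previous paragraph, namely justifying that one may recover $G^{N_1}R^{N_1}\ell_1$ as the supremum of the refined potentials $G^{N_1 \cap N_2}R^{N_1\cap N_2}\ell$ over $N_2 \in \cN_2$. This requires checking that $\{N_1 \cap N_2 \mid N_2 \in \cN_2\}$ is genuinely an $E^{N_1}$-nest that is upwards directed (so that Lemma~\ref{lemma:convergence of projections} in the form of Remark~\ref{remark:directed suffices} applies on the restricted form $E^{N_1}$), and that the restricted functional there is the one induced by $\ell_1$; the directedness comes from Lemma~\ref{lemma:special nests are directed} applied to $\cN_2$, and the nest property and identification of restrictions are bookkeeping around $R^N$ and $D(E)_N$ that I would carry out but not belabor. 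Everything else is a direct invocation of Proposition~\ref{proposition:characterization of domain for positive functionals} and the monotonicity of $(G^N)$.
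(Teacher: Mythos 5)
Your proposal is correct and follows essentially the same route as the paper: your ``cofinality step'' is precisely the paper's key Claim that $G^{N_1}R^{N_1}\ell_1 = \sup_{N_2 \in \cN_2} G^{N_1\cap N_2}R^{N_1\cap N_2}\ell$, proved with the same ingredients (Lemma~\ref{lemma:convergence of projections} in its directed-nest form via Remark~\ref{remark:directed suffices}, Lemma~\ref{lemma:compatibility potential operator}, and the monotonicity of $(G^N)$), and your framing through the refinement $\cN_1\wedge\cN_2$ is just a repackaging of the paper's swap of iterated suprema $\sup_{N_1}\sup_{N_2} = \sup_{N_2}\sup_{N_1}$ in the Dedekind complete space $L^0(m)$. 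The one step you defer as ``bookkeeping'' --- that $\{N_1\cap N_2 \mid N_2 \in \cN_2\}$ is an upwards directed $E^{N_1}$-nest --- is likewise asserted without detailed proof in the paper, so your level of rigor matches it.
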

\begin{proof} We start with the following claim. 

{\em Claim:} Let $N\subseteq X$ be a measurable set, let $\cN$ be a special $E$-nest and let $\ell \in L_+(D(E^N))$. Then  following identity holds:
$$G^N   \ell  = \sup_{M \in \cN} G^{N \cap M} R^{N \cap M} \ell.$$

{\em Proof of the claim.} The set $\{N \cap M \mid M \in \cN\}$ is an $E^N$-nest. It might not be special, but since $\cN$ is upwards directed, it is upwards directed. Furthermore,  Lemma~\ref{lemma:compatibility potential operator} shows the identity
$$P^{N\cap M} (G^NR^N \ell) = G^{N \cap M} R^{N \cap M} \ell.$$
Therefore, the claim follows from Lemma~\ref{lemma:convergence of projections} and the monotonicity in $M$ (here we use that Lemma~\ref{lemma:convergence of projections} holds true for nests that are upwards  directed even though we only formulated it for special nests, cf. Remark~\ref{remark:directed suffices}). \qedc 

This claim  and Proposition~\ref{proposition:characterization of domain for positive functionals} show that   $\ell_1\in D(G^{\cN_1})$ implies
\begin{align*}
G^{\cN_1}\ell_1 &= \sup_{N_1 \in \cN_1} G^{N_1} R^{N_1} \ell_1 \\
&=  \sup_{N_1 \in \cN_1}\sup_{N_2 \in \cN_2} G^{N_1 \cap N_2} R^{N_1 \cap N_2} \ell_1\\
&= \sup_{N_1 \in \cN_1}\sup_{N_2 \in \cN_2} G^{N_1 \cap N_2} R^{N_1 \cap N_2} \ell_2\\
&=  \sup_{N_2 \in \cN_2} \sup_{N_1 \in \cN_1} G^{N_1 \cap N_2} R^{N_1 \cap N_2} \ell_2\\
&=  \sup_{N_2 \in \cN_2} G^{N_2} R^{N_2} \ell_2\\
&= G^{\cN_2}\ell_2.
\end{align*}
Here, we used properties of suprema in the Dedekind complete space $L^0(m)$. This finishes the proof. 
\end{proof} 
With the help of the previous lemma, we now introduce the extended potential operator for regular functionals, which does not depend on the chosen nest.  As seen in Section~\ref{section:lebesgue spaces}, any regular functional $\ell :D(\ell) \to \IR$ on $L^0(m)$ can be decomposed into $\ell = \ell_+ - \ell_-$, where 
$$\ell_+ = \sup\{\ell,0\}  \text{ and } \ell_- = \sup\{ -\ell,0\}$$
are positive functionals and the suprema are taken in the space $L(D(\ell))$. Furthermore,  recall the notation $|\ell| = \ell_+ + \ell_-$. If $\cN$ is an $E$-nest with $D(E)_\cN \subseteq D(\ell)$, we denote the restriction of $\ell$ to $D(E)_\cN$ by $R^\cN \ell$. Proposition~\ref{proposition:characterization of domain for positive functionals} and Lemma~\ref{lemma:potential operator well defined} guarantee that the following is well-defined.
\begin{definition}[Extended potential operator]\label{definition:extended potential operator}
 Let $E$ be a transient energy form.  The domain of the  {\em extended potential operator} $\Ge:D(\Ge) \to L^0(m)$ is 
$$
  D(\Ge) := \left\{\ell:D(\ell) \to \IR \text{ linear} \begin{rcases} \text{ex. a special } E\text{-nest } \cN \text{ with } D(E)_\cN \subseteq D(\ell)\\R^\cN \ell \in L_r(D(E)_\cN) \text{ and  }  |R^\cN \ell| \in D(G^\cN)\end{rcases} \hspace{-.5cm}\right\},
$$
  on which it acts by
$$\Ge \ell:= G^\cN (R^\cN \ell)_+ - G^\cN (R^\cN\ell)_-. $$
\end{definition}

\begin{remark}\label{remark:extended potential operator}
\begin{itemize}
\item According to Proposition~\ref{proposition:characterization of domain for positive functionals} and the monotonicity of $G^\cN$, the condition $|R^\cN \ell| \in D(G^\cN)$ implies $(R^\cN\ell)_+, (R^\cN\ell)_-  \in D(G^\cN)$. This in turn yields $R^\cN \ell \in D(G^\cN)$ and the linearity of $G^\cN$ shows
$$G^\cN (R^\cN \ell)_+ - G^\cN (R^\cN\ell)_- = G^\cN \ell.$$
\item  For $\ell_i \in D(\Ge)$, $i=1,2$, with corresponding special nests $\cN_i$, $i=1,2$, one can  set 
               $$\ell_1 + \ell_2:D(E)_{\cN_1 \wedge\, \cN_2} \to \IR,\, f \mapsto \ell_1(f) + \ell_2(f).$$
               It is not hard to check that in this case $\ell_1 + \ell_2 \in D(\Ge)$ and $\Ge(\ell_1 + \ell_2) = \Ge \ell_1 + \Ge \ell_2.$ 

\item  All of the examples that we discussed at the beginning of this section and that we will discuss in the next section are applications of $G$ and $G^r$ to regular functionals. In this sense, only considering regular functionals is not very restrictive.     
\end{itemize}
\end{remark}
 We now come to the main result of this section. It shows that the potential operator provides minimal solutions to the weak Laplace equation.  
 \begin{theorem} [A minimum principle for the potential operator] \label{theorem:maximal principle potential operator}
  Let $E$ be a  transient energy form of full support. Let $\ell$ be a positive linear functional on $L^0(m)$. The following assertions are equivalent.
  \begin{itemize}
   \item[(i)] There exists a nonnegative weak supersolution to the equation 
   $\Ew(g,\cdot) = \ell.$
   \item[(ii)] $\ell \in D(\Ge)$.
  \end{itemize}
  If one of the above is satisfied, then the following holds true.
  \begin{itemize}
   \item[(a)]The function $G^r \ell$ is a weak solution to the equation  $\Ew(g,\cdot) = \ell$. More precisely, $\Ge\ell$ is $E$-excessive and for each special $E$-nest $\cN$ and all $\psi \in D(E)_{\cN_{\Ge\ell}}$ we have
   $$\Ew_{\cN_{\Ge\ell}}(\Ge\ell,\psi) = \ell (\psi).$$
   \item[(b)] If $h$ is a weak supersolution to the equation $\Ew(g,\cdot) = \ell$,  then $\Ge \ell \leq h$. 
  \end{itemize}
 \end{theorem}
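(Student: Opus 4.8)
My plan is to establish the equivalence (i) $\Leftrightarrow$ (ii) and then the consequences (a) and (b). Let me think through how the machinery developed earlier fits together.

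The implication (ii) $\Rightarrow$ (i) is the easy direction: if $\ell \in D(\Ge)$, then I expect $\Ge\ell$ itself to serve as the required nonnegative weak supersolution, so (i) follows once (a) is proven. Hence the real content is (ii) $\Rightarrow$ (a) $\Rightarrow$ (i), together with (i) $\Rightarrow$ (ii).

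For the consequences assuming $\ell \in D(\Ge)$, the plan is as follows. Choose a special $E$-nest $\cN$ witnessing $\ell \in D(\Ge)$, so that $R^\cN \ell \in L_+(D(E)_\cN)$ and $\Ge\ell = G^\cN \ell = \sup_{N \in \cN} G^N R^N \ell$ by Proposition~\ref{proposition:characterization of domain for positive functionals}. Each $G^N R^N \ell$ is $E^N$-superharmonic, hence $E$-superharmonic (its defining minimality over $D(E)_N$ lifts to $D(E)$ since $\ell$ is positive), and by Lemma~\ref{lemma: monotonicity pn} the net $(G^N R^N \ell)_{N \in \cN}$ is monotone increasing and bounded above by $\Ge\ell$. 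Monotone increasing nets of superharmonic functions converge in $L^0(m)$ to their supremum by Proposition~\ref{proposition:convergence of monotone nets}, and the limit $\Ge\ell$ is $E$-excessive by definition, since it is the $\tau(m)$-limit of superharmonic functions. To compute the weak form, I would apply Theorem~\ref{theorem:computing the weak form for excessive functions} with $h = \Ge\ell$ and the approximating net $h_N := G^N R^N \ell$: assumption (a) of that theorem is the $L^0$-convergence just established; for assumption (b) I need $h - h_N$ excessive, which should follow since $h - h_N = \sup_{M \succ N}(G^M R^M \ell - G^N R^N \ell)$ is again an increasing limit of superharmonic functions (using the compatibility $P^N G^M R^M \ell = G^N R^N \ell$ from Lemma~\ref{lemma:compatibility potential operator} together with Lemma~\ref{lemma: monotonicity pn}); and for assumption (c), for $\psi \in D(E)_N$ nonnegative, $E(h_M, \psi) = E(G^M R^M \ell, \psi) = R^M\ell(\psi) = \ell(\psi) \geq 0$ once $M \succ N$. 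Theorem~\ref{theorem:computing the weak form for excessive functions} then yields $\Ge\ell \in D(E)_{w,\,\cN_{\Ge\ell}}$ and $\Ew_{\cN_{\Ge\ell}}(\Ge\ell,\psi) = \lim_N E(h_N,\psi) = \lim_N R^N\ell(\psi) = \ell(\psi)$ for $\psi \in D(E)_{\cN_{\Ge\ell}}$, proving (a); and since this holds for any special $E$-nest $\cN$, (a) is genuinely nest-independent. Then $\Ge\ell$ is a nonnegative weak solution, a fortiori a weak supersolution, giving (ii) $\Rightarrow$ (i).

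For (i) $\Rightarrow$ (ii) and the minimality (b), suppose $h$ is a nonnegative weak supersolution, so there is a special $E$-nest $\cN$ with $h \in D(E)_{w,\,\cN}$ and $\Ew_\cN(h,\psi) \geq \ell(\psi) \geq 0$ for nonnegative $\psi \in D(E)_\cN$. The key step is to show $R^N G^N R^N \ell \leq h$ for each $N \in \cN$, i.e. to compare the finite-stage potentials against $h$; taking the supremum over $N$ then bounds $\sup_N G^N R^N \ell \leq h$, and by Proposition~\ref{proposition:characterization of domain for positive functionals} this boundedness gives $R^\cN\ell \in D(G^\cN)$, hence $\ell \in D(\Ge)$ with $\Ge\ell = \sup_N G^N R^N \ell \leq h$, proving both (ii) and (b). To get $G^N R^N \ell \leq h$, I would argue on the restricted form $E^N$: the function $G^N R^N \ell$ solves $E^N(G^N R^N\ell, \psi) = \ell(\psi)$ for $\psi \in D(E)_N$, while $h$ satisfies $\Ew_\cN(h,\psi) \geq \ell(\psi)$; testing with $\psi = (G^N R^N\ell - h)_+ \wedge$ (a suitable cutoff in $D(E)_N$), using that $\Ew_\cN$ restricted to $D(E)_N \times D(E)_N$ agrees with $E^N$ (this needs the fact from Lemma~\ref{lemma:excessive functions belong to the local form domain} and Theorem~\ref{theorem:ew extends e} that $h$ is excessive, hence $h \in D(E)_{\mathrm{loc},\,\cN_h}$, so the off-diagonal pairing reduces to the form on $D(E)_N$), and applying the contraction estimate $E^N((G^N R^N\ell - h)_+) \leq 0$ forces $G^N R^N\ell \leq h$ on $N$, which is what is needed since $G^N R^N\ell$ vanishes off $N$.

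The main obstacle I anticipate is the comparison step $G^N R^N\ell \leq h$ in the direction (i) $\Rightarrow$ (b): one must carefully reconcile the weak-form inequality for $h$, which lives with respect to the nest $\cN$ and involves the limit $\lim_{\varphi \in I_\cN}$ and the killing term $\Ekm$, with the genuine Hilbert-space form $E^N$ on the closed subspace $D(E)_N$. The delicate point is that $h$ is only a priori in the weak domain, not the form domain, so I cannot directly insert $h$ as a form argument; the resolution is to use that $h$ is excessive (by the superharmonic direction of Theorem~\ref{theorem:characterization excessive functions as superharmonic functions}), localize via $\cN_h$, and exploit Lemma~\ref{lemma:excessive functions are in the weak form domain} to rewrite $\Ew_{\cN_h}(h,\psi) = \lim_{\varphi \in I_{\cN_h}} E(\varphi h, \psi)$, after which the comparison becomes a genuine computation inside the Hilbert space $(D(E),E)$ with properly chosen test functions supported in $N$.
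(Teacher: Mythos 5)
Your direction (i) $\Rightarrow$ (ii) together with the minimality statement (b) is essentially the paper's argument: you correctly identify that $h$ is excessive via Theorem~\ref{theorem:characterization excessive functions as superharmonic functions}, localize through $\cN_h$, use Lemma~\ref{lemma:excessive functions are in the weak form domain} to replace $\Ew_\cN(h,\psi)$ by $\lim_{\varphi} E(\varphi h,\psi)$, and run a minimum-principle comparison inside $(D(E)_N, E^N)$; your cutoff test with $(G^NR^N\ell - \varphi h)_+ \in D(E)_N$ can indeed be completed (via $E(u,u_+) \geq E(u_+)$ from Lemma~\ref{lemma:cutoff for functions with disjoint support} and transience), and is equivalent to the paper's route through the projection $P^N(\varphi h - G^NR^N\ell)$ being $E^N$-superharmonic hence nonnegative.

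The direction (ii) $\Rightarrow$ (i)/(a), however, contains a genuine gap, used three times: the claim that $G^NR^N\ell$ is $E$-superharmonic because ``its defining minimality over $D(E)_N$ lifts to $D(E)$ since $\ell$ is positive,'' and consequently that $\Ge\ell$ and $\Ge\ell - G^NR^N\ell$ are excessive as $\tau(m)$-limits of superharmonic functions. This lifting is false: $E(G^NR^N\ell,\psi) = \ell(P^N\psi)$ for nonnegative $\psi \in D(E)$, and the projection $P^N$ is \emph{not} positivity preserving for energy forms with killing, so positivity of $\ell$ does not help. Concretely, take $X = \{a,b\}$ with counting measure, $E(f) = (f(a)-f(b))^2 + f(b)^2$ (a transient, irreducible energy form of full support), $N = \{a\}$ and $\ell(\psi) = \psi(a)$. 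Then $G^NR^N\ell = 1_{\{a\}}$, but $E(1_{\{a\}}, 1_{\{b\}}) = -1 < 0$, so $1_{\{a\}}$ is not $E$-superharmonic; it is not even $E$-excessive, since for $f = (1,\tfrac12)$ one has $E(f \wedge 1_{\{a\}}) = 1 > \tfrac12 = E(f)$. So your verification of hypothesis~(b) of Theorem~\ref{theorem:computing the weak form for excessive functions} (that $\Ge\ell - G^NR^N\ell$ is excessive) collapses, and with it the application of that theorem. The paper repairs exactly this point by a different mechanism: one shows $g_{N,M} := G^MR^M\ell - G^NR^N\ell$ is $E^M$-superharmonic using only $\ell(\psi - P^N\psi) \geq 0$ (which needs just $P^N\psi \leq \psi$ from Lemma~\ref{lemma: monotonicity pn}, not positivity of $P^N$), deduces the cutoff bound $E(\psi \wedge g_{N,M}) \leq E(\psi)$ for $\psi \in D(E)_M$ with $M$ large via Lemma~\ref{lemma:superharmonic functions as cutoff}, passes to the limit in $M$ by lower semicontinuity, extends to all $\psi \in D(E)$ by density, and only then invokes the characterization Theorem~\ref{theorem:characterization of excessive functions} to conclude excessivity of $\Ge\ell - G^NR^N\ell$ (and, with $N = \emptyset$, of $\Ge\ell$). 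Note that this last step is where the full-support hypothesis of the theorem enters; the fact that your argument for this direction never uses full support is a telltale sign of the gap.
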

\begin{proof}
 (i) $\Rightarrow$ (ii): Let $h$ be a weak supersolution to the equation $\Ew(g,\cdot) = \ell$. By definition, there exists a special $E$-nest $\cN$ such that $D(E)_{\cN} \subseteq D(\ell)$,  $h \in D(E)_{w,\, \cN}$ and 
 $$\Ew_\cN(h,\psi) \geq \ell(\psi), \text{ for all nonnegative }  \psi \in D(E)_{\cN}.$$
  In particular, Theorem~\ref{theorem:characterization excessive functions as superharmonic functions} shows that $h$ is $E$-excessive and Lemma~\ref{lemma:excessive functions provide nests} implies that $\cN_h$ is a special nest. We prove  that for each $N \in \cN_h$ the inequality 
 $$G^N R^N \ell \leq h$$
 holds, which implies $\ell \in D(\Ge)$ and $\Ge \ell \leq h$ by Proposition~\ref{proposition:characterization of domain for positive functionals}.
 
 To this end, we let  $N \in \cN_h$ and choose a function $\varphi \in D(E)_{\cN_h}$ that satisfies $1_N \leq \varphi \leq 1$.  Lemma~\ref{lemma: main lemma for excessive functions in weak domain} and Lemma~\ref{lemma:excessive functions are in the weak form domain}  yield that for each nonnegative $\psi \in D(E)_N$ there exists $m_\psi \in \IN$ such that $\varphi (h\wedge n)\in D(E)$ and $E(\varphi (h\wedge n),\psi) \geq \Ew_\cN(h,\psi)$ for each $n \geq m_\psi$. Furthermore, since $\varphi \in I_{\cN_h}$, we have $\varphi (h\wedge n) = \varphi h$ for large enough $n$. Together with the assumptions on $h$, these two observations show that for all nonnegative $\psi \in  D(E)_N$  the following holds:
 $$E(\varphi h,\psi)   \geq \Ew_\cN(h,\psi) \geq \ell(\psi) = E(G^NR^N \ell, \psi). $$
 Rearranging this inequality yields that for all nonnegative $\psi \in D(E)_N$ we have
 $$E(P^N(\varphi h - G^NR^N \ell),\psi) = E( \varphi h - G^NR^N \ell,\psi) \geq 0.$$
  By definition, this means that the function $P^N(\varphi h - G^NR^N \ell)$ is $E^N$-superharmonic. According to Proposition~\ref{lemma:superharmonic functions are positive}, the transience of $E^N$ implies that it is nonnegative. This observation and Lemma~\ref{lemma: monotonicity pn} yield
 $$\varphi  h \geq P^N( \varphi h) \geq P^N G^NR^N \ell = G^NR^N \ell. $$
 Since $\varphi$ satisfies $1_N \leq \varphi \leq 1$, we obtain
 $$G^NQ^N \ell \leq h.$$
 This concludes the proof of the implication (i)$\Rightarrow$(ii) and shows assertion~(b).
 
 (ii) $\Rightarrow$ (i): Let $\cN$ be a special $E$-nest such that $\ell = |\ell| \in D(G^{\cN})$. We intend to apply Theorem~\ref{theorem:computing the weak form for excessive functions}  to the functions $(G^NR^N \ell)$ and $\Ge \ell$ to show assertion~(a), which implies (i). The following claim yields that this is possible.
 
 {\em Claim:}  $(G^NR^N \ell)$ and $\Ge \ell$ satisfy the assumptions of  Theorem~\ref{theorem:computing the weak form for excessive functions}, i.e., the following assertions hold true.
 \begin{itemize}
  \item  $\Ge \ell$ is $E$-excessive.
  \item For each $N \in \cN$ the function $G^N R^N \ell$ is $E^N$-superharmonic.
  \item For each $N \in \cN$ the function $G\ell - G^NR^N \ell$ is $E$-excessive. 
 \end{itemize}

 {\em Proof of the claim.} Let $N\in \cN$. The $E^N$-superharmonicity of $G^NR^N \ell$ follows from its definition. For $M \in \cN$ with $m(N\setminus M) = 0$, we set $g_{N,M} := G^MR^M \ell - G^N R^N \ell$. For all nonnegative $\psi \in D(E)_M$, we obtain 
 $$E(g_{N,M},\psi) = \ell(\psi) - E(G^N R^N \ell,P^N\psi) = \ell(\psi - P_N\psi) \geq 0, $$
 where we used Lemma~\ref{lemma: monotonicity pn} for the last inequality. By definition, this means that $g_{N,M}$ is $E^M$-superharmonic. For $\psi \in D(E)_\cN$,
 this observation combined with Lemma~\ref{lemma:superharmonic functions as cutoff} yields
 $$E(\psi \wedge g_{N,M}) \leq E(\psi)$$
 whenever $M$ is large enough and we obtain 
 $$E(\psi \wedge (\Ge\ell - G^NQ^N \ell)) \leq \liminf_{M \in \cN}  E(\psi \wedge g_{N,M})  \leq E(\psi).$$
 Since $D(E)_{\cN}$ is dense in $D(E)$, the previous inequality extends to all $\psi \in D(E)$. From Theorem~\ref{theorem:characterization of excessive functions} we infer that $\Ge\ell - G^N R^N \ell$ is excessive. Letting $N = \emptyset$ in the above computations shows that $\Ge\ell$ is excessive as well. \qedc
 
 The previous claim shows that the assumptions of Theorem~\ref{theorem:computing the weak form for excessive functions} are satisfied. It yields that  for all $\psi \in D(E)_{\cN_{G\ell}}$ we have
 $$\Ew_{\cN_{\Ge \ell}}(\Ge \ell, \psi) = \lim_{N \in \cN} E(G^N R^N \ell, \psi) = \ell(\psi).$$
This proves implication (ii) $\Rightarrow$ (i) and assertion (a). 
\end{proof}

 An immediate consequence of this theorem is the following characterization of the domain of the extended potential operator.

\begin{corollary} \label{corollary:potential operator and weak solutions}
  Let $E$ be a transient energy form of full support and let $\ell:D(\ell) \to \IR$ be a regular functional on $L^0(m)$. The following assertions are equivalent.
  \begin{itemize}
   \item[(i)] $\ell \in D(\Ge)$.
   \item[(ii)] There exists a nonnegative weak solution to the equation $\Ew(g,\cdot) = |\ell|$.
   \item[(iii)] There exist nonnegative weak solutions to the equations $\Ew(g,\cdot) = \ell_+$ and\\ $\Ew(g,\cdot) = \ell_-$.
  \end{itemize}
  If one of the above is satisfied, then $\Ge \ell$ is a weak solution to the equation $\Ew(g,\cdot) = \ell$. More precisely, for any special $E$-nest $\cN$ we have $\Ge \ell \in \cN_{\Ge |\ell|}$ and   
  $$\Ew_{\cN_{\Ge |\ell|}}(\Ge \ell,\psi) = \ell(\psi) \text{  for all } \psi \in D(E)_{\cN_{\Ge |\ell|}}.$$
 \end{corollary}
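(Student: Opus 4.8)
The plan is to derive this corollary directly from Theorem~\ref{theorem:maximal principle potential operator} by splitting the regular functional $\ell$ into its positive and negative parts and applying the theorem to each. Recall from Section~\ref{section:lebesgue spaces} that a regular functional decomposes as $\ell = \ell_+ - \ell_-$ with $\ell_+, \ell_-$ positive functionals, and that $|\ell| = \ell_+ + \ell_-$. The key reduction is to observe that the three assertions about $\ell$ are really statements about $\ell_+$ and $\ell_-$ separately, to which the minimum principle applies directly.

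First I would establish (i) $\Rightarrow$ (iii). By Remark~\ref{remark:extended potential operator}, the condition $\ell \in D(\Ge)$ means that there is a special $E$-nest $\cN$ with $D(E)_\cN \subseteq D(\ell)$, $R^\cN \ell$ regular, and $|R^\cN\ell| \in D(G^\cN)$; as noted there, this forces $(R^\cN\ell)_+, (R^\cN\ell)_- \in D(G^\cN)$. Hence both $\ell_+$ and $\ell_-$ (restricted to $D(E)_\cN$) lie in $D(G^\cN)$, and they are positive. Applying the implication (ii) $\Rightarrow$ (i) of Theorem~\ref{theorem:maximal principle potential operator} to each of $\ell_+$ and $\ell_-$ yields nonnegative weak solutions to $\Ew(g,\cdot) = \ell_+$ and $\Ew(g,\cdot) = \ell_-$, namely $\Ge \ell_+$ and $\Ge \ell_-$. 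This is (iii). The equivalence (ii) $\Leftrightarrow$ (iii) should follow since $|\ell| = \ell_+ + \ell_-$; I would use the additivity of $\Ge$ on $D(\Ge)$ (again Remark~\ref{remark:extended potential operator}) to see that a solution for $|\ell|$ exists iff solutions for $\ell_+$ and $\ell_-$ both exist, observing that $\Ge|\ell| = \Ge\ell_+ + \Ge\ell_-$.

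For (iii) $\Rightarrow$ (i), I would run the argument in reverse: given nonnegative weak supersolutions (solutions are in particular supersolutions) to $\Ew(g,\cdot)=\ell_+$ and $\Ew(g,\cdot)=\ell_-$, the implication (i) $\Rightarrow$ (ii) of Theorem~\ref{theorem:maximal principle potential operator} gives $\ell_+, \ell_- \in D(\Ge)$, hence $|\ell| = \ell_+ + \ell_- \in D(\Ge)$, which forces $\ell \in D(\Ge)$. For the final formula, I would compute $\Ew_{\cN_{\Ge|\ell|}}(\Ge\ell, \psi)$ using $\Ge\ell = \Ge\ell_+ - \Ge\ell_-$ and the bilinearity of $\Ew$ from Theorem~\ref{theorem:ew extends e}; assertion (a) of Theorem~\ref{theorem:maximal principle potential operator} gives $\Ew_{(\cN_{\Ge|\ell|})_{\Ge\ell_\pm}}(\Ge\ell_\pm, \psi) = \ell_\pm(\psi)$, and subtracting yields $\ell(\psi)$.

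The main obstacle will be the bookkeeping of nests: the weak form $\Ew$ can depend on the chosen nest, and the three functions $\Ge|\ell|$, $\Ge\ell_+$, $\Ge\ell_-$ a priori live in local domains with respect to different nests $\cN_{\Ge|\ell|}$, $\cN_{\Ge\ell_+}$, $\cN_{\Ge\ell_-}$. Here I would lean on Proposition~\ref{proposition:nh vs nh'}(b): since $\Ge\ell_+ \leq \Ge|\ell|$ and $\Ge\ell_- \leq \Ge|\ell|$ (monotonicity of $\Ge$ on positive functionals, from Proposition~\ref{proposition:characterization of domain for positive functionals}), the nests $\cN_{\Ge|\ell|}$ and $(\cN_{\Ge|\ell|})_{\Ge\ell_\pm}$ are equivalent, so the values $\Ew_{\cN_{\Ge|\ell|}}(\Ge\ell_\pm, \psi)$ and $\Ew_{(\cN_{\Ge|\ell|})_{\Ge\ell_\pm}}(\Ge\ell_\pm,\psi)$ agree for $\psi \in D(E)_{\cN_{\Ge|\ell|}}$. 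This alignment of all three computations onto the single nest $\cN_{\Ge|\ell|}$ is precisely what makes the subtraction legitimate and yields the clean identity claimed.
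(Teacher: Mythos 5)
Your proposal is correct and follows essentially the same route as the paper's proof: reduce to the positive parts $\ell_\pm$ and apply Theorem~\ref{theorem:maximal principle potential operator}, then align the nests $\cN_{\Ge|\ell|}$ and $(\cN_{\Ge|\ell|})_{\Ge\ell_\pm}$ via Proposition~\ref{proposition:nh vs nh'} and Lemma~\ref{lemma:independence of nests} before subtracting (the paper phrases this with $h:=\Ge\ell_+ + \Ge\ell_-$, which equals your $\Ge|\ell|$). The only difference is the ordering of the implications, which is immaterial.
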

 
 \begin{proof}
  The implications (i) $\Rightarrow$ (ii) and (ii) $\Rightarrow$ (iii) follow from Theorem~\ref{theorem:maximal principle potential operator}. 
  
  (iii) $\Rightarrow$ (i):  Theorem~\ref{theorem:maximal principle potential operator} shows $\ell_+,\ell_- \in D(\Ge)$ and that $\Ge \ell_+, \Ge \ell_-$ are $E$-excessive.  The function $h:= \Ge \ell_+ + \Ge \ell_-$ is $E$-excessive as-well. According to Proposition~\ref{proposition:nh vs nh'}, the special nests $\cN_h$, $(\cN_{h})_{\Ge \ell_+}$ and $(\cN_{h})_{\Ge \ell_-}$ are equivalent. Therefore, Lemma~\ref{lemma:independence of nests} and Theorem~\ref{theorem:maximal principle potential operator} yield that for all $\psi \in D(E)_{\cN_h}$ we have 
  $$\Ew_{\cN_h}(h,\psi) = |\ell|(\psi) \text{ and } \Ew_{\cN_{h}} (\Ge \ell_+ - \Ge \ell_-,\psi) = \ell(\psi).$$
  Hence, the function $\Ge \ell_+ - \Ge \ell_-$ is a weak solution to the equation  $\Ew(g,\cdot) = \ell$. Since $h$ is a weak solution to the equation $\Ew(g,\cdot) = |\ell|$,  Theorem~\ref{theorem:maximal principle potential operator} implies $\ell \in D(\Ge)$. This finishes the proof.
 \end{proof}

\begin{remark}
 When $\ell$ is a positive functional, the theorem shows that $\Ge \ell$ is the minimal weak solution to the Laplace equation $\Ew(g,\cdot) = \ell$. As discussed at the beginning of this section, the operator $G$ can be seen as a generalization of the resolvent of a Dirichlet form, and so $\Ge$ is an extension of the resolvent as well. For some classes of Dirichlet forms it is well known that extended resolvents provide minimal weak solutions to the Laplace equation (in the 'classical' sense). For example, \cite[Theorem~8.4]{Gri} treats the case of $L^2_{\rm loc}$-solutions on manifolds and  \cite[Theorem~11]{KL} discusses solutions on  graphs. In these examples one has a 'canonical' notion of weak solutions in terms of a distributional operator, cf. Example~\ref{example:graphs} and Example~\ref{example:manifolds}.  The novelty of our result lies in the fact that it holds true for all energy forms with our universal notion of weak solutions. The situation for resolvents of Dirichlet forms is discussed in more detail in Subsection~\ref{section:lp resolvents}.
\end{remark}

For theoretical purposes the definition of $\Ge$ is the 'correct' one. However, for explicit computations the following might be a bit more useful. 
 \begin{theorem} \label{theorem:computation of resolvent for positive functionals} 
  Let $E$ be a transient energy form of full support and let $(\ell_i)$ be a monotone increasing net in $L(D(E))_+$. We set  
  $$D(\ell):= \{\psi \in D(E) \mid \lim_i \ell_i (\psi) \text{ exists}\} \text{ and }\ell: D(\ell) \to \IR,\, \psi \mapsto \lim_i \ell_i (\psi). $$
  The following assertions are equivalent.
  \begin{itemize}
   \item[(i)] $\ell \in D(\Ge)$.
   \item[(ii)] There exists $f \in L^0(m)$ such that $G \ell_i \leq f$ for all $i$.
  \end{itemize}
 In particular, if one of the above is satisfied, then
  $$\Ge \ell = \lim_i G \ell_i = \sup_i G\ell_i,$$
  where the convergence holds in $L^0(m)$. 
 \end{theorem}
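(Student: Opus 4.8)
The plan is to exploit that the net $(G\ell_i)$ of potentials is monotone increasing, so that condition (ii) says exactly that this net converges in $L^0(m)$, and then to identify its limit with $\Ge\ell$ via the minimum principle for the potential operator. First I would record the preliminaries: since each $\ell_i$ is a positive functional on $D(E)$, Proposition~\ref{proposition:potential operator is positive} shows $\ell_i\in L_c(D(E))$, that the potential $G\ell_i$ is $E$-superharmonic and hence nonnegative, and---because $i\prec j$ gives $\ell_j-\ell_i\in L_+(D(E))$---that $(G\ell_i)$ is monotone increasing in $L^0(m)$. By Proposition~\ref{proposition:convergence of monotone nets}, such a net converges with respect to $\tau(m)$ if and only if it is bounded above, in which case its limit is $\sup_i G\ell_i$; thus (ii) is equivalent to the convergence of $(G\ell_i)$ to $u:=\sup_i G\ell_i$.

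For the implication (i)$\Rightarrow$(ii) I would fix a special $E$-nest $\cN$ witnessing $\ell\in D(\Ge)$, so that $D(E)_\cN\subseteq D(\ell)$ and $\Ge\ell=\sup_{N\in\cN}G^N R^N\ell$ by Proposition~\ref{proposition:characterization of domain for positive functionals}. For each $N\in\cN$ and nonnegative $\psi\in D(E)_N$ monotonicity of the real net gives $\ell_i(\psi)\le\ell(\psi)$, so $R^N\ell_i\le R^N\ell$ and hence $G^N R^N\ell_i\le G^N R^N\ell$ by positivity of $G^N$. Combining this with Lemma~\ref{lemma:convergence of projections} and the monotonicity in the nest parameter (Lemma~\ref{lemma: monotonicity pn}), which together yield $G\ell_i=\sup_{N\in\cN}G^N R^N\ell_i$, I obtain $G\ell_i\le\Ge\ell$ for every $i$; taking $f=\Ge\ell$ establishes (ii).

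The substantial direction is (ii)$\Rightarrow$(i). Here I set $u:=\sup_i G\ell_i=\lim_i G\ell_i$, which is $E$-excessive as a $\tau(m)$-limit of the superharmonic functions $G\ell_i$. The key step is to apply Theorem~\ref{theorem:computing the weak form for excessive functions} to $h_i:=G\ell_i$ and $h:=u$: hypothesis (a) is the convergence just noted, (c) holds because $E(G\ell_i,\psi)=\ell_i(\psi)\ge0$ for nonnegative $\psi$, and (b)---that $u-G\ell_i$ is excessive---follows since for $j\succ i$ the difference $G\ell_j-G\ell_i=G(\ell_j-\ell_i)$ is superharmonic, so $u-G\ell_i=\lim_j(G\ell_j-G\ell_i)$ lies in the $\tau(m)$-closure of $S(E)$. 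The theorem then gives $u\in D(E)_{w,\,\cN_u}$ and $\Ew_{\cN_u}(u,\psi)=\lim_i\ell_i(\psi)$ for all $\psi\in D(E)_{\cN_u}$. Since the left-hand side is a finite real number and $D(E)_{\cN_u}$ is a lattice (Proposition~\ref{proposition:properties of den}), splitting $\psi$ into positive and negative parts shows $\lim_i\ell_i(\psi)$ exists for every $\psi\in D(E)_{\cN_u}$; hence $D(E)_{\cN_u}\subseteq D(\ell)$ and $u$ is a nonnegative weak supersolution to $\Ew(g,\cdot)=\ell$. Theorem~\ref{theorem:maximal principle potential operator} now yields $\ell\in D(\Ge)$ together with $\Ge\ell\le u$, while the computation of the previous paragraph (valid once $\ell\in D(\Ge)$) gives $u\le\Ge\ell$; therefore $\Ge\ell=u=\sup_i G\ell_i=\lim_i G\ell_i$, which is the asserted formula.

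I expect the main obstacle to be the verification that $u$ is genuinely a weak solution with $\Ew_{\cN_u}(u,\cdot)=\ell$, that is, the interchange of the limit over the directed index $i$ with the limiting procedures built into the definition of the weak form. This is exactly what Theorem~\ref{theorem:computing the weak form for excessive functions} is designed to handle, and the delicate point is checking its hypothesis (b): the a priori excessivity of the differences $u-G\ell_i$ is what supplies the boundedness needed to pass to the limit and, simultaneously, forces the finiteness that places $D(E)_{\cN_u}$ inside $D(\ell)$.
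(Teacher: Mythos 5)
Your proof is correct, and for the substantial direction (ii) $\Rightarrow$ (i) it coincides with the paper's: both arguments set $u = \sup_i G\ell_i$, observe that $u$ and each $u - G\ell_i$ are excessive (the latter because $G\ell_j - G\ell_i = G(\ell_j - \ell_i)$ is superharmonic for $j \succ i$), invoke Theorem~\ref{theorem:computing the weak form for excessive functions} to get $\Ew_{\cN_u}(u,\psi) = \lim_i \ell_i(\psi)$, and then conclude via the minimum principle, Theorem~\ref{theorem:maximal principle potential operator}. Where you diverge is in (i) $\Rightarrow$ (ii) and the derivation of the identity $\Ge\ell = \sup_i G\ell_i$. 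The paper proves more in that direction: using positivity of $\ell - \ell_i$ and transience it establishes the quantitative form-norm convergence $E(G^N R^N \ell - G^N R^N \ell_i) = (\ell - \ell_i)(G^N R^N \ell - G^N R^N \ell_i) \leq (\ell - \ell_i)(G^N R^N \ell) \to 0$, whence $G^N R^N \ell = \sup_i G^N R^N \ell_i$, and then obtains $\Ge\ell = \sup_i G\ell_i$ directly by interchanging the two suprema in the Dedekind complete space $L^0(m)$ --- so the full identity is already in hand before the converse direction is addressed. You instead extract only the inequality $G\ell_i \leq \Ge\ell$ from monotonicity of $G^N$ on positive functionals, which suffices for (ii), and recover the equality by combining this with $\Ge\ell \leq u$ from the minimality statement (b) of Theorem~\ref{theorem:maximal principle potential operator}. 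Your route is softer (no norm estimate, no interchange of suprema) but makes the ``in particular'' identity depend on the machinery of the hard direction, whereas the paper's estimate is self-contained and additionally yields form-norm convergence of the approximating potentials, which is stronger than the $L^0(m)$-convergence asserted in the statement. Your added remark about splitting $\psi$ into positive and negative parts is harmless but redundant: Theorem~\ref{theorem:computing the weak form for excessive functions} already asserts existence of $\lim_i E(G\ell_i,\psi) = \lim_i \ell_i(\psi)$ for every $\psi \in D(E)_{\cN_u}$, which gives $D(E)_{\cN_u} \subseteq D(\ell)$ directly.
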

 \begin{proof}
 (i) $\Rightarrow$ (ii): Let $\cN$ be a special nest such that $D(E)_\cN \subseteq D(\ell)$ and $R^\cN \ell  \in D(G^\cN)$.  We first prove that for $N\in \cN$  the identities
  $$G^N R^N \ell = \lim_i G^N R^N \ell_i =\sup_i G^N R^N \ell_i $$
  hold in $L^0(m)$. Once we show the fist one, the second one is a consequence of the monotonicity of the $\ell_i$ and the positivity of $G^NR^N$, see Proposition~\ref{proposition:potential operator is positive}. Using the positivity of $\ell - \ell_i$ and that $G^N R^N \ell_i$ is nonnegative, we compute
  $$E(G^N R^N \ell - G^N R^N \ell_i) = (\ell - \ell_i)(G^N R^N \ell - G^N R^N \ell_i) \leq (\ell - \ell_i)(G^N R^N \ell) \to 0. $$
  Since $E$ is transient, this implies $G^N R^N \ell_i \to G^N R^N \ell$ in $L^0(m)$. 
  
  These considerations combined with Lemma~\ref{lemma:convergence of projections} and Proposition~\ref{proposition:characterization of domain for positive functionals} yield
  $$\Ge \ell = \sup_{N \in \cN} G^N R^N \ell = \sup_{N \in \cN} \sup_i G^N R^N \ell_i = \sup_i \sup_{N \in \cN} G^N R^N \ell_i = \sup_i G\ell_i. $$
  Here, we used properties of suprema in the Dedekind complete space $L^0(m)$. We obtain (i) $\Rightarrow$ (ii) and the the 'in particular' statement. 
  
  (ii) $\Rightarrow$ (i): By our assumption, $h:= \sup_i G\ell_i = \lim_i G\ell_i$ exists in $L^0(m)$. Since the $G\ell_i$ are superharmonic, see Proposition~\ref{proposition:potential operator is positive}, $h$ is excessive.  For each $i$ we have $h - G \ell_i = \lim_{j} G\ell_{j} - G \ell_i$. Moreover, the monotonicity of $(\ell_i)$ implies that for $j \succ i$ the function $G\ell_j - G \ell_i$ is superharmonic. Consequently, $h - G\ell_i$ is excessive. We choose a special $E$-nest $\cN$ and apply Theorem~\ref{theorem:computing the weak form for excessive functions} to obtain
  $$\Ew_{\cN_h}(h,\psi) = \lim_i E(G\ell_i,\psi) = \lim_i \ell_i(\psi) \text{ for each } \psi \in D(E)_{\cN_h}. $$
  In particular, $D(E)_{\cN_h} \subseteq D(\ell)$ and $h$ is a nonnegative (super-)solution to the equation $\Ew(g,\cdot) =  \ell$. Since the restriction of $\ell$ to $D(E)_{\cN_h}$ is positive, Theorem~\ref{theorem:maximal principle potential operator} yields $\ell \in D(\Ge)$. This finishes the proof. 
 \end{proof}
 \begin{remark}
 Let $\E$ be a Dirichlet form. At the beginning of this section we discussed that the $\E_\alpha$-potential of an $L^2$-function $f$, when interpreted as a functional on $D(\E_\alpha)$, is given by the $L^2$-resolvent $G_\alpha f$. In Dirichlet form theory resolvents are extended to larger classes of functions, for example the $L^p$-spaces. For a nonnegative measurable function $g$ this extension is usually introduced by letting 
 $$G_\alpha g = \sup \{G_\alpha f \mid f \in L^2(m) \text{ with } 0 \leq f \leq g\},$$
 if the supremum exists, see e.g. \cite[Section~6]{KL},  or one uses variants thereof with specific sequences that converge monotone towards $f$, see e.g. \cite[Chapter~1]{CF}.    The previous theorem shows that this way of extending the resolvent is compatible with the extension of the potential operator that was introduced in this section. We provide more details in Section~\ref{section:lp resolvents}.
\end{remark}

\section{Applications}

 \subsection{Uniqueness of bounded weak solutions to the Laplace equation} \label{subsection:uniqueness of bounded weak solutions}
 
  In this subsection we prove that uniqueness of bounded weak solutions to the Laplace equation is equivalent to a conservation property of the extended potential operator. Let $E$ be a transient energy form. The killing part of $E$ gives rise to the {\em killing functional} $\kappa$ with domain
  $$D(\kappa) := \{\psi \in D(E) \mid \text{there  exists } \varphi \in D(E) \text{ with } 1_{\{|\psi| > 0\}} \leq \varphi \leq 1\},  $$
  on which it acts by
  $$\kappa(\psi) := \Ekm(\varphi,\psi).$$
  Lemma~\ref{lemma:characterization of k} shows that $\kappa$ is well-defined and positive.  If $\cN$ is a special $E$-nest, then $D(E)_\cN \subseteq D(\kappa)$ and $1 \in D(E)_{w,\, \cN}$. For $\psi \in D(E)_\cN$ we obtain
  $$\kappa(\psi) = \Ekm(1,\psi) = \Ew_\cN(1,\psi),$$
 where  $\Ekm(1,\psi)$ is understood in the sense of Lemma~\ref{lemma:extension of k}. Hence, $1$ is a weak solution to the equation $\Ew(g,\cdot) = \kappa$ (with respect to any special nest). If, additionally, the form $E$ has full support, then Theorem~\ref{theorem:maximal principle potential operator} shows $\kappa \in D(\Ge)$ and $\Ge \kappa \leq 1$.  With this observation we can formulate the following characterization of uniqueness of bounded weak solutions to the Laplace equation.  
 
\begin{theorem}[Uniqueness of bounded weak solutions]\label{theorem:uniqueness of bounded weak solutions}
 Let $E$ be a transient energy form of full support. The following assertions are equivalent.
 \begin{itemize}
  \item[(i)] $1 = \Ge \kappa$.
  \item[(ii)] Any nonnegative essentially bounded weakly $E$-subharmonic function equals zero.
  \item[(iii)] Any essentially bounded weakly $E$-harmonic function equals zero. 
  \item[(iv)] For all linear functionals $\ell \in D(\Ge)$ with $\Ge |\ell| \in L^\infty(m)$, the equation $\Ew(g,\cdot) = \ell$ has a unique essentially bounded weak solution. 
 \end{itemize}
\end{theorem}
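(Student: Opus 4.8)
The plan is to take (i), the conservation identity $1 = \Ge\kappa$, as the hub and to establish the equivalences (i) $\Leftrightarrow$ (ii) and (i) $\Rightarrow$ (iv) $\Rightarrow$ (iii) $\Rightarrow$ (i). Throughout I would use three facts recorded just before the statement: the constant function $1$, and hence every constant $C \ge 0$, is bounded and $E$-excessive with $\Ew_\cN(C,\psi) = C\kappa(\psi)$ for every special nest $\cN$ and every $\psi \in D(E)_\cN$; the killing functional $\kappa$ lies in $D(\Ge)$ and $\Ge\kappa \le 1$ by the minimum principle, Theorem~\ref{theorem:maximal principle potential operator}; and, by Proposition~\ref{proposition:nh vs nh'}(a) together with Lemma~\ref{lemma:independence of nests}, for a bounded excessive function the weak form is independent of the chosen nest. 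The last point lets me evaluate $\Ew_\cN$ on $1$, on the bounded potentials $\Ge\ell_\pm$, and on their linear combinations over one common special nest, so that $\Ew_\cN(\cdot,\psi)$ is genuinely bilinear on the functions occurring below.

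For (i) $\Rightarrow$ (ii) I would take a nonnegative weakly $E$-subharmonic $h$ with $0 \le h \le 1$ (rescaling otherwise) and observe that $1-h \ge 0$ is a weak supersolution of $\Ew(g,\cdot) = \kappa$, since $\Ew_\cN(1-h,\psi) = \kappa(\psi) - \Ew_\cN(h,\psi) \ge \kappa(\psi)$ for nonnegative test functions. The minimality statement of Theorem~\ref{theorem:maximal principle potential operator}(b) then forces $\Ge\kappa \le 1-h$, and with (i) this reads $1 \le 1-h$, i.e.\ $h = 0$. Conversely, for (ii) $\Rightarrow$ (i) I set $u := 1 - \Ge\kappa$, which is nonnegative (as $\Ge\kappa \le 1$) and bounded; bilinearity gives $\Ew_\cN(u,\psi) = \kappa(\psi) - \kappa(\psi) = 0$, so $u$ is weakly harmonic, in particular a nonnegative bounded weakly subharmonic function, whence $u = 0$ by (ii) and $1 = \Ge\kappa$.

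The Liouville direction (i) $\Rightarrow$ (iii) runs by a two-sided squeeze: if $f$ is weakly harmonic with $|f| \le C$, then both $C+f$ and $C-f$ are nonnegative weak solutions, hence supersolutions, of $\Ew(g,\cdot) = C\kappa$, because $\Ew_\cN(C \pm f,\psi) = C\kappa(\psi)$. Theorem~\ref{theorem:maximal principle potential operator}(b) yields $\Ge(C\kappa) \le C \pm f$, and since $\Ge(C\kappa) = C\,\Ge\kappa = C$ by (i) and the homogeneity of $\Ge$ from Remark~\ref{remark:extended potential operator}, we get $f \ge 0$ and $f \le 0$, so $f = 0$. The implication (iii) $\Rightarrow$ (i) is then immediate from the same $u = 1-\Ge\kappa$, which is bounded and weakly harmonic and thus vanishes; and (iv) $\Rightarrow$ (iii) follows by applying (iv) to $\ell = 0$, for which $\Ge|0| = 0 \in L^\infty(m)$ and $0$ is the evident bounded weak solution of $\Ew(g,\cdot) = 0$.

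The substantive step is (i) $\Rightarrow$ (iv). Existence is supplied by Corollary~\ref{corollary:potential operator and weak solutions}: $\Ge\ell$ is a weak solution of $\Ew(g,\cdot) = \ell$, and it is bounded because $|\Ge\ell| \le \Ge\ell_+ + \Ge\ell_- = \Ge|\ell| \in L^\infty(m)$, using additivity of $\Ge$. For uniqueness I would avoid forming the difference of two solutions (whose nest is delicate) and instead squeeze an arbitrary bounded weak solution $g$ against $\Ge\ell$ from both sides. Choosing $C$ so large that $g + \Ge\ell_- + C \ge 0$, this function is a nonnegative weak supersolution of $\Ew(g,\cdot) = \ell_+ + C\kappa$, so Theorem~\ref{theorem:maximal principle potential operator}(b) and $\Ge(\ell_+ + C\kappa) = \Ge\ell_+ + C$ give $\Ge\ell = \Ge\ell_+ - \Ge\ell_- \le g$; applying the same argument to $-g + \Ge\ell_+ + C$, a supersolution of $\Ew(g,\cdot) = \ell_- + C\kappa$, gives $g \le \Ge\ell$, hence $g = \Ge\ell$. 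The main obstacle throughout is exactly the nest dependence of $\Ew$ flagged after Definition~\ref{definition:weak solution}: I must ensure that all the bounded excessive functions ($1$, the constants, $\Ge\ell_\pm$, $\Ge\kappa$) and their combinations lie simultaneously in a single weak form domain with matching values, which is precisely what Proposition~\ref{proposition:nh vs nh'}(a), Lemma~\ref{lemma:independence of nests} and the additivity of $\Ge$ are there to guarantee.
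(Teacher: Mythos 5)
Your proof is correct and runs on the same engine as the paper's: the function $1-\Ge\kappa$, the minimum principle of Theorem~\ref{theorem:maximal principle potential operator}~(b), and the nest-equivalence facts (Proposition~\ref{proposition:nh vs nh'}~(a) together with Lemma~\ref{lemma:independence of nests}) that let you evaluate $\Ew_\cN$ bilinearly on $1$, the solution at hand, and the bounded potentials $\Ge\ell_\pm$ over one common special nest. The one genuine divergence is the uniqueness half of (iv). The paper packages the (i)~$\Leftrightarrow$~(ii) content into Lemma~\ref{lemma:killing potential} and proves uniqueness as the implication (iii)~$\Rightarrow$~(iv): it shows, via exactly the nest equivalence you invoke, that $f - \Ge\ell$ is an essentially bounded weakly harmonic function and kills it with the Liouville property (iii). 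You instead prove (i)~$\Rightarrow$~(iv) directly by a two-sided squeeze, trapping an arbitrary bounded weak solution $g$ between $\Ge\ell$ from below and above by applying the minimality statement to the nonnegative supersolutions $g + \Ge\ell_- + C$ and $-g + \Ge\ell_+ + C$ of the equations with right-hand sides $\ell_+ + C\kappa$ and $\ell_- + C\kappa$, using $\Ge(\ell_\pm + C\kappa) = \Ge\ell_\pm + C$ under conservation. The two mechanisms are equivalent in cost — your squeeze needs the additivity and homogeneity of $\Ge$ from Remark~\ref{remark:extended potential operator} plus the same common-nest bookkeeping as the paper's subtraction — but yours never forms the difference of two weak solutions, whereas the paper's isolates the Liouville property as the sole input for uniqueness. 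Your remaining implications ((i)~$\Leftrightarrow$~(ii) via $1-h$ as a supersolution of $\Ew(g,\cdot) = \kappa$, (i)~$\Rightarrow$~(iii) by squeezing $C \pm f$ against $\Ge(C\kappa) = C$, and (iv)~$\Rightarrow$~(iii) via $\ell = 0$) coincide with the paper's arguments up to relabeling of the implication graph.
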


\begin{remark}
  When the potential operator is given by the resolvent of a Dirichlet form, the condition $\Ge \kappa = 1$ is a generalization of the concept of stochastic completeness. This is discussed in  detail in Subsection~\ref{section:lp resolvents}.
\end{remark}

The main ingredient for the proof of the previous theorem is the following lemma. 

\begin{lemma} \label{lemma:killing potential}
 Let $E$ be a transient energy form of full support. The function 
 $$w = 1 - \Ge \kappa$$
 is weakly $E$-harmonic and satisfies $0 \leq w \leq 1$. If $f$ is a weakly $E$-subharmonic function with $0 \leq f \leq 1$, then $f \leq w$. 
\end{lemma}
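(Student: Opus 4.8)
The plan is to prove the three assertions separately, the common technical thread being to evaluate the weak form over a single special nest so that the bilinearity of $\Ew_{\cN}$ (Theorem~\ref{theorem:ew extends e}) may be invoked. For the bounds on $w$, recall from the discussion preceding the statement that $\kappa \in D(\Ge)$ and $\Ge\kappa \leq 1$; the latter holds because $1$ is a weak supersolution of $\Ew(g,\cdot) = \kappa$ and $\kappa$ is positive, so the minimum principle, Theorem~\ref{theorem:maximal principle potential operator}~(b), applies. Since $\kappa$ is positive, Theorem~\ref{theorem:maximal principle potential operator}~(a) shows that $\Ge\kappa$ is $E$-excessive, and Proposition~\ref{lemma:superharmonic functions are positive} (excessive functions of a transient form are nonnegative) gives $\Ge\kappa \geq 0$. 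Hence $0 \leq \Ge\kappa \leq 1$ and therefore $0 \leq w \leq 1$.

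For the harmonicity of $w$, I would set $h := \Ge\kappa$, which by the above is a nonnegative $E$-excessive function. Fixing any special $E$-nest $\cN$, Lemma~\ref{lemma:excessive functions belong to the local form domain} shows that $\cN_h$ is again a special $E$-nest and that $h \in D(E)_{{\rm loc},\,\cN_h}$. Theorem~\ref{theorem:maximal principle potential operator}~(a) then yields $h \in D(E)_{w,\,\cN_h}$ with $\Ew_{\cN_h}(h,\psi) = \kappa(\psi)$ for all $\psi \in D(E)_{\cN_h}$. On the other hand, since $\cN_h$ is special, the constant function $1$ lies in $D(E)_{w,\,\cN_h}$ and satisfies $\Ew_{\cN_h}(1,\psi) = \kappa(\psi)$ for $\psi \in D(E)_{\cN_h}$, by the identity recalled before the statement. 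As $D(E)_{w,\,\cN_h}$ is a vector space and $\Ew_{\cN_h}$ is linear in its first argument, I conclude $w = 1 - h \in D(E)_{w,\,\cN_h}$ and $\Ew_{\cN_h}(w,\psi) = \kappa(\psi) - \kappa(\psi) = 0$ for every $\psi \in D(E)_{\cN_h}$; that is, $w$ is weakly $E$-harmonic.

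For the comparison, let $f$ be weakly $E$-subharmonic with $0 \leq f \leq 1$. By definition there is a special $E$-nest $\cN_1$ with $f \in D(E)_{w,\,\cN_1}$ and $\Ew_{\cN_1}(f,\psi) \leq 0$ for all nonnegative $\psi \in D(E)_{\cN_1}$. Since $\cN_1$ is special, again $1 \in D(E)_{w,\,\cN_1}$ with $\Ew_{\cN_1}(1,\cdot) = \kappa$, so linearity gives $1 - f \in D(E)_{w,\,\cN_1}$ and, for nonnegative $\psi \in D(E)_{\cN_1}$,
$$\Ew_{\cN_1}(1-f,\psi) = \kappa(\psi) - \Ew_{\cN_1}(f,\psi) \geq \kappa(\psi).$$
Thus the nonnegative function $1 - f$ is a weak supersolution of $\Ew(g,\cdot) = \kappa$, and Theorem~\ref{theorem:maximal principle potential operator}~(b) yields $\Ge\kappa \leq 1 - f$, i.e.\ $f \leq 1 - \Ge\kappa = w$.

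The point requiring care, and the main obstacle, is precisely the nest-dependence of the weak form: in general $\Ew_{\cN_1}(g,\cdot)$ and $\Ew_{\cN_2}(g,\cdot)$ differ, so one cannot subtract the two solutions $1$ and $\Ge\kappa$ of $\Ew(g,\cdot) = \kappa$ without first placing them in the weak domain of one and the same special nest. The resolution exploited above is that $1$ belongs to $D(E)_{w,\,\cN}$ and realizes $\kappa$ for \emph{every} special nest $\cN$; this freedom lets me select in each step whatever nest is forced by the other function involved, namely $\cN_h$ for the harmonicity statement and $\cN_1$ for the comparison.
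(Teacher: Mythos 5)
Your proof is correct and follows essentially the same route as the paper: the bounds via positivity of $\kappa$ and the minimum principle, harmonicity by subtracting the two weak solutions $1$ and $\Ge\kappa$ of $\Ew(g,\cdot)=\kappa$, and the comparison by observing that $1-f$ is a nonnegative weak supersolution of that equation and invoking Theorem~\ref{theorem:maximal principle potential operator}~(b). Your explicit bookkeeping of the common nest $\cN_h$ (resp.\ $\cN_1$) merely spells out what the paper leaves implicit, using exactly the fact that $1$ realizes $\kappa$ with respect to every special nest.
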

\begin{proof}
 The inequality $1 - \Ge \kappa \geq 0$ was discussed at the beginning of this subsection and $1 - \Ge \kappa \leq 1$ follows from the positivity of $\kappa$ and the positivity of the extended potential operator. If $f$ is a weakly $E$-subharmonic function with $0 \leq f \leq 1$, then the function $1-f$ is a nonnegative weak supersolution to the equation $\Ew(g,\cdot) = \kappa$. Theorem~\ref{theorem:maximal principle potential operator} implies $1-f \geq \Ge \kappa$. Furthermore, Theorem~\ref{theorem:maximal principle potential operator} shows that $\Ge \kappa$ is a weak solution to the equation $\Ew(g)  = \kappa$, and so $1 - \Ge \kappa$ is weakly $E$-harmonic. This finishes the proof.
\end{proof}

\begin{proof}[Proof of Theorem~\ref{theorem:uniqueness of bounded weak solutions}]
 (i) $\Rightarrow$ (ii): Let $f$ be a nonnegative essentially bounded  weakly \mbox{$E$-subharmonic} function. Without loss of generality, we assume that $f \leq 1$. The previous lemma implies  $0 \leq f \leq 1- \Ge \kappa \ = 0$ and we arrive at (ii).
 
 (ii) $\Rightarrow$ (i): According to the previous lemma, the function $1 - \Ge \kappa$ is a nonnegative essentially bounded weakly $E$-subharmonic function. By (ii) it vanishes and we obtain (i). 
 
 (i) $\Rightarrow$ (iii): Let $f$ be an essentially bounded weakly $E$-harmonic function. Without loss of generality, we assume $\|f\|_\infty \leq 1$. In this case, both $1 + f$ and $1-f$ are nonnegative solutions to the equation $\Ew(g,\cdot) = \kappa$. Hence, Theorem~\ref{theorem:maximal principle potential operator} implies
 $$1- f \geq \Ge \kappa = 1 \text { and } 1 + f \geq \Ge \kappa = 1.$$
 This shows $f = 0$ and proves (iii).
 
 (iii) $\Rightarrow$ (iv): According to Corollary~\ref{corollary:potential operator and weak solutions},  the function $\Ge \ell$ is a weak solution to the equation $\Ew(g,\cdot) = \ell$. By the monotonicity of $\Ge$ the condition $\Ge |\ell| \in L^\infty(m)$ implies that $\Ge \ell$ is essentially bounded. It remains to prove uniqueness. To this end, let $f \in L^\infty(m)$  be weak solution to the equation $\Ew(g,\cdot)  = \ell$. By definition, there exists a special $E$-nest $\cN$ such that $f \in D(E)_{w,\, \cN}$ and for all $\psi \in D(E)_{\cN}$ the equation
 $$\Ew_\cN (f,\psi) = \ell(\psi)$$
 holds. The assumption $\Ge |\ell| \in L^\infty(m)$ implies $\Ge \ell_+, \Ge \ell_- \in L^\infty(m)$. According to Proposition~\ref{proposition:nh vs nh'}, the nests $\cN$, $\cN_{\Ge \ell_+}$ and $\cN_{\Ge \ell_-}$ are equivalent. With this observation  Theorem~\ref{theorem:maximal principle potential operator} yields  $\Ge \ell_+, \Ge \ell_-  \in D(E)_{w,\, \cN}$  and that for all $\psi \in D(E)_\cN$ the following identity holds:
 $$\Ew_\cN(f - \Ge \ell,\psi)  = \Ew_\cN(f,\psi) - \Ew_\cN(\Ge \ell,\psi) = \ell(\psi) - \ell(\psi) = 0.$$
 In other words,  $f - \Ge \ell$ is a bounded  weakly $E$-harmonic function. By (iii) it must vanish and we arrive at (iv). 
 
 (iv) $\Rightarrow$ (i): According to the previous lemma, the function $1 - \Ge \kappa$ is an essentially bounded weakly $E$-harmonic function. Since $0$ is another essentially bounded weakly $E$-harmonic function, (iv) implies $1 - \Ge \kappa = 0.$ This proves (i). 
\end{proof}

 \begin{corollary} \label{corollary:conservation implies e=er}
  Let $E$ be a transient energy form of full support. If $\Ge \kappa = 1$, then $E = \Er$.
 \end{corollary}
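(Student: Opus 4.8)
The plan is to use the conservation property $\Ge\kappa=1$ only through Theorem~\ref{theorem:uniqueness of bounded weak solutions}, namely through the fact that under this hypothesis every essentially bounded weakly $E$-harmonic function vanishes. Since $\Er$ is a Silverstein extension of $E$ by Theorem~\ref{theorem:properties of er}, we have $E\leq \Er$ in the order on quadratic forms, and it remains to prove the reverse inclusion $D(\Er)\subseteq D(E)$. As bounded functions are form dense (Proposition~\ref{proposition:approximation by bounded functions}) and $(D(E),E)$ embeds isometrically as a closed subspace of $(D(\Er),\Er)$, it suffices to show $D(\Er)\cap L^\infty(m)\subseteq D(E)$.

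First I would record two structural facts. One checks that $\Er$ is itself transient: a nontrivial recurrent part $S\in \mathcal{I}(\Er)$ (Theorem~\ref{theorem:decomposition into recurrent and transient part}) would give $1_S\in \ker \Er$, hence $\Ekm(1_S)=0$, which in turn forces $\kappa$ and therefore $\Ge\kappa$ to vanish on $S$, contradicting $\Ge\kappa=1$. Thus $(D(\Er),\Er)$ is a Hilbert space by Theorem~\ref{theorem:continuous embedding of energy forms}. Second, for $f\in D(\Er)\cap L^\infty(m)$ and a special $E$-nest $\cN$, Remark~\ref{remark:weak extension of er} gives $f\in D(E)_{w,\cN}$ with $\Ew_\cN(f,\psi)=\Er(f,\psi)$ for $\psi\in D(E)_\cN$; moreover the functional $\ell_f:=\Er(f,\cdot)$ is $E$-continuous, since for $\psi\in D(E)$ one has $|\ell_f(\psi)|\leq \Er(f)^{1/2}\Er(\psi)^{1/2}=\Er(f)^{1/2}E(\psi)^{1/2}$.

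The core of the argument is carried out for $\Er$-superharmonic $f$, where it is clean. Such an $f$ is nonnegative (Proposition~\ref{lemma:superharmonic functions are positive} applied to the transient form $\Er$), and after truncating (Corollary~\ref{corollary:concatenation superharmonic functions}) I may assume it is bounded; then $\ell_f$ is a positive, $E$-continuous functional, so by Lemma~\ref{lemma:convergence of projections} its potential $\Ge\ell_f=G\ell_f$ lies in $D(E)$. Since $f$ is a bounded nonnegative weak supersolution of $\Ew(g,\cdot)=\ell_f$, the minimum principle (Theorem~\ref{theorem:maximal principle potential operator}) yields $0\leq \Ge\ell_f\leq f$ and shows that $\Ge\ell_f$ is a weak solution of the same equation. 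Hence $w:=f-\Ge\ell_f$ is a bounded, nonnegative, weakly $E$-harmonic function, so $w=0$ by Theorem~\ref{theorem:uniqueness of bounded weak solutions} together with $\Ge\kappa=1$; consequently $f=\Ge\ell_f\in D(E)$. The values then automatically agree because $\Er$ extends $E$.

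The main obstacle is to pass from $\Er$-superharmonic $f$ to an arbitrary $f\in D(\Er)\cap L^\infty(m)$. The natural route is to write $f$ as a difference of bounded $\Er$-superharmonic functions, using the Hilbert space structure of $(D(\Er),\Er)$ and the projection construction of Proposition~\ref{proposition:existence superharmonic functions}; the delicate point is the $\Er$-density of the span of the superharmonic functions, equivalently that $\ell_f=\Er(f,\cdot)$ admits a Jordan-type decomposition into positive $E$-continuous functionals for bounded $f$. Once this (or directly the $\Er$-density of differences of bounded superharmonic functions) is established, each summand lies in $D(E)$ by the previous paragraph, so $f\in D(E)$, and closedness of $D(E)$ in $(D(\Er),\Er)$ upgrades this to $D(\Er)\subseteq D(E)$, giving $E=\Er$. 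An alternative that avoids the decomposition is to bound the weakly harmonic remainder $w=f-G\ell_f$ for general $f$ directly: here the killing part of the potential is controlled through $0\leq |\Ge\,\Ekm(f,\cdot)|\leq \Ge\kappa=1$, and the remaining difficulty is a uniform sup-bound for the main-part potential $\Ge\,\Eg(f,\cdot)$, which is exactly where a comparison argument must do the work.
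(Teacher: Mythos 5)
Your skeleton is sound as far as it goes: the reduction to bounded functions, the transience of $\Er$ (your derivation from the full hypothesis $\Ge\kappa=1$ is in fact more careful than the paper's one-line remark that $\Ekm\neq 0$ suffices), and the special case of bounded $\Er$-superharmonic $f$ are all correct. But the proof stops exactly at the crux, and you say so yourself. The passage to an arbitrary bounded $f\in D(\Er)$ rests on an unproven lemma: either a Jordan-type decomposition of $\ell_f=\Er(f,\cdot)$ into positive $E$-continuous functionals, or $\Er$-density of the span of bounded superharmonic functions. The Jordan version is actually false in concrete cases covered by the hypothesis: take $\E$ the Dirichlet form of Brownian motion on $\IR^n$ and $E=\E_1$, so that $\Ge\kappa = G_1 1 = 1$ by stochastic completeness; for generic bounded $f\in W^1(\IR^n)$ the functional $\Er(f,\cdot)$ corresponds to the distribution $(-\Delta+1)f$, which is not a signed Radon measure, hence not order-bounded on the lattice $D(E)$, so $f$ is \emph{not} a difference of superharmonic functions. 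The density version can be salvaged in regular situations (where measures $g\,{\rm d}m$ with $g\in C_c(X)$ supply separating positive functionals), but in the paper's abstract setting nothing guarantees a separating family of positive $E$-continuous functionals, and no such density result is proved anywhere in the text — so your reduction trades the corollary for a statement that is at least as hard.

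The paper closes this gap with a decomposition-free device that needs no positivity at all. Arguing by contraposition: if $E\neq \Er$, pick $h\in D(\Er)\setminus D(E)$ with $\|h\|_\infty\leq 1$ and let $h_0$ be the Hilbert-space projection of $h$ onto the closed subspace $D(E)$ of $(D(\Er),\Er)$ ($\Er$ being transient). Orthogonality gives $\Er(h_r,\psi)=0$ for $h_r:=h-h_0$ and all $\psi\in D(E)$, which via Remark~\ref{remark:weak extension of er} makes $h_r$ weakly $E$-harmonic; the harmonic function is manufactured for free, without any cone of superharmonic functions. The only substantive work is the sup-bound $\|h_r\|_\infty\leq 1$ — precisely the ``uniform sup-bound'' you deferred to ``a comparison argument.'' It is obtained by a truncation trick: choose a net $(h_i)$ in $D(E)_\cN$ with $h_i\to h_0$ in the form topology and set $\psi_i := h - \left((h-h_i)\wedge 1\right)\vee(-1)$; the order-ideal property of $D(E)$ in $D(\Er)$ (this is where the Silverstein structure enters, via Theorem~\ref{theorem:Ideal properties of energy forms} and the speciality of $\cN$) yields $\psi_i\in D(E)$, and the contraction property combined with Pythagoras gives $\Er(h-h_i)\geq \Er(h-\psi_i)=\Er(h_0-\psi_i)+\Er(h_r)$, whence $\psi_i\to h_0$ in $L^0(m)$ and $|h_r|=\lim_i |h-\psi_i|\leq 1$. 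Then $h_r\neq 0$ is a bounded weakly $E$-harmonic function, contradicting Theorem~\ref{theorem:uniqueness of bounded weak solutions}. Until you supply this bound or a correct density lemma, your proposal does not prove the corollary; note also that your fully worked superharmonic case, while correct, plays no role in the paper's argument.
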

 \begin{proof}
  Assume that $E \neq \Er$. We show the existence of an essentially bounded weakly $E$-harmonic function, which contradicts $\Ge \kappa = 1$ by the previous theorem.
  
  Since essentially bounded functions are dense in the domains of energy forms, there exists $h \in D(\Er) \setminus D(E)$ with $\|h\|_\infty \leq 1$. The equality $\Ge \kappa = 1$ implies  $\Ekm \neq 0$ so that $\Er$ is transient. We let $h_0$ be the Hilbert space projection of $h$ onto $D(E)$, i.e., the unique element in $D(E)$ with 
  $$\Er(h - h_0) = \inf \{\Er(h-\psi) \mid \psi \in D(E)\}.$$

  The function $h_r := h-h_0$ satisfies $\Er(h_r,\psi) = 0$ for all $\psi \in D(E)$.  As seen in Remark~\ref{remark:weak extension of er}, this equation implies that $h_r$ is weakly $E$-harmonic. Therefore, it remains to prove its boundedness. We choose a special $E$-nest $\cN$ and a net $(h_i)$ in $D(E)_\cN$ that converges to $h_0$ in the form topology of $E$ and we let 
  $$\psi_i := h - \left((h-h_i) \wedge 1\right) \vee (-1).$$
  We prove $\psi_i \in D(E)$ and the convergence $\psi_i \to h_0$ in the form topology, from which the inequality $-1 \leq h_r \leq 1$ follows. For each $i$, there exists a set $N_i \in \cN$ such that $h_i$ vanishes outside of $N_i$. Since $\|h\|_\infty \leq 1$, the function $\psi_i$ also vanishes outside of $N_i$. In other words, we have $\psi_i \in D(\Er)_\cN$. Furthermore, $\cN$ is special and so there exists $\varphi_i \in D(E)_\cN$ with $1_{N_i} \leq \varphi$. We obtain 
  $$|\psi_i| \leq \|\psi_i\|_\infty \varphi_i.$$
  Since $\Er$ is a Silverstein extension of $E$, the domain $D(E)$ is an order ideal in $D(\Er)$. We infer $\psi_i \in D(E)$. The contraction properties of $E$ and the definition of $h_0$ and $\psi_i$ yield
  $$E(h-h_i) \geq E(h-\psi_i) = E(h_0 - \psi_i) + E(h - h_0).$$
  This shows the $E$-convergence of $\psi_i$ to $h_0$ and the transience of $E$ implies the convergence of $\psi_i$ to $h_0$ in $L^0(m)$. This finishes the proof.
 \end{proof}

 \begin{remark}
  \begin{itemize}
   \item We shall see in Subsection~\ref{section:lp resolvents} that the corollary is an extension of the well known fact that stochastically complete Dirichlet forms are Silverstein unique.
   \item  The main difficulty in the proof of the previous corollary was to show that the Hilbert space projection $P:D(\Er) \to D(E)$ maps $D(\Er)\cap L^\infty(m)$ to $D(E)\cap L^\infty(m)$. Indeed, we proved that for all $f \in D(\Er) \cap L^\infty(m)$ the inequality
   $$\|(I-P) f \|_\infty \leq \|f\|_\infty$$
   holds. This inequality is true for any transient Silverstein extension of $E$ with the corresponding projection. Our proof is inspired by the proof of the Royden decomposition theorem for jump-forms on discrete spaces, see e.g. \cite[Proposition~5.1]{KLSW}.
  \end{itemize}
 \end{remark}

 \subsection{$L^p$-Resolvents of Dirichlet forms and weak solutions} \label{section:lp resolvents}

In this subsection we apply the theory of weak solutions to resolvents of  Dirichlet forms. We show that the extended potential operator yields an extension of the resolvents to all $L^p$-spaces and that it provides minimal $L^p$-solutions to the weak Laplace equation of the perturbed form.  At the end of this section, we elaborate that the conservation property that was discussed in the previous section is an extension of the concept of stochastic completeness.

Let $\E$ be a Dirichlet form. Recall that for $\alpha> 0$ we define the transient energy form $\E_\alpha$ by
$$\E_{\alpha}(f):= \begin{cases}  \E(f) + \alpha \int_X f^2\, \D m &\text{ if } f \in L^2(m) \\ \infty &\text{ else} \end{cases}.$$
Its domain satisfies $D(\E_\alpha) = D(\E) = D(\Ee) \cap L^2(m)$, see Proposition~\ref{propostiont:domain extended dirichlet form}. We abuse notation and denote its potential operator by $G_\alpha$ (which we also used for the $L^2$-resolvent of $\E$). Any function $f \in L^2(m)$ induces a linear functional  $\ell_f\in L_c(D(\E_\alpha))$ via
 $$\ell_f: D(\E_{\alpha}) \to \IR,\, g \mapsto \int_U fg\, {\rm d} m $$
and we write $G_{\alpha}f$ instead of $G_{\alpha} \ell_f$.  In this sense, $G_\alpha$ is an extension of the $L^2$-resolvent of $\E$ to $L_c(D(\E_\alpha))$ and our abuse of notation is justified. In the subsequent discussion we need the following lemma.

\begin{lemma}\label{lemma:main part and killing of dirichlet form}
 Let $\E$ be a Dirichlet form and let $\alpha > 0$.
 \begin{itemize}
  \item[(a)] If $\cN$ is a special $\E_\alpha$-nest, then   for all $N \in \cN$ we have $m(N) < \infty$, and the inclusion $D(\E_\alpha)_\cN \subseteq L^1(m)$ holds.
  \item[(b)] The main part of the energy form $\E_\alpha$ is given by $\Ee^{(M)}$ and the domain of its killing part satisfies   $D((\E_\alpha)^{(k)}) = D(\Ee^{(k)}) \cap L^2(m)$, on which it acts by
 $$f \mapsto (\E_\alpha)^{(k)}(f) = \Ee^{(k)}(f) + \alpha \int_X f^2 \D m.$$
 In particular, the reflected energy form of $\E_\alpha$ satisfies $(\E^{\rm ref})_\alpha = (\E_\alpha)^{\rm ref}.$
 \end{itemize}
\end{lemma}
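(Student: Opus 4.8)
The plan is to exploit that $\E_\alpha$ is the extended Dirichlet form perturbed by a potential term and that this term is invisible to the off-diagonal construction. Since $\Ee$ extends $\E$ and $D(\E)=D(\Ee)\cap L^2(m)$ by Proposition~\ref{propostiont:domain extended dirichlet form}, one first checks that $\E$ and $\Ee$ agree as functionals on $L^2(m)$, so that $\E_\alpha=\Ee+V$ as energy forms on $L^0(m)$, where $V(f)=\alpha\int_X f^2\D m$ has domain $L^2(m)$. Part~(a) is then immediate: if $\cN$ is a special $\E_\alpha$-nest and $N\in\cN$, a witness $g_N\in D(\E_\alpha)\subseteq L^2(m)$ with $1_N\leq g_N$ gives $m(N)\leq\|g_N\|_2^2<\infty$, and an arbitrary $f\in D(\E_\alpha)_\cN$ is an $L^2(m)$-function supported on a set of finite measure, hence lies in $L^1(m)$ by the Cauchy--Schwarz inequality.

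The key observation for (b) is that $V$ cancels in the preliminary off-diagonal forms: for $\varphi\in D(\E_\alpha)$ with $0\leq\varphi\leq1$ and $f\in L^\infty(m)$ one has $\varphi f,\varphi f^2\in L^2(m)$, so $\widehat{(\E_\alpha)}_\varphi$ and $\widehat{\Ee}_\varphi$ have the same domain (Lemma~\ref{lemma:properties of eph}(a)) and, since $V(\varphi f)=\alpha\int\varphi^2f^2\D m=V(\varphi f^2,\varphi)$, they coincide there; taking closures gives $(\E_\alpha)_\varphi=(\Ee)_\varphi$. To turn this $\varphi$-wise identity into an identity of main parts—whose defining suprema a priori run over the different sets $\{\varphi\in D(\E_\alpha)\}$ and $\{\varphi\in D(\Ee)\}$—I would use a common nest. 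Let $\cN=\{\{|h|\geq\beta\}\mid h\in D(\E_\alpha),\ \beta>0\}$ be the standard special $\E_\alpha$-nest of Lemma~\ref{lemma:special nest}. Because $D(\E_\alpha)=D(\E)$ is $\Ee$-dense in $D(\Ee)$ and each of its members can be cut off to a function supported on a set in $\cN$ (Proposition~\ref{proposition:approximation by bounded functions}(b)), $\cN$ is also a special $\Ee$-nest. Every $h\in D(\E_\alpha)$ is square integrable, so each $N\in\cN$ has finite measure; hence a nonnegative $\varphi\leq1$ supported on some $N\in\cN$ automatically lies in $L^2(m)$, and therefore $\{\varphi\in D(\Ee)_\cN:0\leq\varphi\leq1\}=\{\varphi\in D(\E_\alpha)_\cN:0\leq\varphi\leq1\}$. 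Applying Corollary~\ref{corollary:alternative formula for eg} to both forms with this single nest and using $(\E_\alpha)_\varphi=(\Ee)_\varphi$ yields $(\E_\alpha)^{(M)}=\Ee^{(M)}$.

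For the killing part, the main-part identity gives the preliminary relation $\widehat{(\E_\alpha)}^{(k)}(g)=\E_\alpha(g)-(\E_\alpha)^{(M)}(g)=\widehat{\Ee}^{(k)}(g)+\alpha\int_X g^2\D m$ for $g\in D(\E_\alpha)$. Inserting $g=\varphi f^{(n)}$, $f^{(n)}=(f\wedge n)\vee(-n)$, into the monotone-net formula of Lemma~\ref{lemma:characterization of k}(d) for $\E_\alpha$ and splitting the supremum of the sum of the two monotone nets gives $(\E_\alpha)^{(k)}(f)=A+B$, where $A=\sup_{\varphi\in D(\E_\alpha),\,0\leq\varphi\leq1,\,n}\widehat{\Ee}^{(k)}(\varphi f^{(n)})$ and $B=\sup_{\varphi\in D(\E_\alpha),\,0\leq\varphi\leq1,\,n}\alpha\int\varphi^2(f^{(n)})^2\D m$. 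By monotone convergence $B=\alpha\int_X f^2\D m$ (under full support; see below), and $A=\Ee^{(k)}(f)$: the inequality $A\leq\Ee^{(k)}(f)$ is Lemma~\ref{lemma:characterization of k}(d) for $\Ee$, while for $A\geq\Ee^{(k)}(f)$ I would fix $\varphi\in D(\Ee)$, $0\leq\varphi\leq1$, approximate it by $\psi_j\in D(\E_\alpha)$, $0\leq\psi_j\leq1$, with $\psi_j\to\varphi$ in $\tau(m)$ and $\Ee(\psi_j)$ bounded, note that $\psi_j f^{(n)}\to\varphi f^{(n)}$ in $\tau(m)$ with $\Ee^{(M)}(\psi_j f^{(n)})$ bounded (Theorem~\ref{theorem:algebraic and order properties} and the contraction property of the energy form $\Ee^{(M)}$), and then invoke the lower semicontinuity of the killing part of $\Ee$, Lemma~\ref{lemma:characterization of k}(e), together with $\Ee^{(k)}=\widehat{\Ee}^{(k)}$ on $D(\Ee)$, to get $\widehat{\Ee}^{(k)}(\varphi f^{(n)})\leq\liminf_j\widehat{\Ee}^{(k)}(\psi_j f^{(n)})\leq A$. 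Reading off finiteness yields $D((\E_\alpha)^{(k)})=D(\Ee^{(k)})\cap L^2(m)$, and the stated formula follows. The final assertion is then a formal consequence: adding the two parts shows that both $(\E^{\rm ref})_\alpha$ and $(\E_\alpha)^{\rm ref}$ equal $\Ee^{\rm ref}+\alpha\|\cdot\|_2^2$ on $L^0(m)$, cf.\ Definition~\ref{definition:reflected dirichlet form}.

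The main obstacle is the reverse ('$\geq$') inequality in both identities, namely that restricting the defining supremum from $\{\varphi\in D(\Ee)\}$ to the smaller class $\{\varphi\in D(\E_\alpha)\}$ costs nothing; this is handled structurally for the main part (common special nest plus Corollary~\ref{corollary:alternative formula for eg}) and by the semicontinuity approximation, Lemma~\ref{lemma:characterization of k}(e), for the killing part. A second point to watch is that $B=\alpha\int_X f^2\D m$ only holds under full support, since in general $\sup\{\varphi\in D(\E_\alpha):0\leq\varphi\leq1\}=1_{{\rm supp}\,\E_\alpha}$; one first notes that ${\rm supp}\,\E_\alpha={\rm supp}\,\Ee$ (the two domains generate the same support because $D(\E_\alpha)$ is $\tau(m)$-dense in $D(\Ee)$) and then passes to this common support, on which all the displayed formulas hold verbatim.
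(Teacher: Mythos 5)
Your proof is correct and, for part (a) and the main-part identity, follows essentially the same route as the paper: the paper also reduces to a single nest that is special for both forms (it proves that \emph{every} special $\E_\alpha$-nest is a special $\Ee$-nest, where you verify this for the standard nest of Lemma~\ref{lemma:special nest} — an equivalent choice, since Corollary~\ref{corollary:alternative formula for eg} needs only one such nest), identifies the two classes of cut-off functions $\{\varphi \in D(\Ee)_\cN \mid 0\leq\varphi\leq1\}=\{\varphi\in D(\E_\alpha)_\cN\mid 0\leq\varphi\leq1\}$ via the finite measure of the sets in the nest, and observes the $\varphi$-wise cancellation $\E_{\alpha,\varphi}=\E_{{\rm e},\varphi}$ before applying Corollary~\ref{corollary:alternative formula for eg}.

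For the killing part the paper merely says the statement is ``straightforward,'' so your detailed argument goes beyond it; I checked it and it is sound (in particular, Lemma~\ref{lemma:characterization of k}~(d) guarantees $\varphi f^{(n)}\in D(\Ee)$, and $|\psi_j f^{(n)}|\leq n\psi_j\in L^2(m)$ puts your approximants in $D(\E_\alpha)$, so the lower-semicontinuity step via Lemma~\ref{lemma:characterization of k}~(e) is legitimate). A shortcut worth noting: for $f\in L^2(m)$ every competitor $g\in D(\Ee)$ with $|g|\leq|f|$ lies in $L^2(m)$, hence in $D(\Ee)\cap L^2(m)=D(\E)$ by Proposition~\ref{propostiont:domain extended dirichlet form}; the competitor classes for $(\E_\alpha)^{(k)}$ and $\Ee^{(k)}$ therefore coincide outright, the class is upward directed by the lattice property, and both summands are monotone, so no semicontinuity argument is needed at all. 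Finally, your full-support caveat is genuine and is a point the paper glosses over: without it one only gets $(\E_\alpha)^{(k)}(f)=\Ee^{(k)}(f)+\alpha\int_X 1_S f^2\D m$ with $S={\rm supp}\,\Ee={\rm supp}\,\E_\alpha$. Indeed, on $X=\{a,b\}$ with counting measure and $\E\equiv 0$ on $D(\E)=\{f\mid f(b)=0\}$, one has $(\E_\alpha)^{(k)}(1_{\{b\}})=0$ while $\Ee^{(k)}(1_{\{b\}})+\alpha\|1_{\{b\}}\|_2^2=\alpha$, so the displayed formula fails off the support. Since the lemma is only invoked in Section~\ref{section:lp resolvents} under the full-support hypothesis, your reduction to the common support repairs the statement without affecting any application.
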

 \begin{proof}
  (a): Let $\cN$ be a special $\E_\alpha$-nest and let $N\in \cN$. Since $\cN$ is special, there exists $\varphi \in D(\E_{\alpha})_\cN$ with $\varphi  \geq 1_N$. This implies 
  $$\alpha m(N) \leq \alpha \int_X \varphi^2 \D m \leq  \E_{\alpha} (\varphi) < \infty.$$
   Any function in $L^2(m)$ that vanishes outside a set of finite measure belongs to $L^1(m)$. Therefore, $D(\E_\alpha)_\cN \subseteq L^1(m).$
  
  (b): {\em Claim 1:} Any special $\E_\alpha$-nest is a special $\Ee$-nest.  
  
  {\em Proof of  Claim~1.} Let $\cN$ be a special $\E_\alpha$-nest. The form $\Ee$ is the $L^0(m)$-closure of $\E$ and so $D(\E_\alpha) = D(\E)$ is dense in $D(\Ee)$ with respect to the $\Ee$-form topology. Furthermore, $D(\E_\alpha)_\cN$  is dense in $D(\E_\alpha)$ with respect to the $\E_\alpha$-form topology and, on $D(\E_\alpha)$, the $\Ee$-form topology is weaker than the $\E_\alpha$-form topology. Therefore, $\cN$ is an $\Ee$-nest. That it is special with respect to $\Ee$ follows from  the inclusion $D(\E_\alpha)_\cN \subseteq D(\Ee)_\cN$. \qedc

   Let $\cN$ be a special $\E_\alpha$-nest, which is also a special $\Ee$-nest by our claim. Since the sets in $\cN$ have finite measure, we obtain
  \begin{align*}J:= \{\varphi \in D(\Ee)_\cN \mid 0\leq \varphi \leq 1\} &= \{\varphi \in D(\Ee)_\cN \cap L^2(m) \mid 0\leq \varphi \leq 1\}\\
    &= \{\varphi \in D(\E_\alpha)_\cN   \mid 0\leq \varphi \leq 1\}.
  \end{align*}
   Hence, Corollary~\ref{corollary:alternative formula for eg} implies that for all $f \in L^0(m)$, we have 
   $$\Ee^{(M)}(f) = \sup_{\varphi \in J} \E_{{\rm e},\varphi}(f)$$
   and 
   $$(\E_\alpha)^{(M)}(f) = \sup_{\varphi \in J} \E_{\alpha, \varphi}(f).$$

   {\em Claim~2:}  For all $\varphi \in J$ we have  $\E_{\alpha,\varphi} = \E_{{\rm e},\varphi}$.
   
   {\em Proof of Claim~2.} Let $\varphi \in J$. The definition of $\E_{\alpha,\varphi}$ and $\E_{{\rm e},\varphi}$,  and Lemma~\ref{lemma:closability of ep} yield
   \begin{align*}
    D(\E_{\alpha,\varphi}) \cap L^\infty(m) &= \{ f \in L^\infty(m) \mid \varphi f \in D(\E_\alpha)\} \\
   &= \{ f \in L^\infty(m) \mid \varphi f \in D(\Ee)\cap L^2(m)\}\\
   &= \{ f \in L^\infty(m) \mid \varphi f \in D(\Ee) \}\\
   &= D(\E_{{\rm e},\varphi}) \cap L^\infty(m).
   \end{align*}
  Here, we used that  $\varphi$ vanishes outside a set of finite measure. With this at hand, the equality of $\E_{\alpha,\varphi}$ and $\E_{{\rm e},\varphi}$ on  $D(\E_{\alpha,\varphi}) \cap L^\infty(m) = D(\E_{{\rm e},\varphi}) \cap L^\infty(m)$ follows from the definition of $\E_{\alpha,\varphi}$ and $\E_{{\rm e},\varphi}$. Since bounded functions are dense in the domain of an energy form, this finishes the proof of Claim~2. \qedc.
   
   Claim~2 and the previous considerations imply $\Ee^{(M)} = (\E_\alpha)^{(M)}$. The statement on $(\E_\alpha)^{(k)}$ is straightforward from this equality and the definition of the killing part. The 'in particular' part follows from the definition of the reflected  Dirichlet form and the reflected energy form, cf. Definition~\ref{definition:reflected form} and Definition~\ref{definition:reflected dirichlet form}.
 \end{proof} 

 \begin{remark}
   The definition of the main part of $\E_\alpha$ is ambiguous as $\E_\alpha$ can be considered as a Dirichlet form and as an energy form. This is why we formulated the previous lemma as it is. It shows that the main part of $\E_\alpha$ when considered as an energy form is given by $\Ee^{(M)}$, while the main part of $\E_\alpha$ when considered as a Dirichlet form is $\E^{(M)}$, the restriction of  $\Ee^{(M)}$ to $L^2(m)$, cf.   Definition~\ref{definition:reflected dirichlet form}. For the reflected forms no such ambiguity arises. 
 \end{remark}

We say that a Dirichlet form $\E$ has full support if $\E_1$ has full support. This is obviously equivalent to requiring that for some $\alpha>0$ the form $\E_\alpha$  has full support, or to requiring that the extended form $\Ee$ has full support. A functional $\ell \in L_c(D(\E_\alpha))$ is said to satisfy $\ell \leq 1$ if for each  nonnegative $\psi \in D(\E_\alpha) \cap L^1(m)$ the inequality  $\ell(\psi) \leq \int_X \psi \D m$ holds.
\begin{proposition} \label{proposition:properties resolvent}
  Let $\E$ be a Dirichlet form of full support.  
 \begin{itemize}
  \item[(a)] For all $\alpha > 0$ the operator  $\alpha G_{\alpha}$ is Markovian, i.e., for any functional $\ell \in L_c(D(\E_\alpha))$ the inequality $0 \leq \ell \leq 1$ implies $ 0\leq \alpha G_\alpha \ell \leq 1$. 
  \item[(b)] The family $(G_\alpha)_{\alpha> 0}$ satisfies the resolvent equation, i.e., for all $\ell \in L_c(D(\E_1))$ and all $\alpha,\beta > 0$ the following identity holds:
  $$G_\alpha \ell - G_\beta \ell = (\beta - \alpha) G_\alpha G_\beta \ell.$$
  \item[(c)] The restriction of $G_{\alpha}$ to $L^2(m)$ is self-adjoint and for all $f\in L^2(m)$ it satisfies 
  $$\|\alpha G_{\alpha} f\|_2 \leq \|f\|_2.$$
 \end{itemize}
\end{proposition}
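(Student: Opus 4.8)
The plan is to treat the three parts separately, using throughout the defining relation $\E_\alpha(G_\alpha\ell,\psi)=\ell(\psi)$ for all $\psi\in D(\E_\alpha)$, the symmetry of the bilinear form $\E_\alpha$, and the fact that $D(\E_\alpha)=D(\E_\beta)=D(\E_1)=D(\E)$ with mutually equivalent form topologies, so that $L_c(D(\E_\alpha))=L_c(D(\E_\beta))=L_c(D(\E_1))$ and all the $G_\gamma$ are defined on one and the same space of functionals.

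For (c) I would argue directly. Every $f\in L^2(m)$ induces $\ell_f\in L_c(D(\E_\alpha))$ since $|(f,g)|\le\|f\|_2\alpha^{-1/2}\E_\alpha(g)^{1/2}$, and $G_\alpha f:=G_\alpha\ell_f\in D(\E)\subseteq L^2(m)$, so $G_\alpha|_{L^2(m)}$ is everywhere defined. Testing $\E_\alpha(G_\alpha f,\psi)=(f,\psi)$ with $\psi=G_\alpha g$ and using symmetry of $\E_\alpha$ gives $(G_\alpha f,g)=\E_\alpha(G_\alpha f,G_\alpha g)=(f,G_\alpha g)$, so the restriction is symmetric; the norm bound then makes it bounded, hence self-adjoint. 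That bound follows by testing with $\psi=u:=G_\alpha f$: since $\E(u)\ge0$ one has $\alpha\|u\|_2^2\le\E_\alpha(u)=(f,u)\le\|f\|_2\|u\|_2$, whence $\|\alpha G_\alpha f\|_2\le\|f\|_2$.

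For (b) the resolvent equation is bilinear algebra. Writing $u=G_\alpha\ell$ and $v=G_\beta\ell$, the defining relation for $v$ gives $\E(v,\psi)=\ell(\psi)-\beta(v,\psi)$, so $\E_\alpha(v,\psi)=\ell(\psi)-(\beta-\alpha)(v,\psi)$, and subtracting from $\E_\alpha(u,\psi)=\ell(\psi)$ yields $\E_\alpha(u-v,\psi)=(\beta-\alpha)(v,\psi)=(\beta-\alpha)\ell_v(\psi)$ for all $\psi\in D(\E)$. Since $v\in D(\E)\subseteq L^2(m)$ induces the admissible functional $\ell_v$, uniqueness of the potential identifies $u-v=(\beta-\alpha)G_\alpha\ell_v=(\beta-\alpha)G_\alpha G_\beta\ell$.

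The hard part, and the only place where the full Dirichlet structure enters, is (a). Set $h=G_\alpha\ell$ and $c=1/\alpha$. The lower bound is immediate: for nonnegative $\psi$ we have $\E_\alpha(h,\psi)=\ell(\psi)\ge0$, so $h$ is $\E_\alpha$-superharmonic by Proposition~\ref{proposition:potential operator is positive}, hence nonnegative by transience (Proposition~\ref{lemma:superharmonic functions are positive}). For the upper bound $\alpha h\le1$ I would test against $w:=(h-c)_+$. As a normal contraction of $h$, $w$ lies in $D(\E)=D(\E_\alpha)$ by Theorem~\ref{theorem:cutoff properties Dirichlet form}, and the pointwise estimate $w\le\alpha h^2$ on $\{h>c\}$ shows $w\in L^1(m)$. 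Expanding $\E_\alpha(g)-2\ell(g)-[\E_\alpha(h)-2\ell(h)]=\E_\alpha(g-h)\ge0$ shows $h$ minimizes $g\mapsto\E_\alpha(g)-2\ell(g)$, so comparing $h$ with $h\wedge c$ gives $\E_\alpha(h)-\E_\alpha(h\wedge c)\le2\ell(w)$. Combining $\ell(w)\le\int w\D m$ (from $\ell\le1$, $w\ge0$) with $\E_\alpha(h)-\E_\alpha(h\wedge c)\ge\alpha\int_{\{h>c\}}(h^2-c^2)\D m$ (using the contraction inequality $\E(h\wedge c)\le\E(h)$) yields $\alpha\int_{\{h>c\}}(h-c)(h+c)\D m\le2\int_{\{h>c\}}(h-c)\D m$. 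On $\{h>c\}$ one has $\alpha(h+c)>2\alpha c=2$, so the left integrand strictly dominates the right; this forces $m(\{h>c\})=0$, i.e. $\alpha h\le1$. The subtlety to watch throughout is that $c=1/\alpha$ need not belong to $L^2(m)$, so each truncation must be realised as a contraction of $h$ itself (keeping the test functions inside $L^2(m)$) rather than by subtracting the constant $c$.
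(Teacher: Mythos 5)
Your proposal is correct, and for parts (b) and (c) it essentially coincides with the paper: the paper proves (b) by the same one-line bilinear computation $(\beta-\alpha)\E_\alpha(G_\alpha G_\beta\ell,\psi)=\E_\alpha(G_\alpha\ell-G_\beta\ell,\psi)$ followed by uniqueness in the Hilbert space $(D(\E_\alpha),\E_\alpha)$, and for (c) it simply cites standard Dirichlet form theory, which your self-contained symmetry-plus-bound argument reproves. The genuine difference is in (a). The paper deduces $\alpha G_\alpha\ell\leq 1$ from its weak-solution machinery: it observes $1\in D(\E_\alpha)_{w,\,\cN}$ for any special $\E_\alpha$-nest $\cN$, uses Lemma~\ref{lemma:main part and killing of dirichlet form} to compute the killing part of $\E_\alpha$ and obtain $\E^{(w)}_{\alpha,\,\cN}(1,\psi)=(\E_\alpha)^{(k)}(1,\psi)\geq\alpha\int_X\psi\,\D m\geq\alpha\ell(\psi)$ for nonnegative $\psi\in D(\E_\alpha)_\cN$, and then invokes the minimum principle for the extended potential operator, Theorem~\ref{theorem:maximal principle potential operator}, to conclude $1\geq\Ge_\alpha(\alpha\ell)=\alpha G_\alpha\ell$; this is why the full-support hypothesis appears in the statement. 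You instead give a classical variational truncation argument (in the spirit of the proof of \cite[Theorem~1.4.1]{FOT}, adapted from $L^2$-functions to functionals): $h=G_\alpha\ell$ minimizes $g\mapsto\E_\alpha(g)-2\ell(g)$, comparison with $h\wedge\alpha^{-1}$ together with $\E(h\wedge\alpha^{-1})\leq\E(h)$ and $\ell((h-\alpha^{-1})_+)\leq\int(h-\alpha^{-1})_+\,\D m$ yields $\alpha\int_{\{h>\alpha^{-1}\}}(h-\alpha^{-1})(h+\alpha^{-1})\,\D m\leq 2\int_{\{h>\alpha^{-1}\}}(h-\alpha^{-1})\,\D m$, and the strict pointwise domination forces $m(\{h>\alpha^{-1}\})=0$. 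Your bookkeeping is sound: all truncations are normal contractions of $h$ (so the constant $\alpha^{-1}\notin L^2(m)$ never enters as a test function), and the verification $(h-\alpha^{-1})_+\leq\alpha h^2\in L^1(m)$ is exactly what is needed to apply the paper's definition of $\ell\leq 1$, which only constrains $\ell$ on $D(\E_\alpha)\cap L^1(m)$. What each approach buys: yours is elementary, self-contained, and notably does not use the full-support assumption at all, so it proves a slightly stronger statement; the paper's route is heavier but deliberately exercises the weak form, the killing functional, and the minimum principle, which are the tools reused immediately afterwards (e.g.\ in Theorem~\ref{theorem:lp resolvents} and Theorem~\ref{theorem:uniqueness of bounded weak solutions}), so the two proofs serve different expository purposes.
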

\begin{proof}
(c): Since the $L_2$-restriction of $G_\alpha$ coincides with the 'usual' $L_2$-resolvent, this follows from standard Dirichlet form theory, see e.g. \cite{FOT}

(a): Let $\ell \in L_c(D(\E_\alpha))$ with $0 \leq \ell \leq 1$. The nonnegativity of $\alpha G_\alpha$ follows from the corresponding statements for the potential operator, see Proposition~\ref{proposition:potential operator is positive}. We  use Theorem~\ref{theorem:maximal principle potential operator} to prove the inequality $ \alpha G_\alpha \ell \leq 1$. Let $\cN$ be a special $\E_\alpha$-nest. Then $1 \in D(E_\alpha)_{w,\, \cN}$.  The statement of Lemma~\ref{lemma:main part and killing of dirichlet form} on the killing part of $\E_\alpha$ shows that for all nonnegative $\psi\in D(E)_\cN$ we have
$$\mathcal{E}_{\alpha,\, \cN}^{(w)}(1,\psi) = \left(\mathcal{E}_{\alpha}\right)^{(k)}(1,\psi) \geq \alpha \int_X \psi \D m \geq \alpha \ell(\psi),$$
where we used $\ell  \leq 1$ for the last inequality. According to Theorem~\ref{theorem:maximal principle potential operator}, this implies $1 \geq \Ge_\alpha (\alpha \ell) = \alpha G_\alpha \ell$.

(b): For $\psi \in D(\E)$ and $\ell \in L_c(D(\E_1))$ we compute
\begin{align*}
 (\beta - \alpha) \E_\alpha (G_\alpha G_\beta \ell,\psi) &= (\beta - \alpha) \as{G_\beta \ell,\psi} \\
 &= \E_\beta(G_\beta \ell,\psi) - \E_\alpha(G_\beta \ell,\psi) \\
 &= \ell(\psi) - \E_\alpha(G_\beta \ell,\psi)\\
 &= \E_\alpha(G_\alpha \ell -  G_\beta \ell,\psi).
\end{align*}
This shows the desired equality and finishes the proof.
\end{proof}

\begin{remark}
 The previous proposition is certainly well known but we could not find a proper reference for assertions (a) and (b) in this generality.  
\end{remark}

 We now extend the resolvents to all $L^p$-spaces with $1 \leq p \leq \infty$. For $f \in L^0(m)$ we define $D(\ell_f) := \{g \in L^0(m) \mid \int_X |fg| \D m  <\infty\}$ and let
 $$\ell_f:D(\ell_f) \to \IR,\, g \mapsto \int_{X} fg \D m.$$ 
 As in the $L^2$-case, we identify $f \in L^0(m)$ with the linear functional $\ell_f$.  In particular, we say $f \in D(\Ge_\alpha)$ if $\ell_f \in D(\Ge_\alpha)$ and write  $\Ge_{\alpha}f$ instead of $\Ge_\alpha \ell_f$. Our main theorem on $L^p$-resolvents of Dirichlet forms is the following.

 \begin{theorem}\label{theorem:lp resolvents}
  Let $\E$ be a Dirichlet form of full support and let $\alpha > 0$. 
  \begin{itemize}
   \item[(a)] For all $1 \leq p \leq \infty$ we have $L^p(m) \subseteq D(\Ge_{\alpha})$. 
   
   \item[(b)]  The restriction of $\Ge_{\alpha}$ to $L^p(m)$ is a linear operator and for all $f\in L^p(m)$ it satisfies
  $$\|\alpha \Ge_{\alpha} f\|_p \leq \|f\|_p.$$
  \item[(c)] For all $f \in L^p(m)$ we have $\Ge_{\alpha} f \in D(E)_{w}$. More precisely, the following is true.
  \begin{itemize}
   \item[(c1)] If $1\leq p < \infty$, then for all $f \in L^p(m)$ there exists a special $\E_{\alpha}$-nest $\cN$ such that $\Ge_{\alpha}f \in D(\E_\alpha)_{w,\, \cN}$ and  for all  $\psi \in D(\E_{\alpha})_\cN$ it satisfies
   $$\E^{(w)}_{\alpha,\, \cN}(\Ge_{\alpha}f,\psi) = \int_X f\psi \D m.$$
   \item[(c2)] For all $f \in L^\infty(m)$ and all special $\E_{\alpha}$-nests $\cN$ we have $\Ge_{\alpha}f \in D(\E_{\alpha})_{w,\, \cN}$ and  for all $ \psi \in D(\E_{\alpha})_\cN$ it satisfies
   $$\E^{(w)}_{\alpha,\, \cN}(\Ge_{\alpha}f,\psi) = \int_X f\psi \D m. $$
   \item[(d)] For all $1 \leq p \leq \infty$ and all nonnegative $f \in L^p(m)$ we have
   $$\Ge_\alpha f = \sup \{G_\alpha g \mid g \in L^2(m) \text{ and } 0 \leq g \leq f\}.$$
  \end{itemize}
  \end{itemize}
 \end{theorem}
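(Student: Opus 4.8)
The plan is to prove parts (a)--(d) by reducing everything to the already-established machinery for the extended potential operator $\Ge$ and the positivity and Markov properties of $G_\alpha$. The key structural fact I would invoke repeatedly is Lemma~\ref{lemma:main part and killing of dirichlet form}: the killing part of $\E_\alpha$ contains the term $\alpha\int_X f^2\D m$, so that the weak form $\E^{(w)}_{\alpha,\cN}$ pairs naturally with $\int_X f\psi\D m$, and that special $\E_\alpha$-nests consist of sets of finite measure with $D(\E_\alpha)_\cN\subseteq L^1(m)$. Note that $\E_\alpha$ is a transient energy form of full support, so the results of the previous section---in particular Theorem~\ref{theorem:maximal principle potential operator}, Corollary~\ref{corollary:potential operator and weak solutions} and Theorem~\ref{theorem:computation of resolvent for positive functionals}---all apply.

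**Order of the argument.** First I would treat the nonnegative case, which is the heart of the matter and simultaneously yields (a) and (d). For nonnegative $f\in L^p(m)$, I consider the monotone increasing net of functionals $\ell_{g}$ indexed by $g\in L^2(m)$ with $0\le g\le f$ (ordered by the pointwise order on functions), whose pointwise limit on $D(\E_\alpha)$ is $\ell_f$. Applying Theorem~\ref{theorem:computation of resolvent for positive functionals}, membership $\ell_f\in D(\Ge_\alpha)$ is equivalent to the existence of a dominating function for the net $(G_\alpha g)$, and in that case $\Ge_\alpha f=\sup\{G_\alpha g\mid g\in L^2(m),\,0\le g\le f\}$, giving (d) directly. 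To verify the domination hypothesis I would use the contraction estimate (c) of Proposition~\ref{proposition:properties resolvent}, namely $\|\alpha G_\alpha g\|_2\le\|g\|_2\le\|f\|_2$ when $p=2$, together with the Markov property $\alpha G_\alpha$ from Proposition~\ref{proposition:properties resolvent}(a) to get the $L^1$ and $L^\infty$ bounds; interpolation-type monotonicity then covers all $1\le p\le\infty$. Once $L^p(m)\subseteq D(\Ge_\alpha)$ is established for nonnegative functions, the general case of (a) follows by splitting $f=f_+-f_-$ and invoking Corollary~\ref{corollary:potential operator and weak solutions}, and the linearity in (b) follows from the additivity of $\Ge$ recorded in Remark~\ref{remark:extended potential operator}.

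**The weak-solution statements (c).** For (c1) and (c2) the plan is to apply Corollary~\ref{corollary:potential operator and weak solutions} to the regular functional $\ell_f$: once $\ell_f\in D(\Ge_\alpha)$ is known, that corollary immediately yields that $\Ge_\alpha f$ is a weak solution to $\E^{(w)}_\alpha(g,\cdot)=\ell_f$ with respect to the nest $\cN_{\Ge_\alpha|\ell_f|}$, and evaluating against $\psi\in D(\E_\alpha)_\cN$ gives $\E^{(w)}_{\alpha,\cN}(\Ge_\alpha f,\psi)=\int_X f\psi\D m$. The distinction between $p<\infty$ and $p=\infty$ mirrors the corresponding distinction in Theorem~\ref{theorem:characterization excessive functions as superharmonic functions} and Proposition~\ref{proposition:nh vs nh'}: for $f\in L^\infty(m)$ the potential $\Ge_\alpha f$ is bounded (by (b)), so by Proposition~\ref{proposition:nh vs nh'}(a) the nest $\cN_{\Ge_\alpha|\ell_f|}$ is equivalent to an arbitrary special nest $\cN$, and Lemma~\ref{lemma:independence of nests} lets me state the identity for \emph{all} special nests; for $1\le p<\infty$ the potential need not be bounded, so one can only assert existence of \emph{some} special nest.

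**The main obstacle.** The step I expect to require the most care is the verification of the $L^p$-contraction bound in (b) for $p\notin\{1,2,\infty\}$, and the pairing integrability needed to make sense of $\int_X f\psi\D m$ as a value of $\ell_f$ on the relevant test functions. The Markov property gives the $L^1$- and $L^\infty$-contractivity of $\alpha\Ge_\alpha$ on nonnegative functions (via the domination formula (d) and self-adjointness), and the $L^2$ bound is Proposition~\ref{proposition:properties resolvent}(c); the intermediate $L^p$ bounds should then follow by a Riesz--Thorin/Marcinkiewicz interpolation argument on the self-adjoint Markovian resolvent, but one must be careful that the extended operator $\Ge_\alpha$ genuinely agrees with the interpolated operator, which is why the monotone approximation of (d) is essential---it identifies $\Ge_\alpha f$ as the increasing limit of the $L^2$-resolvents $G_\alpha g$, to which the classical bounds apply directly, and monotone convergence then transfers the estimate to $\Ge_\alpha f$. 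The integrability $f\psi\in L^1(m)$ for $\psi\in D(\E_\alpha)_\cN$ is guaranteed by Lemma~\ref{lemma:main part and killing of dirichlet form}(a), since such $\psi$ lie in $L^1(m)\cap L^\infty(m)$, so $\ell_f$ is well-defined on $D(\E_\alpha)_\cN$ for every $f\in L^p(m)$.
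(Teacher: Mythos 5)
Your overall architecture matches the paper's proof quite closely: (d) and the membership statements are run through Theorem~\ref{theorem:computation of resolvent for positive functionals}, (c1) through Corollary~\ref{corollary:potential operator and weak solutions}, (c2) through boundedness of $\Ge_\alpha f$ combined with Proposition~\ref{proposition:nh vs nh'} and Lemma~\ref{lemma:independence of nests}, with the Markov property supplying the $L^\infty$-bound, self-adjointness/duality the $L^1$-bound, and Riesz--Thorin the intermediate exponents. However, there is a genuine gap at the central step. For $1\le p<2$ (indeed whenever the nonnegative $f\in L^p(m)$ does not lie in $L^2(m)$), Theorem~\ref{theorem:computation of resolvent for positive functionals} requires a \emph{pointwise} dominating function $h\in L^0(m)$ with $G_\alpha g\le h$ for all $g\in L^2(m)$ with $0\le g\le f$, whereas your verification only supplies uniform $L^p$-\emph{norm} bounds on the monotone net $(G_\alpha g)$. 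A norm bound on an increasing net does not by itself produce an order upper bound in $L^0(m)$, and the phrase ``interpolation-type monotonicity'' does not close this. The paper circumvents the problem by first constructing the bounded extension $H^p_\alpha$ on all of $L^p(m)$ (density of $L^p(m)\cap L^2(m)$ plus the Riesz--Thorin estimate, both of which need only norm bounds) and then feeding the concrete function $H^p_\alpha|f|$, which dominates each $G_\alpha g$ by positivity and continuity along a monotone approximating sequence, into Theorem~\ref{theorem:computation of resolvent for positive functionals}. Your argument can be repaired the same way, or by upgrading the norm bound directly: for $1<p<\infty$, weak compactness of norm-bounded sets in the reflexive space $L^p(m)$ shows that a monotone norm-bounded net has an upper bound; for $p=1$ one can pass to the increasing net of measures $A\mapsto\int_A G_\alpha g\,\D m$ and apply Radon--Nikodym. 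Either way an explicit additional step is required where you currently have none.

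Second, your final paragraph contains a factual error that contradicts your own (correct) discussion of (c). From Lemma~\ref{lemma:main part and killing of dirichlet form}~(a) one obtains $D(\E_\alpha)_\cN\subseteq L^1(m)\cap L^2(m)$ --- each $\psi$ is an $L^2$-function supported on a set of finite measure --- \emph{not} $D(\E_\alpha)_\cN\subseteq L^1(m)\cap L^\infty(m)$. Consequently your claim that $\ell_f$ is well-defined on $D(\E_\alpha)_\cN$ for \emph{every} special nest and every $f\in L^p(m)$ fails for $p<2$: an unbounded $\psi\in L^1(m)\cap L^2(m)$ can pair non-integrably with such an $f$. This is exactly why (c1) only asserts the existence of \emph{some} special nest, a distinction you state correctly earlier. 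The correct resolution, implicit in the proofs of Theorem~\ref{theorem:computation of resolvent for positive functionals} and Corollary~\ref{corollary:potential operator and weak solutions}, is that the nest is chosen adapted to $f$, namely (a refinement of) $\cN_{\Ge_\alpha|f|}$, on which integrability of $f\psi$ holds automatically: for nonnegative $\psi$ by monotone convergence along the approximating net, and for general $\psi$ by the lattice property of $D(\E_\alpha)_\cN$ from Proposition~\ref{proposition:properties of den}. Only for $f\in L^\infty(m)$, where $\Ge_\alpha f$ is bounded so that $\cN_{\Ge_\alpha|f|}$ is equivalent to any special nest, does the for-all-nests statement of (c2) hold.
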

 \begin{proof} Once we prove assertions (a) and (b) the assertions (c1) and (d) are consequences of  Corollary~\ref{corollary:potential operator and weak solutions}  and Theorem~\ref{theorem:computation of resolvent for positive functionals}, respectively. Assertion (c2) also follows from  Corollary~\ref{corollary:potential operator and weak solutions} with the additional observation that $\Ge_\alpha |f| \in L^\infty(m)$ yields that every special $\E_\alpha$-nest $\cN$ is equivalent to the nest $\cN_{\Ge_\alpha|f|}$, see Proposition~\ref{proposition:nh vs nh'}.      
 
  $p=2$: All the claims are a consequence of Proposition~\ref{proposition:properties resolvent} and the fact that $\Ge_\alpha$ is an extension of $G_\alpha$.

  $p = \infty$:  We start with the following claim.

 {\em Claim:} Let $\cN$ be a special $\E_{\alpha}$-nest. For all $f \in L^\infty(m)$ we have $D(\E_{\alpha})_\cN \subseteq D(\ell_f)$ and the restriction $R^\cN \ell_f$ is a regular functional on $D(\E_{\alpha})_\cN$.

  {\em Proof of the claim.}  Let $N\in \cN$. For any $g \in D(\E_{\alpha})_N$, we obtain 
  $$\int_{X}|fg|\D m = \int_{N} |fg| \D m \leq  \left(\int_{N} f^2 \D m\right)^{1/2} \E_{\alpha}(g)^{1/2} \leq m(N)^{1/2} \|f\|_\infty \E_{\alpha}(g)^{1/2}. $$
  According to Lemma~\ref{lemma:main part and killing of dirichlet form}, we have $m(N)< \infty$. Therefore, the above inequality shows $D(\E_{\alpha})_N \subseteq D(\ell_f)$.  The regularity of the functional $\ell_f$ is obvious. \qedc
  
  Let $\cN$ be a special $\E_{\alpha}$-nest and let $f \in L^\infty(m)$.  By the claim we have $D(\E_{\alpha})_\cN \subseteq D(\ell_f)$.  We prove $|R^\cN\ell_f| \in D(G_\alpha	^\cN)$. According to Proposition~\ref{proposition:characterization of domain for positive functionals}, this is equivalent to the boundedness of the net $(G_{\alpha}^NR^N |R^\cN \ell_f|)$. The Markov property of $\alpha G^N_{\alpha}$ implies
  $$G_{\alpha}^NR^N |R^\cN \ell_f| = G_{\alpha}^NR^N\ell_{|f|} = G_\alpha^N |f||_N \leq \alpha^{-1} \|f\|_\infty.$$
  We obtain $\ell_f \in D(G^\cN_{\alpha})$ and 
  $$\|\Ge_{\alpha} f\|_\infty \leq \|\Ge_{\alpha}|f|\|_\infty \leq \alpha^{-1}\|f\|_\infty.$$
  This shows (a) and (b) in the case when $p=\infty$.

 $p=1$:  For $f \in L^1(m) \cap L^2(m)$ and $n \in \IN$, we let $\psi_n := 1_{\{|f| > 1/n\}} \in L^2(m)$. The properties of the restriction of $G_\alpha$ to $L^2(m)$ yield  
  \begin{align*}
   \int_X | G_{\alpha} f| \D m &\leq \int_X G_{\alpha} |f| \D m\\
   &= \lim_{n \to \infty}  \int_X  G_{\alpha} |f| \psi_n \D m\\
   &= \lim_{n \to \infty}  \int_X |f| G_{\alpha}  \psi_n \D m\\
   &\leq \alpha^{-1} \int_X |f| \D m.
  \end{align*}
  Since $L^1(m) \cap L^2(m)$ is dense in $L^1(m)$, this computation shows that $G_{\alpha}$ can be uniquely extended to a linear operator $H^1_{\alpha}$ on $L^1(m)$. For all $f \in L^1(m)$ it satisfies
  $$\|\alpha H^1_\alpha f\|_1 \leq \|f\|_1.$$
  The  continuity of $H^1_\alpha$ implies that for any $f \in L^1(m)$ we have 
  $$H_\alpha^1 |f| = \lim_{n\to \infty} H^1_\alpha (|f| \psi_n)  =  \lim_{n\to \infty} G_{\alpha} (|f|\psi_n).$$
  Moreover, the positivity of $G_{\alpha}$ shows $H^1_\alpha |f| \geq  G_{\alpha} (|f|\psi_n)$. An application of Theorem~\ref{theorem:computation of resolvent for positive functionals} yields $|f| \in D(\Ge_{\alpha})$ and 
  $$\Ge_{\alpha} |f| = \lim_{n\to \infty} G_{\alpha} (|f|\psi_n) = H^1_\alpha |f|.$$
  This proves all the claims in the case  $p=1$. 
  
  $1 < p < \infty$: With assertion (b) for the $p=1$ and the $p=\infty$ case at hand, the Riesz-Thorin interpolation theorem for positive operators, see e.g. \cite[Section~4.2]{Haa},  shows that for all $f \in L^p(m) \cap L^2(m)$ the following inequality holds:
  $$\|\alpha G_{\alpha} f\|_p \leq \|f\|_p. $$
  In particular, $G_{\alpha}$ can be uniquely extended to a linear operator $H^p_\alpha$ on $L^p(m)$ that satisfies the inequality in assertion (b). An argumentation as in the $L^1$-case shows  $L^p(m)\subseteq D(\Ge_{\alpha})$ and that for all $f \in L^p(m)$ we have
  $$H^p_\alpha f = \Ge_{\alpha} f.$$
   Consequently, we obtain (a) and (b). This finishes the proof.
 \end{proof}

 \begin{corollary}
  Let $\E$ be a Dirichlet form of full support, let $\alpha> 0$ and let  $1\leq p \leq \infty$. For all $f \in L^p(m)$ the function $\Ge_{\alpha} f$ is a weak solution to the equation
  $$\begin{cases}
     \E^{(w)}_{\alpha}(g,\cdot) = \ell_f\\
     g \in L^p(m)
    \end{cases}. 
 $$
  Moreover, if $f \geq 0$ and $h\in L^p(m)$ is a nonnegative weak supersolution to the above equation, then $h \geq \Ge_{\alpha} f$.
 \end{corollary}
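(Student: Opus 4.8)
The plan is to read off both assertions from Theorem~\ref{theorem:lp resolvents} and Theorem~\ref{theorem:maximal principle potential operator}, once the hypotheses of those results are seen to apply to the form $\E_{\alpha}$ and the functional $\ell_f$. Since $\E$ has full support, the energy form $\E_{\alpha}$ is transient and of full support (as recalled before Proposition~\ref{proposition:properties resolvent}), so both cited theorems are available with the energy form $E = \E_{\alpha}$ and its extended potential operator $\Ge_{\alpha}$.

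For the existence part I would first invoke Theorem~\ref{theorem:lp resolvents}~(a) and (b) to get $\ell_f \in D(\Ge_{\alpha})$ together with $\Ge_{\alpha} f \in L^{p}(m)$, so that $\Ge_{\alpha}f$ is an admissible candidate. The weak solution identity is then exactly the content of Theorem~\ref{theorem:lp resolvents}~(c1) when $1 \leq p < \infty$ and of (c2) when $p = \infty$: in either case there is a special $\E_{\alpha}$-nest $\cN$ with $\Ge_{\alpha} f \in D(\E_{\alpha})_{w,\, \cN}$ and
$$\E^{(w)}_{\alpha,\, \cN}(\Ge_{\alpha}f,\psi) = \int_X f\psi\, \D m = \ell_f(\psi) \quad \text{for all } \psi \in D(\E_{\alpha})_\cN.$$
The only point to spell out is that this matches Definition~\ref{definition:weak solution}: the finiteness of the right-hand side for every $\psi \in D(\E_{\alpha})_\cN$ already forces the required inclusion $D(\E_{\alpha})_\cN \subseteq D(\ell_f)$, so nothing beyond the cited theorem is needed.

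For the minimality statement I would specialize to $f \geq 0$. Here $\ell_f$ is a positive linear functional on $L^0(m)$: its domain $D(\ell_f)$ is a sublattice and $\ell_f(\psi) \geq 0$ for $\psi \geq 0$. Thus Theorem~\ref{theorem:maximal principle potential operator} applies to $E = \E_{\alpha}$ and $\ell = \ell_f$; the existence part already supplies $\ell_f \in D(\Ge_{\alpha})$, which is condition (ii) of that theorem, so part (b) is in force. Consequently any weak supersolution $h$ of $\E^{(w)}_{\alpha}(g,\cdot) = \ell_f$ --- in particular a nonnegative $h \in L^p(m)$ --- satisfies $\Ge_{\alpha} f = \Ge_{\alpha}\ell_f \leq h$, which is the claimed inequality $h \geq \Ge_{\alpha} f$.

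I do not expect a genuine obstacle; the corollary is a bookkeeping consequence of the machinery built up in the previous two sections. The only care required is to verify that $\ell_f$ qualifies as a positive (equivalently regular) functional on $L^0(m)$ in the precise sense of Section~\ref{section:lebesgue spaces}, and to confirm that the nest-dependence built into Definition~\ref{definition:weak solution} is absorbed by the split into the cases (c1) and (c2) of Theorem~\ref{theorem:lp resolvents}. Once these identifications are in place, both assertions follow with no further computation.
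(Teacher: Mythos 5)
Your proposal is correct and takes exactly the route of the paper, whose proof is the one-line remark that the corollary ``follows from the previous theorem and Theorem~\ref{theorem:maximal principle potential operator}'': the existence part is Theorem~\ref{theorem:lp resolvents}~(c1)/(c2) and the minimality part is Theorem~\ref{theorem:maximal principle potential operator}~(b) applied to the positive functional $\ell_f$. The small checks you spell out---that $\ell_f$ is a positive (hence regular) functional, that $D(\E_\alpha)_\cN \subseteq D(\ell_f)$ comes for free from the cited theorem, and that $\E_\alpha$ is transient and of full support---are precisely the bookkeeping the paper leaves implicit.
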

 \begin{proof}
  This follows from the previous theorem and Theorem~\ref{theorem:maximal principle potential operator}.
 \end{proof}
\begin{remark}
  \begin{itemize}
   \item As mentioned previously, it is well known that resolvents of Dirichlet forms can be continuously extended to all $L^p$-spaces. For $1 \leq p < \infty$, the space $L^2(m) \cap L^p(m)$ is dense in $L^p(m)$. Therefore, these continuous extensions are necessarily unique. In particular, our definition of the extension of the resolvent to $L^p(m)$ coincides with the classical one. For $p = \infty$, this coincidence can not be inferred directly from the continuity of the extension; it follows from assertion~(d) in the previous theorem, the compatibility of $\Ge_\alpha$ with monotone limits.
   
   \item The novelty of the previous theorem lies in the observation that for all $1\leq p  \leq \infty$ the $L^p$-resolvent provides weak solutions to the Laplace equation with respect to $\E_\alpha$. This is  known for manifolds, see \cite{Gri}, and graphs, see \cite{KL}, where one has a satisfactory notion of distributions, but even for regular Dirichlet forms we could not find such a result in the literature. The reason behind this gap  is that for $1 \leq p < \infty$ the image of the $L^p$-resolvent  need not  be contained in the local space with respect to the nest of relatively compact open sets. It is the flexibility in the chosen nest that allowed us to prove assertion (a) and (c1).  Due to (c2), the situation is a bit better for the resolvent on $L^\infty$; it is known for strongly local regular Dirichlet forms that the $L^\infty$-resolvent provides minimal weak solutions to the Laplace equation, see \cite{Stu1}. Nevertheless, for general regular Dirichlet forms also the $L^\infty$-statement seems to be new.  
  \end{itemize}
  \end{remark} 
In the previous subsection we characterized uniqueness of bounded weak solutions to the Laplace equation in terms of a conservation property for the extended potential operator. We now interpret this  property for Dirichlet forms. To this end, we first need to identify the killing functional of $\E_\alpha$, cf. Subsection~\ref{subsection:uniqueness of bounded weak solutions}. Recall that for $\alpha > 0$, $\ell_\alpha$ is the linear functional with domain $D(\ell_\alpha) = L^1(m)$, on which it acts by
$$\ell_\alpha(\psi) = \alpha \int_X \psi \D m.$$
\begin{lemma}
 Let $\E$ be a Dirichlet form of full support. For $\alpha > 0$,  let $\kappa_\alpha$ be the killing functional of $\E_\alpha$ and let $k$ be the killing functional of $\Ee$.  The domain of $\kappa_\alpha$ satisfies 
 $$D(\kappa_\alpha) \subseteq D(k) \cap  L^1(m) = D(k) \cap D(\ell_\alpha),$$
 on which it acts by
 $$\psi \mapsto \kappa_\alpha(\psi) =  k(\psi) + \ell_\alpha(\psi) = k(\psi) + \alpha \int_{X} \psi \D m.$$
 Moreover, we have $\kappa_\alpha,k, \ell_\alpha \in  D(\Ge_\alpha)$ and 
 $$\Ge_\alpha \kappa_\alpha = \alpha \Ge_\alpha 1 + \Ge_\alpha k \leq 1.$$
\end{lemma}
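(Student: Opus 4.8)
The plan is to reduce everything to the structural identity for the killing part of $\E_\alpha$ recorded in Lemma~\ref{lemma:main part and killing of dirichlet form}, and then to feed the resulting functional identity into the machinery of the extended potential operator.

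First I would settle the domain inclusion together with the formula. Fix $\psi \in D(\kappa_\alpha)$. By the definition of the killing functional there is $\varphi \in D(\E_\alpha) = D(\E)$ with $1_{\{|\psi|>0\}} \leq \varphi \leq 1$. Since $\varphi \in L^2(m)$ and $\varphi = 1$ on $\{|\psi|>0\}$, the estimate $m(\{|\psi|>0\}) \leq m(\{\varphi \geq 1/2\}) \leq 4\|\varphi\|_2^2 < \infty$ shows that $\psi$ is supported on a set of finite measure; as $\psi \in L^2(m)$, Cauchy--Schwarz then yields $\psi \in L^1(m) = D(\ell_\alpha)$. The same $\varphi$ witnesses $\psi \in D(k)$, so $D(\kappa_\alpha) \subseteq D(k) \cap L^1(m)$. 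For the value I would use that $\varphi, \psi \in D(\E_\alpha) \subseteq D((\E_\alpha)^{(k)}) = D(\Ee^{(k)}) \cap L^2(m)$ by Lemma~\ref{lemma:main part and killing of dirichlet form}; polarizing the identity $(\E_\alpha)^{(k)}(f) = \Ee^{(k)}(f) + \alpha \int_X f^2 \D m$ and using $\varphi\psi = \psi$ then gives $\kappa_\alpha(\psi) = (\E_\alpha)^{(k)}(\varphi,\psi) = \Ee^{(k)}(\varphi,\psi) + \alpha\int_X \psi \D m = k(\psi) + \ell_\alpha(\psi)$.

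Next I would handle membership in $D(\Ge_\alpha)$. Since $\E$ has full support, $\E_\alpha$ is a transient energy form of full support, so Theorem~\ref{theorem:maximal principle potential operator} (applied to the weak solution $1$ of $\Ew(g,\cdot)=\kappa_\alpha$) gives $\kappa_\alpha \in D(\Ge_\alpha)$ and $\Ge_\alpha \kappa_\alpha \leq 1$. Membership $\ell_\alpha \in D(\Ge_\alpha)$ follows from $1 \in L^\infty(m) \subseteq D(\Ge_\alpha)$ by Theorem~\ref{theorem:lp resolvents} together with the homogeneity of $\Ge_\alpha$, and it yields $\Ge_\alpha \ell_\alpha = \alpha \Ge_\alpha 1$. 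For $k$ I would choose a special $\E_\alpha$-nest $\cN$; specialness gives $D(\E_\alpha)_\cN \subseteq D(\kappa_\alpha) \subseteq D(k)$, and on $D(\E_\alpha)_\cN$ the positive functionals obey $R^\cN k \leq R^\cN \kappa_\alpha$ by the formula above. Then the positivity (monotonicity) of the potentials $G^N$ from Proposition~\ref{proposition:potential operator is positive}, combined with the nest-independent representation $\Ge_\alpha \kappa_\alpha = \sup_{N \in \cN} G^N R^N \kappa_\alpha$ furnished by Proposition~\ref{proposition:characterization of domain for positive functionals} and Lemma~\ref{lemma:potential operator well defined}, gives $G^N R^N(R^\cN k) \leq G^N R^N(R^\cN \kappa_\alpha) \leq \Ge_\alpha \kappa_\alpha \leq 1$ for every $N \in \cN$. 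By Proposition~\ref{proposition:characterization of domain for positive functionals} this uniform bound yields $R^\cN k \in D(G^\cN)$, that is $k \in D(\Ge_\alpha)$. Finally, with $k, \ell_\alpha \in D(\Ge_\alpha)$ in hand and $\kappa_\alpha = k + \ell_\alpha$, the additivity of the extended potential operator from Remark~\ref{remark:extended potential operator} delivers $\Ge_\alpha \kappa_\alpha = \Ge_\alpha k + \Ge_\alpha \ell_\alpha = \Ge_\alpha k + \alpha \Ge_\alpha 1$, the bound $\leq 1$ being the one already noted.

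The measure-theoretic inclusion and the polarization step are routine. The main obstacle is the bookkeeping behind $k \in D(\Ge_\alpha)$: one must ensure that the domination $R^\cN k \leq R^\cN \kappa_\alpha$ is read on a single special nest that is simultaneously admissible for $\kappa_\alpha$, and that the nest-independence of the extended potential operator for positive functionals is invoked so that comparing against $\Ge_\alpha \kappa_\alpha \leq 1$ is legitimate. Once this is arranged, the additivity step and the identification $\Ge_\alpha \ell_\alpha = \alpha \Ge_\alpha 1$ are purely formal.
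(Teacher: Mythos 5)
Your proposal is correct and takes essentially the same approach as the paper: Lemma~\ref{lemma:main part and killing of dirichlet form} together with the cutoff $\varphi$ (which equals $1$ on $\{|\psi|>0\}$) yields the domain inclusion and the polarized formula $\kappa_\alpha(\psi)=k(\psi)+\ell_\alpha(\psi)$; the observation that $1$ weakly solves $\Ew(g,\cdot)=\kappa_\alpha$ gives $\kappa_\alpha\in D(\Ge_\alpha)$ with $\Ge_\alpha\kappa_\alpha\leq 1$ via Theorem~\ref{theorem:maximal principle potential operator}; domination by $\kappa_\alpha$ combined with positivity of the potential operators and Proposition~\ref{proposition:characterization of domain for positive functionals} gives $k\in D(\Ge_\alpha)$; and linearity (Remark~\ref{remark:extended potential operator}) finishes. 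The only cosmetic deviations are your Chebyshev-plus-Cauchy--Schwarz route to $\psi\in L^1(m)$ (the paper bounds $\alpha\int_X|\psi\varphi|\D m$ directly by $\E_\alpha(\varphi)^{1/2}\E_\alpha(\psi)^{1/2}$) and your appeal to Theorem~\ref{theorem:lp resolvents} for $\ell_\alpha\in D(\Ge_\alpha)$, where the paper instead reuses the same domination argument $\ell_\alpha\leq\kappa_\alpha$ that handles $k$; both variants are valid.
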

\begin{proof}
 It follows from Lemma~\ref{lemma:main part and killing of dirichlet form} that the domain of the killing part of $\E_\alpha$ satisfies 
 $$D((\E_\alpha)^{(k)})  = D(\Ee^{(k)})\cap L^2(m),$$
 on which it acts by
 $$(\E_\alpha)^{(k)}(f) = \Ee^{(k)}(f) + \alpha \int_X f^2 \D m.$$
 For $\psi \in  D(\kappa_\alpha)$ there exists $\varphi \in D(\E_\alpha) = D(\Ee) \cap L^2(m)$ with $1_{\{|\psi|>0\}} \leq \varphi \leq 1$. This implies $\psi \in D(k)$ and 
 $$\alpha \int_X |\psi| \D m  = \alpha \int_X |\psi \varphi| \D m \leq \E_\alpha(\varphi)^{1/2} \E_\alpha(\psi)^{1/2} < \infty.$$
 Furthermore, by the definition of $k$ and $\ell_\alpha$ we have
 \begin{align*}
  \kappa_\alpha(\psi) = (\E_\alpha)^{(k)}(\psi,\varphi)  = \Ee^{(k)}(\psi,\varphi) + \alpha \int_X \psi \varphi \D m =  k(\psi) + \alpha \int_X \psi  \D m = k(\psi) + \ell_\alpha(\psi).
 \end{align*}
 The discussion in Subsection~\ref{subsection:uniqueness of bounded weak solutions} shows $\kappa_\alpha \in D(\Ge_\alpha)$  and  $\Ge_\alpha \kappa_\alpha \leq 1$. Furthermore, what we have already proven yields   $\kappa_\alpha  \geq k, \ell_\alpha$ on $D(\kappa_\alpha)$. According to Proposition~\ref{proposition:characterization of domain for positive functionals} and the positivity of the potential operator, this implies $k, \ell_\alpha \in D(\Ge_\alpha)$. The equation
 $$\Ge_\alpha \kappa_\alpha = \alpha \Ge_\alpha 1 + \Ge_\alpha k$$
 follows from the linearity of the potential operator, cf. Remark~\ref{remark:extended potential operator}. This finishes the proof.  
\end{proof}

With the previous lemma at hand, Theorem~\ref{theorem:uniqueness of bounded weak solutions} reads as follows.
\begin{theorem} \label{theorem:characterization stochastic completeness}
 Let $\E$ be a Dirichlet form of full support. The following assertions are equivalent.
 \begin{itemize}
  \item[(i)] For one/all $\alpha > 0$ the identity $1 = \alpha \Ge_\alpha 1 + \Ge_\alpha k$ holds.
  \item[(ii)] For one/all $\alpha > 0$  any nonnegative essentially bounded weakly $\E_\alpha$-subharmonic function equals zero.
  \item[(iii)] For one/all $\alpha > 0$  any essentially bounded weakly $\E_\alpha$-harmonic function equals zero. 
  \item[(iv)] For one/all $\alpha > 0$ and for all $f \in L^\infty(m)$ the equation $\E^{(w)}_\alpha(g,\cdot) = \ell_f$ has a unique essentially bounded weak solution. 
 \end{itemize}
\end{theorem}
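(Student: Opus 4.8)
The plan is to apply Theorem~\ref{theorem:uniqueness of bounded weak solutions} to the transient energy forms $\E_\alpha$ for each fixed $\alpha$, and then to remove the dependence on $\alpha$ by a resolvent iteration. First I would record that for each $\alpha>0$ the form $\E_\alpha$ is a transient energy form of full support: transience holds because $\E_\alpha(f)\geq\alpha\|f\|_2^2$ forces $\ker\E_\alpha=\{0\}$, and full support of $\E_\alpha$ is equivalent to full support of $\E$ as noted before the statement. Theorem~\ref{theorem:uniqueness of bounded weak solutions} then applies with $E=\E_\alpha$, and its four conditions become, for this fixed $\alpha$: $1=\Ge_\alpha\kappa_\alpha$; vanishing of nonnegative bounded weakly $\E_\alpha$-subharmonic functions; vanishing of bounded weakly $\E_\alpha$-harmonic functions; and uniqueness of bounded weak solutions to $\E^{(w)}_\alpha(g,\cdot)=\ell$ for the admissible functionals $\ell$. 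By the previous lemma $\Ge_\alpha\kappa_\alpha=\alpha\Ge_\alpha 1+\Ge_\alpha k$, so its first condition is exactly (i) for this $\alpha$.

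Next I would match the remaining conditions for fixed $\alpha$. The second and third conditions of Theorem~\ref{theorem:uniqueness of bounded weak solutions} are literally (ii) and (iii). For (iv): on the one hand, every $f\in L^\infty(m)$ gives $\ell_f\in D(\Ge_\alpha)$ with $\Ge_\alpha|\ell_f|=\Ge_\alpha|f|\in L^\infty(m)$ by Theorem~\ref{theorem:lp resolvents}, so the fourth condition of Theorem~\ref{theorem:uniqueness of bounded weak solutions} implies (iv) for this $\alpha$; on the other hand, choosing $f=0$ shows that $0$ is the only bounded weakly $\E_\alpha$-harmonic function, i.e.\ (iv) implies (iii) for this $\alpha$. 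Together with the cited theorem this yields the equivalence of all four statements for each fixed $\alpha$.

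It then remains to prove $\alpha$-independence, for which I would show that (iii) holding for a single $\alpha$ forces (iii) for every $\beta>0$. The key identity is $\E^{(w)}_{\alpha,\cN}(g,\psi)-\E^{(w)}_{\beta,\cN}(g,\psi)=(\alpha-\beta)\int_X g\psi\,{\rm d}m$ for bounded $g$ and $\psi\in D(\E_\alpha)_\cN$, which follows because $\E_\alpha$ and $\E_\beta$ share the same special nests and the same main part while their killing parts differ only by a multiple of $\|\cdot\|_2^2$ (cf.\ Lemma~\ref{lemma:main part and killing of dirichlet form}). Consequently a bounded weakly $\E_\beta$-harmonic $g$ is a bounded weak solution of $\E^{(w)}_\alpha(\cdot,\cdot)=\ell_{(\alpha-\beta)g}$. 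Since $(\alpha-\beta)g\in L^\infty(m)$ gives $\Ge_\alpha|(\alpha-\beta)g|\in L^\infty(m)$, the function $\Ge_\alpha\ell_{(\alpha-\beta)g}=(\alpha-\beta)\Ge_\alpha g$ is also such a solution by Corollary~\ref{corollary:potential operator and weak solutions}, and uniqueness (statement (iv) for $\alpha$) forces $g=(\alpha-\beta)\Ge_\alpha g$. Iterating and using $\|\alpha\Ge_\alpha\|_{L^\infty\to L^\infty}\leq 1$ from Theorem~\ref{theorem:lp resolvents} gives $\|g\|_\infty\leq(|\alpha-\beta|/\alpha)^n\|g\|_\infty$ for all $n$, so $g=0$ whenever $\beta\in(0,2\alpha)$; a bootstrap over such intervals covers all $\beta>0$. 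Combining the fixed-$\alpha$ equivalences with this $\alpha$-independence of (iii) yields the full `one/all' equivalence.

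The main obstacle is this last step: I expect the delicate points to be verifying the weak-form difference identity on a common special nest (so that the same test functions and the same extension of $\Ekm$ are used for $\E_\alpha$ and $\E_\beta$) and confirming that the minimal solution $\Ge_\alpha\ell_{(\alpha-\beta)g}$ may legitimately be substituted into the uniqueness statement, together with the bounded-resolvent estimate that drives the geometric iteration.
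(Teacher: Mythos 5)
Your proposal is correct, and the fixed-$\alpha$ part coincides with the paper's: the paper likewise obtains the equivalence of (i)--(iv) for a single $\alpha$ by citing Theorem~\ref{theorem:uniqueness of bounded weak solutions} together with Theorem~\ref{theorem:lp resolvents}, with the preceding lemma identifying $\Ge_\alpha\kappa_\alpha = \alpha\Ge_\alpha 1 + \Ge_\alpha k$; your careful matching of the statement's (iv) against the slightly different (iv) of the uniqueness theorem (via $\Ge_\alpha|\ell_f|\in L^\infty(m)$ in one direction and $f=0$ in the other) is exactly the bookkeeping the paper leaves implicit. Where you genuinely diverge is the one/all step. The paper transfers assertion~(i): assuming $1 = \alpha\Ge_\alpha 1 + \Ge_\alpha k$, it applies the resolvent equation $\Ge_\alpha\ell - \Ge_\beta\ell = (\beta-\alpha)\Ge_\alpha\Ge_\beta\ell$ (asserted to extend from Proposition~\ref{proposition:properties resolvent} to the extended potential operators) to $\ell = k$ and computes in four lines that $\Ge_\beta k = 1 - \beta\Ge_\beta 1$, i.e.\ (i) for every $\beta>0$ at once. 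You instead transfer assertion~(iii), using the perturbation identity $\E^{(w)}_{\alpha,\cN}(g,\psi) - \E^{(w)}_{\beta,\cN}(g,\psi) = (\alpha-\beta)\int_X g\psi\,{\rm d}m$ --- which is legitimately grounded in Lemma~\ref{lemma:main part and killing of dirichlet form} (common main part $\Ee^{(M)}$, killing parts differing by $(\alpha-\beta)\|\cdot\|_2^2$, and $D(\E_\alpha)_\cN\subseteq L^1(m)$ so the pairing is finite; the equality $\E_{\alpha,\varphi}=\E_{\beta,\varphi}$ on $I_\cN$ and the coincidence of special nests both follow since the form norms of $\E_\alpha$ and $\E_\beta$ are equivalent on the common domain) --- together with uniqueness for $\alpha$ to conclude $g = (\alpha-\beta)\Ge_\alpha g$, and then the $L^\infty$-contractivity $\|\alpha\Ge_\alpha\|_{L^\infty\to L^\infty}\leq 1$ to force $g=0$ for $\beta\in(0,2\alpha)$, bootstrapping to all $\beta$. (In fact a single application of $\|g\|_\infty \leq (|\alpha-\beta|/\alpha)\|g\|_\infty$ already gives $g=0$; the iteration is harmless but unnecessary.) The trade-off: the paper's route is shorter and global in $\beta$, but rests on the unproved claim that the resolvent equation extends to $(\Ge_\alpha)$; your route sidesteps that claim entirely, replacing it by the main/killing decomposition plus the $L^\infty$-bound from Theorem~\ref{theorem:lp resolvents}, at the modest cost of the local-in-$\beta$ restriction and the bootstrap. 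Both arguments are sound.
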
 
 \begin{proof}
  The equivalence of (i) - (iv) for one $\alpha >0$ follows from Theorem~\ref{theorem:uniqueness of bounded weak solutions} and Theorem~\ref{theorem:lp resolvents}. Now, suppose that  the assertions (i) - (iv) hold for $\alpha> 0$.  In particular, we have $1 = \alpha \Ge_\alpha 1  + \Ge_\alpha k$. The resolvent equation of Proposition~\ref{proposition:properties resolvent} extends to $(\Ge_\alpha)$. Therefore, we obtain for $\beta > 0$  
  \begin{align*}
   \Ge_\beta k &= (\alpha - \beta) \Ge_\beta \Ge_\alpha k  + \Ge_\alpha k\\
   &=  (\alpha - \beta) \Ge_\beta (1 - \alpha \Ge_\alpha 1)  + 1 - \alpha \Ge_\alpha 1\\
   &= (\alpha - \beta) \Ge_\beta 1 - \alpha \Ge_\beta 1 + \alpha \Ge_\alpha 1  + 1 - \alpha \Ge_\alpha 1\\
   &= 1 - \beta \Ge_\beta 1. 
  \end{align*}
 This shows that assertion (i) holds for $\beta$ and finishes the proof. 
 \end{proof}
 \begin{corollary}
  Let $\E$ be a Dirichlet form of full support.  If for some $\alpha> 0$ the identity $1 = \alpha \Ge_\alpha 1 + \Ge_\alpha k$ holds, then $\E = \E^{\rm \, ref}$. Moreover, if $1 = \alpha \Ge_\alpha 1$ for some $\alpha> 0$, then $\E$ is Silverstein unique. 
 \end{corollary}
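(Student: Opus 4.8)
The plan is to treat the two assertions separately, reducing the first to the conservation result Corollary~\ref{corollary:conservation implies e=er} for the transient energy form $\E_\alpha$, and the second to the maximality Theorem~\ref{theorem:maximality er}. Throughout I would use the preceding lemma, which identifies the killing functional of $\E_\alpha$ as $\kappa_\alpha = k + \ell_\alpha$ and records $\Ge_\alpha \kappa_\alpha = \alpha \Ge_\alpha 1 + \Ge_\alpha k \le 1$. Note that $\E$ of full support means $\E_\alpha$ is of full support, so that corollary and the theorems of the weak-solutions chapter apply to $\E_\alpha$.

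For the first assertion I would observe that the hypothesis $1 = \alpha \Ge_\alpha 1 + \Ge_\alpha k$ is, by that lemma, precisely $\Ge_\alpha \kappa_\alpha = 1$. Since $\E_\alpha$ is transient of full support, Corollary~\ref{corollary:conservation implies e=er} yields $\E_\alpha = (\E_\alpha)^{\rm ref}$. By Lemma~\ref{lemma:main part and killing of dirichlet form} the reflected energy form of $\E_\alpha$ is $(\E^{\rm ref})_\alpha$, so $\E_\alpha = (\E^{\rm ref})_\alpha$ as energy forms on $L^0(m)$. Restricting to $L^2(m)$ and subtracting the common term $\alpha \|\cdot\|_2^2$ gives $\E = \E^{\rm ref}$.

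For the second assertion I would first combine $1 = \alpha \Ge_\alpha 1$ with $\Ge_\alpha \kappa_\alpha \le 1$ and the positivity of the potential operator (Proposition~\ref{proposition:potential operator is positive}): the chain $1 = \alpha \Ge_\alpha 1 \le \alpha \Ge_\alpha 1 + \Ge_\alpha k = \Ge_\alpha \kappa_\alpha \le 1$ forces both $\Ge_\alpha \kappa_\alpha = 1$ and $\Ge_\alpha k = 0$. The next step is to upgrade $\Ge_\alpha k = 0$ to $\Ee^{(k)} = 0$: since on positive functionals $\Ge_\alpha$ is the supremum of the $G^N R^N$ (Proposition~\ref{proposition:characterization of domain for positive functionals}), $\Ge_\alpha k = 0$ forces $k$ to vanish on $D(\E_\alpha)_\cN$ for a special $\E_\alpha$-nest $\cN$; the product-rule for the killing part (Lemma~\ref{lemma:characterization of k}) then gives $\Ee^{(k)}(f) = k(f^2) = 0$ for bounded $f \in D(\Ee)_\cN$, and density together with the $\Ee$-continuity of $\Ee^{(k)}$ yields $\Ee^{(k)} = 0$. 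Hence, by Lemma~\ref{lemma:main part and killing of dirichlet form}, the killing part of $\E_\alpha$ is $(\E_\alpha)^{(k)} = \alpha \|\cdot\|_2^2$.

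Finally, to conclude Silverstein uniqueness, let $\ow\E$ be any Dirichlet form that is a Silverstein extension of $\E$. By Theorem~\ref{theorem:silverstein extensions of dirichlet forms} (the ideal property is independent of $\alpha$) the energy form $\ow\E_\alpha$ is a Silverstein extension of $\E_\alpha$, and by Lemma~\ref{lemma:main part and killing of dirichlet form} its killing part satisfies $(\ow\E_\alpha)^{(k)}(f) = \ow\Ee^{(k)}(f) + \alpha\int_X f^2 \D m \ge \alpha\int_X f^2 \D m = (\E_\alpha)^{(k)}(f)$ for $f \in D(\E_\alpha)$, the inequality holding because killing parts are nonnegative. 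This is exactly the hypothesis of Theorem~\ref{theorem:maximality er} with $E = \E_\alpha$, so together with $\E_\alpha = (\E_\alpha)^{\rm ref}$ it gives $\ow\E_\alpha \le (\E_\alpha)^{\rm ref} = \E_\alpha$; since $\ow\E_\alpha$ also extends $\E_\alpha$, the two forms coincide, whence $\ow\E = \E$ and $\E$ is Silverstein unique. The main obstacle in this argument is the passage from the numerical identity $\Ge_\alpha k = 0$ to the vanishing of the killing form $\Ee^{(k)}$, where the positivity of the extended potential operator and the product-rule structure of the killing must be used carefully; the rest is bookkeeping with the reflected form and the order on quadratic forms.
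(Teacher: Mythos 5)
Your proof is correct, and its skeleton is the paper's: the first assertion is handled identically (Corollary~\ref{corollary:conservation implies e=er} applied to $\E_\alpha$, then $(\E_\alpha)^{\rm ref} = (\E^{\rm ref})_\alpha$ from Lemma~\ref{lemma:main part and killing of dirichlet form}, then restriction to $L^2(m)$), and the second assertion begins with the same squeeze $1 = \alpha\Ge_\alpha 1 \leq \alpha\Ge_\alpha 1 + \Ge_\alpha k \leq 1$ forcing $\Ge_\alpha k = 0$. You diverge from the paper in two local respects, both sound. First, the paper passes from $\Ge_\alpha k = 0$ to $k = 0$ in one line by noting that $\Ge_\alpha k$ is a weak solution to $\E_\alpha^{(w)}(g,\cdot) = k$ (Corollary~\ref{corollary:potential operator and weak solutions}), so the zero function tests $k$ against a whole nest; you instead use the supremum formula of Proposition~\ref{proposition:characterization of domain for positive functionals} to get $G^N R^N k = 0$ for each $N$ (each term being nonnegative), whence $k(\psi) = \E_\alpha^N(G^N R^N k,\psi) = 0$ on $D(\E_\alpha)_N$ directly from the definition of the potential operator. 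Your route is more elementary in that it bypasses the weak-solution machinery entirely; the paper's is shorter given that machinery is already in place. You also make explicit a step the paper leaves implicit, namely upgrading $k = 0$ to $\Ee^{(k)} = 0$ via the product rule $\Ee^{(k)}(f) = \Ekm(f^2,\varphi) = k(f^2)$ on bounded nest functions plus density and $\Ee$-continuity of $\Ekm$ --- the paper needs exactly this to invoke Corollary~\ref{corollary:maximality er dirichlet form}, so spelling it out is a genuine improvement in transparency. Second, you conclude by applying Theorem~\ref{theorem:maximality er} directly at the level of $\E_\alpha$ (checking $(\E_\alpha)^{(k)} = \alpha\|\cdot\|_2^2 \leq (\ow{\E}_\alpha)^{(k)}$ for any Silverstein extension $\ow{\E}$), whereas the paper cites Corollary~\ref{corollary:maximality er dirichlet form} at the level of $\E$ together with $\E = \E^{\rm ref}$; these are the same argument packaged at different levels, and your version has the minor virtue of never leaving the energy-form setting until the very last identification $\ow{\E}_\alpha = \E_\alpha \Rightarrow \ow{\E} = \E$.
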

 \begin{proof}
 It follows from Corollary~\ref{corollary:conservation implies e=er} that $1 = \alpha \Ge_\alpha 1 + \Ge_\alpha k$ implies $\E_\alpha = (\E_\alpha)^{\rm \, ref}.$  Moreover, the equality $(\E_\alpha)^{\rm \, ref} = (\E^{\rm \, ref})_\alpha$ is a consequence of Lemma~\ref{lemma:main part and killing of dirichlet form}.   Since $D(\E)=D(\E_\alpha)$ and $D(\E^{\rm \, ref}) = D((\E^{\rm \, ref})_\alpha)$, we infer $\E = \E^{\rm \, ref}$. For the 'moreover' statement, we note that $1 = \alpha \Ge_\alpha 1$ yields
  $$1 = \alpha \Ge_\alpha 1 \leq 1 + \Ge_\alpha k  =  \alpha \Ge_\alpha 1 + \Ge_\alpha k \leq 1.$$
  Therefore, we have $\Ge_\alpha k = 0$. Since $\Ge_\alpha k$ is a weak solution to the equation $\E_{\alpha}^{(w)}(g,\cdot) = k$, this implies $k = 0$. Now, the statement follows from Theorem~\ref{corollary:maximality er dirichlet form} and the equality $\E =  \E^{\rm \, ref}$.
 \end{proof}

 \begin{remark} 
 \begin{itemize}
  \item A Dirichlet form is called {\em stochastically complete} or  {\em conservative}, if for all $\alpha > 0$ the identity $1 = \alpha \Ge_\alpha 1$ holds; otherwise it is called {\em stochastically incomplete}.  Dirichlet forms with nonvanishing killing part are automatically stochastically incomplete. In this sense, the condition $1 = \alpha \Ge_\alpha 1 + \Ge_\alpha k$ is a generalization of  stochastic completeness to the case when a killing is present. It is a well known meta theorem in Dirichlet form theory that stochastic completeness can be characterized by the uniqueness of bounded weak solutions to the Laplace equation, see \cite{Gri} for manifolds, \cite{Stu1} for strongly local regular Dirichlet forms and \cite{Woj} for graphs. For discrete regular Dirichlet forms with killing, a generalized conservation criterion, which is equivalent to  $1 = \alpha \Ge_\alpha 1 + \Ge_\alpha k$, was introduced in \cite{KL}. There, the authors named this property {\em stochastic completeness at infinity}. The novelty of our result lies in developing a theory of weak solutions that is flexible enough to prove the meta theorem on stochastic completeness for all Dirichlet forms (before it was only known for specific examples, not even for all regular Dirichlet forms) and at the same time to extend the main result of \cite{KL} to all Dirichlet forms. 
  
  \item The list of equivalent assertions in the previous theorem can be extended by uniqueness statements for bounded weak solutions to the heat equation and by a conservation property for the semigroup. Namely, if one extends the associated semigroup $(T_t)$ properly, then the identity $1 = \alpha \Ge_\alpha 1 + \Ge_\alpha k$ holds for all $\alpha > 0$ if and only if for all $t > 0$ the identity
  $$1 = T_t 1 + \int_0^t T_s k \D s $$
  holds. Here, we do not discuss weak solutions to the heat equation nor introduce extended semigroups. For details we refer the reader to $\cite{KL}$ for the graph case and to  \cite{MS} for the manifold case. For general Dirichlet forms this will be discussed elsewhere. 
  
  \item It is well known that stochastic completeness implies Silverstein uniqueness. The result is noted in \cite{Sil} and a detailed proof is given in \cite{Kuw}.
 \end{itemize}
 \end{remark}

 \subsection{Weakly superharmonic functions and recurrence} \label{subsection:recurrence}
 
In this subsection we prove a meta theorem on recurrence. In many situations recurrence is equivalent to the following assertions:
\begin{itemize}
 \item Every nonnegative 'superharmonic' function is constant.
 \item Every bounded 'superharmonic' function is constant.
 \item Every 'superharmonic' function of 'finite energy' is constant. 
\end{itemize}
The terms in quotation marks are defined depending on the concrete setup. For an energy  form $E$ it turns out that 'superharmonic' should be replaced by weakly $E$-superharmonic and that 'finite energy' should be read as a synonym for functions in $D(\Er)$.

 \begin{theorem} \label{theorem:finish} Let $E$ be an irreducible energy form and assume that $D(E)$ contains a nonconstant function. The following assertions are equivalent.
 \begin{itemize}
   \item[(i)] $E$ is recurrent.
   \item[(ii)] Every nonnegative weakly $E$-superharmonic function is constant.
   \item[(iii)] Every essentially bounded weakly $E$-superharmonic function is constant. 
   \item[(iv)] Every weakly $E$-superharmonic function in $D(\Er)$ is constant.
 \end{itemize}
 \end{theorem}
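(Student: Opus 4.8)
The plan is to reduce everything to the already-established dictionary between excessive functions and nonnegative weakly superharmonic functions. First I would record the standing reductions: since $D(E)$ contains a nonconstant function, $E$ is nontrivial, so by Corollary~\ref{corollary: irreducible forms have full support} the irreducible form $E$ has full support, which makes Theorem~\ref{theorem:characterization excessive functions as superharmonic functions} applicable; moreover, by the dichotomy in Corollary~\ref{corolllary:dichotomy recurrence and transience}, $E$ is either recurrent or transient. The engine of the whole argument is Theorem~\ref{theorem:recurrence in terms of constant excessive functions}, which under exactly these hypotheses says that $E$ is recurrent if and only if every $E$-excessive function is constant, together with Theorem~\ref{theorem:characterization excessive functions as superharmonic functions}, which identifies the nonnegative $E$-excessive functions with the nonnegative weakly $E$-superharmonic functions.

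For the forward direction (i) $\Rightarrow$ (ii)/(iii)/(iv) I would argue from recurrence. For (ii): a nonnegative weakly superharmonic function is excessive by Theorem~\ref{theorem:characterization excessive functions as superharmonic functions}, hence constant by Proposition~\ref{proposition:superharmonic functions are constant for recurrent forms} and irreducibility. For (iii): recurrence gives $1 \in \ker E \subseteq D(E)$, so $\Ew_\cN(1,\psi) = E(1,\psi) = 0$ for every $\psi \in D(E)_\cN$ by Theorem~\ref{theorem:ew extends e}; thus $1$ is weakly harmonic, and if $h$ is bounded and weakly superharmonic then $h + \|h\|_\infty$ is nonnegative and still weakly superharmonic with respect to the same nest, so it is constant by (ii) and therefore so is $h$. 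For (iv): recurrence forces $\Er = E$ via Theorem~\ref{theorem:recurrence and uniqueness} (recurrence yields $E = \Eg$ and $\Ekm = 0$, whence $\Er = \Eg = E$), so a weakly superharmonic $h \in D(\Er) = D(E)$ satisfies $E(h,\psi) = \Ew_\cN(h,\psi) \geq 0$ for nonnegative $\psi \in D(E)_\cN$ by Theorem~\ref{theorem:ew extends e}; a density and lattice approximation (Proposition~\ref{proposition:properties of den} together with Lemma~\ref{lemma:characterization convergence in form topology lsc forms}) upgrades this to $E(h,\psi)\geq 0$ for all nonnegative $\psi \in D(E)$, so $h$ is superharmonic, hence excessive, hence constant.

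For the converses I would prove the contrapositive simultaneously: assuming $E$ is not recurrent, the dichotomy makes it transient, and Theorem~\ref{theorem:recurrence in terms of constant excessive functions} then furnishes a nonconstant $E$-excessive function $h$, which is automatically nonnegative by Proposition~\ref{lemma:superharmonic functions are positive}. This single $h$ refutes all three remaining statements after mild adjustments. It is itself a nonnegative nonconstant weakly superharmonic function by Theorem~\ref{theorem:characterization excessive functions as superharmonic functions}, refuting (ii). Choosing a level $C$ strictly between two points of the essential range of $h$, the truncation $h \wedge C$ is bounded, nonconstant, and still excessive by Corollary~\ref{corollary:concatenation superharmonic functions}, hence weakly superharmonic, refuting (iii). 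Finally, since the line $\IR 1$ is $\tau(m)$-closed (here one uses the existence of a set of finite positive measure, so that convergence of a net of constants in $\tau(m)$ forces convergence of the coefficients), the existence of a nonconstant excessive function forces the existence of a nonconstant superharmonic function $h_s$, because otherwise the $\tau(m)$-closure of $S(E) \subseteq \IR 1$ would again be contained in $\IR 1$; as $h_s \in D(E) \subseteq D(\Er)$ is weakly superharmonic, it refutes (iv).

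The main obstacle I anticipate is bookkeeping around the weak form rather than any deep new idea: one must check that the shifts and truncations stay inside the appropriate weak form domain $D(E)_{w,\,\cN}$ and remain weakly superharmonic with respect to a single nest, and that a weakly superharmonic function lying in $D(E)$ (equivalently in $D(\Er)=D(E)$ in the recurrent case) is genuinely superharmonic, which needs the density and lattice structure of $D(E)_\cN$ recorded in Proposition~\ref{proposition:properties of den}. The nonconstancy verifications for $h\wedge C$ and for the superharmonic function extracted from a nonconstant excessive function are elementary but must be done honestly. Everything else is a direct appeal to Theorem~\ref{theorem:recurrence in terms of constant excessive functions}, Theorem~\ref{theorem:characterization excessive functions as superharmonic functions}, and the recurrence/transience dichotomy.
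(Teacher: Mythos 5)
Your proposal is correct and follows essentially the same route as the paper: the same reductions (full support via irreducibility, the recurrence/transience dichotomy), the same key identifications (Theorem~\ref{theorem:characterization excessive functions as superharmonic functions} and Theorem~\ref{theorem:recurrence in terms of constant excessive functions}), Theorem~\ref{theorem:recurrence and uniqueness} for (i)~$\Rightarrow$~(iv), and truncation via Corollary~\ref{corollary:concatenation superharmonic functions} for the bounded case. The only cosmetic deviations are organizational: you run the converses as a single contrapositive from one nonconstant excessive function, and you extract a nonconstant superharmonic function in $D(E)$ from the $\tau(m)$-closedness of $\IR 1$ (valid since $m$ is semi-finite) rather than reading it off the proof of Theorem~\ref{theorem:recurrence in terms of constant excessive functions} as the paper does.
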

  \begin{proof} According to Corollary~\ref{corollary: irreducible forms have full support}, irreducible energy forms have full support. In particular, all theorems of this chapter can be applied to irreducible energy forms.
  
   (i) $\Rightarrow$ (ii): Let $h$ be nonnegative and weakly $E$-superharmonic. By Theorem~\ref{theorem:characterization excessive functions as superharmonic functions} $h$ is $E$-excessive. Since $E$ is irreducible and recurrent,  Theorem~\ref{theorem:recurrence in terms of constant excessive functions} implies that $h$ is constant.
   
   (ii) $\Rightarrow$ (iii):  Let $h$ be essentially bounded and weakly $E$-superharmonic. The function $\ow{h}:= h + \|h\|_\infty 1$ is weakly $E$-superharmonic an nonnegative. By (ii) it is constant.
   
   (iii) $\Rightarrow$ (i): Assume $E$ were transient. According to Theorem~\ref{theorem:recurrence in terms of constant excessive functions}, there exists a nonconstant $E$-superharmonic function $h \in D(E)$.  The transience of $E$ implies that $h$ is nonnegative, see Proposition~\ref{proposition: superharmonic functions are strictly positive}.  Corollary~\ref{corollary:concatenation superharmonic functions} implies that for all $n \in \IN$ the function $h_n: = h\wedge n$ is superharmonic. Since the $(h_n)$ are also bounded, (iii) implies that  they are constant. This contradicts the fact that $h$ is not constant and we arrive at (i). 
   
   (iv) $\Rightarrow$ (i): Assume $E$ were transient. According to Theorem~\ref{theorem:recurrence in terms of constant excessive functions}, there exists a nonconstant $E$-superharmonic function $h \in D(E) \subseteq D(\Er)$.  Hence, (iv) implies that $h$ is constant, a contradiction.
   
   (i) $\Rightarrow$ (iv): Let $h \in D(\Er)$ weakly $E$-superharmonic. Sine $E$ is recurrent, Theorem~\ref{theorem:recurrence and uniqueness} shows $\Ekm = 0$ and $E = \Eg$. Therefore,  $h$ is $E$-superharmonic. Recurrence  and Theorem~\ref{theorem:recurrence in terms of constant excessive functions} imply that it is constant. 
  \end{proof}

  \begin{remark}
   This theorem is a meta theorem for  Dirichlet forms, Markov chains and more general Markov processes. For manifolds, it is essentially due to Ahlfohrs, see \cite{Ahl} and the discussion in the survey \cite{Gri1}. For the case of Markov chains on graphs, see the book \cite{Woe} and references therein. 
  \end{remark}

  \chapter{Some loose ends}

Every longer research text leaves some loose ends and open problems. In this chapter we collect some of them.

\section{Chapter 1}

Theorem~\ref{theorem:characterization closedness hilbert space} characterizes closedness of a quadratic form $q$ in terms of completeness of the inner product space $(D(q)/\ker q,q)$. One of its assumptions was that the form topology $\tau_q$ is metrizable. 

\begin{question}
 Does Theorem~\ref{theorem:characterization closedness hilbert space} hold without the assumption that $\tau_q$ is metrizable or are there counterexamples?
\end{question}

We gave a short proof for the existence of the extended Dirichlet space. It was one goal of this thesis to use form methods only and our proof of Theorem~\ref{theorem: existence extended Dirichlet space} somehow violated this mantra. 

\begin{question}\label{question:form methods extended space}
 Is it possible to prove Theorem~\ref{theorem: existence extended Dirichlet space} using form methods only?
\end{question}
We have a partial answer to this question when $m$ is finite. It has the advantage that other than our proof of Theorem~\ref{theorem: existence extended Dirichlet space} it immediately extends to Markovian convex functionals.
\begin{proof}[Partial answer to Question~\ref{question:form methods extended space}]
 Let $\E$ be a Dirichlet form and let $m$ finite. Furthermore, let $(f_n)$ be a sequence in $D(\E)$ and let $f \in D(\E)$ with $f_n \overset{m}{\to} f$. For $N \in \IN$, we define $g_{n,N} :=   (f_n \wedge N) \vee(-N)$. Since $m$ is finite, the constant function $N$ belongs to $L^2(m)$.  Lebesgue's dominated convergence theorem, Lemma~\ref{lemma:Lebesgue's theorem}, implies  $g_{n,N} \to (f\wedge N)\vee (-N)$  in $L^2(m)$, as $n \to \infty$. Furthermore, we have $(f\wedge N)\vee (-N) \to f$  in $L^2(m)$, as $N \to \infty$. The $L^2$-lower semicontinuity of $\E$ and its contraction properties imply
 $$\E(f) \leq \liminf_{N\to \infty} \liminf_{n\to \infty} \E(g_{n,N}) \leq \liminf_{n\to \infty} \E(f_n).$$
 This proves the existence of the extended Dirichlet space.
\end{proof}
In \cite{Schmu2} it is shown that not only Markovian forms but also positivity preserving forms are closable on $L^0(m)$.  
\begin{question}
 Let $q$ be a closed quadratic on $L^2(m)$ such that for all $f \in L^2(m)$ the inequality $q(|f|) \leq q(f)$ holds. Can the idea of our proof of Theorem~\ref{theorem: existence extended Dirichlet space} be employed to show that $q$ is closable on $L^0(m)$?
\end{question}

Under certain conditions a von Neumann algebra has associated $L^2$-spaces and one associated $L^0$-space (the affiliated operators) that behave similarly as their commutative pendants. 

\begin{looseend}
 Our proof of the existence of the extended Dirichlet space applies to noncommutative Dirichlet forms in the sense of \cite{Cip}.
\end{looseend}

\section{Chapter 2}

We proved that the triviality of the kernel of an energy form $E$ implies that $(D(E),E)$ is a Hilbert space. The following is a slight extension of Conjecture~\ref{conjecture:hilbert space}.

\begin{question}
 Let $E$ be an energy form. Is $(D(E)/\ker E,E)$ always a Hilbert space?
\end{question}

This question seems to be related to the following. It is known in most examples.

\begin{question}
 Does each energy form admit a local Poincar\'e inequality? More precisely, let $E$ be an energy form and let $U \subseteq X$ be measurable with $m(U) < \infty$. Does there exist a strictly positive function $g \in L^1(U,m)$ such that $D(E) \subseteq L^1(U,g \cdot m)$ and
 $$\int_U |f-L(f)| g \D m \leq E(f)^{1/2} \text{ for all }f \in D(E),$$
 where $L(f) = \|g\|_1^{-1} \int_U fg \D m$?
\end{question}

\section{Chapter 3}

\begin{looseend}
 The maximality statement for $\Egm$ that we mentioned in Remark~\ref{remark:maximality er} can be proven with similar methods as the ones that we used for proving properties of $\Eg$ and $\Ekm$. The proofs of some details become a bit more difficult.
\end{looseend}

We proved that an energy form $E$ that possesses a recurrent Silverstein extension and has a nonvanishing killing part with $1 \in D(\Ekm)$ does not admit a maximal Silverstein extension. 

\begin{question}
 Does every energy form with nonvanishing killing part that is not Silverstein unique have a recurrent Silverstein extension? If not, does an energy form whose Silverstein extensions are all transient always possess a maximal Silverstein extension?
\end{question}

We constructed the reflected energy form, and hence the maximal Silverstein extension, in terms of the algebraic structure provided by $L^0(m)$ on the form domain. However, we also discussed that Silverstein extensions can be characterized in terms of the order structure of $L^0(m)$ on the form domain. 

\begin{question}
 Can the reflected energy form be constructed using the order structure only? What about the semigroup or the resolvent of the reflected Dirichlet form?
\end{question}

When $S$ is a densely defined symmetric operator on $L^2(m)$ whose associated quadratic form $\E_S$ is Markovian we constructed the maximal Element in ${\rm Ext}(S)_{{\rm Sil}}$, under various assumptions on $D(S)$, see Theorem~\ref{theorem:maximality er operator}.

\begin{question}
 Can the assumptions of Theorem~\ref{theorem:maximality er operator} be weakened? More precisely, is $S^{\rm ref}$ always the maximal element in ${\rm Ext}(S)_{{\rm Sil}}$ when there exists a special $\bar{\E}_{S, {\rm e}}$-nest $\cN$ such that $D(S) \subseteq D(E)_\cN$?
\end{question}
The mentioned condition can be seen as an abstract formulation that functions in $D(S)$ vanish at infinity.

\section{Chapter 4}

 Since for jump-type forms the weak form value is obtained by some form of improper integration, it is likely that the value of the weak form may depend on the chosen nest.
 
 \begin{question}
  Does there exist an energy form $E$, special $E$-nests $\cN,\cN'$, and functions $f \in D(E)_{w,\, \cN} \cap D(E)_{w,\, \cN'}$, $\psi \in D(E)_\cN \cap D(E)_{\cN'}$ such that 
  $$\Ew_\cN(f,\psi) \neq \Ew_{\cN'}(f,\psi)?$$
 \end{question}
 Next, we discuss a weaker version of the previous question. For excessive functions $h$ we proved several independence statements of the underlying nest, which basically followed from the monotonicity of the map $\varphi \mapsto E(\varphi h,\psi)$. A positive answer to the following question would make our theory of weak form extensions much more convenient to use. 
 \begin{question}
  Let $E$ be an energy form and let $\cN,\cN'$ be special $E$-nests. If $h \in L^0(m)$ is $E$-excessive, does $h \in D(E)_{w,\, \cN} \cap D(E)_{w,\, \cN'}$ imply 
  $$\Ew_\cN(f,\psi) = \Ew_{\cN'}(f,\psi) \text{ for all } \psi \in D(E)_\cN \cap D(E)_{\cN'}?$$
 \end{question}
 The theory that we develop in Section~\ref{section:Weakly superharmonic and excessive functions} allows us to give an affirmative answer when $\cN'  = \cN_h$ or when $\psi \in D(E)_{\cN_h} \cap D(E)_{\cN'_h}$. The same questions can be asked for the extended potential operator $G^\cN$.
 
 We mentioned in the introduction that we see our work on weak solutions as a possible starting point for geometric analysis of all energy forms. One main ingredient for this analysis are weak solutions, the other main ingredient is geometry. At a very rough level geometry is encoded by a metric on the underlying space. For Dirichlet forms, intrinsic metrics are exactly the metrics whose geometry is related to properties of the form.  They are characterized in terms of the Beurling-Deny formula, see \cite{FLW}. This is why we ask the following question.
 \begin{question}
  Let $\E$ be a regular Dirichlet form and let $\rho$ be a metric on the underlying space. Can $\rho$ being intrinsic with respect to $\E$ be characterized in terms of algebraic properties of $\E$ and $\rho$? Can it be characterized in terms of the forms $\E_\varphi$ or variants thereof?
 \end{question}
A partial result for this question is contained in \cite{AH} for local forms. However, this does not seem to apply in the nonlocal situation.

\begin{looseend}
For energy forms it is possible to study weak solutions to the heat equation, to introduce extended semigroups, to prove corresponding maximum principles and to prove meta theorems involving solutions to the heat equation.  The additional time derivate in the heat equation  complicates some arguments.
\end{looseend}

For introducing the weak form nests and local spaces were essential objects. They have analogues in the noncommutative world. For a measurable set $N$ the set $D(E)_N$ is a closed order ideal in the form domain $D(E)$ (and every closed order ideal is almost of this form when the form is sufficiently regular, see \cite{Sto}). If $N$ belongs to a special $E$-nest $\cN$, then there exists a function $\varphi \in D(E)_\cN$ with  $\varphi D(E)_N = D(E)_N \varphi =  D(E)_N.$ These are clearly algebraic properties that can be formulated  in  noncommutative spaces. Similarly, the local space and the other concepts that we introduced can be extended. 

\begin{looseend}
 In order to study weak solutions for energy forms on noncommutative spaces, the proofs of Chapter~4 need to be formulated purely in terms of algebra without reference to the underlying space. This is possible, our proofs can be modified accordingly. We chose to present proofs with reference to the underlying space since otherwise certain arguments would have looked artificial.  
\end{looseend}

  \begin{appendix}

\chapter{A Lemma on monotone nets}

\begin{lemma}\label{lemma:monotone nets}
 Let $I$ and $J$ be directed sets and for $i \in I, j \in J$ let $a_{ij} \in \IR$ be given. Assume that for each $j\in J$ the map $I \to \IR,\, i \mapsto a_{ij}$ is monotone increasing. Then
 $$\liminf_i \liminf_j a_{ij} \leq \liminf_j \liminf_i a_{ij}.$$
\end{lemma}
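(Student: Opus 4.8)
The plan is to reduce the inequality to two elementary monotonicity facts about $\liminf$ of nets, isolating a single pointwise estimate into which the hypothesis on $(a_{ij})$ enters. Write $b_i := \liminf_j a_{ij}$ and $c_j := \liminf_i a_{ij}$, so that the claim reads $\liminf_i b_i \leq \liminf_j c_j$. Throughout I would use the paper's definition that for a net $(x_i)_{i\in I}$ one has $\liminf_i x_i = \sup_{i_0} \inf\{x_i \mid i_0 \prec i\}$, the outer supremum arising because the inner infima increase as $i_0$ grows along the directed preorder.

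First I would establish the pointwise inequality
$$a_{ij} \leq c_j = \liminf_i a_{ij} \qquad \text{for all } i \in I,\ j \in J.$$
Fixing $i$ and $j$, for every $i'$ with $i \prec i'$ the monotonicity hypothesis gives $a_{i'j} \geq a_{ij}$, so $\inf\{a_{i'j} \mid i \prec i'\} \geq a_{ij}$; since $c_j$ is the supremum over all outer indices of such infima, we obtain $c_j \geq a_{ij}$. This is the only place where the assumption on $(a_{ij})$ is used.

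Next I would observe that, since $a_{ij} \leq c_j$ holds for every $j$, the monotonicity of $\liminf$ under pointwise domination of nets yields, for each fixed $i$,
$$b_i = \liminf_j a_{ij} \leq \liminf_j c_j.$$
Hence $\liminf_j c_j$ is an upper bound for every $b_i$, and therefore for $\sup_i b_i$. Finally, combining this with the general bound $\liminf_i b_i \leq \sup_i b_i$ — valid because $\inf\{b_i \mid i_0 \prec i\} \leq b_{i_0}$ for each $i_0$ (using reflexivity $i_0 \prec i_0$), so that the defining supremum of $\liminf_i b_i$ is dominated by $\sup_i b_i$ — I would conclude
$$\liminf_i b_i \leq \sup_i b_i \leq \liminf_j c_j,$$
which is precisely the assertion.

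No serious obstacle is anticipated; the only delicate points are bookkeeping with nets rather than sequences, in particular using that the preorder is reflexive so that $i_0$ itself lies in $\{i \mid i_0 \prec i\}$, and keeping all quantities in $\IR \cup \{-\infty,\infty\}$ so that the monotonicity of $\liminf$ and the bound $\liminf \leq \sup$ persist even for infinite values. It is worth noting that the hypothesis enters only through $a_{i'j} \geq a_{ij}$ for $i \prec i'$, and that without any monotonicity the reverse double-$\liminf$ inequality fails in general, which is consistent with the asymmetry of the statement.
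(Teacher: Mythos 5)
Your proof is correct, and it takes a genuinely different route from the paper's. The paper argues by an $\varepsilon$-approximation: writing $C := \liminf_j \liminf_i a_{ij}$ (assumed finite without loss of generality), it selects for each $j$ an index $\varphi(j) \succ j$ with $\liminf_i a_{i\varphi(j)} \leq C + \varepsilon$, uses monotonicity to upgrade this to the pointwise bound $a_{i\varphi(j)} \leq C + \varepsilon$, and then exploits the cofinality of $\varphi(J)$ in $J$ to conclude $\liminf_j a_{ij} \leq C + \varepsilon$ for every fixed $i$. You replace the cofinal selection and the $\varepsilon$-bookkeeping by the single pointwise inequality $a_{ij} \leq c_j$, two applications of the monotonicity of $\liminf$ under pointwise domination, and the trivial bound $\liminf_i b_i \leq \sup_i b_i$; this dispenses with the (choice-dependent) construction of the reindexing map $\varphi$ and handles the infinite case without any reduction. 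Both arguments rest on the same consequence of the hypothesis — each term of a monotone increasing net is dominated by its $\liminf$, which equals its supremum — but your arrangement is the more elementary one. Note also that your chain in fact establishes the formally stronger inequality $\sup_i \liminf_j a_{ij} \leq \liminf_j \liminf_i a_{ij}$; this is not genuinely stronger here, since the hypothesis forces $b_i = \liminf_j a_{ij}$ to be monotone increasing in $i$ (if $i \prec i'$ then $a_{ij} \leq a_{i'j}$ for all $j$), so that $\liminf_i b_i = \sup_i b_i$ anyway, but it makes the asymmetry of the statement transparent.
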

\begin{proof}
 Without loss of generality we can assume $\liminf_j \liminf_i a_{ij} < \infty.$ Fix an $\varepsilon > 0$. For each $j \in J$, we can  choose a $\varphi(j) \succ  j$ such that
 $$ \liminf_{i} a_{i\varphi(j)} \leq \liminf_j \liminf_i a_{ij} + \varepsilon.$$
 From the monotonicity of $a_{i\varphi(j)}$ in $i$ we infer 
 $$a_{i\varphi(j)} \leq \liminf_j \liminf_i a_{ij} + \varepsilon. $$
 Since the map $\varphi:J \to J$ is cofinal in $J$, we obtain
 $$\liminf_j a_{ij} \leq \liminf_j \liminf_i a_{ij} + \varepsilon$$
 and the statement follows. 
\end{proof}

\chapter{Sobolev spaces on Riemannian manifolds} \label{appendix:manifolds}

In this appendix we introduce several Sobolev spaces on Riemannian manifolds and recall their properties. For a background and precise definitions  we refer to \cite{Gri}.

Let $(M,g)$ be a smooth Riemannian manifold with associated volume measure ${\rm vol}_g$. We write $C_c^\infty(M)$ for the smooth compactly supported functions and equip them with the usual locally convex topology, cf. \cite[Section~4.1]{Gri}. The distributions on $C_c^\infty(M)$ are denoted by $\mathcal{D}'(M)$ and equipped with the weak-* topology. Similarly, we treat the space of smooth compactly supported vector fields $\vec{C}_c^\infty(M)$ and their distributions $\vec{\mathcal{D}}'(M)$. We let $\nabla:\mathcal{D}'(M) \to \vec{\mathcal{D}}'(M)$ the distributional gradient operator, ${\rm div}:\vec{\mathcal{D}}'(M) \to \mathcal{D}'(M)$ the distributional divergence operator and $\Delta = {\rm div} \circ \nabla$ the (negative definite) distributional Laplace-Beltrami operator. The $L^2$-Sobolev space of first order is
$$W^1(M) := \{f \in L^2({\rm vol}_g) \mid |\nabla f| \in L^2({\rm vol}_g)\},$$
equipped with the norm
$$\|\cdot\|_{W^1}:W^1(M) \to [0,\infty),\, f\mapsto \|f\|_{W^1} := \sqrt{\|f\|_2^2 + \||\nabla f|\|^2_2}.$$
Furthermore,  we denote the closure of $C_c^\infty(M)$ with respect to $\|\cdot\|_{W^1}$ by $W^1_0(M)$, denote the local Sobolev space of first order by
$$W^1_{\rm loc}(M) := \{f \in \Ltlm\mid |\nabla f| \in \Ltlm\},$$
and we let
$$W^1_c(M) := \{f \in W^1(M) \mid f \text{ has compact support}\}.$$
The statements of the following lemma are contained in  \cite[Lemma~5.5]{Gri} and \cite[Corollary~5.6]{Gri}.

\begin{lemma}\label{lemma:product in w0}
The inclusion $W^1_c(M) \subseteq W_0^1(M)$ holds. Moreover, for all $f \in W^1_{\rm loc}(M)$ and all $\varphi \in C_c^\infty(M)$ we have $f\varphi \in W_0^1(M)$.
\end{lemma}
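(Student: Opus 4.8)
The plan is to deduce the second assertion from the first, and to establish the first by localization and mollification.

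For the reduction, let $f \in W^1_{\rm loc}(M)$ and $\varphi \in C_c^\infty(M)$. The product $f\varphi$ is supported in the compact set ${\rm supp}\,\varphi$, so it suffices to check $f\varphi \in W^1(M)$; then $f\varphi \in W^1_c(M)$ and the first assertion yields $f\varphi \in W^1_0(M)$. Since $\varphi$ is bounded and $f \in \Ltlm$, we have $f\varphi \in L^2({\rm vol}_g)$. The Leibniz rule for the distributional gradient gives
$$\nabla(f\varphi) = \varphi\,\nabla f + f\,\nabla\varphi,$$
which I would verify by pairing with a smooth compactly supported vector field $\mathcal{Y}$ and unwinding the definition of the distributional divergence, using the smoothness of $\varphi$. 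Both summands lie in $L^2({\rm vol}_g)$: the first because $\varphi$ is bounded with compact support and $|\nabla f| \in \Ltlm$, the second because $|\nabla\varphi|$ is bounded with compact support and $f \in \Ltlm$. Hence $|\nabla(f\varphi)| \in L^2({\rm vol}_g)$ and $f\varphi \in W^1_c(M)$.

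For the first assertion, fix $f \in W^1_c(M)$ with support in a compact set $K$. First I would choose finitely many coordinate charts $(U_j,\kappa_j)$ covering $K$, relatively compact open sets $V_j \Subset U_j$ still covering $K$, and a smooth partition of unity $(\chi_j)$ subordinate to $(V_j)$ with $\sum_j \chi_j = 1$ on $K$. Then $f = \sum_j \chi_j f$ is a finite sum, and each $\chi_j f$ belongs to $W^1(M)$ and is supported in $V_j$, so it is enough to approximate a single $\chi_j f$ by functions in $C_c^\infty(M)$ in the norm $\|\cdot\|_{W^1}$; since $W^1_0(M)$ is a closed subspace, summing finitely many such approximations will place $f$ in $W^1_0(M)$. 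Pushing $\chi_j f$ forward through $\kappa_j$ yields a compactly supported function on an open subset of $\IR^n$; on the relatively compact set $\kappa_j(V_j)$ the metric coefficients $g_{ik}$, their inverse and the volume density $\sqrt{\det g}$ are smooth and bounded above and below, so $\|\cdot\|_{W^1}$ restricted to functions supported in $V_j$ is comparable to the Euclidean Sobolev norm $W^{1,2}$ of the pushforward. I would then mollify the pushforward with a standard approximate identity to obtain smooth compactly supported functions converging to it in $W^{1,2}(\IR^n)$, taking the mollification parameter small enough that their supports remain inside $\kappa_j(U_j)$, and pull these back through $\kappa_j$ to obtain elements of $C_c^\infty(M)$ converging to $\chi_j f$ in $\|\cdot\|_{W^1}$.

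The main obstacle is the local analysis in the second step: one must verify that mollification commutes with differentiation up to an error vanishing in $W^{1,2}(\IR^n)$ and, crucially, that on each relatively compact chart domain the Riemannian first-order norm and the flat Sobolev norm are equivalent. This equivalence rests on the smoothness of $g$ and the resulting two-sided bounds for the metric tensor and the density on compact sets; keeping track of these bounds, and ensuring that the mollified functions retain compact support inside the chart, is where the bulk of the routine but delicate estimates lie. By contrast, the Leibniz rule of the first step is comparatively harmless once the definition of the distributional gradient is unwound.
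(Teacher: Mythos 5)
Your proof is correct. The paper does not give its own argument for this lemma but cites \cite[Lemma~5.5, Corollary~5.6]{Gri}, and your route --- reducing the second assertion to the first via the Leibniz rule for smooth $\varphi$, then proving $W^1_c(M) \subseteq W^1_0(M)$ by a finite partition of unity, the equivalence of the Riemannian and flat first-order norms on relatively compact chart domains, and mollification in $\IR^n$ --- is precisely the standard proof found in that reference. Note also that your use of the product rule only for smooth $\varphi$ avoids any circularity with Lemma~\ref{lemma:product rule}, which the paper derives \emph{from} the present lemma.
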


\begin{lemma} \label{lemma:product rule}
Let   $f \in W^1_{\rm loc}(M)$ and let $\varphi \in W^1_c(M)$. Then
$$\nabla(\varphi f) = \varphi \nabla f + f \nabla \varphi.$$
\end{lemma}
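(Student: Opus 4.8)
```latex
\textbf{Proof plan for Lemma~\ref{lemma:product rule} (product rule).}
The statement is the distributional Leibniz rule $\nabla(\varphi f) = \varphi \nabla f + f \nabla \varphi$ for $f \in W^1_{\rm loc}(M)$ and $\varphi \in W^1_c(M)$. The plan is to reduce to the classical product rule for smooth functions by a mollification/approximation argument, being careful that all the products that appear are integrable so that the distributional identity makes sense and passes to the limit. First I would verify that both sides are well-defined elements of $\vec{\mathcal{D}}'(M)$: since $\varphi$ has compact support $K$ and $f \in \Ltlm$ with $|\nabla f| \in \Ltlm$, the products $\varphi \nabla f$ and $f \nabla \varphi$ are supported in $K$ and lie in $\vec{L}^1(K,{\rm vol}_g)$ by Cauchy--Schwarz (using $f|_K, |\nabla f||_K \in L^2(K)$ and $\varphi, |\nabla \varphi| \in L^2$), and similarly $\varphi f \in L^1_{\rm loc}$, so $\nabla(\varphi f)$ is a genuine distribution.

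The core is to test against an arbitrary compactly supported smooth vector field $\mathcal{Y} \in \vec{C}_c^\infty(M)$ and show
$$\int_M (\varphi f)\, {\rm div}\,\mathcal{Y}\, \Dm = -\int_M g(\varphi \nabla f + f \nabla \varphi, \mathcal{Y})\, \Dm.$$
I would establish this first in the smooth case and then approximate. Concretely, approximate $f$ locally by smooth functions $f_n$ with $f_n \to f$ and $\nabla f_n \to \nabla f$ in $L^2_{\rm loc}$ (standard local mollification in charts, patched with a partition of unity), and likewise approximate $\varphi$ by $\varphi_n \in C_c^\infty(M)$ with supports in a fixed relatively compact neighborhood of $K$ and with $\varphi_n \to \varphi$, $\nabla \varphi_n \to \nabla \varphi$ in $L^2$. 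For the smooth approximants the identity $\nabla(\varphi_n f_n) = \varphi_n \nabla f_n + f_n \nabla \varphi_n$ is the ordinary product rule. Testing against $\mathcal{Y}$ and using the dominated convergence theorem (all integrations take place over the fixed compact set where the data are $L^2$, so the relevant products converge in $L^1$) lets me pass to the limit on both sides, yielding the claimed distributional equality.

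The main obstacle I anticipate is the bookkeeping of the approximation on a manifold rather than on $\IR^n$: one must arrange the mollifications through charts and a partition of unity so that the supports stay controlled inside a single relatively compact set and so that the gradients converge in $L^2_{\rm loc}$, which requires knowing that the local Sobolev structure is chart-independent and compatible with the Riemannian gradient. A cleaner alternative, which I would prefer if available, is to invoke the already-quoted locality and density results: by Lemma~\ref{lemma:product in w0} products of the form $f\varphi$ with $\varphi \in C_c^\infty(M)$ land in $W_0^1(M)$, and since the product rule is purely local it suffices to check it on each coordinate patch, where it reduces to the Euclidean weak product rule for $W^{1,2}$ functions (one factor bounded with bounded gradient, the other in $W^{1,2}_{\rm loc}$). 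The only genuinely delicate point is ensuring integrability of the cross terms $f \nabla \varphi$ and $\varphi \nabla f$, which I handle via Cauchy--Schwarz on the compact support of $\varphi$ as noted above; once integrability is secured, the limit passage is routine.
```
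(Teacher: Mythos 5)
Your proposal is correct, but it is heavier than the paper's argument, and the difference is instructive. The paper mollifies only one factor: since $\varphi \in W^1_c(M)$, Lemma~\ref{lemma:product in w0} gives a sequence $(\varphi_n)$ in $C_c^\infty(M)$ with $\varphi_n \to \varphi$ in $\|\cdot\|_{W^1}$; it then invokes the (cited) product rule $\nabla(\varphi_n f) = \varphi_n \nabla f + f \nabla \varphi_n$ for \emph{smooth} $\varphi_n$ against a general $f \in W^1_{\rm loc}(M)$, and passes to the limit using only that $\varphi_n f \to \varphi f$, $\varphi_n \nabla f \to \varphi \nabla f$, $f\nabla\varphi_n \to f\nabla\varphi$ distributionally and that $\nabla$ is continuous on $\mathcal{D}'(M)$. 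You instead approximate \emph{both} factors, mollifying $f$ in charts with a partition of unity so as to reduce to the classical Leibniz rule for smooth functions. That buys you independence from the smooth-times-$W^1_{\rm loc}$ product rule (which the paper simply quotes from the literature), at the cost of exactly the manifold-mollification bookkeeping you flag as the main obstacle — chart-independence of the local Sobolev structure, support control, $L^2_{\rm loc}$ convergence of gradients — none of which the paper needs. Two small remarks on your write-up: the limit passage is not really dominated convergence but the elementary fact that products of $L^2$-convergent sequences converge in $L^1$ (Cauchy--Schwarz), which you in fact use elsewhere; and your support-control worry for $\varphi_n$ is vacuous when testing against a fixed $\mathcal{Y} \in \vec{C}_c^\infty(M)$, since all integrals are then over ${\rm supp}\,\mathcal{Y}$, where $f \in L^2$. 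Your ``cleaner alternative'' (localize to charts and quote the Euclidean weak product rule) is closer in economy to the paper's route, though the paper's one-sided approximation avoids even the chart reduction.
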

\begin{proof}
By the previous lemma, there exists a sequence $(\varphi_n)$ in $C_c^\infty(M)$ that converges to $\varphi$ with respect to $\|\cdot\|_{W^1}$. In particular,  $f \varphi_n \to f \varphi$, $\varphi_n \nabla f \to \varphi \nabla f$ and $f \nabla \varphi_n \to f \nabla \varphi$, as $n\to \infty$, in the sense of distributions. According to \cite[Exercise~4.2]{Gri}, we have
$$\nabla(\varphi_n f) = \varphi_n \nabla f + f \nabla \varphi_n.$$
Since $\nabla$ is a continuous operator on all distributions, this implies the statement.  
\end{proof}
The following lemma is the assertion of \cite[Exercise~5.8]{Gri}. It is an easy consequence of \cite[Theorem~5.7]{Gri}.
 \begin{lemma}\label{lemma:contraction manifold}
  Let $\psi \in C^\infty(\IR)$ with $\psi(0) = 0$ and $\sup \{ |\psi'(t)| \mid t \in \IR\} < \infty$. Furthermore, let  $f \in W^1_{\rm loc}(M)$. Then $ \psi \circ f \in W^1_{\rm loc}(M)$ and 
  $$\nabla(\psi \circ f) = (\psi' \circ f) \nabla f.$$
 \end{lemma}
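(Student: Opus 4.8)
The plan is to reduce the global assertion to a purely local one and then invoke the chain rule for the genuine Sobolev space $W^1(M)$, namely \cite[Theorem~5.7]{Gri}, through a cutoff argument built on Lemma~\ref{lemma:product in w0} and Lemma~\ref{lemma:product rule}. Since membership in $W^1_{\rm loc}(M)$ and the identity $\nabla(\psi\circ f)=(\psi'\circ f)\nabla f$ are both local statements (the distributional gradient $\nabla$ is a local operator, so two distributions that agree on an open set have gradients agreeing there), it suffices to verify both claims on each relatively compact open set $U\subseteq M$. I would fix such a $U$ and, using local compactness together with a smooth Urysohn function, choose $\varphi\in C_c^\infty(M)$ with $0\le\varphi\le 1$ and $\varphi\equiv 1$ on $U$.

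First I would record the integrability estimates that make $\psi\circ f$ a sensible object. Since $\psi(0)=0$ and $\psi'$ is bounded, the mean value theorem gives $|\psi\circ f|\le\|\psi'\|_\infty |f|$, so $\psi\circ f\in\Ltlm$ because $f\in\Ltlm$; likewise $(\psi'\circ f)\nabla f\in\Ltlm$ because $|\psi'\circ f|\le\|\psi'\|_\infty$ and $|\nabla f|\in\Ltlm$. Thus once the gradient identity is established, membership of $\psi\circ f$ in $W^1_{\rm loc}(M)$ follows automatically.

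Next, by Lemma~\ref{lemma:product in w0} we have $\varphi f\in W_0^1(M)\subseteq W^1(M)$. Because $\varphi f$ has compact support and $\psi(0)=0$, the composition $\psi\circ(\varphi f)$ vanishes outside a compact set, and $\psi$ is Lipschitz with constant $\|\psi'\|_\infty$; hence \cite[Theorem~5.7]{Gri} applies and yields $\psi\circ(\varphi f)\in W^1(M)$ together with
$$\nabla\bigl(\psi\circ(\varphi f)\bigr)=\bigl(\psi'\circ(\varphi f)\bigr)\,\nabla(\varphi f).$$
On $U$ we have $\varphi f=f$, so $\psi\circ(\varphi f)=\psi\circ f$ and $\psi'\circ(\varphi f)=\psi'\circ f$ there; moreover $\nabla\varphi=0$ on $U$, whence the product rule of Lemma~\ref{lemma:product rule} gives $\nabla(\varphi f)=\varphi\nabla f+f\nabla\varphi=\nabla f$ on $U$. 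Restricting the displayed identity to $U$ and using the locality of $\nabla$ therefore produces $\nabla(\psi\circ f)=(\psi'\circ f)\nabla f$ on $U$.

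Finally, covering $M$ by countably many relatively compact open sets $U$ and patching the locally valid identities (legitimate precisely because $\nabla$ is local) yields the formula on all of $M$; combined with the integrability estimates above this also gives $\psi\circ f\in W^1_{\rm loc}(M)$. The only genuinely delicate point is the locality bookkeeping: one must ensure that ``$\nabla(\psi\circ f)$ restricted to $U$'' is unambiguous and that it coincides with $\nabla\bigl(\psi\circ(\varphi f)\bigr)$ on $U$, which rests on the fact that two distributions coinciding on an open set have coinciding distributional gradients there. Everything else is a direct application of the cited Sobolev chain rule and of the product rule already proved in Lemma~\ref{lemma:product rule}.
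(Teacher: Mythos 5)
Your proof is correct and takes exactly the route the paper intends: the paper gives no written proof of this lemma, saying only that it is the assertion of \cite[Exercise~5.8]{Gri} and an easy consequence of \cite[Theorem~5.7]{Gri}, and your cutoff-and-locality argument (composing with $\varphi f \in W_0^1(M)$ via Lemma~\ref{lemma:product in w0}, applying the cited chain rule, and patching by locality of the distributional gradient) is precisely the standard solution to that exercise. As a purely cosmetic remark, on $U$ you do not actually need Lemma~\ref{lemma:product rule}, since $\varphi f = f$ on $U$ already gives $\nabla(\varphi f) = \nabla f$ there by the same locality you invoke elsewhere.
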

For an arbitrary manifold, there is the following local version of the Poincar\'{e} inequality.
\begin{lemma} \label{lemma:local poincare}
 For each $x \in M$, there exists a relatively compact open neighborhood $U_x$ of $x$ and a constant $C_x > 0$ such that for all $f\in L^2_{\rm loc}({\rm vol}_g)$ with $\int_{U_x} f \D{\rm vol}_g = 0$ and $|\nabla f| \in L^2({\rm vol}_g)$ the inequality
 $$\int_{U_x} |f|^2 {\rm d vol}_g \leq C_x \int_M |\nabla f|^2 \D{\rm vol}_g$$
 holds.
\end{lemma}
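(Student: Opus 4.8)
For each $x\in M$, there exists a relatively compact open neighbourhood $U_x$ of $x$ and a constant $C_x>0$ such that for all $f\in L^2_{\rm loc}({\rm vol}_g)$ with $\int_{U_x} f\,\D{\rm vol}_g = 0$ and $|\nabla f|\in L^2({\rm vol}_g)$ one has $\int_{U_x} |f|^2\,\D{\rm vol}_g \leq C_x \int_M |\nabla f|^2\,\D{\rm vol}_g$.

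Let me think about how to prove this.

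The statement is a standard local Poincaré (or Poincaré–Wirtinger) inequality on a Riemannian manifold. The key point is that it is a *local* statement, so I should reduce everything to a single chart where the Riemannian metric is comparable to the Euclidean one, and then invoke the classical Poincaré inequality on a nice Euclidean domain (a ball or cube).

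So the plan has a few pieces. Let me work out the reduction to Euclidean space first, then recall what I need on the Euclidean side, then address the constant-comparison issue.

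**Step 1: Choose a coordinate chart.** Around $x$ pick a smooth coordinate chart $\varphi\colon V \to \mathbb{R}^n$ with $\varphi(x)=0$, where $V$ is open. On $V$ the metric $g$ is given by a smooth symmetric positive-definite matrix field $(g_{ij})$, and the volume measure is $\D{\rm vol}_g = \sqrt{\det(g_{ij})}\,\D y$ in the $y=\varphi$-coordinates. Choose a relatively compact open set $U_x$ with $x\in U_x$ and $\overline{U_x}\subseteq V$, and such that $\varphi(U_x)$ is, say, an open Euclidean ball $B$ (or cube) — one can always arrange this by shrinking. On the compact set $\overline{U_x}$ the eigenvalues of $(g_{ij})$ and of its inverse are bounded above and below by positive constants, and $\sqrt{\det(g_{ij})}$ is bounded above and below by positive constants. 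These bounds are where the constant $C_x$ ultimately comes from.

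**Step 2: Transfer the two sides of the inequality to $B$.** Writing $u = f\circ\varphi^{-1}$ on $B$, the volume comparison gives
\[
c_1 \int_B |u|^2\,\D y \;\leq\; \int_{U_x}|f|^2\,\D{\rm vol}_g \;\leq\; c_2 \int_B |u|^2\,\D y,
\]
for positive constants $c_1,c_2$ depending only on $U_x$, and similarly, since $|\nabla f|^2_g = g^{ij}\partial_i u\,\partial_j u$ in coordinates, the metric-inverse bounds give
\[
\int_{U_x}|\nabla f|^2_g\,\D{\rm vol}_g \;\geq\; c_3 \int_B |\nabla_{\rm eucl} u|^2\,\D y
\]
for some $c_3>0$. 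The mean-zero condition $\int_{U_x} f\,\D{\rm vol}_g=0$ becomes a weighted mean-zero condition $\int_B u\,\sqrt{\det(g_{ij})}\,\D y = 0$ on $B$, which is \emph{not} the same as $\int_B u\,\D y=0$; this mismatch is the one genuinely fiddly point and I address it below.

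**Step 3: Apply the Euclidean Poincaré inequality.** The classical Poincaré–Wirtinger inequality on the ball $B$ states that there is a constant $C_B$ with $\int_B |u - \bar u_B|^2\,\D y \leq C_B \int_B |\nabla_{\rm eucl} u|^2\,\D y$, where $\bar u_B = |B|^{-1}\int_B u\,\D y$ is the \emph{unweighted} Euclidean average. This holds for all $u$ in the Euclidean $W^{1,2}(B)$, and $u=f\circ\varphi^{-1}$ lands there because of the hypotheses and the comparability of the measures and gradients on the compact chart.

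**Main obstacle and how to handle it.** The one place where care is needed is that the manifold hypothesis gives $\int_{U_x} f\,\D{\rm vol}_g=0$, i.e.\ the \emph{weighted} mean vanishes, whereas the sharp Euclidean inequality in Step~3 is stated for the \emph{unweighted} mean. The clean way to absorb this is to note that the Poincaré inequality is insensitive to which average one subtracts: for any constant $a$ one has $\int_B|u-\bar u_B|^2 \leq \int_B |u-a|^2$, since $\bar u_B$ is the $L^2$-minimiser among constants. Alternatively, one can use the standard fact that a weighted Poincaré inequality with a bounded, bounded-below weight on a connected Lipschitz domain is equivalent (up to constants) to the unweighted one. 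Concretely: by Step~3, $\int_B|u-\bar u_B|^2 \leq C_B\int_B|\nabla_{\rm eucl}u|^2$; and for \emph{any} constant $a$, $\int_B|u|^2 = \int_B|u-a|^2$ only when the weighted mean of $u$ with weight $1$ against $a$ vanishes, which is not automatic — so instead I estimate $\int_B |u|^2 \leq 2\int_B |u-\bar u_B|^2 + 2|B|\,|\bar u_B|^2$ and then bound $|\bar u_B|^2$ using the weighted-mean-zero condition: writing $w=\sqrt{\det(g_{ij})}$ and $\int_B u w\,\D y = 0$, one gets $|\bar u_B| = |B|^{-1}|\int_B (u-\bar u_B)\,\D y|$, and from $\int_B uw = 0$ also $|\int_B \bar u_B\, w| = |\int_B (\bar u_B - u)w|$, whence $|\bar u_B|\,(\int_B w) \leq \big(\int_B w^2\big)^{1/2}\|u-\bar u_B\|_{L^2(B)}$ by Cauchy–Schwarz. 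Since $w$ is bounded above and below by positive constants on $\overline{U_x}$, this yields $|\bar u_B| \leq C'\,\|u-\bar u_B\|_{L^2(B)}$, and substituting back gives $\int_B|u|^2 \leq C''\int_B|u-\bar u_B|^2 \leq C''C_B\int_B|\nabla_{\rm eucl}u|^2$. Chaining this with the measure/gradient comparisons of Step~2 produces the desired inequality with $C_x = c_1^{-1}C''C_B c_3^{-1}$, completing the proof. The only nonroutine bookkeeping is this constant-shuffle between weighted and unweighted means, and it is handled entirely by Cauchy–Schwarz together with the uniform two-sided bounds on the density $w$ over the compact chart.
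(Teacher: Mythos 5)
Your proposal is correct and follows essentially the same route as the paper: choose a chart in which the metric coefficients and the density $\sqrt{\det(g_{ij})}$ admit uniform two-sided bounds on the relatively compact neighbourhood, transfer both sides of the inequality to a Euclidean ball, and invoke the classical Poincar\'e--Wirtinger inequality there. The only divergence is in the weighted-versus-unweighted mean step, which the paper settles in one line --- since $\int_B \bar f\,\sigma\,{\rm d}y=0$, the constant $0$ minimizes $c\mapsto\int_B|\bar f-c|^2\sigma\,{\rm d}y$, hence $\int_B|\bar f|^2\sigma\,{\rm d}y\le\int_B|\bar f-\bar f_B|^2\sigma\,{\rm d}y$ --- whereas you absorb the mean by Cauchy--Schwarz, which also works; just delete your stray identity $|\bar u_B|=|B|^{-1}\bigl|\int_B(u-\bar u_B)\,{\rm d}y\bigr|$, whose right-hand side is zero and which your subsequent (self-contained and correct) Cauchy--Schwarz chain never uses.
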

\begin{proof}
 For $x \in M$, we choose a chart $(U_x,\varphi_x)$ around $x$ such that $B_x:=\varphi_x(U_x)$ is an open Euclidean ball. For $f \in \Ltl$ with $|\nabla f| \in L^2({\rm vol}_g)$, we have $\bar{f} := f \circ \varphi_x^{-1} \in W^{1}(B_x)$. In these coordinates the occurring integrals satisfy
 $$\int_{U_x} |\nabla f|^2 {d \rm vol}_g = \int_{B_x} \as{A \nabla \bar{f}, \nabla \bar{f}} \sigma \D x$$
 and
 $$\int_{U_x} |f|^2 \D {\rm vol}_g = \int_{B_x} |\bar{f}|^2 \sigma \D x, $$
  where  $A$ is a uniformly positive definite matrix function and $\sigma$ is a uniformly positive and bounded function.  If $ \int_{U_x} f \D{\rm vol}_g = 0$, then also $\int_{B_x} \bar{f} \sigma \D x = 0$. We let $\bar{f}_{B_x} := |B_x|^{-1} \int_{B_x} \bar{f} \D x$. Using the properties of $A$ and $\sigma$ and the standard Poincar\'e inequality for functions in $W^{1}(B_x)$, see \cite[Section~5.8.1]{Eva}, we obtain constants $C_i > 0$ such that
  \begin{align*}
   \int_{B_x} |\bar{f}|^2 \sigma \D x &\leq \int_{B_x} |\bar{f} - \bar{f}_{B_x}|^2 \sigma \D x \\
   &\leq C_1 \int_{B_x} |\bar{f} - \bar{f}_{B_x}|^2  \D x \\
   &\leq C_2 \int_{B_x} |\nabla \bar{f}|^2\D x \\
   &\leq C_3 \int_{B_x} \as{A \nabla \bar{f}, \nabla \bar{f}} \sigma \D x.
  \end{align*}
  Note that for the first inequality, we used the minimality property of the expectation $\int_{B_x} \bar{f} \sigma \D x = 0$. This finishes the proof.
  \end{proof}

\chapter{An extension of normal contractions}

\begin{lemma} \label{lemma:extension of normal contractions}
 Let $A \subseteq \IR$ an let $C:A \to \IR$. Assume that $C$ satisfies  
 $$|C(x)| \leq |x| \text{ and } |C(x)-C(y)| \leq |x-y| \text{ for all }x,y \in A.$$
 Then $C$ can be extended to a normal contraction on $\IR^n$. 
\end{lemma}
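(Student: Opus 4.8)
The plan is to adapt the classical McShane--Whitney extension of Lipschitz functions to the $\ell^1$-metric $d(x,y) := \sum_{i=1}^n |x_i - y_i|$ on $\IR^n$, which is precisely the metric with respect to which the defining inequalities of a normal contraction (Definition~\ref{definition:normal contraction}) are phrased. Read in this language, the two hypotheses say that $C$ is $1$-Lipschitz for $d$ on its domain $A$ and that $|C(x)| \le d(x,0) = \sum_{i=1}^n |x_i|$ for every $x \in A$.

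First I would reduce to the case $0 \in A$ with $C(0)=0$. If $0 \notin A$, enlarge the domain to $A \cup \{0\}$ and set $C(0) := 0$. The bound $|C(x)| \le \sum_{i=1}^n |x_i|$ is exactly the statement that the Lipschitz estimate $|C(x) - C(0)| \le d(x,0)$ persists for the new point $0$, so both hypotheses continue to hold on $A \cup \{0\}$; hence there is no loss of generality.

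Next, define $\tilde C : \IR^n \to \IR$ by $\tilde C(x) := \inf_{a \in A} \big( C(a) + d(x,a) \big)$. The key verifications are threefold. (i) The infimum is finite: the Lipschitz bound against $0$ gives $C(a) \ge -d(a,0)$, and the triangle inequality $d(a,0) \le d(a,x) + d(x,0)$ then yields $C(a) + d(x,a) \ge -d(x,0) = -\sum_{i=1}^n |x_i| > -\infty$. (ii) $\tilde C$ restricts to $C$ on $A$: for $x \in A$ the choice $a = x$ gives $\tilde C(x) \le C(x)$, while $1$-Lipschitzness of $C$ gives $C(a) + d(x,a) \ge C(a) + |C(x) - C(a)| \ge C(x)$ for every $a \in A$; in particular $\tilde C(0) = C(0) = 0$. (iii) $\tilde C$ is $1$-Lipschitz for $d$, being an infimum of the uniformly $1$-Lipschitz functions $x \mapsto C(a) + d(x,a)$: from $C(a) + d(x,a) \le C(a) + d(y,a) + d(x,y)$ one takes the infimum over $a$ and interchanges $x$ and $y$ to obtain $|\tilde C(x) - \tilde C(y)| \le d(x,y) = \sum_{i=1}^n |x_i - y_i|$.

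Combining (ii) and (iii), $\tilde C$ satisfies $\tilde C(0) = 0$ together with the required Lipschitz estimate, so it is a normal contraction on $\IR^n$ that extends $C$. I do not expect a genuine obstacle here; the only points demanding care are the reduction guaranteeing $\tilde C(0)=0$ (which is exactly where the bound $|C(x)| \le \sum_{i=1}^n |x_i|$ is needed) and the finiteness of the defining infimum, both of which follow from the triangle inequality once the hypotheses are interpreted as $1$-Lipschitzness relative to the origin.
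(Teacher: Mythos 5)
Your proposal is correct and is essentially the paper's own argument: the paper likewise reduces to $0 \in A$ and defines $\widetilde{C}(x) = \inf\{C(y) + \sum_{i=1}^n |x_i - y_i| \mid y \in A\}$, the McShane--Whitney inf-convolution with respect to the $\ell^1$-distance, then verifies the extension and contraction properties. Your additional verification that the infimum is finite is a point the paper leaves implicit, but otherwise the two proofs coincide.
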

\begin{proof} Without loss of generality we can assume $0 \in A$ for otherwise we could set $C(0) = 0$ without affecting the inequalities. We let
 $$\widetilde{C}(x) := \inf \left\{C(y) + \sum_{i=1}^n |x_i-y_i| \,\middle|\,  y \in A \right\}.$$
 For $\varepsilon > 0$ and $x \in \IR^n$, we choose $x^\varepsilon \in A$ such that
 $$\widetilde{C}(x) \geq C(x^\varepsilon) + \sum_{i=1}^n |x_i-x^\varepsilon_i| - \varepsilon.$$
  Let $x \in A$. By definition we have $\ow{C}(x) \leq C(x)$. Furthermore, for any $\varepsilon > 0$ we obtain
 $$C(x) - \ow{C}(x) \leq C(x) - C(x^\varepsilon)  -  \sum_{i=1}^n |x_i-x^\varepsilon_i| +  \varepsilon \leq \varepsilon. $$
 This shows that $\ow{C}$ extends $C$. Now we prove that $\ow{C}$ is a normal contraction. Since $0 \in A$ and $C$ extends $\ow{C}$, we have $\ow{C}(0) = 0$. Furthermore, for arbitrary $x,y \in \IR^n$ and $\varepsilon > 0$ we obtain
 $$\ow{C}(y) - \ow{C}(x) \leq C(x^\varepsilon) + \sum_{i=1}^n |y_i-x^\varepsilon_i|  - C(x^\varepsilon) - \sum_{i=1}^n |x_i-x^\varepsilon_i|  + \varepsilon \leq \sum_{i=1}^n |x_i-y_i| +  \varepsilon.$$
 This finishes the proof.
\end{proof}
\end{appendix}

  \backmatter

 \bibliographystyle{plain}
 
\bibliography{literatur}

\def\cprime{$'$}
\begin{thebibliography}{10}

\bibitem{Ahl}
Lars~V. Ahlfors.
\newblock On the characterization of hyperbolic {R}iemann surfaces.
\newblock {\em Ann. Acad. Sci. Fennicae. Ser. A. I. Math.-Phys.}, 1952(125):5,
  1952.

\bibitem{ABR}
Sergio Albeverio, Johannes Brasche, and Michael R{\"o}ckner.
\newblock Dirichlet forms and generalized {S}chr\"odinger operators.
\newblock In {\em Schr\"odinger operators ({S}\o nderborg, 1988)}, volume 345
  of {\em Lecture Notes in Phys.}, pages 1--42. Springer, Berlin, 1989.

\bibitem{AH-K0}
Sergio Albeverio and Raphael H{\o}egh-Krohn.
\newblock Quasi invariant measures, symmetric diffusion processes and quantum
  fields.
\newblock In {\em Les m\'ethodes math\'ematiques de la th\'eorie quantique des
  champs ({C}olloq. {I}nternat. {CNRS}, {N}o. 248, {M}arseille, 1975)}, pages
  11--59. \'Editions Centre Nat. Recherche Sci., Paris, 1976.

\bibitem{AH-K1}
Sergio Albeverio and Raphael H{\o}egh-Krohn.
\newblock Dirichlet forms and diffusion processes on rigged {H}ilbert spaces.
\newblock {\em Z. Wahrscheinlichkeitstheorie und Verw. Gebiete}, 40(1):1--57,
  1977.

\bibitem{AH-K2}
Sergio Albeverio and Raphael H{\o}egh-Krohn.
\newblock Hunt processes and analytic potential theory on rigged {H}ilbert
  spaces.
\newblock {\em Ann. Inst. H. Poincar\'e Sect. B (N.S.)}, 13(3):269--291, 1977.

\bibitem{All}
Guy Allain.
\newblock Sur la repr\'esentation des formes de {D}irichlet.
\newblock {\em Ann. Inst. Fourier (Grenoble)}, 25(3-4):xix, 1--10, 1975.
\newblock Collection of articles dedicated to Marcel Brelot on the occasion of
  his 70th birthday.

\bibitem{AH}
Teppei Ariyoshi and Masanori Hino.
\newblock Small-time asymptotic estimates in local {D}irichlet spaces.
\newblock {\em Electron. J. Probab.}, 10:no. 37, 1236--1259 (electronic), 2005.

\bibitem{BP}
Viorel Barbu and Teodor Precupanu.
\newblock {\em Convexity and optimization in {B}anach spaces}.
\newblock Springer Monographs in Mathematics. Springer, Dordrecht, fourth
  edition, 2012.

\bibitem{BD}
Arne Beurling and Jacques Deny.
\newblock Espaces de {D}irichlet. {I}. {L}e cas \'el\'ementaire.
\newblock {\em Acta Math.}, 99:203--224, 1958.

\bibitem{BD1}
Arne Beurling and Jacques Deny.
\newblock Dirichlet spaces.
\newblock {\em Proc. Nat. Acad. Sci. U.S.A.}, 45:208--215, 1959.

\bibitem{Bou}
Nicolas Bourbaki.
\newblock {\em Espaces vectoriels topologiques. {C}hapitres 1 \`a 5}.
\newblock Masson, Paris, new edition, 1981.
\newblock {\'E}l{\'e}ments de math{\'e}matique. [Elements of mathematics].

\bibitem{Che}
Zhen-Qing Chen.
\newblock On reflected {D}irichlet spaces.
\newblock {\em Probab. Theory Related Fields}, 94(2):135--162, 1992.

\bibitem{CF}
Zhen-Qing Chen and Masatoshi Fukushima.
\newblock {\em Symmetric {M}arkov processes, time change, and boundary theory},
  volume~35 of {\em London Mathematical Society Monographs Series}.
\newblock Princeton University Press, Princeton, NJ, 2012.

\bibitem{Cip}
Fabio Cipriani.
\newblock Noncommutative potential theory: a survey.
\newblock {\em J. Geom. Phys.}, 105:25--59, 2016.

\bibitem{CG}
Fabio Cipriani and Gabriele Grillo.
\newblock Nonlinear {M}arkov semigroups, nonlinear {D}irichlet forms and
  applications to minimal surfaces.
\newblock {\em J. Reine Angew. Math.}, 562:201--235, 2003.

\bibitem{Ebe}
Andreas Eberle.
\newblock {\em Uniqueness and non-uniqueness of semigroups generated by
  singular diffusion operators}, volume 1718 of {\em Lecture Notes in
  Mathematics}.
\newblock Springer-Verlag, Berlin, 1999.

\bibitem{Eva}
Lawrence~C. Evans.
\newblock {\em Partial differential equations}, volume~19 of {\em Graduate
  Studies in Mathematics}.
\newblock American Mathematical Society, Providence, RI, 1998.

\bibitem{FLW}
Rupert~L. Frank, Daniel Lenz, and Daniel Wingert.
\newblock Intrinsic metrics for non-local symmetric {D}irichlet forms and
  applications to spectral theory.
\newblock {\em J. Funct. Anal.}, 266(8):4765--4808, 2014.

\bibitem{Fre2}
David~H. Fremlin.
\newblock {\em Measure theory. {V}ol. 2}.
\newblock Torres Fremlin, Colchester, 2003.
\newblock Broad foundations, Corrected second printing of the 2001 original.

\bibitem{Fre1}
David~H. Fremlin.
\newblock {\em Measure theory. {V}ol. 1}.
\newblock Torres Fremlin, Colchester, 2004.
\newblock The irreducible minimum, Corrected third printing of the 2000
  original.

\bibitem{Fre3}
David~H. Fremlin.
\newblock {\em Measure theory. {V}ol. 3}.
\newblock Torres Fremlin, Colchester, 2004.
\newblock Measure algebras, Corrected second printing of the 2002 original.

\bibitem{Fuk69}
Masatoshi Fukushima.
\newblock On boundary conditions for multi-dimensional {B}rownian motions with
  symmetric resolvent densities.
\newblock {\em J. Math. Soc. Japan}, 21:58--93, 1969.

\bibitem{Fuk71}
Masatoshi Fukushima.
\newblock Dirichlet spaces and strong {M}arkov processes.
\newblock {\em Trans. Amer. Math. Soc.}, 162:185--224, 1971.

\bibitem{Fuk76}
Masatoshi Fukushima.
\newblock Potential theory of symmetric {M}arkov processes and its
  applications.
\newblock In {\em Proceedings of the {T}hird {J}apan-{USSR} {S}ymposium on
  {P}robability {T}heory ({T}ashkent, 1975)}, pages 119--133. Lecture Notes in
  Math., Vol. 550. Springer, Berlin, 1976.

\bibitem{Fuk80}
Masatoshi Fukushima.
\newblock {\em Dirichlet forms and {M}arkov processes}, volume~23 of {\em
  North-Holland Mathematical Library}.
\newblock North-Holland Publishing Co., Amsterdam-New York; Kodansha, Ltd.,
  Tokyo, 1980.

\bibitem{FOT}
Masatoshi Fukushima, Yoichi Oshima, and Masayoshi Takeda.
\newblock {\em Dirichlet forms and symmetric {M}arkov processes}, volume~19 of
  {\em de Gruyter Studies in Mathematics}.
\newblock Walter de Gruyter \& Co., Berlin, extended edition, 2011.

\bibitem{GHKLW}
Agelos Georgakopoulos, Sebastian Haeseler, Matthias Keller, Daniel Lenz, and
  Rados{\l}aw~K. Wojciechowski.
\newblock Graphs of finite measure.
\newblock {\em J. Math. Pures Appl. (9)}, 103(5):1093--1131, 2015.

\bibitem{Gri1}
Alexander Grigor{\cprime}yan.
\newblock Analytic and geometric background of recurrence and non-explosion of
  the {B}rownian motion on {R}iemannian manifolds.
\newblock {\em Bull. Amer. Math. Soc. (N.S.)}, 36(2):135--249, 1999.

\bibitem{Gri}
Alexander Grigor'yan.
\newblock {\em Heat kernel and analysis on manifolds}, volume~47 of {\em AMS/IP
  Studies in Advanced Mathematics}.
\newblock American Mathematical Society, Providence, RI; International Press,
  Boston, MA, 2009.

\bibitem{GM}
Alexander Grigor'yan and Jun Masamune.
\newblock Parabolicity and stochastic completeness of manifolds in terms of the
  {G}reen formula.
\newblock {\em J. Math. Pures Appl. (9)}, 100(5):607--632, 2013.

\bibitem{Gro1}
Leonard Gross.
\newblock Existence and uniqueness of physical ground states.
\newblock {\em J. Functional Analysis}, 10:52--109, 1972.

\bibitem{Gro2}
Leonard Gross.
\newblock Hypercontractivity and logarithmic {S}obolev inequalities for the
  {C}lifford {D}irichlet form.
\newblock {\em Duke Math. J.}, 42(3):383--396, 1975.

\bibitem{Haa}
Markus Haase.
\newblock Convexity inequalities for positive operators.
\newblock {\em Positivity}, 11(1):57--68, 2007.

\bibitem{HK}
Sebastian Haeseler and Matthias Keller.
\newblock Generalized solutions and spectrum for {D}irichlet forms on graphs.
\newblock In {\em Random walks, boundaries and spectra}, volume~64 of {\em
  Progr. Probab.}, pages 181--199. Birkh\"auser/Springer Basel AG, Basel, 2011.

\bibitem{HKLW}
Sebastian Haeseler, Matthias Keller, Daniel Lenz, and Rados{\l}aw
  Wojciechowski.
\newblock Laplacians on infinite graphs: {D}irichlet and {N}eumann boundary
  conditions.
\newblock {\em J. Spectr. Theory}, 2(4):397--432, 2012.

\bibitem{HKLMS}
Sebastian Haseler, Matthias Keller, Daniel Lenz, Jun Masamune, and Marcel
  Schmidt.
\newblock Global properties of {D}irichlet forms in terms of {G}reen's formula.
\newblock {\em Preprint, arXiv:1412.3355v1}, 2014.

\bibitem{Hin}
Michael Hinz.
\newblock Sup-norm-closable bilinear forms and {L}agrangians.
\newblock {\em Ann. Mat. Pura Appl. (4)}, 195(4):1021--1054, 2016.

\bibitem{Hoer}
Lars H{\"o}rmander.
\newblock {\em The analysis of linear partial differential operators. {III}},
  volume 274 of {\em Grundlehren der Mathematischen Wissenschaften [Fundamental
  Principles of Mathematical Sciences]}.
\newblock Springer-Verlag, Berlin, 1985.
\newblock Pseudodifferential operators.

\bibitem{HKMW}
Xueping Huang, Matthias Keller, Jun Masamune, and Rados{\l}aw~K. Wojciechowski.
\newblock A note on self-adjoint extensions of the {L}aplacian on weighted
  graphs.
\newblock {\em J. Funct. Anal.}, 265(8):1556--1578, 2013.

\bibitem{Hus}
Taqdir Husain.
\newblock {\em The open mapping and closed graph theorems in topological vector
  spaces}.
\newblock Clarendon Press, Oxford, 1965.

\bibitem{JvN}
Pascual Jordan and John Von~Neumann.
\newblock On inner products in linear, metric spaces.
\newblock {\em Ann. of Math. (2)}, 36(3):719--723, 1935.

\bibitem{Kaj}
Naotaka Kajino.
\newblock Equivalence of recurrence and {L}iouville property for symmetric
  {D}irichlet forms.
\newblock {\em Preprint}, 2010.

\bibitem{Kel}
Matthias Keller.
\newblock Intrinsic metrics on graphs: a survey.
\newblock In {\em Mathematical technology of networks}, volume 128 of {\em
  Springer Proc. Math. Stat.}, pages 81--119. Springer, Cham, 2015.

\bibitem{KL}
Matthias Keller and Daniel Lenz.
\newblock Dirichlet forms and stochastic completeness of graphs and subgraphs.
\newblock {\em J. Reine Angew. Math.}, 666:189--223, 2012.

\bibitem{KLSW}
Matthias Keller, Daniel Lenz, Marcel Schmidt, and Rados{\l}aw~K Wojciechowski.
\newblock Note on uniformly transient graphs.
\newblock {\em to appear in Rev. Mat. Iberoam., arXiv:1412.0815}, 2016+.

\bibitem{Kig1}
Jun Kigami.
\newblock {\em Analysis on fractals}, volume 143 of {\em Cambridge Tracts in
  Mathematics}.
\newblock Cambridge University Press, Cambridge, 2001.

\bibitem{Kig2}
Jun Kigami.
\newblock Harmonic analysis for resistance forms.
\newblock {\em J. Funct. Anal.}, 204(2):399--444, 2003.

\bibitem{Kig3}
Jun Kigami.
\newblock Resistance forms, quasisymmetric maps and heat kernel estimates.
\newblock {\em Mem. Amer. Math. Soc.}, 216(1015):vi+132, 2012.

\bibitem{Kuw}
Kazuhiro Kuwae.
\newblock Reflected {D}irichlet forms and the uniqueness of {S}ilverstein's
  extension.
\newblock {\em Potential Anal.}, 16(3):221--247, 2002.

\bibitem{MOR}
Zhi~Ming Ma, Ludger Overbeck, and Michael R{\"o}ckner.
\newblock Markov processes associated with semi-{D}irichlet forms.
\newblock {\em Osaka J. Math.}, 32(1):97--119, 1995.

\bibitem{MR}
Zhi~Ming Ma and Michael R{\"o}ckner.
\newblock {\em Introduction to the theory of (nonsymmetric) {D}irichlet forms}.
\newblock Universitext. Springer-Verlag, Berlin, 1992.

\bibitem{MVV}
Amir Manavi, Hendrik Vogt, and J{\"u}rgen Voigt.
\newblock Domination of semigroups associated with sectorial forms.
\newblock {\em J. Operator Theory}, 54(1):9--25, 2005.

\bibitem{MS}
Jun Masamune and Marcel Schmidt.
\newblock A note on stochastic completeness at infinity.
\newblock {\em in preparation}.

\bibitem{Maz}
Vladimir~G. Maz'ja.
\newblock {\em Sobolev spaces}.
\newblock Springer Series in Soviet Mathematics. Springer-Verlag, Berlin, 1985.
\newblock Translated from the Russian by T. O. Shaposhnikova.

\bibitem{M-N}
Peter Meyer-Nieberg.
\newblock {\em Banach lattices}.
\newblock Universitext. Springer-Verlag, Berlin, 1991.

\bibitem{Nag}
Kei{\^o} Nagami.
\newblock Baire sets, {B}orel sets and some typical semi-continuous functions.
\newblock {\em Nagoya Math. J.}, 7:85--93, 1954.

\bibitem{Osh}
Y{\=o}ichi {\=O}shima.
\newblock Potential of recurrent symmetric {M}arkov processes and its
  associated {D}irichlet spaces.
\newblock In {\em Functional analysis in {M}arkov processes ({K}atata/{K}yoto,
  1981)}, volume 923 of {\em Lecture Notes in Math.}, pages 260--275. Springer,
  Berlin, 1982.

\bibitem{Ouh}
El-Maati Ouhabaz.
\newblock Invariance of closed convex sets and domination criteria for
  semigroups.
\newblock {\em Potential Anal.}, 5(6):611--625, 1996.

\bibitem{Rie}
Marc~A. Rieffel.
\newblock Non-commutative resistance networks.
\newblock {\em SIGMA Symmetry Integrability Geom. Methods Appl.}, 10:Paper 064,
  46, 2014.

\bibitem{Rob}
Derek~W. Robinson.
\newblock On extensions of local {D}irichlet forms.
\newblock {\em preprint, arXiv:1602.01167v1}.

\bibitem{RS}
Derek~W. Robinson and Adam Sikora.
\newblock Markov uniqueness of degenerate elliptic operators.
\newblock {\em Ann. Sc. Norm. Super. Pisa Cl. Sci. (5)}, 10(3):683--710, 2011.

\bibitem{Schae}
Helmut~H. Schaefer.
\newblock {\em Topological vector spaces}.
\newblock Springer-Verlag, New York-Berlin, 1971.
\newblock Third printing corrected, Graduate Texts in Mathematics, Vol. 3.

\bibitem{Schmi}
Marcel Schmidt.
\newblock Global properties of {D}irichlet forms on discrete spaces.
\newblock {\em to appear in Dissertationes Math. (Rozprawy Mat.),
  arXiv:1201.3474v3}.

\bibitem{Schmu3}
Byron Schmuland.
\newblock Dirichlet forms: some infinite-dimensional examples.
\newblock {\em Canad. J. Statist.}, 27(4):683--700, 1999.

\bibitem{Schmu1}
Byron Schmuland.
\newblock Extended {D}irichlet spaces.
\newblock {\em C. R. Math. Acad. Sci. Soc. R. Can.}, 21(4):146--152, 1999.

\bibitem{Schmu2}
Byron Schmuland.
\newblock Positivity preserving forms have the {F}atou property.
\newblock {\em Potential Anal.}, 10(4):373--378, 1999.

\bibitem{Shi}
Ichiro Shigekawa.
\newblock Semigroups preserving a convex set in a {B}anach space.
\newblock {\em Kyoto J. Math.}, 51(3):647--672, 2011.

\bibitem{Sil1}
Martin~L. Silverstein.
\newblock The reflected {D}irichlet space.
\newblock {\em Illinois J. Math.}, 18:310--355, 1974.

\bibitem{Sil}
Martin~L. Silverstein.
\newblock {\em Symmetric {M}arkov processes}.
\newblock Lecture Notes in Mathematics, Vol. 426. Springer-Verlag, Berlin-New
  York, 1974.

\bibitem{Sil2}
Martin~L. Silverstein.
\newblock {\em Boundary theory for symmetric {M}arkov processes}.
\newblock Lecture Notes in Mathematics, Vol. 516. Springer-Verlag, Berlin-New
  York, 1976.

\bibitem{Soa}
Paolo~M. Soardi.
\newblock {\em Potential theory on infinite networks}, volume 1590 of {\em
  Lecture Notes in Mathematics}.
\newblock Springer-Verlag, Berlin, 1994.

\bibitem{Sto}
Peter Stollmann.
\newblock Closed ideals in {D}irichlet spaces.
\newblock {\em Potential Anal.}, 2(3):263--268, 1993.

\bibitem{SV}
Peter Stollmann and J{\"u}rgen Voigt.
\newblock Perturbation of {D}irichlet forms by measures.
\newblock {\em Potential Anal.}, 5(2):109--138, 1996.

\bibitem{Stu1}
Karl-Theodor Sturm.
\newblock Analysis on local {D}irichlet spaces. {I}. {R}ecurrence,
  conservativeness and {$L^p$}-{L}iouville properties.
\newblock {\em J. Reine Angew. Math.}, 456:173--196, 1994.

\bibitem{Stu2}
Karl-Theodor Sturm.
\newblock Analysis on local {D}irichlet spaces. {II}. {U}pper {G}aussian
  estimates for the fundamental solutions of parabolic equations.
\newblock {\em Osaka J. Math.}, 32(2):275--312, 1995.

\bibitem{Stu4}
Karl-Theodor Sturm.
\newblock On the geometry defined by {D}irichlet forms.
\newblock In {\em Seminar on {S}tochastic {A}nalysis, {R}andom {F}ields and
  {A}pplications ({A}scona, 1993)}, volume~36 of {\em Progr. Probab.}, pages
  231--242. Birkh\"auser, Basel, 1995.

\bibitem{Stu3}
Karl-Theodor Sturm.
\newblock Analysis on local {D}irichlet spaces. {III}. {T}he parabolic
  {H}arnack inequality.
\newblock {\em J. Math. Pures Appl. (9)}, 75(3):273--297, 1996.

\bibitem{Tak}
Masayoshi Takeda.
\newblock Two classes of extensions for generalized {S}chr\"odinger operators.
\newblock {\em Potential Anal.}, 5(1):1--13, 1996.

\bibitem{Woe}
Wolfgang Woess.
\newblock {\em Random walks on infinite graphs and groups}, volume 138 of {\em
  Cambridge Tracts in Mathematics}.
\newblock Cambridge University Press, Cambridge, 2000.

\bibitem{Woj}
Radoslaw~Krzysztof Wojciechowski.
\newblock {\em Stochastic completeness of graphs}.
\newblock ProQuest LLC, Ann Arbor, MI, 2008.
\newblock Thesis (Ph.D.)--City University of New York.

\end{thebibliography}




\end{document}